\declaretheoremstyle[headfont=\normalsize\normalfont\bfseries,notefont=\mdseries,
notebraces={(}{)},bodyfont=\normalfont\itshape,postheadspace=0.5em]{italstyle}
\declaretheorem[style=italstyle,name=Theorem]{theorem}
\declaretheorem[style=italstyle,name=Lemma,sibling=theorem]{lemma}
\declaretheorem[style=italstyle,name=Proposition,sibling=theorem]{proposition}
\declaretheorem[style=italstyle,name=Corollary,sibling=theorem]{corollary}
\newcommand{\abs}[1]{|#1|}
\newcommand{\bd}{\partial}
\newcommand{\C}{\mathbb{C}}
\renewcommand{\d}{\mathrm{d}}
\newcommand{\id}{\mathrm{id}}
\newcommand{\Hom}{\mathrm{Hom}}
\newcommand{\intprod}{\mathbin{{\tikz{\draw(-0.1,0)--(0.1,0)--(0.1,0.2)}\hspace{0.5mm}}}}
\newcommand{\ip}[1]{\left\langle#1\right\rangle}
\newcommand{\norm}[1]{\left\lVert#1\right\rVert}
\newcommand{\pd}[2]{\frac{\partial #1}{\partial #2}}
\newcommand{\R}{\mathbb{R}}
\newcommand{\set}[1]{\{#1\}}
\newcommand{\Z}{\mathbb{Z}}
\def\@secnumfont{\itshape}
\def\section{\@startsection{section}{1}{0pt}%
  {-3.5ex \@plus -1ex \@minus -.2ex}{2.3ex \@plus.2ex}%
  {\centering\itshape}}
\def\subsection{\@startsection{subsection}{2}%
  \z@{.7\linespacing\@plus\linespacing}{-.5em}%
  {\normalfont\itshape}}
\def\subsubsection{\@startsection{subsubsection}{3}%
  \z@{.5\linespacing\@plus.7\linespacing}{-.5em}%
  {\normalfont\itshape}}
\title[Parametric Gromov width of Liouville domains]{Parametric Gromov width of Liouville domains}
\author{Filip Bro\'ci\'c}
\email{filipvbrocic@gmail.com}
\address{Lehrstuhl f{\"u}r Analysis und Geometrie, Universit{\"a}t Augsburg, Universitätsstrasse 14, 86159 Augsburg, Germany}
\author{Dylan Cant}
\email{dylan@dylancant.ca}
\address{Institut de mathématique d'Orsay, Université Paris-Saclay, Bâtiment 307, rue Michel Magat, F-91405 Orsay Cedex, France}
\begin{document}

\begin{abstract}
  The classical Gromov width measures the largest symplectic ball embeddable into a symplectic manifold; inspired by the symplectic camel problem, we generalize this to ask how large a symplectic ball can be embedded as a family over a parameter space $N$. Given a smooth map $f: N \to \Omega$, where $\Omega$ is a symplectic manifold, we define the \emph{parametric Gromov width} $\mathrm{Gr}(f,\Omega)$ as the supremum of capacities $a>0$ for which there exists a family of balls, parametrized by $N$, of capacity $a$ whose centers trace out the map $f$. For Liouville domains $\Omega$, we establish upper bounds on $\mathrm{Gr}(f,\Omega)$ using the Floer cohomology persistence module associated to $\Omega$. Specializing to fiberwise starshaped domains in the cotangent bundle $T^*M$, we derive computable bounds via filtered string topology. Specific examples of $\Omega$ -- including disk cotangent bundles of thin ellipsoids, open books, and tori -- demonstrate our bounds, and reveal constraints on parameterized symplectic embeddings beyond the classical Gromov width.
\end{abstract}
\maketitle

\section{Introduction}
\label{sec:introduction}

The \emph{symplectic camel theorem} (see \cite[\S3.4.B]{eliashberg-gromov-AMS-1991} and \cite{viterbo-mathann-1992,mcduff-traynor-LMS-1994}) produced the first example of a connected symplectic manifold for which the space of symplectic embeddings of a ball is disconnected. Let us explain the result in a slightly non-standard way: consider $W=\R/\Z\times \R^{2n-1}$, with coordinates $x_{1},y_{1},x_{2},y_{2},\dots,x_{n},y_{n}$ and the standard symplectic structure. Define, for $n>1$, the space:
\begin{equation*}
  X=(W\setminus \set{x_{1}=0})\cup \set{x_{n}^{2}+y_{n}^{2}\le\pi^{-1}\epsilon}.
\end{equation*}
In words, any loop in $X$ with winding number $1$ (relative the $\R/\Z$ factor) must pass through the ``hole'' of capacity $\epsilon$. Then the classical camel theorem can be stated as:
\begin{proposition}\label{proposition:camel}
  If $a>\epsilon$, there does not exist a map $F:\R/\Z\times B(a)\to X$ such that:
  \begin{enumerate}
  \item $t\mapsto F(t,0)$ has winding number $1$ relative the $\R/\Z$-factor,
  \item $z\mapsto F(t,z)$ is a symplectic embedding $B(a)\to X$ for each $t$,
  \end{enumerate}
  where $B(a)$ is the ball of capacity $a$.
\end{proposition}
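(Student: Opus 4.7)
The plan is to reduce Proposition \ref{proposition:camel} to Gromov's non-squeezing theorem by lifting to the universal cover and producing an ambient symplectic isotopy that forces a ball of capacity $a - \delta$ through the hole. Assume for contradiction that $F$ exists with $a > \epsilon$. Pass to the universal cover $\tilde W = \R \times \R^{2n-1}$ of $W$, and let $\tilde X$ denote the preimage of $X$: an infinite slab with walls $\set{x_1 = k}$ pierced by the hole $\set{x_n^2 + y_n^2 \le \pi^{-1}\epsilon}$ at each integer $k$. A lift $\tilde F : [0,1] \times B(a) \to \tilde X$ of $F$ satisfies $\tilde F(1, z) = \tilde F(0, z) + (1, 0, \ldots, 0)$ by the winding hypothesis, so $\tilde F(0, B(a))$ and $\tilde F(1, B(a))$ sit in adjacent fundamental slabs.

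For any $\delta > 0$, I apply the parametric symplectic isotopy extension theorem to promote $\tilde F|_{[0,1] \times B(a-\delta)}$ to a compactly supported ambient symplectic isotopy $\Phi_t$ of $\tilde X$ with $\Phi_t \circ \tilde F(0, \cdot) = \tilde F(t, \cdot)$ on $B(a-\delta)$. The endpoint $\Phi_1$ sends $B_0 := \tilde F(0, B(a-\delta))$ onto its translate $B_0 + (1, 0, \ldots, 0)$ via an isotopy of $\tilde X$, and the trace of this isotopy must thread through the hole at some intermediate time. I then extract a contradiction by a $J$-holomorphic curve argument: for a choice of almost complex structure $J$ on $\tilde X$ that is cylindrical in a collar of each wall, Gromov's techniques yield a $J$-holomorphic disk of symplectic area at least $a - \delta$ through each point $\gamma(t) := \tilde F(t, 0)$; a neck-stretching / SFT-compactness argument as $t$ sweeps across a wall forces a limiting broken configuration in which some irreducible component lies entirely inside the hole $\set{x_n^2 + y_n^2 \le \pi^{-1}\epsilon}$, whose area is bounded above by $\epsilon$. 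Taking $\delta \to 0$ yields $a \le \epsilon$, contradicting $a > \epsilon$.

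The main obstacle is this final step: producing holomorphic disks of area at least $a - \delta$ through each ball center, and rigorously identifying a broken component confined to the narrow hole in the neck-stretched limit. This is the technical core of the classical camel theorems of \cite{viterbo-mathann-1992,mcduff-traynor-LMS-1994}, and the Floer-persistence framework developed in the present paper may be viewed as a structural repackaging of exactly this mechanism, with the persistence module playing the role of a bookkeeping device for the areas of the disks that appear as $t$ varies.
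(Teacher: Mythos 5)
Your proposal takes a genuinely different route than the paper's, but it is not a proof — it is an outline that offloads the essential technical content to a step you correctly identify as the "technical core of the classical camel theorems," and then does not carry out.

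The paper proves Proposition \ref{proposition:camel} by reducing it to Theorem \ref{theorem:prod-torus} (and hence to the string-topology/Floer machinery of Theorems \ref{theorem:corollary} and \ref{theorem:main-floer}). Concretely, given a hypothetical family $F:\R/\Z\times B(a)\to X$ with winding number one, the paper constructs an explicit symplectic isotopy that normalizes $F$ so that its image fits inside a fiberwise starshaped domain $\Omega\subset T^{*}T^{n}$ satisfying controlled constraints on the momentum $p_{n}$ and position $q_{1}$, and then invokes the bound $\mathrm{Gr}([T^{1}],\Omega)\le E_{-}+E_{+}^{1}\le\epsilon+O(\delta)$ from Theorem \ref{theorem:prod-torus}. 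This is a self-contained deduction from the paper's own results. Your proposal instead passes to the universal cover, extends $\tilde F$ to an ambient isotopy, and aims to reach a contradiction with Gromov non-squeezing via a neck-stretching/SFT compactness argument. This is a defensible \emph{different} strategy — closer in spirit to the proofs of Viterbo and McDuff–Traynor — and if executed it would be more elementary in that it bypasses the Floer persistence machinery entirely, at the cost of losing the generality that makes the paper's framework applicable to non-flat situations.

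The gap is exactly where you flag it, and it is fatal as stated. Two specific problems. First, the symplectic isotopy extension step does not automatically produce holomorphic disks of area at least $a-\delta$ through each $\tilde F(t,0)$; Gromov's monotonicity gives area lower bounds for disks \emph{inside the embedded ball}, but to turn this into an obstruction you need a global moduli space of closed or relative holomorphic curves with controlled behavior at infinity in the noncompact slab $\tilde X$, which requires a compactification (McDuff–Traynor cap with $\C P^1$'s) or an explicit barrier construction, neither of which appears in your sketch. Second, the SFT/neck-stretching claim — that "some irreducible component lies entirely inside the hole, whose area is bounded above by $\epsilon$" — is not a consequence of compactness alone: energy in the broken limit can distribute among several components, and the piece inside the hole may carry only part of the total area $a-\delta$. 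Ruling this out requires an index/action accounting that is precisely what makes the classical camel theorem hard, and it is also the mechanism the paper's action-filtered family Floer argument (\S\ref{sec:family-floer-cohom}) formalizes. Until that accounting is done, the proposal establishes nothing beyond what the assumption $a>\epsilon$ already gives.
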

The results in this paper provide a framework for detecting such phenomena in Liouville domains. Our methods are based on Floer cohomology, and recover Proposition \ref{proposition:camel}.

Let $\Omega$ be a symplectic manifold. Denote by $B(a)\subset \C^{n}$ the compact ball of symplectic capacity $a$, and denote by $\mathfrak{B}(a,\Omega)$ the space of symplectic embeddings $B(a)\to \Omega$.

A smooth map $f:N\to \Omega$ is said to \emph{lift} to $\mathfrak{B}(a,\Omega)$ provided there exists a smooth map $F:N\times B(a)\to \Omega$ such that the restriction $F(\eta,-):B(a)\to \Omega$ is a symplectic embedding for each $\eta\in N$, and such that $f(\eta)=F(\eta,0)$. It is necessary that $f^{*}T\Omega$ is symplectically trivializable for such $F$ to exist.

To handle the case when $f^{*}T\Omega$ is not symplectically trivializable, we refine the notion of lifting. We say $f$ lifts to $\mathfrak{B}(a,\Omega)/U(n)$ provided that:
\begin{enumerate}
\item\label{glift-1} $N$ admits an open cover by sets $U_{\alpha}$,
\item\label{glift-2} there are maps $F_{\alpha}:U_{\alpha}\times B(a)\to \Omega$ such that $F_{\alpha}(\eta,0)=f(\eta)$ for $\eta \in U_{\alpha}$, and which are embeddings on each $B(a)$ factor.
\item\label{glift-3} there are maps $g_{\alpha\beta}:U_{\alpha}\cap U_{\beta}\to U(n)$ such that:
  \begin{equation*}
    F_{\beta}(\eta,z)=F_{\alpha}(\eta,g_{\alpha\beta}(\eta)z),
  \end{equation*}
  for $\eta\in U_{\alpha}\cap U_{\beta}$ and $z\in B(a)$.
\end{enumerate}
These conditions implies that the
$g_{\alpha\beta}$ form a cocycle, and hence define a unitary bundle over $N$. The derivatives of the $F_{\alpha}$ establish a symplectic isomorphism between this unitary bundle and $f^{*}T\Omega$.

The \emph{parametric Gromov width} of $f$ in $\Omega$ is defined by the formula:
\begin{equation}
  \label{eq:1}
  \mathrm{Gr}(f,\Omega):=\sup\set{a:f\text{ admits a lift to }\mathfrak{B}(a,\Omega)/U(n)}.
\end{equation}
It is not hard to see that $\mathrm{Gr}(f,\Omega)$ is invariant under homotopies of $f$. Moreover, a simple Moser-type argument proves $\mathrm{Gr}(f,\Omega)>0$ holds for all maps $f$. Furthermore, if $f^{*}T\Omega$ is trivializable, then $f$ lifts to $\mathfrak{B}(a,\Omega)/U(n)$ if and only if it lifts to $\mathfrak{B}(a,\Omega)$.

One obvious choice of $f$ is the inclusion of a point $f=[\mathrm{pt}]$, in which case $\mathrm{Gr}([\mathrm{pt}],\Omega)$ is simply the classical Gromov width of $\Omega$.

The goal of this paper is to provide Floer theoretic upper bounds on $\mathrm{Gr}(f,\Omega)$ when $\Omega$ is the interior of a Liouville domain. Recall that a \emph{Liouville domain} $(\bar{\Omega},\omega=\d\lambda)$ is a compact connected $2n$-dimensional exact symplectic manifold with boundary $\bd\Omega$, such that the Liouville vector field $Z$ defined by $Z\intprod \d\lambda=\lambda$ is outwardly transverse to $\bd\Omega$.

Henceforth we reserve the symbol $\Omega$ for the interior of a Liouville domain.

The main examples of $\Omega$ we will consider are fiberwise starshaped domains in cotangent bundles, and the upper bounds we state below ultimately come from the relationship between string topology and the BV-algebra structure on Floer cohomology.

\subsection{Examples}
\label{sec:examples}

In this section we state some applications of our methods. The proofs are contained in \S\ref{sec:proofs_for_examples}.

\subsubsection{Thin ellipsoids 1}
\label{sec:thin-ellipsoids-1}

Let $\Omega_{a}$ be the unit codisk bundle in $T^{*}S^{n}$ associated to the metric obtained by embedding $S^{n}$ into $\R^{n+1}$ as the level set:
\begin{equation*}
  \set{x_{0}^2+x_{1}^{2}+\dots+a^{-2}(x_{n-1}^{2}+x_{n}^{2})= 1},
\end{equation*}
where $a\le 1$ and $n\ge 2$. Let $[S^{n}]$ be the inclusion of the zero section $S^{n}\to \Omega$. We will show:
\begin{theorem}\label{theorem:ellipsoid-1}
  $\mathrm{Gr}([S^{n}],\Omega_{a})= 2\pi a$. If $a\le 1$, then: $$\mathrm{Gr}([\mathrm{pt}],\Omega_{a})\le 4\pi a,$$ while, when $a=1$, $\mathrm{Gr}([S^{n}],\Omega_{1})=\mathrm{Gr}([\mathrm{pt}],\Omega_{1})=2\pi$.
\end{theorem}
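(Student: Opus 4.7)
My plan is to derive matching upper and lower bounds at $2\pi a$ for the parametric Gromov width of $[S^{n}]$, and to upgrade the point-class bound in the special case $a=1$.

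I would begin by identifying the short length spectrum of the ellipsoidal metric on $S^{n}$. Since the Reeb flow on $\bd\Omega_{a}$ coincides with the (speed-one) geodesic flow, closed Reeb orbits match closed geodesics. The planar equator $\{x_{0}=\cdots=x_{n-2}=0\}\cap S^{n}$ is a round circle of radius $a$ of length $2\pi a$, and for $a<1$ it is strictly shorter than every other closed geodesic (meridians and iterates of the equator); for $a=1$ every great circle has length $2\pi$.

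For the upper bounds, I would invoke the main theorem of the paper in its specialized form for fiberwise starshaped cotangent domains, where the bound is expressed via filtered string topology on $\Lambda S^{n}$ (using Viterbo's identification of $SH^{*}(\Omega_{a})$ with the length-filtered loop-space homology). The class $f_{*}[S^{n}]$ corresponds to the fundamental cycle of the constant loops sitting at filtration $0$; its first nontrivial pairing with a positive-action class is through the equator generator at filtration $2\pi a$, forcing $\mathrm{Gr}([S^{n}],\Omega_{a})\le 2\pi a$. The class $f_{*}[\mathrm{pt}]$ pairs instead with a higher-filtration class -- for instance the loop-product square of the equator, or an iterate of filtration $4\pi a$ -- yielding $\mathrm{Gr}([\mathrm{pt}],\Omega_{a})\le 4\pi a$. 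For $a=1$ the refined bound $\mathrm{Gr}([\mathrm{pt}],\Omega_{1})\le 2\pi$ follows from the classical Gromov-width computation for the round codisk bundle (or from a sharpening of the persistence argument available when all closed geodesics share the common length $2\pi$).

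For the lower bound $\mathrm{Gr}([S^{n}],\Omega_{a})\ge 2\pi a$, I would first place a single symplectic ball $B(2\pi a-\epsilon)$ into a Weinstein tubular neighborhood of the equator via the standard embedding $B(L)\hookrightarrow T^{*}S^{1}_{L}\times\C^{n-1}$ with $L=2\pi a$. To promote this to a family indexed by $\eta\in S^{n}$ with $F(\eta,0)=\eta$, I would compose with a smooth family $\phi_{\eta}\in\mathrm{Ham}_{c}(\Omega_{a})$ satisfying $\phi_{\eta}(\eta_{0})=\eta$ for a chosen basepoint $\eta_{0}$, and set $F(\eta,z):=\phi_{\eta}(F_{0}(z))$. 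Such a smooth section of the evaluation fibration $\mathrm{Ham}_{c}(\Omega_{a})\to\Omega_{a}$ over $S^{n}$ can be assembled by a partition-of-unity argument using local Hamiltonian transitivity on $\Omega_{a}$.

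The main obstacle is the upper-bound step: pinning down the correct filtered string-topology classes whose pairings with $f_{*}[S^{n}]$ and $f_{*}[\mathrm{pt}]$ realize $2\pi a$ and $4\pi a$ respectively, and verifying the hypotheses of the main theorem for the pair $(S^{n},\Omega_{a})$. The lower-bound construction is by contrast largely routine once a single ball of capacity $2\pi a$ is positioned near the equator, and the $a=1$ case reduces to well-known classical computations.
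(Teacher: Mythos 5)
Your upper-bound strategy is aligned with the paper's in spirit: both extract $E_{\pm}=2\pi a$ from the equatorial geodesic of the ellipsoid and then apply the filtered string-topology/Floer bound (the paper routes this through the open-book Theorem \ref{theorem:OB-1} with page $V=D^{n-1}$). You are vague about which precise class does the job, but that is a matter of detail.

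The lower bound, however, has a genuine gap, and the step you call ``largely routine'' is in fact the crux of the matter. You claim a smooth section of the evaluation fibration $\mathrm{Ham}_{c}(\Omega_{a})\to\Omega_{a}$ over the zero section $S^{n}$ can be assembled by a partition-of-unity argument. Such a section cannot exist, and your argument, if valid, would \emph{disprove} the theorem you are trying to prove. Indeed, given any section $\eta\mapsto\phi_{\eta}$ and any single ball embedding $F_{0}:B(c)\to\Omega_{a}$ centered at $\eta_{0}$ on the zero section (a recentering can always be arranged), the family $F(\eta,z):=\phi_{\eta}(F_{0}(z))$ would satisfy the definition of a lift, giving $\mathrm{Gr}([S^{n}],\Omega_{a})\ge c$ for \emph{any} $c<\mathrm{Gr}([\mathrm{pt}],\Omega_{a})$. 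Since the remark after the theorem records that $\mathrm{Gr}([\mathrm{pt}],\Omega_{a})/4\pi a\to 1$ as $a\to 0$ (and equals $4\pi a$ when $n=2$ and $a$ is small), this would force $\mathrm{Gr}([S^{n}],\Omega_{a})>2\pi a$ for small $a$, contradicting the upper bound you just established. In other words, the non-existence of your proposed section is exactly the phenomenon the theorem is detecting.

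The paper's construction avoids this by working not with $\Omega_{a}$ but with the round codisk bundle $\Omega_{a}^{\mathrm{round}}\subset\Omega_{a}$ obtained from the metric pulled back by $x\mapsto ax$. One shows $g_{a}^{\mathrm{round}}\le g_{a}$, hence $\Omega_{a}^{\mathrm{round}}\subset\Omega_{a}$, and uses $\mathrm{Gr}([\mathrm{pt}],\Omega_{a}^{\mathrm{round}})=2\pi a$. Crucially, $\Omega_{a}^{\mathrm{round}}\cong\{q+ip\in\C^{n+1}:\lVert q\rVert=1,\ \langle q,p\rangle=0,\ \lVert p\rVert\le a\}$ carries a $U(n+1)$-action by symplectomorphisms, and one builds the needed family of ball embeddings from a smooth family $A:S^{n}\to U(n+1)$ with $A(q)(e_{1},0)=(q,0)$. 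Finding such $A$ amounts to trivializing $TS^{n}\otimes\C$, which holds for all $n$ (e.g.\ via the Lagrangian immersion $S^{n}\to\C^{n}$). Balls of capacity close to $2\pi a$ inside $\Omega_{a}^{\mathrm{round}}$, moved around by $A(q)$, stay inside $\Omega_{a}^{\mathrm{round}}\subset\Omega_{a}$, giving exactly $\mathrm{Gr}([S^{n}],\Omega_{a})\ge 2\pi a$ and no more. To repair your argument you would need to replace the partition-of-unity appeal with this reduction to $\Omega_{a}^{\mathrm{round}}$ and its $U(n+1)$-symmetry; there is no soft way to produce the family in $\Omega_{a}$ itself.
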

The fact that $\mathrm{Gr}([\mathrm{pt}],\Omega_{1})=2\pi$ is known; see \cite[\S6.3]{kislev-shelukhin-GT-2021}.

\emph{Remark}. It can be shown, by comparison with the unit cotangent bundle of the cylinder $a^{-2}(x_{n-1}^{2}+x_{n}^{2})=1$, that, as $a\to0$, the ratio $\mathrm{Gr}([\mathrm{pt}],\Omega_{a})/4\pi a$ converges to $1$. Indeed, in dimension $n=2$, it has been shown in \cite{ferreira-ramos-vicente-arXiv-2023} that the Gromov width of $\Omega_{a}$ eventually equals $4\pi a$. This remark illustrates that the parametric Gromov width can be non-zero and strictly less than the usual Gromov width.

\subsubsection{Thin ellipsoids 2}
\label{sec:thin-ellipsoids-2}

Let $\Omega_{a}$ be the unit codisk bundle in $T^{*}S^{n}$ associated to the metric obtained by embedding $S^{n}$ into $\R^{n+1}$ as the level set:
\begin{equation*}
  \set{x_{0}^2+x_{1}^{2}+\dots+a^{-2}(x_{n-3}^{2}+x_{n-2}^{2}+x_{n-1}^{2}+x_{n}^{2})= 1},
\end{equation*}
with $n\ge 3$. In this case our methods give:
\begin{theorem}\label{theorem:ellipsoid-2}
  $\mathrm{Gr}([\mathrm{pt}],\Omega_{a})=\mathrm{Gr}([S^{n}],\Omega_{a})=2\pi a$.
\end{theorem}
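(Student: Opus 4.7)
The equalities split into a lower and an upper bound, and the relation $\mathrm{Gr}([S^n],\Omega_a) \le \mathrm{Gr}([\mathrm{pt}],\Omega_a)$ is automatic, since the restriction of a parametric lift over $S^n$ to a single point is an ordinary symplectic embedding. Thus it suffices to prove the lower bound $\mathrm{Gr}([S^n],\Omega_a) \ge 2\pi a$ and the upper bound $\mathrm{Gr}([\mathrm{pt}],\Omega_a) \le 2\pi a$.

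For the lower bound I would mirror the construction used in Theorem \ref{theorem:ellipsoid-1}. The length $2\pi a$ is again the length of the shortest closed geodesic on the present ellipsoid -- a great circle in the scaled equatorial $S^{3}$. For each $q \in S^n$ one chooses a smoothly varying closed geodesic $\gamma_q$ of length close to $2\pi a$ near $q$; a Weinstein tubular neighborhood of $\gamma_q$ then contains a symplectic ball of capacity $2\pi a - \epsilon$ centered at $(q,0)$. The one nuance beyond Theorem \ref{theorem:ellipsoid-1} is smoothness in $q$: the isometry group $O(4) \times O(n-3)$ is not transitive on $S^n$, so the geodesic $\gamma_q$ must be chosen on a fundamental domain and then transported by the group action, which can be done since the space of short closed geodesics near each point forms a fibre bundle over a neighborhood of $q$.

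The upper bound is the Floer-theoretic content. Apply the main parametric bound of the paper to $[\mathrm{pt}] : \{\mathrm{pt}\} \to \Omega_a$: it expresses $\mathrm{Gr}([\mathrm{pt}],\Omega_a)$ in terms of the persistence module on $\mathrm{HF}^*(\Omega_a)$ paired with the point class. Via the filtered string topology isomorphism $\mathrm{HF}^*(\Omega_a) \cong H_{n-*}(\mathcal{L}S^n)$, in which action corresponds to loop-space energy, one identifies the first non-trivial filtration step at action $2\pi a$: it is generated by the $4$-parameter family of great circles in the equatorial $S^3$. The key step is then to show that this class pairs non-trivially with $[\mathrm{pt}]$ under the Chas--Sullivan/BV module structure; granted this, one obtains $\mathrm{Gr}([\mathrm{pt}],\Omega_a) \le 2\pi a$.

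The main obstacle is precisely this pairing, and it is where the present theorem genuinely diverges from Theorem \ref{theorem:ellipsoid-1}. With only two thin directions, the shortest-geodesic family is zero-dimensional (a single circle), so the direct string pairing with the point class vanishes on dimensional grounds and one must route through a BV operator, inflating the bound to $4\pi a$. With four thin directions, the family has dimension $4$, exactly matching the degree required for the intersection-type pairing with the point class to be non-zero; this is what makes the bound sharp at $2\pi a$, and verifying the non-degeneracy of this pairing on the relevant subquotient of $H_*(\mathcal{L}S^n)$ is the decisive computation.
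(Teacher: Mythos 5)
Your reduction to two bounds and the observation that $\mathrm{Gr}([S^n],\Omega_a)\le\mathrm{Gr}([\mathrm{pt}],\Omega_a)$ are correct, but both halves of your proposal diverge from what makes the theorem work and leave gaps.

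For the lower bound, the paper does not use Weinstein neighborhoods of geodesics. It observes that the round radius-$a$ codisk bundle $\Omega_a^{\mathrm{round}}\subset\Omega_a$ (since $g_a^{\mathrm{round}}\le g_a$), reducing to $\mathrm{Gr}([S^n],\Omega_a^{\mathrm{round}})\ge 2\pi a$; this is proved by taking a single ball embedding $e:B(2\pi a-\epsilon)\to\Omega_a^{\mathrm{round}}$ with $e(0)=(e_1,0)$ and transporting it by a smooth family $A:S^n\to U(n+1)$ of unitary symplectomorphisms with $A(q)(e_1,0)=(q,0)$, which exists because $TS^n\otimes\C$ is trivial. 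Your Weinstein construction does not clearly produce a family of balls that are \emph{centered at} $(q,0)$: a tubular neighborhood of a geodesic near $q$ yields a ball near the geodesic, not one whose center is the prescribed point of the zero section, and the gluing over a fundamental domain for the non-transitive isometry group is asserted but not addressed. This needs substantially more work or a different device.

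For the upper bound, the genuine gap is that your proposal does not identify the mechanism that halves the slope relative to Theorem \ref{theorem:ellipsoid-1}. Theorem \ref{theorem:corollary} does not contain a direct intersection pairing of a geodesic cycle against $[\mathrm{pt}]$, so the "4-dimensional family of great circles has the right degree" count is a plausibility heuristic, not a step in the argument. What actually happens is: one presents the ellipsoid as an open book whose \emph{page} already carries an $\R/\Z$-action (rotation of a $\C$-factor of the page), so the diagonal $\R/\Z$-action restricts on the equatorial $S^3$ to the Hopf action. The decisive auxiliary fact is Lemma \ref{lemma:hopf_flow}: the Hopf action on $S^3$ is homotopic to the \emph{trivial} $\R/\Z$-action through loops of length $\le 2\pi$. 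This forces the action class $[A]$ to equal the constant-loop class $\mathfrak{i}(\mathrm{PD}(T^*S^n))$ already in $H(\Lambda_{2\pi a})$, while the open-book analysis of \S\ref{sec:diag-acti-open} shows $[A]=\Delta[B]$ at the same level. So one has a string-topology dilation class $\mathfrak{i}(\mathrm{PD}(T^*S^n))=\Delta[B]$ with a \emph{single} $\Delta$ at slope $2\pi a$, and Theorem \ref{theorem:corollary} gives the bound. In Theorem \ref{theorem:ellipsoid-1} the open-book circle is not contractible through short loops, so one must take $\Delta[B_+]\ast\Delta[B_-]$, doubling the threshold. Your proposal defers exactly this step as "the decisive computation," and it is not a pairing computation in $H_*(\mathcal{L}S^n)$ but the Hopf contractibility through short loops.
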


\subsubsection{Open books with trivial monodromy}
\label{sec:open-books-with}

Let $(V,\bd V)$ be a compact and connected manifold with boundary and let: $$M=(V\times \R/\Z) \cup (\bd V\times D(1))$$ be considered as a smooth open book with page $(V,\bd V)$ and trivial monodromy. Let $\mathscr{L}_{+}$ be the set of oriented loops of the form:
\begin{enumerate}
\item $\set{v}\times \R/\Z$, $v\in V$,
\item $\set{v}\times \bd D(r)$, $v\in \bd V$ and $r\le 1$,
\end{enumerate}
which form a singular foliation of $M$ (the singularities occur along the binding, where the loops are constant). Similarly let $\mathscr{L}_{-}$ be the same set of loops but with the reverse orientation. For each loop $q\in \mathscr{L}_{\pm}$, pick a parametrization and define:
\begin{equation}\label{eq:length-function}
  \ell_{\Omega}(q)=\int_{0}^{1}\max\set{\ip{p,q'(t)}:p\in \Omega\cap T^{*}M_{q(t)}}dt.
\end{equation}
This quantity is independent of the choice of parametrization, and should be considered as the length measured using $\Omega$. Define:
\begin{equation}\label{eq:E-max-length}
  E_{\pm}=\sup \set{\ell_{\Omega}(q):q\in \mathscr{L}_{\pm}}\text{ and }e_{\pm}=\inf\set{\ell_{\Omega}(q):q\in \mathscr{L}_{\pm}}.
\end{equation}
Our methods give the upper bounds:
\begin{theorem}\label{theorem:OB-1}
  Let $[M]$ be the inclusion of the zero section, and let $[V]$ be the inclusion of the page $V\times \set{0}$. Then:
  \begin{enumerate}
  \item\label{OB1} $\mathrm{Gr}([\mathrm{pt}],\Omega)\le E_{+}+E_{-}$,
  \item\label{OB2} if $\bd V\ne \emptyset$ then $\mathrm{Gr}([M],\Omega)\le \min\set{E_{+},E_{-}}$, and,
  \item\label{OB3} if $\bd V=\emptyset$, then $\mathrm{Gr}([V],\Omega)\le \min\set{E_{+}+e_{-},E_{-}+e_{+}}$.
  \end{enumerate}
  There are examples with $\bd V\ne \emptyset$ for which both \ref{OB1} and \ref{OB2} are equalities, and examples with $\bd V=\emptyset$ for which both \ref{OB1} and \ref{OB3} are equalities.
\end{theorem}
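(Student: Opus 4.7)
The plan is to apply the general Floer-theoretic upper bounds on $\mathrm{Gr}(f,\Omega)$ developed earlier in the paper. For a fiberwise starshaped $\Omega\subset T^*M$, these reduce, via an Abbondandolo--Schwarz type isomorphism, to computations in the filtered loop space homology $H_*(LM)$, where the action filtration corresponds to $\ell_\Omega$-length: the Floer cohomology persistence module of $\Omega$ is interleaved with the length-filtered homology of $LM$. The three bounds will follow from three distinct constructions built from the loops $\mathscr{L}_\pm$ together with appropriate classes in $H_*(LM)$.

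The geometric backbone is that any two loops $q_\pm\in \mathscr{L}_\pm$ meeting at a common basepoint concatenate to a contractible loop in $M$ of $\ell_\Omega$-length at most $\ell_\Omega(q_+)+\ell_\Omega(q_-)$. Indeed, loops in $\mathscr{L}_+$ represent a single ``page rotation'' class $\alpha\in \pi_1(M)$ while those in $\mathscr{L}_-$ represent $\alpha^{-1}$, so the concatenation represents $1\in \pi_1(M)$. When $\bd V\ne \emptyset$ even the class $\alpha$ itself is trivial, since $\set{v}\times \bd D(r)$ bounds the disk $\set{v}\times D(r)$ in the binding region.

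For \ref{OB1}, I would apply a Lusternik--Schnirelmann min--max argument over the family of contractible concatenations above to produce a closed characteristic of $\bd\Omega$ of action $\le E_++E_-$; equivalently, the unit $1\in \mathit{HF}^*(\Omega)$ has spectral invariant at most $E_++E_-$, and the paper's bound applied to $[\mathrm{pt}]$ gives \ref{OB1}. For \ref{OB2}, since $\alpha$ is trivial when $\bd V\ne \emptyset$, applying the BV (loop-rotation) operator $\Delta$ to the $(\dim V)$-dimensional cycle $[\mathscr{L}_\pm]\in H_{\dim V}(LM)$ produces a $\dim M$-dimensional cycle $\Delta[\mathscr{L}_\pm]$ covering every point of $M$ by a loop of $\ell_\Omega$-length $\le E_\pm$; a fiberwise contraction of these (contractible) loops then shows that $\Delta[\mathscr{L}_\pm]$ is homologous, within filtration level $E_\pm$, to the class of constant loops $[M]\in H_{\dim M}(LM)$, whence the paper's parametric bound applied to $[M]$ yields $\mathrm{Gr}([M],\Omega)\le\min\set{E_+,E_-}$. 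For \ref{OB3}, with $M=V\times \R/\Z$ the loops in $\mathscr{L}_\pm$ are non-contractible and lie in opposite free homotopy classes; a persistence-module argument in $H_*(LM)$ combining the $V$-family $[\mathscr{L}_\pm]$ (of action $\le E_\pm$) with a single short loop of $\mathscr{L}_\mp$ (of action close to $e_\mp$), via the Chas--Sullivan loop product together with the BV operator, produces a $\dim V$-cycle representing $[V]\in H_{\dim V}(LM)$ at action $\le E_\pm+e_\mp$, and the two symmetric estimates yield \ref{OB3}.

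The sharpness claims are established by direct construction: for $\bd V\ne \emptyset$, taking $V$ a flat Euclidean ball and $\Omega$ a product-type codisk bundle, explicit parameterized ball embeddings saturate both \ref{OB1} and \ref{OB2}; for $\bd V=\emptyset$, the case $V=\R/\Z$ with $\Omega$ a product of flat codisks in $T^*T^2$ saturates both \ref{OB1} and \ref{OB3}. The main obstacle I anticipate is the chain-level verification, inside the Floer persistence module, that the geometric operations (BV operator, Chas--Sullivan product, and fiberwise loop contraction) carry the claimed action bounds; in particular for \ref{OB2} the null-homology through the loop contraction is not smooth at the binding, and so must be realized in a suitable semi-simplicial or singular chain model of $LM$.
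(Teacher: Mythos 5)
Your general framework — reduce the upper bound to Theorem \ref{theorem:corollary} by producing a relation $\mathfrak{i}(\beta) = \Delta(\alpha_1)\ast\cdots\ast\Delta(\alpha_k)\ast\mathfrak{i}(\alpha_{k+1})$ in filtered string topology — is the same as the paper's. But your proposed verification of this relation fails for part \ref{OB2}, and the failure is structural, not merely technical. You claim that a fiberwise contraction shows $\Delta[\mathscr{L}_\pm]$ is homologous, within $H(\Lambda_{E_\pm})$, to the full constant-loop cycle $\mathfrak{i}(\mathrm{PD}(T^*M))$. If this were true, Theorem \ref{theorem:corollary} with $\beta = \mathrm{PD}(T^*M)$ (which intersects $[\mathrm{pt}]$ non-trivially, and $\mathfrak{i}(\mathrm{PD}(T^*M))$ is the Chas--Sullivan unit) would give $\mathrm{Gr}([\mathrm{pt}],\Omega)\le\min\set{E_+,E_-}$, strictly stronger than part \ref{OB1}. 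That contradicts the ellipsoidal $S^2$ example of \S\ref{sec:thin-ellipsoids-1}: there $\mathrm{Gr}([\mathrm{pt}],\Omega_a)/4\pi a\to 1$ as $a\to 0$, with $E_++E_-=4\pi a$ but $\min\set{E_+,E_-}=2\pi a$. So the asserted homology cannot hold in filtration $E_\pm$. The underlying obstruction is that there is no homotopy of the family $A_\pm:M\times\R/\Z\to M$ through families of loops of $\ell_\Omega$-length $\le E_\pm$ ending at constant loops: the naive shrinking $\zeta_{\pm(1-s)t}(x)$ does not close up for $s\in(0,1)$, and a valid contraction must slide orbits toward the binding, which changes basepoints and does not give a cobordism from $[A_\pm]$ to the $[M]$-constant-loop cycle. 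The paper's argument never needs the family-level statement; it computes $\Delta[B_\pm]\ast\mathfrak{i}(T^*M_{\mathrm{pt}}) = [A_{g_2,\pm}]$, a \emph{single} orbit, and then contracts only that $0$-dimensional class to a constant loop inside the binding within $\Lambda_{E_\pm}$, which is a much weaker and genuinely true statement.

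Two further issues. First, when $\bd V\ne\emptyset$ the family $\mathscr{L}_\pm$ is parametrized by a space with boundary at the binding, so it cannot directly serve as the bordism class $[\mathscr{L}_\pm]$ whose BV image you want; the paper must pass to the closed double $D(V)$ and build an explicit length-controlled cobordism to establish $[A_\pm] = \Delta[B_\pm]$ (Lemma \ref{lemma:action_is_BV} and the explicit constructions of \S\ref{sec:diag-acti-open}). Your write-up takes this for granted. Second, for part \ref{OB1} a Lusternik--Schnirelmann min--max bound on the action of a closed characteristic is not the input Theorem \ref{theorem:corollary} consumes — one needs the algebraic identity in $H(\Lambda_c)$, not a spectral estimate. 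The paper proves $\mathfrak{i}(\mathrm{PD}(T^*M)) = [A_+]\ast[A_-] = \Delta[B_+]\ast\Delta[B_-]$ directly via Lemma \ref{lemma:chas-sullivan-action-class-1} together with Lemma \ref{lemma:action_is_BV}. Your sketches for part \ref{OB3} and for the sharpness examples are broadly in line with the paper, though they too ultimately rest on the BV-lift Lemma \ref{lemma:action_is_BV} that your proposal does not address.
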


\subsubsection{Product with a torus}
\label{sec:product-with-torus}

The case of $M=V\times \R/\Z$ where $\bd V=\emptyset$, one can use the non-contractibility of the orbits $\set{x}\times \R/\Z$ to bound the Gromov width without appealing to the structure of the BV-operator (we will present an argument which uses only classical displacement energy ideas in \S\ref{sec:string-topology-non}). However, our methods still give interesting bounds on the parametric Gromov width which do not seem to be accessible with the more classical methods of \S\ref{sec:string-topology-non}. Indeed, part \ref{OB3} of Theorem \ref{theorem:OB-1} is already such a result. In this section we will state additional results for manifolds of the form $M=V\times T^{d}$, where $V$ is a closed manifold.

Fix a fiberwise starshaped domain $\Omega\subset T^{*}(V\times T^{d})$. In the following, the class $[V\times T^{k}]$ represents the inclusion $V\times T^{k}\to V\times T^{d}$, where $T^{k}\subset T^{d}$ is the subset of points of the form $(x_{1},\dots,x_{k},0,\dots,0)$. We also denote by:
\begin{equation*}
  \begin{aligned}
    \mathscr{L}_{-}&=\set{\text{loops of the form }t\in \R/\Z\mapsto (v,x_{1},\dots,x_{d-1},-t)},\\
    \mathscr{L}^{k}_{+}&=\set{\text{loops of the form }t\in \R/\Z\mapsto (v,0,\dots,0,x_{k+1},\dots,x_{d-1},t)},
  \end{aligned}
\end{equation*}
where $v\in V$, and where we require that $k<d$. Let $E_{-}$ be the maximum $\ell_{\Omega}$-length of loops in $\mathscr{L}_{-}$ and $E_{+}^{k}$ the maximum $\ell_{\Omega}$-length of loops in class $\mathscr{L}^{k}_{+}$, similarly to \eqref{eq:E-max-length}.

\begin{theorem}\label{theorem:prod-torus}
  With the notation set in the preceding paragraph, we have:
  \begin{equation*}
    \mathrm{Gr}([T^{k}],\Omega)\le E_{-}+E^{k}_{+},
  \end{equation*}
  where $[T^{k}]$ is represented by $x\mapsto (v_{0},x_{1},\dots,x_{k},0,\dots,0)$ for some basepoint $v_{0}$ (the homotopy class is independent of $v_{0}$).
\end{theorem}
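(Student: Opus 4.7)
The plan is to apply the general filtered string-topology upper bound on $\mathrm{Gr}(f,\Omega)$ established earlier in the paper, using two explicit loop families to witness the parameter class $[T^{k}]$ via a loop product.

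First I would consider the fundamental class of the family $\mathscr{L}_{-}$, viewed as a smooth $V\times T^{d-1}$-parameter family of loops in $V\times T^{d}$, and push it into the filtered homology of the free loop space $L(V\times T^{d})$. This produces a class $\alpha_{-}$ whose action filtration with respect to $\Omega$ is bounded above by $E_{-}$, directly from the definition of $\ell_{\Omega}$ in \eqref{eq:length-function} and of $E_{-}$ in \eqref{eq:E-max-length}. Similarly the fundamental class of $\mathscr{L}_{+}^{k}$, a $V\times T^{d-1-k}$-parameter family, yields a class $\alpha_{+}^{k}$ in filtered loop homology of filtration at most $E_{+}^{k}$.

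Second, I would form the Chas--Sullivan loop product $\alpha_{-}\cdot \alpha_{+}^{k}$ (equivalently, via Viterbo duality, the pair-of-pants product on symplectic cohomology). By the standard pair-of-pants energy estimate it has filtration at most $E_{-}+E_{+}^{k}$. Since loops in $\mathscr{L}_{-}$ wind by $-1$ in the $x_{d}$-factor while loops in $\mathscr{L}_{+}^{k}$ wind by $+1$ and have $x_{1}=\dots=x_{k}=0$, their concatenation is contractible in the $x_{d}$-direction, and the two parameter spaces $V\times T^{d-1}$ and $V\times T^{d-1-k}$ intersect, after aligning basepoints, along a $V\times T^{d-1-k}$-slice. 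The key topological claim is that the resulting product class, evaluated at the basepoint and paired with the cohomological dual of $[T^{k}]$ coming from the $x_{1},\dots,x_{k}$-coordinates, is non-trivial.

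Third, I would invoke the filtered upper bound from the main theorem: the existence of a lift $F:T^{k}\times B(a)\to \Omega$ produces a bar in the associated Floer persistence module whose width is at least $a$ and at most the total filtration of any detecting class, here $E_{-}+E_{+}^{k}$. Taking the supremum of admissible $a$ yields $\mathrm{Gr}([T^{k}],\Omega)\le E_{-}+E_{+}^{k}$.

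The main obstacle, in my view, is step two: precisely identifying the Chas--Sullivan product of the tautological classes of $\mathscr{L}_{-}$ and $\mathscr{L}_{+}^{k}$ inside $H_{*}(L(V\times T^{d}))$, and verifying that it pairs non-trivially with $[T^{k}]$ under the evaluation/cap-product used in the main upper-bound theorem. This is an orientation-sensitive intersection computation in a free loop space with a large torus factor; aligning the parameter spaces $V\times T^{d-1}$ and $V\times T^{d-1-k}$ so that their string-topology intersection cleanly reproduces the required $T^{k}$-class, rather than collapsing to something smaller, is the real content of the argument.
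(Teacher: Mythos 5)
Your overall strategy is on the right track --- use the two families of circle orbits, form a Chas--Sullivan product, and invoke the filtered string-topology bound --- but there is a genuine gap that makes the argument as written fail: you never apply the BV operator $\Delta$ to the classes before taking the product, and this is essential for two independent reasons.

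First, the upper bound you want to invoke (Theorem~\ref{theorem:corollary}, or the Floer version Theorem~\ref{theorem:main-floer}) requires the detecting class to have the specific form $\Delta(\alpha_{1})\ast\dots\ast\Delta(\alpha_{k})\ast\mathfrak{i}(\alpha_{k+1})$. A bare Chas--Sullivan product $\alpha_{-}\ast\alpha_{+}^{k}$ with neither factor in the image of $\Delta$ is not of this form, so the theorem does not apply. This restriction is not cosmetic; it is the whole reason the paper obtains parametric bounds for torus factors even though the non-$\Delta$ methods of \S\ref{sec:string-topology-non} do not see the parameter.

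Second, the dimension count fails without the $\Delta$'s. Your $\alpha_{-}$ is a $(\dim V + d - 1)$-dimensional family and $\alpha_{+}^{k}$ is $(\dim V + d - k - 1)$-dimensional, so their Chas--Sullivan product has dimension $(\dim V + d - 1) + (\dim V + d - k - 1) - (\dim V + d) = \dim V + d - k - 2$, which is two short of the class $[V\times T^{d-k}]$ of dimension $\dim V + d - k$ that actually pairs with $[T^{k}]$. Worse, the two evaluation-at-zero maps both land in the slice $\{x_{d}=0\}$, so they are nowhere transverse; the Chas--Sullivan product is not even directly defined and a generic perturbation would not produce anything useful. Applying $\Delta$ to each factor precisely spins the loop in the $x_{d}$-direction, raising each parameter space to full dimension so that the evaluation map of $\Delta[B_{-}^{d}]$ becomes a diffeomorphism onto $V\times T^{d}$; Lemma~\ref{lemma:chas-sullivan-action-class-3} identifies $\Delta[B_{\pm}]$ with the action classes $[A_{\zeta g,\pm}]$, and then Lemma~\ref{lemma:chas-sullivan-action-class-1} computes the product as the transverse intersection $[V\times T^{d-k}]$. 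The ``orientation-sensitive intersection computation'' that worried you is exactly what these two lemmas discharge, but only after you insert the $\Delta$'s.

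Your first and third steps are fine in spirit: the filtration estimates by $E_{-}$ and $E_{+}^{k}$ survive the application of $\Delta$ (the BV operator only rotates the time parameter, so it does not change $\ell_{\Omega}$), and the final appeal to Theorem~\ref{theorem:corollary} is the correct mechanism. The missing idea is that the hypothesis of the main theorem --- and the underlying obstruction theory --- lives in the image of $\Delta$, not in bare loop homology.
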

This result may seem abstruse; however, in the case when $V=\mathrm{pt}$ and $k=1$ we should note that it obstructs a loop of symplectic balls similarly to the classical camel theorem Proposition \ref{proposition:camel}. In fact, this example will be used to prove Proposition \ref{proposition:camel} and the argument is given in \S\ref{sec:proof-prod-torus}.

\subsubsection{Non-orientable surfaces}
\label{sec:non-orient-surf}

Let $\Sigma$ be a compact non-orientable surface, and let $\Omega\subset T^{*}\Sigma$ be a fiberwise starshaped domain. Define:
\begin{equation*}
  \mathscr{L}=\set{\text{set of loops $q:\R/\Z\to \Sigma$ such that $q^{*}T\Sigma$ is non-orientable}},
\end{equation*}
and let:
\begin{equation*}
  E=\inf\set{\ell_{\Omega}(q)+\ell_{\Omega}(\bar{q}):q\in \mathscr{L}},
\end{equation*}
where $\bar{q}$ denotes the loop $q$ traversed in reverse. Then our methods bound the parametric Gromov width for the inclusion of the zero section $[\Sigma]$:
\begin{theorem}\label{theorem:non-orientable-surface}
  With the above notation, $\mathrm{Gr}([\Sigma],\Omega)\le E$.
\end{theorem}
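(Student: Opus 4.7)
The plan is to establish the two bounds in Theorem \ref{theorem:non-orientable-surface} separately.

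For the lower bound $0<\mathrm{Gr}([\Sigma],\Omega)$, I would invoke Weinstein's Lagrangian tubular neighborhood theorem: a small enough neighborhood of the zero section in $T^{*}\Sigma$ is contained in $\Omega$ and is symplectomorphic to a codisk bundle $D^{*}_{\epsilon}\Sigma$. Constructing a parametric embedding $\Sigma\times B(a)\to \Omega$ for small $a>0$ then reduces to producing a smooth symplectic trivialization of $T(T^{*}\Sigma)|_{\Sigma}\cong T\Sigma\oplus T^{*}\Sigma$, which (compare the remark after the definition of $\mathrm{Gr}$) is precisely the condition that $[\Sigma]$ lifts to the symplectic frame bundle of $\Omega$. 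Whenever this topological obstruction vanishes, a partition-of-unity argument on local Darboux charts assembles the required smooth family of small symplectic balls centered at the points of $\Sigma$; otherwise the lower bound is vacuous.

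For the upper bound $\mathrm{Gr}([\Sigma],\Omega)\le E$, I would apply the filtered string topology framework developed earlier in the paper. The geometric principle is that, for a non-orientable $q\in \mathscr{L}$, parametric transport of a symplectic ball along $q$ would force an orientation-reversing monodromy on the Lagrangian framing of $T(T^{*}\Sigma)|_{\Sigma}$; concatenating $q$ with its reverse $\bar q$ kills this monodromy at total action cost $\ell_{\Omega}(q)+\ell_{\Omega}(\bar q)$. Working with $\Z/2$-coefficients throughout (to handle the non-orientability cleanly), I would associate to $q$ a filtered class $\alpha_{q}\in H_{*}(\mathcal{L}\Sigma;\Z/2)$ at action level $\ell_{\Omega}(q)+\ell_{\Omega}(\bar q)$, built from the pair $(q,\bar q)$ by combining the BV-operator $\Delta$ with the loop-reversal involution on $\mathcal{L}\Sigma$. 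Via the isomorphism between Floer cohomology of $\Omega$ and the loop homology of $\Sigma$, the image of $\alpha_{q}$ in the Floer persistence module of $\Omega$ then yields the desired Floer-theoretic obstruction to a parametric ball of capacity exceeding $\ell_{\Omega}(q)+\ell_{\Omega}(\bar q)$ extending $[\Sigma]$.

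The main obstacle, I expect, is threefold: (i) constructing $\alpha_{q}$ so that its action filtration is exactly $\ell_{\Omega}(q)+\ell_{\Omega}(\bar q)$, rather than a larger quantity contributed by the concatenation or by stringy corrections; (ii) showing that the pairing of $\alpha_{q}$ against $[\Sigma]$ through $\Delta$ and the string coproduct is non-trivial, with the non-triviality ultimately tracing back precisely to the non-orientability of $q$ (this is where the $\Z/2$-twisted fundamental class $[\Sigma]\in H_{2}(\Sigma;\Z/2)$ enters); and (iii) correctly accounting for the orientation local system on $\mathcal{L}\Sigma$, which is twisted along exactly the loops in $\mathscr{L}$, so that the Floer cohomology with appropriate local coefficients matches the string-topology picture. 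Once this is set up analogously to the arguments for Theorems \ref{theorem:OB-1}--\ref{theorem:prod-torus}, passing to the infimum over $q\in \mathscr{L}$ yields the bound by $E$.
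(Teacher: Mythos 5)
Your lower bound argument correctly reduces to finding a symplectic trivialization of $T(T^{*}\Sigma)|_{\Sigma}$, but then hedges: ``otherwise the lower bound is vacuous.'' The theorem asserts $0<\mathrm{Gr}([\Sigma],\Omega)$ unconditionally, so you still owe the observation (supplied in the remark ending \S\ref{sec:non-orient-surf}) that this trivialization \emph{always} exists: $T(T^{*}\Sigma)\to\Sigma$ admits a nowhere-vanishing section, so it splits as $E\oplus\C$, and since $c_{1}(T(T^{*}\Sigma))=0$, the complementary line bundle $E$ is trivial. Without this step your lower bound does not close.

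For the upper bound, your list of ingredients ($q$, $\bar q$, $\Delta$, $\Z/2$-coefficients, the filtered framework) is sensible, but the mechanism does not match what the paper actually uses. Theorem~\ref{theorem:corollary} asks for an identity $\mathfrak{i}(\beta)=\Delta(\alpha_{1})\ast\Delta(\alpha_{2})\ast\cdots$ in $H(\Lambda_{c})$, built from the Chas--Sullivan \emph{product}; there is no coproduct, no ``pairing against $[\Sigma]$'' step, and no local-coefficients subtlety (everything is over $\Z/2\Z$ by fiat, so your obstacle (iii) is moot). The concrete computation you do not name is $\Delta(q)\ast\Delta(\bar q)=\mathrm{pt}=\mathfrak{i}(T^{*}\Sigma_{\mathrm{pt}})$ in $H(\Lambda_{E})$. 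To see it: both $\Delta(q)$ and $\Delta(\bar q)$ evaluate at $0$ to a parametrization of the same curve $q(\R/\Z)\subset\Sigma$, and the mod-$2$ intersection number of that curve with a generic small perturbation of itself is exactly $1$ precisely when $q^{*}T\Sigma$ is non-orientable (the normal line bundle over $q$ is then non-orientable, forcing any push-off to cross $q$ an odd number of times); the resulting concatenated loop $q\cdot\bar q$ then contracts within $\Lambda_{E}$. This is where non-orientability enters; the intersection of $[\Sigma]$ with the cotangent fiber $T^{*}\Sigma_{\mathrm{pt}}$ is trivially $1$ and has nothing to do with orientability. With this identity in hand, Theorem~\ref{theorem:corollary} gives the bound directly.
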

This theorem is interesting because it applies to surfaces with negative curvature, where it is generally hard to bound symplectic capacities. It is also interesting to ponder the role of non-orientability. For instance, since certain non-orientable surfaces $\Sigma$ embed as Lagrangians in $\C^{2}$, the Hofer-Zehnder capacity of any disk cotangent bundle over such $\Sigma$ is finite. However, it seems to be an open question whether the same fact is true for orientable surfaces of genus at least two (see, e.g., \cite[pp.\,105]{bimmermann-arch-math-2024}).

\subsection{Floer cohomology persistence module}
\label{sec:floer-cohom-pers-1}

Let $W$ be the completion of $\bar{\Omega}$ obtained by attaching the symplectization end $\bd \Omega\times [1,\infty)$ to $\bar{\Omega}$ in such a way that $Z$ extends to $r\bd_{r}$, where $r$ is the projection to $[1,\infty)$, and such that the extension of $Z$ is a Liouville vector field.

Let $\mathscr{H}$ be the space of all Hamiltonian functions $H$ such that $H=cr$ holds outside of a compact set, for some $c\in \R$.

Fix an almost complex structure $J$ which is invariant under the flow by $Z$ in the end. For a time-dependent family of Hamiltonian functions $H_{t}\in \mathscr{H}$, whose flow $\varphi_{t}$ has a non-degenerate time-1 map, and for which the Floer cohomology chain complex $\mathrm{CF}(H_{t})$ is well-defined. Here the Floer complex is the $\Z/2\Z$ vector space generated by the $1$-periodic orbits of $\varphi_{t}$, and the differential uses the almost complex structure $J$. Denote by $\mathrm{HF}(H_{t},J)$ the resulting homology.

Using continuation maps, the resulting homology group depends only on the \emph{slope}, namely the average value of $H_{t}/r$ in the end; we denote by $V_{c}$ the homology group for a system with slope $c$. It is well-known that continuation maps endow $c\mapsto V_{c}$ with the structure of a persistence module, namely, a functor from $(\R,\le)\to \mathrm{Vect}(\Z/2\Z)$. Precise details of this construction are recalled in \S\ref{sec:floer-cohom-pers-defin}. The colimit of $V_{c}$ as $c\to\infty$ is the so-called \emph{symplectic cohomology} $\mathrm{SH}(W)$.

This persistence module $V_{c}$ has three structures relevant to our paper:
\begin{enumerate}
\item the product structure $\ast:V_{c_{1}}\otimes V_{c_{2}}\to V_{c_{1}+c_{2}}$ induced by the Floer cohomology pair-of-pants operation,
\item the BV-operator $\Delta:V_{c}\to V_{c}$ induced by counting $\R/\Z$-families of Floer cylinders,
\item the PSS morphism $\mathrm{PSS}:H^{*}(W)\to V_{c}$ for $c>0$;
\end{enumerate}
we refer the reader to \cite{abbondandolo-schwarz-GT-2010,abouzaid-EMS-2015} for background on these structures.

Our main result is the following:
\begin{theorem}\label{theorem:main-floer}
  Suppose that $\zeta_{i}\in V_{c_{i}}$, $i=1,2$, with $c_{i}>0$, are such that:
  \begin{equation*}
    \mathrm{PSS}(\beta)=\Delta(\zeta_{1})\ast \zeta_{2}\text{ holds in }V_{c_{1}+c_{2}}.
  \end{equation*}
  If a map $f:N\to \Omega$ has a non-zero mod 2 homological intersection number with the cohomology class $\beta\in H^{*}(W)$, then $\mathrm{Gr}(f,\Omega)\le c_{1}+c_{2}$.
\end{theorem}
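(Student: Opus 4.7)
We argue by contradiction. Suppose there exists a parametric lift $F\colon N\times B(a)\to\Omega$ of $f$ with $a>c_{1}+\dots+c_{k}$; we will produce a parametric evaluation $\mu_{f}$ on the persistence module with two incompatible properties, namely $\mu_{f}(\mathrm{PSS}(\beta))=\ip{\beta,f_{*}[N]}\ne 0\pmod 2$ on the one hand, and $\mu_{f}(\Delta(\zeta_{1})\ast\dots\ast\Delta(\zeta_{k})\ast\zeta_{k+1})=0$ on the other. The hypothesized chain-level identity then forces a contradiction.

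The first step is to define $\mu_{f}$. Fix a Hamiltonian $K$ of slope slightly larger than $\sum c_{i}$. For each non-degenerate $K$-orbit $\gamma$, let $\mu_{f}(\gamma)$ be the mod-2 count of the $N$-parametric moduli of holomorphic planes $u\colon\C\to W$ satisfying $u(\infty)=\gamma$ and $u(0)=f(\eta)$ for some $\eta\in N$. Standard parametric compactness/transversality yields that $\mu_{f}$ is a chain map descending to a well-defined, continuation-invariant morphism $V_{\sum c_{i}+\delta}\to\Z/2\Z$. Identifying the parametric plane count for a PSS cocycle with geometric intersection at the chain level gives
\begin{equation*}
  \mu_{f}(\mathrm{PSS}(\beta))=\ip{\beta,f_{*}[N]}\pmod 2,
\end{equation*}
which is non-zero by hypothesis.

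The second and crucial step is to show $\mu_{f}$ vanishes on the iterated BV-product. A chain-level representative of $\Delta(\zeta_{1})\ast\dots\ast\Delta(\zeta_{k})\ast\zeta_{k+1}$ is given by counts of pair-of-pants trees with $k+1$ input orbits, where the BV operator imparts a free $S^{1}$-rotation parameter on each of the first $k$ inputs. Concatenating such a tree with the plane defining $\mu_{f}$ yields an $N$-parametric moduli of holomorphic configurations carrying an $(S^{1})^{k}$ action, with a marked point mapped to $f(\eta)$ and total input action at most $\sum c_{i}+O(\delta)<a$. The family of balls $F(\eta,B(a))$ provides the key geometric input: a monotonicity-based confinement argument, exploiting the area bound $a>\sum c_{i}$, shows that each such configuration can be deformed into $F(\eta,B(a))$. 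Inside the ball the BV-induced circle actions become genuine rotations with no fixed points on the relevant moduli, so solutions come in $(S^{1})^{k}$-orbits and the count vanishes.

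The main obstacle is carrying out step two rigorously: constructing the parametric moduli space, proving an area/monotonicity-based confinement lemma that forces configurations into the family $F(\eta,B(a))$ when $a>\sum c_{i}$, and verifying that the BV-induced $(S^{1})^{k}$-action on the confined moduli is free. Once these are in place, combining the two steps produces the required contradiction. The remaining ingredients---the construction of $\mu_{f}$, its continuation invariance, the PSS intersection identity, and chain-level compatibility with the pair-of-pants product---are standard parametric Floer theory.
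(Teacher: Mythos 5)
Your first step — defining an evaluation $\mu_{f}$ via an $N$-parametric count of PSS-type planes with $u(0)=f(\eta)$ and verifying $\mu_{f}(\mathrm{PSS}(\beta))=\ip{\beta,f_{*}[N]}$ — matches the paper's construction of the evaluation map $\mathfrak{e}$ in \S\ref{sec:evaluation-maps-ball}, and in particular Proposition \ref{prop:concrete-consequence} and Lemma \ref{lemma:glued-evaluation-PSS}. So far so good.

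The critical second step does not work as you describe, and the gap is real. You propose (a) a ``monotonicity-based confinement'' argument forcing the contributing configurations into the ball $F(\eta,B(a))$ because their energy is less than $a$, and (b) a vanishing of the count because the BV operations induce a free $(S^{1})^{k}$-action inside the ball. Both fail. Gromov monotonicity gives a \emph{lower} bound on the energy of a holomorphic curve exiting a ball, not confinement of a low-energy curve to a ball: a configuration whose total energy is $<a$ is in no way constrained to stay inside $F(\eta,B(a))$; it can leave and return, or leave entirely. Likewise, the BV operator does not produce an honest free circle action on the relevant moduli space: the $\R/\Z$-parameter is already part of the moduli problem (the operator counts rigid elements of a moduli of pairs $(\theta,u)$), so there is no free $S^{1}$-action killing the count, and inside a standard ball the rotation action anyway fixes the center, which is exactly the orbit you need to see.

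The paper's actual proof is built on a completely different mechanism. The family of balls is used to construct a family of Hamiltonians $H_{c,\delta,\epsilon,\eta}$ with an isolated minimum at the center of $B_{\eta}$, whose action is approximately $c-\epsilon a<0$ when $a>c$ (Lemma \ref{lemma:construction-of-D}). The key is not confinement but an \emph{action-filtration estimate} in a family Floer cohomology over $N$ (\S\ref{sec:family-floer-cohom}). Because the asymptotic data varies over $N$, the naive filtration doesn't work; one must take the pseudogradient on $N$ to have arbitrarily short flow lines (\S\ref{sec:spec-hamilt-assoc}), which keeps the curvature/error terms under control. The vanishing on the image of $\Delta$ is then proved (Proposition \ref{proposition:BV-family}) not by a free circle action but by a dimension count: a BV-cylinder joining two central orbits would be null-homologous (its energy is forced to be tiny), so by a Chern-class index formula it cannot be rigid. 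The product is then shown to respect the filtration up to a small error (Lemma \ref{lemma:popff-filtration}), and the contradiction is a numerical inequality: the iterated product has filtration level $\geq -\mathrm{const}\cdot\varepsilon$, but its image under $\mathfrak{e}_{\mathrm{FF}}$ is non-zero only on chains containing the central orbit, whose filtration level is approximately $c-a<0$. Without the family filtration machinery there is no way to make your heuristic ``low energy implies small action of the output'' rigorous; that is the idea your plan is missing.
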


The idea in the proof of Theorem \ref{theorem:main-floer} is to define a sort of \emph{evaluation map} $V_{c}\to \Z/2\Z$ using the family of ball embeddings $N\times B(a)\to \Omega$. The map is defined and non-trivial on $\mathrm{PSS}(\beta)$ provided that the slope $c$ is smaller than the capacity $a$. Arguing using a special action filtration defined using family Floer cohomology (in the sense of \cite{hutchings-agt-2008}), we will show that the non-triviality of this map obstructs the existence of a solution to the equation appearing in Theorem \ref{theorem:main-floer}. Thus, if the equation can be solved, the slope $c$ must be larger than the capacity $a$, as desired.

The details of this argument are given in \S\ref{sec:family-floer-cohom}.

\subsubsection{Relative Hofer-Zehnder capacities}
\label{sec:relat-hofer-zehnd}

In fact, the argument we give easily generalizes to the following statement concerning relative Hofer-Zehnder capacities. Let us define a \emph{Hofer-Zehnder admissible function} $K:\Omega\to \R$ with \emph{depth} $A$ and \emph{well} $U$ (a non-empty open set) to be a function such that:
\begin{enumerate}
\item $K$ is compactly supported in $\Omega$,
\item $-A=\min K$, and the interior of $\set{K=-A}$ is $U$,
\item $X_{K}$ has no nonconstant 1-periodic orbits.
\end{enumerate}
Then we have:
\begin{theorem}\label{theorem:hofer-zehnder}
  Suppose that the hypotheses of Theorem \ref{theorem:main-floer} are satisfied for slope $c>0$ and map $f:N\to \Omega$. For each Hofer-Zehnder admissible function $K$ whose well $U$ contains the image of $f$, we have:
  \begin{equation*}
    A + \mathrm{Gr}(f,U)\le c,
  \end{equation*}
  where $A$ is the depth of $K$. In particular, the so-called relative Hofer Zehnder capacity of $f(N)\subset \Omega$ is bounded from above by $c$.
\end{theorem}
This is proved in \S\ref{sec:proof-theorem-hofer-zehnder}. Here we recall \emph{the relative Hofer Zehnder capacity} of a pair $Y\subset X$ (where $X$ is a symplectic manifold) is simply the deepest depth $A$ one can achieve amongst Hofer-Zehnder admissible functions $K:X\to \R$ subject to the constraint that the well $U$ contains $Y$.

\subsubsection{Dilation classes}
\label{sec:dilation-classes}

The equation appearing in Theorem \ref{theorem:main-floer} can be considered as a generalization of the \emph{dilation class equation} introduced in \cite{seidel-solomon-GAFA-2012}. Recall that a class $\zeta$ is called a \emph{dilation class} if $\mathrm{PSS}(1)=\Delta \zeta$. Thus our theorem implies:
\begin{corollary}
The existence of a dilation class in the Floer cohomology group $V_{c}$ bounds the usual Gromov width by $c$.
\end{corollary}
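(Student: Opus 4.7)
The plan is to apply Theorem \ref{theorem:main-floer} with the shortest possible product expression that realizes the dilation equation. By definition, a dilation class is $\zeta \in V_c$ satisfying $\mathrm{PSS}(1) = \Delta(\zeta)$, where $1 \in H^0(W)$. The classical Gromov width of $\Omega$ is exactly $\mathrm{Gr}([\mathrm{pt}],\Omega)$, so I would take $f = [\mathrm{pt}]$ and try to match the hypotheses of the main theorem with $\beta = 1$.

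I would set $k = 1$, $\zeta_1 = \zeta$ (with $c_1 = c > 0$), and choose $\zeta_{2} \in V_0$ to be the unit $\mathrm{PSS}(1) \in V_0$ obtained by passing through the inverse-limit structure. The hypothesis of Theorem \ref{theorem:main-floer} then reads $\mathrm{PSS}(1) = \Delta(\zeta) \ast \mathrm{PSS}(1)$ in $V_c$. Since $\mathrm{PSS}(1) \in V_0$ is a two-sided unit for the product $V_{c_1} \otimes V_{c_2} \to V_{c_1+c_2}$, the right-hand side simplifies to $\Delta(\zeta)$, and the dilation equation $\mathrm{PSS}(1) = \Delta(\zeta)$ supplies the required identity. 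The mod $2$ intersection number of $f = [\mathrm{pt}]$ with $\beta = 1 \in H^0(W)$ is just $1$, so Theorem \ref{theorem:main-floer} directly yields $\mathrm{Gr}([\mathrm{pt}],\Omega) \le c$, which is the content of the corollary.

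The only substantive step, and the one I expect to be the main obstacle, is verifying that $\mathrm{PSS}(1) \in V_0$ really is a two-sided unit for the pair-of-pants product $V_c \otimes V_0 \to V_c$, and that $\mathrm{PSS}$ is compatible with the inverse-limit construction that defines $V_0$. Both facts are standard, and I would handle them by appealing to the precise constructions recalled in \S\ref{sec:floer-cohom-pers-defin}; no other nontrivial input is needed. This is not surprising, since Theorem \ref{theorem:main-floer} is set up precisely as a multi-factor, parametric generalization of the Seidel--Solomon dilation-class formalism, with the dilation-class case recovered by the trivial choice $k = 1$ and $\beta = 1$.
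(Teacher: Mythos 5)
Your reduction is exactly what the paper intends: with $k=1$, $\beta = 1$, $\zeta_1 = \zeta$, and $\zeta_{k+1} = \mathrm{PSS}(1)$ the unit, the hypothesis of Theorem~\ref{theorem:main-floer} specializes precisely to the dilation equation, and the trivial intersection pairing of a point with the fundamental class gives the conclusion. The paper gives no further proof of the corollary (it follows the words ``Thus our theorem implies''), so your filling-in of the unitality of $\mathrm{PSS}(1)$ in $V_0$ is the correct and standard supporting step.
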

This result is probably not so surprising to experts, as the slopes $c$ for which dilation classes appear are already used in quantitative symplectic geometry; see, e.g., \cite{seidel-inventiones-2014, zhou-topology-2021}.

The result implied by Theorem \ref{theorem:ellipsoid-2} on the Gromov width on the cotangent bundle of the round $S^{3}$ then follows from the existence of a dilation class in the Floer cohomology group of the appropriate slope, and the above corollary. Indeed, in Theorem \ref{theorem:ellipsoid-2}, our proof essentially shows that the dilation class equation $\mathrm{PSS}(1)=\Delta\zeta$ can be solved in the appropriate group $V_{c}$.

It is interesting to compare the situation with our results on tori, as the cotangent bundles of tori (or more generally $K(\pi,1)$ spaces) never have dilation classes. However, the more general equation in Theorem \ref{theorem:main-floer} does admit solutions (as we exploit in Theorem \ref{theorem:prod-torus}).

There are other classes of manifolds which are known to admit dilation classes. For instance, \cite{seidel-solomon-GAFA-2012} show that the total space of certain Lefschetz fibrations admit dilation classes. To apply the above corollary one would need present the total space as a completion of a Liouville domain, and then estimate the precise slopes for which the dilation class equation can be solved in total spaces of Lefshetz fibrations; we do not analyze this question in this paper.

\subsubsection{Subcritical handle attachment}
\label{sec:subcrit-handle-attachment}

Another interesting class of Liouville domains where the equation in Theorem \ref{theorem:main-floer} can be solved are domains $\Omega$ obtained by a subcritical handle attachment to a Liouville domain $\Omega_{0}$. Recall that $\Omega$ is obtained by attaching a handle along an isotropic sphere $S^{k-1}$ in the contact boundary $\bd\Omega_{0}$, and if $k<n$ we say the handle attachment is subcritical. For instance, attaching a $1$-handle to a $4$-dimensional Liouville domain is an example of a subcritical handle attachment. For more details on handle attachments see \cite{weinstein-hokkaido-91, cieliebak-JEMS-2002, fauck-IJM-2020}.

If the attaching sphere is null-homologous in $\Omega_{0}$, then the \emph{cocore disk} (which is a properly embedded open disk $D^{2n-k}\to \Omega$ which intersects the attaching disk in a single point) defines a cohomology class in $\Omega$. It is known that:
\begin{proposition}
  The cohomology class of the cocore disk $\beta$ satisfies:
  \begin{equation*}
    \mathrm{PSS}(\beta)=0
  \end{equation*}
  in $V_{c}$ for $c$ sufficiently large.
\end{proposition}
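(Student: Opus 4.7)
The plan is to combine the naturality of the PSS morphism under Viterbo transfer with Cieliebak's theorem on the invariance of symplectic cohomology under subcritical handle attachment.

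First, I would realize $\Omega_{0}\subset \Omega$ as a Liouville subdomain and, for slopes $c$ outside the action spectrum of $\bd\Omega_{0}$, construct the Viterbo transfer $t_{c}:V_{c}(\Omega)\to V_{c}(\Omega_{0})$ together with a commuting square
\begin{equation*}
  t_{c}\circ \mathrm{PSS}_{\Omega}=\mathrm{PSS}_{\Omega_{0}}\circ i^{*},
\end{equation*}
where $i^{*}:H^{*}(\Omega)\to H^{*}(\Omega_{0})$ is the restriction along the inclusion. Compatibility follows from a standard interpolation argument using a family of admissible Hamiltonians of slope $c$ that concentrate their slope across $\bd\Omega_{0}$ in the limit.

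Second, I would verify that $i^{*}\beta=0$. By Poincar\'e--Lefschetz duality, $\beta\in H^{k}(\Omega)$ is dual to the cocore disk $D^{2n-k}$, which is properly embedded inside the attached handle with boundary on the belt sphere. The hypothesis that the attaching sphere is nullhomologous in $\Omega_{0}$ lets me choose a cocycle representative of $\beta$ supported in an arbitrarily small neighborhood of the cocore, in particular disjoint from $\Omega_{0}$; hence $i^{*}\beta=0$, and the commuting square then yields $t_{c}\,\mathrm{PSS}_{\Omega}(\beta)=0$.

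Finally, I would invoke Cieliebak's theorem: the new Reeb orbits on $\bd\Omega$ produced by a subcritical handle attachment appear in cancelling Morse--Bott families in the Floer differential, and the transfer $t_{c}$ is an isomorphism as soon as $c$ exceeds a threshold $c_{0}$ determined by the actions of these orbits. For such $c$ we conclude $\mathrm{PSS}_{\Omega}(\beta)\in \ker t_{c}=0$. The main obstacle is upgrading Cieliebak's invariance (usually phrased at the level of the colimit $\mathrm{SH}=\varinjlim V_{c}$) to an isomorphism at finite slope; this is handled by making Cieliebak's cancellation argument explicit with a split model Hamiltonian adapted to the handle decomposition, so that the newly created orbit families cancel in pairs in the Floer complex and $c_{0}$ is read off from their lengths.
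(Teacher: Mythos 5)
Your core strategy matches the paper's: use the Viterbo restriction map, the commuting square with PSS and $i^{*}$, and the vanishing $i^{*}\beta=0$. The paper's proof is exactly this, quoting \cite{cieliebak-JEMS-2002,fauck-IJM-2020} for the isomorphism and noting that $i^{*}$ kills the cocore class.

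However, the last step of your proposal contains a gap that is also an unnecessary detour. You try to upgrade Cieliebak's invariance of $\mathrm{SH}$ to an isomorphism $t_{c}:V_{c}(\Omega)\to V_{c}(\Omega_{0})$ at \emph{finite} slope, claiming $t_{c}$ is an isomorphism once $c$ exceeds some threshold, and acknowledge this as "the main obstacle." But this finite-slope isomorphism is neither what Cieliebak proves nor something you should expect in general: $V_{c}(\Omega)$ and $V_{c}(\Omega_{0})$ are built from the Reeb dynamics of two \emph{different} hypersurfaces $\bd\Omega$ and $\bd\Omega_{0}$, with unrelated period spectra, and there is no natural way to compare "slope $c$" on the two sides without building in further model-dependent choices. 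Making the split Morse--Bott cancellation produce a genuine chain-level isomorphism for a fixed slope is a much stronger statement than the colimit isomorphism, and it is not needed.

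The gap is avoided by the following elementary observation, which is what the paper uses implicitly. Since $\mathrm{SH}(W)=\varinjlim_{c}V_{c}$ is a filtered colimit of $\Z/2\Z$-vector spaces, a single element of some $V_{c_{0}}$ maps to zero in the colimit if and only if its image under the structure map $V_{c_{0}}\to V_{c}$ is zero for some finite $c\ge c_{0}$. So once you know $\mathrm{PSS}(\beta)=0$ in $\mathrm{SH}(W)$ (which follows from the commuting square \eqref{eq:viterbo_restriction} together with $i^{*}\beta=0$ and Cieliebak's isomorphism at the level of $\mathrm{SH}$), you immediately get $\mathrm{PSS}(\beta)=0$ in $V_{c}$ for $c$ sufficiently large, by naturality of $\mathrm{PSS}$ with respect to continuation maps. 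You should replace the finite-slope isomorphism claim with this colimit argument; the rest of your proposal is correct and matches the paper.
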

In particular, $\beta$ satisfies the equation in Theorem \ref{theorem:main-floer}.
\begin{proof}
  This follows from \cite{cieliebak-JEMS-2002} (see also \cite[Theorem 1.3]{fauck-IJM-2020}) which proves the so-called Viterbo restriction map from \cite{viterbo-GAFA-1999} is an isomorphism from the symplectic cohomology of $W_{0}$ to the symplectic cohomology of $W$ (the completions of $\Omega_{0}$ and $\Omega$, respectively).

  It is also known that the Viterbo restriction map ($\text{V.R.}$) commutes with PSS and the pullback map on cohomology associated to the inclusion $i:\Omega_{0}\to \Omega$:
  \begin{equation}\label{eq:viterbo_restriction}
    \begin{tikzcd}
      {H^{*}(W)}\arrow[d,"{\mathrm{PSS}}"]\arrow[r,"{i^*}"] &{H^{*}(W_{0})}\arrow[d,"{\mathrm{PSS}}"]\\
      {\mathrm{SH}(W)}\arrow[r,"\text{V.R.}"] &{\mathrm{SH}(W_{0})},
    \end{tikzcd}
  \end{equation}
  Since $i^{*}$ takes $\beta$ to $0$, and $\text{V.R.}$ is an isomorphism, it follows that $\mathrm{PSS}(\beta)=0$ in $\mathrm{SH}(W)$, as desired.
\end{proof}

Theorem \ref{theorem:main-floer} then bounds the parametric Gromov width of any map $f:N\to \Omega$ which has a non-zero intersection number with the cocore disk. This brings us close to the original camel problem Proposition \ref{proposition:camel}; indeed, in one formulation of the camel problem, the relevant space is the domain obtained by attaching a $1$-handle to a ball, see \cite[Figure 1.3]{mcduffsalamon-alt}.

To apply our methods to obtain explicit bounds on the parametric Gromov width, one would need to set up a careful model for the handle attachment $\Omega_{0}\to \Omega$, and determine at which slopes $c$ the equation $\mathrm{PSS}(\beta)=0$ can be solved in $V_{c}$. We do not perform such an analysis in this paper. We will instead recover Proposition \ref{proposition:camel} using a string topological computation in \S\ref{sec:proofs_for_examples}.

\subsection{Comparison between string topology and Floer cohomology}
\label{sec:comp-betw-string}

For our applications to fiberwise starshaped domains in cotangent bundles, we will use the well-established strategy of comparing string topology and Floer cohomology. The relationship between string topology and Floer cohomology of cotangent bundles is developed in \cite{viterbo-GAFA-1999,abbondandolo-schwarz-CPAM-2006,salamon-weber-GAFA-2006,abbondandolo-schwarz-GT-2010,abouzaid-EMS-2015}.

Our approach is inspired by Shelukhin's proof of the Viterbo conjecture for certain manifolds in \cite{shelukhin-GAFA-2022,shelukhin-invetiones-2022}. In particular, his approach in \cite{shelukhin-GAFA-2022} exploits the relationship between the string topology and symplectic cohomology for cotangent bundles of manifolds which are \emph{string point-invertible}; the notion of string point-invertibility is defined in terms the string bracket which measures the failure of the BV operator $\Delta$ to satisfy the Leibniz rule.

Another inspiration is Irie's bound on the Hofer-Zehnder capacity of disk cotangent bundles of manifolds admitting circle actions with non-contractible fibers \cite{irie-JEMS-2014}; his argument appeals to the aforementioned product structures.

Our framework for string topology is as follows. Let $\Omega$ be a fiberwise starshaped domain in $T^{*}M$. For $c>0$ and $k=0,1,\dots$, introduce $Z_{k}(\Lambda_{c})$ as the monoid of smooth maps $A:P\times \R/\Z\to M$ where:
\begin{enumerate}
\item $\dim P=k$,
\item $\ell_{\Omega}(A(x,-))<c$, for all $x\in P$, where $\ell_{\Omega}$ is the length function defined in \eqref{eq:length-function};
\end{enumerate}
here $A_{1}+A_{2}$ is the coproduct $(P_{1}\sqcup P_{2})\times \R/\Z\to M$.

A \emph{cobordism} between $A_{1},A_{2}\in Z_{k}(\Lambda_{c})$ is a cobordism $Q$ between $P_{1},P_{2}$ and a smooth map $C:Q\times \R/\Z\to M$ satisfying:
\begin{enumerate}
\item[($2'$)] $\ell_{\Omega}(C(q,-))<c$ for all $q\in Q$,
\end{enumerate}
and such that $C$ restricts to $A_{1},A_{2}$ on the boundary. Define $H_{k}(\Lambda_{c})$ be the quotient of $Z_{k}(\Lambda_{c})$ by the cobordism relation. The monoid structure on $Z_{k}(\Lambda_{c})$ induces on $H_{k}(\Lambda_{c})$ the structure of a vector space over $\Z/2\Z$. Write $Z(\Lambda_{c}),H(\Lambda_{c})$ for the direct sum of the graded pieces $Z_{k}(\Lambda_{c}),H_{k}(\Lambda_{c})$, respectively. If $A\in Z(\Lambda_{c})$, then we write its image $\alpha\in H(\Lambda_{c})$ using the symbol $\alpha=[A]$. One thinks of $H(\Lambda_{c})$ as a convenient proxy for the homology of the loop space.

The obvious inclusion morphisms $H(\Lambda_{c_{1}})\to H(\Lambda_{c_{2}})$ endow $c\mapsto H(\Lambda_{c})$ with the structure of a persistence module defined for $c>0$.

As in the Floer cohomology case, this persistence module has three natural structures:
\begin{enumerate}
\item the \emph{Chas-Sullivan product} $\ast:H(\Lambda_{c_{1}})\otimes H(\Lambda_{c_{2}})\to H(\Lambda_{c_{1}+c_{2}})$,
\item the \emph{BV-operator} $\Delta:H(\Lambda_{c})\to H(\Lambda_{c})$,
\item an \emph{inclusion of constant loops} morphism $\mathfrak{i}:H^{*}(W)\to H(\Lambda_{c})$ for $c>0$, which sends a degree $d$ cohomology class to an element in $H_{n-d}(\Lambda_{c})$.
\end{enumerate}
The technical result relating string topology and the Floer cohomology persistence module is:
\begin{theorem}\label{theorem:main-comparison}
  There is a morphism of persistence modules $\Theta_{c}:H(\Lambda_{c})\to V_{s}$ such that:
  \begin{enumerate}
  \item\label{product-compatible} $\ast\circ (\Theta_{c_{1}},\Theta_{c_{2}})=\Theta_{c_{1}+c_{2}}\circ \ast$ holds on $H(\Lambda_{c_{1}})\otimes H(\Lambda_{c_{2}})$,
  \item $\Delta\circ \Theta_{c}=\Theta_{c} \circ \Delta$,
  \item $\mathrm{PSS}=\Theta_{c}\circ \mathfrak{i}$.
  \end{enumerate}
\end{theorem}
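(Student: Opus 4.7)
The plan is to adapt the Abbondandolo--Schwarz/Abouzaid PSS-style comparison between free loop spaces and Floer cohomology of cotangent bundles (see \cite{abbondandolo-schwarz-CPAM-2006,abbondandolo-schwarz-GT-2010,abouzaid-EMS-2015}) to the parametric, filtered setting. First, I would fix an admissible $H \in \mathscr{H}$ of slope $s$ slightly above $c$ whose time-$1$ map is non-degenerate. Given a family $A : P \times \R/\Z \to M$ representing an element of $Z_{k}(\Lambda_{c})$, and a $1$-periodic orbit $\gamma$ of $\varphi_{t}$, I define a moduli space $\mathcal{M}(A,\gamma)$ of pairs $(x,u)$ with $x \in P$ and $u : (-\infty,0] \times \R/\Z \to W$ a Floer half-cylinder asymptotic to $\gamma$ at $-\infty$ whose outgoing boundary loop projects down to $A(x,\cdot)$ in $M$. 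For generic auxiliary data, $\mathcal{M}(A,\gamma)$ is cut out transversely of expected dimension $\dim P - \abs{\gamma}$, and counting isolated configurations mod $2$ defines
\begin{equation*}
  \Theta_{c}(A) = \sum_{\gamma} \#\mathcal{M}(A,\gamma)\cdot \gamma
\end{equation*}
at the cycle level.

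The crucial filtration input is an energy estimate: a standard computation using $\omega = \d\lambda$ and the mixed boundary condition shows that any $\gamma$ contributing to $\Theta_{c}(A)$ has Floer action at most $\ell_{\Omega}(A(x,-)) < c$, so the output represents a class in $V_{s}$ for any $s \ge c$, compatibly with the persistence structure. Analyzing the codimension-one boundary of $\mathcal{M}(A,\gamma)$ yields two types of limits: Floer breaking at $-\infty$, which shows $\Theta_{c}$ is a chain map, and breaking along $\bd P$ or along an internal family-cobordism, which shows cobordism invariance and therefore that $\Theta_{c}$ descends to $H(\Lambda_{c})$. Standard deformation arguments produce independence up to chain homotopy of $H$, $J$, and the perturbation system, giving a well-defined persistence morphism.

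For the three compatibilities, property (3) is immediate because when $A$ is a family of constant loops representing a Poincaré-dual cycle of $\beta \in H^{*}(W)$, the moduli space $\mathcal{M}(A,\gamma)$ degenerates to the classical PSS cap moduli space. Property (2) follows by noting that both BV-operators are induced by the natural $\R/\Z$-action rotating the domain coordinate, and $\Theta_{c}$ is equivariant for this action by construction. Property \ref{product-compatible} is the main obstacle: it requires a figure-eight interpolating moduli space, in the spirit of \cite{abbondandolo-schwarz-GT-2010}, which on one end degenerates to the Chas-Sullivan concatenation on $(P_{1}\times P_{2})\times \R/\Z$, and on the other to the pair-of-pants Floer operation on $V_{c_{1}}\otimes V_{c_{2}} \to V_{c_{1}+c_{2}}$. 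The hard part is implementing this interpolation parametrically while preserving the strict inequality $\ell_{\Omega} < c_{1}+c_{2}$ throughout the one-parameter family of domains, which requires a sharp energy computation exploiting that $\lambda$ pairs positively with the projected outgoing boundary loops; once this estimate is in place, each of (1)--(3) reduces to a codimension-one boundary count of the appropriate moduli space.
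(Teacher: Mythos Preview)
Your construction of $\Theta$ via half-cylinders with moving Lagrangian boundary conditions $u(0,t)\in T^{*}M_{A(x,t)}$, the a priori energy estimate controlling the output slope, and the arguments for (2) and (3) are all essentially the approach the paper takes (\S4.1--\S4.3). The paper is somewhat more careful: (2) is proved by literally gluing BV-data to $\Theta$-data and observing that a shift $\bar u(s,t)=u(s,t-\theta)$ identifies the glued moduli space with the one defining $\Theta(\Delta A)$; (3) is proved by a one-parameter neck-stretching between the $\Theta$-half-cylinder and the PSS plane. But your sketch captures these correctly.

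The gap is in your treatment of (1). You invoke ``a figure-eight interpolating moduli space in the spirit of \cite{abbondandolo-schwarz-GT-2010}'', but the paper explicitly explains why this does not go through here: the Abbondandolo--Schwarz argument for the product is not a direct moduli-space cobordism but an indirect homological-algebra argument (via the Morse complex of the energy functional), and it does not adapt to the bordism-of-loops model for $H(\Lambda_{c})$ used in this paper. Your last paragraph acknowledges the difficulty but does not supply the missing geometry.

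What the paper actually does for (1) is new. It introduces an auxiliary cycle $\Pi(D)\in\mathrm{CF}(H_{\infty,t},J)$ attached to a pair of ``shadowing disks'' $D=(D_{0},D_{1})$ removed from $\C$, with moving fiber boundary conditions along $\partial D_{i}$ governed by $A_{i}$. Sending the disks to the punctures $\pm i$ gives $\Pi(D)=\Theta(A_{0})\ast\Theta(A_{1})$ by ordinary Floer gluing (Lemma~\ref{lemma:Pi-is-POP}, relatively easy). Letting the disks collide so the domain develops a thin neck gives $\Pi(D)=\Theta(A_{\infty})$ (Lemma~\ref{lemma:Pi-is-CS}); this limit is an \emph{adiabatic} gluing problem in the sense of Fukaya--Oh, Ekholm, and Oh--Zhu, because in the limit the neck does not carry a Floer breaking but rather degenerates to a gradient-flow segment joining two boundary marked points, and the fiber-product constraint $A_{0}(x_{0},0)=A_{1}(x_{1},0)$ appears only in the limit. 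The pregluing must be done with a holomorphic strip whose slope encodes the transverse displacement of the two fibers, and the compactness/surjectivity analysis (\S4.4.5--\S4.4.8) is the technical heart of the proof. Your proposal does not anticipate this adiabatic phenomenon, and without it the ``one end degenerates to Chas--Sullivan'' claim has no mechanism.
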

The proof of this theorem occupies \S\ref{sec:string-topol-floer-cohom}. Most of the arguments are similar to those in \cite{abbondandolo-schwarz-CPAM-2006,abbondandolo-schwarz-GT-2010,abouzaid-EMS-2015}, with small modifications due to the fact we work with bordism classes of loops (rather than chains in the Morse homology associated to an energy functional). However, working directly with bordism classes makes the existing proof that the product structures are identified difficult to implement (the existing proof is not a direct argument, and instead is based on a homological algebra argument). For this reason, we provide a new proof that the product structures are identified. Interestingly enough, this leads to an \emph{adiabatic gluing problem}, similar to those considered in \cite{fukaya-oh-AJM-1997,ekholm-GT-2007,oh-zhu-JSG-2011}. This argument is given in \S\ref{sec:product-structures}.

Combining Theorem \ref{theorem:main-floer} and Theorem \ref{theorem:main-comparison} yields the following upper bound on the relative Gromov width in terms of string topology:
\begin{theorem}\label{theorem:corollary}
  Suppose there are classes $\alpha_{i}\in H(\Lambda_{c_{i}})$, $i=1,2$, with $c_{i}>0$, such that:
  \begin{equation*}
    \mathfrak{i}(\beta)=\Delta(\alpha_{1})\ast\alpha_{2}\text{ holds in }H(\Lambda_{c_{1}+c_{2}}).
  \end{equation*}
  If a map $f:N\to \Omega$ has a non-zero mod 2 homological intersection number with the cohomology class $\beta\in H^{*}(W)$, then $\mathrm{Gr}(f,\Omega)\le c_{1}+c_{2}$.\hfill$\square$
\end{theorem}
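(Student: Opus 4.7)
The plan is to reduce Theorem \ref{theorem:corollary} to Theorem \ref{theorem:main-floer} by pushing the string-topological equation across the persistence module morphism $\Theta_c$ supplied by Theorem \ref{theorem:main-comparison}. Concretely, I would set $\zeta_i := \Theta_{c_i}(\alpha_i) \in V_{c_i}$ for $i=1,\dots,k$, and define $\zeta_{k+1} := \mathrm{PSS}(\alpha_{k+1})$, viewed as an element of $V_0$ (i.e.\ the inverse limit $\varprojlim_{c>0} V_c$) by the PSS compatibility $\mathrm{PSS} = \Theta_c\circ \mathfrak{i}$ valid for every $c>0$.

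Next, I would apply $\Theta_{c_1+\dots+c_k}$ to both sides of the hypothesized identity
\begin{equation*}
\mathfrak{i}(\beta) = \Delta(\alpha_1)\ast\dots\ast \Delta(\alpha_k)\ast \mathfrak{i}(\alpha_{k+1})
\end{equation*}
in $H(\Lambda_{c_1+\dots+c_k})$. Item (3) of Theorem \ref{theorem:main-comparison} transforms the left hand side into $\mathrm{PSS}(\beta)\in V_{c_1+\dots+c_k}$. For the right hand side, item (1) tells me $\Theta$ intertwines the Chas--Sullivan and pair-of-pants products, and item (2) tells me it intertwines the two BV operators, so the product and $\Delta$ can be pushed inside one factor at a time to give $\Delta(\zeta_1)\ast\dots\ast\Delta(\zeta_k)\ast\zeta_{k+1}$. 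One obtains exactly the equation
\begin{equation*}
\mathrm{PSS}(\beta) = \Delta(\zeta_1)\ast\dots\ast \Delta(\zeta_k)\ast \zeta_{k+1}\quad\text{in }V_{c_1+\dots+c_k},
\end{equation*}
which is the hypothesis of Theorem \ref{theorem:main-floer}.

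Finally, the hypothesis on $f\colon N\to \Omega$ having a non-zero mod 2 intersection with $\beta$ is preserved verbatim, so Theorem \ref{theorem:main-floer} yields $\mathrm{Gr}(f,\Omega)\le c_1+\dots+c_k$. There is essentially no obstacle here: the statement is formally a corollary of Theorems \ref{theorem:main-floer} and \ref{theorem:main-comparison}. The only mild subtlety worth verifying is the handling of the ``slope zero'' factor $\zeta_{k+1}$, namely that $\Theta_c\circ \mathfrak{i}(\alpha_{k+1})$ is compatible under the continuation maps $V_c\to V_{c'}$ so that it defines a bona fide element of the inverse limit $V_0$ appearing in Theorem \ref{theorem:main-floer}; this follows from $\mathrm{PSS} = \Theta_c\circ \mathfrak{i}$ holding for all $c>0$ together with the compatibility of PSS with continuation maps, already built into the construction of $V_0$ discussed in the remark after Theorem \ref{theorem:main-floer}.
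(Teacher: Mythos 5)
Your proposal is correct and is exactly the argument the paper intends: Theorem \ref{theorem:corollary} is stated with a $\square$ immediately after because it is a formal combination of Theorem \ref{theorem:main-floer} and Theorem \ref{theorem:main-comparison}, and your write-up simply fills in the transport via $\Theta$ using items (1)--(3), including the correct handling of the slope-zero factor $\zeta_{k+1}=\mathrm{PSS}(\alpha_{k+1})\in V_{0}$.
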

This corollary will be the result used in the demonstration of our examples. One of course has a similar corollary concerning the relative Hofer-Zehnder capacity by combining Theorem \ref{theorem:main-comparison} with Theorem \ref{theorem:hofer-zehnder}.

\subsection{String topology in non-trivial free homotopy classes}
\label{sec:string-topology-non}

In the case when there is sufficiently rich string topology in non-trivial free homotopy classes, one can establish bounds on the Gromov width without appealing to the BV-operator. For instance, a main result of \cite{irie-JEMS-2014} uses the existence of two classes $\alpha_{i}\in H(\Lambda_{c_{i}})$, $i=1,2$, such that:
\begin{enumerate}
\item $\alpha_{1}\ast \alpha_{2}=\mathfrak{i}([M])$,
\item $\alpha_{i}$ lies in a non-trivial free homotopy class,
\end{enumerate}
to prove the Hofer-Zehnder capacity of $\Omega$ is finite. Typically the way one obtains such classes $\alpha_{1},\alpha_{2}$ is via $\R/\Z$-actions whose orbits are non-contractible.

Such considerations yield the following variation of Theorem \ref{theorem:main-floer}:
\begin{theorem}\label{theorem:non-contractible}
  Suppose that $\zeta_{i}\in V_{c_{i}}(\Omega)$, $i=1,2$, with $c_{i}>0$, are such that:
  \begin{equation*}
    \mathrm{PSS}(\beta)=\zeta_{1}\ast \zeta_{2}\text{ holds in }V_{c_{1}+c_{2}},
  \end{equation*}
  and suppose that $\zeta_{i}$ can be represented by a cycle whose orbits are all non-contractible.

  If a map $f:N\to \Omega$ has a non-zero mod 2 homological intersection number with the cohomology class $\beta\in H^{*}(W)$, then $\mathrm{Gr}(f,\Omega)\le c_{1}+c_{2}$.
\end{theorem}
The proof is given in \S\ref{sec:proof-theorem-non-contractible}. There is also an obvious variation of Theorem~\ref{theorem:hofer-zehnder} based on Theorem \ref{theorem:non-contractible}, whose statement we omit.

In certain cases where orbits are non-contractible, one can prove bounds on the Gromov width using more classical ideas of Hamiltonian displacement. Indeed, we have:
\begin{proposition}
  Suppose $f_{t}$ is a smooth isotopy of a closed manifold $M$ such that $f_{0}=f_{1}=\id$ and such that the orbits $x\mapsto f_{t}(x)$ are non-contractible; denote by $\kappa$ the free homotopy class containing these orbits. There exists a constant $\mathrm{const}(f_{t},\Omega)$ such that, for any covering space $M'\to M$ to which loops in $\kappa$ do not lift, the following holds: if $K\subset \Omega$ is a compact set which does admit a lift to $K'\subset T^{*}M'$, then $K'$ has Hofer displacement energy at most $\mathrm{const}(f_{t},\Omega)$.
\end{proposition}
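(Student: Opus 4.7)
The plan is to construct a Hamiltonian isotopy of $T^{*}M'$ which displaces $K'$ using the cotangent lift of $f_{t}$. The central observation is that while $f_{1}=\id$ on $M$, the unique lift $\tilde f_{t}:M'\to M'$ of the isotopy with $\tilde f_{0}=\id$ ends at a nontrivial deck transformation $\tau$: for any $x'\in M'$, the orbit $t\mapsto \tilde f_{t}(x')$ is the path-lift of a loop in class $\kappa$ based at $\pi(x')$, and by hypothesis such loops do not lift to closed curves in $M'$, so $\tilde f_{1}(x')\ne x'$ for every $x'$.

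For the construction, let $X_{t}$ be the time-dependent vector field generating $f_{t}$, and let $H_{t}(q,p)=\ip{p,X_{t}(q)}$ be the fiberwise-linear Hamiltonian whose flow is the cotangent lift $\Phi_{t}$ of $f_{t}$. Set $U=\bigcup_{t\in[0,1]}\Phi_{t}(\Omega)\subset T^{*}M$, which is compact because $\Omega$ is, pick a bump function $\rho:T^{*}M\to[0,1]$ equal to $1$ on $U$ and with compact support, define $H^{\rho}_{t}=\rho H_{t}$, and set
\begin{equation*}
  \mathrm{const}(f_{t},\Omega):=\int_{0}^{1}\mathrm{osc}(H^{\rho}_{t})\,dt.
\end{equation*}
Pull $H^{\rho}_{t}$ back to a bounded function $\tilde H^{\rho}_{t}$ on $T^{*}M'$ along the covering map $\pi_{T}:T^{*}M'\to T^{*}M$; it has the same oscillation as $H^{\rho}_{t}$.

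I would then argue that the Hamiltonian flow $\tilde\Phi_{t}$ of $\tilde H^{\rho}_{t}$ displaces $K'$ at time $1$. Trajectories of $\tilde\Phi_{t}$ starting in $K'\subset\pi_{T}^{-1}(\Omega)$ project to trajectories of $\Phi_{t}$ which lie in $U$ by definition, so they stay inside the locus where the lifted bump function is identically $1$; on this locus $\tilde\Phi_{t}$ coincides with the cotangent lift of $\tilde f_{t}$, so $\tilde\Phi_{1}(K')$ equals the image of $K'$ under the cotangent lift of $\tau$. Since $\tau$ acts freely on the connected cover $M'$, so does its cotangent lift on $T^{*}M'$; combined with the fact that the projection $K'\to K$ is a bijection, any intersection point $x'\in\tilde\Phi_{1}(K')\cap K'$ would satisfy $x'=\tilde\Phi_{1}(y')$ with $y'\in K'$ projecting to the same point as $x'$, forcing $x'=y'$ and giving a fixed point of a free action, a contradiction.

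The main technical obstacle is that $\tilde H^{\rho}_{t}$ is not compactly supported on $T^{*}M'$, whereas the definition of Hofer displacement energy requires compactly supported Hamiltonians. To resolve this, pick a bump function $\tilde\chi:T^{*}M'\to[0,1]$ equal to $1$ on a compact neighborhood of the compact set $\bigcup_{t\in[0,1]}\tilde\Phi_{t}(K')$ and with compact support, and set $G_{t}=\tilde\chi\tilde H^{\rho}_{t}$. The flow of $G_{t}$ agrees with that of $\tilde H^{\rho}_{t}$ along all trajectories of $K'$, so $G_{t}$ still displaces $K'$ at time $1$; since $\tilde\chi\in[0,1]$, at every point one has $G_{t}(x)\in[\min(0,\tilde H^{\rho}_{t}(x)),\max(0,\tilde H^{\rho}_{t}(x))]\subset[\inf \tilde H^{\rho}_{t},\sup \tilde H^{\rho}_{t}]$, giving $\mathrm{osc}(G_{t})\le\mathrm{osc}(\tilde H^{\rho}_{t})=\mathrm{osc}(H^{\rho}_{t})$. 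Hence the Hofer displacement energy of $K'$ is at most $\int_{0}^{1}\mathrm{osc}(G_{t})\,dt\le\mathrm{const}(f_{t},\Omega)$.
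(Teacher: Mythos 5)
Your proof is correct and follows the same strategy as the paper's (quite terse) proof: lift the cotangent lift $\Phi_t$ of $f_t$ to $T^*M'$, observe that the time-1 map is the cotangent lift of a nontrivial deck transformation (hence fixed-point-free, displacing $K'$), and cut off to obtain a compactly supported Hamiltonian with Hofer length independent of $M'$. Your write-up carefully fills in the details that the paper leaves to the reader — the two-stage cut-off, the oscillation bound surviving the final cut-off $\tilde\chi$, and the precise freeness/bijection argument for displacement.
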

\emph{Remark}. This implies that the Gromov width of $\Omega$ is bounded by $\mathrm{const}(f_{t},\Omega)$. It also implies that Lagrangians which lift to $T^{*}M'$ bound holomorphic disks with symplectic area at most $\mathrm{const}(f_{t},\Omega)$.
\begin{proof}
  Let $\Phi_{t}$ the canonical lift of $f_{t}$, which lifts to an isotopy $\Phi_{t}'$ of $T^{*}M'$ such that $\Phi_{1}'$ is a canonical lift of a deck transformation of $M'\to M$. Then $\Phi_{1}'$ displaces any set $K'$ as in the statement. One can cut-off $\Phi_{t}'$ in a uniform way without changing how it acts on the inverse image of $\Omega$. The cut-off can be constructed so that it has a finite Hofer length. The desired result follows.
\end{proof}

\subsection{Conventions}
\label{sec:conventions}

\subsubsection{On the cohomology of $W$}
\label{sec:remark-on-cohomology}

For convenience, we define $H^{*}(W)$ to be the group of smooth proper maps $C:S\to W$, where $S$ is a smooth manifold, modulo proper cobordisms. This is graded by the codimension of $F$. This is a proxy for the cohomology of $W$. Prototypical examples are the fundamental class $\id:W\to W$ and, in the case when $W=T^{*}M$, the inclusion of the fiber $T^{*}M_{q}\to T^{*}M$ for some $q$.

\subsection{Acknowledgements}
\label{sec:acknowledgements}

First of all, the authors wish to thank Egor Shelukhin for many interesting discussions and helpful comments. The first author is thankful to Kai Cieliebak for useful suggestions. The second author also wishes to thank Georgios Dimitroglou-Rizell, Tobias Ekholm, and Yin Li, for key discussions during his visit to Uppsala University. The authors also wish to thank useful discussions with Alberto Abbondandolo and Johanna Bimmermann after the first version of this text was posted. F.B.\@ is supported by the Deutsche Forschungsgemeinschaft (DFG, German Research Foundation) – 517480394. D.C.\@ is supported by the ANR project CoSy.

\section{String topology}
\label{sec:string-topology}

In this section, we continue the discussion of string topology from where the introduction left off.

\subsection{Three structures on the string topology persistence module}
\label{sec:three-struct-string}

In order to apply Theorem \ref{theorem:corollary} it is necessary to explain the three structures $\ast,\Delta,\mathfrak{i}$.

\subsubsection{The Chas-Sullivan product}
\label{sec:chas-sull-prod}

The product $\alpha_{1}\ast \alpha_{2}\in H_{k_{1}+k_{2}-n}(\Lambda_{c_{1}+c_{2}})$ of two classes $\alpha_{i}\in H_{k_{i}}(\Lambda_{c_{i}})$, $i=1,2$, originally defined in \cite{chas-sullivan-arXiv-1999}, can be thought of as a combination of the intersection product and the Pontrjagin (concatenation) product.

It is defined as follows: write $\alpha_{i}=[A_{i}]$ where $A_{1},A_{2}$ are in \emph{general position}, which means that the \emph{evaluation-at-zero} maps: $$e_{i}:x\in P_{i}\mapsto A_{i}(x,0),$$ $i=0,1$, are transverse to one-another. Such representatives always exist. Define $P_{3}$ to be the transverse fiber product of these two maps:
\begin{equation*}
  \begin{tikzcd}
    {P_{3}}\arrow[d,"{}"]\arrow[r,"{}"] &{P_{2}}\arrow[d,"{e_{2}}"]\\
    {P_{1}}\arrow[r,"{e_{1}}"] &{M},
  \end{tikzcd}
\end{equation*}
so that $P_{3}$ is a compact manifold of dimension $k_{1}+k_{2}-n$, where $n=\dim M$. Concretely $P_{3}$ is the submanifold of pairs $(x_{1},x_{2})\in P_{1}\times P_{2}$ satisfying the incidence $e_{1}(x_{1})=e_{2}(x_{2})$.
Define $A_{3}:P_{3}\times \R/\Z\to M$ by the formula:
\begin{equation*}
  A_{3}(x_{1},x_{2},t):=\left\{
    \begin{aligned}
      &A_{1}(x_{1},\beta(2t))&&\text{ for }t\in [0,1/2],\\
      &A_{2}(x_{2},\beta(2t-1))&&\text{ for }t\in [1/2,1],
    \end{aligned}
  \right.
\end{equation*}
and extended by $1$-periodicity; here $\beta$ is a standard smooth cut-off function which equals $0$ for $t\le 0$ and equals $1$ for $t\ge 1$. The cut-offs ensure $A_{3}$ is a smooth map. It is important to note that $A_{3}\in Z(\Lambda_{c_{1}+c_{2}})$, because the length function is additive under concatenation and invariant under time reparametrizations.

\begin{lemma}
  The class $[A_{3}]$ in $H_{k_{1}+k_{2}-n}(\Lambda_{c_{1}+c_{2}})$ is independent of the choice of representatives $A_{1},A_{2}$ in general position, and depends only on $\alpha_{1},\alpha_{2}$.
\end{lemma}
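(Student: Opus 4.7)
The plan is to establish the claim by varying one factor at a time. I would first prove the key sub-claim: if $A_1, A_1' \in Z(\Lambda_{c_1})$ are both in general position with a fixed $A_2 \in Z(\Lambda_{c_2})$ and $[A_1] = [A_1']$ in $H(\Lambda_{c_1})$, then the associated concatenations $A_3$ and $A_3'$ are cobordant in $Z(\Lambda_{c_1+c_2})$. The full statement then follows by combining this with its symmetric counterpart in the other variable.

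For the sub-claim, pick any cobordism $C_1 : Q_1 \times \R/\Z \to M$ from $A_1$ to $A_1'$, where $\bd Q_1 = P_1 \sqcup P_1'$. A standard relative transversality argument, using perturbations supported in the interior of $Q_1$, lets me modify $C_1$ so that the evaluation map $\tilde{e}_1 : Q_1 \to M$, $q \mapsto C_1(q,0)$, becomes transverse to $e_2 : P_2 \to M$, while leaving the boundary values (which are already transverse by hypothesis) unchanged. Since $\ell_\Omega$ depends continuously on the $C^1$-data of the map and $\sup_{q \in Q_1}\ell_\Omega(C_1(q,-)) < c_1$ on the compact manifold $Q_1$, a sufficiently $C^1$-small perturbation preserves the length constraint. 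Then the transverse fiber product $Q_3 := Q_1 \times_M P_2$ is a smooth compact manifold of dimension $k_1 + k_2 - n + 1$ with $\bd Q_3 = P_3 \sqcup P_3'$, and the same concatenation formula used to define $A_3$ gives a map $C_3 : Q_3 \times \R/\Z \to M$ satisfying $\ell_\Omega(C_3(q_3, -)) < c_1 + c_2$ for every $q_3 \in Q_3$, by additivity of $\ell_\Omega$ under concatenation and invariance under time reparametrization. Thus $C_3$ realizes the required cobordism in $Z(\Lambda_{c_1 + c_2})$.

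To deduce the full statement, given pairs $(A_1, A_2)$ and $(A_1', A_2')$ both in general position with $[A_i] = [A_i']$, I would pick a small perturbation $A_1''$ of $A_1'$ which is in general position with both $A_2$ and $A_2'$ simultaneously; this is possible because transversality is a $C^\infty$-open condition and a small perturbation is cobordant to $A_1'$ through a thin product cobordism compatible with the length constraint. The sub-claim and its symmetric counterpart then yield the chain $[A_1 \ast A_2] = [A_1'' \ast A_2] = [A_1'' \ast A_2'] = [A_1' \ast A_2']$. The main obstacle is the relative transversality step for $C_1$: one must perturb in the interior of $Q_1$ to arrange $\tilde{e}_1 \pitchfork e_2$ without altering the already-transverse boundary data and without violating the strict length bound $\ell_\Omega(C_1(q,-)) < c_1$. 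This is a routine application of the parametric transversality theorem to a finite-dimensional family of perturbations supported away from $\bd Q_1$, combined with the openness of the length condition under $C^1$-small changes of compactly-parameterized loops.
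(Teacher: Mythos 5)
Your argument is correct and is the standard differential-topology argument the paper has in mind; the paper's own ``proof'' merely asserts the result is ``a straightforward argument in differential topology, similar to the arguments in \cite{milnor-book-1965}, and is left to the reader,'' so you have simply supplied the details. The structure — first fixing $A_2$ and building a cobordism of fiber products by relative transversality supported away from $\partial Q_1$, then bridging the two variables via an intermediate $A_1''$ in general position with both $A_2$ and $A_2'$ — is exactly the right way to organize it. Two minor points worth tightening in writing it up formally: (a) in the bridging step you say the existence of $A_1''$ ``is possible because transversality is a $C^\infty$-open condition,'' but openness only guarantees that a small perturbation of $A_1'$ stays transverse to $A_2'$; you also need the \emph{density} of transversality (parametric Sard theorem) to find a nearby $A_1''$ transverse to $A_2$; and (b) when you perturb $C_1$ in the interior to arrange $\tilde{e}_1 \pitchfork e_2$, it is worth noting explicitly that transversality of $\tilde{e}_1|_{\partial Q_1}$ to $e_2$ already implies transversality of $\tilde{e}_1$ (as a map from $Q_1$) to $e_2$ at boundary points, hence on a collar neighborhood, so the perturbation can safely be supported away from $\partial Q_1$. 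The compactness argument guaranteeing a uniform gap $\ell_\Omega < c_1 - \epsilon$, and hence stability of the length constraint under $C^1$-small perturbation, is correctly identified and is indeed the only place the strict inequality in the definition of $Z(\Lambda_c)$ is used.
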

\begin{proof}
  This is a straightforward argument in differential topology, similar to the arguments in \cite{milnor-book-1965}, and is left to the reader.
\end{proof}

Observe also that $(x_{3},t)\mapsto A_{3}(x_{3},t+1/2)$ represents the class of $\alpha_{2}\ast \alpha_{1}$, and hence $\alpha_{1}\ast \alpha_{2}=\alpha_{2}\ast\alpha_{2}$, since we can homotope from one to the other via the formula $(x_{3},t,s)\mapsto A_{3}(x_{3},t+s)$.

Similarly, one can prove that $\ast$ is an associative product, although we leave the details of this to the reader.

\subsubsection{The BV-operator}
\label{sec:bv-operator}

Like the Chas-Sullivan product, this string topology operation is introduced in \cite{chas-sullivan-arXiv-1999}. It is defined as follows; given $\alpha\in H_{k}(\Lambda_{c})$, write $\alpha=[A]$, where $A:P\times \R/\Z\to M$, and introduce:
\begin{equation*}
  \Delta(A):(\R/\Z\times P)\times \R/\Z\to M\text{ given by }\Delta(A)(\theta,x)(t)=A(x,t-\theta).
\end{equation*}
Then $\Delta(A)\in Z_{k+1}(\Lambda_{c})$ and we define $\Delta(\alpha)=[\Delta(A)]$. It is trivial to check that $\Delta(\alpha)$ is independent of the choice of representative $A$.

\subsubsection{Inclusion of the constant loops}
\label{sec:inclusion-of-constants}

Let $\beta\in H^{d}(T^{*}M)$ be represented by a smooth proper map $C:S\to T^{*}M$ which is transverse to the zero section. The class $\mathfrak{i}(\beta)\in H_{n-d}(\Lambda_{c})$ is the image of $\beta$ under a sort of Thom isomorphism.

Define $f:P\to M$ via the fiber product:
\begin{equation*}
  \begin{tikzcd}
    {P}\arrow[d,"{f}"]\arrow[r,"{}"] &{S}\arrow[d,"{C}"]\\
    {M}\arrow[r,"{}"] &{T^{*}M},
  \end{tikzcd}
\end{equation*}
and define $A:P\times \R/\Z\to M$ by $A(x,t)=f(x)$. Set: $$\mathfrak{i}(\beta):=[A]\in H_{n-d}(\Lambda_{c}).$$
It is straightforward to prove that $\mathfrak{i}(\beta)$ in $H_{n-d}(\Lambda_{c})$ is independent of the representative $C:S\to T^{*}M$ (since, by our proxy definition of $H^{*}(T^{*}M)$, any two representatives are properly cobordant).

The following discussion sheds light on the map $\mathfrak{i}$, and will be used in \S\ref{sec:incl-const-loops} in the comparison between string topology and Floer cohomology; it shows that $\mathfrak{i}$ is essentially a Thom isomorphism between cohomology of $T^{*}M$ and homology of $M$.
\begin{lemma}\label{lemma:isomorphism_constant}
  Any class $C:S\to M$ is cohomologous to the class $C':S'\to M$ determined by the fiber product:
  \begin{equation*}
    \begin{tikzcd}
      {S'}\arrow[d,"{}"]\arrow[r,"{C'}"] &{T^{*}M}\arrow[d,"{}"]\\
      {P}\arrow[r,"{f}"] &{M},
    \end{tikzcd}
  \end{equation*}
  where $f$ is defined above.
\end{lemma}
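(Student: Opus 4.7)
The lemma is a form of the Thom isomorphism for the proper-cobordism model of cohomology: a class $[C]\in H^{*}(T^{*}M)$ is determined, up to proper cobordism, by its transverse intersection $f:P\to M$ with the zero section. My plan is to exhibit an explicit proper cobordism between $C$ and $C'$ built from the tubular neighborhood of $P$ in $S$.

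First, I would perturb $C$ within its proper cobordism class so that it is transverse to the zero section $M\subset T^{*}M$. Then $P=C^{-1}(M)$ is a properly embedded submanifold of $S$, the restriction $C|_{P}$ agrees with the map $f$ defined by the fiber product, and the derivative $dC$ at points of $P$ yields a canonical isomorphism of vector bundles $N_{P/S}\cong f^{*}T^{*}M = S'$, via the canonical identification $N_{M/T^{*}M}\cong T^{*}M$ of the normal bundle of the zero section with the cotangent fibers. By the tubular neighborhood theorem, I then choose an open embedding $\phi:S'\supset D\hookrightarrow S$ onto a neighborhood of $P$, equal to the identity on $P$ and inducing the above normal-bundle identification; a Moser-type straight-line homotopy in the cotangent fibers allows me to arrange, after shrinking $D$, that $C\circ\phi$ agrees with the canonical map $C':S'\to T^{*}M$ on $D$, using that the two maps agree to first order along $P$.

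Given this local matching, I construct the cobordism $\tilde C:\tilde S\to T^{*}M$ in two stages. The core piece is obtained by gluing the cylinders $S\times[0,1]$ and $S'\times[0,1]$ along the open subset $\phi(D)\times\{1\}\subset\partial(S\times[0,1])$ identified with $D\times\{0\}\subset\partial(S'\times[0,1])$ via $\phi$; on this glued manifold, setting $\tilde C$ to be $C$ on the first cylinder and $C'$ on the second gives a well-defined smooth map because of the matching established above. The "leftover" boundary pieces $(S\setminus\phi(D))\times\{1\}$ and $(S'\setminus D)\times\{0\}$ both map into $T^{*}M\setminus M$, and I eliminate them by attaching Liouville-scaling collars $(s,t)\mapsto\lambda_{t}(C(s))$ with $\lambda_{t}(q,p)=(q,tp)$, which properly push the image to infinity in $T^{*}M$ as $t\to\infty$, provided $D$ is chosen large enough so that $|p(C(s))|\ge\delta>0$ on $S\setminus\phi(D)$. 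After smoothing corners, the resulting manifold has boundary precisely $S\sqcup S'$, as required.

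The main obstacle is the last step, namely constructing the cobordism as a smooth manifold while simultaneously guaranteeing that $\tilde C$ is proper. Properness of the Liouville-flow collars hinges on a lower bound on $|p\circ C|$ away from $P$, which is arranged by enlarging $D$ so that $\phi(D)$ exhausts a chosen neighborhood such as $\{|C|<1\}$; the corner-smoothing at $\phi(\partial D)$ is then a standard partition-of-unity argument. Conceptually, the result is the Thom isomorphism for the pair $(T^{*}M, M)$ applied in the proper-cobordism model.
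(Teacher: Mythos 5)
Your strategy coincides with the paper's in its two essential moves: identify $C$ with $C'$ to first order along $P$ via the normal-bundle isomorphism and a Moser deformation, then cap off everything avoiding the zero section with the Liouville flow. But the gluing step as stated does not produce a cobordism with boundary $S\sqcup S'$. After you identify $\phi(D)\times\{1\}$ with $D\times\{0\}$, the ``leftover'' boundary pieces $(S\setminus\phi(D))\times\{1\}$ and $(S'\setminus D)\times\{0\}$ are manifolds \emph{with boundary}, with boundaries $\phi(\partial D)$ and $\partial D$ respectively. Attaching the half-infinite Liouville cylinders $(S\setminus\phi(D))\times[1,\infty)$ and $(S'\setminus D)\times[1,\infty)$ therefore introduces fresh, uncapped boundary components $\phi(\partial D)\times[1,\infty)$ and $\partial D\times[1,\infty)$; as written, $\tilde S$ carries a noncompact piece of boundary sitting over $\partial D$ that you neither eliminate nor account for, so your claim that $\partial\tilde S=S\sqcup S'$ fails.

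The missing step is one more gluing: identify $\phi(\partial D)\times[1,\infty)$ with $\partial D\times[1,\infty)$ via $\phi\times\id$. This is legal because $C\circ\phi$ and $C'$ agree on $\overline D$, hence on $\partial D$, so the Liouville-flowed maps match along the two cylinders. With this gluing and the corner-smoothing you mention, $\tilde S$ really is a proper cobordism from $C$ to $C'$. Note that this extra gluing is exactly where the paper's ``surgery operation'' hides: the manifold $T:=(S\setminus\phi(D))\cup_{\partial D}(S'\setminus D)$ with the evident proper map $G:T\to T^*M\setminus M$ is what the Liouville end of your $\tilde S$ is capping off. That is the structure of the paper's two-step argument: surger $C\sqcup C'$ along the shared intersection $P$ with the zero section to produce a proper $G$ disjoint from the zero section, then cobord $G$ to $\emptyset$ via $(x,t)\mapsto\rho_t(G(x))$. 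Once the missing gluing is supplied, your proof and the paper's differ only in whether one reads the resulting manifold-with-boundary as a cobordism from $C$ to $C'$ or as a nullcobordism of $C\sqcup C'$, which over $\Z/2\Z$ coefficients are the same thing.
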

\begin{proof}
  Observe that $C$ and $C'$ have the same transverse intersection with the zero section. It follows from a straighforward surgery operation that $C+C'$ (the sum is taken in the cohomology cobordism group) is properly cobordant to a proper map $G:T\to T^{*}M$ whose image is disjoint from the zero section.

  Then the map $T\times [0,\infty)\to T^{*}M$ given by $x,t\mapsto \rho_{t}(G(x,t))$, where $\rho_{t}$ is the Liouville flow, is a proper cobordism from $G$ to $\emptyset$ (it is proper because the image of $G$ is disjoint from the zero section). Thus $[C+C']=0$, and hence the desired relation $[C]=[C']$ holds.
\end{proof}

\subsection{Proofs for \S \ref{sec:examples}}
\label{sec:proofs_for_examples}

In this section, we provide the proofs of the Theorems stated in \S \ref{sec:examples}. The upper bounds in Theorem \ref{theorem:ellipsoid-1} will ultimately follow from Theorem \ref{theorem:OB-1}. The proof of Theorem \ref{theorem:OB-1} is based on a geometric construction of classes solving the equation appearing in Theorem \ref{theorem:corollary}. The proof of Theorem \ref{theorem:ellipsoid-2} will follow from similar considerations, but with a different classes in the string topology of open books. We prove Theorem \ref{theorem:prod-torus} and explain how to recover Proposition \ref{proposition:camel} in \S\ref{sec:proof-prod-torus}. Finally we prove Theorem \ref{theorem:non-orientable-surface} in \S\ref{sec:proof-non-orientable-surface}.

\subsubsection{Action classes}
\label{sec:action-classes}

One recurring idea is the notion of an \emph{action class}. Let $\zeta_{t}:M\to M$, $t\in \R/\Z$, be a circle action. The \emph{action class} is the class:
\begin{equation*}
  A:M\times \R/\Z\to M\text{ given by }A(x,t)=\zeta_{t}(x).
\end{equation*}
More generally, for any bordism class $g:P\to M$, one defines:
\begin{equation*}
  A_{g,\pm}:P\times \R/\Z\to M\text{ given by }A_{g,\pm}(x,t)=\zeta_{\pm t}(g(x)).
\end{equation*}
It is clear that $[A_{g,\pm}]$ depends only on the bordism class of $g$ in $M$.

Denote by $E_{g,\pm}$ the maximal $\ell_{\Omega}$-length of the orbits appearing in $A_{g,\pm}$.

Such classes behave well with respect to the Chas-Sullivan product.
\begin{lemma}\label{lemma:chas-sullivan-action-class-1}
  For two transverse maps $g_{i}:P_{i}\to M$, $i=1,2$, one has:
  \begin{equation*}
    [A_{g_{1},+}]\ast [A_{g_{2},-}]=[g_{1}\cap g_{2}]\text{ in }\Lambda_{E_{g_{1},+}+E_{g_{2},-}},
  \end{equation*}
  where $[g_{1}\cap g_{2}]$ is the class of constant loops: $$(x_{1},x_{2})\in P_{3}\times \R/\Z\mapsto g_{1}(x_{1})=g_{2}(x_{2}),$$ where $P_{3}$ is the fiber product of $g_{1}$ and $g_{2}$.
\end{lemma}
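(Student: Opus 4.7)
The plan is to produce an explicit cobordism from the Chas--Sullivan representative $A_3$ of $[A_{g_1,+}]\ast[A_{g_2,-}]$ to the constant-loop representative of $[g_1\cap g_2]$. Transversality of $g_1$ and $g_2$ is precisely the general position hypothesis from \S\ref{sec:chas-sull-prod}, since the evaluation-at-zero maps of $A_{g_i,\pm}$ are $g_i$; the parameter space is $P_3=\set{(x_1,x_2):g_1(x_1)=g_2(x_2)}$. At such a point, writing $p=g_1(x_1)=g_2(x_2)$, the loop $A_3$ is $t\mapsto\zeta_{\beta(2t)}(p)$ on $[0,1/2]$ and $t\mapsto\zeta_{-\beta(2t-1)}(p)$ on $[1/2,1]$. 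Since $\zeta$ is an $\R/\Z$-action, $\zeta_{-u}=\zeta_{1-u}$, so the second half is the time-reverse of the first, and $A_3$ traverses the $\zeta$-orbit through $p$ forward and then backward---a configuration which is canonically null-homotopic in the loop space.

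An explicit null-homotopy is given, for $s\in[0,1]$, by
\begin{equation*}
  H_s(x_1,x_2,t)=\begin{cases}\zeta_{s\beta(2t)}(p)&t\in[0,1/2],\\ \zeta_{s(1-\beta(2t-1))}(p)&t\in[1/2,1].\end{cases}
\end{equation*}
The two cases agree at $t=1/2$ (both equal $\zeta_s(p)$) and at $t=0,1$ (both equal $p$), and the vanishing of $\beta'$ at $\set{0,1}$ makes $H_s$ smooth across the seams, including across $\R/\Z$. At $s=0$, $H_0$ is the constant loop at $p$, representing $[g_1\cap g_2]$; at $s=1$, the identity $\zeta_{1-u}=\zeta_{-u}$ identifies $H_1$ with $A_3$.

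For the length bound, write $X$ for the generator of $\zeta$. On $[0,1/2]$, $\partial_tH_s=2s\beta'(2t)X(H_s)$ is a nonnegative multiple of $X$, so after the substitution $\sigma=s\beta(2t)$ the first-half contribution to $\ell_\Omega(H_s(x_1,x_2,\cdot))$ becomes
\begin{equation*}
  \int_0^s\max\set{\ip{p',X(\zeta_\sigma(p))}:p'\in\Omega\cap T^*M_{\zeta_\sigma(p)}}\,d\sigma,
\end{equation*}
which is bounded by $\ell_\Omega(A_{g_1,+}(x_1,\cdot))\le E_{g_1,+}$, the integrand being nonnegative since $0\in\Omega\cap T^*M_q$. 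A parallel substitution on $[1/2,1]$, now using $-X$ in place of $X$, bounds the second-half contribution by $E_{g_2,-}$. Hence $\ell_\Omega(H_s(x_1,x_2,\cdot))\le E_{g_1,+}+E_{g_2,-}$ uniformly in $s$, and $H$ is a valid cobordism in $\Lambda_c$ for every $c>E_{g_1,+}+E_{g_2,-}$.

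The main technical point is the uniform-in-$s$ length estimate, which rests on the monotonicity of $\beta$ and the positivity of the support function in the fiber (a consequence of $0\in\Omega_q$); without this positivity one could not pass from an integral over a sub-arc to the whole-orbit bound. The strict-versus-nonstrict inequality in the definition of $Z(\Lambda_c)$ is handled by taking $c$ marginally above $E_{g_1,+}+E_{g_2,-}$ and invoking the persistence module structure to pass to the limit.
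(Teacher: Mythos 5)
Your proof is correct and follows essentially the same strategy as the paper's: an explicit $s$-parametrized null-homotopy from the constant loop to the Chas--Sullivan representative, with the length bound controlled uniformly in $s$ via the nonnegativity of the fiberwise support function (which follows from $0\in\Omega_q$). The only difference is cosmetic: the paper interpolates via $\zeta_{\beta(2st)}$ (and $\zeta_{\beta(s)-\beta(2st-s)}$ on the second half) rather than your $\zeta_{s\beta(2t)}$ (and $\zeta_{s(1-\beta(2t-1))}$); both give the same endpoints and both admit the same substitution argument for the length estimate, which you in fact carry out in more detail than the paper does.
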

\begin{proof}
  The evaluation at $0$ map of $A_{g_{i},\pm}$ is $g_{i}$. Thus $A_{g_{1},+}\ast A_{g_{2},+}$ is represented by the map:
  \begin{equation}\label{eq:chas-sullivan-product}
    (x_{1},x_{2},t)\in P_{3}\times \R/\Z\mapsto \left\{
      \begin{aligned}
        &\zeta_{\beta(2t)}(g_{1}(x_{1}))&&\text{ for }t\in [0,1/2],\\
        &\zeta_{-\beta(2t-1)}(g_{2}(x_{2}))&&\text{ for }t\in [1/2,1].
      \end{aligned}
    \right.
  \end{equation}
  We claim this class is homotopic to the class of $[g_{1}\cap g_{2}]$ described in the statement of the lemma; furthermore, we claim the homotopy can be made inside of $\Lambda_{E_{g_{1},+} + E_{g_{2},-}}$. To show this, define $G : P_{3} \times [0,1]\times \R/\Z \to M$ by:
  \begin{equation*}
    G(x_{1},x_{2},s,t):=\left\{
      \begin{aligned}
        &\zeta_{\beta(2st)}(g_{1}(x_{1}))&&\text{ for }t\in [0,1/2],\\
        &\zeta_{\beta(s)-\beta(2st-s)}(g_{2}(x_{2}))&&\text{ for }t\in [1/2,1].
      \end{aligned}
    \right.
  \end{equation*}
  This defines a smooth homotopy, within $\Lambda_{E_{g_{1},+}+E_{g_{2},-}}$, between the class of constant loops $g_{1}\cap g_{2}$, at $s=0$, and \eqref{eq:chas-sullivan-product}, at $s=1$.
\end{proof}

Another lemma which will be used in the proof of Theorem \ref{theorem:OB-1} is:
\begin{lemma}\label{lemma:chas-sullivan-action-class-2}
  For two transverse maps $g_{i}:P_{i}\to M$, $i=1,2$, then:
  \begin{equation*}
    [A_{g_{1},\pm}]\ast [g_{2}]=[A_{g_{1}\cap g_{2},\pm}]\text{ in }\Lambda_{E_{g_{1},\pm}},
  \end{equation*}
  where $[g_{2}]$ is considered as a class of constant loops, and $g_{1}\cap g_{2}:P_{3}\to M$ is as in Lemma \ref{lemma:chas-sullivan-action-class-1}.
\end{lemma}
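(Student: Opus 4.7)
My plan is to mirror the strategy of Lemma \ref{lemma:chas-sullivan-action-class-1}: first unpack the Chas--Sullivan definition to obtain an explicit representative of $[A_{g_{1},\pm}]\ast[g_{2}]$, then exhibit a smooth homotopy, inside $\Lambda_{E_{g_{1},\pm}}$, to $A_{g_{1}\cap g_{2},\pm}$. I describe the $+$ case; the $-$ case follows by reversing the sign of the flow parameter. The evaluation-at-zero map of $A_{g_{1},+}$ is $x_{1}\mapsto\zeta_{0}(g_{1}(x_{1}))=g_{1}(x_{1})$, while that of the constant-loop class is $g_{2}$ itself, so transversality of $g_{1}$ and $g_{2}$ places these representatives in general position, with fibre product exactly the $P_{3}$ appearing in Lemma \ref{lemma:chas-sullivan-action-class-1}. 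By the product formula, $[A_{g_{1},+}]\ast[g_{2}]$ is then represented by
\begin{equation*}
A_{3}(x_{1},x_{2},t)=\left\{\begin{aligned}&\zeta_{\beta(2t)}(g_{1}(x_{1}))&&\text{for }t\in[0,1/2],\\&g_{2}(x_{2})&&\text{for }t\in[1/2,1],\end{aligned}\right.
\end{equation*}
which is smooth and continuous at $t=1/2$ because $\zeta_{1}=\id$ and $g_{1}(x_{1})=g_{2}(x_{2})$ on $P_{3}$.

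Next I would construct the homotopy to $A_{g_{1}\cap g_{2},+}$ by ``stretching'' the active first half of $A_{3}$ to fill the whole loop. Let $\psi:\R/\Z\to\R/\Z$ be defined by $\psi(t)=\beta(2t)$ on $[0,1/2]$ and $\psi(t)=1$ on $[1/2,1]$; since $\beta$ has all derivatives vanishing at $0$ and at $1$, $\psi$ is a smooth, monotone non-decreasing, degree-$1$ self-map of the circle. Set $\phi_{s}(t)=st+(1-s)\psi(t)$ for $s\in[0,1]$; each $\phi_{s}$ is smooth, monotone non-decreasing and of degree $1$, and the family interpolates between $\phi_{0}=\psi$ and $\phi_{1}=\id$. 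Define
\begin{equation*}
G:P_{3}\times[0,1]\times\R/\Z\to M,\qquad G(x_{1},x_{2},s,t)=\zeta_{\phi_{s}(t)}(g_{1}(x_{1})).
\end{equation*}
At $s=0$ this recovers $A_{3}$ (using $\zeta_{1}=\id$ and $g_{1}(x_{1})=g_{2}(x_{2})$ on the constant half), and at $s=1$ it recovers $A_{g_{1}\cap g_{2},+}$.

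The main point to verify, and the step I expect to require the most care, is that $G$ stays inside $\Lambda_{E_{g_{1},+}}$. This is a direct change-of-variables argument: for each fixed $s$, since $\phi_{s}$ is monotone non-decreasing with $\phi_{s}(0)=0$ and $\phi_{s}(1)=1$, the substitution $u=\phi_{s}(t)$ yields
\begin{equation*}
\ell_{\Omega}\bigl(G(x_{1},x_{2},s,-)\bigr)=\ell_{\Omega}\bigl(t\mapsto\zeta_{t}(g_{1}(x_{1}))\bigr)\le E_{g_{1},+},
\end{equation*}
with any plateaus of $\phi_{s}$ contributing nothing since $\phi_{s}'=0$ there. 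The remainder of the argument is routine bookkeeping, essentially identical to the corresponding step of Lemma \ref{lemma:chas-sullivan-action-class-1}, and I would omit the details.
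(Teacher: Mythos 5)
Your proof is correct, and it fills in the details that the paper deliberately omits (the paper's own ``proof'' is a one-line remark that the argument is straightforward and easier than Lemma~\ref{lemma:chas-sullivan-action-class-1}). You follow exactly the strategy that remark invites: unpack the Chas--Sullivan formula to get an explicit representative, then homotope to $A_{g_{1}\cap g_{2},\pm}$ by a monotone reparametrization $\phi_{s}$ of the flow parameter, using reparametrization-invariance of $\ell_{\Omega}$ to stay inside $\Lambda_{E_{g_{1},\pm}}$. The identification $G(\cdot,1,\cdot)=A_{g_{1}\cap g_{2},+}$ and the length bookkeeping both check out, so this is essentially the proof the authors had in mind.
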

\begin{proof}
  The proof is straightforward, and easier than Lemma \ref{lemma:chas-sullivan-action-class-1}.
\end{proof}

Given a smooth map $g:P\to M$, denote by $\zeta g:\R/\Z \times P\to M$ the class defined by $\zeta g(\theta,x)=\zeta_{\theta}(g(x))$. The final result we require concerning action classes is:
\begin{lemma}\label{lemma:chas-sullivan-action-class-3}
  Given a smooth map $g:P\to M$, it holds that:
  \begin{equation*}
    \Delta[A_{g,\pm}]=[A_{\zeta g,\pm}]
  \end{equation*}
  in $H(\Lambda_{E_{g,\pm}})$.
\end{lemma}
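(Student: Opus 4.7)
\emph{Proof proposal.} The plan is to unpack both sides directly from the definitions and to observe that any discrepancy between the two parametrized families reduces to a reparametrization of the $\R/\Z$ factor of the parameter space $\R/\Z \times P$.

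By definition, $\Delta[A_{g,\pm}]$ is represented by $(\theta,x,t) \mapsto A_{g,\pm}(x,t-\theta) = \zeta_{\pm(t-\theta)}(g(x))$, while $[A_{\zeta g,\pm}]$ is represented by $(\theta,x,t) \mapsto \zeta_{\pm t}(\zeta g(\theta,x)) = \zeta_{\pm t + \theta}(g(x))$. For the $-$ sign both expressions evaluate to $\zeta_{\theta-t}(g(x))$ on the nose, so the equality already holds at the level of $Z(\Lambda_{E_{g,-}})$ and nothing further needs to be said. For the $+$ sign the two representatives differ only by the involution $\theta \mapsto -\theta$ of the $\R/\Z$ factor.

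To upgrade this reparametrization to an honest cobordism in the sense of \S\ref{sec:introduction}, I will take $Q := [0,1] \times \R/\Z \times P$, identify $\{0\} \times \R/\Z \times P$ with the source parameter space via the identity and $\{1\} \times \R/\Z \times P$ with the target parameter space via the reflection $(\theta,x) \mapsto (-\theta,x)$, and define the cobordism map by $C(s,\theta,x,t) := \zeta_{t-\theta}(g(x))$, independent of $s$. The two boundary restrictions then recover $\Delta[A_{g,+}]$ at $s=0$ and $[A_{\zeta g,+}]$ at $s=1$, respectively. Every loop appearing in $C$ is a rigid time-shift of $t \mapsto \zeta_t(g(x))$, so its $\ell_{\Omega}$-length is bounded by $E_{g,+}$, verifying the filtration condition.

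I do not anticipate a serious obstacle here. The only point requiring care is that the definition of cobordism in $Z(\Lambda_c)$ permits $\bd Q$ to be identified with $P_1 \sqcup P_2$ via any diffeomorphism, which is what licenses the orientation-reversing reflection used above; no orientation data needs to match since coefficients are in $\Z/2\Z$. Once this is accepted, both cases of the lemma reduce to direct substitution.
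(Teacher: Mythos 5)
Your proof is correct and takes essentially the same approach as the paper: unpack both sides from the definitions, observe literal equality for the $-$ sign, and observe for the $+$ sign that the two representatives differ by the diffeomorphism $(\theta,x)\mapsto(-\theta,x)$ of the parameter space. The paper simply invokes "differ by a diffeomorphism $\Rightarrow$ cobordant" without spelling out the mapping-cylinder cobordism, which you make explicit; this is a presentational difference only.
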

\begin{proof}
  This is a straightforward calculation: by definition, $\Delta[A_{g,\pm}]$ is the class represented by $\R/\Z\times P\times \R/\Z\to M$ given by:
  \begin{equation*}
    (\theta,x,t)\mapsto \zeta_{\pm(t-\theta)}(g(x)).
  \end{equation*}
  On the other hand, $A_{\zeta g,\pm}$ is given by:
  \begin{equation*}
    (\theta,x,t)\mapsto \zeta_{\pm t+\theta}(g(x)).
  \end{equation*}
  In the case of $-$ sign, these classes are literally the same. In the case of $+$ sign, the classes differ by the diffeomorphism $\R/\Z\times P\mapsto \R/\Z\times P$ given by $(\theta,x)\mapsto (-\theta,x)$, and so represent the same class in cobordism.
\end{proof}

\subsubsection{Proof of Theorem \ref{theorem:OB-1}}
\label{sec:proof-of-theorem-OB-1}
Let $M=(V\times \R/\Z) \cup (\bd V\times D(1))$ be an open book with trivial monodromy as in \S\ref{sec:open-books-with}. There is an obvious circle action $\zeta$ which acts as:
\begin{equation*}
  \zeta_{t}(x,\theta)=\left\{
    \begin{aligned}
      &(x,\theta+t), &&(x, \theta) \in V \times \R/\Z,\\
      &(x,r, \theta + t), &&(x,r,\theta) \in \partial V \times D(1),
    \end{aligned}
  \right.
\end{equation*}
where we use polar coordinates on $D(1)$ factor near the binding.

Theorem \ref{theorem:OB-1} will follow from Theorem \ref{theorem:corollary} applied to the action classes associated to this circle action. In order to apply Theorem \ref{theorem:corollary}, we need to show that the relevant action classes lie in the image of the BV-operator, among other things.

Abbreviate $A_{\pm}=A_{\id,\pm}$ and $E_{\pm}=E_{\id,\pm}$, using the notation in \S\ref{sec:action-classes}. Then $E_{\pm}$ agree with the numbers denoted using the same symbols in the statement of Theorem \ref{theorem:OB-1}.
\begin{lemma}\label{lemma:action_is_BV}
  There exist $[B_{\pm}]\in H(\Lambda_{E_{\pm}})$ such that $\Delta[B_{\pm}]=[A_{\pm}]$ in $H(\Lambda_{E_{\pm}})$.
\end{lemma}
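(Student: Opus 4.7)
The plan is to realize each $B_\pm$ as an action class $[A_{g,\pm}]$ for a carefully chosen smooth $g\colon P\to M$ with $P$ a closed $(n-1)$-manifold. By Lemma~\ref{lemma:chas-sullivan-action-class-3}, $\Delta[A_{g,\pm}]=[A_{\zeta g,\pm}]$, so the lemma reduces to arranging that $A_{\zeta g,\pm}$ and $A_{\id_M,\pm}=A_\pm$ represent the same class in $H_n(\Lambda_{E_\pm})$. Because every loop in an action class is a $\zeta$-orbit --- hence lies in $\mathscr{L}_\pm$ with length at most $E_\pm$ --- any bordism $C\colon Q\to M$ between the underlying base maps $\zeta g$ and $\id_M$ lifts fibrewise to a bordism of loop families $(q,t)\mapsto \zeta_{\pm t}(C(q))$ inside $\Lambda_{E_\pm}$. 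The task therefore shrinks to producing $(P,g)$ so that $\zeta g\colon \R/\Z\times P\to M$ represents the fundamental class $[M]\in H_n(M;\Z/2)$, equivalently so that it has odd mod-$2$ degree.

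When $\partial V=\emptyset$ this is immediate: take $P=V$ (closed, $(n-1)$-dimensional) and $g(v)=(v,0)\in V\times \R/\Z=M$; then $\zeta g(\theta,v)=(v,\pm\theta)$ is a diffeomorphism $\R/\Z\times V\xrightarrow{\cong}M$, so has degree $1$.

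When $\partial V\neq\emptyset$ the page carries a boundary and cannot itself serve as $P$. My plan is to take $P$ to be a closed $(n-1)$-manifold obtained by capping off the \emph{extended page} $\widetilde V:=V\cup_{\partial V}(\partial V\times[0,1])\subset M$ --- the radial slice $V\times\{0\}\cup \partial V\times\{\theta=0\}$, whose boundary in $M$ is the binding $\partial V\times\{0\}$ --- by a manifold-with-boundary $C$ with $\partial C=\partial V$, chosen so that $g|_C$ takes values in a subset of $M$ of ``effective dimension'' strictly less than $n-1$ with respect to transverse intersections with generic $\zeta$-orbits (for instance, built from a null-bordism of $\partial V$ inside a tubular neighbourhood of the binding, with image contained in a union of $\zeta$-orbits meeting the binding). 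The extended page $\widetilde V$ is, on the other hand, an honest transverse slice to the $\zeta$-action away from the binding and meets each generic orbit in exactly one point, so it contributes $1$ to the mod-$2$ degree; the cap then contributes $0$, and $\zeta g$ has odd degree as required.

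The main obstacle is the construction of the cap: a straightforward retraction $C\to \partial V$ extending the identity on $\partial C=\partial V$ does not exist in general (e.g.\@ when $V=D^{n-1}$), so $g|_C$ must be produced more indirectly, using that $\partial V$ is null-cobordant (as $\partial V=\partial V$) together with the fact that the tubular neighbourhood $\partial V\times D(1)$ is foliated by $\zeta$-orbits whose orbit space $\partial V\times[0,1]$ is codimension one. The technical heart of the proof is to carry out this capping compatibly with the loop structure while keeping the image of the cap inside a low-dimensional union of orbits.
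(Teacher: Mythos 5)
Your handling of the case $\bd V=\emptyset$ is correct and matches the paper's brief indication. For $\bd V\ne\emptyset$, however, the plan to realize $B_{\pm}$ as an action class $[A_{g,\pm}]$ cannot succeed — not merely because the particular cap you try to build is unavailable, but because \emph{no} choice of $g$ can work. Suppose $B_{\pm}=A_{g,\pm}$ with $g\colon P\to M$, $P$ closed of dimension $n-1$, and suppose $\Delta[A_{g,\pm}]=[A_{\zeta g,\pm}]$ equalled $[A_{\pm}]$ in $H(\Lambda_{E_{\pm}})$. Evaluating a cobordism of loop families at $t=0$ yields a cobordism in $M$ between $(\theta,x)\mapsto \zeta_{\mp\theta}(g(x))$ on $\R/\Z\times P$ and $\id_{M}$, so these maps have the same mod-$2$ degree, namely $1$. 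But the degree of $(\theta,x)\mapsto\zeta_{\mp\theta}(g(x))$ is exactly the mod-$2$ intersection number of $g_{*}[P]\in H_{n-1}(M;\Z/2)$ with the class of a generic $\zeta$-orbit in $H_{1}(M;\Z/2)$; and when $\bd V\ne\emptyset$ every $\zeta$-orbit shrinks into the binding and is therefore null-homologous, so this intersection number vanishes identically. Thus $\Delta[A_{g,\pm}]\ne[A_{\pm}]$ for \emph{every} $g$, precisely in those cases (e.g.\ $V=D^{n-1}$, $M=S^{n}$) where you noticed the retraction fails. The ``indirect capping'' you gesture at cannot repair this: the obstruction lives at the level of mod-$2$ degree, not at the level of constructing a specific cap.

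The paper's $B_{\pm}$ sidesteps this parity obstruction by \emph{not} being an action class. Unwinding \S\ref{sec:diag-acti-open}, the representative $D_{\vartheta}$ over the double $D(V)$ consists of genuine $\zeta$-orbits over one copy of $V$ and of \emph{constant} loops over the interior of the other copy, and those constants sit at points \emph{off} the binding. They are therefore not $\zeta$-orbits (the $\zeta$-orbit through such a point is a nontrivial circle), but a constant loop has $\ell_{\Omega}$-length zero and is perfectly admissible in $Z(\Lambda_{E_{\pm}})$. Because they are constant, they contribute nothing to the degree of $\mathrm{ev}_{0}(\Delta B_{\pm})$, while the $\zeta$-orbits over the other copy of $V$ sweep out $M$ exactly once; this is how the odd degree is produced. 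That trick has no analogue within the family $\{A_{g,\pm}\}$, so your reduction — valid as far as it goes — commits to an ansatz that is too rigid to prove the lemma.
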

\begin{proof}
  This is obvious in the case in $\bd V=\emptyset$ (one can simply appeal to Lemma \ref{lemma:chas-sullivan-action-class-3}). The argument is harder when $\bd V\ne \emptyset$.

  The class $B_{\pm}$ is defined as a map $D(V)\times \R/\Z\to M$ where here the \emph{double} $D(V)$ is the closed manifold obtained by gluing $V$ to another copy of $V$, say $\bar{V}$, along the boundary using the identity map.

  Then $A_{\pm}$ is defined on $M\times \R/\Z$, and $\Delta(B_{\pm})$ is defined on $\R/\Z\times D(V)\times \R/\Z$. There is a cobordism $X$ between $\R/\Z\times D(V)$ and $M$, and there is map $C_{\pm}:Q\times\R/\Z\to M$ which extends $A_{\pm},B_{\pm}$, and which remains in $\Lambda_{E_{\pm}}$; see Figure \ref{fig:cobordism}.

  In fact, the construction of $B_{\pm}$ and $C_{\pm}$ follows from a more general argument which we present in \S\ref{sec:diag-acti-open}. For this reason, we omit the details of the present proof.
\end{proof}
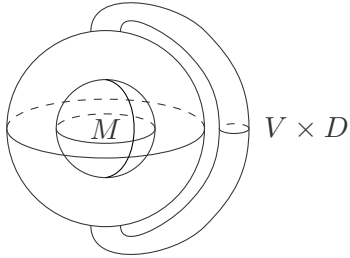
\begin{figure}[h]
  \centering
  \begin{tikzpicture}[scale=.65]
    \draw[black!80!white] (0,0) circle (2cm);
    \draw[black!80!white] (-2,0) arc (180:360:2 and 0.6);
    \draw[black!80!white,dashed] (2,0) arc (0:180:2 and 0.6);
    \draw[black!80!white] (0,0) circle (1cm);
    \draw[black!80!white] (-1,0) arc (180:360:1 and 0.3);
    \draw[black!80!white, dashed] (1,0) arc (0:180:1 and 0.3);
    \node[black!80!white] at (0,0) {$M$};

    \draw[black!80!white] (-0.3, -{sqrt(4-0.09)}) to [out=270, in=225] (2,-2) to [out=45, in=270] (2.9,0) to [out=90, in=315] (2,2) to [out = 135, in =90] (-0.3, {sqrt(4-0.09)});
    \draw[black!80!white] (0.3, -{sqrt(4-0.09)}) to [out=270, in=225] (1.6,-1.6) to [out=45, in=270] (2.3,0) to [out=90, in=315] (1.6,1.6) to [out = 135, in =90] (0.3, {sqrt(4-0.09)});
    \draw[black!80!white] (2.3,0) arc (180:360:0.3 and 0.1);
    \draw[black!80!white, dashed] (2.9,0) arc (0:180:0.3 and 0.1);
    \node[black!80!white] at (3,0) [right] {$V \times D$};

    \draw[black] (0,1) to [out=350, in=10] (0,-1);
  \end{tikzpicture}
  \caption{The cobordism between $M$ and $\R/\Z\times D(V)$ can be visualized as attaching a generalized handle to $M$. The simplest example is when $V=[0,1]$ where $M\simeq S^{2}$ and $\R/\Z\times D(V)\simeq T^{2}$.}
  \label{fig:cobordism}
\end{figure}

We now prove Theorem \ref{theorem:OB-1} using this lemma and the results in \S\ref{sec:action-classes}.

First of all, part \ref{OB1} follows easily from Theorem \ref{theorem:corollary}, and the fact that:
\begin{equation*}
  \mathfrak{i}(\mathrm{PD}(T^{*}M))=[A_{+}]\ast [A_{-}]=\Delta[B_{+}]\ast \Delta[B_{-}]\text{ in }H(\Lambda_{E_{+}+E_{-}}),
\end{equation*}
where we have used Lemma \ref{lemma:chas-sullivan-action-class-1} in the first equality, and the fact that $\mathfrak{i}(\mathrm{PD}(T^{*}M))$ is represented by the constant loops $(x,t)\in M\times \R/\Z\mapsto x$. The second equality follows from the previous lemma. By Theorem \ref{theorem:corollary}, this equation bounds the regular Gromov width from above by $E_{+}+E_{-}$, since $f:\mathrm{pt}\to \Omega$ is dual to the fundamental class $\mathrm{PD}(T^{*}M)$.

To prove part \ref{OB2}, we use Lemma \ref{lemma:chas-sullivan-action-class-2}, with $g_{1}=\id$ and $g_{2}$ the inclusion of a point $\mathrm{pt}$. We then have:
\begin{equation}\label{eq:equation-pt-OB-1-part-2}
  \Delta[B_{\pm}]\ast \mathfrak{i}(T^{*}M_{\mathrm{pt}})=[A_{\pm}]\ast [g_{2}]=[A_{g_{2},\pm}]\text{ in }H(E_{\pm}),
\end{equation}
where we use that $\mathfrak{i}(T^{*}M_{\mathrm{pt}})$ is represented by a single constant loop based at $\mathrm{pt}$.

The class $A_{g_{2},\pm}$ is represented by a single loop going around the open book. Since $\bd V\ne \emptyset$, this single loop can be homotoped to a constant loop in the binding; moreover, this homotopy can be taken in $H(E_{\pm})$.

Since the class of $f:M\to M$ is dual to $\mathrm{PD}(T^{*}M_{\mathrm{pt}})$, Theorem \ref{theorem:corollary} applied with equation \eqref{eq:equation-pt-OB-1-part-2} bounds the parametric Gromov width $\mathrm{Gr}([M],\Omega)$ from above by $\min\set{E_{+},E_{-}}$.

To prove part \ref{OB3}, we first appeal to Lemma \ref{lemma:chas-sullivan-action-class-3} where the map $g:P\to M$ is the inclusion of a point $\mathrm{pt}$; this shows
\begin{equation*}
  \Delta[A_{g,\pm}]=[A_{\zeta g,\pm}].
\end{equation*}
Then we apply Lemma \ref{lemma:chas-sullivan-action-class-1} to conclude:
\begin{equation*}
  \Delta[A_{g,\pm}]\ast \Delta[B_{\mp}]=\Delta[A_{\zeta g,\pm}]\ast [A_{\mp}]=[(\zeta g)\cap \id]\text{ in }\Lambda_{E_{g,\pm}+E_{\mp}},
\end{equation*}
A moment's thought reveals that the class of constant loops $(\zeta g)\cap \id$ (based at points which travel around a single orbit) equals $\mathfrak{i}(\beta)$ where $\beta$ is dual to the inclusion of $V\to M$. Thus we conclude from Theorem \ref{theorem:corollary} that $\mathrm{Gr}([V],\Omega)$ is bounded by $\min\set{E_{g,+}+E_{-},E_{g,-}+E_{+}}$. Since we can take $g$ to be an arbitrary point, we can make $E_{g,+}=e_{+}$ or $E_{g,-}=e_{-}$ (the constants appearing in Theorem \ref{theorem:OB-1}). This gives the desired result for \ref{OB3}.

To complete the proof of Theorem \ref{theorem:OB-1}, it remains for us to explain why the estimates are sharp in some cases.

One example with $\bd V\ne \emptyset$ where both \ref{OB1} and \ref{OB2} are sharp is $M=S^{2}$ with the ellipsoidal metric induced by $\set{x_{0}^{2}+a^{-2}(x_{1}^{2}+x_{2}^{2})=1}\subset \R^{3}$ for $a$ sufficiently small; that the bound $\mathrm{Gr}([S^{2}],\Omega_{a})\le 2\pi a$ is sharp is the content of Theorem \ref{theorem:ellipsoid-1}, and further details are given in \S\ref{sec:proof_thm_ell_1}. That the bound $\mathrm{Gr}([\mathrm{pt}],\Omega_{a})\le 4\pi a$ is sharp is proved in \cite{ferreira-ramos-vicente-arXiv-2023} for $a$ small enough.

One example with $\bd V=\emptyset$ where both \ref{OB1} and \ref{OB3} are sharp is $M=T^{2}$ with the flat metric obtained by $T^{2}=\R/a\Z\times \R\Z$ with $a\le 1$. That the Gromov width of the unit codisk bundle $\Omega_{a}$ is $4\pi a$ is proved in \cite{brocic-CCM-2025}; using canonical translations, one can transport this ball in a coherent manner to be based at all the points in $V=\set{0}\times \R/\Z$, proving that $\mathrm{Gr}([V],\Omega_{a})$ is also $4\pi a$, as desired (in general, for the codisk bundle $\Omega$ of a flat metric on a torus, or, more generally, the codisk bundle of any Lie group with a bi-invariant metric, it holds that $\mathrm{Gr}(\mathrm{pt},\Omega)=\mathrm{Gr}(f,\Omega)$).

\subsubsection{Proof of Theorem \ref{theorem:ellipsoid-1}}
\label{sec:proof_thm_ell_1}
As we will show momentarily, the upper bounds in Theorem \ref{theorem:ellipsoid-1} follow from Theorem \ref{theorem:OB-1}. The lower bounds will follow from explicit constructions.

There are two metrics we will consider on $S^n$ in the proof. Set:
\begin{equation*}
  S^{n}=\set{(x_{0},\dots,x_{n}):x_{0}^{2}+\dots+x_{n}^{2}=1}\subset \R^{n+1}.
\end{equation*}
The first metric $g_{a}$ we consider is the pullback of the Euclidean metric under the ellipsoid embedding $\varphi_{\mathrm{ell}}:S^{n}\to \R^{n+1}$ given by: $$\varphi_{\mathrm{ell}}(x_{0},\dots,x_{n-1},x_{n})=(x_{0},\dots,ax_{n-1},ax_{n}).$$ The unit codisk bundle of $g_{a}$ is the domain $\Omega_{a}$ under consideration.

The second metric is the round metric $g_{a}^{\mathrm{round}}$ is the pullback of the Euclidean metric under the embedding $\varphi_{\mathrm{round}}:S^{n}\to \R^{n+1}$ given by:
\begin{equation*}
  \varphi_{\mathrm{round}}(x)=ax;
\end{equation*}
we denote by $\Omega_{a}^{\mathrm{round}}$ the associated unit codisk bundle.

Observe that $S^{n}$ is identified with the open book with page $V=D^{n-1}$, via the parametrization:
$$V \times  \R / \Z \ni (x, \theta) \to (x, \sqrt{1 - \|x\|^2} \cos(2\pi \theta), \sqrt{1 - \|x\|^2} \sin(2 \pi \theta)) \in S^{n}.$$

Since the domain $\Omega_{a}$ is fiberwise symmetric, we have $E_{+} = E_{-}$. An easy computation shows that every loop in $\mathscr{L}_+$ has the length at most $2 \pi a$ as measured using $\Omega_{a}$; this maximum is achieved for the loop $(0,\cos(2\pi \theta),\sin(2 \pi \theta))$. Hence, from Theorem \ref{theorem:OB-1} we get
$$
\mathrm{Gr}([S^{n}],\Omega_{a})\le 2 \pi a, \text{ and } \mathrm{Gr}([\mathrm{pt}],\Omega_{a})\le 4\pi a.
$$

To obtain the equality $\mathrm{Gr}([S^{n}],\Omega_{a})= 2 \pi a$, we will first show that $\Omega_{a}^{\mathrm{round}}\subset \Omega_{a}$. Thus it follows that $\mathrm{Gr}([S^{n}],\Omega_{a}^{\mathrm{round}})\le \mathrm{Gr}([S^{n}],\Omega_{a})$. Then, we will show that $\mathrm{Gr}([S^n], \Omega_{a}^{\mathrm{round}}) = 2\pi a$.

It is a general fact that, if there is an inequality between Riemannian metric $g_{0}\le g_{1}$, then the unit codisk bundle determined by $g_{1}$ contains the unit codisk bundle determined by $g_{0}$. An easy computation shows that $g_{a}^{\mathrm{round}}\le g_{a}$.

Now we argue that $\mathrm{Gr}([S^{n}],\Omega_{a}^{\mathrm{round}})= 2\pi a$.
It is known that for the class of the point we have $Gr([\mathrm{pt}]; \Omega_{a}^{\mathrm{round}}) = 2\pi a$, i.e., for every $\epsilon >0$ there is a symplectic embedding $e:B^{2n}(2\pi a - \epsilon) \to \Omega_{a}^{\mathrm{round}}$. We can further assume that $e(0) = (e_1,0)$, where $e_1 = (1,0,...,0) \in S^n$. To estimate the parametric Gromov width from below, it is enough to find a family $A:S^n \to \mathrm{Symp}(\Omega_{a}^{\mathrm{round}})$ which satisfies $A(q) (e_1, 0) = (q, 0)$. Indeed, such a familly, together with an embedding $e$ induces $A_e: S^n \to \mathfrak{B}(2\pi a - \epsilon,\Omega_a)$ by
$$
A_e(q) := A(q) \circ e \in \mathfrak{B}(2 \pi a - \epsilon,\Omega_a).
$$
One way to think about $\Omega_{a}^{\mathrm{round}}$ is $$\Omega_{a}^{\mathrm{round}}\cong \{q+ i p \in \C^{n+1} \mid \|q\|=1, \langle q, p \rangle =0, \|p\| \leq a \}.$$
Each $A \in U(n+1)$ maps $\Omega_{a}^{\mathrm{round}}$ to itself, and it is a symplectomorphism. Hence, it is enough to find a family $A: S^n \to U(n+1)$ of unitary matrices such that $A(q) (e_1, 0) = (q, 0)$. This is equivalent to finding $n$ families of vectores $z_i (q)\in \C^{n+1}$ such that $\langle q, z_1(q), ..., z_n(q) \rangle$ is a unitary basis of $\C^{n+1}$ for every $q \in S^n$. Such a family of vectors can be found since the complexified tangent bundle $T S^n \otimes \C$ is trivial\footnote{This is a well-known fact, a fairly simple way to see this is from the existence of a Lagrangian immersion $S^n = \{(x,y) \in \R^n \times \R \mid \|x\|^2 + y^2=1\}  \mapsto (1+iy) x \in \C^n$ (see \cite[Example 13.2.4]{mcduffsalamon-alt}).}. This completes the proof of Theorem \ref{theorem:ellipsoid-1}.

\subsubsection{Diagonal actions on open books}
\label{sec:diag-acti-open}

If one has an $\R/\Z$-action on the page $V$, we have the induced diagonal $\R/\Z$ action on the open book $M$. The action class we used in \S\ref{sec:proof-of-theorem-OB-1} is obtained from the trivial action on $V$.

The goal in this section is to prove that the action classes $A_{\pm}:M \times \R /\Z \to M$ lie in the image of the BV operator $\Delta$, in $H(\Lambda_{c})$ for specific $c$. To this end, it is useful to give a description of the open book $M$ as a subset of $V \times \C$.

Let $(V,\bd V)$ be a smooth manifold with boundary, and let $f:V\to [0,1]$ be a smooth function such that $f(1)=\bd V$ is a regular level set. Define:
\begin{equation*}
  \mathrm{OB}(V)=\set{(v,z)\in V\times \C:f(v)+\abs{z}^{2}=1}.
\end{equation*}
The bordism classes we consider are related to a circle action of the following form: suppose that $(t,v)\mapsto \zeta_{t}(v)$ is a circle action on $V$ which preserves the level sets of $f$, and consider the resulting circle action:
\begin{equation*}
  (t,v,z)\mapsto (\zeta_{t}(v),e^{2\pi i t}z)
\end{equation*}
which acts on $\mathrm{OB}(V)$.

First, we deform the construction of $\mathrm{OB}(V)$ as follows. Define:
\begin{equation*}
  M=\set{(v,z)\in V\times \C:f(v)+\rho(\abs{z})^{2}=1},
\end{equation*}
where $\rho$ is non-decreasing, $\rho(x)=0$ for $x\le 1$, $\rho(x)=2$ for $x\ge 2$, and $\rho'(x)>0$ whenever $\rho(x)\in (0,1]$. Then:
\begin{lemma}\label{lemma:first-cobordism-step}
  The identity map $\id:\mathrm{OB}(V)\to \mathrm{OB}(V)$ is cobordant to the smooth map $R:M\to \mathrm{OB}(V)$ given by:
  \begin{equation*}
    R(v,z)=(v,g(z)z)\text{ where }g(z)=\abs{z}^{-1}\rho(\abs{z}).
  \end{equation*}
  which is well-defined; i.e., $f(v)+\abs{g(z)z}^{2}=1$.
\end{lemma}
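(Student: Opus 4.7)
The plan is to construct the cobordism by linearly interpolating the two defining functions. Set $\rho_s(x) := (1-s)x + s\rho(x)$ for $s \in [0,1]$, so that $\rho_0(x) = x$ (which recovers $\mathrm{OB}(V)$) and $\rho_1(x) = \rho(x)$ (which recovers $M$). Define
\begin{equation*}
  N := \set{(v,z,s)\in V\times \C\times [0,1]:f(v)+\rho_{s}(\abs{z})^{2}=1},
\end{equation*}
and define the candidate cobordism map $\tilde{R}:N\to \mathrm{OB}(V)$ by
\begin{equation*}
  \tilde{R}(v,z,s)=(v,g_{s}(z)z)\text{ where }g_{s}(z):=\rho_{s}(\abs{z})/\abs{z}.
\end{equation*}
By construction $f(v)+\abs{g_{s}(z)z}^{2}=f(v)+\rho_{s}(\abs{z})^{2}=1$, so $\tilde{R}$ takes values in $\mathrm{OB}(V)$, and it restricts to the identity at $s=0$ (where $g_{0}\equiv 1$) and to $R$ at $s=1$.

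Next, I would verify that $\tilde{R}$ is globally smooth and that $N$ is a smooth compact manifold whose boundary is $\mathrm{OB}(V)\sqcup M$. For smoothness of $\tilde{R}$, the only potential issue is at $z=0$: since $\rho(x)=0$ for $x\le 1$, one has $\rho_{s}(\abs{z})=(1-s)\abs{z}$ on the region $\abs{z}\le 1$, hence $g_{s}(z)z=(1-s)z$ there, which extends smoothly across $z=0$; away from $z=0$ the formula $\rho_{s}(\abs{z}) z/\abs{z}$ is manifestly smooth, and the two formulas agree on the overlap. To see $N$ is a manifold, compute the differential of $F(v,z,s)=f(v)+\rho_{s}(\abs{z})^{2}-1$ and do a small case analysis on $N\cap\{F=0\}$: where $\rho_{s}(\abs{z})=0$, one has $f(v)=1$, hence $v\in \bd V$ and $df\ne 0$ by the regular-level-set hypothesis; where $\rho_{s}(\abs{z})>0$, one gets $\partial_{\abs{z}}F=2\rho_{s}(\abs{z})\rho_{s}'(\abs{z})$, and $\rho_{s}'=(1-s)+s\rho'\ne 0$ (one checks that the only way $\rho_{s}'$ vanishes is $s=1$ and $\rho'(\abs{z})=0$, but $\rho'\equiv 0$ on $[0,1]\cup[2,\infty)$ forces either $\rho_{s}(\abs{z})=0$ or $\rho_{s}(\abs{z})\ge 2$, the latter being excluded by $\rho_{s}(\abs{z})^{2}\le 1$). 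Hence $dF\ne 0$ on $N$, and transversality of $F$ to $\{0\}$ extends to a neighborhood of the slices $s=0,1$, so $N$ is a smooth manifold with boundary $\mathrm{OB}(V)\sqcup M$.

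Finally, compactness of $N$ follows from compactness of $V$ together with the a priori bound $\rho_{s}(\abs{z})\le 1$, which confines $\abs{z}$ to a bounded set; hence $\tilde{R}$ is automatically proper. This exhibits $\tilde{R}$ as the required cobordism between $\id_{\mathrm{OB}(V)}$ and $R:M\to \mathrm{OB}(V)$.

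The main technical obstacle is the regularity check on $N$ near the binding region and at the transition $\abs{z}=1$, where the formula for $\rho_{s}$ changes character; the fact that $\rho$ is arranged to vanish on $[0,1]$ (rather than only at $0$) is crucial, since it makes $g_{s}(z)z$ identically $(1-s)z$ on an open neighborhood of $z=0$, sidestepping any $0/0$ issue and ensuring smoothness of $\tilde{R}$.
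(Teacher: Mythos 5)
Your proposal is correct and coincides with the paper's own proof: your $N$ is exactly the paper's $Q$ (since $\rho_s(\abs{z})=(1-s)\abs{z}+s\rho(\abs{z})$), your $\tilde{R}$ is the paper's $S$, and your $g_s$ is identical to the paper's $g_s=(1-s)+s\abs{z}^{-1}\rho(\abs{z})$. You have merely unpacked the ``short computation'' that the paper leaves to the reader, namely the smoothness of $g_s(z)z$ across $z=0$ and the transversality of the defining equation.
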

\begin{proof}
  We define the cobordism explicitly: $$Q=\set{(v,z,s)\in V\times \C\times [0,1]:((1-s)\abs{z}+s\rho(\abs{z}))^{2}+f(v)=1}.$$
  A short computation shows that $Q$ is cut transversally. Moreover, the boundary $\bd Q=\set{s=0,1}$ is identified with $\mathrm{OB}(V)\sqcup M$ in the obvious way. Define a map:
  \begin{equation*}
    S:Q\to \mathrm{OB}(V)
  \end{equation*}
  by the formula:
  \begin{equation*}
    S(v,z,s)=(v,g_{s}(z)z)\text{ where }g_{s}(z)=(1-s)+s\abs{z}^{-1}\rho(\abs{z}),
  \end{equation*}
  which is a smooth function. This provides the desired cobordism, since $S(v,z,0)$ is identified with the identity map and $S(v,z,1)$ with $R(v,z)$.
\end{proof}

Next, we consider the action class:
\begin{equation*}
  A:\R/\Z\times \mathrm{OB}(V)\to \mathrm{OB}(V)\text{ given by }(t,v,z)\mapsto (\zeta_{t}(v),e^{2\pi i t}z).
\end{equation*}
Our goal is prove that $[A]$ lies in the image of $\Delta$.

It follows easily from Lemma \ref{lemma:first-cobordism-step} that $A$ is cobordant to the class:
\begin{equation*}
  B:\R/\Z\times M\to \mathrm{OB}(V)\text{ given by }(t,v,z)\mapsto (\zeta_{t}(v),e^{2\pi i t}g(z)z),
\end{equation*}
indeed, one can re-use the same cobordism $Q$, and $A,B$ extend to the cobordism.

\emph{Remark}. During the cobordism, the lengths of loops never exceeds the maximum length of loops in the class of $A$; indeed, each loop appearing is exactly one of the loops appearing in the family $A$.

Thus, in order to prove that $[A]$ lies in the image of $\Delta$, it is sufficient to prove that $[B]$ lies in the image of $\Delta$.

The next step is to pick a smooth non-increasing cut-off function $\psi:\R\to [0,2]$ such that $\psi(x)=2$ for $x\le 0$, $\psi(x)=0$ for $x\ge 1$, and $\psi'(x)\ne 0$ if $\psi(x)\in (0,1]$. Then we define:
\begin{equation*}
  N=\set{(v,z)\in V\times \C:f(v)+\psi(\abs{z})^{2}+\rho(\abs{z})^{2}=1}.
\end{equation*}
It is important to note that $\psi(\abs{z})$ and $\rho(\abs{z})$ are supported in different regions (the former in $\abs{z}\le 1$ and the latter in $\abs{z}\ge 1$). A quick computation shows that $N$ is cut transversally.

\begin{lemma}
  The map $B:\R/\Z\times M\to \mathrm{OB}(V)$ is cobordant to the map:
  \begin{equation*}
    C_{\vartheta}:\R/\Z\times N\to \mathrm{OB}(V)
  \end{equation*}
  given by:
  \begin{equation*}
    C_{\vartheta}(t,v,z)=\left\{
      \begin{aligned}
        &(\zeta_{t}(v),e^{2\pi i \vartheta}\psi(\abs{z}))\text{ if }\abs{z}\le 1,\\
        &(\zeta_{t}(v),e^{2\pi i t}g(\abs{z})z)\text{ if }\abs{z}\ge 1,
      \end{aligned}
    \right.
  \end{equation*}
  where $g(\abs{z})=\abs{z}^{-1}\rho(\abs{z})$.
\end{lemma}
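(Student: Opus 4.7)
The plan is to construct an explicit cobordism by simultaneously interpolating the defining equations of $M$ and $N$ and the maps $B$ and $C_{\vartheta}$. First I would introduce the manifold
\begin{equation*}
\bar Q = \set{(v, z, \tau) \in V\times \C\times [0,1]: f(v) + \tau^{2}\psi(\abs{z})^{2} + \rho(\abs{z})^{2} = 1};
\end{equation*}
a transversality check analogous to Lemma \ref{lemma:first-cobordism-step} shows $\bar Q$ is smoothly cut out, since whenever $\psi(\abs{z})=\rho(\abs{z})=0$ one has $f(v)=1$ (so $df\ne 0$ by regularity of $\bd V$), and otherwise one of the $\tau$- or $z$-derivatives is nonzero. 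The boundary is $M\sqcup N$ at $\tau=0,1$, so $\R/\Z\times \bar Q$ is a smooth cobordism from $\R/\Z\times M$ to $\R/\Z\times N$. I use $\tau^{2}$ rather than a linear interpolation $s$ to avoid the non-smoothness of $\sqrt{s}$ at $s=0$ in the map constructed next.

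Next, I would define the interpolating map $\Phi:\R/\Z\times \bar Q\to \mathrm{OB}(V)$ by
\begin{equation*}
\Phi(t,v,z,\tau) = \bigl(\zeta_{t}(v),\ \tau\, e^{2\pi i\vartheta}\psi(\abs{z}) + e^{2\pi i t}g(\abs{z})z\bigr),
\end{equation*}
with the convention that $g(\abs{z})z=0$ wherever $\rho$ vanishes. Since $\psi$ and $\rho$ have disjoint supports the two summands are never simultaneously nonzero, and $\abs{w}^{2}=\tau^{2}\psi(\abs{z})^{2}+\rho(\abs{z})^{2}=1-f(v)=1-f(\zeta_{t}(v))$ shows that $\Phi$ lands in $\mathrm{OB}(V)$. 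At $\tau=0$ the first summand vanishes and $\Phi=B$; at $\tau=1$, case-splitting on $\abs{z}\le 1$ versus $\abs{z}\ge 1$ recovers $C_{\vartheta}$. Smoothness across $\abs{z}=1$ holds because $\psi$ and $\rho$ vanish to infinite order there (each is identically zero on one side of $1$), and smoothness at $z=0$ holds provided one chooses $\psi$ to be constantly $2$ in a neighborhood of $0$ (a choice permitted by the hypotheses on $\psi$), so that $\psi(\abs{z})$ is locally constant in $z$ near the origin.

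The hard part will be the length condition, i.e., verifying that every loop $t\mapsto\Phi(t,v,z,\tau)$ appearing in the cobordism has $\ell_{\Omega}$-length within the claimed bound. In the bulk $\abs{z}\ge 1$ the complex component rotates exactly as in the original $B$-loop, so the loop coincides with an orbit of the original action class and its length is unchanged. In the binding $\abs{z}\le 1$ the complex component is constant in $t$; the loop sits over the $\zeta$-orbit of $v$ at the shifted height $\tau e^{2\pi i\vartheta}\psi(\abs{z})$, and checking that this shifted loop has $\ell_{\Omega}$-length at most that of the corresponding action-class loop reduces to a compatibility between the circle action $\zeta$ and the fiberwise-starshaped structure of $\Omega$. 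In the application to Lemma \ref{lemma:action_is_BV}, where the relevant $\zeta$ acts trivially on $V$, these binding loops are in fact constant and the bound is automatic, so $(\bar Q,\Phi)$ provides the desired cobordism inside $\Lambda_{E_{\pm}}$.
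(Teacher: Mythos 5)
Your cobordism is identical to the paper's: the paper also uses $s^{2}$ in the defining equation of the level set $X=\{f(v)+s^{2}\psi(|z|)^{2}+\rho(|z|)^{2}=1\}$, and your map $\Phi$ written as a sum equals the paper's piecewise-defined $T_{\vartheta}$ because $\psi$ and $\rho$ have disjoint supports. Your extra remarks—taking $\psi$ constant near $0$ so that $\psi(|z|)$ is smooth at $z=0$, and noting that the cobordism introduces new binding loops which are constant when $\zeta$ acts trivially on $V$—correctly make explicit two points the paper leaves implicit or defers to the remark following the lemma.
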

\begin{proof}
  It is clear from the construction that the map is smooth and agrees on the overlap (since $\psi(x)$ and $\rho(x)$ vanish to all orders when $x=1$). Moreover, the map is well-defined and valued in $\mathrm{OB}(V)$, as can be checked by a direct computation. It remains to prove the map is cobordant to $B$.

  To see this, we define the cobordism explicitly:
  \begin{equation*}
    X=\set{(v,z,s)\in V\times \C\times [0,1]:f(v)+s^{2}\psi(\abs{z})^{2}+\rho(\abs{z})^{2}=1}.
  \end{equation*}
  It is not hard to see that this is cut transversally; when $s=0$, it is already known to be transverse, and when $s\ne 0$, the derivative with respect to $s$ is non-zero where $\psi(\abs{z})\ne 0$; elsewhere it is known to be transverse by the same argument used to prove $M$ was cut transversally.

  Now we we define the map $T_{\vartheta}:\R/\Z\times X\to \mathrm{OB}(V)$ by the formula:
  \begin{equation*}
    T_{\vartheta}(t,v,z,s)=\left\{
      \begin{aligned}
        &(\zeta_{t}(v),e^{2\pi i \vartheta}s\psi(\abs{z}))\text{ if }\abs{z}\le 1,\\
        &(\zeta_{t}(v),e^{2\pi i t}g(\abs{z})z)\text{ if }\abs{z}\ge 1,
      \end{aligned}
    \right.
  \end{equation*}
  where $g(\abs{z})=\abs{z}^{-1}\rho(\abs{z})$. This map is easily seen to be a cobordism between $B$ and $C_{\vartheta}$, as desired.
\end{proof}
\emph{Remark}. Unlike the cobordism between $A$ and $B$, the cobordism between $B$ and $C_{\vartheta}$ introduces new loops; namely, it introduces the loops
\begin{equation}\label{eq:new-loops}
  t\mapsto (\zeta_{t}(v),e^{2\pi i\vartheta}r)\text{ where }f(v)=1-r^{2}
\end{equation}
Thus, if $c$ is greater than the maximum length of loops which appear in the class $A$, and in the family \eqref{eq:new-loops} (for any chosen $\vartheta$), then the cobordism from $A$ to $C_{\vartheta}$ occurs within $H(\Lambda_{c})$. However, if the action on the page $V$ is trivial, these loops are constant and therefore do not increase the length threshold; this observation is relevant to obtaining the stated bound in Lemma \ref{lemma:action_is_BV}.

Our final result in this subsection is that $[C_{\vartheta}]=\Delta([D_{\vartheta}])$ for a family of loops $D_{\vartheta}$. To show this, define:
\begin{equation*}
  P=\set{(v,r)\in V\times (0,\infty):f(v)+\psi(r)^{2}+\rho(r)^{2}=1}\subset N;
\end{equation*}
essentially since $\psi(0)=2$ is larger than $1$, it is easy to see that $P$ is a closed manifold. Then we define: $D_{\vartheta}:\R/\Z\times P\to \mathrm{OB}(V)$ by:
\begin{equation*}
  D_{\vartheta}(t,v,r)=\left\{
    \begin{aligned}
      &(\zeta_{t}(v),e^{2\pi i \vartheta}s\psi(r))\text{ if }\abs{z}\le 1,\\
      &(\zeta_{t}(v),e^{2\pi i t}g(r)r)\text{ if }\abs{z}\ge 1.
    \end{aligned}
  \right.
\end{equation*}
\begin{lemma}\label{lemma:open_books_bv_2}
  It holds that $\Delta([D_{\vartheta}])=[C_{\vartheta}]$ within $H(\Lambda_{c})$ provided $C_{\vartheta}\in Z(\Lambda_{c})$.
\end{lemma}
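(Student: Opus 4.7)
The plan is to identify $N \cong \R/\Z \times P$ via polar decomposition $(\theta, v, r) \mapsto (v, re^{2\pi i \theta})$, which is a diffeomorphism because $\psi(0) = 2$ forces $|z| > 0$ on $N$, and then exhibit an explicit cobordism between $C_\vartheta$ and $\Delta(D_\vartheta)$ on the common parameter space $\R/\Z \times P$. Under this identification,
\[
C_\vartheta(\theta, v, r, t) = \begin{cases} (\zeta_t(v), e^{2\pi i \vartheta} \psi(r)) & \text{if } r \leq 1, \\ (\zeta_t(v), e^{2\pi i (t + \theta)} g(r) r) & \text{if } r \geq 1, \end{cases}
\]
while $\Delta(D_\vartheta)(\theta, v, r, t) = D_\vartheta(v, r, t - \theta)$ replaces $\zeta_t$ by $\zeta_{t-\theta}$ in both branches, and replaces $e^{2\pi i(t+\theta)}$ by $e^{2\pi i(t-\theta)}$ in the second. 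The two maps thus differ only in how the parameter $\theta$ enters: $C_\vartheta$ rotates the starting point in the $\C$-factor by $\theta$, while $\Delta(D_\vartheta)$ shifts the loop parameter by $-\theta$.

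The key geometric observation is that both presentations send each $(\theta, v, r)$ to (a reparametrization of) an orbit of the diagonal circle action $\sigma_s(v, z) = (\zeta_s(v), e^{2\pi i s} z)$ on $\mathrm{OB}(V)$. I would construct a smooth family $H_s(\theta, v, r, t)$, $s \in [0,1]$, with $H_0 = C_\vartheta$ and $H_1 = \Delta(D_\vartheta)$, so that each intermediate loop $t\mapsto H_s$ is again a $\sigma$-orbit through a basepoint varying smoothly with $(s, \theta, v, r)$. Concretely, in the region $r \geq 1$, the intermediate loop would be arranged to be the $\sigma$-orbit through $(v, e^{2\pi i (1-s) \theta} g(r) r)$, which coincides as a subset of $\mathrm{OB}(V)$ with the $C_\vartheta$-loop at parameter $\theta' = (1-s)\theta$; hence its $\ell_\Omega$-length equals that of a loop already appearing in $C_\vartheta$, and is strictly less than $c$ by hypothesis. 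This argument simultaneously shows $\Delta(D_\vartheta) \in Z(\Lambda_c)$ and that the cobordism remains in $\Lambda_c$.

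The main technical obstacle is globality. The naive shearing formula, e.g.\ $H_s(\theta, v, r, t) = (\zeta_{t - s \theta}(v), e^{2\pi i (t + (1-2s)\theta)} g(r) r)$, fails to descend to $\theta \in \R/\Z$ because the coefficients $s$ and $1-2s$ are non-integers for generic $s \in (0,1)$, so $\theta \mapsto \theta + 1$ alters the value. I would address this either by thickening the parameter-space cobordism via a mapping-torus-type construction that absorbs the Dehn-twist discrepancy between the two presentations, or---and this is the case actually used in the proof of Lemma~\ref{lemma:action_is_BV}, where the circle action $\zeta$ on the page $V$ is the trivial action---by observing that when $\zeta = \id_V$ the formulas collapse so that $C_\vartheta$ and $\Delta(D_\vartheta)$ agree after precomposition with the involution $\theta \mapsto -\theta$ on $\R/\Z$, reducing the claimed equality to the standard fact that this orientation-reversing involution preserves the mod-$2$ bordism class.
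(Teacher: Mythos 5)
Your geometric intuition is right---$C_\vartheta$ and $\Delta(D_\vartheta)$ both sweep out exactly the family of $\sigma$-orbits of the diagonal circle action, so the two families should be related by a diffeomorphism of their parameter spaces. But you do not carry this out in the generality the lemma requires, and that is a real gap. Your fallback for the ``global obstruction'' is stated only for trivial $\zeta$: you reduce to the involution $\theta\mapsto-\theta$ and explicitly note this is ``the case actually used in the proof of Lemma~\ref{lemma:action_is_BV}.'' However, Lemma~\ref{lemma:open_books_bv_2} is stated for arbitrary $\zeta$ and is invoked again in \S\ref{sec:proof_thm_ell_2} (the proof of Theorem~\ref{theorem:ellipsoid-2}), where the page action $\zeta$ is a genuine rotation of a $\C$-coordinate, not the identity. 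So the special case does not suffice, and the mapping-torus alternative is left entirely unverified.

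The fix is much lighter than what you propose. There is no need to interpolate through intermediate loops or worry about a Dehn-twist discrepancy: the paper exhibits a single diffeomorphism $h:\R/\Z\times P\to N$, namely $h(\theta,v,r)=(\zeta_{-\theta}(v),e^{-2\pi i\theta}r)$, with $C_\vartheta(t,h(\theta,v,r))=\Delta(D_\vartheta)(\theta,v,r,t)$ on the nose, so the two are trivially cobordant. In your polar coordinates $\iota(\theta,v,r)=(v,re^{2\pi i\theta})$, the composite $\iota^{-1}\circ h$ is the map $(\theta,v,r)\mapsto(-\theta,\zeta_{-\theta}(v),r)$---your $\theta\mapsto-\theta$ augmented by the ``screw motion'' $v\mapsto\zeta_{-\theta}(v)$. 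This is a well-defined diffeomorphism of $\R/\Z\times P$ for any $\zeta$ (it descends to $\R/\Z$ because $(\theta,v)\mapsto(-\theta,\zeta_{-\theta}(v))$ is invariant under $\theta\mapsto\theta+1$), and it leaves every loop unchanged as a map $\R/\Z\to\mathrm{OB}(V)$, so the cobordism stays inside $\Lambda_c$. You were one extra factor away from the right change of variables; once you include the $\zeta_{-\theta}$ twist, there is no global obstruction to resolve.
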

\begin{proof}
  Observe that $\Delta(D_{\vartheta})$ is represented by $D_{\vartheta}':\R/\Z\times \R/\Z\times P\to \mathrm{OB}(V)$ given by:
  \begin{equation*}
    D'(t,\theta,v,r)=\left\{
      \begin{aligned}
        &(\zeta_{t-\theta}(v),e^{2\pi i \vartheta}s\psi(r))\text{ if }\abs{z}\le 1,\\
        &(\zeta_{t-\theta}(v),e^{2\pi i (t-\theta)}g(r)r)\text{ if }\abs{z}\ge 1.
      \end{aligned}
    \right.
  \end{equation*}
  Now consider the diffeomorphism $h:\R/\Z\times P\to N$ given by:
  \begin{equation*}
    h(\theta,v,r)=(\zeta_{-\theta}(v),e^{-2\pi i\theta}r).
  \end{equation*}
  Under this diffeomorphism we have:
  \begin{equation*}
    C_{\vartheta}(t,h(\theta,v,r))=\left\{
      \begin{aligned}
        &(\zeta_{t-\theta}(v),e^{2\pi i \vartheta}\psi(r))\text{ if }\abs{z}\le 1,\\
        &(\zeta_{t-\theta}(v),e^{2\pi i (t-\theta)}g(r)r)\text{ if }\abs{z}\ge 1,
      \end{aligned}
    \right.
  \end{equation*}
  so $[C_{\vartheta}]=[D_{\vartheta}']=\Delta([D_{\vartheta}])$. Note that two maps $\R/\Z\times P_{i}\to \mathrm{OB}(V)$, $i=0,1$, differing by a diffeomorphism $P_{0}\to P_{1}$ are cobordant in a trivial sense; Such a cobordism does not change any lengths of loops, so the cobordism from $C_{\vartheta}$ to $\Delta(D_{\vartheta})$ occurs within $H(\Lambda_{c})$.
\end{proof}

\subsubsection{Proof of Theorem \ref{theorem:ellipsoid-2}}
\label{sec:proof_thm_ell_2}

We begin with an auxiliary lemma about the Hopf flow. Consider the Hopf $\R /\Z$-action on $S^3\subset \C^{2}$ given by:
$$
\zeta_{t}(z_1, z_2) = (e^{2\pi i t} z_1, e^{2\pi i t} z_2).
$$
Then:
\begin{lemma}\label{lemma:hopf_flow}
The Hopf action is homotopic to the trivial $\R/\Z$-action through the loops of length $\leq 2\pi$.
\end{lemma}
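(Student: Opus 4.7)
The plan is to work in the coordinates $S^3 = \{x = (z_1,z_2) \in \C^2 : |z_1|^2+|z_2|^2 = 1\}$ and exploit the globally defined smooth map $x = (z_1,z_2) \mapsto x_\perp := (-\bar{z}_2, \bar{z}_1)$, which sends each $x \in S^3$ to a unit vector complex-orthogonal to it, so that $\langle x, x_\perp\rangle_{\C} = 0$ and $|x_\perp| = 1$. The existence of such an $x_\perp$ as a globally smooth function of $x$ (essentially the quaternionic structure on $\C^2 = \mathbb{H}$) is what makes the argument go.

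Next, I would build the homotopy in two stages. The first stage is
\begin{equation*}
H_s(x,t) = \cos(\pi s)\, e^{2\pi i t}\, x + \sin(\pi s)\, x_\perp, \qquad s \in [0, 1/2],
\end{equation*}
which, because the cross term $2\cos(\pi s)\sin(\pi s)\,\mathrm{Re}(e^{-2\pi i t}\langle x, x_\perp\rangle_{\C})$ vanishes, takes values in $S^3$. At $s=0$ it recovers the Hopf orbit through $x$, at $s=1/2$ it is the constant loop at $x_\perp$, and a direct differentiation yields $|\partial_t H_s| = 2\pi \cos(\pi s)$, so the length of the loop $t \mapsto H_s(x,t)$ equals $2\pi\cos(\pi s) \leq 2\pi$ throughout.

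The second stage is the $t$-independent family
\begin{equation*}
H'_s(x,t) = \cos(\pi(s-1/2))\, x_\perp + \sin(\pi(s-1/2))\, x, \qquad s \in [1/2, 1],
\end{equation*}
which moves along the geodesic in the real $2$-plane spanned by $x_\perp$ and $x$ (which are real-orthogonal, since $\langle x, x_\perp\rangle_\C = 0$ implies its real part vanishes). This stage has loops of length $0$ and terminates, at $s = 1$, at the trivial action $x \mapsto x$. A smooth cutoff in $s$ near $s = 1/2$ glues the two stages into a single smooth map $S^3 \times [0,1] \times \R/\Z \to S^3$ realizing the desired homotopy.

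I expect the main conceptual subtlety to be ensuring that the homotopy can end at the genuine identity self-map of $S^3$, rather than at some other map like $x \mapsto x_\perp$: if $x_\perp$ were only defined on the Hopf quotient $S^2$, we would be stuck by the non-triviality of the Hopf fibration. The resolution is that $x \mapsto x_\perp$ is defined as a smooth self-map of the \emph{total space} $S^3$, and is a degree $+1$ diffeomorphism, which is precisely why the explicit interpolation in the second stage lands back at the identity without any topological obstruction.
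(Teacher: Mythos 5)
Your proof is correct, and it is essentially the same argument as the paper's, written out explicitly: the paper conjugates the Hopf action to $(e^{2\pi i t}z_1, e^{-2\pi i t}z_2)$, views it as the restriction of the isometric $\mathrm{SU}(2)\cong S^3$ action to the great circle $t\mapsto(e^{2\pi i t},0)$, contracts that one loop in $S^3$ through loops of length $\le 2\pi$, and propagates the contraction to every point via the isometric orbit map $g\mapsto g\cdot p$. Your map $x\mapsto x_\perp=(-\bar z_2,\bar z_1)$ is left multiplication by $j\in S^3\cong\mathrm{SU}(2)$, and if one chooses the paper's contraction $h$ to slide the great circle to the pole $(0,1)$ and then back to the identity along a geodesic, unwinding the formula $H(z,t,s)=h(t,s)\cdot z$ (after undoing the conjugation) reproduces your two stages verbatim.
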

\begin{proof}
After conjugating the Hopf action with $\psi(z_1,z_2) = (z_1, \Bar{z}_2)$ we get:
$$
t \cdot (z_1, z_2) = (e^{2\pi i t} z_1, e^{-2\pi i t} z_2).
$$
This $\R/\Z$-action is the restriction of the $\mathrm{SU}(2) \cong S^3$ action to the unit circle $\set{(z,0):z\in S^{1}}\subset S^{3}$. Consider a homotopy $h: \R/ \Z \times [0,1] \to S^3$ through loops of length $\leq 2\pi$ such that $h(t,0) = (e^{2\pi i t}, 0)$, $h(t,1) = (1,0)$. The homotopy $h$ induces a homotopy:
$$
H(z_1, z_2, t, s) = h(t,s) \cdot (z_1,z_2),
$$
which satisfies the conclusion of the Lemma.
\end{proof}

Similarly to \S\ref{sec:proof_thm_ell_1} one gets that the radius $a$ codisk bundle $\Omega_{a}^{\mathrm{round}}$ of the round metric is a subset of $\Omega_a$, where $\Omega_a$ is the unit codisk bundle determined by the embedding of: $$ \set{x_{0}^{2}+x_{1}^{2}+\dots+a^{-2}(x_{n-3}^{2}+x_{n-2}^{2} + x_{n-1}^{2}+x_{n}^{2})= 1}\subset \R^{n+1}.$$ Since $Gr([S^n]; \Omega_{a}^{\mathrm{round}}) = 2\pi a$ we get:
$$
\mathrm{Gr}([\mathrm{pt}]; \Omega_a) \geq Gr([S^n]; \Omega_a) \geq 2\pi a.
$$
To prove the upper bound, one appeals to the results of \S\ref{sec:diag-acti-open} with the diagonal $\R/\Z$-action given by:
\begin{equation}\label{eq:action-ellipsoid-2}
  \zeta_{t}(x_0, ...., x_{n-4}, z_1, z_2) = (x_0, ..., x_{n-4},e^{2\pi i t} z_1, e^{2\pi i t} z_2).
\end{equation}
One sees $S^n$ as an open book with page $V \subset \mathbb{D}^{n-3} \times \C$ given by:
$$
V= \{x_0^2 + \dots + x_{n-4}^2 + a^{-2} |z_1|^2 \leq 1\}.
$$
Then we have:
$$
S^n = \mathrm{OB}(V) = \{ (x, z_1, z_2) \in V \times \C \mid  f(x, z_1) + a^{-2} |z_2|^2 =1\},
$$
where $f(x,z) = x_0^2 + \dots + x_{n-4}^2 + a^{-2} |z|^2$. An $\R / \Z$ action on $V$ is given by the rotation of the $\C$ coordinate, and it preserves the levels of $f$, hence from Lemma \ref{lemma:open_books_bv_2}, the induced action class $A$ is in the image of $\Delta$; in brief, there exists a class $B$ such that $[A] = \Delta [B ] \in H(\Lambda_{2 \pi a})$. The stated length threshold of $2\pi a$ is obtained by inspection of the length thresholds in \ref{sec:diag-acti-open}.

Finally, the contractibility of the Hopf flow on $S^3$ implies that the $\R/ \Z$-action \eqref{eq:action-ellipsoid-2} is homotopic to the trivial $\R / \Z$ action, hence $[A] = [S^n] \in H(\Lambda_{2 \pi a})$. Then Theorem \ref{theorem:corollary} applied to the class $\beta =[T^*S^n]$, $\alpha_1 = [B]$ (and $\alpha_2 = [S^n]$) produces $\mathrm{Gr}([\mathrm{pt}]; \Omega_a) \leq 2\pi a$, as desired.

\subsubsection{Proof of Theorem \ref{theorem:prod-torus} and Proposition \ref{proposition:camel}}
\label{sec:proof-prod-torus}
The proof of Theorem \ref{theorem:prod-torus} follows very similar lines to the proofs of Theorem \ref{theorem:OB-1} in the case when $\bd V=\emptyset$. Indeed, one can define $B_{-}^{d}:V\times T^{d-1}\times \R/\Z\to V\times T^{d}$ by:
\begin{equation*}
  B_{-}(v,x,t)=(v,x_{1},\dots,x_{d-1},-t),
\end{equation*}
and, for $k<d$, define $B_{+}^{k}:V\times T^{d-k-1}\times \R/\Z\to V\times T^{d}$ by
\begin{equation*}
  B_{-}^{k}(x,t)=(v,0,\dots,0,x_{1},\dots,x_{d-k-1},t).
\end{equation*}
By Lemma \ref{lemma:chas-sullivan-action-class-1} and \ref{lemma:chas-sullivan-action-class-3}, it holds that:
\begin{equation*}
  \Delta [B_{-}^{d}]\ast \Delta [B_{+}^{k}]=[V\times T^{d-k}]\text{ in }\Lambda_{E_{-}+E_{+}^{k}}
\end{equation*}
where $[V\times T^{d-k}]$ is the class of $(v,x)\mapsto (v,0,\dots,0,x_{1},\dots,x_{d-k})$.

Since $[V\times T^{d-k}]$ has non-zero homological intersection number with the class $[T^{k}]$ represented by the map $f:T^{k}\to V\times T^{d}$ given by $x\mapsto (v_{0},x_{1},\dots,x_{k})$, we conclude from Theorem \ref{theorem:corollary} that $\mathrm{Gr}([T^{k}],\Omega)\le E_{-}+E_{+}^{k}$, as desired.

In the remainder of this subsection, we will explain how Theorem \ref{theorem:prod-torus} can be used to recover the camel theorem (Proposition \ref{proposition:camel}).

Recall the set-up of Proposition \ref{proposition:camel}; we had defined the symplectic manifold:
\begin{equation*}
  X=(W\setminus \set{x_{1}=0})\cup \set{x_{n}^{2}+y_{n}^{2}\le\pi^{-1}\epsilon},
\end{equation*}
where $W=\R/\Z\times \R^{2n-1}$, for $n>1$, and claimed that $\mathrm{Gr}(f,X)\le \epsilon$ provided that $f:\R/\Z\to X$ has non-trivial winding number.

We argue by contradiction: suppose there is a family of balls: $$F:\R/\Z\times B(a)\to X$$ with $a>\epsilon$, such that $t\mapsto F(t,0)=(t,0,\dots,0)$ (here we appeal to the fact that the winding number classifies the free homotopy class of a loop). It is convenient to fix some small parameter $\delta>0$. We begin with an auxiliary lemma, which converts $X$ into a space more compatible with Theorem \ref{theorem:prod-torus}.
\begin{lemma}
  There exists a symplectic isotopy $\psi_{s}:X\to W$ so that, abbreviating $y_{i}=y_{i}\circ \psi_{1}\circ F(t,z)$ and $x_{i}=x_{i}\circ \psi_{1}\circ F(t,z)$, we have:
  \begin{enumerate}
  \item\label{camel-transformation-1} $\psi_{1}\circ F(t,0)=(t,0,\dots,0)$,
  \item\label{camel-transformation-2} $x_{i}\in (-1/2,1/2)$ for $i=2,3,\dots,n$,
  \item\label{camel-transformation-3} $y_{n}>-\epsilon/2-2\delta$,
  \item\label{camel-transformation-4} $x_{1}=0\implies y_{n}<\epsilon/2+\delta$.
  \end{enumerate}
\end{lemma}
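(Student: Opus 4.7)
The plan is to construct $\psi_{1}$ as a composition of symplectomorphisms of the ambient $W=\R/\Z\times \R^{2n-1}$, then restrict each to $X$. Since $F$ has compact image $K:=F(\R/\Z\times B(a))\subset X$, we have ample flexibility to choose transformations so that $\psi_{1}\circ F$ satisfies the required bounds; any symplectomorphism of $W$ fixing the center loop $\set{(t,0,\dots,0):t\in \R/\Z}$ pointwise automatically yields \ref{camel-transformation-1}. The construction proceeds in two stages: first handle \ref{camel-transformation-2} for $i<n$ by linear scalings in each $(x_{i},y_{i})$-plane, then handle \ref{camel-transformation-2} for $i=n$, together with \ref{camel-transformation-3} and \ref{camel-transformation-4}, by a carefully chosen symplectomorphism of the $(x_{n},y_{n})$-plane.

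For the first stage, apply the linear symplectomorphism $\sigma_{i}(x_{i},y_{i})=(\lambda_{i}x_{i},\lambda_{i}^{-1}y_{i})$ (extended by the identity on the other factors) for each $i=2,\dots,n-1$, with $\lambda_{i}>0$ chosen small enough that the $(x_{i},y_{i})$-projection of the compact $K$ is mapped into $(-1/2,1/2)\times \R$. Each $\sigma_{i}$ fixes the entire $x_{1}$-axis pointwise, so preserves \ref{camel-transformation-1}, and the natural one-parameter family $\sigma_{i}^{s}$ (using $\lambda_{i}^{s}$ in place of $\lambda_{i}$) provides an isotopy to the identity. Since these scalings do not touch the $(x_{n},y_{n})$-coordinates, the hole disk is unaffected.

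The main work is constructing a symplectomorphism $\Phi$ of the $(x_{n},y_{n})$-plane which simultaneously (a) fixes the origin, (b) maps the closed hole disk $\bar{D}=\set{x_{n}^{2}+y_{n}^{2}\le \pi^{-1}\epsilon}$ (whose area is exactly $\epsilon$) into a rectangle $[-L,L]\times [-h,h]$ with $4Lh=\epsilon$, $L<1/2$, and $h<\epsilon/2+\delta$, and (c) maps the entire $(x_{n},y_{n})$-projection of $K$ into $(-1/2,1/2)\times (-\epsilon/2-2\delta,\infty)$. For (a) and (b), use the explicit area-preserving ``slicing'' map $(x,y)\mapsto (Lx/\sqrt{\pi^{-1}\epsilon-y^{2}},\,L^{-1}\int_{0}^{y}\sqrt{\pi^{-1}\epsilon-s^{2}}\,ds)$, whose Jacobian is identically $1$ and which sends $\bar{D}$ bijectively onto the rectangle, fixing the origin. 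Extend this to a compactly supported symplectomorphism of $\R^{2}$ via Moser's trick (the extension is possible because both $\bar{D}$ and its image lie in large simply connected open sets). For (c), compose with a further Hamiltonian isotopy of the $(x_{n},y_{n})$-plane whose generating Hamiltonian vanishes near the origin but lifts distant points of $K$ with $y_{n}\le -\epsilon/2-2\delta$ upward.

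The main obstacle is balancing the three competing requirements on $\Phi$: the origin must remain fixed, $\bar{D}$ must be squeezed into a rectangle, and distant portions of $K$ must be lifted, all while preserving area. The decisive observation is that the area of $\bar{D}$ is exactly $\epsilon$, matching the target rectangle and providing the $\delta$ of slack built into \ref{camel-transformation-3} and \ref{camel-transformation-4}; and the compactness of $K$ allows the lifting step in (c) to be performed by a compactly supported Hamiltonian flow disjoint from a neighborhood of the origin. The final isotopy $\psi_{s}$ is obtained by concatenating the one-parameter families $\sigma_{i}^{s}$ with the Hamiltonian isotopies realizing $\Phi$; each step lifts to a symplectic isotopy of $W$, whose restriction to $X$ gives the desired $\psi_{s}:X\to W$.
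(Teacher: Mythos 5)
Your proposal takes a genuinely different route from the paper, and it contains gaps. The paper scales \emph{all} $(x_{i},y_{i})$ for $i=2,\dots,n$, including $i=n$ --- this one move both achieves condition \ref{camel-transformation-2} for all $i$ and converts the hole disk into an ellipse of area $\epsilon$ already contained in the strip $\{x_{n}\in(-1/2,1/2)\}$. The reshaping of that ellipse into the rectangle $(-1/2,1/2)\times(-\epsilon/2-\delta,\epsilon/2+\delta)$ is then a tame isotopy \emph{of the strip} with strict area slack. Finally the lifting step uses the Hamiltonian $H=f(x_{1})x_{n}$, whose flow is $f(x_{1})\partial_{y_{n}}+f'(x_{1})x_{n}\partial_{y_{1}}$: it never changes any $x_{i}$, and crucially it \emph{vanishes where $x_{1}=0$}, so the points constrained by \ref{camel-transformation-4} are untouched while far-away points are lifted to satisfy \ref{camel-transformation-3}.

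Concrete gaps in your approach. First, you only scale $(x_{i},y_{i})$ for $i=2,\dots,n-1$, so after your first stage the projection of $K$ to the $(x_{n},y_{n})$-plane can still be far wider than the strip $\{x_{n}\in(-1/2,1/2)\}$. Your construction of $\Phi$ (slicing map $+$ compactly supported Moser extension $+$ lifting Hamiltonian) does not compress $x_{n}$: the Moser extension is the identity far from the hole disk, and the lifting step, as you describe it, only increases $y_{n}$. Condition \ref{camel-transformation-2} for $i=n$ is therefore not established for all of $K$. Second, the slicing map $(x,y)\mapsto(Lx/\sqrt{\pi^{-1}\epsilon-y^{2}},\,L^{-1}\int_{0}^{y}\sqrt{\pi^{-1}\epsilon-s^{2}}\,ds)$ has derivative blowing up as $y^{2}\to\pi^{-1}\epsilon$ (and is discontinuous at the two poles $(0,\pm\sqrt{\pi^{-1}\epsilon})$ of the boundary circle); it is a volume-preserving diffeomorphism of the \emph{open} disk onto the open rectangle, but it does not extend to a diffeomorphism of a neighborhood of $\bar{D}$, so Moser's trick cannot be applied to it directly. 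Relatedly, you chose $4Lh=\epsilon$ (exact area match), which leaves no slack for any smooth embedding; the paper deliberately uses a target rectangle of strictly larger area. Third, doing the lifting with a Hamiltonian depending only on $(x_{n},y_{n})$ means you cannot distinguish the points with $x_{1}=0$ (which must not be pushed above $y_{n}=\epsilon/2+\delta$) from the others; you must therefore keep the entire rectangle fixed and funnel the low points around it through the narrow gaps $\{L<|x_{n}|<1/2\}$, all while staying inside the strip. This is much more delicate than the paper's $f(x_{1})x_{n}$ trick, and you do not explain how the funnelling is arranged. If you want to pursue the 2D-only route, you would need at minimum to (i) first scale $(x_{n},y_{n})$ so that $\pi_{n}(K)$ already lies inside the strip, (ii) use a target rectangle of area strictly larger than $\epsilon$, and (iii) replace the singular slicing map by a smooth reshaping with slack; even then the lifting step is harder than you acknowledge, and the paper's use of $x_{1}$ is the cleaner way out.
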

\begin{proof}
  Condition \ref{camel-transformation-1} is already satisfied. Condition \ref{camel-transformation-2} is easily satisfied using the transformation $(x_{i},y_{i})\mapsto (a^{-1}x_{i},ay_{i})$ where $a>\max 2\abs{x_{i}}$, for each $i=2,\dots,n$, where the maximum is taken over the image of the family of balls. Let us denote $\rho_{a}$ the result of this transformation. It is convenient to also take $\pi a^{2}>4\epsilon$.

  Now we focus on attaining \ref{camel-transformation-4}. The transformation used to achieve \ref{camel-transformation-2} implies that:
  \begin{equation*}
    x_{1}\circ \rho_{a}(F(t,z))=0\implies \rho_{a}(F(t,z))\subset \set{a^{2}x_{n}^{2}+a^{-2}y_{n}^{2}\le \pi^{-1}\epsilon}=R.
  \end{equation*}
  Since the projection $R_n$ of $R$ to $x_{n}, y_{n}$-plane is an ellipse of area $\epsilon$ and is contained in $S=\set{x_{n}\in (-1/2,1/2)}\subset \R^2$, one can find a symplectic isotopy $\varphi_{s}$ of the strip $S$ so $\varphi_{1}$ sends $R_n$ into the rectangle: $$(-1/2,1/2)\times (-\epsilon/2-\delta,\epsilon/2+\delta),$$ since the rectangle has strictly larger area than the ellipse. One can arrange that $\varphi_{1}(0)=0$. The isotopy $\varphi_{s}$ extends as a product isotopy $\id\times \varphi_{s}$ to all of $W$. Then $\varphi_{1}\circ \rho_{a}\circ F$ satisfies \ref{camel-transformation-1}, \ref{camel-transformation-2}, \ref{camel-transformation-4}. It remains only to ensure \ref{camel-transformation-3} holds.

  For this step, we consider the isotopy generated by $H=f(x_{1})x_{n}$, where $f(0)=0$ and $f'(0)=0$, and $f(x_{1})=1$ outside a small neighborhood $U$ of $x_{1}=0$. The Hamiltonian vector field generated by this function is:
  \begin{equation*}
    f(x_{1})\partial_{y_{n}}+f'(x_{1})x_{n}\partial_{y_{1}}
  \end{equation*}
  This vector field equals $\bd_{y_{n}}$ outside of $U$, and equals $0$ when $x_{1}=0$. We pick the neighborhood $U$ small enough that:
  \begin{equation*}
    \varphi_{1}\circ \rho_{a}\circ x_{1}\in U\implies \varphi_{1}\circ \rho_{a}\circ y_{n}\ge -\epsilon/2-2\delta,
  \end{equation*}
  which can be achieved by a simple compactness argument, since we already know \ref{camel-transformation-4} holds.

  The flow by $X_{H}$ increases the $y_{n}$ coordinates when $x_{1}\not\in U$;  thus flowing long enough will therefore satisfy condition \ref{camel-transformation-3} everywhere. Denoting by $\eta_{1}$ the long time-flow by $X_{H}$, we set $\psi_{1}=\eta_{1}\circ \varphi_{1}\circ \rho_{a}$ to complete the proof.
\end{proof}
Henceforth replace $F$ by $\psi_{1}\circ F$.

\begin{proof}[Proof of Proposition \ref{proposition:camel}]
Introduce the domain $\Omega\subset T^{*}T^{n}$ determined by the conditions:
\begin{enumerate}[label=(\alph*)]
\item\label{domain-camel-1} $p_{n}\ge -\epsilon/2-\delta$,
\item\label{domain-camel-2} $q_{1}=0\implies p_{n}\le \epsilon/2+2\delta$,
\end{enumerate}
using canonical coordinates $(p_{1},q_{1},\dots,p_{n},q_{n})$. Define the map:
\begin{equation*}
  (x_{1},y_{1},\dots,x_{n},y_{n})\mapsto T^{*}T^{N}\text{ by }x_{i}=q_{i}\text{ and }y_{i}=-p_{i}.
\end{equation*}
The family of balls $F$ projects to a family $F:\R/\Z\times B(a)\to T^{*}T^{n}$ which restricts to embeddings $z\mapsto F(t,z)$, since we have arranged that $F$ is valued in the region $x_{i}\in (-1/2,1/2)$ for $i=2,\dots,n$. Moreover, by construction, $F$ is actually a family of balls in $\Omega$. Thus the existence of $F$ implies $\mathrm{Gr}(f,\Omega)\ge a$, where $q\circ f(t)=(t,0,\dots,0)$.

On the other hand, Theorem \ref{theorem:prod-torus} with $k=1$ and $d=n$ yields:
\begin{equation*}
  \mathrm{Gr}(f,\Omega)\le \epsilon+3\delta,
\end{equation*}
as can be seen by approximating the domain $\Omega$ satisfying \ref{domain-camel-1} and \ref{domain-camel-2} from within by smooth fiberwise starshaped domains. Thus we conclude $a\le \epsilon+3\delta$. Since $\delta$ could be taken arbitrarily small, we have $a\le \epsilon$, as desired.
\end{proof}

\subsubsection{Proof of Theorem \ref{theorem:non-orientable-surface}}
\label{sec:proof-non-orientable-surface}

The proof is straightforward application of Theorem \ref{theorem:corollary} using a simple string topology computation: if $q$ is a loop such that $q^{*}T\Sigma$ is non-orientable, then $\Delta(q)\ast \Delta(\bar{q})=\mathrm{pt}$ in $H(\Lambda_{E})$, where $\mathrm{pt}$ denotes the class represented by a single constant loop. Since $\mathrm{pt}=\mathfrak{i}(T^{*}M_{\mathrm{pt}})$, and $T^{*}M_{\mathrm{pt}}$ has non-zero homological intersection number with the zero section, the desired result $\mathrm{Gr}([\Sigma],\Omega)\le E$ follows. \hfill$\square$

\section{The Floer cohomology persistence module}
\label{sec:floer-cohom-pers}

In this section we explain the aspects of our paper pertaining to the Floer cohomology persistence module, with the goal of proving Theorem \ref{theorem:main-floer}.

\subsection{Hamiltonian functions and isotopies}
\label{sec:hamilt-functions-isotopies}

\subsubsection{Class of Hamiltonian functions}
\label{sec:class-hamilt-funct}

As in \S\ref{sec:introduction}, throughout we fix a Liouville domain $\bar{\Omega}$, and denote by $W$ its completion. The structure of $W$ as a completion yields a distinguished ``convex end'' which is symplectomorphic to the positive half of the symplectization of $\bd\Omega$ with the contact structure $\ker(\lambda|_{\bd\Omega})$. This yields a function $r$ which is one-homogeneous with respect to the Liouville flow in the convex end, and which satisfies $\bd\Omega=\set{r=1}$. It is well-known that the Hamiltonian vector field $X_{r}$ is equivariant with respect to the Liouville flow, and restricts to $\bd\Omega$ as the Reeb vector field for the contact form $\lambda|_{\bd\Omega}$.

Let us introduce the notation $\Omega(r_{0})=\set{r< r_{0}}$, so $\Omega=\Omega(1)$.

As in \S\ref{sec:floer-cohom-pers-1}, this distinguishes a class of Hamiltonian functions: define $\mathscr{H}$ to be those smooth functions $H:W\to \R$ such that $H=ar$ holds when $r\ge r_{0}$, for some $r_{0}$ (which depends on $H$).

\subsubsection{Smooth families of Hamiltonians}
\label{sec:smooth-famil-hamilt}

One says that a family $H_{\tau}\in \mathscr{H}$, where $\tau$ is valued in a smooth manifold $T$ (potentially with boundary and corners) is \emph{smooth} provided:
\begin{enumerate}
\item the map $(\tau,w)\in T\times W\mapsto H_{\tau}(w)$ is smooth,
\item $H_{\tau}(w)=c_{\tau}r$ for $r\ge r_{0}$, for some $r_{0}$ which depends continuously on $\tau$, and where the slope $c_{\tau}$ varies smoothly with $\tau$.
\end{enumerate}

\subsection{Hamiltonian connections on surfaces}
\label{sec:hamilt-conn-surf}

The Floer theory PDEs used in this text are based on the notion of a \emph{Hamiltonian connection} as described in \cite[\S8]{mcduff-salamon-book-2012}; see also \cite[\S2.2.3]{alizadeh-atallah-cant-arXiv-2023} which works in a similar context to the present paper.

\subsubsection{Domains}
\label{sec:domains}

Let us agree that a \emph{domain} is a compact smooth surface $\Sigma$ with boundary $\bd\Sigma$ and the data of:
\begin{enumerate}
\item\label{domain1} a complex structure $j$, namely a smooth section of $\mathrm{End}(T\Sigma)$ whose square is $-1$, and,
\item\label{domain2} two collections $\Gamma_{\pm}$ of $j$-holomorphic embeddings of a closed disk $D(1)\to \Sigma$ whose images are mutually disjoint, and are also disjoint from $\bd\Sigma$.
\end{enumerate}
A \emph{smooth family of domains}, parametrized by $\sigma\in S$, is simply the parametric version of \ref{domain1} and \ref{domain2}; more prosaically, this means that one considers the product $S\times \Sigma$, with projection $\pi$ onto $\Sigma$, and picks:
\begin{enumerate}[label=(\arabic*$^{\prime}$)]
\item a smooth section $j$ of $\mathrm{End}(\pi^{*}T\Sigma)$ whose square is $-1$, and
\item two collections $\Gamma_{\pm}$ of smooth embeddings $S\times D(1)\to S\times \Sigma$ which sends $\set{\sigma}\times D(1)$ into $\set{\sigma}\times \Sigma$ and which satisfies \ref{domain2} when restricted to the disk $\set{\sigma}\times D(1)$, for the appropriate complex structure.
\end{enumerate}
A prototypical example of such a family is given by $\Sigma=\mathbb{CP}^{1}$, and $S$ the product of $k$-copies of $T\mathbb{CP}^{1}$ with an appropriate closed set removed. To each collection of $k$ non-zero tangent vectors, there exist $k$ uniquely determined biholomorphisms of $\mathbb{CP}^{1}$ which send $(0,1)\in T\mathbb{CP}^{1}$ to the chosen tangent vectors. These biholomorphisms take $D(1)\subset \mathbb{CP}^{1}$ onto $k$ biholomorphically embedded disks. There is a closed subset of $S$ for which these $k$ disks intersect; after removing this closed subset, one has a smooth family.

\subsubsection{Remark on notation and terminology}
\label{sec:remark-notation-terminology}

If either $j$ or $\Gamma_{\pm}$ is not germane to the discussion, we will suppress it from the notation, and simply refer to $S\times \Sigma$ as a family of domains.

It is also convenient to sometimes refer to $\Gamma_{\pm}$ as punctures, in which case we forget the holomorphic embedding and consider only the evaluation at the center of the disk. This produces a submanifold $\Gamma_{\pm}\subset S\times \Sigma$ of codimension $2$ which is a disjoint union of sections of $S\times \Sigma\to S$; we call a connected component $\zeta\subset \Gamma_{\pm}$ a puncture.

Frequently it is expedient to define $\Sigma$ as the already punctured surface: $\Sigma=\R\times \R/\Z$, $\Sigma=\C$, or $\Sigma=\C\setminus \set{z_{1},z_{2}}$. In these cases, we should understand the underlying closed surface to be the Riemann sphere $\mathbb{C}P^{1}$.

At other times, we will refer to $\Gamma_{\pm}$ as cylindrical ends, in which case we precompose the holomorphic embedding of the disk $D(1)\to \Sigma$ with the conformal reparametrization $(s,t)\mapsto e^{\mp 2\pi(s+it)}$. Note that the ends in $\Gamma_{+}$ are modelled on $[0,\infty)\times \R/\Z$ while those in $\Gamma_{-}$ are modelled on $(-\infty,0]\times \R/\Z$. We will sometimes appeal to cylindrical coordinates near a puncture $\zeta\subset \Gamma_{\pm}$, and this is to be understood in the context of this remark.

\subsubsection{Connection 1-forms}
\label{sec:connection-1-forms}

Let $(S\times \Sigma,\Gamma_{\pm})$ be a smooth family of domains. A connection 1-form is a singular 1-form $\mathfrak{a}$ on $S\times \Sigma \times W$ such that:
\begin{enumerate}
\item\label{c1f1} above a compact coordinate chart $(\sigma, z=x+iy)$ on $S\times \Sigma$ disjoint from the punctures, $\mathfrak{a}=H_{\sigma,x,y}\d x+K_{\sigma,x,y}\d y$, where $H,K$ are smooth families in $\mathscr{H}$;
\item\label{c1f2} for each puncture $\zeta\subset \Gamma_{\pm}$, one has $\mathfrak{a}=H_{\zeta,t}\d t$ near $\zeta$, where $H_{\zeta,t}$ is a smooth family in $\mathscr{H}$ on $\R/\Z$; this uses the appropriate cylindrical coordinates near $\zeta$. Moreover, the Hamiltonian isotopy generated by $H_{\zeta,t}$ has non-degenerate 1-periodic orbits.
\end{enumerate}
One should think that $\mathfrak{a}$ has prescribed singularities near the punctures, with a requirement on the dynamics of the induced asymptotic Hamiltonian system. Note that, because the asymptotic Hamiltonian system commutes with the Liouville flow outside of a compact set, the requirement that the orbits are non-degenerate forces all of the orbits to remain in a compact set, and hence there are only finitely many orbits.

\emph{Remark}. One slightly subtle requirement which plays a role in our proof of the maximum principle, Proposition \ref{proposition:maximum-principle}, is the following: the connection one form should appear in the form $\mathfrak{a}=H_{\zeta,t}\d t$, where $H_{\zeta,t}$ is non-degenerate, on cylindrical ends, \emph{and on some number of finite length cylinders}, such that the domain obtained by removing the cylindrical ends and the finite length cylinders is contained in a fixed compact subset of $\Sigma$. This is because our maximum principle is based on confining Floer cylinders, and then bounding the distance of other points to a region where $\mathfrak{a}=H_{\zeta,t}\d t$. This set-up is relevant when, for instance, gluing together two continuation cylinders.

\subsubsection{Perturbation 1-forms}
\label{sec:perturbation-1-forms}

Connection 1-forms are used to define a certain PDE. It is well-understood that Floer theory relies on the moduli spaces of solutions to this PDE being transversally cut out. We introduce in this section a perturbation term $\mathfrak{p}$ to obtain the requisite transversality.

Given a smooth family of domains $(S\times \Sigma,\Gamma_{\pm})$, a perturbation 1-form is a smooth $1$-form $\mathfrak{p}$ on $S\times \Sigma\times W$ such that:
\begin{enumerate}
\item above a compact coordinate chart $(\sigma,z=x+iy)$ on $S\times \Sigma$ disjoint from the punctures, $\mathfrak{p}=h_{\sigma,x,y}\d x+k_{\sigma,x,y}\d y$, where $h,k$ are smooth and uniformly bounded in $C^{1}$ families of functions $W\to \R$, and,
\item $\mathfrak{p}$ vanishes above a neighborhood of the punctures $\Gamma_{\pm}$.
\end{enumerate}
The boundedness of the functions appearing in $\mathfrak{p}$ is used in an essential way when establishing a priori estimates on the energy integrals in \S\ref{sec:energy-integral}; see the comment at the end of \S\ref{sec:priori-energy-estim}.

\subsubsection{Families of connection and perturbation 1-forms}
\label{sec:famil-conn-pert}

Let $S\times \Sigma$ be a smooth family of domains. One can pull this back to $T\times S\times \Sigma$ for any manifold $T$ to obtain a new family of domains. A connection or perturbation 1-form on $T\times S\times \Sigma\times W$ is, by definition, a smooth family of connection or perturbation 1-forms on $S\times \Sigma\times W$ parametrized by $T$.

In this sense, we may speak of homotopies of 1-forms by setting $T=[0,1]$.

\subsubsection{Connection associated to a 1-form}
\label{sec:conn-assoc-1}

Let $\mathfrak{a},\mathfrak{p}$ be connection and perturbation 1-forms on $S\times \Sigma\times W$. Define:
\begin{equation*}
  \mathfrak{H}=TW^{\perp \Omega}\text{ where }\Omega=\mathrm{pr}_{W}^{*}\omega-\d\mathfrak{a}-\d\mathfrak{p}.
\end{equation*}
Then $\mathfrak{H}$ is an Ehresmann connection on $S\times \Sigma\times W\to S\times \Sigma$.

The coordinate distribution $TS$ is contained in $\mathfrak{H}$. On the other hand, the horizontal lifts of $\bd_{x},\bd_{y}\subset T\Sigma$ are given by:
\begin{equation*}
  \bd_{x}^{\mathfrak{H}}=\bd_{x}+X_{H+h}\text{ and }\bd_{y}^{\mathfrak{H}}=\bd_{y}+X_{K+k},
\end{equation*}
where $\mathfrak{a}+\mathfrak{p}=(H+h)\d x+(K+k)\d y$.

The pullback of $\mathfrak{H}$ to $\set{\sigma}\times \Sigma\times W\to \set{\sigma}\times \Sigma$ will be denoted by $\mathfrak{H}_{\sigma}$ in the sequel, and should be considered as a family of connections on $\Sigma\times W\to \Sigma$.

\subsection{A general form of Floer's equation}
\label{sec:general-form-floers}

In this section, we will explain a general form of Floer's equation which will specialize to the various cases used in the proofs of our main results.

\subsubsection{Almost complex structures}
\label{sec:almost-compl-struct}

Let $S\times \Sigma$ be a family of domains, with punctures $\Gamma_{\pm}$. An almost complex structure on $S\times \Sigma\times W$ is a section $J$ of the pullback bundle $\mathrm{pr}_{W}^{*}\mathrm{End}(TW)\to S\times \Sigma\times W$ satisfying $J^{2}=-1$.

It is convenient to think of this as a family $J_{\sigma,z}$ of almost complex structures on $W$ parametrized by $(\sigma,z)\in S\times \Sigma$. We require that:
\begin{enumerate}
\item For $(\sigma,z)$ near a puncture $\zeta\subset \Gamma_{\pm}$, we have $J_{\sigma,z}=J_{\zeta}$ for an almost complex structure $J_{\zeta}$ on $W$.
\item $J$ is $\omega$-tame, i.e., $\omega(v,Jv)$ is a positive quadratic form for $v\in TW$.
\item Each $J_{\sigma,z}$ is invariant under the Liouville flow outside of $\Omega\subset W$.
\end{enumerate}

\subsubsection{Lagrangian boundary conditions}
\label{sec:lagr-bound-cond}

Suppose that the surface $\Sigma$ has boundary $\bd\Sigma$. For the purposes of this text, a \emph{Lagrangian boundary condition} is a smoothly varying family of Lagrangian submanifolds $L_{\sigma,z}\subset W$ where $(\sigma, z)\in S\times \bd \Sigma$. Here ``smoothly varying'' means that:
\begin{equation*}
  L=\set{(\sigma,z,p):p\in L_{\sigma,z}}\subset S\times \bd\Sigma \times W
\end{equation*}
should be a smooth submanifold. A submanifold is required to be properly embedded and without boundary. We require two additional properties:
\begin{enumerate}
\item $L_{\sigma,z}$ is weakly exact, i.e., $\omega$ vanishes on disks with boundary on $L_{\sigma,z}$.
\item outside of $\Omega$, the Liouville vector field is tangent to $L_{\sigma,z}$.
\end{enumerate}

In this text, Lagrangian boundary conditions will only be used in the comparison between string topology of $M$ and Floer cohomology of $T^{*}M$, and we will have that $L_{\sigma,z}=T^{*}M_{q(\sigma,z)}$ where $q(\sigma,z)$ depends smoothly on $\sigma,z$.

\subsubsection{Floer's equation for a Hamiltonian connection}
\label{sec:floers-equation-general}

The data required to formulate Floer's equation is:
\begin{enumerate}
\item\label{FEdata1} a family of domains $(S\times \Sigma,\Gamma_{\pm},j)$ as in \S\ref{sec:domains},
\item\label{FEdata2} a connection $1$-form $\mathfrak{a}$ and perturbation $1$-form $\mathfrak{p}$ on $S\times \Sigma \times W$ as in \S\ref{sec:connection-1-forms} and \S\ref{sec:perturbation-1-forms},
\item\label{FEdata3} an almost complex structure $J$ on $S\times \Sigma\times W$ as in \S\ref{sec:almost-compl-struct},
\item\label{FEdata4} a Lagrangian boundary condition $L$ for $S\times \Sigma\times W$ as in \S\ref{sec:lagr-bound-cond}.
\end{enumerate}

The data of \ref{FEdata2} produces an Ehresmann connection $\mathfrak{H}$ for the fiber bundle $S\times \Sigma\times W\to S\times \Sigma$. For each $\sigma\in S$, we consider the Ehresmann connection $\mathfrak{H}_{\sigma}$ obtained by pullback to $\set{\sigma}\times \Sigma \times W\to \set{\sigma}\times \Sigma$. Then $\mathfrak{H}_{\sigma}$ is identified with $T\Sigma$ via the projection map, and we use \ref{FEdata1} and \ref{FEdata3} to define:
\begin{equation*}
  \tilde{J}_{\sigma,z,w}=\left[
    \begin{matrix}
      J_{\sigma,z,w}&0\\
      0&j_{\sigma,z}
    \end{matrix}  \right]\text{ on }TW_{w}\oplus (\mathfrak{H}_{\sigma})_{z,w}=T(\Sigma\times W)_{z,w}.
\end{equation*}
This is considered as a family of almost complex structures on $\Sigma\times W$.

A pair $(\sigma,u)$ solves Floer's equation for this data if:
\begin{equation*}
  \left\{
    \begin{aligned}
      &u:\Sigma\to W\text{ is a smooth map},\\
      &\text{the section }z\mapsto (z,u(z))\text{ is }(j_{\sigma},\tilde{J}_{\sigma})\text{-holomorphic},\\
      &u(z)\in L_{\sigma,z}\text{ for all }z\in \bd \Sigma.
    \end{aligned}
  \right.
\end{equation*}
In the sequel, we will say that such $(\sigma,u)$ solves \S\ref{sec:floers-equation-general}.

\subsubsection{Regularity}
\label{sec:regularity}

We assume the reader is familiar with the standard elliptic regularity results, in particular, the following fact: if $(\sigma_{n},u_{n})$ is a sequence of solutions to \S\ref{sec:floers-equation-general} and:
\begin{enumerate}
\item $\sigma_{n}$ converges to $\sigma_{\infty}\in S$,
\item\label{reg2} the image $u_{n}(\Sigma)$ remains in a compact set,
\item\label{reg3} the first derivatives of $u_{n}:\Sigma\to W$ are bounded on compact subsets,
\end{enumerate}
then a subsequence of $u_{n}$ converges in the $C^{\infty}_{\mathrm{loc}}$-topology to a smooth map $u_{\infty}$ such that $(\sigma_{\infty},u_{\infty})$ solves \S\ref{sec:floers-equation-general}.

This result is true because of the general theory of pseudo-holomorphic curves in $\Sigma\times W$, and because the submanifold $L_{\sigma}\subset \Sigma\times W$ in \S\ref{sec:lagr-bound-cond} is totally real. We refer the reader to \cite[\S B]{mcduff-salamon-book-2012} for further details.

To establish \ref{reg2} and \ref{reg3} we rely on a priori estimates on the energy integral.

\subsubsection{Energy integral}
\label{sec:energy-integral}

Let $u:\Sigma\to W$ be a smooth map, and let $\mathfrak{H}$ be an Ehresmann connection on $\Sigma\times W$. Consider the 2-form on $\Sigma$ defined by:
\begin{equation*}
  \omega(\Pi_{\mathfrak{H}}\d v(-),\Pi_{\mathfrak{H}}\d v(-)),
\end{equation*}
where $v(z)=(z,u(z))$ and $\Pi_{\mathfrak{H}}:T(\Sigma\times W)\to TW$ is the projection whose kernel is $\mathfrak{H}$. The integral of this 2-form is called the \emph{energy} of the map $u$ relative the connection $\mathfrak{H}$.

If $(\sigma,u)$ solves \S\ref{sec:floers-equation-general}, and $\mathfrak{H}=\mathfrak{H}_{\sigma}$, then the above 2-form is non-negative, since $J$ is assumed to be $\omega$-tame. We say that $(\sigma,u)$ has \emph{finite energy} provided the energy of $u$ relative $\mathfrak{H}_{\sigma}$ is finite. The moduli space of all finite energy solutions of \ref{sec:floers-equation-general} plays a central role in this text, and might be denoted:
\begin{equation*}
  \mathscr{M}(S\times \Sigma\times W,\Gamma_{\pm},j,\mathfrak{a},\mathfrak{p},J,L);
\end{equation*}
however, we will typically suppress arguments from the notation and simply use the symbol $\mathscr{M}$ as the required data can be inferred from the context.

The finite energy condition implies the following results:

\begin{proposition}[Asymptotic convergence]\label{proposition:asymptotic}
  If $(\sigma,u)$ solves \S\ref{sec:floers-equation-general} and has finite energy, then, in cylindrical coordinates $z=s+it$ near the puncture $\zeta$, $u$ is asymptotic to a solution of the $s$-independent equation:
  \begin{equation*}
    \bd_{s}u+J_{\zeta}(u)(\bd_{t}u-X_{\zeta,t}(u))=0,
  \end{equation*}
  where $J_{\sigma,z}=J_{\zeta}$ and $\mathfrak{a}=H_{\zeta,t}\d t$ near $\zeta$. Moreover, this asymptotic solution satisfies $\bd_{s}u=0$, and is thus tracing out an orbit of the vector field $X_{\zeta,t}$.
\end{proposition}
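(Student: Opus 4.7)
The plan is to follow the standard four-step strategy for asymptotic convergence at cylindrical ends in Floer theory.

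First, I would unpack the PDE near the puncture $\zeta$ in cylindrical coordinates $z = s+it$. By \S\ref{sec:connection-1-forms} and \S\ref{sec:perturbation-1-forms}, the connection one-form reduces to $\mathfrak{a} = H_{\zeta,t}\,dt$ and the perturbation one-form vanishes there, so the recipe in \S\ref{sec:floers-equation-general} directly yields the advertised $s$-independent equation
\[
  \bd_s u + J_\zeta(u)\bigl(\bd_t u - X_{\zeta,t}(u)\bigr) = 0
\]
on the cylindrical end.

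Second, I would establish a uniform $C^0$ bound for $u$ over the cylindrical end. Since $H_{\zeta,t}$ agrees with $c_\zeta r$ outside a compact set and $J_\zeta$ is invariant under the Liouville flow there, the composition $r \circ u$ satisfies a subharmonic-type inequality wherever $r \circ u$ is large; this is the usual maximum-principle argument underlying Proposition \ref{proposition:maximum-principle}, and it confines $u$ to a compact subset of $W$ on the end.

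Third, I would extract a limit. Finite energy forces the energy of $u$ on the slab $[s-1,s+1] \times \R/\Z$ to tend to $0$ as $\abs{s} \to \infty$. Combined with the $C^0$ bound and the exactness of $\omega$ on $W$ (which rules out sphere bubbling), the standard $\varepsilon$-regularity theorem gives uniform gradient bounds on far-out slabs. The translated sequence $u_n(s,t) = u(s+s_n, t)$, for $s_n \to \infty$, is then precompact in $C^\infty_{\mathrm{loc}}(\R \times \R/\Z, W)$, and a subsequence converges to a finite-energy solution $v$ of the $s$-independent equation. Its energy coincides with the tail energy of $u$, which vanishes, so $\bd_s v \equiv 0$ and $t \mapsto v(t)$ traces a $1$-periodic orbit of $X_{\zeta,t}$.

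The main obstacle will be upgrading this subsequential convergence to genuine asymptotic convergence to a \emph{single} orbit --- a priori, distinct translation sequences could limit to distinct orbits. For this I would invoke the non-degeneracy of $1$-periodic orbits of $H_{\zeta,t}$ postulated in \S\ref{sec:connection-1-forms}: the associated asymptotic operator is then invertible, and the standard exponential-decay argument (e.g.\ as in \cite[\S B]{mcduff-salamon-book-2012}) shows that $u(s,\cdot)$ converges in $C^\infty(\R/\Z)$ to a unique $1$-periodic orbit at an exponential rate as $\abs{s} \to \infty$.
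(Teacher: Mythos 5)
Your proposal is correct and follows the standard four-step strategy (reduce to the $s$-independent equation on the end, confine to a compact set via the maximum principle, extract a $C^\infty_{\mathrm{loc}}$ limit with vanishing tail energy, then use non-degeneracy of the asymptotic orbit to promote subsequential to genuine exponential convergence). The paper does not give its own argument for this proposition --- the proof consists solely of a citation to \cite{salamon-notes-1997}, which contains precisely the argument you sketch --- so your proposal supplies the details that the paper delegates to the reference, with no substantive divergence in approach.
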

\begin{proof}
  See, e.g., \cite{salamon-notes-1997}.
\end{proof}

\begin{proposition}[Gradient bound]\label{proposition:gradient-bound}
  If $(\sigma_{n},u_{n})$ is a sequence of solutions of \S\ref{sec:floers-equation-general}, with uniformly bounded energy, and $\sigma_{n}$ converges, then $u_{n}$ has bounded first derivatives with respect to a Riemannian metric $g$ on $W$ which is invariant under the Liouville flow in the end, and a metric on $\Sigma$ which is cylindrical near the punctures.
\end{proposition}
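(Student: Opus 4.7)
The plan is the standard Floer-theoretic gradient bound by rescaling (bubbling). Suppose, for contradiction, that no uniform derivative bound holds, so that after passing to a subsequence there exist $z_n \in \Sigma$ with $R_n := |du_n(z_n)|_g \to \infty$. My first task is to ensure that the images $u_n(\Sigma)$ all remain in a fixed compact subset $K \subset W$: this is the content of the maximum principle (Proposition~\ref{proposition:maximum-principle}) applied to $r \circ u_n$ in the Liouville end. The maximum principle works because $J$ is invariant under the Liouville flow for $r \ge r_0$, the connection one-form $\mathfrak{a}$ has the form $c_\sigma\, r\, \d t$ on cylindrical ends and on the auxiliary finite-length cylinders mentioned in the remark after \S\ref{sec:connection-1-forms}, and the perturbation $\mathfrak{p}$ has uniformly $C^1$-bounded Hamiltonians. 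Together these force $r \circ u_n$ to be bounded by a constant depending only on the convergent asymptotic slopes and the $C^1$-norm of $\mathfrak{p}$.

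Next I would apply Hofer's lemma to produce modified points $z_n' \in \Sigma$ and radii $\epsilon_n \to 0$ with $\epsilon_n R_n \to \infty$ and $|du_n|_g \le 2 R_n$ on a geodesic disk of radius $\epsilon_n$ about $z_n'$. Rescaling by $v_n(w) := u_n(\exp_{z_n'}(w/R_n))$ produces maps from disks of radius $\epsilon_n R_n$ in $T_{z_n'}\Sigma$ into $K$ with $|dv_n| \le 2$. These satisfy a rescaled version of the equation of \S\ref{sec:floers-equation-general} in which the Hamiltonian contributions of $\mathfrak{a}$ and $\mathfrak{p}$ appear with a prefactor $R_n^{-1}$ and therefore vanish in the limit. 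Using $\sigma_n \to \sigma_\infty$, the uniform energy bound, and the elliptic regularity recalled in \S\ref{sec:regularity}, one extracts a subsequential $C^\infty_{\mathrm{loc}}$-limit $v_\infty$ which is a non-constant $J_{\sigma_\infty,z_\infty}$-holomorphic map of finite energy, defined either on $\C$ (if $z_n'$ stays in the interior of $\Sigma$ away from punctures) or on a closed half-plane (if $z_\infty \in \bd\Sigma$).

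The contradiction is now extracted from exactness. In the interior case, removal of singularities extends $v_\infty$ to a non-constant holomorphic sphere in $W$; since $\omega = \d\lambda$ is exact on $W$, every holomorphic sphere has zero symplectic area, hence is constant. In the boundary case, removal of singularities produces a non-constant holomorphic disk with boundary on a weakly exact Lagrangian $L_{\sigma_\infty,z_\infty}$, which likewise has zero area and is therefore constant. The remaining possibility, that $z_n'$ approaches a puncture $\zeta \in \Gamma_\pm$, is ruled out by Proposition~\ref{proposition:asymptotic}: in cylindrical coordinates the finite-energy solutions $u_n$ converge uniformly in $n$ to a finite set of non-degenerate $1$-periodic orbits of $X_{\zeta,t}$, yielding an a priori $C^1$-bound in a neighborhood of $\zeta$ which already precludes blowup there.

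I expect the main obstacle to be the confinement step. The maximum principle for $r \circ u_n$ is delicate because the Floer equation is perturbed both by $\mathfrak{a}$ (which is only asymptotically of the form $cr\,\d t$) and by $\mathfrak{p}$ (with generic but $C^1$-bounded Hamiltonians), and one must verify that a subsolution estimate of the form $\Delta(r \circ u_n) \ge -C$ survives in the regions where these perturbations are active. This is precisely the issue flagged in the remark after \S\ref{sec:connection-1-forms}, and it is the reason the hypotheses in \S\ref{sec:perturbation-1-forms} demand uniform $C^1$-boundedness. Once the images are confined and the rescaling is set up, the remaining argument follows the standard playbook of bubbling in exact symplectic manifolds with weakly exact Lagrangians.
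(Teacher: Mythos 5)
Your overall route is the same one the paper has in mind: the paper's proof of this proposition is a one-sentence pointer to ``standard bubbling analysis,'' relying on exactness of $\omega$, weak exactness of the Lagrangian boundary conditions, and the Liouville-invariance of $J$ in the end. Your Hofer-rescaling argument, with the interior/boundary bubble dichotomy excluded by exactness and weak exactness respectively, is precisely that standard argument, so the core of the proposal is sound.

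Two points deserve correction. First, your treatment of blowup near a puncture $\zeta\in\Gamma_\pm$ does not follow from Proposition~\ref{proposition:asymptotic} as you claim: that proposition gives asymptotic convergence for \emph{each} $u_n$ individually, with no uniformity in $n$, so it cannot by itself supply an a priori $C^1$-bound in a neighborhood of $\zeta$ valid for the whole sequence. The correct (and simpler) handling is that the same rescaling argument applies verbatim on a cylindrical end: there $\mathfrak{a}=H_{\zeta,t}\,\d t$, the Hamiltonian term acquires a factor $R_n^{-1}$ under rescaling and drops out, and one again obtains a non-constant finite-energy holomorphic plane, hence a sphere, contradicting exactness --- no separate a priori estimate at the puncture is needed. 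Second, the $C^1$-boundedness of the functions in $\mathfrak{p}$ is used, per the paper's own explanation in \S\ref{sec:perturbation-1-forms} (referencing \S\ref{sec:priori-energy-estim}), to obtain the a priori \emph{energy} estimates (Lemma~\ref{lemma:a-has-then-p-has}), not in the maximum-principle step; and the maximum principle in the paper is proved by a different ``soft'' argument (confining Floer cylinders and bounding distances) rather than by a subsolution inequality $\Delta(r\circ u_n)\ge -C$ as you suggest. Finally, note a structural difference: you invoke Proposition~\ref{proposition:maximum-principle} first to confine the images, whereas the paper's remark that the argument is ``simplified'' by Liouville-invariance of $J$ hints at handling escape to infinity directly within the bubbling analysis (bubbles in the end can be translated by the Liouville flow and still yield constant spheres by exactness of the symplectization form). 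Both orderings work provided the maximum principle does not itself rely on the gradient bound, which the cited ``soft argument'' does not, but being aware that your argument depends on this independence is worthwhile.
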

\begin{proof}
  Because $W$ is symplectically aspherical, and the Lagrangians appearing as boundary conditions are weakly exact, the stated result follows from standard bubbling analysis. The argument is simplified by the fact that the almost complex structures $J$ are invariant under the Liouville flow in the end. For details, we refer the reader to, e.g., \cite{brocic-cant-JFPTA-2024,alizadeh-atallah-cant-arXiv-2023}.
\end{proof}

\begin{proposition}[Maximum principle]\label{proposition:maximum-principle}
  If $(\sigma_{n},u_{n})$ is a sequence of solutions of \S\ref{sec:floers-equation-general} with uniformly bounded energy, and $\sigma_{n}$ converges, then $u_{n}(\Sigma)$ remains in a compact subset of $W$.
\end{proposition}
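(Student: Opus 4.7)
The plan is to combine the classical Floer maximum principle on cylindrical pieces with the gradient bound of Proposition~\ref{proposition:gradient-bound} on the compact complement, exploiting the decomposition of the connection form $\mathfrak{a}$ flagged in the remark of \S\ref{sec:connection-1-forms}.

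First I would fix, for each $\sigma$ in a small neighbourhood of the limit $\sigma_{\infty}$, the decomposition of $\Sigma$ into (i) the cylindrical ends near each puncture $\zeta \in \Gamma_{\pm}$, (ii) the finite-length cylinders on which $\mathfrak{a} = H_{\zeta,t}\,\d t$, and (iii) a compact complement $K \subset \Sigma$. Since the perturbation form $\mathfrak{p}$ vanishes on a neighbourhood of the punctures, on the ends Floer's equation reduces to the standard cylinder equation $\bd_{s}u + J_{\zeta}(u)(\bd_{t}u - X_{H_{\zeta,t}}(u)) = 0$ with $H_{\zeta,t} \in \mathscr{H}$, hence of the form $c_{\zeta} r$ for $r \ge r_{0}$, and $J_{\zeta}$ invariant under the Liouville flow for $r \ge r_{0}$. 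In this setting the function $r \circ u$ satisfies a well-known weak subharmonicity inequality on $\set{r \circ u > r_{0}}$, so $r \circ u$ cannot attain an interior maximum in this region. The same argument applies verbatim on the finite-length cylinders of item~(ii).

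Second, I would use the asymptotic convergence statement of Proposition~\ref{proposition:asymptotic}: on each cylindrical end $u_{n}$ is asymptotic to a $1$-periodic orbit of $X_{H_{\zeta,t}}$, and these orbits are non-degenerate, so they trace out a finite set of loops confined to a compact subset $K_{\zeta} \subset W$ independent of $n$ (non-degenerate $1$-periodic orbits of $X_{cr}$ cannot escape into the region $r \ge r_{0}$, where $X_{cr}$ is a Reeb-type vector field with no $1$-periodic orbits unless $c$ is a period; even then the orbits are contained in the level set $\set{r = r_{0}}$, which is compact). Combined with the maximum principle from the previous paragraph, this yields a uniform $C^{0}$ bound for $u_{n}$ on each cylindrical end and on each finite-length cylinder, the bound being controlled by the values of $r \circ u_{n}$ on the two boundary circles of the cylinder, which themselves are controlled by the asymptotic orbits and by the values on $\bd K$.

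Finally, on the compact complement $K$ I would invoke Proposition~\ref{proposition:gradient-bound}: the first derivatives of $u_{n}$ are uniformly bounded with respect to a metric which is cylindrical near the punctures and Liouville-invariant on $W$ outside of $\Omega$. Since $K$ is compact and every point of $K$ lies within bounded distance of $\bd K$, the uniform $C^{0}$ bound established on $\bd K$ in the previous step propagates by the gradient bound to a uniform $C^{0}$ bound on all of $K$. Combining the two bounds gives a compact $K' \subset W$ containing $u_{n}(\Sigma)$ for all large $n$, which is what is required. The main subtlety I anticipate is the interplay between the maximum principle and the Lagrangian boundary condition of \S\ref{sec:lagr-bound-cond} on the parts of $\bd \Sigma$ which meet the high-$r$ region: since the Liouville field is tangent to $L_{\sigma,z}$ outside $\Omega$, the function $r$ is invariant in the normal-to-$L$ direction there, and a standard Hopf-lemma argument rules out boundary maxima as well; this is the only place where one must be careful.
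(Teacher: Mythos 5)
Your argument has the right ingredients — subharmonicity of $r\circ u$ on the cylindrical pieces, compactness of the asymptotic orbits, and the gradient bound of Proposition~\ref{proposition:gradient-bound} — and this is indeed the structure the paper relies on (confine the Floer-cylinder pieces, then propagate by the gradient bound). The subharmonicity computation itself is sound here even though the slope $c_t$ is allowed to depend on $t$: the extra term one might worry about cancels, as one can check directly.

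However, the argument as written is circular. The maximum principle on each cylindrical piece bounds $r\circ u_n$ there by its value on the two boundary circles, one of which is the asymptotic orbit (compact) but the other of which is the circle in $\bd K$. The gradient bound on $K$ then bounds $u_n|_K$ \emph{relative to} $u_n|_{\bd K}$. You have therefore only shown $\sup_\Sigma r\circ u_n \le \max\{\sup_{\bd K} r\circ u_n + C\,\mathrm{diam}(K),\ r_{\mathrm{orbit}}\}$, and since $\bd K \subset \Sigma$ this does not close: you never establish the absolute bound on $\bd K$ that you invoke in the last paragraph. The only absolute anchor available is the compact family of asymptotic orbits, but these sit at infinite cylindrical distance from $\bd K$, and the gradient bound (with respect to a metric that is cylindrical near the punctures) cannot propagate a $C^0$ bound across an infinite end. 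Indeed, the asymptotic convergence of Proposition~\ref{proposition:asymptotic} has no uniform rate across the sequence $u_n$, so one cannot cut the ends at a fixed finite modulus.

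The missing step — and the reason the proof crucially uses the hypothesis of \S\ref{sec:connection-1-forms} that $X_{\zeta,t}$ has no $1$-periodic orbits outside a compact set, a hypothesis your proposal never invokes — is a pigeonhole/bootstrapping argument on the cylindrical pieces. Since $\int_0^\infty\int_0^1|\bd_s u_n|^2\,\d t\,\d s$ is uniformly bounded, for a fixed $L$ depending only on the energy bound one can find $s_n\in[0,L]$ at which $\int_0^1|\bd_t u_n(s_n,t)-X_{\zeta,t}(u_n(s_n,t))|^2\,\d t$ is as small as required; elliptic bootstrapping for the resulting ODE forces $u_n(s_n,\cdot)$ to lie near a $1$-periodic orbit of $X_{\zeta,t}$, hence in a fixed compact set. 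The gradient bound then controls $u_n$ on $[0,s_n]$ (finite, bounded modulus), and the maximum principle controls $u_n$ on $[s_n,\infty)$ (now both boundary circles are bounded). Only after the cylindrical pieces are confined absolutely does the gradient bound on the fixed compact complement $K$ finish the proof. This is precisely the "soft argument" from \cite{brocic-cant-JFPTA-2024} that the paper defers to, and is where the content of the proposition actually lives.
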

\begin{proof}
  This is proved in \cite{brocic-cant-JFPTA-2024} by a soft argument (using the Liouville-invariance of $J$ and the Hamiltonian systems in \ref{sec:hamilt-functions-isotopies}); see also \cite{alizadeh-atallah-cant-arXiv-2023}. We note that the proof does use the assumption in \S\ref{sec:connection-1-forms} that the Hamiltonian isotopy generated by the asymptotic system $H_{\zeta,t}$ has no 1-periodic orbits outside of a compact set.
\end{proof}

\subsubsection{Compactness and Floer differential cylinders}
\label{sec:compactness}

By combining Propositions \ref{proposition:gradient-bound} and \ref{proposition:maximum-principle} with the regularity result in \S\ref{sec:regularity}, one sees that, in order to conclude that a sequence $(\sigma_{n},u_{n})$ of solutions to \S\ref{sec:floers-equation-general} has a convergent subsequence in $C^{\infty}_{\mathrm{loc}}$, it suffices to ensure that $\sigma_{n}$ has a convergent subsequence and that $u_{n}$ has uniformly bounded energy. Thus the main consideration for ensuring compactness is a priori energy estimates.

Even if $(\sigma_{n},u_{n})$ has a subsequence such that $u_{n}\to u_{\infty}$ in $C^{\infty}_{\mathrm{loc}}$, it is not true in general that $u_{n}$ will converge uniformly to $u_{\infty}$; it is possible that solutions of the $s$-independent equation from Proposition \ref{proposition:asymptotic} break-off at the punctures. This phenomenon is a cornerstone of Floer theory, and we will not explain it any further except via the illustration in Figure~\ref{fig:compactness-up-to-breaking}.

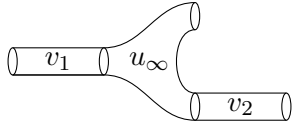
\begin{figure}[h]
  \centering
  \begin{tikzpicture}[scale=.6]
    \path (-2,0) coordinate(E) +(0,-0.6)coordinate(EM)-- (0,0) coordinate(A)+(0,-0.6)coordinate(AM) -- (1,-0.3)node{$u_{\infty}$} -- (2,1)coordinate(B)+(0,-0.6)coordinate(BM) -- (2,-1) coordinate(C)+(0,-0.6)coordinate(CM)--(4,-1) coordinate(D)+(0,-0.6)coordinate(DM);
    \draw (A)+(0,-0.3) circle (0.1 and 0.3) (B)+(0,-0.3) circle (0.1 and 0.3) (C)+(0,-0.3) circle (0.1 and 0.3) (D)+(0,-0.3) circle (0.1 and 0.3) (E)+(0,-0.3) circle (0.1 and 0.3);
    \draw (E)--(A)to[out=0,in=180](B) (BM)to[out=180,in=180](C)--(D) (EM)--(AM)to[out=0,in=180](CM)--(DM);
    \draw (EM)--(AM);
    \path (E)--node{$v_{1}$}(AM);
    \path (C)--node{$v_{2}$}(DM);
  \end{tikzpicture}
  \caption{Compactness up-to-breaking of Floer differential cylinders at the punctures; a sequence of solutions $u_{n}$ defined on the pair-of-pants surface converging on compact subsets to a limit $u_{\infty}$, with the breaking of two solutions $v_{0},v_{1}$ of the $s$-independent equation at the punctures.}
  \label{fig:compactness-up-to-breaking}
\end{figure}

Because of the breaking phenomenon, the $s$-independent equation in Proposition \ref{proposition:asymptotic} plays a central role in Floer theory; we refer to this equation as \emph{Floer's differential equation}, and it depends on the choice of an almost complex structure $J_{\zeta}$ and a time-dependent Hamiltonian vector field $X_{\eta,t}$ generated by a smooth family $H_{t}\in \mathscr{H}$ satisfying $H_{t+1}=H_{t}$.

\subsubsection{A priori energy estimates}
\label{sec:priori-energy-estim}

Consider a connection one-form $\mathfrak{a}$ and perturbation one-form $\mathfrak{p}$ on $S\times \Sigma\times W$. This induces an Ehresmann connection $\mathfrak{H}$ on $S\times \Sigma\times W\to S\times \Sigma$. It is well-understood that the curvature of the connection $\mathfrak{H}$ plays a role in a priori energy estimates for the energy integral for solutions to \S\ref{sec:floers-equation-general}; see, e.g., \cite[\S2.3.3]{alizadeh-atallah-cant-arXiv-2023}. In fact, there is an identity for the energy integral involving the two-form $\mathfrak{r}$ on $S\times \Sigma\times W$ called the curvature potential characterized by the property that, above a local coordinate patch $z=x+iy$ on $\Sigma$ where:
\begin{equation*}
  \mathfrak{a}=H_{\sigma,x,y}\d x+K_{\sigma,x,y}\d y\text{ and }\mathfrak{p}=h_{\sigma,x,y}\d x+k_{\sigma,x,y}\d y
\end{equation*}
we have:
\begin{equation*}
  \mathfrak{r}=(\bd_{x}(K_{\sigma,x,y}+k_{\sigma,x,y})-\bd_{y}(H_{\sigma,x,y}+h_{\sigma,x,y})+\omega(X^{H+h}_{\sigma,s,t},X^{K+k}_{\sigma,x,y})) \d x\wedge \d y,
\end{equation*}
where $X^{H+h}_{\sigma,x,y},X^{k+k}_{\sigma,x,y}$ are the domain dependent Hamiltonian vector fields tangent to the fibers of $S\times \Sigma \times W\to S\times \Sigma$.

We note that if $\mathfrak{p}=0$ and $\mathfrak{a}=H_{t}\d t$ then $\mathfrak{r}=0$, and so the curvature potential $\mathfrak{r}$ vanishes above a neighbourhood of the punctures $\Gamma\subset S\times \Sigma$.

It is convenient to phrase the energy identity in terms of the amount of energy a solution has on a compact subdomain $\Sigma'\subset \Sigma$. A priori energy estimates for the full energy are then obtained by taking a limit over larger and larger subdomains. The energy identity we will use is:
\begin{lemma}\label{lemma:energy-identity}
  If $(\sigma,u)$ solves \S\ref{sec:floers-equation-general} and $\Sigma'\subset \Sigma$ is a compact subdomain with boundary containing the support of $\mathfrak{p}_{\sigma}$, then the energy of $u|_{\Sigma'}$ is equal to:
  \begin{equation*}
    E(u|_{\Sigma'})=\int_{\Sigma'}u^{*}\omega+v^{*}\mathfrak{r}_{\sigma}-\int_{\bd\Sigma'}v^{*}\mathfrak{a}_{\sigma},
  \end{equation*}
  where $\mathfrak{a}_{\sigma},\mathfrak{p}_{\sigma},\mathfrak{r}_{\sigma}$ denote the pullbacks to $\set{\sigma}\times \Sigma\times W$ and $v(z)=(z,u(z))$ is considered as a section of $\Sigma\times W\to \Sigma$.
\end{lemma}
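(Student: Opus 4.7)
The plan is to reduce the identity to a pointwise equality of two-forms on $\Sigma'$, namely
\[
\omega\bigl(\Pi_{\mathfrak{H}_{\sigma}}\d v,\,\Pi_{\mathfrak{H}_{\sigma}}\d v\bigr) \,=\, u^{*}\omega + \mathfrak{r}_{\sigma} - v^{*}\d\mathfrak{a}_{\sigma} - v^{*}\d\mathfrak{p}_{\sigma},
\]
and then to integrate and apply Stokes. Granted this pointwise equality, integration over $\Sigma'$ combined with Stokes converts $-\int_{\Sigma'} v^{*}\d\mathfrak{a}_{\sigma}$ into $-\int_{\partial\Sigma'} v^{*}\mathfrak{a}_{\sigma}$, while the hypothesis that $\mathfrak{p}_{\sigma}$ is supported in the interior of $\Sigma'$ gives $\int_{\Sigma'} v^{*}\d\mathfrak{p}_{\sigma} = \int_{\partial\Sigma'} v^{*}\mathfrak{p}_{\sigma} = 0$. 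Combining these produces the stated formula.

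To verify the pointwise identity I would work in an arbitrary local chart $z = x+iy$ on $\Sigma$ in which $\mathfrak{a}_{\sigma} = H\,\d x + K\,\d y$ and $\mathfrak{p}_{\sigma} = h\,\d x + k\,\d y$. The authors' formula for the horizontal lifts identifies the vertical components of $\d v$ as $\xi_{x} := \partial_{x}u - X_{H+h}$ and $\xi_{y} := \partial_{y}u - X_{K+k}$, so the left-hand side equals $\omega(\xi_{x}, \xi_{y})\,\d x\wedge \d y$. Expanding this bilinearly produces four terms: $\omega(\partial_{x}u, \partial_{y}u)$, which is $u^{*}\omega(\partial_x,\partial_y)$; two cross terms involving $\omega(X_{\bullet}, \partial_{\bullet}u)$; and $\omega(X_{H+h}, X_{K+k})$. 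A direct computation of $v^{*}\d\mathfrak{a}_{\sigma}$ in this chart yields $(\partial_{x}K-\partial_{y}H)\,\d x\wedge \d y$ plus mixed terms pairing the fiber-derivative of $H,K$ with $\partial_{\bullet}u$; invoking the Hamiltonian identity $\omega(X_{f},\xi) = -\d f(\xi)$ (which is the sign convention forced on us by the definition of the Ehresmann connection $\mathfrak{H}$ together with the stated form of the horizontal lifts) converts those mixed terms into the same objects as the cross terms on the left. One then checks that the $(\partial_{x}(K+k) - \partial_{y}(H+h))$ contributions on both sides cancel and that the residual $\omega(X_{H+h}, X_{K+k})$ is exactly what $\mathfrak{r}_{\sigma}$ supplies.

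Since both sides of the pointwise identity are intrinsically defined two-forms on $\Sigma$, local verification in a single chart suffices and no partition of unity is required to globalize. Worth noting: the identity itself makes no use of the Floer equation, which enters only in \S\ref{sec:energy-integral} to guarantee that the integrand $\omega(\Pi_{\mathfrak{H}_{\sigma}}\d v, \Pi_{\mathfrak{H}_{\sigma}}\d v)$ is pointwise non-negative, so that ``energy'' is a meaningful quantity. The only real obstacle is the careful bookkeeping of signs among the Hamiltonian vector field convention, the horizontal splitting, and the curvature potential $\mathfrak{r}_{\sigma}$; once these have been pinned down consistently, what remains is a short algebraic calculation.
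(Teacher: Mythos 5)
Your proof is correct and is essentially the ``standard computation'' that the paper delegates to \cite[Lemma 2.3]{alizadeh-atallah-cant-arXiv-2023}: expanding $\omega(\Pi_{\mathfrak{H}_\sigma}\d v,\Pi_{\mathfrak{H}_\sigma}\d v)$ in a conformal chart using the stated horizontal lifts $\bd_x+X_{H+h}$, $\bd_y+X_{K+k}$ together with the sign convention $\omega(X_f,\cdot)=-\d f$ forced by $\mathfrak{H}=TW^{\perp\Omega}$ gives the pointwise two-form identity, and integrating and applying Stokes, with the support hypothesis killing the $\mathfrak{p}_\sigma$ boundary term, gives the lemma. One small notational slip: on the right-hand side of your pointwise identity you should write $v^*\mathfrak{r}_\sigma$ rather than $\mathfrak{r}_\sigma$, since the curvature potential's coefficient depends on the $W$-point.
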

\begin{proof}
  This is a standard computation; see, e.g., \cite[Lemma 2.3]{alizadeh-atallah-cant-arXiv-2023}.
\end{proof}

Let us say that the data $\mathfrak{a},\mathfrak{p}$ has \emph{curvature bounded from above} provided:
\begin{equation*}
  \sup\set{\int_{\Sigma}v^{*}\mathfrak{r} : v\text{ is a smooth section }\Sigma\to \set{\sigma}\times \Sigma\times W\text{ for some $\sigma$}}<\infty,
\end{equation*}
where $\mathfrak{r}$ is the curvature potential associated to $\mathfrak{a},\mathfrak{p}$. We denote the quantity on the left by $\mathrm{const}(\mathfrak{r})$. This leads to the following a priori energy estimate:
\begin{lemma}\label{lemma:energy-estimate}
  If $\mathfrak{a},\mathfrak{p}$ has curvature bounded from above, then any finite energy solution $(\sigma,u)$ of \S\ref{sec:floers-equation-general} satisfies:
  \begin{equation*}
    E(u)\le \omega(u)+\int_{0}^{1}\sum_{\Gamma_{-}}H_{\zeta,t}(\gamma_{\zeta}(t))-\sum_{\Gamma_{+}}H_{\zeta,t}(\gamma_{\zeta}(t))dt-\int_{\bd\Sigma}v^{*}\mathfrak{a}_{\sigma}+\mathrm{const}(\mathfrak{r}),
  \end{equation*}
  where $u$ is asymptotic to the orbit $\gamma_{\zeta}(t)$ at the puncture $\zeta$, and $\mathfrak{a}=H_{\zeta,t}\d t$ holds near the puncture $\zeta$. In particular, the energy of finite energy solutions is uniformly bounded provided:
  \begin{enumerate}
  \item\label{AP-req-1} the symplectic area $\omega(u)$ is bounded above for solutions $(\sigma,u)$,
  \item\label{AP-req-2} the 1-periodic orbits of $H_{\zeta,t}$ are contained in a compact set,
  \item\label{AP-req-3} the integrals of $\mathfrak{a}_{\sigma}$ over sections $v:\bd\Sigma\to W$ valued in the Lagrangian boundary conditions $L_{\sigma}$ are bounded below as $\sigma$ varies in $S$.
  \end{enumerate}
\end{lemma}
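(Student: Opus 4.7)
The plan is to apply the identity of Lemma \ref{lemma:energy-identity} to an exhausting sequence of compact subdomains of $\Sigma$ and take the limit. Near each puncture $\zeta \in \Gamma_\pm$, use the cylindrical coordinates $(s,t)$ on the ends to truncate at $|s|=R$ and form the compact subdomain $\Sigma'_R \subset \Sigma$; for $R$ large enough, $\Sigma'_R$ contains the support of $\mathfrak{p}_\sigma$ (which vanishes near the punctures by definition of a perturbation 1-form), so Lemma \ref{lemma:energy-identity} applies and gives
$$E(u|_{\Sigma'_R}) = \int_{\Sigma'_R} u^*\omega + \int_{\Sigma'_R} v^* \mathfrak{r}_\sigma - \int_{\bd \Sigma'_R} v^* \mathfrak{a}_\sigma.$$

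Next, I would analyze each term as $R \to \infty$. The left-hand side tends to $E(u)$ by monotone convergence since the energy density is non-negative (by $\omega$-tameness of $J$). The first term on the right converges to the symplectic area $\omega(u)$ (which is automatically finite, as it can be recovered as a difference of finite quantities). The curvature integral is bounded from above by $\mathrm{const}(\mathfrak{r})$, directly from the definition of curvature bounded from above. The boundary $\bd \Sigma'_R$ decomposes as $\bd \Sigma$ (contributing the term $-\int_{\bd\Sigma} v^* \mathfrak{a}_\sigma$ in the statement) together with the truncation circles $\{\pm R\} \times \R/\Z$ at each puncture.

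The main step is the analysis of these circle integrals. Near a puncture $\zeta$, the connection one-form reduces to $\mathfrak{a} = H_{\zeta,t}\,\d t$, and the contribution from the truncation circle equals $\pm \int_0^1 H_{\zeta,t}(u(\pm R, t))\,\d t$, with the sign fixed by the induced boundary orientation: positive at positive punctures and negative at negative punctures. By Proposition \ref{proposition:asymptotic}, $u(\pm R, t) \to \gamma_\zeta(t)$ as $R \to \infty$, and interchanging limit with integration produces $\pm\int_0^1 H_{\zeta,t}(\gamma_\zeta(t))\,\d t$. Distributing the overall minus sign in front of the boundary term in the energy identity yields exactly the signs in the statement, with equality replaced by $\le$ because of the upper bound by $\mathrm{const}(\mathfrak{r})$.

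The main technical obstacle I anticipate is justifying the interchange of limit and integration at the punctures: one needs uniform (in $t$) convergence $u(\pm R, t) \to \gamma_\zeta(t)$, not merely asymptotic convergence along a sequence. This uniformity follows from the standard exponential decay estimates in Floer theory at non-degenerate asymptotic orbits, which is guaranteed by the non-degeneracy assumption in \ref{c1f2} combined with the finite energy of $(\sigma,u)$; this is the only non-routine input. Once the inequality is in hand, the ``in particular'' clause follows: hypothesis \ref{AP-req-1} bounds $\omega(u)$ from above; hypothesis \ref{AP-req-2} confines each orbit $\gamma_\zeta$ to a fixed compact set on which $H_{\zeta,t}$ is bounded, so the sum of $H_{\zeta,t}(\gamma_\zeta(t))$ terms is uniformly controlled; hypothesis \ref{AP-req-3} bounds $-\int_{\bd \Sigma} v^* \mathfrak{a}_\sigma$ from above; and $\mathrm{const}(\mathfrak{r})$ is finite by assumption. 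Summing these four bounds yields the uniform upper bound on $E(u)$.
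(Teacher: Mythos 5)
Your proof is correct and follows essentially the same route as the paper: both go through the energy identity of Lemma \ref{lemma:energy-identity} applied to an exhausting sequence of compact subdomains, with the paper deferring the details to a reference while you spell them out. The sign bookkeeping at the truncation circles, the monotone-convergence step for the energy integral, the $\mathrm{const}(\mathfrak{r})$ bound from the curvature hypothesis, and the appeal to exponential decay at non-degenerate asymptotics to justify the limit/integral interchange are all correct and are exactly what the citation encodes.
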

\begin{proof}
  This follows from the energy identity; see \cite[Lemma 2.4]{alizadeh-atallah-cant-arXiv-2023}.
\end{proof}
We briefly explain why these three conditions can be assumed to hold in the context of our paper. First note \ref{AP-req-2} holds by fiat; it is part of our assumptions from \S\ref{sec:connection-1-forms} on the connection one-form. Second, in all of the constructions involving a non-empty boundary $\bd \Sigma$, we ensure that $\mathfrak{a}$ appears in the form $H_{\sigma,t}\d t$ where $t$ is a coordinate on the boundary, $H_{\sigma,t}$ is non-negative outside of a compact set, and moreover that $\min H_{\sigma,t}$ is uniformly bounded from below as $\sigma$ varies in $S$. It then follows easily that the integrals of $v^{*}\mathfrak{a}_{\sigma}$ will be bounded from below, i.e., \ref{AP-req-3} will be satisfied.

Morally speaking, condition \ref{AP-req-1} holds because we assume the symplectic form is exact. There is some subtlety when $\bd\Sigma\ne \emptyset$, and we will explain how to deal with this in \S\ref{sec:string-topol-floer-cohom}, when we first consider moduli spaces with boundary conditions.

\emph{Remark}. In general, condition \ref{AP-req-1} cannot be established, and the usual way one deals with this is via the introduction of Novikov coefficients in the Floer cohomology groups.

To conclude this subsection, we make the following observation:
\begin{lemma}\label{lemma:a-has-then-p-has}
  If $(\mathfrak{a},0)$ has curvature bounded above (i.e., $\mathfrak{p}=0$), then so does the perturbed data $(\mathfrak{a},\mathfrak{p})$.
\end{lemma}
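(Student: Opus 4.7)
My plan is to observe that the curvature potential splits additively under the substitution $\mathfrak{a}\mapsto \mathfrak{a}, \mathfrak{p}\mapsto 0$, and to control the ``difference'' term by the standing hypotheses on $\mathfrak{p}$ in \S\ref{sec:perturbation-1-forms}. Concretely, using the formula for $\mathfrak{r}$ in \S\ref{sec:priori-energy-estim} together with bilinearity of $\omega$, I would write $\mathfrak{r}=\mathfrak{r}_{0}+\mathfrak{r}_{1}$, where $\mathfrak{r}_{0}$ is the curvature potential of $(\mathfrak{a},0)$ and
\begin{equation*}
  \mathfrak{r}_{1}=\bigl(\bd_{x}k-\bd_{y}h+\omega(X^{H},X^{k})+\omega(X^{h},X^{K})+\omega(X^{h},X^{k})\bigr)\,\d x\wedge \d y.
\end{equation*}
By hypothesis $\int_{\Sigma}v^{*}\mathfrak{r}_{0}\le \mathrm{const}(\mathfrak{r}_{0})$ is uniformly bounded, so it suffices to produce a uniform bound on $\int_{\Sigma}v^{*}\mathfrak{r}_{1}$.

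Next, I would exploit two features of $\mathfrak{p}$. First, \emph{support}: the pullback $v^{*}\mathfrak{r}_{1}$ is supported in the projection to $\Sigma$ of $\mathrm{supp}(\mathfrak{p})$, which by \S\ref{sec:perturbation-1-forms} avoids a neighbourhood of the punctures. Since $\Sigma$ is compact, this projection has area bounded by $\mathrm{area}(\Sigma)$ (for any fixed background area form on $\Sigma$). Second, \emph{pointwise boundedness of the coefficient}: the terms $\bd_{x}k$ and $\bd_{y}h$ are bounded by the uniform $C^{1}$-bound on $h,k$. Rewriting the symplectic pairings as $\omega(X^{H},X^{k})=\d k(X^{H})$ (etc.), I would measure everything in a Riemannian metric $g$ on $W$ which is invariant under the Liouville flow in the end; in such a metric $|X_{r}|_{g}$ is bounded, hence $|X^{H}|_{g}, |X^{K}|_{g}$ are bounded on the support of $\mathfrak{p}$ (they are $c_{\sigma,x,y}X_{r}$ at infinity, with slopes varying smoothly in the compact $(\sigma,x,y)$-footprint of $\mathfrak{p}$), while $|\d h|_{g}, |\d k|_{g}$ and $|X^{h}|_{g}, |X^{k}|_{g}$ are bounded by the $C^{1}$ hypothesis on $h,k$. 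Combining, $\mathfrak{r}_{1}$ has a uniform pointwise bound $C$ over $S\times \Sigma \times W$.

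Putting these together: for any section $v$ over any $\sigma$,
\begin{equation*}
  \left|\int_{\Sigma}v^{*}\mathfrak{r}_{1}\right|\le C\cdot \mathrm{area}(\Sigma),
\end{equation*}
so $\mathrm{const}(\mathfrak{r})\le \mathrm{const}(\mathfrak{r}_{0})+C\cdot \mathrm{area}(\Sigma)<\infty$, which is the desired conclusion. The only real subtlety, which I expect to be the main thing to justify carefully, is the pointwise boundedness of $\omega(X^{H},X^{k})$ and $\omega(X^{h},X^{K})$ at infinity in $W$: here one must remember that although $X^{H}$ grows with $r$ in the Euclidean sense, in the Liouville-invariant metric it has bounded length, and this is precisely the metric in which the $C^{1}$-bound on $h,k$ is to be understood.
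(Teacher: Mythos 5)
Your argument is correct and is the expected unwinding of the paper's one-line proof, which simply cites the uniform $C^{1}$-bound on the functions appearing in $\mathfrak{p}$ (delegating the details to \cite[Lemma 2.2]{alizadeh-atallah-cant-arXiv-2023}). The decomposition $\mathfrak{r}=\mathfrak{r}_{0}+\mathfrak{r}_{1}$, the observation that $\mathfrak{r}_{1}$ is supported over a compact part of $\Sigma$ away from the punctures, and the Poisson-bracket rewriting $\omega(X^{H},X^{k})=\pm\,\d k(X^{H})$ estimated in a Liouville-invariant metric (where $|X_{r}|_{g}$ is bounded and hence so are $|X^{H}|_{g},|X^{K}|_{g}$ on the compact parameter footprint of $\mathfrak{p}$) are precisely the right ingredients.

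Two small points worth being explicit about: (i) the terms $\bd_{x}k,\bd_{y}h$ are \emph{domain} derivatives, whereas ``uniformly bounded in $C^{1}$ families of functions $W\to\R$'' most literally bounds only the $W$-direction derivatives; your argument tacitly reads the hypothesis as also controlling the $x,y$-derivatives, which is the intended reading (and automatic when, as is typical, $\mathfrak{p}$ is compactly supported in $W$), but it should be stated. (ii) Your closing remark that ``$X^{H}$ grows with $r$ in the Euclidean sense'' is slightly off in general: $X_{r}$ agrees with the Reeb vector field on the symplectization end and is $r$-independent in cylindrical coordinates, so the point is not that $X^{H}$ is unbounded in some naive metric, but simply that one must fix a metric in which both $|X_{r}|_{g}$ is bounded and the $C^{1}$-hypothesis on $\mathfrak{p}$ applies, and the Liouville-invariant metric serves for both.
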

\begin{proof}
  This is due to the fact that the functions appearing in $\mathfrak{p}$ are assumed to be uniformly bounded in $C^{1}$; see \cite[Lemma 2.2]{alizadeh-atallah-cant-arXiv-2023}.
\end{proof}

\subsubsection{Transversality}
\label{sec:transversality}

Each finite-energy solution $(\sigma,u)$ determines a linearized operator: $$D_{\sigma,u}:TS_{\sigma}\oplus W^{1,p}(u^{*}TW)\to L^{p}(\Hom^{0,1}(T\Sigma,u^{*}TW)),$$ where $\Hom^{0,1}$ is the bundle of $(j_{\sigma},J_{\sigma})$-antilinear homomorphisms. We always assume $p>2$ when discussing the Sobolev space $W^{1,p}$.

The details of the construction of the linearized operator are well-known; see, e.g., \cite{salamon-notes-1997,cant-thesis-2022}. Let us comment that it is obtained by differentiating the local coordinate representations of the non-linear PDE with respect to local deformations of $u$ (and $\sigma$); this yields a linear differential operator acting on the local deformations; these  can be patched together to obtain the global linearized operator.

Restricting $D_{\sigma,u}$ to variations fixing $\sigma$ yields a particular type of differential operator, namely, a Cauchy-Riemann operator with asymptotic conditions. Because we required that the asymptotic Hamiltonian systems are non-degenerate in \S\ref{sec:connection-1-forms}, the linearized operator is Fredholm operator. If the cokernel of $D_{\sigma,u}$ is zero for each solution $(\sigma,u)$, then we say that transversality holds. In this case, the space of solutions has the structure of a smooth manifold, and its dimension is equal to the dimension of the kernel of $D_{\sigma,u}$. This dimension can be computed by a general formula for the Fredholm index of a Cauchy-Riemann operator with asymptotic conditions. This is all a quite standard part of Floer theory.

We will appeal to the following general transversality lemma:
\begin{lemma}
  Given a family of domains, almost complex structures, Lagrangian boundary conditions, and the connection one-form $\mathfrak{a}$ on $S\times \Sigma\times W$, transversality can be achieved for all solutions $(\sigma,u)$ to the perturbed equation \S\ref{sec:floers-equation-general} by choosing $\mathfrak{p}$ to be a generic perturbation one-form.
\end{lemma}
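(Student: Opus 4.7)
The plan is to prove this via the standard Floer-Hofer-Salamon universal moduli space and Sard-Smale argument, tailored to the current set-up where $\mathfrak{p}$ is domain-dependent on $S\times \Sigma\times W$.

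First I would introduce a separable Banach manifold $\mathscr{P}$ of admissible perturbation one-forms. The naive $C^{\infty}$ space is not Banach, so I would use Floer's $\epsilon$-norm trick: fix a countable sequence of $C^{k}$-norms, weighted so that $\mathscr{P}$ sits densely inside the $C^{\infty}$-perturbations that vanish near $\Gamma_{\pm}$ and whose component functions are uniformly bounded in $C^{1}$. Then consider the universal moduli space
\begin{equation*}
  \mathscr{M}^{\mathrm{univ}}=\set{(\sigma,u,\mathfrak{p}):(\sigma,u)\text{ solves \S\ref{sec:floers-equation-general} with perturbation }\mathfrak{p}},
\end{equation*}
viewed as a zero set of a smooth Fredholm section of an appropriate Banach bundle over $S\times \mathscr{B}\times \mathscr{P}$, where $\mathscr{B}$ is the space of $W^{1,p}$-maps with the prescribed asymptotic/boundary conditions.

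The main step is showing that the linearization
\begin{equation*}
  \mathscr{D}:TS_{\sigma}\oplus W^{1,p}(u^{*}TW)\oplus T_{\mathfrak{p}}\mathscr{P}\to L^{p}(\Hom^{0,1}(T\Sigma,u^{*}TW))
\end{equation*}
is surjective at every $(\sigma,u,\mathfrak{p})\in \mathscr{M}^{\mathrm{univ}}$. Since the restriction $D_{\sigma,u}$ already has closed range, it suffices to show that any $\eta\in L^{q}(\Hom^{0,1}(T\Sigma,u^{*}TW))$ annihilating the image of $D_{\sigma,u}$ and pairing trivially with all variations in $\mathfrak{p}$ must vanish. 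By standard elliptic regularity $\eta$ is smooth and satisfies the formal adjoint equation, so it is either identically zero or has isolated zeros. Then I would pick any interior point $z_{0}\in \Sigma\setminus (\bd\Sigma\cup \Gamma_{\pm})$ where $\eta(z_{0})\ne 0$; since $\mathfrak{p}$ can be varied by arbitrary smooth $1$-forms $\delta\mathfrak{p}=\rho(z)\chi(w)\alpha$ with $\rho$ a bump near $z_{0}$ and $\chi$ a bump near $u(z_{0})$, one can arrange the $(0,1)$-component to have nonzero pairing with $\eta$ in a neighbourhood of $z_{0}$, contradicting $\mathscr{D}^{*}\eta=0$. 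The domain-dependence of $\mathfrak{p}$ sidesteps the usual somewhere-injectivity subtleties, since we are free to localize in $z$ rather than in $u(z)$.

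Having established that $\mathscr{M}^{\mathrm{univ}}$ is a smooth Banach manifold, I would apply the Sard-Smale theorem to the projection $\pi:\mathscr{M}^{\mathrm{univ}}\to \mathscr{P}$. Since $\pi$ is Fredholm (with index equal to the index of $D_{\sigma,u}$ plus $\dim S$), its regular values form a residual subset $\mathscr{P}_{\mathrm{reg}}\subset \mathscr{P}$; for $\mathfrak{p}\in \mathscr{P}_{\mathrm{reg}}$ every solution $(\sigma,u)$ to \S\ref{sec:floers-equation-general} is cut out transversally. A standard approximation argument (combined with the $\epsilon$-norm construction) then upgrades this from the Banach class to genuine smooth perturbation one-forms.

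The main technical obstacle I expect is the surjectivity argument at points where $u$ could in principle degenerate — e.g.\@ at boundary points, or near the punctures where $\mathfrak{p}$ is forced to vanish. The boundary issue is handled by choosing $z_{0}$ in the interior (solutions with $\eta\equiv 0$ on the interior must be identically zero by unique continuation for the adjoint Cauchy-Riemann equation), and the puncture issue is handled because $\eta$ decays exponentially at the punctures (by the non-degeneracy assumption in \S\ref{sec:connection-1-forms}) and so cannot be supported only there. Lemma \ref{lemma:a-has-then-p-has} ensures the a priori energy estimates of Lemma \ref{lemma:energy-estimate} persist under the perturbations, so the moduli spaces remain compact in the Gromov-Floer sense throughout.
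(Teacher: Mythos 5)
Your proof is correct and follows the standard Floer--Hofer--Salamon universal-moduli-space / Sard--Smale argument that the paper itself simply cites (to \cite{alizadeh-atallah-cant-arXiv-2023} and the references therein) rather than proving. The key observation you single out — that domain-dependence of $\mathfrak{p}$ lets one localize in $z$ and thereby sidestep somewhere-injectivity — together with the exponential decay of the cokernel element $\eta$ near the punctures (where $\mathfrak{p}$ is forced to vanish) and unique continuation into the interior, is exactly what makes the transversality argument go through in this setting.
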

\begin{proof}
  This is a standard application of the usual Sard-Smale argument; see \cite[\S2.3.10]{alizadeh-atallah-cant-arXiv-2023} and the references therein.
\end{proof}

As dicussed in \S\ref{sec:energy-integral} and \S\ref{sec:compactness}, a special role is played by solutions of the $s$-independent equation (Floer's differential equation):
\begin{equation}\label{eq:floer-differential-equation}
  \bd_{s}u+J_{\zeta}(u)(\bd_{t}u-X_{\zeta,t}(u))=0,
\end{equation}
where $u$ is defined on a cylinder and $J_{\zeta},X_{\zeta,t}$ are the asymptotic almost complex structure and Hamiltonian vector field associated to a puncture $\zeta$.

This equation also has a linearized operator, and it is also important to achieve transversality for this. However, the perturbation term can no longer be used, since we require $\mathfrak{p}$ to be supported away from the punctures. In this case, we achieve transversality by assuming the asymptotic vector fields $X_{\zeta,t}$ are sufficiently generic. As shown in \cite{floer-hofer-salamon-duke-1995}, transversality can be achieved by perturbing $X_{\zeta,t}$ in a small, compactly supported way, away from its non-degenerate orbits; see also \cite[\S4.1]{brocic-cant-JFPTA-2024}. This requires us to slightly amend condition \ref{c1f2} from \S\ref{sec:connection-1-forms} by requiring that $X_{\zeta,t}$ is sufficiently generic (in addition to having non-degenerate $1$-periodic orbits). Henceforth, we will assume asymptotic data is sufficiently generic to achieve transversality for Floer's differential equation.

\subsection{Floer cohomology}
\label{sec:floer-cohomology}

In this section we define the Floer cohomology persistence module $c\in \R\mapsto V_{c}\in \mathrm{Vect}(\Z/2\Z)$, with its three additional structures: the product, the BV-operator, and the map $H^{*}(W)\to V_{0}$.

The reader who is comfortable with these structures should feel free to skip to \S\ref{sec:evaluation-maps-ball}.

\subsubsection{The Floer complex}
\label{sec:floer-complex}

Let $H_{t}$, $t\in \R/\Z$, be a smooth family in $\mathscr{H}$, and suppose that the associated Hamiltonian vector field $X_{t}$ has non-degenerate 1-periodic orbits. Let $J$ be an almost complex structure on $W$ which is $\omega$-tame and Liouville invariant outside of $\Omega$. Moreover, suppose that $X_{t}$ is sufficiently generic that the space of finite energy solutions to Floer's differential equation is cut transversally, as explained in \S\ref{sec:transversality}. Let us call such data $(H_{t},J)$ \emph{admissible} for defining the Floer complex.

The requirements in \S\ref{sec:hamilt-conn-surf} and \S\ref{sec:general-form-floers} ensure that the asymptotics $(H_{\zeta,t},J_{\zeta})$ which appear in the general form of Floer's equation \S\ref{sec:floers-equation-general} are admissible for defining the Floer complex.

Given admissible data, the \emph{Floer complex} $\mathrm{CF}(H_{t},J)$ is defined to be the $\Z/2\Z$ vector space generated by the $1$-periodic orbits of $X_{t}$ with the differential $d$ given by counting the one-dimensional components of the moduli space of finite energy solutions to Floer's differential equation. The right asymptotic of the solution is considered as the input of the differential and the left asymptotic is considered as the output.

The homology of the complex is denoted $\mathrm{HF}(H_{t},J)$, and is called the Floer cohomology of the pair $(H_{t},J)$.

The details of the definition of the Floer differential, and why it squares to zero, are well-known; see, e.g., \cite{floer-CMP-1989,hofer-salamon-floermem-1995,salamon-notes-1997}. The requisite regularity and compactness results follow from our general discussion in \S\ref{sec:general-form-floers}. One aspect of the argument we do not explain is the Floer theory gluing argument used the relate the coefficients appearing in $d\circ d$ to the two-dimensional components of the moduli space of Floer differential cylinders; we refer the reader to, e.g., \cite[\S3.3]{salamon-notes-1997} for an exposition of this gluing theory.

Let us comment on one slightly non-standard aspect of the systems that we consider. We do not assume that $H_{t}=cr$ holds in the end for a fixed slope $c$; rather, our definition allows $H_{t}=c_{t}r$ where $c_{t}$ varies with $t$. We define the \emph{slope} of the family $H_{t}$ to be the real number:
\begin{equation*}
  c(H_{t})=\int_{0}^{1}c_{t}dt
\end{equation*}
This number plays a special role in the persistence module: $V_{c}$ is defined as a formal limit over all the Floer cohomology groups $\mathrm{HF}(H_{t},J)$ with $c(H_{t})=c$.

The slope $c(H_{t})$ has a geometric interpretation: $X_{t}=c_{t}R$ holds in the region $r\ge r_{0}$ for $r_{0}$ large enough (how large depends on the smooth family $H_{t}\in \mathscr{H}$; see \S\ref{sec:class-hamilt-funct}), where $R=X_{r}$ is the so-called Reeb vector field. Thus the isotopy obtained by integrating $X_{t}$ agrees with the Reeb flow for time $c(H_{t})$ in the region $r\ge r_{0}$, up to a time reparametrization.

\begin{figure}[h]
  \centering
  \begin{tikzpicture}[xscale=.7,yscale=.4]
    \draw (0,0) circle (0.3 and 1) (0,1)--+(10,0) (0,-1)--+(10,0) (10,0) circle (0.3 and 1);
    \path (-0.3,0)node[left]{output}--node{$\bd_{s}u+J(u)(\bd_{t}u-X_{t}(u))=0$}(10.3,0)node[right]{input};
  \end{tikzpicture}
  \caption{The Floer differential}
  \label{fig:floer-differential}
\end{figure}

\subsubsection{Continuation maps}
\label{sec:cont-maps-floer}

Continuation maps are chain maps which relate the Floer complexes for different choices of admissible data. Because we use Hamiltonian functions $H_{t}$ with varying slope, a bit of care is needed to define continuation maps in the generality we will require.

We define \emph{continuation data} to be a connection 1-form $\mathfrak{a}$ and almost complex structure $J$ on $\Sigma\times W$, where $\Sigma=\R\times \R/\Z$, such that:
\begin{equation*}
  \mathfrak{a}=K_{s,t}\d s+H_{s,t}\d t,
\end{equation*}
where $H_{s,t},K_{s,t}$ are smooth families in $\mathscr{H}$ satisfying:
\begin{enumerate}
\item\label{cdata1} $K_{s,t}=b_{s,t}r$ and $H_{s,t}=c_{s,t}r$ for $r\ge r_{0}$, for an $r_{0}$ independent of $s,t$,
\item\label{cdata2} $\bd_{s}c_{s,t}\le \bd_{t}b_{s,t}$,
\item $K_{s,t}=\bd_{s}H_{s,t}=0$ for $\abs{s}\ge s_{0}$,
\item\label{cdata4} $H_{s,t}=H_{\zeta_{1},t}$ for $s\le -s_{0}$ and $H_{s,t}=H_{\zeta_{0},t}$ for $s\ge s_{0}$,
\item\label{cdata5} $J=J_{s,t}$ satisfies $J_{s,t}=J_{\zeta_{1}}$ for $s\le -s_{0}$ and $J_{s,t}=J_{\zeta_{0}}$ for $s\ge s_{0}$,
\item\label{cdata6} $(H_{\zeta_{0},t},J_{\zeta_{0}})$ and $(H_{\zeta_{1},t},J_{\zeta_{1}})$ are admissible for defining $\mathrm{CF}$.
\end{enumerate}
Such data is called continuation data from $(H_{\zeta_{0},t},J_{\zeta_{0}})$ to $(H_{\zeta_{1},t},J_{\zeta_{1}})$, i.e., as in the Floer differential, we consider the right asymptotic to be the input.

A \emph{homotopy} of continuation data is a connection one-form and almost complex structure on $\R\times \Sigma\times W$ such that the restriction to $\set{\tau}\times \Sigma\times W$ satisfies \ref{cdata1} through \ref{cdata6} for each $\tau$, with fixed asymptotic data $(H_{\zeta_{0},t},J_{\zeta_{0}})$ and $(H_{\zeta_{1},t},J_{\zeta_{1}})$, and such that the numbers $r_{0},s_{0}$ can be taken to be independent of $\tau,s,t$. One says that the continuation data obtained by restricting to $\set{0}\times \Sigma\times W$ and $\set{1}\times \Sigma\times W$ are homotopic.

\begin{lemma}\label{lemma:when-does-cdata-exist}
  Given asymptotic data $(H_{\zeta_{0},t},J_{\zeta_{0}})$ and $(H_{\zeta_{1},t},J_{\zeta_{1}})$ satisfying \ref{cdata6}, there exists continuation data between them if and only if the slope of $H_{\zeta_{0},t}$ is at most the slope of $H_{\zeta_{1},t}$. In this case, there is a homotopy between any two choices of continuation data.
\end{lemma}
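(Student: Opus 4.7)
For necessity, I would integrate the curvature inequality \ref{cdata2} over the rectangle $[-s_0,s_0]\times\R/\Z$. By $t$-periodicity of $b_{s,t}$, the right-hand side integrates to zero, while the prescribed asymptotic values in \ref{cdata4} give
$$\int_{\R/\Z}\int_{-s_0}^{s_0}\partial_s c_{s,t}\,ds\,dt = \int_0^1 (c_{\zeta_0,t}-c_{\zeta_1,t})\,dt = c(H_{\zeta_0,t}) - c(H_{\zeta_1,t}),$$
so the slope inequality $c(H_{\zeta_0,t})\le c(H_{\zeta_1,t})$ is forced.

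For sufficiency I would construct the data explicitly. Fix a smooth non-decreasing cutoff $\chi:\R\to[0,1]$ with $\chi(s)=0$ for $s\le -s_0$ and $\chi(s)=1$ for $s\ge s_0$, and define
$$H_{s,t} := \chi(s)H_{\zeta_0,t} + (1-\chi(s))H_{\zeta_1,t},$$
so that $\partial_s c_{s,t} = \chi'(s)(c_{\zeta_0,t}-c_{\zeta_1,t})$ in the end. The $t$-average of this quantity equals $\chi'(s)(c(H_{\zeta_0,t})-c(H_{\zeta_1,t}))$, which is non-positive by hypothesis. Consequently the function
$$g_s(t) := \partial_s c_{s,t} - \chi'(s)\bigl(c(H_{\zeta_0,t})-c(H_{\zeta_1,t})\bigr)$$
has vanishing $t$-mean, so it admits a smooth $t$-periodic primitive $b_{s,t}$, depending smoothly on $s$ and supported in $\abs{s}\le s_0$. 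By construction $\partial_t b_{s,t} = g_s(t) \ge \partial_s c_{s,t}$, verifying \ref{cdata2}. Defining $K_{s,t}$ to equal $b_{s,t}r$ in the end (and smoothly extending into the interior of $W$) produces the required connection one-form, and any smooth interpolation $J_{s,t}$ within the contractible space of $\omega$-tame, Liouville-invariant almost complex structures satisfies \ref{cdata5}.

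For the homotopy claim, I would take the straight-line interpolation $\tau\mathfrak{a}^{(1)}+(1-\tau)\mathfrak{a}^{(0)}$ between two given connection forms, and interpolate the almost complex structures through the contractible space just mentioned. The asymptotic and support conditions on $H,K$ are linear, \ref{cdata2} is a linear inequality, and taking $r_0,s_0$ to be the maxima of the constants for the two endpoints makes them independent of $\tau$; so the Hamiltonian part of the homotopy is automatic. The only mildly delicate step in the whole argument is the construction of $b_{s,t}$: its feasibility depends precisely on the vanishing of the $t$-mean obstruction, which is exactly the slope hypothesis.
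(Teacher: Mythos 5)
Your proof is correct and follows essentially the same approach as the paper. The necessity argument integrates the curvature inequality over the whole rectangle at once rather than averaging in $t$ first and noting $c(s)$ is non-increasing, but this is the same calculation; and your abstract construction of $b_{s,t}$ as a $t$-periodic primitive of the mean-zero function $g_s$ is exactly the paper's explicit formula $K_{s,t}=\int_0^t\partial_s H_{s,\tau}\,d\tau - t\,\partial_s\int_0^1 H_{s,\tau}\,d\tau$ restricted to the conical end, the only (cosmetic) difference being that the paper's formula already lives on all of $W$ and so requires no separate extension step.
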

\begin{proof}
  First suppose there exists continuation data. Introduce the averages:
  \begin{equation*}
    c(s)=\int_{0}^{1}c(s,t)\d t.
  \end{equation*}
  Then one uses the requirement \ref{cdata2} to show:
  \begin{equation*}
    \bd_{s}c(s)\le \int_{0}^{1}\partial_{t}b(s,t)\d t=0.
  \end{equation*}
  In particular, the slope at the left end is greater than the slope at the right end. This proves the ``only if'' part of the first assertion.

  For the ``if'' part, we fix a standard cut-off function $\beta(s)$ and define:
  \begin{equation*}
    \begin{aligned}
      H_{s,t}&=\beta(s)H_{\zeta_{0},t}+(1-\beta(s))H_{\zeta_{1},t}\\
      K_{s,t}&=\int_{0}^{t}\pd{}{s}H_{s,\tau}d\tau - t\pd{}{s}\int_{0}^{1}H_{s,\tau}d \tau.
    \end{aligned}
  \end{equation*}
  It is straightforward to check that $H_{s,t},K_{s,t}$ are 1-periodic in the $t$-variable, and that properties \ref{cdata1} through \ref{cdata4} are satisfied. It is important that the slope of $H_{\zeta_{0},t}$ is less than the slope of $H_{\zeta_{1},t}$ in order for \ref{cdata2} to be satisfied, as it ensures that:
  \begin{equation*}
    \pd{}{s}\int_{0}^{1}H_{s,\tau}d \tau\le 0\text{ on the region where }r\ge r_{0}.
  \end{equation*}
  One uses the contractibility of the space of almost complex structures to extend the asymptotic complex structures $J_{\zeta_{0}}$ and $J_{\zeta_{1}}$ such that \ref{cdata5} is satisfied.

  Finally we prove that any two continuation data between the same asymptotic systems are homotopic. This is straightforward, as one can simply take a convex combination of two one-forms $\mathfrak{a}$ and easily verify the properties \ref{cdata1} through \ref{cdata4} (which are all preserved under convex combinations), and again appeal to the contractibility of the space of almost complex structures. This completes the proof.
\end{proof}

Given continuation data $(\mathfrak{a},J)$ between $(H_{\zeta_{0},t},J_{\zeta_{0}})$ and $(H_{\zeta_{1},t},J_{\zeta_{1}})$, one interprets the counts the rigid solutions of the moduli space of finite energy solutions to \S\ref{sec:floers-equation-general} with generic perturbation term $\mathfrak{p}$ as defining the coefficients in a linear map (called a \emph{continuation map}): $$\mathrm{CF}(H_{\zeta_{0},t},J_{\zeta_{0}})\to \mathrm{CF}(H_{\zeta_{1},t},J_{\zeta_{1}})$$
Here \emph{rigid} means the counting the zero-dimensional components of the moduli space, which is assumed to be cut transversally.

Let us comment on one aspect related to why the count of rigid elements is finite: condition \ref{cdata2} ensures that the connection 1-form $\mathfrak{a}$ has curvature bounded from above. Indeed, the curvature two-form was given by:
\begin{equation*}
  \mathfrak{r}=(\bd_{s}H_{s,t}-\bd_{t}K_{s,t}+\omega(X^{K}_{s,t},X^{H}_{s,t})) \d s\wedge \d t,
\end{equation*}
and since $X^{K}_{s,t},X^{H}_{s,t}$ are proportional when $r\ge r_{0}$ (both point in the direction of the Reeb flow), we have:
\begin{equation*}
  \mathfrak{r}=(\bd_{s}c_{s,t}-\bd_{t}b_{s,t})r\d s\wedge \d t
\end{equation*}
when $r\ge r_{0}$, and this is non-positive.

The details of the construction of the continuation map are standard; these maps form a basic ingredient in the Floer cohomology TQFT which has been carefully constructed in the open case by \cite{ritter-jtopol-2013}; see also \cite{schwarz-thesis-1995,hofer-salamon-floermem-1995,abouzaid-EMS-2015}.

Standard arguments show:
\begin{enumerate}
\item\label{cmap-prop-1} the continuation map is a chain map,
\item\label{cmap-prop-2} the chain homotopy class of the map is independent of the generic perturbation $\mathfrak{p}$ or the homotopy class of continuation data,
\item\label{cmap-prop-3} the composition of two continuation maps is a continuation map,
\item\label{cmap-prop-4} the continuation map $\mathrm{HF}(H_{t},J)\to \mathrm{HF}(H_{t},J)$ is the identity map.
\end{enumerate}
We will not review these standard arguments, and refer the reader to, e.g., \cite{schwarz-thesis-1995,hofer-salamon-floermem-1995,ritter-jtopol-2013,abouzaid-EMS-2015}. Let us comment that \ref{cmap-prop-1}, \ref{cmap-prop-2}, and \ref{cmap-prop-3} are based on gluing arguments similar to those used to prove $d\circ d=0$.

A standard consequence of \ref{cmap-prop-1} through \ref{cmap-prop-4} is, if $(H_{\zeta_{0},t},J_{\zeta_{0}})$ and $(H_{\zeta_{1},t},J_{\zeta_{1}})$ are admissible for defining $\mathrm{CF}$, and $H_{\zeta_{0},t}$ and $H_{\zeta_{1},t}$ have the same slope, then their Floer homologies are canonically isomorphic. We will continue with this line of reasoning in the next subsection \S\ref{sec:floer-cohom-pers-defin}.

\subsubsection{The Floer cohomology persistence module}
\label{sec:floer-cohom-pers-defin}

Let $\mathscr{D}$ be the category whose objects are pairs $(H_{t},J)$ which are admissible for defining the Floer complex, and where there is a unique morphism $(H_{\zeta_{0},t},J_{\zeta_{0}})\to (H_{\zeta_{1},t},J_{\zeta_{1}})$ if the slope of $H_{\zeta_{0},t}$ is at most the slope of $H_{\zeta_{1},t}$, and no morphisms otherwise. The assignment:
\begin{equation*}
  (H_{t},J)\mapsto \mathrm{HF}(H_{t},J),
\end{equation*}
together with the continuation map construction induces a functor:
\begin{equation*}
  \mathrm{HF}:\mathscr{D}\to \mathrm{Vect}(\Z/2\Z),
\end{equation*}
where $\mathrm{Vect}(\Z/2\Z)$ is the category of vector spaces over the field $\Z/2\Z$. In a certain sense to made precise, this functor factorizes through the functor $\mathscr{D}\to (\R,\le)$ which sends $(H_{t},J)$ to its slope.

Summarizing, we have a diagram of the form:
\begin{equation*}
  \begin{tikzcd}
    &{\mathscr{D}}\arrow[ld,"{\mathrm{slope}}",swap]\arrow[rd,"{\mathrm{HF}}"] &\\
    {(\R,\le)}\arrow[rr,"{V}",dashed]& &{\mathrm{Vect}(\Z/2\Z)},
  \end{tikzcd}
\end{equation*}
and the claim is that there is a persistence module $V$ which makes the diagram commute in the following sense:
\begin{lemma}
  There exists a functor $V:(\R,\le)\to \mathrm{Vect}(\Z/2\Z)$ equipped with a natural isomorphism:
  \begin{equation*}
    I:V\circ (\mathrm{slope})\to \mathrm{HF}
  \end{equation*}
  of functors defined on $\mathscr{D}$. Moreover, any two such $(V,I)$, $(V',I')$ are isomorphic, in that there is a unique natural isomorphism $U:V\to V'$ making the diagram:
  \begin{equation*}
    \begin{tikzcd}
      {V\circ \mathrm{slope}}\arrow[d,"{I}"]\arrow[r,"{U}"] &{V'\circ \mathrm{slope}}\arrow[d,"{I'}"]\\
      {\mathrm{HF}}\arrow[r,"{\id}"] &{\mathrm{HF}}
    \end{tikzcd}
  \end{equation*}
  commute; the diagram is valued in the category of functors $\mathscr{D}\to \mathrm{Vect}(\Z/2\Z)$.
\end{lemma}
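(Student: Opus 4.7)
The strategy is a standard descent argument: having established in properties \ref{cmap-prop-1}--\ref{cmap-prop-4} that continuation maps form a coherent system at the level of homology, the functor $\mathrm{HF}:\mathscr{D}\to \mathrm{Vect}(\Z/2\Z)$ sends every morphism lying over an identity in $(\R,\le)$ to an isomorphism, and one then simply picks representatives.

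Concretely, I would first invoke the axiom of choice to fix, for each $c\in\R$, an admissible pair $(H^{c}_{t},J^{c})$ of slope $c$, and define $V_{c}:=\mathrm{HF}(H^{c}_{t},J^{c})$. For each pair $c_{1}\le c_{2}$, Lemma \ref{lemma:when-does-cdata-exist} guarantees continuation data exists, and its homotopy class is unique; the induced continuation map on $\mathrm{HF}$ defines the structure morphism $V_{c_{1}\le c_{2}}:V_{c_{1}}\to V_{c_{2}}$. The functoriality axioms $V_{c\le c}=\id$ and $V_{c_{2}\le c_{3}}\circ V_{c_{1}\le c_{2}}=V_{c_{1}\le c_{3}}$ follow immediately from \ref{cmap-prop-3} and \ref{cmap-prop-4}.

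To build the natural isomorphism $I:V\circ \mathrm{slope}\to \mathrm{HF}$, I would set $I_{(H_{t},J)}$ to be the continuation map $\mathrm{HF}(H^{c}_{t},J^{c})\to \mathrm{HF}(H_{t},J)$ for $(H_{t},J)$ of slope $c$. Naturality amounts to the commutativity, for each morphism $(H_{\zeta_{0},t},J_{\zeta_{0}})\to (H_{\zeta_{1},t},J_{\zeta_{1}})$ in $\mathscr{D}$ with slopes $c_{0}\le c_{1}$, of the square whose four arrows are all continuation maps; this is exactly \ref{cmap-prop-3}. That each $I_{(H_{t},J)}$ is an isomorphism is the special case $c_{0}=c_{1}=c$ combined with \ref{cmap-prop-4}: the composition of continuation maps $(H^{c}_{t},J^{c})\to (H_{t},J)\to (H^{c}_{t},J^{c})$ equals the identity continuation map, hence the identity on $\mathrm{HF}(H^{c}_{t},J^{c})$, and similarly the other way.

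For uniqueness, given another pair $(V',I')$, I would define $U_{c}:V_{c}\to V'_{c}$ as the unique composite $(I'_{(H^{c}_{t},J^{c})})^{-1}\circ I_{(H^{c}_{t},J^{c})}$; naturality of $U$ in $c$ follows by chasing the diagram built from the naturality squares of $I$ and $I'$ applied to a continuation map between the chosen representatives $(H^{c_{1}}_{t},J^{c_{1}})$ and $(H^{c_{2}}_{t},J^{c_{2}})$, and the stated square $I'\circ U=I$ holds by construction on the chosen representatives and then on all of $\mathscr{D}$ by naturality. The main (and only genuine) obstacle is verifying that the composite of continuation maps over a zig-zag of objects of the same slope coincides with the canonical isomorphism; this is precisely the content of \ref{cmap-prop-3} and \ref{cmap-prop-4}, so the whole argument is formal once those two properties are in hand.
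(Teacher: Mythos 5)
Your argument has a genuine gap at the periods of the Reeb flow. For $V$ to be a functor on all of $(\R,\le)$ you must produce $V_{c}$ for every $c\in\R$; but when $c\in\mathrm{Per}(X_{r})$ there is \emph{no} admissible pair $(H^{c}_{t},J^{c})$ of slope exactly $c$, so your ``axiom of choice'' step cannot be carried out. Indeed, a Hamiltonian of total slope $c$ generates, in the end where $H_{t}=c_{t}r$, a Liouville-invariant family of $1$-periodic orbits lying over any period-$c$ Reeb orbit, and these are necessarily degenerate; as noted in \S\ref{sec:connection-1-forms}, admissibility forces all $1$-periodic orbits into a compact set. The paper flags this explicitly in the remark following Theorem \ref{theorem:main-floer}: ``$V_{c}$ is defined as a Floer cohomology whenever $X_{r}$ has no orbits of period $c$, and, for other values of $c$, $V_{c}$ is defined as an inverse limit.'' Your construction, as written, simply does not define $V_{c}$ at those values, so you have not produced a functor on $(\R,\le)$.

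The paper sidesteps this by defining $V_{s}$ directly as the category-theoretic limit $\lim_{\mathscr{D}(s)}\mathrm{HF}$ over the full subcategory $\mathscr{D}(s)\subset\mathscr{D}$ of objects whose slope is $\ge s$; this subcategory is nonempty for every $s$, and the restriction maps between the limits supply the persistence-module structure for free. Away from the period spectrum, any admissible system of slope $s$ is initial in $\mathscr{D}(s)$, so the limit computes to $\mathrm{HF}(H_{t},J)$ and one recovers exactly the isomorphism $I$ you describe; your checks of naturality and of the uniqueness of $U$ via properties \ref{cmap-prop-1}--\ref{cmap-prop-4} are, in that regime, a faithful concrete unwinding of the universal-property argument. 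But on the period spectrum $\mathscr{D}(s)$ has no initial object and $V_{s}$ can even be infinite dimensional, living, as the paper puts it, in a completion of the category of finite-dimensional vector spaces. To repair your proof you would need to supply $V_{c}$ for period slopes --- for instance as the inverse limit of $V_{c'}$ over non-periods $c'>c$ --- together with the compatible structure maps in and out, at which point you have essentially reproduced the paper's limit construction in different words.
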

\begin{proof}
  The argument is essentially abstract nonsense. For each $c$, consider the full subcategory $\mathscr{D}(c)$ of objects $(H_{t},J)$ where the slope of $H_{t}$ is no less than $c$. The restriction of $\mathrm{HF}$ to this category is again a functor, and we define the category theory limit:
  \begin{equation*}
    V_{c}=\lim_{\mathscr{D}(c)}\mathrm{HF};
  \end{equation*}
  see, e.g., \cite{maclean-springer-1971,aluffi-GSM-2009} for details on category theory limits.

  Prosaically speaking, we define $V_{c}$ as the inverse limit of the Floer cohomologies of Hamiltonian systems whose slopes are at least $c$. If $c$ is not the period of a Reeb orbit of the Reeb flow associated to $\bd \Omega$, then $\mathscr{D}(c)$ has initial objects $(H_{t},J)$, and then the limit is isomorphic to $\mathrm{HF}(H_{t},J)$; this produces the natural isomorphism $I$. If $c$ is the period of a Reeb orbit, then $\mathscr{D}(c)$ does not have an initial object; in this case, it is possible that $V_{c}$ is an infinite dimensional vector space; these outputs of the persistence module can be thought of as living in the ``completion'' of the category of finite dimensional vector spaces.

  Proving that any other construction $(V',I')$ is canonically isomorphic to our $(V,I)$ is straightforward application of universal properties of limits, and other abstract nonsense.
\end{proof}

Henceforth, we will refer to the functor $V:\R\to \mathrm{Vect}(\Z/2\Z)$ as the \emph{Floer cohomology persistence module} associated to $\Omega$.

Let us comment that, if the set of periods of the Reeb flow (i.e., its spectrum) is \emph{discrete}, then $V_{c}$ is finite dimensional for all $c$ (although we will not use this fact). In the same vein, if $c\in \R$ is such that $(c,c+\epsilon)$ contains no points in the spectrum of the Reeb flow, then $V_{c}$ is finite dimensional. As a special case, $V_{0}$ is finite dimensional.

\subsubsection{The pair-of-pants product}
\label{sec:pair-pants-product}

In this section, we briefly recall the pair-of-pants product structure on Floer cohomology groups. The details of the construction will be important in the sequel, but for now, let us summarize the resulting algebraic structure.

Let $\mathscr{D}^{3}$ be the full subcategory of $\mathscr{D}\times \mathscr{D}\times \mathscr{D}$ consisting of data:
\begin{equation*}
  ((H_{\zeta_{0},t},J_{\zeta_{0}}),(H_{\zeta_{1},t},J_{\zeta_{1}}),(H_{\zeta_{\infty},t},J_{\zeta_{\infty}})),
\end{equation*}
where $H_{\zeta_{\infty},t}$ has a slope no less than the sum of the slopes of $H_{\zeta_{0},t}$ and $H_{\zeta_{1},t}$. There are two functors defined on $\mathscr{D}^{3}$:
\begin{equation*}
  \begin{aligned}
    T_{1}&=\mathrm{HF}(H_{\zeta_{0,t}},J_{\zeta_{0}})\otimes \mathrm{HF}(H_{\zeta_{1},t},J_{\zeta_{1}}),\\
    T_{2}&=\mathrm{HF}(H_{\zeta_{\infty,t}},J_{\zeta_{\infty}}),
  \end{aligned}
\end{equation*}
the ``pair-of-pants product'' is a natural transformation $\ast:T_{1}\to T_{2}$.

Via suitable abstract nonsense, this induces a natural transformation:
\begin{equation*}
  \ast:V_{c_{0}}\otimes V_{c_{1}}\to V_{c_{\infty}}
\end{equation*}
between two functors defined on the subcategory of $\R^{3}$ (a partially ordered set) consisting of those objects $(c_{0},c_{1},c_{\infty})$ where $c_{\infty}\ge c_{0}+c_{1}$. The partial order on $\R^{3}$ is the one where $(c_{0},c_{1},c_{\infty})\le (c_{0}',c_{1}',c_{\infty}')$ if and only if $c_{i}\le c_{i}'$ for each $i=0,1,\infty$.

The construction of $\ast$ is formally similar to the construction of continuation maps: one defines a class of \emph{pair-of-pants data} which consists of connection one-forms $\mathfrak{a}$ and almost complex structures $J$, and then uses perturbations $\mathfrak{p}$ to define a transversally cut-out moduli space whose rigid counts are packaged into a chain map. The resulting map on homology is independent of the perturbation $\mathfrak{p}$ and homotopy class of pair-of-pants data $(\mathfrak{a},J)$. One shows the map on homology commutes with continuation maps (in a suitable sense) and therefore induces the aforementioned natural transformation $\ast$.

In the rest of this section, we describe the construction of the pair-of-pants product, with focus on the details relevant to the proof of Theorem \ref{theorem:main-floer}.

We begin by defining pair-of-pants data, which is a connection one-form $\mathfrak{a}$ and almost complex structure on $\Sigma\times W$ where $\Sigma=\C\setminus \set{0,1}$. We require the following properties:
\begin{enumerate}
\item $\mathfrak{a}=H_{\zeta_{i},t}\d t$ and $J=J_{\zeta_{i}}$ holds in standard cylindrical ends around the $i=0,1,\infty$ punctures, and $(H_{\zeta_{i},t},J_{\zeta_{i}})$ are admissible for defining the Floer complex
\item $\mathfrak{a}=f_{x,y}r\d x+g_{x,y}r\d y$ holds outside of $r\ge r_{0}$, where $f,g\in \R$ vary smoothly with $x,y$,
\item\label{popdata3} $\bd_{x}g_{x,y}-\bd_{y}f_{x,y}\le 0$.
\end{enumerate}
Here a standard cylindrical end around $z=0,1$ is obtained by parametrizing a disk around $z$ by $[0,\infty)\times \R/\Z$ using the exponential map, and a standard cylindrical end around $z=\infty$ is obtained by parametrizing the complement of a disk around $0$ by $(-\infty,0]\times \R/\Z$, again using the exponential map. One breaks the rotational symmetry by requiring that the line $t=0$ is aligned with the positive real axis.

We say such data goes from $(H_{\zeta_{0},t},J_{\zeta_{0}})$, $(H_{\zeta_{1},t},J_{\zeta_{1}})$ to $(H_{\zeta_{\infty},t},J_{\zeta_{\infty}})$.

Similarly to the case of the continuation data in \S\ref{sec:cont-maps-floer}, one can speak of homotopies of pair-of-pants data. The analog of Lemma \ref{lemma:when-does-cdata-exist} is:
\begin{lemma}\label{lemma:when-does-pants-data-exist}
  Given $(H_{\zeta_{i},t},J_{\zeta_{i}})$ for $i=0,1,\infty$, there exists pair-of-pants data if and only if the slope of $H_{\zeta_{\infty},t}$ is no less than the sum of the slopes of $H_{\zeta_{0},t}$ and $H_{\zeta_{1},t}$. In this case, there is a unique homotopy class of pair-of-pants data.
\end{lemma}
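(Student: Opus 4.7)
The plan is to adapt the strategy of Lemma~\ref{lemma:when-does-cdata-exist}, proving necessity via Stokes' theorem, sufficiency via an explicit geometric construction, and uniqueness of the homotopy class via a convex combination argument.

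For necessity, I would assume pair-of-pants data $(\mathfrak{a},J)$ exists. In the region $r\ge r_0$ of $\Sigma\times W$, condition (2) of the definition lets us write $\mathfrak{a} = r\cdot\pi^*\alpha$ for a one-form $\alpha$ on $\Sigma$ (with $\pi:\Sigma\times W\to\Sigma$), and condition \ref{popdata3} translates to $d\alpha \le 0$ as a 2-form on $\Sigma$. Let $\Sigma_R\subset\Sigma$ be the compact surface obtained by excising small disks around the three punctures, chosen inside the cylindrical-end charts where $\mathfrak{a} = H_{\zeta_i,t}\d t$. Applying Stokes' theorem and evaluating the boundary integrals in the cylindrical coordinates, with care for the reversed boundary orientation at the negative puncture $\zeta_\infty$, yields
\begin{equation*}
  0 \ge \int_{\Sigma_R} d\alpha = c(H_{\zeta_0,t}) + c(H_{\zeta_1,t}) - c(H_{\zeta_\infty,t}),
\end{equation*}
which is the required slope inequality.

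For sufficiency, assume the slope inequality holds. The key step is to construct a smooth one-form $\alpha$ on $\Sigma$ which restricts to $c_{\zeta_i,t}\d t$ in each cylindrical end and satisfies $d\alpha \le 0$ pointwise. First, glue the prescribed cylindrical asymptotics via cut-off functions to produce some smooth one-form $\alpha_0$ on $\Sigma$; by Stokes, $\int_\Sigma d\alpha_0$ equals $c(H_{\zeta_0,t}) + c(H_{\zeta_1,t}) - c(H_{\zeta_\infty,t})$, which is nonpositive. Choose a nonpositive two-form $\sigma$ on $\Sigma$, compactly supported away from the cylindrical ends, with $\int_\Sigma \sigma = \int_\Sigma d\alpha_0$. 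Then $\sigma - d\alpha_0$ is a compactly supported two-form of zero total integral; since $H^2_c(\Sigma;\R)\cong\R$ via integration (for the pair of pants), $\sigma - d\alpha_0 = d\beta$ for some compactly supported one-form $\beta$. Setting $\alpha := \alpha_0 + \beta$ yields $d\alpha = \sigma \le 0$ while preserving the cylindrical asymptotic model. Now define $\mathfrak{a} := r\cdot\pi^*\alpha$ for $r\ge r_0$ and extend smoothly into the compact part of $W$ so as to agree with $H_{\zeta_i,t}\d t$ near each puncture; pick a compatible $J$ interpolating the $J_{\zeta_i}$ via contractibility of the space of admissible almost complex structures.

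For uniqueness of the homotopy class, given two pair-of-pants data $(\mathfrak{a}_0,J_0)$, $(\mathfrak{a}_1,J_1)$ with matching asymptotics, the straight-line homotopy $\mathfrak{a}_\tau := (1-\tau)\mathfrak{a}_0 + \tau\mathfrak{a}_1$ preserves every defining condition, since each is either asymptotic in nature (the cylindrical model near a puncture, the form $fr\d x + gr\d y$ in the end) or the linear inequality \ref{popdata3}, all of which are stable under convex combinations; a homotopy of the almost complex structures again follows from contractibility. The main obstacle in the plan is the sufficiency step, and specifically securing the pointwise sign condition on $d\alpha$ in the strict case $c(H_{\zeta_\infty,t}) > c(H_{\zeta_0,t}) + c(H_{\zeta_1,t})$; the Poincaré duality argument sketched above is designed to address exactly this point, and once $\alpha$ is obtained the remaining work is essentially routine cutoff-and-interpolation arguments.
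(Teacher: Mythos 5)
Your proof is correct, and the two halves the paper does spell out (necessity via Stokes, uniqueness via convex combinations and contractibility of the space of almost complex structures) match the paper's reasoning essentially verbatim. The genuinely different part is the ``if'' direction, which the paper dismisses as ``a straightforward construction.'' You replace the implicit explicit-cutoff construction (of the kind used, e.g., with the closed one-forms $\mathfrak{m}_0,\mathfrak{m}_1$ in the family pair-of-pants product of \S\ref{sec:family-pair-of-pants}, possibly glued to continuation data at the output) by a cohomological correction argument: build any $\alpha_0$ with the right asymptotics, then use $H^2_c(\Sigma;\R)\cong\R$ to exchange $d\alpha_0$ for a chosen nonpositive $\sigma$ at the cost of an exact term. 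This is a clean and slightly more conceptual route; the explicit construction buys you nothing extra here, so your choice is reasonable.

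One technical loose end you should tighten: producing $\beta$ with $d\beta=\sigma-d\alpha_0$ and $\beta$ \emph{compactly supported} does not by itself guarantee $\beta$ vanishes on the fixed cylindrical ends, which is what you need to ``preserve the cylindrical asymptotic model,'' since the ends are prescribed open sets and not merely neighborhoods of the punctures. The fix is standard: on each end, $d\beta=0$ and $\beta$ vanishes near the puncture, so the winding number $\int_{\{s\}\times\R/\Z}\beta$ vanishes for all $s$, and $\beta=df$ on the end with $f$ constant (equal to $0$) near the puncture; subtracting $d(\chi f)$ for a cutoff $\chi$ supported in the end and equal to $1$ deep in the end yields a new $\beta$ with the same $d\beta$ that vanishes identically on each end. (Equivalently, work with relative cohomology of the compact part $\Sigma'$ directly, using $H^2(\Sigma',\bd\Sigma')\cong\R$.) With this adjustment the argument is complete.
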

\begin{proof}
  The key idea in the proof is to consider $\mathfrak{a}=f_{x,y}\d x+g_{x,y}\d y$ as a one-form on $\Sigma$. The integral of $\d\mathfrak{a}$ (which is compactly supported) over $\Sigma$ is the difference in slopes: $$\int_{\Sigma}\d\mathfrak{a}=c(H_{\zeta_{0},t})+c(H_{\zeta_{1}},t)-c(H_{\zeta_{\infty},t});$$
  such an observation appears in, e.g., \cite{ritter-jtopol-2013}. This proves the ``only if'' part of the first assertion. The ``if'' part is a straightforward construction. Finally, the uniqueness of the homotopy class follows from the convexity of the space of pair-of-pants data (and the contractibility of the space of almost complex structures).
\end{proof}

Given pair-of-pants data $(\mathfrak{a},J)$, and a generic perturbation one-form $\mathfrak{p}$, one packages the counts of the rigid finite energy solutions to \S\ref{sec:general-form-floers} into a map:
\begin{equation}\label{eq:chain-level-product}
  \ast:\mathrm{CF}(H_{\zeta_{0},t},J_{\zeta_{0}})\otimes \mathrm{CF}(H_{\zeta_{1},t},J_{\zeta_{1}})\to \mathrm{CF}(H_{\zeta_{\infty},t},J_{\zeta_{\infty}}).
\end{equation}
The counts of rigid elements are finite because $\mathfrak{a}$ has curvature bounded above --- this is a consequence of requirement \ref{popdata3}, and also our assumption that $\omega$ is exact; see the discussion in \S\ref{sec:priori-energy-estim}.

As explained in \cite{schwarz-thesis-1995,ritter-jtopol-2013}, this map is a chain map, and its chain homotopy class is independent of the perturbation term $\mathfrak{p}$ and the homotopy class of the pair-of-pants data. The resulting map on homology is the product structure explained at the start of this section. The claimed naturality of the product operation follows from the fact that \eqref{eq:chain-level-product} commutes with continuation maps, up to chain homotopy; this standard fact is proved in, e.g., \cite{schwarz-thesis-1995,ritter-jtopol-2013}; one indication of why this holds is that if one ``glues'' continuation data to pair-of-pants data, one obtains new pair-of-pants data. The detailed argument involves Floer theoretic gluing and we defer the precise argument to the aforementioned references.

We end this section by commenting on one detail which will be important in the sequel. The product operation is defined by counting finite energy solutions to \S\ref{sec:general-form-floers}, and the fact that solutions have non-negative energy integral implies an inequality involving the values of $\int_{0}^{1}H_{\zeta_{i},t}$ at the three asymptotics (see the energy estimate Lemma \ref{lemma:energy-estimate}). For well-chosen pair-of-pants data, this inequality essentially says the product ``respects action filtrations;'' this will be used in a crucial way in the proof of Theorem \ref{theorem:main-floer} in \S\ref{sec:proof-of-main-floer}. We will return to this point and discuss the relevant action filtrations in \S\ref{sec:family-acti-filtr}. For other results on the interaction between the product structure and action filtration, we refer the reader to \cite{schwarz-PJM-2000,entov-poltero-IMRN-2003,kislev-shelukhin-GT-2021,alizadeh-atallah-cant-arXiv-2023}.

\subsubsection{The BV-operator on Floer cohomology}
\label{sec:bv-operator-on-HF}

The BV-operator is a natural endomorphism of functors $\mathscr{D}\to \mathrm{Vect}(\Z/2\Z)$:
\begin{equation*}
  \Delta:\mathrm{HF}\to \mathrm{HF};
\end{equation*}
abstract nonsense yields an endomorphism $\Delta$ of the persistence module $V$. The goal of this section is to briefly explain its construction. We refer the reader to \cite[pp.\,326]{abouzaid-EMS-2015} for a detailed exposition.

We define \emph{BV-data} to be a connection one-form $\mathfrak{a}=K_{\theta,s,t}\d s+ H_{\theta,s,t}\d t$ and almost complex structure $J$ on the family of domains $\R/\Z\times \Sigma\times W$, where $\Sigma=\R\times \R/\Z$ is the cylinder, satisfying:
\begin{enumerate}
\item $\mathfrak{a}=H_{t+\theta}\d t$ for $s\le -s_{0}$,
\item $\mathfrak{a}=H_{t}\d t$ for $s\ge s_{0}$,
\item $H_{\theta,s,t}=c_{\theta,s,t}r$ and $K_{\theta,s,t}=b_{\theta,s,t}r$ for $r\ge r_{0}$,
\item\label{bv-data-4} $\bd_{s}c_{\theta,s,t}\le \bd_{t}b_{\theta,s,t}$,
\item $J$ is fixed $\omega$-tame almost complex structure on $W$, which is Liouville equivariant outside of $\Omega(1)$,
\item $(H_{t},J)$ is admissible for defining the Floer complex,
\end{enumerate}
for some positive $r_{0},s_{0}$, and where $\theta$ is the $\R/\Z$ coordinate on $\R/\Z\times \Sigma\times W$.

As in \S\ref{sec:cont-maps-floer} and \S\ref{sec:pair-pants-product}, one can speak about homotopies of BV-data; such homotopies are required to satisfy the above properties with fixed $H_{t},J$ at all moments.

There is an analog of Lemma \ref{lemma:when-does-cdata-exist} and Lemma \ref{lemma:when-does-pants-data-exist}.
\begin{lemma}
  There exists BV-data for any data $(H_{t},J)$ which is admissible for defining $\mathrm{CF}$. Moreover, there is a unique homotopy class of BV-data.
\end{lemma}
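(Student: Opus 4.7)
The plan is to mimic the constructions in Lemma \ref{lemma:when-does-cdata-exist} and Lemma \ref{lemma:when-does-pants-data-exist}, using the same ``corrector term'' trick to enforce the $t$-periodicity of $K_{\theta,s,t}$ along with the curvature inequality \ref{bv-data-4}. The one new feature here is that everything must be $\R/\Z$-periodic in $\theta$, but this will be automatic from the $1$-periodicity of $H_t$ in $t$.

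\textbf{Existence.} Pick a smooth cutoff $\beta:\R\to[0,1]$ with $\beta(s)=1$ for $s\le -s_{0}$ and $\beta(s)=0$ for $s\ge s_{0}$, and set
\begin{equation*}
  H_{\theta,s,t}=\beta(s)H_{t+\theta}+(1-\beta(s))H_{t},\qquad K_{\theta,s,t}=\int_{0}^{t}\pd{H_{\theta,s,\tau}}{s}d\tau-t\pd{}{s}\int_{0}^{1}H_{\theta,s,\tau}d\tau.
\end{equation*}
Because $H_{t+1}=H_{t}$, the expression for $H_{\theta,s,t}$ is well-defined on $\R/\Z\times \Sigma\times W$. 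By construction, $K$ is $1$-periodic in $t$, $K=0$ and $\partial_{s}H=0$ outside $\abs{s}\le s_{0}$, and the correct asymptotic behavior at $s=\pm\infty$ is attained. For $J$, we simply use the fixed almost complex structure provided by the admissible data. Checking the curvature condition \ref{bv-data-4}: in the Liouville end one has $c_{\theta,s,t}=\beta(s)c_{t+\theta}+(1-\beta(s))c_{t}$, and the key observation is that
\begin{equation*}
  \int_{0}^{1}c_{\theta,s,\tau}\,d\tau=\beta(s)\int_{0}^{1}c_{\tau+\theta}d\tau+(1-\beta(s))\int_{0}^{1}c_{\tau}d\tau=\int_{0}^{1}c_{\tau}d\tau
\end{equation*}
is independent of $(\theta,s)$. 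Combining this with $\partial_{t}b_{\theta,s,t}=\partial_{s}c_{\theta,s,t}-\partial_{s}\!\int_{0}^{1}c_{\theta,s,\tau}d\tau$ (which is immediate from the formula for $K$) gives $\partial_{s}c_{\theta,s,t}=\partial_{t}b_{\theta,s,t}$, so \ref{bv-data-4} holds with equality in the end.

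\textbf{Uniqueness of the homotopy class.} Let $(\mathfrak{a}^{0},J)$ and $(\mathfrak{a}^{1},J)$ be two choices of BV-data, with the same fixed asymptotic $H_{t}$ and $J$. Take the straight-line homotopy $\mathfrak{a}^{\tau}=(1-\tau)\mathfrak{a}^{0}+\tau\mathfrak{a}^{1}$. All the defining conditions for BV-data survive a convex combination: the asymptotic profiles at $s=\pm\infty$ are common to both endpoints; the form $K\,ds+H\,dt$ with $H,K$ of slope type is preserved; the $\R/\Z$-periodicity in $\theta$ holds pointwise; and, crucially, the curvature inequality \ref{bv-data-4} is linear in $(c_{\theta,s,t},b_{\theta,s,t})$ and therefore convex. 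Hence $\tau\mapsto (\mathfrak{a}^{\tau},J)$ is a homotopy of BV-data from $(\mathfrak{a}^{0},J)$ to $(\mathfrak{a}^{1},J)$.

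\textbf{Expected obstacle.} There is no serious difficulty: the only thing to watch is that the ``rotation in $t$'' formula $H_{t+\theta}$ is globally defined over $\theta\in\R/\Z$, which uses nothing more than $H_{t+1}=H_{t}$. The averaging identity, which makes the curvature condition free in the Liouville end, is then automatic because convex combinations preserve the time-average of a $t$-periodic function and time-shifts do not change it. As in Lemma \ref{lemma:when-does-cdata-exist}, the cleanest exposition will be to set up the corrector formula for $K$ and then verify \ref{bv-data-4} in one line.
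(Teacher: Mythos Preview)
Your proof is correct and follows essentially the same approach as the paper: the same interpolation formula for $H_{\theta,s,t}$, the same corrector-term construction for $K_{\theta,s,t}$ (the paper notes, as you do, that the second term vanishes because the slope has constant time-average), and the same convexity argument for uniqueness of the homotopy class.
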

\begin{proof}
  The existence of at most one homotopy class follows from the convexity of the space of BV-data with fixed $(H_{t},J)$. It remains only to prove there exists some BV-data.

  The construction is rather simple; one defines:
  \begin{equation*}
    H_{\theta,s,t}:=(1-\beta(s))H_{t+\theta}+\beta(s)H_{t},
  \end{equation*}
  where $\beta(s)$ is a standard cut-off function, and then defines:
  \begin{equation*}
    K_{\theta,s,t}:=\int_{0}^{t}\bd_{s}H_{\theta,s,\tau}d \tau-t\int_{0}^{1}\bd_{s}H_{\theta,s,\tau}d\tau=\int_{0}^{t}\bd_{s}H_{\theta,s,\tau}d\tau.
  \end{equation*}
  Note the similarity between this and the construction used in Lemma \ref{lemma:when-does-cdata-exist}. The formula for $K_{\theta,s,t}$ simplifies because $\bd_{s}H_{\theta,s,t}$ has zero time average --- the slope of $H_{\theta,s,t}$ is constant as $s$ varies.

  One easily verifies the enumerated conditions hold. Indeed, one verifies directly that $\bd_{s}H_{\theta,s,t}=\bd_{t}K_{\theta,s,t}$, which implies\footnote{Let us observe that, for any BV data, $\bd_{s}c_{\theta,s,t}-\bd_{t}b_{\theta,s,t}$ has zero time-average, and hence must be constant. Thus we could replace \ref{bv-data-4} by the apparently stronger condition $\bd_{s}c_{\theta,s,t}=\bd_{t}b_{\theta,s,t}$ without any loss of generality.} \ref{bv-data-4}. This completes the proof.
\end{proof}

The way BV-data is used to define a map $\Delta:\mathrm{CF}(H_{t},J)\to \mathrm{CF}(H_{t},J)$ follows the same lines as \S\ref{sec:cont-maps-floer} and \S\ref{sec:pair-pants-product}. Briefly, one picks generic perturbation one-form $\mathfrak{p}$ on $\R/\Z\times \Sigma\times W$, and defines the map whose coefficients are the counts of rigid finite-energy solutions to \S\ref{sec:floers-equation-general}.

Let us briefly explain how to interpret the count of solutions as coefficients in a matrix. If $(\theta,u)$ is a rigid finite energy solution, then $u(s,t)\to \gamma_{\mathrm{out}}(t+\theta)$ as $s\to-\infty$ and $u(s,t)\to \gamma_{\mathrm{in}}(t)$ as $s\to\infty$, where $\gamma_{\mathrm{in}}$ and $\gamma_{\mathrm{out}}$ are $1$-periodic orbits of the system generated by $H_{t}$; we consider $u$ as contributing to the coefficient in the matrix with entry $(\gamma_{\mathrm{out}},\gamma_{\mathrm{in}})$.

\subsubsection{PSS and inclusion of the constant loops}
\label{sec:incl-const-loops-in-HF}

Recall the full subcategory $\mathscr{D}(0)$ of objects $(H_{t},J)\in \mathscr{D}$ where the slope of $H_{t}$ is no less than $0$. In this section we explain how to define the PSS morphism, which can be considered as a natural transformation:
\begin{equation*}
  \mathrm{PSS}:H^{*}(W)\to \mathrm{HF}|_{\mathscr{D}(0)};
\end{equation*}
in order to interpret $\mathrm{PSS}$ as a natural transformation, the domain $H^{*}(W)$ is considered as a constant functor on $\mathscr{D}(0)$. Abstract nonsense (i.e., taking limits) shows that $\mathrm{PSS}$ descends to a homomorphism:
\begin{equation*}
  \mathrm{PSS}:H^{*}(W)\to V_{0};
\end{equation*}
see \S\ref{sec:floer-cohom-pers-defin}.

The PSS construction goes back to \cite{piunikhin-salamon-schwarz-1996}, and has been generalized to the setting of convex-at-infinity symplectic manifolds in \cite{frauenfelder-schlenk-IJM-2007,ritter-jtopol-2013}. Typically one works with a Morse theory version of $H^{*}(W)$. In our framework, recall from \S\ref{sec:remark-on-cohomology} that we prefer to work with a proxy for singular cohomology, and instead define $H^{*}(W)$ to be the group of smooth proper maps $C:S\to W$ modulo proper cobordisms.

We define $\mathrm{PSS}$-data to be a connection one-form $\mathfrak{a}$ and almost complex structure $J$ on $\Sigma\times W$ where\footnote{More precisely, in the terminology of \S\ref{sec:domains}, $\Sigma=\mathbb{C}P^{1}$ with $\Gamma_{-}=\set{\infty}$, so the punctured surface is $\C$.} $\Sigma=\C$ which, in the cylindrical coordinates $z=e^{-2\pi (s+i t)}$, satisfies:
\begin{enumerate}
\item $\mathfrak{a}=K_{s,t}\d s+H_{s,t}\d t$,
\item\label{pss-data-2} $\mathfrak{a}=0$ for $s\ge s_{0}$,
\item $\mathfrak{a}=H_{t}\d t$ for $s\le -s_{0}$,
\item\label{pss-data-4} $K_{s,t}=b_{s,t}r$, $H_{s,t}=c_{s,t}r$ for $r\ge r_{0}$,
\item $\bd_{s}c_{s,t}\le \bd_{t}b_{s,t}$,
\item $J$ is fixed as in \S\ref{sec:bv-operator-on-HF},
\item\label{pss-data-7} $(H_{t},J)\in \mathscr{D}$.
\end{enumerate}
Note that it follows from \ref{pss-data-2}, \ref{pss-data-4}, and \ref{pss-data-7} that $(H_{t},J)\in \mathscr{D}(0)$. PSS-data is similar to continuation data from $(0,J)$ to $(H_{t},J)$, the only difference being that $(0,J)$ is not actually admissible for defining the Floer complex, and so we consider the puncture at $s=\infty$ (namely $z=0$) as a removable singularity.

For a cycle $C:S\to W$, we define $\mathrm{PSS}(C)\in \mathrm{CF}(H_{t},J)$ as a cycle. Fix PSS data $(\mathfrak{a},J)$, which we pull back to the family $S\times \Sigma\times W$. Fix a generic perturbation term $\mathfrak{p}$, and count the rigid finite energy solutions $(\sigma,u)$ to \S\ref{sec:general-form-floers} satisfying the incidence constraint:
\begin{equation*}
  u(0)=C(\sigma).
\end{equation*}
One counts the asymptotic orbits of $u$ as a linear combination in $\mathrm{CF}(H_{t},J)$. The arguments from \cite{piunikhin-salamon-schwarz-1996,frauenfelder-schlenk-IJM-2007,ritter-jtopol-2013} adapt easily to the present case to show that this linear combination is a cycle in $\mathrm{CF}(H_{t},J)$. The cohomology class of this cycle is independent of $\mathfrak{p}$, the PSS-data chosen, and even the proper cobordism class of $F$. By construction, one sees that $\mathrm{PSS}$ is a linear map $H^{*}(W)\to \mathrm{HF}(H_{t},J)$. Finally, the standard Floer theory gluing arguments show that $\mathrm{PSS}$ is a natural homomorphism, i.e., it commutes with continuation maps.

This concludes the specification of the Floer theory framework used in the paper.

\subsection{Evaluation maps and ball embeddings}
\label{sec:evaluation-maps-ball}

Fix a map $f:N\to \Omega$, where $N$ is a closed manifold. The goal of this section is to explain how a lift of $f$ to $\mathfrak{B}(a,\Omega)/U(n)$, as in \S\ref{sec:introduction}, is used to define an evaluation map $\mathfrak{e}:V_{c}\to \Z/2\Z$ for $0<c<a$ which satisfies:
\begin{equation*}
  \mathfrak{e}(\mathrm{PSS}(\beta))=1
\end{equation*}
provided that $\beta=[C]$ where $C:S\to W$ is a cycle with non-zero homological intersection number with $f$.

In particular, a consequence we will deduce by the end of this section is:
\begin{proposition}\label{prop:concrete-consequence}
  If $f$ lifts to $\mathfrak{B}(a,\Omega)/U(n)$, and $\beta\in H^{*}(W)$ has non-zero homological intersection number with $f$, then $\mathrm{PSS}(\beta)\ne 0$ in $V_{c}$ for $c<a$.
\end{proposition}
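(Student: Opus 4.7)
My plan is to construct the evaluation map $\mathfrak{e}: V_c \to \Z/2\Z$ by a parametric adaptation of the PSS construction, with the family $F: N \times B(a) \to \Omega$ serving as the geometric input defining a parametric incidence condition. Given a cycle $z \in \mathrm{CF}(H_t, J)$ representing a class in $V_c$, for each generator $\gamma$ appearing in $z$ I would count, with $\Z/2\Z$-coefficients, the rigid finite-energy solutions $u: (-\infty, 0] \times \R/\Z \to W$ of a negative-PSS type equation, asymptotic to $\gamma$ at $-\infty$ and satisfying an incidence condition $u(0,0) = f(\eta)$ for some $\eta \in N$; the rigid parametric moduli space thus consists of pairs $(\eta, u)$.

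The crucial step is the choice of Hamiltonian $H_t \in \mathscr{H}$ of slope $c$, which I would take to be adapted to the family by requiring that $F^* H_t$ agrees with a standard radial profile $h(|z|^2)$ on $B(a) \subset \C^n$ whose Hamiltonian generates no non-constant $1$-periodic orbits in the interior $B(a) \setminus \bd B(a)$. This is possible precisely because $c < a$: the maximal slope of a non-resonant radial Hamiltonian on $B(a)$ is $a$. With this choice, the constant orbits at each $F(\eta, 0) = f(\eta)$ sit at the bottom of a special action filtration on $\mathrm{CF}(H_t, J)$, and it is this filtration that plays the role of Hutchings' family Floer cohomology in the paper's outline.

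To identify $\mathfrak{e}(\mathrm{PSS}(\beta))$ with the mod-$2$ intersection number $\#(C \cap f)$, I would concatenate the PSS capping disk (encoding $\mathrm{PSS}(\beta)$ via incidence with a transverse representative $C: S \to W$) with the evaluation capping half-cylinder (encoding $\mathfrak{e}$ via incidence with $f(N)$) along their common asymptotic orbit. A standard neck-stretching argument should reduce the concatenated count to that of configurations on a sphere with two marked points mapped to $W$, with intermediate Floer orbit shrinking to a constant orbit lying inside the image of some ball, which by the radial-Hamiltonian analysis above corresponds precisely to a transverse intersection point of $C$ and $f$ inside $\Omega$. By the hypothesis on $\beta$, this count is non-zero mod $2$, giving $\mathfrak{e}(\mathrm{PSS}(\beta)) = 1$ and hence $\mathrm{PSS}(\beta) \neq 0$.

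The main obstacle, as the paper itself flags, is establishing that the count defining $\mathfrak{e}$ is well-defined on cohomology, i.e., that it is a chain map, invariant under continuation, homotopies of perturbation data, and the choice of generic $\mathfrak{p}$, so that it descends to a map $V_c \to \Z/2\Z$. This reduces to an action/energy argument controlling how Floer solutions and their breakings can interact with the image of the family; the inequality $c < a$ is what prevents such solutions from escaping the image of the balls through an orbit of period less than $a$ and thereby disrupting the count. Compared with the usual PSS naturality proof, the additional delicacy is keeping track of the parametric incidence with $N$ through all Floer-theoretic gluings, and I expect this bookkeeping, together with the precise construction of the connection one-form with the required curvature sign, to be the most intricate part of the argument.
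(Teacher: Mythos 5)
Your high-level plan — construct an evaluation map $\mathfrak{e}$ from a parametric moduli space over $N$, then argue $\mathfrak{e}(\mathrm{PSS}(\beta))$ equals the intersection number $\beta\cdot[f]$ — is the same plan the paper follows. However, the specific geometric data you choose has two problems that I think are fatal as stated.

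First, your requirement that $F^*H_t$ be a fixed radial profile $h(|z|^2)$ on $B(a)$ cannot be satisfied by a single $H_t \in \mathscr{H}$, because the balls $B_\eta = F(\eta, B(a))$ overlap as $\eta$ varies over $N$. A point $w$ lying in $B_{\eta_0}\cap B_{\eta_1}$ has two different preimages $z_0, z_1$ with in general $|z_0|\neq|z_1|$, forcing $h(|z_0|^2) = h(|z_1|^2)$ — an overdetermined constraint. The paper instead uses an $\eta$-indexed \emph{family} of Hamiltonians $H_{c,\delta,\epsilon,\eta}$, each adapted to its own ball $B_\eta$, and runs a parametric moduli space $\mathscr{M}$ of pairs $(\eta, u)$ over $N$ whose asymptotic data genuinely varies with $\eta$.

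Second — and this is the deeper issue — your evaluation cap places $\gamma$ at the asymptotic end and a \emph{removable singularity with a point constraint} $u(0,0)=f(\eta)$ at the other. This means the slope profile across your cap is $c \to 0$. When you glue it to the PSS cap (slope profile $0 \to c$) along the intermediate orbit, you get a sphere whose slope profile is $0 \to c \to 0$: a bump. No connection one-form $\mathfrak{a} = H_{s,t}\,\d t + K_{s,t}\,\d s$ with such a profile can have non-positive curvature $\mathfrak{r}$ (integrate $\bd_s c_{s,t} \le \bd_t b_{s,t}$ over $t\in\R/\Z$; the $K$-term drops by periodicity and you are left requiring $\bd_s \int c\,\d t \le 0$, contradicting a bump). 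The energy identity of Lemma~\ref{lemma:energy-estimate} then fails to give an a priori bound, and the maximum principle (Proposition~\ref{proposition:maximum-principle}) cannot be applied either, since it presupposes bounded energy. So the neck-stretching/Gromov compactness you invoke to identify the glued count with the intersection number $\beta\cdot[f]$ is not available. (Indeed it \emph{must} fail, since $\mathfrak{e}(\mathrm{PSS}(\beta)) = \beta\cdot[f]$ for all $c > 0$ would contradict the vanishing of $\mathrm{PSS}(\beta)$ in $\mathrm{SH}(W)$ that occurs, e.g., after subcritical handle attachment.) The paper avoids this by replacing your incidence constraint with an \emph{asymptotic orbit condition}: the left end of the evaluation cylinder is required to converge to the constant orbit at the center $F(\eta,0)$ of $B_\eta$, which is a non-degenerate orbit of $H_{c,\delta,\epsilon,\eta}$ with slope $c$. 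The glued evaluation-PSS moduli space is then a plane with one removable singularity (slope $0$, incidence with $C$) and one asymptotic puncture (slope $c$, the ball center), so the slope profile is monotone and the standard curvature estimate goes through. The assumption $c < a$ then enters precisely as the action gap: by Lemma~\ref{lemma:construction-of-D}, the ball-center orbit has action $c + c\delta/2 - \epsilon a$, which for $\epsilon$ near $1$ and $\delta$ small is strictly below every other action, so there is no breaking at the ball-center end (Lemma~\ref{lemma:e-map-only-one-breaking}) and the evaluation map is defined. Your statement that ``the maximal slope of a non-resonant radial Hamiltonian on $B(a)$ is $a$'' gestures at this, but what actually matters is the action drop $\epsilon a$ at the center (not the slope of a radial profile), and the inequality $\epsilon a > c(1+\delta/2)$ is what isolates that orbit.
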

\emph{Remark}. One can define a capacity by the number:
\begin{equation*}
  c(\beta,\Omega)=\inf\set{c:\mathrm{PSS}(\beta)=0\text{ in }V_{c}};
\end{equation*}
Proposition \ref{prop:concrete-consequence} implies $\mathrm{Gr}(f,\Omega)\le c(\beta,\Omega)$. In the special case when $\beta=[W]$, we conclude the estimate that $\mathrm{Gr}(f,\Omega)\le c([W],\Omega)$; see, e.g., \cite{benedetti-kang-JFPTA-2022}.

Without any true loss of generality, we assume that $f$ lifts to $\mathfrak{B}(a',\Omega)/U(n)$ for some number $a'>a$. We will appeal to such an extension in some of the subsequent arguments.

\subsubsection{Family of Hamiltonian functions associated to a family of balls}
\label{sec:system-of-hamiltonians}

Introduce a convex function $\mu:\R\to \R$ such that:
\begin{enumerate}
\item $\mu(x)=x$ for $x\ge 1$,
\item $\mu(x)=1/2$ for $x\le 0$,
\end{enumerate}
and introduce the autonomous Hamiltonian function: $$G_{c,\delta}=c\delta\mu(\delta^{-1}(r-1))+c$$ for a fixed non-negative constant $c$ and a very small parameter $\delta>0$. Then:
\begin{enumerate}
\item $G_{c,\delta}$ agrees with $cr$ outside of a neighbourhood of $\Omega$ (indeed, the neighborhood is $\set{r\ge 1+\delta}$),
\item $G_{c,\delta}$ equals the constant $c+c\delta/2$ on the domain $\Omega$.
\end{enumerate}
One should consider this Hamiltonian as the background Hamiltonian system.

Recall that the parametric Gromov width concerns maps $f:N\to \Omega$ together with:
\begin{enumerate}
\item an open cover $U_{\alpha}$ of $N$,
\item smooth maps $g_{\alpha\beta}:U_{\alpha}\cap U_{\beta}\to U(n)$,  
\item extensions $F_{\alpha}:U_{\alpha}\times B(a)\to \Omega$ satisfying $F_{\beta}(\eta,z)=F_{\beta}(\eta,g_{\alpha\beta}(\eta)z)$ for $\eta\in U_{\alpha}\cap U_{\beta}$.
\end{enumerate}
We will use the extensions of $F_{\alpha}$ to define an $\eta$-parametric family of perturbations of the Hamiltonian $G_{c,\delta}$.

Let us denote by $B_{\eta}$ the image of of $z\mapsto F_{\alpha}(\eta,z)$, for any $\alpha$ such that $\eta\in U_{\alpha}$. The image of this ball, and the radial coordinate $\abs{z}$, is independent of the choice of $\alpha$.

Let $D_{\eta}:W\to \R$ be a smooth family of functions, parametrized by $\eta\in N$, with compact support in $\Omega$ and such that:
\begin{enumerate}
\item $\set{D_{\eta}<0}$ is the interior of $B_{\eta}$,
\item $D_{\eta}\circ B_{\eta}(z)=\pi \abs{z}^{2} - a$.
\end{enumerate}
Such a family will be constructed in Lemma \ref{lemma:construction-of-D} below using slight extensions of $B_{\eta}$. Note that the constants are chosen so that $D_{\eta}$ vanishes on the boundary of $B_{\eta}$.

The $\eta$-parametric family of perturbations of $G_{c,\delta}$ we will use in our construction is:
\begin{equation}\label{eq:special-hamiltonian}
  H_{c,\delta,\epsilon,\eta}=G_{c,\delta}+\epsilon D_{\eta},
\end{equation}
which depends on parameters $c,\delta,\epsilon>0$, and $\eta\in N$. We will denote the Hamiltonian vector field of this function by $X_{c,\delta,\epsilon,\eta}$.

\emph{Remark}. In the following lemma, we discuss the \emph{action} of an orbit $\gamma(t)$ of a system generated by Hamiltonian function $H_{t}$, which is defined as:
\begin{equation*}
  \textstyle \int H_{t}(\gamma(t))dt-\int \gamma^{*}\lambda,
\end{equation*}
as is typical in Floer theory in exact symplectic manifolds.

\emph{Remark}. We will also refer to the period spectrum of the Reeb vector field $X_{r}$, which we denote by $\mathrm{Per}(X_{r})$.

\begin{lemma}\label{lemma:construction-of-D}
  The family $D_{\eta}$ can be chosen so that, for $\epsilon<1$ and $c$ not a period of $X_{r}$, the contractible 1-periodic orbits of $X_{c,\delta,\epsilon,\eta}$ are of three types:
  \begin{enumerate}
  \item constant orbits in $\Omega$ lying outside $B_{\eta}$, with action at least $c+c\delta/2$,
  \item a single constant orbit at the center of $B_{\eta}$, with action $c+c\delta/2-\epsilon a$,
  \item non-constant periodic orbits of $X_{r}$ lying in the hypersurfaces defined by $c\mu'(\delta^{-1}(r_{0}-1))=b\in \mathrm{Per}(X_{r})$, with action at least $c-b$.
  \end{enumerate}
  In particular, if $\epsilon a>b+c\delta/2$ for all $b\in \mathrm{Per}(X_{r})\cap [0,c]$, there are no non-constant finite energy Floer cylinders for $X_{c,\delta,\epsilon,\eta}$ whose negative asymptotic is in (2). If $a>c$, then we can achieve this conclusion for $\epsilon$ close enough to $1$ and $\delta$ close enough to $0$.
\end{lemma}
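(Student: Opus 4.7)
\emph{Proof plan.} The argument naturally divides into four steps: (a) construct $D_\eta$ and verify that its only $1$-periodic orbits outside $B_\eta$ are constant; (b) classify the contractible $1$-periodic orbits of $X_{c,\delta,\epsilon,\eta}$ region-by-region; (c) compute their actions; (d) conclude the non-existence statement for Floer cylinders via an action comparison.

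For (a), I extend each embedding $B_\eta$ to a slightly larger family $B_\eta:B(a')\to\Omega$ with $a'>a$ and set $D_\eta\circ B_\eta(z)=\pi\abs{z}^2-a$ on $B(a')$. Beyond $B_\eta(B(a'))$, I interpolate $D_\eta$ smoothly down to zero using a radial profile $\phi(\pi\abs{z}^2)$ with $\phi'\in[0,1]$, and extend by zero outside a tubular neighbourhood in $\Omega$. The Hamiltonian vector field of such a radial function is $\phi'(\pi\abs{z}^2)X_{\pi\abs{z}^2}$, whose time-$1$ flow rotates each complex line by $2\pi\epsilon\phi'<2\pi$; hence no non-constant $1$-periodic orbit of $\epsilon X_{D_\eta}$ occurs outside $B_\eta$. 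The Morse--Bott critical spheres appearing where $\phi'=0$ can be replaced by isolated non-degenerate critical points by a small perturbation, all of positive value, so that $D_\eta\ge 0$ outside $B_\eta$ is preserved.

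For (b) and (c), I split $W$ into three regions. On $\set{r\ge 1+\delta}$, $G_{c,\delta}=cr$ and $D_\eta=0$, so $X_H=cX_r$ and the assumption $c\notin\mathrm{Per}(X_r)$ rules out $1$-periodic orbits. On the transition shell $\set{1\le r\le 1+\delta}$, $D_\eta=0$ still and $X_H=c\mu'(\delta^{-1}(r-1))X_r$; a $1$-periodic orbit appears precisely on a level where $c\mu'(\delta^{-1}(r-1))\in\mathrm{Per}(X_r)$, giving type~(3). On $\Omega=\set{r\le 1}$, $G_{c,\delta}$ is the constant $c+c\delta/2$ so $X_H=\epsilon X_{D_\eta}$; inside $B_\eta$ the standard ball model fixes only the center, yielding type~(2), and step~(a) leaves only constant orbits at critical points of $D_\eta$ outside $B_\eta$, yielding type~(1). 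The actions are then: type~(1) gives $c+c\delta/2+\epsilon D_\eta(x)\ge c+c\delta/2$ since $D_\eta\ge 0$ outside $B_\eta$; type~(2) gives $c+c\delta/2-\epsilon a$; and for type~(3) on the level $r_0=1+\delta x$, $x\in[0,1]$, with $b=c\mu'(x)$, direct computation using $\lambda(X_r)=r$ on the conical end yields action $c-b+c\delta(\mu(x)-x\mu'(x))$. Convexity of $\mu$ makes $\mu(x)-x\mu'(x)$ non-increasing (its derivative is $-x\mu''(x)\le 0$) with value $0$ at $x=1$, so it is $\ge 0$ on $[0,1]$ and the action is $\ge c-b$.

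Under the hypothesis $\epsilon a>b+c\delta/2$ for every $b\in\mathrm{Per}(X_r)\cap[0,c]$, orbit~(2) has action strictly less than any orbit of type~(1) or~(3). A direct computation with the Floer equation $\bd_s u+J(\bd_t u-X_H u)=0$ gives $\bd_s A_H(u(s,\cdot))=-\int_0^1\abs{\bd_s u}^2_g\, dt\le 0$, so the action is monotonically non-increasing in $s$ and strictly decreasing on non-constant cylinders; the negative asymptotic ($s\to-\infty$) therefore has strictly greater action than the positive asymptotic. Since~(2) is the unique minimum-action orbit, it cannot appear as the negative asymptotic of any non-constant cylinder. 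For the final clause, if $a>c$ then $b\le c<a$ for all $b\in\mathrm{Per}(X_r)\cap[0,c]$, so $\epsilon a>c+c\delta/2\ge b+c\delta/2$ once $\epsilon$ is chosen close enough to $1$ and $\delta$ close enough to $0$. The main technical obstacle is step~(a): the simultaneous requirements on $D_\eta$ (matching the standard ball Hamiltonian on $B_\eta$, non-negativity outside, and no extraneous $1$-periodic orbits in the transition region) force the somewhat delicate radial-plus-Morse-perturbation construction described there.
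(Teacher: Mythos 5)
Your proof is essentially the paper's proof: same profile-type construction of $D_\eta$, same region-by-region orbit classification, same action formula for the type~(3) orbits (your $c\delta(\mu(x)-x\mu'(x))$ is the paper's $cf(r_0-1)$ under the substitution $x=\delta^{-1}(r_0-1)$), and the same action-monotonicity argument for the final non-existence statement. Two small slips in step~(a): first, you set $D_\eta\circ B_\eta(z)=\pi\abs{z}^2-a$ on all of $B(a')$, which equals $a'-a>0$ at $\partial B(a')$, and then assert an interpolation to zero with $\phi'\in[0,1]$; but bringing the value back to zero forces $\phi'<0$ somewhere, so the constraint must be $\abs{\phi'}\le 1$ (which is what the paper requires of its profile $\rho$, and which suffices since only $\epsilon\abs{\phi'}<1$ is needed to rule out nontrivial $1$-periodic orbits). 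The cleanest version, as in the paper, does the entire profiling inside $B(a')$ via $D=\rho(\pi\abs{z}^2)-a$ with $\rho(x)=x$ for $x\le a$, $\rho=a$ for $x\ge a'$, $\abs{\rho'}\le 1$, so that $D_\eta$ vanishes outside $B_\eta(B(a'))$ automatically and no extra patching is needed. Second, the Morse perturbation of the degenerate $\rho'=0$ shells is unnecessary for this lemma: nondegeneracy is only required of the central orbit, and the subsequent applications (the evaluation-map argument of \S\ref{sec:defin-evaluation-map}) rely on the action gap rather than nondegeneracy of the type~(1) orbits.
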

\begin{proof}
  As mentioned in \S\ref{sec:evaluation-maps-ball}, we will appeal to a small extension $B(a)\subset B(a')$. Fix a smooth function $\rho:\R\to \R$ such that $\rho(x)=x$ for $x\le a$, $\rho(x)\ge a$ for $x\in [a,a']$, and $\rho(x)=a$ for $x\ge a'$, and such that $\abs{\rho'}\le 1$ holds at all points.

  Then we define $D(z)=\rho(\pi \abs{z}^{2})-a$ which can be pushed forward using the ball embedding $B_{\eta}(a')\to \Omega$ to define the desired family of functions $D_{\eta}$.

  Then it is easily seen that the only orbits inside $\Omega$ are the orbits of types (1) and (2) with the claimed actions. The only other orbits are orbits of $G_{c,\delta}$ appearing outside of $\Omega$. A standard computation shows that these orbits are orbits lying in a fixed radius $r=r_{0}$ satisfying $c\mu'(\delta^{-1}(r_{0}-1))=b\in \mathrm{Per}(X_{r})$.

  In this case we use the fact that $\lambda(X_{r})=r$ to compute their action as:
  \begin{equation*}
    \text{action}=c\delta\mu(\delta^{-1}(r_{0}-1))+c-c\mu'(\delta^{-1}(r_{0}-1))r_{0}.
  \end{equation*}
  Simplifying we see that:
  \begin{equation*}
    \text{action}=c-b+cf(r_{0}-1)
  \end{equation*}
  where:
  \begin{equation*}
    f(x)=-\mu'(\delta^{-1}x)x+\delta\mu(\delta^{-1}x).
  \end{equation*}
  Using the convexity of $\mu$, one sees that $f'(x)\le 0$, and since $f(x)=0$ for $x\ge \delta$ (since $\mu(s)=s$ for $s\ge 1$), we conclude that $f(x)\ge 0$. This gives the desired result.
\end{proof}

\emph{Remark.} In the proof we implicitly appealed to the well-known fact that the action of the left end of a Floer cylinder is at least the action of the right end; this is a special case of the general energy identity proved in Lemma \ref{lemma:energy-identity}.

\begin{figure}[h]
  \centering
  \begin{tikzpicture}
    \draw (0,0)--(2,2)coordinate(A)to[out=45,in=180](2.3,2.1)to[out=0,in=180](2.8,2)coordinate(AP)--(3,2);
    \draw[dashed] (A)--+(0,-2.5)node[above,fill=white]{$a$};
    \draw[dashed] (AP)--+(0,-2.5)node[above,fill=white]{$a'$} (AP)--+(-3,0)node[left]{$a$};
  \end{tikzpicture}
  \caption{Graph of $\rho:\R\to \R$}
  \label{fig:graph-of-rho}
\end{figure}

\subsubsection{Definition of the evaluation-map}
\label{sec:defin-evaluation-map}

We continue in the context of the previous subsection. Let $(H_{t},J)\in \mathscr{D}$ where $H_{t}$ has slope at most $a>0$, where $a$ is the capacity appearing in the lift of $f$ to $\mathfrak{B}(a,\Omega)/U(n)$.

In this subsection, we explain how a family of continuation data from $(H_{t},J)$ to $(H_{c,\delta,\epsilon,\eta},J_{\eta})$ parametrized by $\eta\in N$ can be used to define an evaluation map provided $\epsilon a>c(1+\delta/2)$ and where $c$ is no less than the slope of $H_{t}$.

Let us therefore define \emph{evaluation data} to be a connection one-form $\mathfrak{a}$ and almost complex structure $J$ on the family on $N\times \Sigma\times W$ where $\Sigma=\R\times \R/\Z$, satisfying:
\begin{enumerate}
\item $\mathfrak{a}=K_{\eta,s,t}\d s+H_{\eta,s,t}\d t$,
\item $K_{s,t}=b_{\eta,s,t}r$ and $H_{s,t}=a_{\eta,s,t}r$ for $r\ge r_{0}$,
\item $\bd_{s}c_{\eta,s,t}\le \bd_{t}b_{\eta,s,t}$,
\item $\mathfrak{a}=H_{t}\d t$ for $s\ge s_{0}$,
\item $\mathfrak{a}=H_{c,\delta,\epsilon,\eta}\d t$ for $s\le -s_{0}$, where $\epsilon a>c(1+\delta/2)$,
\item $J$ is fixed, i.e., independent of $\eta,s,t$, for $s\ge s_{0}$,
\item $J=J_{\eta}$ for some fixed family $J_{\eta}$ for $s\le -s_{0}$.
\end{enumerate}
We comment that such data deviates slightly from the framework established in \S\ref{sec:hamilt-conn-surf} and \S\ref{sec:general-form-floers}, in that we allow the asymptotic data to depend on $\eta\in N$ at the left end. We will explain shortly why this does not cause any issues.

Introducing a generic perturbation one-form $\mathfrak{p}$, one can still consider the finite energy solutions to Floer's equation \S\ref{sec:floers-equation-general}.

For data $(\mathfrak{a},\mathfrak{p},J)$, introduce $\mathscr{M}$ as the moduli space of finite energy solutions $(\eta,u)$ such that the left asymptotic of $u$ is the constant orbit located at the center of the ball $B_{\eta}$. Since this central orbit is non-degenerate for each $\eta$, the space of solutions $\mathscr{M}$ behaves as if the asymptotic data at the left end were independent of $\eta$, at least as far as the results in \S\ref{sec:general-form-floers} are concerned.

\emph{Remark}. Some care is needed to properly define the linearization framework when the asymptotic data changes; see, e.g., the proof of Lemma \ref{lemma:glued-evaluation-PSS}.

The key result about $\mathscr{M}$ is the following:
\begin{lemma}\label{lemma:e-map-only-one-breaking}
  Suppose the perturbation term $\mathfrak{p}$ is such that $\mathscr{M}$ is cut transversally. If $(\eta_{n},u_{n})$ is a sequence of rigid solutions $\mathscr{M}$, then $(\eta_{n},u_{n})$ has a convergent subsequence in $\mathscr{M}$. If $(\eta_{n},u_{n})$ is a sequence of solutions in the $1$-dimensional component of $\mathscr{M}$, then $(\eta_{n},u_{n})$ has a subsequence which either converges in $\mathscr{M}$, or which breaks into configuration of a rigid solution in $\mathscr{M}$ on the left and a Floer differential cylinder for $(H_{t},J)$ on the right (in the sense discussed in \S\ref{sec:compactness}).
\end{lemma}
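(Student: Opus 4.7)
My plan is to combine the standard Floer compactness package from \S\ref{sec:general-form-floers} with the action-filtration obstruction contained in Lemma \ref{lemma:construction-of-D}.

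\textbf{Step 1: a priori energy and compactness up to breaking.} I will first check that the evaluation data $(\mathfrak{a},\mathfrak{p},J)$ satisfies the hypotheses of Lemma \ref{lemma:energy-estimate}. Property (3) of evaluation data gives $\mathfrak{r}\le 0$ outside a compact set, so $\mathfrak{a}$ has curvature bounded above, and Lemma \ref{lemma:a-has-then-p-has} transports this to the perturbed data; the left asymptotic is the central orbit, whose action is the fixed number $c+c\delta/2-\epsilon a$ from Lemma \ref{lemma:construction-of-D}; the right asymptotic ranges over the finite set of $1$-periodic orbits of $H_t$; the domain has no boundary, so the Lagrangian term in Lemma \ref{lemma:energy-estimate} is absent. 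Compactness of $N$ makes the resulting energy bound uniform in $\eta$. Together with Propositions \ref{proposition:gradient-bound} and \ref{proposition:maximum-principle} and the regularity package of \S\ref{sec:regularity}, any sequence $(\eta_n,u_n)\in\mathscr{M}$ admits a subsequence with $\eta_n\to\eta_\infty$ in the closed manifold $N$ and $u_n\to u_\infty$ in $C^\infty_{\mathrm{loc}}$ to a finite-energy solution at $\eta_\infty$, modulo bubbling (excluded by exactness) and modulo breakings of Floer differential cylinders at the two punctures as illustrated in Figure \ref{fig:compactness-up-to-breaking}.

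\textbf{Step 2: ruling out left-end breaking.} This is where the hypothesis $\epsilon a>c(1+\delta/2)$ enters, and it is the only non-standard input. A left-end breaking would produce a non-constant finite-energy Floer differential cylinder for $(H_{c,\delta,\epsilon,\eta_\infty},J_{\eta_\infty})$ whose negative (output) asymptotic is the central orbit at the middle of $B_{\eta_\infty}$. Since $c\notin\mathrm{Per}(X_r)$, every period $b\in\mathrm{Per}(X_r)\cap[0,c]$ satisfies $b<c$, so the hypothesis yields $\epsilon a>b+c\delta/2$ for all such $b$; Lemma \ref{lemma:construction-of-D} then rules out exactly these cylinders. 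Hence no non-trivial breaking can occur on the left, and any breaking is a Floer differential cylinder for the $s$-independent data $(H_t,J)$ glued on at the right puncture.

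\textbf{Step 3: dimension count.} Transversality of $\mathscr{M}$ implies that breaking strata have strictly smaller virtual dimension. A sequence in the $0$-dimensional component of $\mathscr{M}$ must therefore converge in $\mathscr{M}$, since any genuinely broken configuration would live in a stratum of virtual dimension $\le -1$ and would be empty. A sequence in the $1$-dimensional component either converges in $\mathscr{M}$, or, by the standard Floer gluing/compactness analysis, breaks off at the right end exactly one Floer differential cylinder which is rigid modulo its $\R$-translation, leaving a rigid element of $\mathscr{M}$ as limit. This is precisely the dichotomy stated in the lemma.

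The main subtlety beyond the standard Floer-theoretic machinery is that the asymptotic data at the left puncture depends on the parameter $\eta$. This is harmless: the family $\eta\mapsto H_{c,\delta,\epsilon,\eta}$ is smooth and its central orbit is uniformly non-degenerate (it lies at a non-degenerate minimum of $H_{c,\delta,\epsilon,\eta}$), so the usual exponential-decay asymptotic-convergence argument (cf.\ Proposition \ref{proposition:asymptotic}) can be run uniformly in $\eta$, and the space of limit broken configurations is well-defined.
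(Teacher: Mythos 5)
Your proof is correct and follows the same approach as the paper: standard Floer compactness plus the action argument from Lemma~\ref{lemma:construction-of-D} to preclude breaking at the left (varying) end. You have simply spelled out the energy-estimate and dimension-count bookkeeping that the paper summarizes as ``mostly standard Floer theory.''
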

\begin{proof}
  This is mostly standard Floer theory; the only non-standard thing we need to check is that a sequence of solutions in the 1-dimensional component of $\mathscr{M}$ does not break into a configuration of a non-stationary Floer differential cylinder for $(H_{c,\delta,\epsilon,\eta},J_{\eta})$ on the left and a rigid solution in $\mathscr{M}$ on the right. However, such a breaking can be precluded by action considerations, since we have shown in Lemma \ref{lemma:construction-of-D} that there are no non-constant Floer differential cylinders for $(H_{c,\delta,\epsilon,\eta},J_{\eta})$ whose left asymptotic is the central orbit.
\end{proof}

To define the map $\mathfrak{e}:\mathrm{CF}(H_{t},J)\to \Z/2$, we define:
\begin{equation*}
  \mathfrak{e}(\gamma)=\#\set{u\in \mathscr{M}:\lim_{s\to\infty}u(s,t)=\gamma(t)}\text{ mod }2.
\end{equation*}
Lemma \ref{lemma:e-map-only-one-breaking} and standard Floer theory gluing arguments imply that $\mathfrak{e}$ is a chain map, i.e., $\mathfrak{e}\circ d=0$.

Since the space of evaluation data is weakly contractible, for fixed input system $(H_{t},J)$, the standard arguments show that the chain homotopy class of $\mathfrak{e}$ is independent of the choice of evaluation data. The resulting map on homology is denoted $\mathfrak{e}:\mathrm{HF}(H_{t},J)\to \Z/2$. Summarizing, we have constructed a map:
\begin{equation*}
  \mathfrak{e}:\mathrm{HF}(H_{t},J)\to \Z/2\Z
\end{equation*}
on the full subcategory $\mathscr{D}_{<a}\subset \mathscr{D}$ consisting of objects with slope at most $a$. Finally,
the usual gluing arguments prove that $\mathfrak{e}$ is a natural transformation, where $\Z/2\Z$ is interpreted as a constant functor.

In the next section, we prove that $\mathfrak{e}\circ \mathrm{PSS}:H^{*}(W)\to \Z/2\Z$ sends $\beta$ to $1$ provided that $\beta$ has non-zero intersection with the cycle $\eta\mapsto F(\eta,0)$.

\subsubsection{Non-triviality of the evaluation map}
\label{sec:non-triv-eval}

We continue in the context of the previous two subsections. Let $\beta\in H^{*}(W)$ have non-zero homological intersection with the cycle $\eta\mapsto F(\eta,0)$. Pick a representative $C:S\to W$ of $\beta$ which is transverse to the cycle $\eta\mapsto F(\eta,0)$, so that the set of pairs $(\sigma,\eta)$ solving $C(\sigma)=F(\eta,0)$ is a finite odd set of points.

Define \emph{glued evaluation PSS data} to be a connection one-form $\mathfrak{a}$ and almost complex structure $J$ on the family $N\times \C\times W$ such that, in the cylindrical coordinates $z=e^{-2\pi (s+it)}$,
\begin{enumerate}
\item $\mathfrak{a}=K_{\eta,s,t}\d s+H_{\eta,s,t}\d t$,
\item $K_{s,t}=b_{\eta,s,t}r$ and $H_{s,t}=c_{\eta,s,t}r$ for $r\ge r_{0}$,
\item $\bd_{s}c_{\eta,s,t}\le \bd_{t}b_{\eta,s,t}$,
\item $\mathfrak{a}=0$ for $s\ge s_{0}$,
\item $\mathfrak{a}=H_{c,\delta,\epsilon,\eta}\d t$ for $s\le -s_{0}$, where $\epsilon a>c(1+\delta/2)$, $c\ge 0$,
\item $J=J_{+,\eta}$ for $s\le -s_{0}$ and $J=J_{-,\eta}$ for $s\ge s_{0}$, for two fixed families $J_{-,\eta}$ and $J_{+,\eta}$.
\end{enumerate}
Roughly speaking, glued evaluation PSS data is the data one obtains by gluing the evaluation data from \S\ref{sec:defin-evaluation-map} to the PSS-data from \S\ref{sec:incl-const-loops-in-HF}.

Similarly to \S\ref{sec:incl-const-loops-in-HF} and \S\ref{sec:defin-evaluation-map}, we pull back glued evaluation PSS data to the family $S\times N\times \C\times W$, and for a generic perturbation term $\mathfrak{p}$, we consider the moduli space $\mathscr{M}$ of finite energy solutions $(\sigma,\eta,u)$ to \S\ref{sec:floers-equation-general} satisfying the incidence condition:
\begin{equation*}
  u(0)=C(\sigma),
\end{equation*}
and such that the asymptotic orbit of $u$ is the constant orbit located at the center of the ball $B_{\eta}$.

\begin{lemma}\label{lemma:glued-evaluation-PSS}
  We have the following:
  \begin{enumerate}[label=(\alph*)]
  \item\label{lemma-GE-PSS-a} all components of $\mathscr{M}$ are compact,
  \item\label{lemma-GE-PSS-b} the count of points in the rigid component of $\mathscr{M}$ is independent of the choice of glued evaluation PSS data and perturbation one-form $\mathfrak{p}$,
  \item\label{lemma-GE-PSS-c} the count of points in the rigid component of $\mathscr{M}$ equals $\mathfrak{e}(\mathrm{PSS}(\beta))$,
  \item\label{lemma-GE-PSS-d} the count of points in the rigid component of $\mathscr{M}$ is odd;
  \end{enumerate}
  all counts are taken mod 2. Thus $\mathfrak{e}(\mathrm{PSS}(\beta))=1$, proving Proposition \ref{prop:concrete-consequence}.
\end{lemma}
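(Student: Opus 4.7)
The plan is to treat parts \ref{lemma-GE-PSS-a}--\ref{lemma-GE-PSS-d} in turn, using the framework of \S\ref{sec:general-form-floers} as applied to glued evaluation PSS data, with the action estimates of Lemma \ref{lemma:construction-of-D} doing the critical work of ruling out unwanted degenerations. For compactness (part \ref{lemma-GE-PSS-a}), observe that the slope condition $\bd_s c_{\eta,s,t} \le \bd_t b_{\eta,s,t}$ together with Lemma \ref{lemma:a-has-then-p-has} gives the curvature upper bound required by Lemma \ref{lemma:energy-estimate}; exactness of $\omega$ yields the area bound (after fixing a primitive of $\lambda$), and the asymptotic orbit is automatically confined. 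Propositions \ref{proposition:gradient-bound} and \ref{proposition:maximum-principle} then supply the gradient and $C^{0}$ bounds, while Lemma \ref{lemma:construction-of-D} shows that no non-constant Floer cylinder for the left-end system can have the central orbit as output; hence the rigid stratum has no sequential boundary, and the $1$-dimensional stratum breaks only at the right end. Invariance under choices (part \ref{lemma-GE-PSS-b}) follows by a standard cobordism: two choices $(\mathfrak{a}^{i},J^{i},\mathfrak{p}^{i})$, $i=0,1$, are connected by a homotopy respecting all inequality constraints (which are convex), producing a $1$-dimensional parametric moduli space whose mod $2$ boundary count identifies the two rigid counts.

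For part \ref{lemma-GE-PSS-c}, apply a Floer-theoretic neck-stretching argument. Choose a family of glued evaluation PSS data $(\mathfrak{a}^{R},J^{R})$ indexed by neck length $R$, so that as $R\to\infty$ the surface $\C$ degenerates into a PSS cap (on which $\mathfrak{a}$ vanishes near the incidence marked point) joined by a long cylinder to the evaluation piece. By the standard gluing theorem (cf.\ \cite{schwarz-thesis-1995,abouzaid-EMS-2015}), for large $R$ the rigid moduli space is bijective with triples $(u_{\mathrm{PSS}},\gamma,u_{\mathrm{eval}})$, where $\gamma$ is a $1$-periodic orbit of $H_{c,\delta,\epsilon,\eta}$, $u_{\mathrm{PSS}}$ is a rigid PSS solution hitting $C$ and asymptotic to $\gamma$, and $u_{\mathrm{eval}}$ is a rigid evaluation solution with input $\gamma$ and output at the central orbit. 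Summing over $\gamma$ yields $\mathfrak{e}(\mathrm{PSS}(\beta))$; by part \ref{lemma-GE-PSS-b}, the count for any other choice of glued data agrees.

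For part \ref{lemma-GE-PSS-d}, exploit part \ref{lemma-GE-PSS-b} again to pick especially transparent data: arrange $\mathfrak{a}$ on $\C$ to equal $H_{c,\delta,\epsilon,\eta}\,dt$ outside a tiny neighbourhood of $z=0$, with a generic small perturbation $\mathfrak{p}$. Each intersection point $(\sigma,\eta)\in S\times N$ with $C(\sigma)=F(\eta,0)$ produces the constant solution $u\equiv F(\eta,0)$, which is regular by the non-degeneracy of the central critical point of $H_{c,\delta,\epsilon,\eta}$ and a standard Conley--Zehnder index computation; these persist under small $\mathfrak{p}$. No other rigid solutions arise, because any non-constant Floer solution would expend a uniformly positive amount of energy escaping the central level set, incompatible with the energy bound for $\mathfrak{p}$ small. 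The mod $2$ count is therefore the intersection number of $C$ with $\eta\mapsto F(\eta,0)$, which is odd by choice of representative.

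The hardest step is the gluing bijection in part \ref{lemma-GE-PSS-c}, since it must be implemented in the parametric setting over $S\times N$ rather than curve-by-curve. Compactness of $S$ and $N$, together with smooth variation of the relevant Fredholm kernels, make this manageable by the templates in the cited references, but the bookkeeping is the principal technical burden. A secondary subtlety occurs in part \ref{lemma-GE-PSS-d}: verifying that no non-constant rigid solution slips in requires a uniform energy lower bound on non-constant trajectories, which follows from the action estimates of Lemma \ref{lemma:construction-of-D} provided the perturbation $\mathfrak{p}$ is chosen in a sufficiently small neighbourhood of zero.
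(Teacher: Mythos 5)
Your parts \ref{lemma-GE-PSS-a} and \ref{lemma-GE-PSS-b} are essentially the paper's argument, and \ref{lemma-GE-PSS-c} is the paper's gluing argument phrased as a degeneration rather than a concatenation, which is fine. One small slip in \ref{lemma-GE-PSS-a}: you write that the $1$-dimensional stratum ``breaks only at the right end,'' but the glued evaluation PSS domain is $\C$ with a single puncture at $z=\infty$; the point $z=0$ is a removable singularity carrying the incidence constraint, not a cylindrical end, so there is no breaking whatsoever and all components are compact.

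The real gap is in part \ref{lemma-GE-PSS-d}, which the paper flags as the only non-standard step. You keep the parameter $c\ge 0$ general and assert that ``any non-constant Floer solution would expend a uniformly positive amount of energy escaping the central level set.'' This is not justified and, without further choices, not true: for $c>0$ the background Hamiltonian $G_{c,\delta}$ has non-constant periodic orbits in the shell $1<r<1+\delta$, the action gap between the central orbit and the next-lowest action is $b-c\delta/2-\epsilon a$ for $b\in \mathrm{Per}(X_r)$, and the energy of a PSS-type solution asymptotic to the central orbit is not automatically small (its energy identity involves both the action of the asymptotic and the curvature of $\mathfrak{a}$, not just the perturbation $\mathfrak{p}$). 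The paper's proof exploits the freedom in part \ref{lemma-GE-PSS-b} much more aggressively: it sets $c=0$, so the connection becomes $\mathfrak{a}=(1-\beta(s))\epsilon D_\eta\,\d t$, and picks $J_\eta$ to be the pullback of the standard complex structure on $B(a)$. Then the equation, restricted to the ball, becomes an explicit linear equation in $\C^n$; a center-of-mass argument (exploiting that $JX$ points radially inward) shows $u$ stays at mean zero, and an energy estimate forces $u$ to be constant. Your approach omits this simplification and has no substitute for it.

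A second gap in \ref{lemma-GE-PSS-d} is the regularity claim. Invoking ``non-degeneracy of the central critical point and a standard Conley--Zehnder index computation'' establishes at most that the Fredholm index is right, not that the cokernel vanishes. The asymptotic of the PSS-type curve at the puncture \emph{varies with} $\eta$ (it is the center of $B_\eta$), which means the standard linearization framework cannot be applied verbatim: one must write $u=w+f(\eta)$ with $w$ in a fixed Banach space, and check that the linearized operator is an isomorphism. In the paper this is done explicitly: the mixed term $\partial V_\eta/\partial\eta(0)$ vanishes because $V_\eta(0)=0$ for all $\eta$, the Cauchy--Riemann block is an isomorphism by non-degeneracy, and the constraint block $TS_{\sigma_0}\oplus TN_{\eta_0}\to\R^{2n}$ is an isomorphism by transversality of $C$ and $F(\cdot,0)$. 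Your proposal does not engage with the varying-asymptotic issue, which is precisely what makes this step nontrivial.
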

\begin{proof}
  Part \ref{lemma-GE-PSS-a} follows from the same compactness-up-to-breaking argument used in the proof of Lemma \ref{lemma:e-map-only-one-breaking}. A sequence of solutions will, in general, have a subsequence which converges or breaks into a configuration of a non-stationary Floer differential cylinder and another solution in $\mathscr{M}$. There is only one end where the breaking can happen (the left end). Since we require that solutions in $\mathscr{M}$ are asymptotic to the central orbit at the left end, and the only Floer differential cylinders whose left end is the central orbit are constant (by Lemma \ref{lemma:construction-of-D}), we conclude there can be no breaking.

  Part \ref{lemma-GE-PSS-b} follows from a cobordism argument: because the space of data is path connected, the rigid counts for two choices of data are cobordant finite sets of points, and hence have the same cardinality mod 2; here we note that the parametric moduli space used in such a cobordism argument will be compact by the exact same argument used for \ref{lemma-GE-PSS-a}.

  Part \ref{lemma-GE-PSS-c} follows from Floer theoretic gluing. By picking the glued evaluation PSS data as a concatenation of evaluation data (\S\ref{sec:defin-evaluation-map}) and PSS-data (\S\ref{sec:incl-const-loops-in-HF}), a standard gluing argument shows the count of rigid points in $\mathscr{M}$ equals the count obtained by applying the map $\mathfrak{e}$ to the cycle $\mathrm{PSS}(C)$.

  The non-standard part of the argument is establishing \ref{lemma-GE-PSS-d}. For this part of the argument, we use an explicit choice of data, and directly show the count equals the (odd) count of pairs $(\sigma,\eta)$ satisfying $C(\sigma)=F(\eta,0)$.

  Let us pick the data such that $c=0$, so that $\delta$ becomes irrelevant, and:
  \begin{equation*}
    H_{c,\delta,\epsilon,\eta}=\epsilon D_{\eta}.
  \end{equation*}
  Pick $\mathfrak{a}=(1-\beta(s))\epsilon D_{\eta}\d t,$ and pick $J=J_{\eta}$ (so $J_{+}=J_{-}$), where $J_{\eta}$ agrees with the standard almost complex structure when pulled back to the ball $B_{\eta}$. We will show that all solutions in $\mathscr{M}$ are regular with $\mathfrak{p}=0$, and therefore the count without perturbation equals the count with perturbation.

  \emph{Claim.} For $\epsilon$ sufficiently small, any solution $(\sigma,\eta,u)$ with this specific data must be such that $u$ is constant and equal to the center of the ball $B_{\eta}$.

  This claim follows from a simple adiabatic compactness argument: restricting to a half-infinite cylinder $(-\infty,R]\times \R/\Z$ we can pull $u$ back to a map valued in $B(a)$ satisfying:
  \begin{equation}\label{eq:equation-ev-PSS-in-ball}
    \bd_{s}u+J(\bd_{t}u-\epsilon (1-\beta(s))X(u))=0
  \end{equation}
  where $X$ is the Hamiltonian vector field for $\pi \abs{z}^{2}$, and $J$ is the standard almost complex structure. Either we can take $R=+\infty$, or we can take $R$ to be maximal in which case $u(R,t)$ hits the boundary $\bd B(a)$ for some $t$.

  We can take $\epsilon$ small enough so that $\abs{\bd_{t}u}$ is everywhere less than $\sqrt{a/\pi}$, by a simple compactness argument.

  Consider the center of mass $\xi(s)=\int_{0}^{1}u(s,t)d t$, so $\xi:(-\infty,R]\to B(a)$ is a smooth curve. Using the linearity of the above equation (bearing in mind that $X(u)$ is actually a linear function of $u$) one concludes that:
  \begin{equation*}
    \bd_{s}\xi(t)=\epsilon(1-\beta(s))JX(\xi).
  \end{equation*}
  It is well-known that $JX$ is a vector field which points radially inwards. Thus, since $\xi(s)$ converges to $0$ as $s\to-\infty$, we must have that $\xi(s)=0$ holds identically. It therefore follows that $u(s,t)$ has mean zero for $s\le R$.

  Thus we have $R=+\infty$, since $u$ cannot touch the boundary $\bd B(a)$ because $\abs{\bd_{t}u(R,t)}$ is smaller than the radius of $\bd B(a)$ and $u(R,-)$ has mean zero. Then one estimates the energy integral of $u$ by:
  \begin{equation*}
    \text{energy of $u$}=\int \abs{\bd_{s}u }^{2}d sd t\le -a+\int \beta'(s)a=0,
  \end{equation*}
  which implies $u$ must be constant, and thus equal to the center of the ball.

  Thus $\mathscr{M}$ is in bijection with the set of pairs $(\sigma,\eta)$ satisfying $C(\sigma)=F(\eta,0)$; the bijection is simply the projection map $(\sigma,\eta,u)\mapsto (\sigma,\eta)$. It remains only to prove that these solutions are in fact regular.

  To analyze the regularity of these solutions, we need to open the ``black box'' of the linearization framework. Let us fix a solution $(\sigma_{0},\eta_{0},u_{0})$, and use the coordinate system induced by the embedded ball $B_{\eta_{0}}$.

  If $(\sigma,\eta,u)$ is nearby $(\sigma_{0},\eta_{0},u_{0})$, then it solves the equation provided that:
  \begin{equation}\label{eq:lemma-ev-PSS-non-linear-framework}
    \left\{
      \begin{aligned}
        &u=w+f(\eta)\text{ for }w\in W^{1,p}(\Sigma)\\
        &\bd_{s}w+J_{\eta}(w+f(\eta))(\bd_{t}w-V_{\eta}(w))=0\\
        &w(0)+f(\eta)-c(\sigma)=0,\\
      \end{aligned}
    \right.
  \end{equation}
  where $f(\eta)$, $c(\sigma)=C(\sigma)$, and $V_{\eta}(z)=\epsilon X_{\eta}(z+f(\eta))$ are represented in the coordinate system. Note that the maps $c$ and $f$ are transverse at their intersection $(\sigma_{0},\eta_{0})$.

  \emph{Remark.} When linearizing with a varying asymptotic, it is important to use such an auxiliary map $w$, as it is defined on a fixed Banach space of maps.

  The second two equations in \eqref{eq:lemma-ev-PSS-non-linear-framework} can be considered as a non-linear map defined on a neighborhood of $0$ between Banach manifolds:
  \begin{equation*}
    W^{1,p}\times S\times N\to L^{1,p}\times \R^{2n}.
  \end{equation*}
  Differentiating this non-linear map gives the linearized operator.

  If $(\sigma',\eta',w')$ is a tangent vector at $(\sigma_{0},\eta_{0},0)$, the linearized operator is:
  \begin{equation*}
    \left[
      \begin{aligned}
        &\bd_{s}w'+J\bd_{t}w'-\epsilon JX(w')-J\partial V_{\eta}/\partial \eta(0)\eta'\\
        &w'(0)+\d f(\eta_{0})\eta'-\d c(\sigma_{0})\sigma'
      \end{aligned}\right]\in L^{p}\oplus \R^{2n}.
  \end{equation*}
  The fact that $V_{\eta}(0)=0$ holds for all $\eta$ implies $\partial V_{\eta}/\partial \eta(0)=0$. Thus the first component is a Cauchy-Riemann operator $W^{1,p}\to L^{p}$. Because $\epsilon X$ has a non-degenerate orbit at the origin, this operator is an isomorphism $W^{1,p}\to L^{p}$ (by standard Fredholm theory for Floer's equation). On the other hand, since $f$ and $c$ are transverse and have complementary dimensions, the second equation is an isomorphism $TS_{\sigma_{0}}\oplus TN_{\eta_{0}}\to \R^{2n}$. Thus the linearized operator is an isomorphism, and so the constant solution $(\sigma_{0},\eta_{0},u_{0})$ is regular. This completes the proof.
\end{proof}

\subsection{Family Floer cohomology}
\label{sec:family-floer-cohom}

In this section we develop a version of family Floer cohomology following the scheme in \cite{hutchings-agt-2008}.

Let us briefly comment on the strategy. The identity $\mathfrak{e}(\mathrm{PSS}(\beta))=1$ proved in \S\ref{sec:evaluation-maps-ball} implies that any cycle $\sum \gamma_{i}$ in $\mathrm{CF}(H_{t},J)$ representing $\mathrm{PSS}(\beta)$ has at least one orbit $\gamma_{i}$ which appears as the right end of one of the cylinders used to define $\mathfrak{e}$; see Figure \ref{fig:consequence-evaluation-map}.

\begin{figure}[h]
  \centering
  \begin{tikzpicture}[xscale=.9,yscale=.4]
    \draw (0,0) circle (0.2 and 1) (0,1)--+(10,0) (0,-1)--+(10,0) (10,0) circle (0.2 and 1);
    \path (-0.3,0)node[left]{center of $B_{\eta}$}--node{$\bd_{s}u-Y_{\eta,s,t}(u)+J_{\eta,s,t}(u)(\bd_{t}u-X_{\eta,s,t}(u))=0$}(10.3,0)node[right]{$\gamma_{i}$};
\end{tikzpicture}
\caption{A consequence of $\mathfrak{e}(\sum \gamma_{i})=1$ is the existence of a cylinder $(\eta,u)$ joining $\gamma_{i}$ to the center of the ball $B_{\eta}$. In the figure $Y,X$ are the Hamiltonian generators of $K,H$ where $(\mathfrak{a}=K_{\eta,s,t}\d s+H_{\eta,s,t}\d t,J_{\eta,s,t})$ is evaluation data as in \S\ref{sec:defin-evaluation-map}.}
\label{fig:consequence-evaluation-map}
\end{figure}

It is a general principle that the action of the left asymptotic (the center of the ball) is at least the action of the right asymptotic (the orbit $\gamma_{i}$), up to an error depending on the curvature of the Hamiltonian connection determined by $\mathfrak{a}$; see Lemma \ref{lemma:energy-estimate}.

In Lemma \ref{lemma:construction-of-D} it was shown that the action of the center of the ball is equal to $(c+c\delta/2)-a$; thus if we can prove that:
\begin{enumerate}
\item\label{3p6strat1} the action of $\gamma_{i}$ is greater than $c+c\delta/2-a$, and
\item\label{3p6strat2} the error coming from the curvature can be made arbitrarily small,
\end{enumerate}
then we have successfully obstructed the family of balls.

Item \ref{3p6strat1} will ultimately be a consequence of Theorem \ref{theorem:main-floer}'s hypothesis:
\begin{equation*}
  \mathrm{PSS}(\beta)=\Delta(\zeta_{1})\ast\zeta_{2}\text{ holds in }V_{c_{1}+c_{2}},
\end{equation*}
where $\zeta_{i}\in V_{c_{i}}$, with $c_{i}>0$, provided $c:=c_{1}+c_{2}<a$.

However, because the cylinder in Figure \ref{fig:consequence-evaluation-map} appears at an unknown parameter value $\eta\in N$, it is not possible to uniformly control the curvature because the input system $(H_{t},J)$ is independent of $\eta$.

For this reason, we instead work with a \emph{family of input systems} $(H_{\eta,t},J_{\eta})$, and upgrade the map $\mathfrak{e}$ to be defined on a suitable family version of Floer cohomology. As we will show, with the family version, one can control the curvature and simultaneously achieve \ref{3p6strat1} and \ref{3p6strat2}.

The family Floer cohomology essentially contains no new algebraic information (it is the regular Floer cohomology tensored with the cohomology of $N$); however, family Floer cohomology grants one access to new action filtrations on the Floer complex.

The rest of this section is dedicated to developing the necessary theory, and, in \S\ref{sec:proof-of-main-floer}, completing the proof of Theorem \ref{theorem:main-floer}.

\subsubsection{Definition of family Floer cohomology}
\label{sec:defin-family-floer}

Let $N$ be a compact manifold. By definition, \emph{family Floer data} is:
\begin{enumerate}
\item a Morse-Smale pseudogradient $P$ on $N$ (e.g., a Morse-Smale gradient-like vector field in the terminology of \cite{milnor-book-PUP-1965}),
\item a smooth family $H_{\eta,t}$ in $\mathscr{H}$ parametrized by $N\times \R/\Z$,
\item a fixed almost complex structure $J$, which is, as usual, $\omega$-tame and Liouville invariant in the convex end,
\end{enumerate}
which satisfies:
\begin{enumerate}[resume]
\item\label{FF-data-4} for each zero $\eta_{0}$ of $P$, there is a neighborhood $\mathrm{Op}(\eta_{0})$ of $\eta_{0}$ such that $H_{\eta,t}=H_{\eta_{0},t}$, and $(H_{\eta_{0},t},J)$ is admissible for defining $\mathrm{CF}(H_{\eta_{0},t},J)$.
\item $H_{\eta,t}=cr$ for $r\ge r_{0}$.
\end{enumerate}
\emph{Note.} For simplicity, we do not allow $H_{\eta,t}$ to have a time-dependent slope, unlike the data for the non-family Floer complex. The reason for this is that we have no need to appeal to time-dependent slopes in the family Floer complex; on the other hand, time-dependent slopes will be used for the regular Floer complex in \S\ref{sec:string-topol-floer-cohom}.

For each zero $\eta_{0}$ of $P$, associate the one-dimensional vector space $\Z/2\Z \eta_{0}$; for family Floer data $(P,H_{\eta,t},J)$ define the \emph{family Floer complex}:
\begin{equation*}
  \mathrm{CFF}(P,H_{\eta,t},J)=\bigoplus\mathrm{CF}(H_{\eta_{0},t},J)\otimes \Z/2\Z\eta_{0},
\end{equation*}
where the direct sum is over the zeros of $P$. Thus it makes sense to refer to a generator $\gamma\otimes \eta_{0}$ whenever $\gamma(t)$ is a 1-periodic orbit of $H_{\eta_{0},t}$.

In the rest of this section we will explain how to define the differential $d_{\mathrm{FF}}$ on the family Floer complex. As part of the definition, we will need to require that certain auxiliary moduli spaces are cut transversally, and this will impose a genericity condition on the family $H_{\eta,t}$.

For each pair $(\eta_{1},\eta_{0})$ of zeros of $P$, one can consider the (open) manifold $\mathscr{P}(\eta_{1},\eta_{0})$ of parametrized flow lines $\pi(s)$ from $\eta_{1}$ to $\eta_{0}$ for the negative pseudogradient $-P$; here $\eta_{1}$ is the left asymptotic and $\eta_{0}$ is the right asymptotic.

Fixing $(\eta_{1},\eta_{0})$, there is an induced connection and perturbation one-form, and almost complex structure, on the family $\mathscr{P}(\eta_{1},\eta_{0})\times \Sigma\times W$, where $\Sigma$ is the cylinder; precisely, one defines the family of data by:
\begin{equation*}
  \begin{aligned}
    \mathfrak{a}_{\pi,s,t}&=H_{\pi(s),t}\d t&J_{\pi,s,t}&=J.
  \end{aligned}
\end{equation*}
It is important to note that, since $H_{\eta,t}=cr$ for $r\ge r_{0}$, the curvature of $\mathfrak{a}$ vanishes outside of a compact set.

One can consider the finite energy solutions to \S\ref{sec:floers-equation-general} for this family of data. Unwinding the definitions, one sees that a solution is a pair $(\pi,u)$ solving:
\begin{equation*}
  \bd_{s}u+J(u)(\bd_{t}u-X_{\pi(s),t}(u))=0.
\end{equation*}
One defines $\mathscr{M}(\eta_{1},\eta_{0})$ to be the moduli space of such finite energy solutions.
\begin{lemma}
  For generic perturbation of $H_{\eta,t}$ away from the zeroes of $P$, all the moduli spaces $\mathscr{M}(\eta_{1},\eta_{0})$ are cut transversally.
\end{lemma}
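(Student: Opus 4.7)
The plan is to apply a standard Sard-Smale argument adapted to this family setting. First I would fix a separable Banach space $\mathscr{K}$ of admissible perturbations $H'_{\eta,t}$ of the Hamiltonian family -- for instance, a Floer $C_\varepsilon$-space of smooth functions on $N\times \R/\Z\times W$ which vanish in the neighborhoods $\mathrm{Op}(\eta_0)$ of each zero of $P$ (so that condition \ref{FF-data-4} is preserved) and which equal $0$ in the region $r\ge r_0$. For every $H'\in \mathscr{K}$, the perturbed data $(P,H_{\eta,t}+H'_{\eta,t},J)$ remains family Floer data.

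Next, for each ordered pair $(\eta_1,\eta_0)$ of zeros of $P$, I would form the universal moduli space
\begin{equation*}
  \mathscr{M}^{\mathrm{univ}}(\eta_1,\eta_0)\subset \mathscr{K}\times \mathscr{P}(\eta_1,\eta_0)\times \mathscr{B}^{1,p}
\end{equation*}
of triples $(H',\pi,u)$ where $u$ is a finite-energy solution of the equation $\bd_{s}u+J(u)(\bd_{t}u-X_{\pi(s),t}^{H+H'}(u))=0$ with prescribed asymptotics. The goal is to prove $\mathscr{M}^{\mathrm{univ}}(\eta_1,\eta_0)$ is a Banach manifold, which reduces (by implicit function theorem) to showing that the universal linearized operator $\mathscr{D}^{\mathrm{univ}}_{(H',\pi,u)}$ is surjective at each solution.

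The key step is the surjectivity of $\mathscr{D}^{\mathrm{univ}}$. Take an element $\xi$ of the $L^q$-dual which annihilates the image. By standard elliptic regularity $\xi$ is smooth and solves a linear Cauchy-Riemann equation, so Aronszajn's unique continuation theorem implies that either $\xi\equiv 0$ or $\xi$ is nonzero on a dense subset. Since $\pi$ is a non-stationary flow line of $-P$, there exists a compact interval $I\subset \R$ on which $\pi(s)$ lies outside $\bigcup_{\eta_0}\mathrm{Op}(\eta_0)$; on this slab, variations of $H$ through elements of $\mathscr{K}$ are unconstrained. One then plays the standard game: one finds an ``injective'' point $(s_0,t_0)\in I\times \R/\Z$ where $\xi\ne 0$, $\bd_s u\ne 0$, and $u(s_0,t_0)$ is not hit by $u(s,t)$ for any other $(s,t)$ with $\pi(s)$ in a neighborhood of a zero of $P$; such a point exists because the asymptotic orbits are finite and $u$ is non-constant on $I$ (as $\pi$ varies). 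Then a bump-function perturbation $H'$ supported near $(s_0,t_0,u(s_0,t_0))$ contributes an inhomogeneity of the form $-X_{H'}\otimes dt$ which pairs nontrivially with $\xi$, a contradiction. Thus $\xi\equiv 0$ and $\mathscr{D}^{\mathrm{univ}}$ is surjective.

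The main obstacle is this somewhere-injectivity step; it is the same subtlety handled in \cite{floer-hofer-salamon-duke-1995} and \cite[\S4.1]{brocic-cant-JFPTA-2024}, and adapts without essential change because the perturbation of $H$ can be made to depend on the base point $\eta\in N$ (so one can vary $H$ independently at each $\pi(s_0)$). Once $\mathscr{M}^{\mathrm{univ}}(\eta_1,\eta_0)$ is a Banach manifold, the projection $\mathscr{M}^{\mathrm{univ}}(\eta_1,\eta_0)\to \mathscr{K}$ is Fredholm of the same index as $\mathscr{M}(\eta_1,\eta_0)$, and the Sard-Smale theorem produces a residual set $\mathscr{K}_{\mathrm{reg}}(\eta_1,\eta_0)\subset \mathscr{K}$ over which the fibers are transversally cut out. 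Intersecting over the countable collection of pairs $(\eta_1,\eta_0)$ yields a residual, hence non-empty, set of perturbations for which every $\mathscr{M}(\eta_1,\eta_0)$ is regular simultaneously, completing the proof.
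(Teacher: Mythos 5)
Your outline of the Sard--Smale argument for the universal moduli space is sound, and the use of the extra freedom to perturb $H_{\eta,t}$ as a function of $\eta\in N$ is the right source of transversality (compare the reference to Seidel that the paper cites). However, your argument has a genuine gap: it only treats the case $\eta_1\neq\eta_0$.

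The moduli spaces $\mathscr{M}(\eta_0,\eta_0)$ with $\eta_1 = \eta_0$ also need to be cut transversally, and for these your mechanism fails entirely. When $\eta_1=\eta_0$, the only parametrized flow line from $\eta_0$ to $\eta_0$ is the constant one, so $\pi(s)\equiv\eta_0$. Your perturbations are required to vanish on the neighborhoods $\mathrm{Op}(\eta_0)$ of the zeros of $P$ (precisely so that condition~(4) of the family Floer data is preserved), and so they have no effect whatsoever on the equation along a constant flow line. The interval $I$ you introduce -- ``a compact interval $I\subset\R$ on which $\pi(s)$ lies outside $\bigcup_{\eta_0}\mathrm{Op}(\eta_0)$'' -- simply does not exist in this case. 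Your sentence ``Since $\pi$ is a non-stationary flow line of $-P$\dots'' assumes away exactly these components, and the residual set produced by Sard--Smale says nothing about them.

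The paper closes this gap differently: it observes that for $\eta_1=\eta_0$ the pair $(\pi,u)$ reduces to an ordinary Floer differential cylinder for the fixed admissible system $(H_{\eta_0,t},J)$, whose regularity is already part of the hypotheses on family Floer data (condition~(4) in \S\ref{sec:defin-family-floer}). You should add this case to your argument; without it the proof is incomplete. Once this is included, your Sard--Smale treatment of the case $\eta_1\neq\eta_0$ is a reasonable expansion of the paper's brief argument, though the somewhere-injectivity step should be stated more carefully: the localization in $s$ is actually easier than in the classical Floer--Hofer--Salamon setting, because the flow line $\pi$ is an embedding and a bump in the $\eta$-variable near $\pi(s_0)$ already localizes the perturbation to a small $s$-interval, so only injectivity within a single loop $t\mapsto u(s_0,t)$ needs to be arranged.
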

\begin{proof}
  Note that, since $\pi(s)$ lies in the neighborhood $\mathrm{Op}(\eta_{0})\cup \mathrm{Op}(\eta_{1})$ outside of a compact set, $X_{\pi(s),t}$ is $s$-independent outside of a compact set.

  If $\eta_{1}\ne \eta_{0}$, then there is a non-empty interval where $X_{\pi(s),t}$ is actually $s$-dependent. In that case we can perturb $X_{\eta,t}$ generically away from the zeroes in such a way that $X_{\pi(s),t}$ is modified in a generic fashion only a compact part of the cylinder; such variations are sufficient to ensure transversality (compare with \cite[pp.\,971]{seidel-GAFA-2015}).

  In the case when $\eta_{1}=\eta_{0}$, then $u$ solves the Floer differential equation for $(H_{\eta_{0},t},J)$, and so requirement \ref{FF-data-4} implies $u$ is regular. In other words, the moduli space $\mathscr{M}(\eta_{0},\eta_{0})$ is simply the moduli space considered in \S\ref{sec:floer-complex} for the admissible data $(H_{\eta_{0},t},J)$.
\end{proof}
We add one additional requirement to our family Floer data:
\begin{enumerate}[resume]
\item the data $H_{\eta,t}$ is chosen generically away from the zeros of $P$ so that all moduli spaces $\mathscr{M}(\eta_{1},\eta_{0})$ are cut transversally.
\end{enumerate}
Data $(P,H_{\eta,t},J)$ which satisfies all of the requirements is said to be admissible for defining the family Floer complex.

The moduli space $\mathscr{M}(\eta_{1},\eta_{0})$ carries an $\R$-action given by translation:
\begin{equation*}
  (\pi(s),u(s,t))\mapsto (\pi(s+s_{0}),u(s+s_{0},t)).
\end{equation*}
Finally, one defines the differential by the formula:
\begin{equation*}
  d_{\mathrm{FF}}(\gamma_{0}\otimes \eta_{0}):=\sum_{\gamma_{1}\otimes \eta_{1}}\#\set{[\pi,u]\in \mathscr{M}(\eta_{1},\eta_{0})/\R:u\text{ joins }\gamma_{1},\gamma_{0}}\cdot \gamma_{1}\otimes \eta_{1};
\end{equation*}
more precisely, we count the one-dimensional components of $\mathscr{M}(\eta_{1},\eta_{0})$ with the advertised asymptotics.

It is perhaps interesting to note that one can decompose $d_{\mathrm{FF}}$ into a sum:
\begin{equation*}
  d_{\mathrm{FF}}=d_{0}+d_{1}+d_{2}+\dots,
\end{equation*}
where $d_{i}$ maps a generator $\gamma_{0}\otimes \eta_{0}$ into the piece of $\mathrm{CFF}$ generated by terms $\gamma_{1}\otimes \eta_{1}$ where $\mathrm{Index}(\eta_{1})=\mathrm{Index}(\eta_{0})+i$. A simple inspection proves that $d_{0}$ preserves the summand $\mathrm{CF}(H_{\eta_{0},t},J)\otimes \Z/2\Z\eta_{0}$ and acts as $d\otimes \id$ where $d$ is the usual Floer differential.

The first key lemma of family Floer cohomology is:
\begin{lemma}
  The family Floer differential squares to zero: $d_{\mathrm{FF}}^{2}=0$.
\end{lemma}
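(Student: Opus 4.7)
The plan is to identify the matrix coefficient $\ip{d_{\mathrm{FF}}^{2}(\gamma_{0}\otimes \eta_{0}),\gamma_{2}\otimes \eta_{2}}$ with the count (mod 2) of boundary points of the compactification of the one-dimensional component of a suitable moduli space, and then invoke the fact that a compact one-manifold has an even number of boundary points.

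Concretely, I would first introduce the \emph{total moduli space}
\begin{equation*}
  \mathscr{N}(\eta_{2},\eta_{0};\gamma_{2},\gamma_{0})=\set{[\pi,u]\in \mathscr{M}(\eta_{2},\eta_{0})/\R : u\text{ joins }\gamma_{2},\gamma_{0}},
\end{equation*}
and focus on its $1$-dimensional component. By the standard elliptic regularity (\S\ref{sec:regularity}), the energy estimate of Lemma \ref{lemma:energy-estimate} (the curvature of $\mathfrak{a}_{\pi,s,t}=H_{\pi(s),t}\d t$ vanishes outside a compact set because $H_{\eta,t}=cr$ in the end), and the maximum principle (Proposition \ref{proposition:maximum-principle}), together with the compactness of $N$ itself, a sequence in this moduli space has a convergent subsequence in $C^{\infty}_{\mathrm{loc}}$ after translation. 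The possible degenerations split into exactly two types: (i) \emph{Morse breaking} of the flow line $\pi$ at an intermediate zero $\eta_{1}$ of $P$, together with a consequent breaking of $u$ at some orbit $\gamma_{1}$ of $H_{\eta_{1},t}$; and (ii) \emph{Floer breaking} of $u$ at an intermediate orbit $\gamma_{1}$ of one of the two endpoint systems $(H_{\eta_{0},t},J)$ or $(H_{\eta_{2},t},J)$ (this uses condition \ref{FF-data-4} in the definition of family Floer data, which forces the system to be $\eta$-independent near the zeros of $P$, so that broken pieces on the cylindrical ends really are Floer differential cylinders).

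Next I would invoke the usual Floer-theoretic gluing argument to show that each such broken configuration arises as a \emph{unique} end of the one-dimensional component of $\mathscr{N}$, and conversely that every end is of one of these forms. This gives a compact $1$-manifold with boundary, and translates the identity ``$\#\partial=0\bmod 2$'' into the equation
\begin{equation*}
  \sum_{\eta_{1},\gamma_{1}}\#\bigl(\mathscr{N}(\eta_{2},\eta_{1};\gamma_{2},\gamma_{1})_{0}\bigr)\cdot\#\bigl(\mathscr{N}(\eta_{1},\eta_{0};\gamma_{1},\gamma_{0})_{0}\bigr)=0\bmod 2,
\end{equation*}
where $(-)_{0}$ denotes the rigid component, and the sum runs over all intermediate pairs $(\eta_{1},\gamma_{1})$ — covering Morse breaking ($\eta_{1}$ strictly between $\eta_{0}$ and $\eta_{2}$) as well as Floer breaking ($\eta_{1}\in \set{\eta_{0},\eta_{2}}$, where the relevant factor becomes the usual Floer differential on $\mathrm{CF}(H_{\eta_{1},t},J)$). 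This is exactly the matrix coefficient of $d_{\mathrm{FF}}\circ d_{\mathrm{FF}}$, and so $d_{\mathrm{FF}}^{2}=0$.

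The main obstacle I anticipate is the gluing step: the standard Floer theory gluing construction must be adapted to the situation where one glues a Floer cylinder to a parametrized continuation cylinder, and where, in the Morse-breaking case, one simultaneously glues two flow lines of $P$ and two cylinders along the intermediate orbit. Near a zero $\eta_{1}$ of $P$ the connection one-form is genuinely $\eta$-independent by requirement \ref{FF-data-4}, so near the breaking instant the equation degenerates to the honest Floer differential equation for $(H_{\eta_{1},t},J)$; this is precisely what makes gluing (and the matching with the asymptotic convergence of Proposition \ref{proposition:asymptotic}) work in the standard way, cf.\ \cite[\S3.3]{salamon-notes-1997}, and what also ensures no new breaking phenomena appear. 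A minor additional point is the transversality of the boundary strata: this is built into our admissibility conditions, since Morse-Smale-ness of $P$ guarantees transverse cutout of $\mathscr{P}(\eta_{2},\eta_{1})\times\mathscr{P}(\eta_{1},\eta_{0})$, and the Floer differentials at $\eta_{0},\eta_{1},\eta_{2}$ are regular by requirement \ref{FF-data-4}.
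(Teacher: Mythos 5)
Your proposal is correct and follows essentially the same strategy as the paper: count the ends of the one-dimensional component of $\mathscr{M}(\eta_{2},\eta_{0})/\R$, identify them with the matrix coefficients of $d_{\mathrm{FF}}^{2}$ via the two types of breaking (Morse breaking of $\pi$ at an intermediate critical point with a consequent splitting of $u$, and Floer breaking of $u$ at the two ends where the data is $\eta$-independent by requirement \ref{FF-data-4}), and invoke evenness of boundary points, with the gluing step handled as a parametric continuation-cylinder gluing. The paper's own proof is a brief sketch that points to \cite[Prop.\,3.9]{hutchings-agt-2008}, \cite[pp.\,970]{seidel-GAFA-2015}, and its own Lemma \ref{lemma:i-chain-map} for the detailed breaking/gluing analysis, which is exactly the structure you lay out.
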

\begin{proof}
  This is outlined in \cite[Proposition 3.9]{hutchings-agt-2008} in the context of family Morse homology. We also refer the reader to \cite[pp.\,970]{seidel-GAFA-2015} for a Floer cohomology set-up closer to the present context.

  Briefly, one shows that the number of ends of the one-dimensional component of $\mathscr{M}(\eta_{1},\eta_{0})/\R$ equals the matrix entry for $d_{\mathrm{FF}}^{2}$ of type: $$\mathrm{CF}(H_{\eta_{0},t},J)\otimes \Z/2\Z\eta_{0}\to \mathrm{CF}(H_{\eta_{1},t},J)\otimes \Z/2\Z\eta_{1}.$$ Since the number of ends is even, one concludes $d_{\mathrm{FF}}^{2}=0$. We refer the reader who wishes for additional details to the proof of Lemma \ref{lemma:i-chain-map}.
\end{proof}

We denote by $\mathrm{HFF}(P,H_{\eta,t},J)$ the homology of $\mathrm{CFF}(P,H_{\eta,t},J)$ with respect to $d_{\mathrm{FF}}$.

\subsubsection{From Floer cohomology to family Floer cohomology}
\label{sec:from-HF-to-HFF}

In this section we explain how to define a map $i:\mathrm{HF}(H_{t},J)\to \mathrm{HFF}(P,H_{\eta,t},J)$ whenever the slope $c\ge 0$ of $H_{\eta,t}$ equals the slope of $H_{t}$. This map will be referred to as the \emph{comparison map}.

To keep things as simple as necessary, we assume that $H_{t}$ satisfies $H_{t}=cr$ for $r\ge r_{0}$, i.e., we disallow time-dependent slopes in $H_{t}$, and that the same fixed almost complex structure is used for $(H_{t},J)$ and $(P,H_{\eta,t},J)$.

We will construct on $\mathrm{HFF}(P,H_{\eta,t},J)$:
\begin{enumerate}
\item a product $\ast_{\mathrm{FF}}$ in \S\ref{sec:family-pair-of-pants},
\item a BV operator $\Delta_{\mathrm{FF}}$ in \S\ref{sec:family-BV-operator},
\item a map $\mathfrak{e}_{\mathrm{FF}}$ associated to a family of ball embeddings in \S\ref{sec:eval-map-assoc}.
\end{enumerate}
The comparison map $i$ will be shown to respect these structures.

To define $i$, we follow the usual strategy of defining \emph{comparison data}, and then counting the rigid elements in an associated moduli space.

Fix $(H_{t},J)$ and $(H_{\eta,t},J,P)$ which are admissible for defining the Floer complex and family Floer complex. Define \emph{comparison data} to be a connection one-form $\mathfrak{a}$ on the family $N\times \Sigma\times W$ where $\Sigma$ is the cylinder, satisfying:
\begin{enumerate}
\item $\mathfrak{a}=H_{\eta,s,t}\d t$,
\item $H_{\eta,s,t}=cr$ for $r\ge r_{0}$,
\item $H_{\eta,s,t}=H_{t}$ for $s\ge s_{0}$ and $H_{\eta,s,t}=H_{\eta,t}$ for $s\le -s_{0}$.
\end{enumerate}
One easily shows that the space of comparison data is convex and non-empty; to see it is non-empty, one can simply take the linear interpolation (bearing in mind that we assume that $H_{\eta,t},H_{t}$ both equal $cr$ for $r\ge r_{0}$).

As in the definition of $d_{\mathrm{FF}}$, we do not directly consider the solutions of the moduli space associated to this family (indeed, this family has asymptotics which depend on $\eta$, which would require special treatment). Rather, we will pull back this data using flow lines of the pseudogradient $P$.

For each zero $\eta_{1}$, introduce $\mathscr{P}(\eta_{1})$ as the space of flow lines $\pi(s)$ of $-\beta(-s)P$ which converge to $\eta_{1}$ as $s\to-\infty$. Notice that $\pi(s)$ is constant for $s\ge 0$. The space of points appearing as $\pi(0)$ for $\pi\in \mathscr{P}(\eta_{1})$ is simply the unstable manifold of $\eta_{1}$.

Comparison data induces a connection one-form $\mathfrak{a}$ on $\mathscr{P}(\eta_{1})\times \Sigma\times W$ given by the formula:
\begin{equation*}
  \mathfrak{a}_{\pi,s,t}=\mathfrak{a}_{\pi(s),s,t},
\end{equation*}
and it is important to note that this family has constant asymptotics, namely:
\begin{enumerate}
\item $\mathfrak{a}_{\pi,s,t}=H_{\eta_{1},t}$ for $s\le -s_{1}(\pi)$,
\item $\mathfrak{a}_{\pi,s,t}=H_{t}$ for $s\ge s_{0}$.
\end{enumerate}
The threshold $s_{1}(\pi)$ is not constant throughout the family (note that $\mathscr{P}(\eta_{1})$ is an open manifold), but it is locally constant which is sufficient for the framework established in \S\ref{sec:connection-1-forms} and \S\ref{sec:general-form-floers}.

For a generic perturbation term $\mathfrak{p}$ on $\mathscr{P}(\eta_{1})\times \Sigma\times W$, and using the almost complex structure $J$, one consider the moduli space $\mathscr{M}(\eta_{1})$ of finite energy solutions $(\pi,u)$ of \S\ref{sec:floers-equation-general}. To ground the discussion with a concrete formula, here is the equation we are counting when $\mathfrak{p}=0$:
\begin{equation*}
  \bd_{s}u+J(u)(\bd_{t}u-X_{\pi(s),s,t}(u))=0;
\end{equation*}
the perturbation $\mathfrak{p}$ simply changes the right hand side from $0$ to a small vector field.

By counting the rigid elements in $\mathscr{M}(\eta_{1})$ whose right asymptotic is equal to $\gamma\in \mathrm{CF}(H_{t},J)$, we obtain chains $i_{\eta_{1}}(\gamma)\in \mathrm{CF}(H_{\eta_{1},t},J)$. The comparison map $i$ is the sum over all zeros $\eta_{1}$ of $P$:
\begin{equation*}
  i(\gamma):=\sum i_{\eta_{1}}(\gamma)\otimes \eta_{1}.
\end{equation*}
The key lemma concerning this map is:
\begin{lemma}\label{lemma:i-chain-map}
  The map $i:\mathrm{CF}(H_{t},J)\to \mathrm{CFF}(P,H_{\eta,t},J)$ is a chain map with respect to $d$ and $d_{\mathrm{FF}}$. The chain homotopy class is independent of the generic perturbation $\mathfrak{p}$ or the precise comparison data used.
\end{lemma}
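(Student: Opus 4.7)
The plan is the standard ``ends of a $1$-dimensional moduli space cancel mod~$2$'' argument. For fixed zero $\eta_{2}$ of $P$ and asymptotic orbits $\gamma_{2}$ of $H_{\eta_{2},t}$ and $\gamma$ of $H_{t}$, I would examine the $1$-dimensional component $\mathscr{M}_{1}(\eta_{2};\gamma_{2},\gamma)\subset \mathscr{M}(\eta_{2})$ of solutions with these asymptotics, and count its ends modulo~$2$. The goal is to match these ends bijectively with the matrix entries from $\gamma$ to $\gamma_{2}\otimes\eta_{2}$ of $d_{\mathrm{FF}}\circ i+i\circ d$, thereby showing this operator vanishes on chains.

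To enumerate the ends I would combine the compactness results of \S\ref{sec:general-form-floers} (Propositions \ref{proposition:gradient-bound} and \ref{proposition:maximum-principle} and the energy identity of Lemma \ref{lemma:energy-identity}) with the classical compactness of broken trajectories for the Morse--Smale pseudogradient~$P$. This yields three types of boundary strata: (I) breaking off at the right puncture $s=+\infty$ of a Floer differential cylinder for $(H_{t},J)$ from some $\gamma'$ to $\gamma$, glued to an element of $\mathscr{M}(\eta_{2};\gamma_{2},\gamma')$ --- these contribute the matrix entries of $i\circ d$; (II) breaking off at the left puncture of a Floer differential cylinder for $(H_{\eta_{2},t},J)$ from $\gamma_{2}$ to some $\gamma_{2}'$, with $\pi$ remaining in $\mathscr{P}(\eta_{2})$ --- these contribute the ``index-preserving'' piece of $d_{\mathrm{FF}}\circ i$; (III) the flow line $\pi$ breaking through another zero $\eta_{1}$ of $P$, which, by requirement \ref{FF-data-4}, simultaneously forces $u$ to split into an element of $\mathscr{M}(\eta_{2},\eta_{1})/\R$ (asymptotic to $\gamma_{2}$ on the left and some $\gamma_{1}$ on the right) and an element of $\mathscr{M}(\eta_{1};\gamma_{1},\gamma)$ --- these contribute the ``index-changing'' pieces of $d_{\mathrm{FF}}\circ i$. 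Higher-order breakings have codimension $\ge 2$ and so do not appear in a $1$-dimensional family.

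A gluing theorem must then match each broken configuration with a unique end of $\mathscr{M}_{1}(\eta_{2};\gamma_{2},\gamma)$. Cases (I) and (II) are the standard Floer gluing at a non-degenerate asymptotic orbit on a cylindrical end of $u$. In Case (III) one glues simultaneously across a long neck of the Morse flow for $\pi$ and across a Floer break of $u$ at the intermediate orbit $\gamma_{1}$; the essential simplification granted by \ref{FF-data-4} is that on the region where $\pi(s)$ lies in a neighbourhood of $\eta_{1}$ the family $H_{\pi(s),t}$ is just the constant family $H_{\eta_{1},t}$, so the Morse and Floer gluing parameters decouple on the neck and the argument reduces, after pregluing, to the standard Floer implicit-function theorem at $\gamma_{1}$; compare \cite{hutchings-agt-2008,seidel-GAFA-2015}. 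Summing the ends then gives $d_{\mathrm{FF}}\circ i+i\circ d=0$.

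For the independence statement, the space of admissible comparison data is convex (and the space of admissible $J$ contractible), so two choices $(\mathfrak{a}^{0},\mathfrak{p}^{0})$ and $(\mathfrak{a}^{1},\mathfrak{p}^{1})$ can be joined by a generic homotopy $(\mathfrak{a}^{\tau},\mathfrak{p}^{\tau})$, $\tau\in [0,1]$. The rigid component of the parametric moduli space $\bigsqcup_{\tau\in[0,1]}\mathscr{M}^{\tau}(\eta_{2})$ defines a degree $-1$ map $K:\mathrm{CF}(H_{t},J)\to \mathrm{CFF}(P,H_{\eta,t},J)$, and the same boundary analysis applied to the $1$-dimensional component of this parametric moduli space yields $d_{\mathrm{FF}}K+Kd=i^{0}+i^{1}$ modulo~$2$, with the two extra summands coming from the $\tau=0,1$ ends of the parameter interval. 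I expect the main obstacle to be the combined Morse/Floer gluing in Case (III): although the technique is by now standard in family Floer theory, verifying the compactness, transversality, and gluing estimates simultaneously for the Morse-type and Floer-type necks in the present framework requires some care --- precisely because $H_{\eta,t}$ is only locally constant in $\eta$ near the zeroes of $P$, rather than globally constant.
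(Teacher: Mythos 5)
Your proposal matches the paper's own proof: the paper also examines the ends of the one‑dimensional component $\mathscr{M}_{1}(\eta_{1},\gamma_{1},\gamma_{0})$, splits them according to whether the projected sequence $\pi_{n}\in\mathscr{P}(\eta_{1})$ converges (your Cases (I)–(II), contributing $i\circ d$ and the $d_{0}$ part of $d_{\mathrm{FF}}\circ i$) or Morse-breaks through an intermediate zero $\eta'$ (your Case (III), contributing $d_{k}$ for $k>0$), and invokes requirement \ref{FF-data-4} to reduce the combined Morse/Floer gluing at the breaking to standard continuation-cylinder gluing, exactly as you note. The independence statement is handled by the same convexity-plus-parametric-moduli-space argument you describe (the paper omits its details).
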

\begin{proof}
  This argument has no surprising parts. However, the moduli spaces we consider are less standard, and so we attempt to give a bit more detail than we have in previous arguments.

  First we will prove that $i$ is a chain map. The key is to consider the one-dimensional component $\mathscr{M}_{1}(\eta_{1})$. It is convenient to focus on the one-dimensional components which contain solutions $(\pi,u)$ where $u$ has asymptotics $\gamma_{1},\gamma_{0}$. Let us refer to this $1$-dimensional manifold by $\mathscr{M}_{1}(\eta_{1},\gamma_{1},\gamma_{0})$.

  As usual with parametric moduli spaces, this admits a smooth map to $\mathscr{P}(\eta_{1})$ simply given by $(\pi,u)\mapsto \pi$. Because $\mathscr{P}(\eta_{1})$ is an open manifold, there are two possible failures of compactness for a sequence $(\pi_{n},u_{n})\in \mathscr{M}_{1}(\eta_{1},\gamma_{1},\gamma_{0})$:
  \begin{enumerate}
  \item\label{lemma-i-chain-map-1} $(\pi_{n},u_{n})$ has no convergent subsequence, but $\pi_{n}$ does;
  \item\label{lemma-i-chain-map-2} $\pi_{n}$ has no convergent subsequence.
  \end{enumerate}
  Each non-compact component of $\mathscr{M}_{1}(\eta_{1},\gamma_{1},\gamma_{0})$ has two non-compact ends, and each such end is either of type \ref{lemma-i-chain-map-1} or \ref{lemma-i-chain-map-2}.

  In the case of an end of type \ref{lemma-i-chain-map-1}, the usual Floer compactness-up-to-breaking arguments imply that we can pass to a subsequence so that $\pi_{n}$ converges to $\pi$ and $u_{n}$ breaks into a configuration of a rigid element in $\mathscr{M}_{0}(\eta_{1})$ connected to a non-stationary Floer differential cylinder for $(H_{\eta_{1},t},J)$ at the left end, or for $(H_{t},J)$ at the right end. By consideration of dimensions, the Floer differential cylinders which broke-off live in one-dimensional families, and hence are counted by the Floer differential. The gluing result complementary to this compactness-up-to-breaking result proves that the number of ends of type \ref{lemma-i-chain-map-1} equals the coefficient of $\gamma_{1}\otimes \eta_{1}$ appearing in:
  \begin{equation*}
    d_{0}(i(\gamma_{0}))+i(d(\gamma_{0})),
  \end{equation*}
  where we recall $d_{\mathrm{FF}}=d_{0}+d_{1}+\dots$. Thus, to complete the argument, it suffices to prove that the number of ends of type \ref{lemma-i-chain-map-2} equals the coefficient of $\gamma_{1}\otimes \eta_{1}$ appearing in:
  \begin{equation*}
    d_{1}(i(\gamma_{0}))+d_{2}(i(\gamma_{0}))+\dots.
  \end{equation*}
  Let us therefore focus on an end $(\pi_{n},u_{n})$ of type \ref{lemma-i-chain-map-2}. Because $P$ is assumed to be a Morse-Smale pseudogradient, one can pass to a subsequence so that $\pi_{n}$ breaks into a configuration in the product $(\pi_{1},\pi_{0})\in \mathscr{P}(\eta_{1},\eta')\times \mathscr{P}(\eta')$ where the index of $\eta'$ is strictly less than the index of $\eta_{1}$; see Figure \ref{fig:lemma-i-chain-map-morse-theoretic-breaking}.

  \begin{figure}[h]
    \centering
    \begin{tikzpicture}
      \draw (0,-1)node[draw,circle,fill,inner sep=1pt]{}--(0,0)node[draw,circle,fill,inner sep=1pt]{}node[above]{$\eta'$}circle(0.6)--(-3,0)node[draw,circle,fill,inner sep=1pt]{}node[left]{$\eta_{1}$};
      \draw (0,-1)to[out=115,in=-5](-0.45,-0.2)to[in=-7,out=175](-3,0);
    \end{tikzpicture}
    \caption{Morse theoretic breaking of the flow lines into two pieces. The circle around $\eta_{0}$ signifies the open set $\mathrm{Op}(\eta')$ where $H_{\eta,t}=H_{\eta',t}$.}
    \label{fig:lemma-i-chain-map-morse-theoretic-breaking}
  \end{figure}
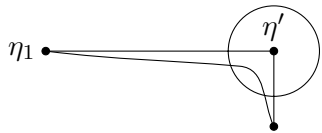

  As $\pi_{n}$ breaks into $(\pi_{1},\pi_{0})$, the equation which $u_{n}$ solves separates into two equations, in the following sense: $u_{n}(s+s_{n},t)$ will solve the equation for $d_{\mathrm{FF}}$ on compact subsets, for any sequence $s_{n}\to\infty$, while the non-translated solution $(\pi_{n},u_{n}(s,t))$ converges to a solution $(\pi_{0},u_{+})\in \mathscr{M}(\eta')$. By picking $s_{n}$ correctly, the translated solution $(\pi_{n}(s+s_{n}),u_{n}(s+s_{n}))$ will converge to a solution $(\pi_{1},u_{-})$. By consideration of dimensions, $(\pi_{0},u_{+})$ is a rigid element of $\mathscr{M}(\eta')$, and $(\pi_{1},u_{-})$ is a rigid-up-to-translation element of the moduli space used to define $d_{k}$, where $k>0$ is the index difference of $\eta_{1}$ and $\eta'$. Following similar gluing theory as in \cite[pp.\,972]{seidel-GAFA-2015}, each such configuration actually appears as a non-compact end of type (2), and thereby one shows:
  \begin{equation*}
    0=i(d(\gamma_{0}))+d_{0}(i(\gamma_{0}))+d_{1}(i(\gamma_{0}))+d_{2}(i(\gamma_{0}))+\dots\text{ modulo }2,
  \end{equation*}
  because each coefficient in the output is the count of the non-compact ends of a one-manifold. This completes the proof that $i$ is a chain map.

  The proof that the chain homotopy class is independent of the perturbation $\mathfrak{p}$ or the comparison data follows similar lines, and we omit the details.
\end{proof}

\emph{Remark}. The appeal to gluing theory, while appearing non-standard, actually follows from a general parametric gluing result for continuation cylinders; this is because the equation which $u$ solves near a breaking can be considered as a continuation cylinder for concatenated continuation data (varying in a parameter space); see Figure \ref{fig:lemma-i-chain-map-solution-near-breaking}. The arguments in \cite{salamon-notes-1997} can be employed in such a case.

\begin{figure}[h]
  \centering
  \begin{tikzpicture}
    \draw (0,0) circle (0.1 and 0.4) (2,0) circle (0.1 and 0.4) (8,0) circle (0.1 and 0.4) (10,0) circle (0.1 and 0.4);
    \draw (0,0.4)--+(10,0) (0,-0.4)--+(10,0);
    \path (-0.1,0)node[left]{$\gamma_{1}$}--node{$\bd_{s}u+J(u)(\bd_{t}u-X_{\eta',t}(u))=0$}(10.1,0)node[right]{$\gamma_{0}$};
  \end{tikzpicture}
  \caption{Solution near the breaking; on a large subcylinder (determined by $\pi(s)\in \mathrm{Op}(\eta')$), the equation appears as Floer's equation.}
  \label{fig:lemma-i-chain-map-solution-near-breaking}
\end{figure}

\subsubsection{The family pair-of-pants product}
\label{sec:family-pair-of-pants}

The product structure on family Floer cohomology is defined as a combination of the Morse cohomology product, defined using flow trees as in \cite{fukaya-AMSIP-1997}, and the pair-of-pants product from \S\ref{sec:pair-pants-product}. For details on a different Floer theoretic product combining flow lines and pairs-of-pants, we refer the reader to \cite[\S4.3]{seidel-GAFA-2015}.

First we introduce a framework for flow trees: having fixed a Morse-Smale pseudogradient $P$ on the parameter space $N$, introduce time-dependent vector fields $P_{0,s}$, $P_{1,s}$, which are defined for $s\in [-1,\infty)$, vanish when $s\in [-1,1]$, and agree with $P$ when $s\in [2,\infty)$. Let $P_{\infty,s}=\beta(-s-1)P$.

A \emph{flow tree} is a configuration $(\pi_{0},\pi_{1},\pi_{\infty})$ where $\pi_{i}$ is a flow line for $-P_{i,s}$, defined on $[-1,\infty)$ when $i=0,1$ and on $(-\infty,1]$ when $i=\infty$, and such that $\pi_{0}(0)=\pi_{1}(0)=\pi_{\infty}(0)=\eta'$.

\emph{Remark}. The fact that $\pi_{i}(s)$ is defined for $s\in [-1,1]$ and is constant on this interval will be a convenience in some of the subsequent formulas.

A flow tree has asymptotic zeros $\eta_{0},\eta_{1},\eta_{\infty}$ of $P$ at its non-compact ends; the space of flow lines with these asymptotics is denoted $\mathscr{T}(\eta_{0},\eta_{1},\eta_{\infty})$.

Notice that the junction point $\eta'$ lies in the intersection of (deformations of) the stable manifolds of $\eta_{1},\eta_{0}$ and the unstable manifold of $\eta_{\infty}$. In particular, assuming these deformed stable and unstable manifolds are transverse, then the possible choices for $\eta'$ form a (potentially open) manifold of dimension: $$\dim \mathscr{T}(\eta_{0},\eta_{1},\eta_{\infty})=\mathrm{Index}(\eta_{\infty})-\mathrm{Index}(\eta_{0})-\mathrm{Index}(\eta_{1}).$$
This dimension is also the dimension of the space of flow trees, since the junction point determines the flow tree.

Next we explain how to set-up a family of connection one-forms on the pair-of-pants parametrized by the space of flow trees. To do this, define \emph{family pair-of-pants data} to be:
\begin{enumerate}
\item a fixed almost complex structure $J$,
\item families of Hamiltonian functions $H_{i,\eta,t}$, $i=0,1,\infty$ such that $(H_{i,\eta,t},J)$ is admissible for defining the family Floer complex,
\item satisfying $H_{0,\eta,t}+H_{1,\eta,t}=H_{\infty,\eta,t}$.
\end{enumerate}
For each flow tree $(\pi_{0},\pi_{1},\pi_{\infty})$, one considers the families of Hamiltonians $H_{i,\pi_{i}(s),t}$ defined on the ``branches'' of the flow tree; at the junction point, one has three Hamiltonians related by $H_{0,\eta',t}+H_{1,\eta',t}=H_{\infty,\eta',t}$.

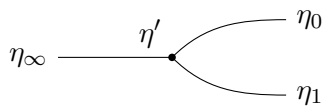
\begin{figure}[h]
  \centering
  \begin{tikzpicture}[scale=.5]
    \draw (0,0)node[above left]{$\eta'$}node[fill,circle,inner sep=1pt]{} to[out=45,in=180] (3,1)node[right]{$\eta_{0}$} (0,0) to[out=-45,in=180] (3,-1)node[right]{$\eta_{1}$} (0,0) -- (-3,0)node[left]{$\eta_{\infty}$};
  \end{tikzpicture}
  \caption{Illustration of a flow tree.}
  \label{fig:illustration-of-a-flow-tree}
\end{figure}

To lift this to the pair-of-pants surface, we follow an ad hoc recipe. Fix the pair-of-pants surface $\Sigma=\C\setminus \set{0,1}$, as in \S\ref{sec:pair-pants-product}, and consider the punctured disks $D(1/3)^{\times}$, $1+D(1/3)^{\times}$, and $\C\setminus D(2)$, as cylindrical ends, parametrized in the standard way so that the line $t=0$ is aligned with the positive real axis. The cylindrical ends around $0,1$ are parametrized by $[0,\infty)\times \R/\Z$ while the end around $\infty$ is parametrized by $(-\infty,0]\times \R/\Z$.

Fix smooth functions $t_{i}:\Sigma\to \R/\Z$ such that:
\begin{equation*}
  t_{i}=t\text{ in the $i$th cylindrical end and $\infty$th cylindrical end,}
\end{equation*}
and so that $t_{0}$ is constant in the $1$ cylindrical end while $t_{1}$ is constant in the $0$ cylindrical end. The differentials $\mathfrak{m}_{i}=\d t_{i}$ give two closed real-valued differential forms on $\Sigma$ such that:
\begin{equation*}
  \mathfrak{m}_{i}=\d t\text{ in the $i$th cylindrical end and $\infty$th cylindrical end};
\end{equation*}
note that $\mathfrak{m}_{0}$ vanishes in the $1$ cylindrical end while $\mathfrak{m}_{1}$ vanishes in the $0$ cylindrical end.

One more piece of data needed to lift the equation to the pair-of-pants is a smooth map $b:\Sigma\to \R$ satisfying $b(z)=s(z)$ when $z$ is in any of the three cylindrical ends. The values of $b(z)$ should be contained in $[-1,1]$ on the complement of the cylindrical ends.

For each flow tree $\pi=(\pi_{0},\pi_{1},\pi_{\infty})$ define:
\begin{equation*}
  \mathfrak{a}_{\pi}=\left\{
    \begin{aligned}
      &H_{\infty,\pi_{\infty}(b(z)),t}\d t\text{ in the $\infty$th cylindrical end},\\
      &H_{0,\pi_{0}(b(z)),t_{0}(z)}\mathfrak{m}_{0}+H_{1,\pi_{1}(b(z)),t_{1}(z)}\mathfrak{m}_{1}\text{ otherwise}.
    \end{aligned}
  \right.
\end{equation*}
Notice that, for $z$ outside of the cylindrical ends, $\pi_{i}(b(z))=\eta'$. This explains our choice of having $\pi_{i}$ defined on $[-1,1]$ for all branches.

The key outcome of this construction is:
\begin{lemma}\label{lemma:pop-family-curvature-bounded-above}
  The family of connection one-forms on $\mathscr{T}(\eta_{0},\eta_{1},\eta_{\infty})\times \Sigma\times W$ induced by $\mathfrak{a}$ is smooth and has curvature bounded from above. Moreover:
  \begin{equation*}
    \mathfrak{a}=H_{i,\pi_{i}(s),t}\d t
  \end{equation*}
  holds in the $i$th cylindrical end.
\end{lemma}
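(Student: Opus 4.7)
The plan is to verify each of the three claims---smoothness, curvature bounded from above, and the cylindrical end behavior---by unwinding the construction and using the closed-ness of $\mathfrak{m}_i$, the identity $H_0 + H_1 = H_\infty$, and the compactness of $N$.

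First, I would check that the two cases of the formula for $\mathfrak{a}_\pi$ agree in their overlap, namely where the $\infty$-cylindrical end meets the region $\set{b(z)\in [-1,1]}$. In this overlap, the cut-off vector fields $P_{i,s}$ vanish, so each $\pi_i(b(z))$ equals the junction point $\eta'$; simultaneously, in the $\infty$-end one has $t_0 = t_1 = t$, whence $\mathfrak{m}_0 = \mathfrak{m}_1 = \d t$. The algebraic identity $H_{0,\eta',t} + H_{1,\eta',t} = H_{\infty,\eta',t}$ then reconciles the two pieces, yielding a smoothly defined family of one-forms (smoothness in $\pi$ follows because $\mathscr{T}$ carries a smooth parametrization and all the constituents are smooth). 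The claimed end behavior is checked similarly: in the $i$-th cylindrical end for $i = 0, 1$, the function $t_{1-i}$ is constant and $t_i = t$, so $\mathfrak{m}_{1-i}=0$ and $\mathfrak{m}_i = \d t$, reducing $\mathfrak{a}_\pi$ to $H_{i,\pi_i(s),t}\d t$; the $\infty$-end case is built into the formula by construction.

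The heart of the argument is the curvature bound. I would first show that the curvature potential $\mathfrak{r}$ vanishes over the convex end of $W$, where $H_{i,\eta,t} = c_i r$ is independent of $(\eta, t)$. Writing $\mathfrak{m}_i = \alpha_i \d x + \beta_i \d y$ locally, so that $\mathfrak{a}_\pi = r\sum_i c_i(\alpha_i \d x + \beta_i \d y)$, the first part $\bd_{x} K - \bd_{y} H$ of the curvature potential reduces to $r\sum_i c_i(\bd_x \beta_i - \bd_y \alpha_i) = 0$ since $\d\mathfrak{m}_i = \d(\d t_i)=0$, and the Poisson bracket $\omega(X^H, X^K)$ vanishes because $X^H, X^K$ are both scalar multiples of the Reeb vector field. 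Thus $\mathfrak{r}$ is supported on the bounded set $\set{r \leq r_0}$.

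On this bounded set, the coefficients of $\mathfrak{r}$ are uniformly bounded in $(\sigma, z, w)$, since $N$ is compact and the data is smooth; hence the integral of $v^*\mathfrak{r}$ over the compact part of $\Sigma$ (complement of the cylindrical ends) is uniformly bounded in $\sigma$. On each cylindrical end, $\mathfrak{a}_\pi = H_{i,\pi_i(s),t}\d t$ has no $\d s$ component, so $\mathfrak{r}|_\sigma = \bd_s H_{i,\pi_i(s),t}\,\d s\wedge \d t$, and pointwise estimates give
\begin{equation*}
  \left|\int_{\text{$i$-th end}} v^*\mathfrak{r}\right| \leq C\int_{\R}\abs{\pi_i'(s)}\d s \leq C\cdot \ell(\pi_i),
\end{equation*}
where $\ell(\pi_i)$, the length of a flow line of $-P$ in the compact manifold $N$, is uniformly bounded independently of the flow tree (e.g.\@ by a difference of Morse function values). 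The main technical point is the bookkeeping needed to verify smoothness and end-behavior in the overlap region, keeping track of the cut-offs in $b$, $\mathfrak{m}_i$, and $P_{i,s}$; once this is handled, both the vanishing of $\mathfrak{r}$ in the convex end and the length-based bound in the cylindrical ends are essentially elementary.
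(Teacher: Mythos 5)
Your proof is correct and follows the paper's approach: the curvature vanishes on $\{r \ge r_0\}$ because the $\mathfrak{m}_i$ are closed and both Hamiltonian vector fields become proportional to the Reeb vector field there, while the cylindrical-end formula is immediate from the construction of $b$, $t_i$, $\mathfrak{m}_i$. You fill in slightly more detail than the paper's one-line argument --- notably you explicitly verify the overlap/smoothness bookkeeping and bound the cylindrical-end contribution via the uniform bound on the $g$-lengths of pseudogradient flow lines --- which is a genuine gap-filling step since the definition of ``curvature bounded from above'' involves integration over the non-compact surface $\Sigma$.
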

\begin{proof}
  To see that $\mathfrak{a}$ has curvature bounded from above, one only needs to observe that, outside of a compact set one has $H_{i,\eta,t}=c_{i}r$, and therefore the curvature of $\mathfrak{a}$ vanishes outside of a compact set because the forms $\mathfrak{m}_{i}$ are closed.

  The statement about the form of $\mathfrak{a}$ in the cylindrical ends follows immediately from the construction of $b$, $t_{i}$, and $\mathfrak{m}_{i}$.
\end{proof}

The data of $\mathfrak{a}$ on the family $\mathscr{T}(\eta_{0},\eta_{1},\eta_{\infty})\times \Sigma\times W$ together with the fixed almost complex structure $J$ and a generic perturbation term $\mathfrak{p}$ leads to a moduli space $\mathscr{M}(\eta_{0},\eta_{1},\eta_{\infty})$ of finite energy solutions to \S\ref{sec:floers-equation-general}. Counting the rigid elements asymptotic to orbits $\gamma_{0},\gamma_{1},\gamma_{\infty}$ gives a number $N_{\eta_{0},\eta_{1},\eta_{\infty}}(\gamma_{0},\gamma_{1},\gamma_{\infty})$ in $\Z/2\Z$. Define:
\begin{equation*}
  (\gamma_{0}\otimes \eta_{0})\ast_{\mathrm{FF}} (\gamma_{1}\otimes \eta_{1}):=\sum N_{\eta_{0},\eta_{1},\eta_{\infty}}(\gamma_{0},\gamma_{1},\gamma_{\infty})(\gamma_{\infty}\otimes \eta_{\infty}),
\end{equation*}
where the sum is over all $\eta_{\infty}$ and $\gamma_{\infty}$. As expected:
\begin{lemma}
  The operation $\ast_{\mathrm{FF}}$ is a chain map:
  \begin{equation*}
    \ast_{\mathrm{FF}}:\mathrm{CFF}(P,H_{0,\eta,t},J)\otimes \mathrm{CFF}(P,H_{1,\eta,t},J)\to \mathrm{CFF}(P,H_{\infty,\eta,t},J);
  \end{equation*}
  the chain homotopy class of the map is independent of the choice of $\mathfrak{m}_{i},t_{i}$, the perturbed vector fields $P_{i,s}$, and the perturbation one-form $\mathfrak{p}$.
\end{lemma}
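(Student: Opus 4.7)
The plan is to mimic the argument of Lemma \ref{lemma:i-chain-map} essentially verbatim. Fix asymptotic data $\gamma_{i}\otimes \eta_{i}$ for $i=0,1,\infty$, and consider the subspace $\mathscr{M}_{1}$ of the one-dimensional component of the parametric moduli space $\mathscr{M}(\eta_{0},\eta_{1},\eta_{\infty})$ consisting of solutions with these three asymptotic orbits. For a generic perturbation $\mathfrak{p}$ the usual Sard--Smale argument achieves transversality, so $\mathscr{M}_{1}$ is a smooth 1-manifold. The a priori energy estimate of Lemma \ref{lemma:energy-estimate}, combined with the curvature bound of Lemma \ref{lemma:pop-family-curvature-bounded-above}, supplies the uniform energy required for compactness-up-to-breaking.

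Every divergent sequence in $\mathscr{M}_{1}$ has a subsequence breaking in one of two ways. Type I is standard Floer-cylinder breaking at one of the three cylindrical ends $i\in\{0,1,\infty\}$: the piece that splits off is a rigid-up-to-translation cylinder for the system $(H_{i,\eta_{i},t},J)$, and the surviving piece is a rigid element of $\mathscr{M}_{0}(\eta_{0},\eta_{1},\eta_{\infty})$. Writing $\alpha=\gamma_{0}\otimes\eta_{0}$ and $\beta=\gamma_{1}\otimes\eta_{1}$, these breakings contribute the coefficients in $d_{0}(\alpha\ast_{\mathrm{FF}}\beta)$, $d_{0}(\alpha)\ast_{\mathrm{FF}}\beta$, and $\alpha\ast_{\mathrm{FF}}d_{0}(\beta)$. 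Type II is Morse-theoretic breaking of the underlying flow tree $(\pi_{0},\pi_{1},\pi_{\infty})$: one of the three branches breaks at an intermediate critical point $\eta'$, producing a rigid-up-to-translation trajectory in $\mathscr{P}(\eta',\eta_{i})$ (or $\mathscr{P}(\eta_{\infty},\eta')$) paired with a rigid element of a lower-dimensional product moduli space whose asymptotics include $\eta'$ in place of $\eta_{i}$. These account for the higher pieces $d_{k}$, $k\ge 1$, of the decomposition $d_{\mathrm{FF}}=d_{0}+d_{1}+\cdots$.

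Each such breaking is realized as a genuine boundary end of $\mathscr{M}_{1}$ via a gluing theorem complementary to the compactness statement. Tabulating all ends of the compactification $\overline{\mathscr{M}_{1}}$, one sees that the total count (mod 2) equals the coefficient of $\gamma_{\infty}\otimes\eta_{\infty}$ in:
\begin{equation*}
  d_{\mathrm{FF}}(\alpha\ast_{\mathrm{FF}}\beta)+d_{\mathrm{FF}}(\alpha)\ast_{\mathrm{FF}}\beta+\alpha\ast_{\mathrm{FF}}d_{\mathrm{FF}}(\beta),
\end{equation*}
and vanishes because a compact 1-manifold has an even number of boundary points. For the chain-homotopy statement, I would interpolate between any two admissible choices of $(\mathfrak{m}_{i},t_{i},P_{i,s},\mathfrak{p})$ through a smooth one-parameter family and perform the analogous end analysis on the resulting parametric moduli space; the rigid elements there supply the chain homotopy, while the ``parameter boundary'' ends contribute the difference of the two products.

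The principal technical obstacle I anticipate is the parametric gluing required for Type II breakings, where the pair-of-pants connection one-form $\mathfrak{a}_{\pi}$ depends on the flow tree through the function $b:\Sigma\to \R$ and the branches $\pi_{i}$. This coupling between Morse flow-tree gluing and Floer cylinder gluing is not entirely routine, but should be amenable to an adaptation of the framework of \cite[pp.\,971--972]{seidel-GAFA-2015}, which handles an analogous coupling in a family Floer cohomology setting. The independence from $\mathfrak{m}_{i},t_{i}$ and $P_{i,s}$ then reduces to the same cobordism argument.
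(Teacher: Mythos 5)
Your proof of the chain-map statement follows the paper's intended argument essentially verbatim: the decomposition of the ends of $\overline{\mathscr{M}_{1}}$ into Floer-cylinder breaking at the punctures (contributing the $d_{0}$ terms) and Morse-theoretic breaking of the flow tree at intermediate critical points (contributing the $d_{k}$ terms, $k\ge 1$), with the parametric gluing handled by the recursive-perturbation framework, is exactly right.

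There is one gap in the chain-homotopy part, and it is precisely the point the paper singles out for comment. You write that you would ``interpolate between any two admissible choices of $(\mathfrak{m}_{i},t_{i},P_{i,s},\mathfrak{p})$ through a smooth one-parameter family,'' taking the existence of such a path for granted. For $P_{i,s}$, $b$, and $\mathfrak{p}$ this is indeed just linear interpolation, but the $t_{i}$ are \emph{circle-valued} functions on $\Sigma$, so the space of admissible choices is not obviously convex, and a naive linear interpolation does not make sense. The argument required here is: given two admissible choices $t_{i}$ and $t_{i}'$, the difference $t_{i}-t_{i}'$ is a circle-valued function on the pair-of-pants which vanishes in at least two of the three cylindrical ends; since every embedded loop in $\Sigma$ is homotopic into a cylindrical end, such a function necessarily induces the zero map $\pi_{1}(\Sigma)\to\pi_{1}(\R/\Z)$, hence lifts to an $\R$-valued function, and the space of $\R$-valued lifts \emph{is} convex. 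One then interpolates linearly between the lifts. Without this observation you have not established that the parametric moduli space over $[0,1]$ can actually be constructed, so the independence of the chain homotopy class from the choice of $\mathfrak{m}_{i},t_{i}$ is unproved.
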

\begin{proof}
  This follows standard lines; to see that it is a chain map, one inspects the non-compact ends of the one-dimensional components of $\mathscr{M}(\eta_{0},\eta_{1},\eta_{\infty})$ in a manner similar to the proof of Lemma \ref{lemma:i-chain-map}. Let us note that a key part of the argument is understanding the failures of compactness in the moduli space of flow trees $\mathscr{T}(\eta_{0},\eta_{1},\eta_{\infty})$; the same considerations used to prove the flow tree product on Morse cohomology is a chain map will be used here.

  To see that the chain homotopy class is independent of the auxiliary choices, one needs to find a path between two choices, then set-up a parametric moduli space and apply the usual Floer theory arguments (see, e.g., \cite[pp.\,314 and pp.\,341]{abouzaid-EMS-2015}). We only comment on why one can find a path between two such choices: clearly one can find paths between the choices of $\mathfrak{P}_{i,s}$, $b$, and $\mathfrak{p}$ (simply by a linear interpolation). To find a path between the choices of $t_{0},t_{1}$ and, say, $t_{0}',t_{1}'$, one can consider the circle-valued functions $t_{i}-t_{i}'$; by construction, these functions vanish in at least two-out-of-three cylindrical ends. Any such circle-valued function necessarily induces the zero map $\pi_{1}(\Sigma)\to \pi_{1}(\R/\Z)$, and thus lifts to $\R$. The space of $\R$-valued functions is convex, and hence the desired path can be taken to be a linear interpolation between the lifts.
\end{proof}

As mentioned in \S\ref{sec:from-HF-to-HFF}, the map $i:\mathrm{HF}(H_{t},J)\to \mathrm{HF}(P,H_{\eta,t},J)$ is compatible with the product structures, in the following sense:
\begin{lemma}
  Given Hamiltonians $H_{i,t}$ and $H_{i,\eta,t}$, $i=0,1,\infty$, such that:
  \begin{enumerate}
  \item the slope of $H_{i,t}$ equals the slope of $H_{i,\eta,t}$
  \item $(H_{i,t},J)$ and $(P,H_{i,\eta,t},J)$ are admissible for defining $\mathrm{CF}$ and $\mathrm{CFF}$
  \item $H_{\infty,t}=H_{0,t}+H_{1,t}$, and $H_{\infty,\eta,t}=H_{0,\eta,t}+H_{1,\eta,t}$,
  \end{enumerate}
  then we have an equality:
  \begin{equation*}
    \ast_{\mathrm{FF}}\circ (i\otimes i)=i\circ \ast,
  \end{equation*}
  of maps $\mathrm{HF}(H_{0,t},J)\otimes \mathrm{HF}(H_{1,t},J)\to \mathrm{HFF}(P,H_{\infty,\eta,t},J)$.
\end{lemma}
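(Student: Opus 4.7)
The plan is to exhibit a chain-level equality up to homotopy by constructing an interpolating parametric moduli space, following the standard Floer-theoretic TQFT paradigm but adapted to the flow-tree/flow-line set-up of \S\ref{sec:family-pair-of-pants} and \S\ref{sec:from-HF-to-HFF}. I will describe two auxiliary moduli spaces whose rigid counts reproduce the two sides, then build a one-parameter family between them.

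First I would compute each side as a concrete count. For the right-hand side $i\circ \ast$, one glues comparison data (as in \S\ref{sec:from-HF-to-HFF}) at the output puncture of the usual pair-of-pants, obtaining a family over $\mathscr{P}(\eta_\infty)$ parametrizing a single flow line attached to the $\infty$-puncture of $\Sigma$. For the left-hand side $\ast_{\mathrm{FF}}\circ (i\otimes i)$, one instead uses the flow-tree family over $\mathscr{T}(\eta_0,\eta_1,\eta_\infty)$ from \S\ref{sec:family-pair-of-pants} and pre-composes with comparison cylinders at the $0$ and $1$ punctures; this amounts to attaching two extra flow lines for $-\beta(-s)P$ at these inputs. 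In both cases, by Lemma~\ref{lemma:pop-family-curvature-bounded-above} together with Lemma~\ref{lemma:a-has-then-p-has}, the associated connection one-forms have curvature bounded above, and the slope conditions guarantee compactness. The ordinary Floer-theoretic gluing theorem (as used implicitly in Lemma~\ref{lemma:i-chain-map}) identifies the rigid counts with the two compositions in question.

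Next I would interpolate between these two moduli problems using a single one-parameter family $\tau\in [0,1]$ of configurations. The idea is to realize both compositions as degenerations of a \emph{symmetric} moduli space in which all three punctures of $\Sigma$ carry half-infinite flow-line branches: one chooses, for each puncture, a smooth function $b_i:\Sigma\to \R$ as in \S\ref{sec:family-pair-of-pants} whose support one slides via the parameter $\tau$. At $\tau=0$, the supports of $b_0,b_1$ are pushed out along the input cylindrical ends (recovering the composition with $i\otimes i$ on the inputs), while $b_\infty$ is trivial and the output is already of the form $H_{\infty,\eta,t}\,dt$ via a flow tree with a trivial $\pi_\infty$-branch. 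At $\tau=1$, one slides the supports of $b_0,b_1$ inward and pushes $b_\infty$ out along the output end, realizing the composition of a pair-of-pants (with inputs $H_{i,t}$ and output $H_{\infty,t}$) and a comparison flow-line on the output. Condition $H_{0,\eta,t}+H_{1,\eta,t}=H_{\infty,\eta,t}$ makes the interpolation well-defined at the junction points of the underlying flow trees, and the differential forms $\mathfrak{m}_i$ of \S\ref{sec:family-pair-of-pants} can be varied convexly; the curvature remains bounded above throughout by the same calculation as in Lemma~\ref{lemma:pop-family-curvature-bounded-above}, since the asymptotic slopes and the closedness of $\mathfrak{m}_i$ are preserved.

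Finally, I would analyze the boundary of the resulting one-dimensional parametric moduli space. Its non-compact ends come from (i) Floer breakings at the three cylindrical ends, which produce the terms $h\circ (d_{\mathrm{FF}}\oplus d\otimes\id + \id\otimes d)$ for the chain homotopy $h$ defined by the counts at fixed $\tau\in (0,1)$, (ii) Morse-theoretic breakings of the flow trees or flow lines at intermediate zeros of $P$, which contribute differentials on the $\mathrm{CFF}$ side, and (iii) the two endpoints $\tau=0,1$, which by the gluing step of the previous paragraph reproduce the chain-level expressions $\ast_{\mathrm{FF}}\circ(i\otimes i)$ and $i\circ \ast$. Setting the total count mod $2$ equal to zero yields $i\circ \ast - \ast_{\mathrm{FF}}\circ (i\otimes i)=dh+hd$, giving the desired equality on homology.

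The main obstacle I expect is item (ii): matching Morse-theoretic boundary contributions from flow-tree degenerations on one side of the interpolation with those from flow-line degenerations on the other. This requires a careful bookkeeping of which breakings at intermediate zeros cancel with the family Floer differential $d_{\mathrm{FF}}=d_0+d_1+\cdots$ and which are genuine $\tau=0,1$ contributions. The same gluing/compactness dichotomy appearing in the proof of Lemma~\ref{lemma:i-chain-map} and in \cite[\S4.3]{seidel-GAFA-2015} handles this, but must be checked uniformly in $\tau$.
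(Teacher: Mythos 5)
Your proposal takes the same route as the paper: one builds a parametric moduli space fibered over flow-tree/flow-line data and a slide parameter $\tau$, and reads off $\ast_{\mathrm{FF}}\circ(i\otimes i)$ and $i\circ\ast$ from the two extremes of $\tau$ while the interior ends supply the chain homotopy terms. The paper realizes the interpolation concretely by cutting off the pseudogradient, replacing $-P_{i,s}$ by $-\beta(\tau-s)P_{i,s}$, and simultaneously shifting the Hamiltonian interpolation $H_{i,s,\eta,t}=(1-\beta(s))H_{i,\eta,t}+\beta(s)H_{i,t}$ through the argument $s=b(z)-\tau$; this couples the deformation of the flow-tree structure to the deformation of the domain Hamiltonians and is what makes the degenerations at $\tau\to\pm\infty$ come out as the two compositions.

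One imprecision worth flagging: you describe the $\ast_{\mathrm{FF}}\circ(i\otimes i)$ endpoint as involving ``a flow tree with a trivial $\pi_\infty$-branch.'' In that degeneration the $\pi_\infty$-branch is \emph{not} trivial — it remains a full flow line from the junction point to a zero $\eta_\infty$ of $P$, and this non-triviality is what produces the asymptotic over $\eta_\infty$ in $\mathrm{CFF}(P,H_{\infty,\eta,t},J)$. What degenerates at that end are the continuation cylinders attached at the two input punctures (the $i$-map factors), not the output flow line. If you keep the $\pi_\infty$-branch trivial at that end you would instead land in a single summand $\mathrm{CF}(H_{\infty,\eta',t},J)\otimes\Z/2\Z\eta'$ with $\eta'$ the junction, which is not the family pair-of-pants. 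Apart from this, your sketch matches the argument in the paper.
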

\begin{proof}
  The argument has no surprises; one simply follows their nose. The strategy is as follows: define a parametric moduli space $\mathscr{M}_{\mathrm{param}}$ admitting a map $\tau$ to $\R$. The one-dimensional components of $\mathscr{M}_{\mathrm{param}}$ have non-compact ends of three types:
  \begin{enumerate}
  \item ends containing sequences with $\tau\to\infty$; these ends will be asymptotic to the configurations composing $\ast_{\mathrm{FF}}\circ (i\otimes i)$;
  \item ends containing sequences with $\tau\to-\infty$; these ends will be asymptotic to the configurations composing $i\circ \ast$;
  \item\label{family-product-compatible-i-3} ends which project under $\tau$ to precompact sets in $\R$; these ends will be asymptotic to chain homotopy terms.
  \end{enumerate}
  The total count of ends is even, and one concludes an equation of the form:
  \begin{equation}\label{eq:ppff-and-i-chain-level}
    \ast_{\mathrm{FF}}\circ (i\otimes i)+i\circ \ast+d_{\mathrm{FF}}K+K(d\otimes \id+\id\otimes d)=0\text{ mod }2,
  \end{equation}
  as maps on the chain complexes; each of the three summands corresponds to one type of ends. The desired result follows.

  We now describe the construction of the parametric moduli space $\mathscr{M}_{\mathrm{param}}$.

  Let us define the space $\mathscr{F}$ of pairs $(\tau,\pi)$ where $\pi=(\pi_{0},\pi_{1},\pi_{\infty})$ is a flow tree of the following type:
  \begin{enumerate}
  \item $\pi_{i}:[-1,\infty)\to N$ is a flow line for $-\beta(\tau-s)P_{i,s}$, $i=0,1$,
  \item $\pi_{\infty}:(-\infty,1]\to N$  is a flow line for $-\beta(\tau-s)P_{\infty,s}$,
  \item $\pi_{0}(0)=\pi_{1}(0)=\pi_{\infty}(0)=\eta'$;
  \end{enumerate}
  here $\beta$ is the standard cut-off function. Notice that $\pi_{i}(s)$ is constant for $s\ge \tau$; consequently, each element $(\tau,\pi)\in \mathscr{F}$ is completely determined by the parameter value $\tau$ and the junction point $\eta'$ which must be a point in the unstable manifold of $\eta_{\infty}=\lim_{s\to-\infty}\pi_{\infty}(s)$. In particular, if we let $\mathscr{F}(\eta_{\infty})$ be the subset of flow trees with fixed asymptotic $\eta_{\infty}$, then $\mathscr{F}(\eta_{\infty})$ is an open manifold diffeomorphic to $\R\times (\text{unstable manifold of }\eta_{\infty})$.

  Following a similar construction used in the definition of $\ast_{\mathrm{FF}}$, we obtain a connection one form $\mathfrak{a}$ on the family $\mathscr{F}(\eta_{\infty})\times \Sigma\times W$, as follows:
  \begin{equation*}
    \mathfrak{a}_{\tau,\pi}:=\left\{
      \begin{aligned}
        &H_{\infty,b(z)-\tau,\pi_{\infty}(b(z)),t}\d t\text{ in the $\infty$th cylindrical end},\\
        &H_{0,b(z)-\tau,\pi_{0}(b(z)),t_{0}(z)}\mathfrak{m}_{0}+H_{1,b(z)-\tau,\pi_{1}(b(z)),t_{1}(z)}\mathfrak{m}_{1}\text{ otherwise},
      \end{aligned}
    \right.
  \end{equation*}
  where:
  \begin{equation*}
    H_{i,s,\eta,t}=(1-\beta(s))H_{i,\eta,t}+\beta(s)H_{i,t}.
  \end{equation*}
  As in the proof of Lemma \ref{lemma:pop-family-curvature-bounded-above}, this $\mathfrak{a}$ has curvature bounded from above.

  Morally, this $\mathfrak{a}_{\tau,\pi}$ is a sort of hybrid between a continuation from $H_{i,t}$ to $H_{i,s,\eta,t}$ and the family pair-of-pants product. The ``continuation part'' is in the region where $b(z)\approx \tau$, which occurs in the positive ends $i=0,1$ when $\tau$ is large and positive, and is in the negative end $i=\infty$ when $\tau$ is large and negative. As $\tau\to\infty$, this equation ``breaks'' into a concatenation of the equations defining the $i$-map at the $0$ and $1$ punctures and the family pair-of-pants product. When $\tau\to-\infty$, the equation breaks into a concatenation of the equation defined by:
  \begin{equation*}
    \mathfrak{a}_{-\infty}=\left\{
      \begin{aligned}
        &H_{\infty,t}\d t\text{ in the $\infty$th cylindrical end},\\
        &H_{0,t_{0}(z)}\mathfrak{m}_{0}+H_{1,t_{1}(z)}\mathfrak{m}_{1}\text{ otherwise},
      \end{aligned}\right.
  \end{equation*}
  (which is just the non-family pair-of-pants product and does not depend on any flow tree) and the equation defining the $i$-map.

  The desired moduli space $\mathscr{M}_{\mathrm{param}}$ is a union of components $\mathscr{M}_{\mathrm{param}}(\eta_{\infty})$; the ends of this component compose the terms in \eqref{eq:ppff-and-i-chain-level} with output in the summand $\mathrm{CF}(H_{\infty,\eta_{\infty},t},J)\otimes \Z/2\Z \eta_{\infty}$.

  This component $\mathscr{M}_{\mathrm{param}}(\eta_{\infty})$ is defined using the above $\mathfrak{a}$, the fixed almost complex structure $J$, and a peturbation term $\mathfrak{p}$ on $\mathscr{F}(\eta_{\infty})\times \Sigma\times W$. One constructs $\mathfrak{p}$ ``recursively,'' in the following sense: if $(\tau,\pi)$ is close to breaking (either when $\tau\to\pm\infty$, or $\pi$ approaches the boundary of the unstable manifold of $\eta_{\infty}$), $\mathfrak{p}$ should be determined by perturbation terms chosen for the equations which appear in the breaking. Such recursive choices of perturbations are a standard ingredient in Floer theory (see, e.g., \cite[pp.\,109]{seidel-book-2008}).

  The analysis of the non-compact ends of $\mathscr{M}_{\mathrm{param}}(\eta_{\infty})$ and the derivation of the desired chain-level equation \eqref{eq:ppff-and-i-chain-level} follows similar lines to the proof of Lemma \ref{lemma:i-chain-map}, and we omit further details.
\end{proof}

\subsubsection{Special Hamiltonians associated to a family of ball embeddings}
\label{sec:spec-hamilt-assoc}

In this section, we will describe a particular choice of data $H_{\eta,t}'$, $\eta\in N$, associated to a lift of $f$ to $\mathfrak{B}(a,\Omega)/U(n)$.

Recall from \eqref{eq:special-hamiltonian} in \S\ref{sec:system-of-hamiltonians} the Hamiltonian function $H_{c,\delta,\epsilon,\eta}$ which has a minimum located at the center of the ball $B_{\eta}$. The parameters $c,\delta,\epsilon$ are explained in \S\ref{sec:evaluation-maps-ball}. We assume that $\epsilon a>c(1+\delta/2)$ as this ensures the evaluation map is defined; see \S\ref{sec:defin-evaluation-map}.

Given a pseudogradient $P$ on $N$, the family $\eta\mapsto H_{c,\delta,\epsilon,\eta}$ is not valid data for the family Floer complex, since it is probably not constant in neighborhoods of the zeros of $P$. We correct for this by modifying $F$ as follows: simply precompose $F$ using a smooth map $N\to N$ which is close to the identity in the $C^{0}$ distance (associated to a Riemannian metric $g$) and is constant on neighborhoods of the zeros of $P$. Provided the $C^{0}$ distance is small enough, the modified $F$ is homotopic to the original $F$.

Fix an almost complex structure $J$. We define our family as:
\begin{equation*}
  H'_{\eta,t}=H_{c,\delta,\epsilon,\eta}+\kappa_{\eta,t},
\end{equation*}
where $\kappa_{\eta,t}$ vanishes in the ball $B_{\eta}$, is supported in $\Omega(1+\delta)$, and is locally $\eta$-independent whenever $\eta$ is in a neighborhood of the zeros of $P$. We require that $(P,H'_{\eta,t},J)$ is admissible for defining the family Floer complex (this can be achieved if $\kappa_{t}$ is chosen generically).

It will be important when considering action filtrations on $\mathrm{CFF}(P,H_{\eta,t}',J)$ to make the following quantity very small:
\begin{equation}\label{eq:important-quantity}
  \max_{\eta\in N} \norm{\partial_{\eta}{H_{\eta,t}'}}_{g}\times (\text{max $g$-length of flow lines of $P$}).
\end{equation}
This quantity can be made arbitrarily small by picking the pseudogradient $P$ to have only very short flow lines;\footnote{The construction of a pseudogradient with only very short flow lines is an exercise left for the reader. For a similar result see \cite[Lemma 2.6]{eliashberg-pancholi}.} we note the size of $\partial_{\eta}H_{\eta,t}'$ is uniformly controlled during the construction (for each $P$ one can chose a modification of $F$ so that the $\eta$ derivative of $H'_{c,\delta,\epsilon,\eta}$ is bounded independently of $P$).

\subsubsection{The family BV-operator}
\label{sec:family-BV-operator}

The goal in this subsection is to construct the operator $\Delta_{\mathrm{FF}}$. At the end, we will analyze how $\Delta_{\mathrm{FF}}$ acts on the special family $H_{\eta,t}'$ introduced in \S\ref{sec:spec-hamilt-assoc}.

Let $(P,H_{\eta,t},J)$ be admissible for defining the family Floer complex. The definition of $\Delta_{\mathrm{FF}}$ is straightforward; we define a connection one-form $\mathfrak{a}$ on the family $\mathscr{P}(\eta_{1},\eta_{0})\times \R/\Z\times \Sigma\times W$, where $\Sigma$ is the cylinder, by the formula: $$\mathfrak{a}_{\pi,\theta,s,t}=[(1-\beta(s))H_{\pi(s),t+\theta}+\beta(s)H_{\pi(s),t}]\d t.$$

Using the almost complex structure $J$, and a generic perturbation one-form $\mathfrak{p}$, we have an associated moduli space $\mathscr{M}(\eta_{1},\eta_{0})$ for each pair $\eta_{1},\eta_{0}$. Counting the rigid elements produces a map $\Delta_{\mathrm{FF},\eta_{0},\eta_{1}}:\mathrm{CF}(H_{\eta_{0},t},J)\to \mathrm{CF}(H_{\eta_{1},t},J)$, and we define:
\begin{equation*}
  \Delta_{\mathrm{FF}}(\gamma_{0}\otimes \eta_{0})=\sum \Delta_{\mathrm{FF},\eta_{0},\eta_{1}}(\gamma_{0})\otimes \eta_{1},
\end{equation*}
where the sum is over all zeros $\eta_{1}$. Similar arguments to those in \S\ref{sec:from-HF-to-HFF} and \S\ref{sec:family-pair-of-pants} prove that $\Delta_{\mathrm{FF}}$ is a chain map $\mathrm{CFF}(P,H_{\eta,t},J)\to \mathrm{CFF}(P,H_{\eta,t},J)$, and the chain homotopy class is independent of the perturbation term $\mathfrak{p}$.

As with $\ast_{\mathrm{FF}}$, one has $\Delta_{\mathrm{FF}}\circ i=i\circ \Delta$ as maps $\mathrm{HF}(H_{t},J)\to \mathrm{HFF}(P,H_{\eta,t},J)$, provided that $H_{t},H_{\eta,t}$ have the same slope $c$.

In the rest of this subsection, we will analyze how the map $\Delta_{\mathrm{FF}}$ acts on the specific family $H_{\eta,t}'$ constructed in \S\ref{sec:spec-hamilt-assoc}. We will show:

\begin{proposition}\label{proposition:BV-family}
  If the length of flow lines of $P$ are sufficiently short, and the perturbation one-form $\mathfrak{p}$ and the perturbation $\kappa_{\eta,t}$ used in the definition of $H_{\eta,t}'$ are sufficiently small, then the following holds: any chain:
  \begin{equation*}
    \sum_{i=1}^{k} \gamma_{i}\otimes \eta_{i}
  \end{equation*}
  in the output of $\Delta_{\mathrm{FF}}$ is such that $\gamma_{i}$, $i=1,\dots,k$, is not the center of $B_{\eta_{i}}$.
\end{proposition}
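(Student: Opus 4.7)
The plan is to combine an action filtration argument with an adiabatic compactness argument. First I would establish the action gap for $H_{\eta,t}'$: by Lemma \ref{lemma:construction-of-D} applied to the autonomous piece $H_{c,\delta,\epsilon,\eta}$, the central orbit has action $c(1+\delta/2)-\epsilon a$, while every other $1$-periodic orbit has action at least $\min\set{c(1+\delta/2),0}=0$. Under the hypothesis $\epsilon a>c(1+\delta/2)$ there is a definite gap $g:=\epsilon a-c(1+\delta/2)>0$. Since $\kappa_{\eta,t}$ vanishes on $B_{\eta}$, the central orbit remains a non-degenerate orbit of $H_{\eta,t}'$ with unchanged action, and for $\norm{\kappa}_{C^0}$ small this gap persists for the actions of all $1$-periodic orbits of $H_{\eta,t}'$.

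Next I would apply the energy estimate Lemma \ref{lemma:energy-estimate} to a finite-energy element $(\pi,\theta,u)$ of $\mathscr{M}(\eta_1,\eta_0)$. The connection one-form
\begin{equation*}
  \mathfrak{a}_{\pi,\theta,s,t}=[(1-\beta(s))H_{\pi(s),t+\theta}'+\beta(s)H_{\pi(s),t}']\,\d t
\end{equation*}
has curvature supported in $\Omega(1+\delta)$, because outside that compact region $H_{\eta,t}'=cr$ and the BV interpolation is trivial. A direct computation bounds the integral of the curvature two-form over any section by a constant times $\norm{\kappa}_{C^0}$ plus the quantity \eqref{eq:important-quantity}, which can be made strictly less than $g/2$ by choosing the pseudogradient $P$ with very short flow lines and by taking $\kappa$ and the Floer perturbation $\mathfrak{p}$ sufficiently small. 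The energy estimate then gives $\mathcal{A}(\gamma_1)\ge\mathcal{A}(\gamma_0)-\mathrm{const}(\mathfrak{r})$, and if $\gamma_1$ is the center of $B_{\eta_1}$ this forces $\mathcal{A}(\gamma_0)<c(1+\delta/2)-\epsilon a+g/2<0$, which combined with the action gap implies $\gamma_0$ is the central orbit of $B_{\eta_0}$.

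Finally I would rule out rigid center-to-center cylinders by adiabatic compactness. Suppose for contradiction that for a sequence of admissible parameters with $\norm{\kappa_n}_{C^0}\to 0$ and the flow lines of $P_n$ shrinking to zero length, there exist rigid elements $(\pi_n,\theta_n,u_n)$ of $\mathscr{M}(\eta_1^n,\eta_0^n)/\R$ joining centers; the previous step gives $E(u_n)\to 0$. Propositions \ref{proposition:gradient-bound} and \ref{proposition:maximum-principle} extract a subsequence with $\pi_n$ collapsing to a constant flow line at some $\eta_{*}\in N$, $\theta_n\to\theta_{*}$, and $u_n\to u_\infty$ in $C^{\infty}_{\mathrm{loc}}$, where $u_\infty$ is a BV cylinder for the autonomous Hamiltonian $H_{c,\delta,\epsilon,\eta_{*}}$ with both asymptotics at the center and zero energy, hence constant. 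For this autonomous system the moduli space of BV cylinders from center to center (modulo $s$-translation) is precisely the $S^{1}$-family of constants parametrized by $\theta$, whose $0$-dimensional rigid component is empty; a standard implicit function / universal moduli space argument then shows that generic sufficiently small perturbations produce no rigid elements near this $S^{1}$-family, contradicting the existence of $(\pi_n,\theta_n,u_n)$ for large $n$.

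The hard part will be the last regularity assertion. To verify that the $S^{1}$-family of constant cylinders is cleanly cut out by the BV-linearized operator, one must show that the linearized Floer operator at the constant center orbit is an isomorphism: a short Fourier computation on $\R\times S^{1}$, using the non-degeneracy of the center as a Morse minimum with Hessian of size $2\pi\epsilon$ in local Darboux coordinates, shows this holds provided $\epsilon$ is not an integer (which we may arrange). Once this is established, the kernel of the total BV-linearized operator is exactly the tangent to the $\theta$-direction, so by the implicit function theorem the $S^{1}$-family persists as a $1$-dimensional family under small perturbation, without producing $0$-dimensional branches.
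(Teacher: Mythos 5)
Your proposal is correct in outline and follows the same overall skeleton as the paper's proof: establish the action gap via Lemma \ref{lemma:construction-of-D}, use the curvature-controlled energy estimate together with short flow lines to reduce the question to ruling out rigid center-to-center solutions, and then use an adiabatic compactness argument plus an index/transversality consideration. The genuinely different part is the last step. The paper invokes the abstract dimension formula $1+\dim\mathscr{P}(\eta_1,\eta_0)+2[u]\cdot\mathfrak{s}^{-1}(0)$ involving a section $\mathfrak{s}$ of $\det_{\mathbb C}TW$, observing the Conley--Zehnder contributions cancel in the trivialization induced by $\mathfrak{s}$; it then reduces the problem to showing $[u]=0$. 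You instead compute the Fredholm index directly at the constant solution via a Fourier expansion and invoke the implicit function theorem. These compute the same number, and the Fourier route is valid, but notice that it only buys you a cleaner index computation at the limit constant: to transfer the index to the actual solutions $u_n$ you still implicitly use that $[u_n]=0$, since the Fredholm index depends on the homology class of the cylinder after compactification. So the paper's key topological step --- arguing that the loops $t\mapsto u(s,t)$ have small diameter and hence $u$ bounds a three-ball --- is not actually avoided, it is merely folded into the statement ``$u_n\to$ constant in a norm strong enough for the IFT and for homology class constancy.'' That uniform control is the real content, and your $C^\infty_{\mathrm{loc}}$ statement is not quite strong enough to conclude without making the argument near the ends explicit.

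Two smaller points. First, your assertion that ``the kernel of the total BV-linearized operator is exactly the tangent to the $\theta$-direction'' only holds when $\eta_1=\eta_0$; for $\eta_1\ne\eta_0$ the kernel also contains the $\dim\mathscr{P}(\eta_1,\eta_0)$-dimensional $\pi$-direction, and the index is $1+\dim\mathscr{P}(\eta_1,\eta_0)\ge 2$. This does not harm the conclusion (the index is still $\ge 1$, so rigid solutions are impossible), but it should be stated, since you explicitly allow $\pi_n$ to be nonconstant in the compactness argument. Second, the moduli space $\mathscr{M}(\eta_1,\eta_0)$ used to define $\Delta_{\mathrm{FF}}$ is \emph{not} quotiented by $\R$-translation (only $d_{\mathrm{FF}}$ is), so the ``$/\R$'' in your statement of the contradiction hypothesis is a slip; removing it does not change the argument.
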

\begin{proof}
  First, we show that, if $\gamma_{0}$ is the center of the ball $B_{\eta_{0}}$, then $\Delta_{\mathrm{FF}}(\gamma_{0}\otimes \eta_{0})$ does not contain any term $\gamma_{i}\otimes \eta_{i}$ where $\eta_{i}$ is the center of the ball $B_{\eta_{i}}$. The key idea is to exploit the dimension of the moduli space $\mathscr{M}(\eta_{1},\eta_{0})$.

  Suppose there exists a solution $(\pi,u)\in \mathscr{M}(\eta_{1},\eta_{0})$ which joins the center $x_{1}$ of the ball $B_{\eta_{1}}$ at the left end to the center $x_{0}$ of the ball $B_{\eta_{0}}$ at the right end. Pick a generic section $\mathfrak{s}$ of $\det_{\C}TW$ which is non-vanishing at $x_{0}$ and $x_{1}$, and moreover is homotopic through non-vanishing sections to the standard trivialization of $\det_{\C}TW_{x_{0}}$ and $\det_{\C}TW_{x_{1}}$; this is trivial if $x_{0},x_{1}$ are different points, and follows by an easy construction when $x_{0}=x_{1}$, provided we assume that $\pi$ is short enough that the balls $B_{\pi(s)}$ all contain the point $x_{0}=x_{1}$.

  As is well-known (see, e.g., \cite{cant-thesis-2022}), the zero set $\mathfrak{s}^{-1}(0)$ is Poincaré dual to the first Chern class of $W$, and the dimension of $\mathscr{M}(\eta_{1},\eta_{0})$ near $(\pi,u)$ is:
  \begin{equation*}
    1+\dim \mathscr{P}(\eta_{1},\eta_{0})+2[u]\cdot \mathfrak{s}^{-1}(0);
  \end{equation*}
  the Conley-Zehnder indices do not appear because the orbits at the center of the ball have the same indices when they are computed using the homotopy class of trivializations induced by $\mathfrak{s}$. Thus, if we can prove that $u$ is null-homologous (bearing in mind that $u$ is a topologically a sphere), then $(\pi,u)$ cannot lie in a rigid component of $\mathscr{M}(\eta_{1},\eta_{0})$; this gives the desired result. The rest of the proof is dedicated to showing that $u$ must be null-homologous provided the perturbations are small enough, and the flow lines of the pseudogradient are short enough.

  To prove that the sphere $u$ is null-homologous we will argue that the diameter of each loop $t\mapsto u(s,t)$ is smaller than the injectivity radius, and hence $u$ bounds a three-dimensional ball. To show this, we will analyze the equation, and estimate the energy of $u$ in terms of $\mathfrak{p},\kappa_{t}$ and the length of flow lines.

  Unpacking the definitions, one sees that $u$ solves:
  \begin{equation*}
    \bd_{s}u+J(u)(\bd_{t}u-X_{\pi(s)}(u))=V_{\pi,s,t}(u),
  \end{equation*}
  where $V_{\pi,s,t}$ is due to the perturbation one-form $\mathfrak{p}$ and perturbation term $\kappa$, and $X_{\eta}$ is the Hamiltonian vector field for $H_{c,\delta,\epsilon,\eta}$.

  By construction, $V_{\pi,s,t}$ is compactly supported in the cylinder, and can be taken to be as small as desired. The energy integral of $u$ is equal to:
  \begin{equation*}
    \int \pd{H_{c,\delta,\epsilon,\pi(s)}}{s}(u(s,t))d sdt+\mathrm{Error},
  \end{equation*}
  where the error term depends only on $\kappa_{t},\mathfrak{p}$, and can be made as small as desired. We then estimate:
  \begin{equation*}
    \abs{\partial_{s}{H_{c,\delta,\epsilon,\pi(s)}}}\le \max_{\eta}\abs{\partial_{\eta}H_{c,\delta,\epsilon,\eta}}_{g}\abs{\pi'(s)}_{g}.
  \end{equation*}
  Integrating this over the cylinder, one concludes the energy is bounded by the error plus the length of $\pi$ times a uniform constant; see the discussion in \S\ref{sec:spec-hamilt-assoc}. Since we assume the length of $\pi$ is short, we can assume the energy of $u$ is as small as desired.

  The proof is finished by appealing to a compactness argument. Suppose we have a sequence of solutions $u_{n}$ of the above form, with perturbation terms $\mathfrak{p}_{n},\kappa_{n,\eta,t}$, lengths of flow lines of $P_{n}$, and, consequently, energies all tending to zero. Because $\omega$ is tamed by $J$, we obtain:
  \begin{equation*}
    E_{n}=\int \abs{\bd_{t}u_{n}-X_{\pi_{n}(s),t}(u_{n})}^{2}dsdt\to 0\text{ as }n\to\infty,
  \end{equation*}
  as is well-known in estimates of the Floer theory energy integrals; see, e.g., \cite[pp.\,12]{salamon-notes-1997}.

  By standard bubbling analysis, we can assume $\abs{\bd_{s}u_{n}}$ and $\abs{\bd_{t}u_{n}}$ are uniformly bounded, say by $C>0$.

  Consider the loops $\gamma_{n,s}(t)=u_{n}(s,t)$. For any $s$, there must be a nearby point $s'$ such that: $$\abs{s-s'}\le E_{n}^{1/2}\text{ and }\int_{\R/\Z} \abs{\bd_{t}\gamma_{n,s'}(t)-X_{\pi_{n}(s'),t}(\gamma_{n,s'}(t))}^{2}dt\le E_{n}^{1/2}.$$ By the gradient bound, we conclude that $\gamma_{n,s}(t)$ lies in the $CE_{n}^{1/2}$ neighborhood of $\gamma_{n,s'}(t)$. Thus it suffices to prove that, if $\gamma_{n}$ is a sequence of loops sampled from $u_{n}(s,t)$ such that:
  \begin{equation*}
    \int_{\R/\Z}\abs{\bd_{t}\gamma_{n}(t)-X_{\eta_{n},t}(\gamma_{n}(t))}^{2}dt \le E_{n}^{1/2},
  \end{equation*}
  for some $\eta_{n}\in N$, then $\gamma_{n}(t)$ has a subsequence which converges uniformly to a point; this will imply all loops sampled from $u$ have a small enough diameter (for $n$ sufficiently large).

  By standard bootstrapping for ODEs, similar to the argument in \cite[\S2.2.2]{brocic-cant-JFPTA-2024}, it follows that a subsequence $\gamma_{n}(t)$ converges in the $C^{1}$ topology to an orbit of $X_{\eta,t}$ for some $\eta\in N$. The orbits of $X_{\eta,t}$ are either constants, or have action uniformly far from the action of the center of the ball (see Lemma \ref{lemma:construction-of-D}). Assume that $\gamma_{n}$ does not converge to a point; it then follows that $\gamma_{n}(t)$ has action far from the action of the center of the ball as $n\to\infty$. Since $\gamma_{n}(t)$ was sampled from $u_{n}(s,t)$, and $u_{n}$ joins two centers of balls, we conclude that $u_{n}$ must have a minimum positive amount of energy (using the well-known principle that the energy integral governs the change in action; the presence of the perturbation terms in the energy integral will not ruin the application of this principle). This minimum amount of energy of $u_{n}$ contradicts our assumption, and the proof is complete in this case.

  The second thing to show is that $\Delta_{\mathrm{FF}}(\gamma_{0}\otimes \eta_{0})$ does not contain any term $\gamma_{i}\otimes \eta_{i}$ where $\gamma_{i}$ is the center of $B_{\eta_{i}}$ provided $\gamma_{0}$ is \emph{not} the center of $B_{\eta_{0}}$. This case is much easier (for sufficiently small perturbations and lengths of flow lines), since the action of the center has the lowest action among all orbits, and $\Delta_{\mathrm{FF}}$ increases actions (up to an error which becomes as small as desired as the perturbations and lengths of flow lines tend to zero).
\end{proof}

\subsubsection{Evaluation map associated to a family of ball embeddings}
\label{sec:eval-map-assoc}

In this section we construct a map $\mathfrak{e}_{\mathrm{FF}}:\mathrm{HFF}(P,H_{\eta,t},J)\to \Z/2\Z$ using a lift of $f$ to $\mathfrak{B}(a,\Omega)/U(n)$ in a similar way to \S\ref{sec:evaluation-maps-ball}.

We assume, as in \S\ref{sec:evaluation-maps-ball}, that $\epsilon a>c(1+\delta/2)$, and that the family $H_{\eta,t}$ has slope at most $c$.

Consider the family $\mathscr{P}'(\eta_{0})$ of flow lines $\pi$ of $-\beta(s)P$ which are asymptotic at the positive end to the zero $\eta_{0}$. Each $\pi\in \mathscr{P}'(\eta_{0})$ is determined by its terminal point $\eta'=\pi(0)$, and the set of such terminal points is the stable manifold of $\eta_{0}$.

Define a connection one-form $\mathfrak{a}$ on $\mathscr{P}'(\eta_{0})\times \Sigma\times W$ where $\Sigma$ is the cylinder by the equation:
\begin{equation*}
  \mathfrak{a}_{\pi,s,t}=(1-\beta(s))H_{c,\delta,\epsilon,\pi(s)}\d t+\beta(s)H_{\pi(s),t}\d t;
\end{equation*}
in words, $\mathfrak{a}$ is a continuation data from $H_{\eta_{0},t}$ to $H_{c,\delta,\epsilon,\eta'}$ where $\eta'=\pi(0)$. Note that, as in \S\ref{sec:defin-evaluation-map}, this connection one-form has a varying asymptotic at the left end, and so some care is needed when considering the associated moduli space.

Fixing a generic perturbation term $\mathfrak{p}$ and an almost complex structure $J$, we consider the moduli space $\mathscr{M}(\eta_{0})$ of solutions $(\pi,u)$ whose left asymptotic is the central orbit of $H_{c,\delta,\epsilon,\pi(0)}$. This moduli space has similar compactness and regularity properties as if the asymptotics of $\mathfrak{a}$ were fixed, essentially because the left asymptotic orbit is always non-degenerate (see the discussion in \S\ref{sec:defin-evaluation-map}).

Define:
\begin{equation*}
  \mathfrak{e}_{\mathrm{FF}}(\gamma_{0}\otimes \eta_{0}):=\sum (\text{rigid elements in $\mathscr{M}(\eta_{0})$ whose right asymptotic is $\gamma_{0}$}).
\end{equation*}
The main structural result is:
\begin{lemma}
  The map $\mathfrak{e}_{\mathrm{FF}}:\mathrm{CFF}(P,H_{\eta,t},J)\to \Z/2\Z$ is a chain map, and the chain homotopy class is independent of the generic perturbation term $\mathfrak{p}$.
\end{lemma}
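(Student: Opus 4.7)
The plan is to run the standard Floer-theoretic cobordism argument on the one-dimensional components of $\mathscr{M}(\eta_0)$, exploiting the fact — already established in Lemma \ref{lemma:construction-of-D} — that no non-constant Floer cylinders for $H_{c,\delta,\epsilon,\eta}$ break off at the left end. First, I would show that the relevant compactness and gluing results from \S\ref{sec:general-form-floers} apply despite the $\eta$-dependence of the left asymptotic. As in the discussion at the start of \S\ref{sec:defin-evaluation-map}, since the central orbit of $H_{c,\delta,\epsilon,\pi(0)}$ is non-degenerate (with a nondegeneracy independent of $\pi$), the moduli space $\mathscr{M}(\eta_0)$ behaves as if the left asymptotic were fixed; in particular, a priori energy and gradient estimates hold via Lemma \ref{lemma:energy-estimate} and Proposition \ref{proposition:gradient-bound}, since the curvature of $\mathfrak{a}_{\pi,s,t}$ is controlled and the action of the central orbit is uniformly bounded.

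Next, I would enumerate the non-compact ends of the one-dimensional components $\mathscr{M}_1(\eta_0,\gamma_0)$ whose right asymptotic is a fixed orbit $\gamma_0$ of $H_{\eta_0,t}$. There are three potential types of ends: (a) Floer breaking at the right puncture, yielding a rigid element of $\mathscr{M}(\eta_0)$ glued to a rigid Floer cylinder for $(H_{\eta_0,t},J)$; (b) Floer breaking at the left puncture; (c) Morse-theoretic breaking of the flow line $\pi_n$, in which $\pi_n$ converges to a broken configuration $(\pi_1,\pi_0) \in \mathscr{P}(\eta_0,\eta_0')\times \mathscr{P}'(\eta_0')$ for some zero $\eta_0'$ of strictly higher index, with the Floer cylinder splitting accordingly into a rigid element of $\mathscr{M}(\eta_0')$ on the left and a rigid-up-to-translation element defining the coefficient of the component $d_j$ of $d_{\mathrm{FF}}$ on the right. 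Case (b) is ruled out exactly as in the proof of Lemma \ref{lemma:e-map-only-one-breaking}: by Lemma \ref{lemma:construction-of-D} there are no non-constant finite-energy Floer cylinders for $H_{c,\delta,\epsilon,\eta}$ whose left asymptotic is the central orbit. Cases (a) and (c) together account for $\mathfrak{e}_{\mathrm{FF}}(d_{\mathrm{FF}}(\gamma_0 \otimes \eta_0))$, as each coefficient appearing there is built from exactly such configurations. The standard parametric gluing theorem (compare Lemma \ref{lemma:i-chain-map} and Figure \ref{fig:lemma-i-chain-map-solution-near-breaking}) ensures that every such broken configuration arises as a unique end, so that the total count of ends equals $\mathfrak{e}_{\mathrm{FF}}(d_{\mathrm{FF}}(\gamma_0 \otimes \eta_0))$ modulo 2; being the count of ends of a compact one-manifold with boundary, this count is zero, giving $\mathfrak{e}_{\mathrm{FF}} \circ d_{\mathrm{FF}} = 0$.

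For the independence of the chain homotopy class on $\mathfrak{p}$, I would use the usual parametric argument. Given two generic perturbations $\mathfrak{p}_0,\mathfrak{p}_1$, pick a smooth path $\mathfrak{p}_\tau$, $\tau \in [0,1]$, joining them, and consider the parametric moduli space $\mathscr{M}^{\mathrm{param}}(\eta_0)$ of tuples $(\tau,\pi,u)$ where $(\pi,u)$ solves the $\mathfrak{p}_\tau$-perturbed equation. For generic choice of path, this moduli space is cut transversally with dimension one higher than before, and its compactness follows from the same considerations as above (the crucial exclusion of left-end breaking depending only on Lemma \ref{lemma:construction-of-D}, hence uniformly in $\tau$). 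Counting rigid elements of the parametric moduli space defines an operator $K:\mathrm{CFF}(P,H_{\eta,t},J)\to \Z/2\Z$, and a standard end-counting argument for the one-dimensional components of the parametric moduli space yields
\begin{equation*}
  \mathfrak{e}_{\mathrm{FF}}^{(1)} - \mathfrak{e}_{\mathrm{FF}}^{(0)} = K \circ d_{\mathrm{FF}}
\end{equation*}
modulo 2, i.e., the two definitions of $\mathfrak{e}_{\mathrm{FF}}$ are chain homotopic.

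The main obstacle is purely bookkeeping: verifying that no subtle additional breaking phenomenon occurs at the left end in the parametric setting. This follows from the same soft action-filtration argument used in Lemma \ref{lemma:e-map-only-one-breaking}, since Lemma \ref{lemma:construction-of-D} gives a uniform gap between the action of the central orbit and the actions of any other $1$-periodic orbit or Reeb-type orbit of $H_{c,\delta,\epsilon,\eta}$, and this gap is independent of $\eta$ and $\tau$.
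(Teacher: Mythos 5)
Your proof is correct and fills in exactly what the paper's one-line reference to \S\ref{sec:defin-evaluation-map} and \S\ref{sec:from-HF-to-HFF} implicitly asks for. One small notational slip: in case (c) the broken flow-line segment should lie in $\mathscr{P}(\eta_0',\eta_0)$ (higher-index zero on the left, matching the paper's convention that $\eta_1$ in $\mathscr{P}(\eta_1,\eta_0)$ is the left asymptotic), rather than $\mathscr{P}(\eta_0,\eta_0')$; your surrounding description of the breaking is otherwise consistent.
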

\begin{proof}
  The argument is the same as in \S\ref{sec:defin-evaluation-map}, with the modifications needed to work with family Floer cohomology used in, e.g., \S\ref{sec:from-HF-to-HFF}.
\end{proof}

We now show that $\mathfrak{e}_{\mathrm{FF}}$ is compatible with the $i$-map.
\begin{lemma}
  Fix $H_{t}$ so that $(H_{t},J)$ is admissible for defining the Floer complex, the same slope as $H_{\eta,t}$, so that the $i$-map $\mathrm{CF}(H_{t},J)\to \mathrm{CFF}(P,H_{\eta,t},J)$ is defined. Then:
  \begin{equation*}
    \mathfrak{e}_{\mathrm{FF}}\circ i=\mathfrak{e}
  \end{equation*}
  as maps $\mathrm{HF}(H_{t},J)\to\Z/2\Z$.
\end{lemma}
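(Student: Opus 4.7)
The plan is to prove the identity via a standard Floer-theoretic gluing argument that realizes $\mathfrak{e}$ and $\mathfrak{e}_{\mathrm{FF}}\circ i$ as two asymptotic configurations of a single one-parameter family of moduli spaces.

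First, I would unwind the composition $\mathfrak{e}_{\mathrm{FF}}\circ i$ at the chain level. For each zero $\eta_0$ of $P$, its contribution to $\mathfrak{e}_{\mathrm{FF}}\circ i(\gamma)$ counts pairs $((\pi,u_1),(\pi',u_2))$ where $\pi\in\mathscr{P}(\eta_0)$ and $u_1$ is a rigid $i$-cylinder from a $1$-periodic orbit $\gamma'$ of $H_{\eta_0,t}$ to the input orbit $\gamma$, while $\pi'\in\mathscr{P}'(\eta_0)$ and $u_2$ is a rigid $\mathfrak{e}_{\mathrm{FF}}$-cylinder from the central orbit of $H_{c,\delta,\epsilon,\pi'(0)}$ to $\gamma'$. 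By the standard Floer gluing construction, the mod $2$ count of such configurations equals the count of rigid solutions for a glued equation parameterized by $\mathscr{P}'(\eta_0)\times\mathscr{P}(\eta_0)$, in which a neck of length $R\gg 0$ is inserted at the intermediate orbit $\gamma'$, and the broken Morse configuration $(\pi',\pi)$ is replaced by a single trajectory passing close to $\eta_0$.

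Next, I would exhibit this glued parameter space as a stretching of the parameter space $N$ used in defining $\mathfrak{e}$. The Morse--Smale property of $P$ guarantees that, for every $R$ sufficiently large, the Morse gluing of $(\pi',\pi)$ at the zero $\eta_0$ produces a smooth trajectory in $N$, and letting $\eta_0$ vary produces an open dense subset of $N$. Evaluating at the central time $s=0$ of the glued trajectory, one obtains a map to $N$ that extends to a diffeomorphism in a neighbourhood of $R=\infty$. Pulling back the evaluation data of $\mathfrak{e}$ along these glued trajectories provides a family of equation data $\mathfrak{a}_R$ on $N\times\Sigma\times W$, $R\in[0,\infty]$, such that $\mathfrak{a}_0$ is homotopic to the evaluation data used to define $\mathfrak{e}$, while the $R\to\infty$ limit of rigid solutions precisely recovers the configurations enumerated by $\mathfrak{e}_{\mathrm{FF}}\circ i$.

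Finally, I would analyze the one-dimensional component of the associated parametric moduli space over $R\in[0,\infty]$. Its $R=0$ boundary contributes $\mathfrak{e}(\gamma)$, its $R=\infty$ breaking contributes $\mathfrak{e}_{\mathrm{FF}}\circ i(\gamma)$, and interior Floer/Morse breakings give chain-homotopy terms of the form $d_{\mathrm{FF}}\circ K+K\circ d$. Counting non-compact ends of a compact one-manifold modulo $2$ then yields the chain-level identity
\begin{equation*}
\mathfrak{e}_{\mathrm{FF}}\circ i+\mathfrak{e}=d_{\mathrm{FF}}\circ K+K\circ d\pmod{2},
\end{equation*}
which descends to the required equality on homology. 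The main technical obstacle is the joint parametric Floer--Morse gluing at $R=\infty$: one must simultaneously glue along the intermediate orbit $\gamma'$ and along the Morse--Smale trajectory at $\eta_0$, and verify that this gluing establishes a bijection between a neighbourhood of $\infty$ in the parametric moduli space and the rigid configurations contributing to $\mathfrak{e}_{\mathrm{FF}}\circ i$. This is a mild extension of the parametric gluing analysis in \cite{salamon-notes-1997}, and is essentially the same technical ingredient already invoked in the proof of Lemma \ref{lemma:i-chain-map}.
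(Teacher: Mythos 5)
Your proposal takes essentially the same approach as the paper: interpolating over a parameter $R\in[0,\infty]$ with the equation for $\mathfrak{e}$ at $R=0$ (where the gradient flow is switched off, so the parameter-space flow lines are constant and the equation collapses to the evaluation data of \S\ref{sec:defin-evaluation-map}) and the configurations contributing to $\mathfrak{e}_{\mathrm{FF}}\circ i$ at $R=\infty$ (where the flow lines and Floer cylinders undergo simultaneous Morse and Floer breaking at a critical point $\eta_{0}$ and an intermediate orbit). The paper constructs this family directly over the space $\mathscr{P}''\cong\R\times N$ of flow lines of $-\beta(s)\beta(R-s)P$, rather than building it up by gluing the broken configurations as you phrase it, but this is a difference of exposition rather than substance; the essential technical point (parametric Floer--Morse gluing at $R=\infty$, with a perturbation term compatible with the degeneration) is the same in both and you correctly identify it as the main obstacle.

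One formal slip worth flagging: your chain-homotopy identity cannot contain a term $d_{\mathrm{FF}}\circ K$. Both $\mathfrak{e}$ and $\mathfrak{e}_{\mathrm{FF}}\circ i$ land in $\Z/2\Z$, so the homotopy $K$ is a map $\mathrm{CF}(H_{t},J)\to\Z/2\Z$ and the relation produced by counting ends of the one-dimensional parametric moduli space is simply $\mathfrak{e}_{\mathrm{FF}}\circ i+\mathfrak{e}=K\circ d \pmod 2$; the target is concentrated in a single degree with trivial differential, so there is no post-composition term. (If instead you meant $K$ to land in $\mathrm{CFF}$, the equation would fail to typecheck.) This does not affect the soundness of the argument, but the displayed formula as written is incorrect and should be repaired.
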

\begin{proof}
  The key is to consider the family $\mathscr{P}''$ of pairs $(R,\pi)$ where $\pi$ is a flow line of the vector field: $-\beta(s)\beta(R-s)P.$ Note that each such $\pi$ is locally constant outside of the interval $[0,R]$, and moreover $\pi$ is determined by the point $\pi(0)$ which is an arbitrary point of $N$. Thus $\mathscr{P}''$ is diffeomorphic to the product $\R\times N$, via the map $(R,\pi)\mapsto (R,\pi(0))$.

  Define a connection one-form $\mathfrak{a}$ on the family $\mathscr{P}''\times \Sigma\times W$ by:
  \begin{equation*}
    a_{\pi,s,t}=[(1-\beta(s))H_{c,\delta,\epsilon,\pi(s)}+\beta(s)(\beta(R-s)H_{\pi(s),t}+(1-\beta(R-s))H_{t})]\d t;
  \end{equation*}
  It is important to notice that, for $s\le R-1$, $\mathfrak{a}_{\pi,s,t}$ agrees with the connection $1$-form used to define $\mathfrak{e}_{\mathrm{FF}}$, and on $s\ge 1$, $\mathfrak{a}_{\pi,s,t}$ agrees with the $R$-translated version of the connection $1$-form used to define $i$.

  Fixing a perturbation term $\mathfrak{p}$, we can therefore consider the one-dimensional component of the moduli space $\mathscr{M}$ consisting of triples $(R,\pi,u)$.

  \emph{Claim}: for each generic number $R_{0}$, the fiber $\mathscr{M}(R_{0})$ of triples $(R_{0},\pi,u)$ is a zero-dimensional manifold, and counting the points in $\mathscr{M}(R_{0})$ defines a chain map $\mathrm{CF}(H_{t},J)\to \Z/2\Z$. The chain homotopy class of this map is independent of $R_{0}$.

  The claim is standard Floer theory, following similar arguments used in \S\ref{sec:defin-evaluation-map}, and we omit further discussion.

  The two crucial observations are that:
  \begin{enumerate}
  \item $\mathscr{M}(R_{0})$ is precisely the moduli space used to define $\mathfrak{e}$, provided $R_{0}\le 0$,
  \item as $R_{0}\to +\infty$, $\mathscr{M}(R_{0})$ ``breaks'' into configurations of rigid solutions $((\pi_{-},u_{-}),(\pi_{+},u_{+}))$, where $\pi_{-}\in \mathscr{P}'(\eta_{0})$ and $\pi_{+}\in \mathscr{P}(\eta_{0})$, and where $(\pi_{-},u_{-})$ contributes to $\mathfrak{e}_{\mathrm{FF}}$ and $(\pi_{+},u_{+})$ contributes to $i$.
  \end{enumerate}
  In this manner, we conclude that $\mathfrak{e}=\mathfrak{e}_{\mathrm{FF}}\circ i$, up to chain homotopy. One point which merits comment is that one should pick $\mathfrak{p}$ to be compatible with the breaking as $R_{0}\to \infty$. This completes the proof.
\end{proof}

We end this subsection with an analysis of how $\mathfrak{e}_{\mathrm{FF}}$ acts on $\mathrm{CFF}(P,H_{\eta,t}',J)$ for the special family $H_{\eta,t}'$ constructed in \S\ref{sec:spec-hamilt-assoc}.

\begin{proposition}\label{proposition:eFF-vanish-on-non-central}
  If the length of flow lines of $P$ are sufficiently short, and the perturbation one-form $\mathfrak{p}$ and the perturbation $\kappa_{\eta,t}$ used in the definition of $H_{\eta,t}'$ are sufficiently small, then the following holds:
  \begin{equation*}
    \mathfrak{e}_{\mathrm{FF}}(\sum_{i=1}^{k} \gamma_{i}\otimes \eta_{i})=0
  \end{equation*}
  provided each $\gamma_{i}$ is not the center of $B_{\eta_{i}}$. In the definition of $\mathfrak{e}_{\mathrm{FF}}$ we use the same family of ball embeddings as is used in the family $H_{\eta,t}'$.
\end{proposition}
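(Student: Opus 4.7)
The plan is to argue by action–energy inequality, exploiting the fact that the center of $B_{\eta}$ has action $c+c\delta/2-\epsilon a$ while, by Lemma \ref{lemma:construction-of-D}, every other 1-periodic orbit of $H_{c,\delta,\epsilon,\eta}$ has action at least $\min\{c+c\delta/2,\,c-b\}$ for $b\in\mathrm{Per}(X_r)\cap[0,c]$, and hence is separated from the center by a definite gap $\Delta:=\min\{\epsilon a-c\delta/2,\,\epsilon a-b_{\max}-c\delta/2\}>0$ under the standing assumption $\epsilon a>c(1+\delta/2)$ and the fact that $c$ is not in $\mathrm{Per}(X_r)$. After we pass to the $\kappa$-perturbed Hamiltonians $H'_{\eta,t}$, each orbit shifts by at most $O(\|\kappa\|_{C^0})$ in action, so the gap survives provided $\|\kappa\|$ is small. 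By linearity it suffices to prove $\mathfrak{e}_{\mathrm{FF}}(\gamma_0\otimes\eta_0)=0$ whenever $\gamma_0$ is not the center of $B_{\eta_0}$; equivalently, to show the moduli space $\mathscr{M}(\eta_0)$ of pairs $(\pi,u)$ with right asymptotic $\gamma_0$ is empty.

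To do this, I would apply the energy identity of Lemma \ref{lemma:energy-identity} to the connection 1-form $\mathfrak{a}_{\pi,s,t}=H^\pi_{s,t}\,dt$ with
\begin{equation*}
H^\pi_{s,t}(w)=(1-\beta(s))H_{c,\delta,\epsilon,\pi(s)}(w)+\beta(s)H'_{\pi(s),t}(w),
\end{equation*}
whose curvature potential is $\mathfrak{r}=(\partial_s H^\pi_{s,t})\,ds\wedge dt$. The non-negativity of $E(u)$ yields
\begin{equation*}
\mathcal{A}(\gamma_0)\;\le\;\mathcal{A}(x_{\pi(0)})\;+\;\int_\Sigma u^*\mathfrak{r}\;+\;(\text{perturbation corrections from }\mathfrak{p}),
\end{equation*}
where $x_{\pi(0)}$ is the central orbit and $\mathcal{A}(x_{\pi(0)})=c+c\delta/2-\epsilon a$.

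The curvature term splits, via the chain rule, into a piece proportional to $\beta'(s)\kappa_{\pi(s),t}$ and a piece proportional to $\langle \partial_\eta H^{(\cdot)}_{\cdots},\pi'(s)\rangle$. Integrating over $\Sigma$, the first contribution is bounded by $\|\kappa\|_{C^0}\cdot\int|\beta'|\,ds$ and the second by $\sup_\eta\|\partial_\eta H\|_g\cdot\mathrm{length}_g(\pi)$ — exactly the quantity \eqref{eq:important-quantity} in \S\ref{sec:spec-hamilt-assoc}, which can be made arbitrarily small by picking $P$ with short flow lines and $\kappa$ small. Similarly, the contribution of the generic perturbation one-form $\mathfrak{p}$ can be bounded in terms of $\|\mathfrak{p}\|_{C^0}$ using Lemma \ref{lemma:a-has-then-p-has}, and so is small. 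Combining these with the action gap $\Delta$ yields the inequality $\mathcal{A}(\gamma_0)\le\mathcal{A}(x_{\pi(0)})+(\text{small})<\mathcal{A}(\gamma_0)-\Delta+(\text{small})$, a contradiction once the perturbations are chosen smaller than $\Delta$. Hence $\mathscr{M}(\eta_0)$ is empty, and the claim follows.

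The main obstacle I anticipate is making the estimates genuinely uniform over all $\pi\in\mathscr{P}'(\eta_0)$ and over all $\eta_0$ simultaneously, since $\mathscr{P}'(\eta_0)$ is an open manifold of possibly unbounded extent. The point is that one must control $\sup_\eta\|\partial_\eta H_{c,\delta,\epsilon,\eta}\|_g$ and $\sup_\eta\|\partial_\eta H'_{\eta,t}\|_g$ by a constant independent of the pseudogradient $P$ — this is the key observation noted at the end of \S\ref{sec:spec-hamilt-assoc}. Granted this uniformity, the length of \emph{any} flow line of $-P$ on the compact manifold $N$ can be made arbitrarily small by replacing $P$ by a slower pseudogradient, and the argument closes.
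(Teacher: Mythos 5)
Your proposal is correct and follows essentially the same approach as the paper: show the energy of solutions contributing to $\mathfrak{e}_{\mathrm{FF}}$ can be made arbitrarily small (short flow lines, small $\kappa$, small $\mathfrak{p}$), conclude the action difference between the asymptotics is bounded by this error, and then use the action gap from Lemma \ref{lemma:construction-of-D} to rule out non-central right asymptotics. The paper's write-up is terser (it simply refers to Proposition \ref{proposition:BV-family}), but your identification of the key cancellation --- that $H^\pi_{s,t}=H_{c,\delta,\epsilon,\pi(s)}+\beta(s)\kappa_{\pi(s),t}$ has no large $\beta'(s)$-term precisely because the same family $H_{c,\delta,\epsilon,\eta}$ is used in both $H'_{\eta,t}$ and $\mathfrak{e}_{\mathrm{FF}}$ --- is exactly the observation the paper flags as crucial; the only minor slip is your expression for the gap $\Delta$ (the constant-orbit contribution is $\epsilon a$, not $\epsilon a - c\delta/2$), which does not affect the sign or the argument.
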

\begin{proof}
  The argument is similar (and easier) than Proposition \ref{proposition:BV-family}. One estimates that the energy of any solution $u$ appearing in the moduli space used to define $\mathfrak{e}_{\mathrm{FF}}$ is as small as desired; for this, it is important that the same family $H_{c,\delta,\epsilon,\eta}$ is used in the construction of $H_{\eta,t}'$ and in $\mathfrak{e}_{\mathrm{FF}}$. Because the energy governs the action difference, any solution $u$ joins asymptotic orbits $\gamma_{-},\gamma_{+}$ where the action of $\gamma_{-}$ is larger than the action of $\gamma_{+}$ up to an arbitrarily small error.

  This error can be taken to be smaller than the distance between the action at the center of the ball and the action of any other orbit (see Lemma \ref{lemma:construction-of-D}). In particular, since the center of the ball has the lowest action, we conclude that the only possible solutions contributing to $\mathfrak{e}_{\mathrm{FF}}$ must be asymptotic at their positive end to $\gamma_{0}\otimes \eta_{0}$ where $\gamma_{0}$ is the center of the ball $B_{\eta_{0}}$, as desired.
\end{proof}

\subsubsection{The family action filtration}
\label{sec:family-acti-filtr}

In Propositions \ref{proposition:BV-family} and \ref{proposition:eFF-vanish-on-non-central} we appealed to the actions of orbits in the context of family Floer cohomology, in the context when the flow lines of the pseudogradient $P$ are short. In this subsection, we will formalize such considerations by introducing special action filtrations on the family Floer complex depending on a parameter $\varepsilon>0$.

For $\varepsilon>0$, define the $\varepsilon$-\emph{action} of $\gamma_{0}\otimes \eta_{0}\in \mathrm{CFF}(P,H_{\eta,t},J)$ to be the number:
\begin{equation*}
  A_{\varepsilon}(P,H_{\eta,t};\gamma_{0}\otimes \eta_{0}):=\varepsilon \mathrm{Index}(P;\eta_{0})+\int H_{\eta_{0},t}(\gamma_{0}(t))dt -\int \gamma_{0}^{*}\lambda,
\end{equation*}
where, recall, $\lambda$ is the Liouville form on $W$. One defines the $\varepsilon$-action of a chain in $\mathrm{CFF}(P,H_{\eta,t},J)$ to be the minimum action of a generator which appears in the chain.

\begin{lemma}\label{lemma:dff-filtration}
  The differential $d_{\mathrm{FF}}$ increases the $\varepsilon$-action provided:
  \begin{equation*}
    \max_{\eta,t,u}\abs{\bd_{\eta}H_{\eta,t}(u)}_{g}dt\times \max \set{g\text{-length of a flow line of $P$}}<\varepsilon,
  \end{equation*}
  for some Riemannian metric $g$ on $N$.
\end{lemma}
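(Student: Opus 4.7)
The plan is to use the standard Floer-theoretic energy identity for cylinders with an $s$-dependent Hamiltonian, combined with the index decomposition $d_{\mathrm{FF}} = d_0 + d_1 + \cdots$ recalled just after the definition of $d_{\mathrm{FF}}$. Fix a contribution $(\pi, u) \in \mathscr{M}(\eta_1, \eta_0)$ to the coefficient of $\gamma_1 \otimes \eta_1$ in $d_{\mathrm{FF}}(\gamma_0 \otimes \eta_0)$; so $u$ is asymptotic to $\gamma_1$ at $s = -\infty$ and to $\gamma_0$ at $s = +\infty$, and solves Floer's equation for the $s$-dependent Hamiltonian $H_{\pi(s), t}$. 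A direct computation (cf.\ Lemma \ref{lemma:energy-identity}, applied to a cylinder with $\mathfrak{a} = H_{\pi(s),t}\d t$) gives the energy identity
\begin{equation*}
  E(u) = \mathcal{A}_{H_{\eta_1, t}}(\gamma_1) - \mathcal{A}_{H_{\eta_0, t}}(\gamma_0) - \int_{\R \times \R/\Z} (\partial_s H_{\pi(s), t})(u(s, t))\, ds\, dt,
\end{equation*}
where $\mathcal{A}_{H}(\gamma) = \int_0^1 H_t(\gamma(t))\, dt - \int \gamma^*\lambda$ is the Hamiltonian action used throughout the paper. Since $E(u) \geq 0$, this yields the one-sided inequality $\mathcal{A}_{H_{\eta_1, t}}(\gamma_1) - \mathcal{A}_{H_{\eta_0, t}}(\gamma_0) \geq \int (\partial_s H_{\pi(s), t})(u)\, ds\, dt$.

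The next step is to estimate the right-hand side. By the chain rule $(\partial_s H_{\pi(s), t})(u) = (d_\eta H_{\eta, t})|_{\eta = \pi(s),\, u}(\pi'(s))$, one has the pointwise bound $|(\partial_s H_{\pi(s), t})(u)| \leq |\partial_\eta H_{\eta, t}(u)|_g \cdot |\pi'(s)|_g$. Integrating separately in $s$ and $t$, the absolute value of the integral is at most
\begin{equation*}
  \varepsilon' := \Big(\max_{\eta, t, u}|\partial_\eta H_{\eta, t}(u)|_g\Big) \cdot \mathrm{length}_g(\pi),
\end{equation*}
which by hypothesis is strictly less than $\varepsilon$. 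Consequently $\mathcal{A}_{H_{\eta_1, t}}(\gamma_1) - \mathcal{A}_{H_{\eta_0, t}}(\gamma_0) \geq -\varepsilon'$.

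I would conclude with a short case analysis using the Morse--Smale property of $P$. The decomposition $d_{\mathrm{FF}} = d_0 + d_1 + \cdots$ stated in the paper shows that any rigid contributing $(\pi, u)$ has $\mathrm{Index}(\eta_1) - \mathrm{Index}(\eta_0) = i$ for some $i \geq 0$, with $i = 0$ forcing $\eta_1 = \eta_0$ and $\pi$ constant (so $d_0$ recovers the ordinary Floer differential tensored with the identity). When $\pi$ is constant, the integral vanishes identically, so $\mathcal{A}_{H_{\eta_0, t}}(\gamma_1) \geq \mathcal{A}_{H_{\eta_0, t}}(\gamma_0)$ from $E(u) \geq 0$, and since the Morse index contribution $\varepsilon\,\mathrm{Index}(\eta_0)$ is unchanged, $A_\varepsilon(\gamma_1 \otimes \eta_1) \geq A_\varepsilon(\gamma_0 \otimes \eta_0)$. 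When $i \geq 1$, the Morse index term picks up an honest gain,
\begin{equation*}
  A_\varepsilon(\gamma_1 \otimes \eta_1) - A_\varepsilon(\gamma_0 \otimes \eta_0) \geq \varepsilon \cdot i - \varepsilon' \geq \varepsilon - \varepsilon' > 0,
\end{equation*}
and the two estimates together give the claim. I do not anticipate a serious obstacle: the only delicate point is fixing the sign in the energy identity so that it matches the convention of the paper (namely, that the left end of a Floer cylinder has action at least that of the right end, as asserted in the remark after Lemma \ref{lemma:construction-of-D}); once that is pinned down, the result is essentially a one-line consequence of $E(u) \geq 0$, the chain rule, and the Morse--Smale dichotomy between constant and non-constant rigid flow lines.
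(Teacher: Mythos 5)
Your argument is essentially identical to the paper's: apply the energy identity for a Floer cylinder with $s$-dependent Hamiltonian, bound the curvature term by the chain rule $|\partial_s H_{\pi(s),t}(u)| \le |\partial_\eta H_{\eta,t}(u)|_g\,|\pi'(s)|_g$ integrated over the flow line, and split into the cases $\eta_1 = \eta_0$ (where $\pi$ is constant and the curvature term vanishes) and $\eta_1 \ne \eta_0$ (where the Morse index gain of at least $\varepsilon$ absorbs the error). The one discrepancy is the sign on the curvature term: with the paper's conventions the identity reads $E(u) = \mathcal{A}(\gamma_1) - \mathcal{A}(\gamma_0) + \int \partial_s H_{\pi(s),t}(u)\,ds\,dt$ rather than $-\int\partial_s H$ as you wrote, but since you then bound the integral in absolute value, your conclusion $\mathcal{A}(\gamma_1) - \mathcal{A}(\gamma_0) \ge -\varepsilon'$ is unaffected.
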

\begin{proof}
  Recall that the differential counts $\R$-families of finite-energy solutions $(\pi,u)$ where $\pi\in \mathscr{P}(\eta_{1},\eta_{0})$ is a flow line of $-P$ and $u:\R\times \R/\Z\to W$ solves:
  \begin{equation*}
    \bd_{s}u+J(u)(\bd_{t}u-X_{\pi(s),t}(u))=0,
  \end{equation*}
  where $X_{\eta,t}$ is the Hamiltonian vector field for $H_{\eta,t}$. The space of such solutions admits an $\R$-action by translating $\pi$ and $u$, and the differential counts the rigid elements in the quotient by this $\R$-action.

  Unpacking the definition from \S\ref{sec:energy-integral}, one sees the energy of $u$ is:
  \begin{equation*}
    E(u):=\int \omega(\bd_{s}u,\bd_{t}u-X_{\pi(s),t}(u))dsdt,
  \end{equation*}
  which is non-negative and can be computed as:
  \begin{equation*}
    E(u)=\omega(u)+\int_{0}^{1}H_{\eta_{1},t}(\gamma_{1}(t))dt-\int_{0}^{1}H_{\eta_{0},t}(\gamma_{0}(t))dt+\int\bd_{s}H_{\pi(s),t}(u)dsdt,
  \end{equation*}
  where $\gamma_{i}(t)$ are the asymptotic orbits of $u$. If $\eta_{1}=\eta_{0}$, then $\pi'(s)=0$ and $\bd_{s}H_{\pi(s),t}=0$. Otherwise, the last term can be estimated:
  \begin{equation*}
    \int \bd_{s}H_{\pi(s),t}(u)dsdt\le \max_{\eta,t,u}\abs{\bd_{\eta}H_{\eta,t}(u)}_{g}\int_{\R}\abs{\pi'(s)}_{g}ds\le \varepsilon.
  \end{equation*}

  Using $\omega=\d\lambda$, one obtains:
  \begin{equation*}
    0\le A_{\varepsilon}( \gamma_{1}\otimes\eta_{1})-A_{\varepsilon}(\gamma_{0}\otimes \eta_{0});
  \end{equation*}
  this is proved in two cases: first, if $\eta_{1}=\eta_{0}$, and, second, if $\eta_{1}\ne \eta_{0}$, in which case their index difference is at least $1$.
\end{proof}

Thus, for $\varepsilon$ satisfying the hypotheses of Lemma \ref{lemma:dff-filtration}, $(\mathrm{CFF}(P,H_{t},J),d_{\mathrm{FF}},A_{\varepsilon})$ is a (cohomologically) filtered complex, and so it makes sense to speak about the filtration level of a homology class (as the maximum action of all representative cycles).

Next we will show that $A_{\varepsilon}$ is compatible with $\Delta_{\mathrm{FF}}$ up to a bounded error.

\begin{lemma}\label{lemma:bvff-filtration}
  For any homology class $\alpha\in \mathrm{HFF}(P,H_{\eta,t},J)$, it holds that:
  \begin{equation*}
    -\int \max_{\eta,u}\abs{H_{\eta,t}(u)-H_{\eta,t+\theta}(u)}\d t\le A_{\varepsilon}(\Delta_{\mathrm{FF}}(\alpha))-A_{\varepsilon}(\alpha),
  \end{equation*}
  provided that $\varepsilon$ satisfies the hypotheses of Lemma \ref{lemma:dff-filtration}.
\end{lemma}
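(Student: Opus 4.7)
The plan is to prove the inequality at the chain level, for every matrix coefficient of $\Delta_{\mathrm{FF}}$, and then pass to cohomology classes exactly as in the proof of Lemma \ref{lemma:dff-filtration}. Concretely, it suffices to show that whenever $\gamma_1 \otimes \eta_1$ appears with nonzero coefficient in $\Delta_{\mathrm{FF}}(\gamma_0 \otimes \eta_0)$---witnessed by a rigid finite-energy solution $(\pi, \theta, u)$ of the BV equation---one has
\[
A_\varepsilon(\gamma_1 \otimes \eta_1) - A_\varepsilon(\gamma_0 \otimes \eta_0) \ge -\int_0^1 \max_{\eta, u} |H_{\eta, t}(u) - H_{\eta, t+\theta}(u)|\,dt.
\]

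The core step is an energy identity. Set $H_{\pi,\theta,s,t} := (1-\beta(s)) H_{\pi(s), t+\theta} + \beta(s) H_{\pi(s), t}$ and repeat the Stokes-plus-chain-rule computation from the proof of Lemma \ref{lemma:dff-filtration}, together with the change of variable $\tau = t + \theta$ that absorbs the reparametrization $\gamma_{\mathrm{out}}(t) = \gamma_1(t+\theta)$ of the left-hand asymptotic; this produces
\[
E(u) = \mathcal{A}(\gamma_1, H_{\eta_1}) - \mathcal{A}(\gamma_0, H_{\eta_0}) + \int (\bd_s H_{\pi,\theta,s,t})(u)\,ds\,dt,
\]
where $\mathcal{A}(\gamma, H) := \int_0^1 H(\gamma)\,dt - \int \gamma^*\lambda$. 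Nonnegativity of $E(u)$ then gives $\mathcal{A}(\gamma_1, H_{\eta_1}) - \mathcal{A}(\gamma_0, H_{\eta_0}) \ge -\int (\bd_s H)(u)\,ds\,dt$.

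Next, I will expand $\bd_s H$ into a ``cut-off'' part $\beta'(s)\bigl[H_{\pi(s), t}(u) - H_{\pi(s), t+\theta}(u)\bigr]$ and a ``flow-line'' part $\pi'(s) \cdot \bigl[(1-\beta)\nabla_\eta H_{\pi(s), t+\theta} + \beta \nabla_\eta H_{\pi(s), t}\bigr](u)$. Since $\beta$ is the standard monotone cut-off with $\beta(-\infty) = 0$ and $\beta(+\infty) = 1$, so that $\int_\mathbb{R} |\beta'(s)|\,ds = 1$, Fubini bounds the cut-off contribution in absolute value by exactly $\int_0^1 \max_{\eta, u} |H_{\eta, t}(u) - H_{\eta, t+\theta}(u)|\,dt$. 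The flow-line contribution is bounded by $\max_{\eta, t, u} |\bd_\eta H_{\eta, t}(u)|_g \cdot \int_\mathbb{R} |\pi'(s)|_g \,ds < \varepsilon$ by the hypothesis inherited from Lemma \ref{lemma:dff-filtration}.

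The final, somewhat magical, step is to absorb this $\varepsilon$ error into the Morse-index part of $A_\varepsilon$. If $\eta_0 = \eta_1$, then $\pi$ is constant, the flow-line contribution vanishes identically, and $\mathrm{Index}(\eta_1) - \mathrm{Index}(\eta_0) = 0$. If instead $\eta_0 \ne \eta_1$, then the existence of a pseudogradient flow line in $\mathscr{P}(\eta_1, \eta_0)$ forces $\mathrm{Index}(\eta_1) - \mathrm{Index}(\eta_0) \ge 1$, and the extra $\varepsilon[\mathrm{Index}(\eta_1) - \mathrm{Index}(\eta_0)] \ge \varepsilon$ in $A_\varepsilon(\gamma_1 \otimes \eta_1) - A_\varepsilon(\gamma_0 \otimes \eta_0)$ exactly cancels the $-\varepsilon$ flow-line error. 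Either way the desired inequality holds. The only delicate point in the argument is getting the signs and asymptotic reparametrizations right in the energy identity; once that bookkeeping is done, everything else is a routine estimate on the curvature of a Hamiltonian connection.
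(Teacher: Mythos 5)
Your proposal matches the paper's own argument essentially verbatim: the energy identity for the BV-continuation cylinder, the decomposition of $\bd_s H_{\pi,\theta,s,t}$ into a cut-off piece bounded by $\int_0^1 \max_{\eta,w}\abs{H_{\eta,t}(w)-H_{\eta,t+\theta}(w)}\,\d t$ and a flow-line piece bounded by $\varepsilon$, and the dichotomy ($\eta_0=\eta_1$ gives $\pi'\equiv 0$; $\eta_0\ne\eta_1$ gives an index jump of at least $1$ that absorbs the $\varepsilon$). The one point you elide is the perturbation one-form $\mathfrak{p}$: the chain-level moduli space for $\Delta_{\mathrm{FF}}$ is cut out using a generic $\mathfrak{p}$, whose curvature contribution adds a further error to the energy identity, and the paper disposes of it by noting the cohomology-level map is independent of $\mathfrak{p}$ and taking a limit $\mathfrak{p}\to 0$; you should add this limiting argument (or a statement that the error is $O(\norm{\mathfrak{p}})$ and removable at the homology level) for the proof to be complete.
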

\emph{Remark}. In particular, if $H_{\eta,t}$ is close to being autonomous (for each $\eta$), then $\Delta_{\mathrm{FF}}$ is close to being action non-decreasing.
\begin{proof}
  The argument is similar to Lemma \ref{lemma:dff-filtration}. Recall that $\Delta_{\mathrm{FF}}$ counts the rigid solutions $(\pi,u)$ where $\pi\in \mathscr{P}(\eta_{1},\eta_{0})$ and $u$ solves a perturbed version of the equation:
  \begin{equation}\label{eq:deltaFF-filtration-cmap-type}
    \bd_{s}u+J(u)(\bd_{t}u-X_{s,t}(u))=0
  \end{equation}
  where $X_{s,t}$ is the Hamiltonian vector field for $(1-\beta(s))H_{\pi(s),t+\theta}+\beta(s)H_{\pi(s),t}$. The energy of solutions for the unperturbed equation can be estimated as:
  \begin{equation*}
    \begin{aligned}
      E(u)&\le \int_{0}^{1}\max_{\eta,u}\abs{H_{\eta,t}(u)-H_{\eta,t+\theta}(u)}\d t+\max_{u,\eta,t}\abs{\bd_{\eta}H_{\eta,t}}_{g}\int_{\R}\abs{\pi'(s)}_{g}\\
          &+\omega(u)+\int_{0}^{1}H_{\eta_{1},t+\theta}(\gamma_{1}(t+\theta))d t-\int_{0}^{1}H_{\eta_{0},t}(\gamma_{0}(t))dt.
    \end{aligned}
  \end{equation*}
  The proof uses the standard energy estimates for continuation map type equation like \eqref{eq:deltaFF-filtration-cmap-type}, similarly to Lemma \ref{lemma:dff-filtration}.

  If $\eta_{1}=\eta_{0}$, then $\pi'(s)=0$; otherwise the index difference is at least $1$. In both cases, one rearranges to obtain the desired result.

  In general, one needs to work with the perturbed equation $\mathfrak{p}$; however, since the chain homotopy class is independent of the perturbation, one can prove the estimate with small error terms due to the perturbation $\mathfrak{p}$, and taking a limit as $\mathfrak{p}\to 0$ will reduce to the above unperturbed analysis; see also the proof of Lemma \ref{lemma:popff-filtration} for further details on this step of the argument.
\end{proof}

To conclude this section, we show that $\ast_{\mathrm{FF}}$ respects the $\varepsilon$-action filtration, up to an error depending on $\varepsilon$.

\begin{lemma}\label{lemma:popff-filtration}
  Suppose that $\varepsilon$ satisfies the hypotheses of Lemma \ref{lemma:dff-filtration}. Then, for any two homology classes $\alpha_{i}\in \mathrm{HFF}(P,H_{i,\eta,t},J)$, $i=0,1$, we have:
  \begin{equation*}
    -(3+\dim N)\varepsilon-C\max_{\eta,t_{0},t_{1}}\abs{\set{H_{0,\eta,t_{0}},H_{1,\eta,t_{1}}}}\le A_{\varepsilon}(\alpha_{0}\ast_{\mathrm{FF}}\alpha_{1})-A_{\varepsilon}(\alpha_{0})-A_{\varepsilon}(\alpha_{1}),
  \end{equation*}
  where $C$ is a uniform constant depending only on the pair-of-pants surface; here $\alpha_{0}\ast_{\mathrm{FF}}\alpha_{1}\in \mathrm{HFF}(P,H_{\infty,\eta,t},J)$ and $H_{\infty,\eta,t}=H_{0,\eta,t}+H_{1,\eta,t}$ is assumed to be admissible for defining the family Floer complex as required in the definition of $\ast_{\mathrm{FF}}$, and $\set{-,-}$ is the Poisson bracket.
\end{lemma}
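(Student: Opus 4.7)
The plan is to mirror the proofs of Lemmas \ref{lemma:dff-filtration} and \ref{lemma:bvff-filtration}: apply the energy identity of Lemma \ref{lemma:energy-identity} to a rigid solution $(\pi,u)\in\mathscr{M}(\eta_{0},\eta_{1},\eta_{\infty};\gamma_{0},\gamma_{1},\gamma_{\infty})$ contributing to $\ast_{\mathrm{FF}}$, and extract an action inequality from $E(u)\ge 0$. Combined with Stokes' theorem and $\omega=\mathrm{d}\lambda$, the identity rewrites as
\begin{equation*}
  A_{\mathrm{Floer}}(\gamma_{\infty})-A_{\mathrm{Floer}}(\gamma_{0})-A_{\mathrm{Floer}}(\gamma_{1})\ge -\int v^{*}\mathfrak{r}_{\pi},
\end{equation*}
where $\mathfrak{r}_{\pi}$ is the curvature of $\mathfrak{a}_{\pi}$. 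The task then reduces to estimating $\int v^{*}\mathfrak{r}_{\pi}$.

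The core computation is decomposing $\Sigma$ into its three cylindrical ends and the ``body'' $\Sigma_{0}=\Sigma\setminus(\text{cylindrical ends})$. On the body, where $\mathfrak{a}_{\pi}=H_{0}\mathfrak{m}_{0}+H_{1}\mathfrak{m}_{1}$ with $\mathfrak{m}_{i}$ closed, a short calculation yields
\begin{equation*}
  \mathfrak{r}_{\pi}=\mathrm{d}_{\Sigma}H_{0}\wedge \mathfrak{m}_{0}+\mathrm{d}_{\Sigma}H_{1}\wedge \mathfrak{m}_{1}+\{H_{0},H_{1}\}\,\mathfrak{m}_{0}\wedge \mathfrak{m}_{1};
\end{equation*}
however, the first two summands vanish on $\Sigma_{0}$ because $b(z)\in[-1,1]$ there and $\pi_{i}$ is locally constant on $[-1,1]$, so $\mathrm{d}_{\Sigma}H_{i}$ contains no $\mathrm{d}b$-component on $\Sigma_{0}$ and wedging with $\mathfrak{m}_{i}=\mathrm{d}t_{i}$ kills the remaining $\mathrm{d}t_{i}$-component. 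Thus $|\int_{\Sigma_{0}}v^{*}\mathfrak{r}_{\pi}|\le C\max_{\eta,t_{0},t_{1}}|\{H_{0,\eta,t_{0}},H_{1,\eta,t_{1}}\}|$ with $C=\int_{\Sigma_{0}}|\mathfrak{m}_{0}\wedge \mathfrak{m}_{1}|$, a constant depending only on the pair-of-pants set-up. On each cylindrical end the connection has the form $H(\pi_{i}(s),t)\,\mathrm{d}t$, so $\mathfrak{r}_{\pi}=(\partial_{\eta}H)\cdot \pi_{i}'(s)\,\mathrm{d}s\wedge \mathrm{d}t$, and integrating bounds the contribution by $\max|\partial_{\eta}H|_{g}\cdot\mathrm{length}_{g}(\pi_{i})<\varepsilon$ by the hypothesis of Lemma \ref{lemma:dff-filtration}; summing the three ends gives $3\varepsilon$. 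Altogether $|\int v^{*}\mathfrak{r}_{\pi}|\le 3\varepsilon+C\max|\{H_{0},H_{1}\}|$.

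Adding $\varepsilon$ times the Morse index differences and using that non-emptiness of $\mathscr{T}(\eta_{0},\eta_{1},\eta_{\infty})$ forces $\mathrm{Index}(\eta_{\infty})-\mathrm{Index}(\eta_{0})-\mathrm{Index}(\eta_{1})=\dim\mathscr{T}\ge 0$, I would obtain the chain-level inequality
\begin{equation*}
  A_{\varepsilon}(\gamma_{\infty}\otimes \eta_{\infty})\ge A_{\varepsilon}(\gamma_{0}\otimes \eta_{0})+A_{\varepsilon}(\gamma_{1}\otimes \eta_{1})-3\varepsilon-C\max|\{H_{0},H_{1}\}|
\end{equation*}
(the extra $\dim N\cdot \varepsilon$ in the statement absorbs a bounded index-shift slack and, as explained below, the perturbation error). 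Passing to cohomology uses the spectral convention: the $\varepsilon$-action of a chain is the \emph{minimum} action over its generators, and the $\varepsilon$-action of a class is the \emph{supremum} over representing cycles. Choosing cycles $x_{i}$ with $A_{\varepsilon}(x_{i})\ge A_{\varepsilon}(\alpha_{i})-\delta$, every generator in $x_{0}\ast_{\mathrm{FF}}x_{1}$ satisfies the chain-level bound with inputs of action at least $A_{\varepsilon}(\alpha_{i})-\delta$, and letting $\delta\to 0$ yields the claim.

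The main technical obstacle is handling the generic perturbation one-form $\mathfrak{p}$ required for transversality of $\mathscr{M}(\eta_{0},\eta_{1},\eta_{\infty})$: the energy identity for the perturbed data acquires an error controlled by $\|\mathfrak{p}\|_{C^{1}}$ (compare Lemma \ref{lemma:a-has-then-p-has}). Since the chain homotopy class of $\ast_{\mathrm{FF}}$ is $\mathfrak{p}$-independent, I would compute $A_{\varepsilon}$ on cohomology by passing to a limit along perturbations with $\|\mathfrak{p}\|_{C^{1}}\to 0$, at which stage this error becomes arbitrarily small and is absorbed into the $\dim N\cdot \varepsilon$ slack. A secondary subtlety is that at the $\infty$ puncture the connection takes the form $H_{\infty}\,\mathrm{d}t$ rather than $H_{0}\mathfrak{m}_{0}+H_{1}\mathfrak{m}_{1}$; one must verify the parametric bound separately there using $H_{\infty}=H_{0}+H_{1}$, and check that $b$ can be chosen so the transition between body and ends introduces no spurious curvature contribution.
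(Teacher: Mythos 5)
Your proposal is correct and takes essentially the same approach as the paper: reduce to a curvature estimate for the family pair-of-pants connection, compute the curvature on the ``body'' of the surface to be $\{H_0,H_1\}\,\mathfrak{m}_0\wedge\mathfrak{m}_1$, bound the cylindrical ends by the pseudogradient-flow-line estimate as in Lemma \ref{lemma:dff-filtration}, and pass to cohomology by taking the perturbation to zero. The paper organizes the argument slightly differently — explicitly breaking the degenerating solution into a central pair-of-pants limit plus $d_{\mathrm{FF}}$-cylinders and recording the four inequalities \eqref{eq:many-inequalities-pop-filtration}, and it uses the weaker index bound $\mathrm{Index}(\eta_\infty)\ge\max\{\mathrm{Index}(\eta_0),\mathrm{Index}(\eta_1)\}$ rather than your $\dim\mathscr{T}\ge 0$ — but these are cosmetic differences within the $\dim N\cdot\varepsilon$ slack, not a different route.
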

\begin{proof}
  Recall from \S\ref{sec:family-pair-of-pants} that the product is defined by counting rigid solutions to the perturbed equation determined by the connection one-form:
  \begin{equation*}
    \mathfrak{a}=\left\{
      \begin{aligned}
        &H_{i,\pi_{i}(s),t}\d t\text{ in the $i$ cylindrical end, $i=0,1,\infty$},\\
        &H_{0,\eta',t_{0}(z)}\mathfrak{m}_{0}+H_{1,\eta',t_{1}(z)}\mathfrak{m}_{1}\text{ otherwise},
      \end{aligned}
    \right.
  \end{equation*}
  where $(\pi_{0},\pi_{1},\pi_{\infty})$ is a flow tree, $\eta'$ is the junction point of the flow tree, and the circle valued functions $t_{i}$, and their differentials $\mathfrak{m}_{i}=\d t_{i}$ are as in \S\ref{sec:family-pair-of-pants}. The branches of the flow tree are flow lines for perturbations of the pseudogradient.

  Thus there are two relevant perturbations which play a role in the definition of the pair-of-pants product:
  \begin{enumerate}
  \item\label{pop-perturbation-1} the usual perturbation term $\mathfrak{p}$,
  \item\label{pop-perturbation-2} the perturbations used to define the flow tree.
  \end{enumerate}
  For each solution $(\pi,u)$ to the family pair-of-pants equation, ideally, we would like to show that:
  \begin{equation}\label{eq:pop-estimate-need-to-show}
    -(3+\dim N)\varepsilon\le A_{\varepsilon}(\gamma_{\infty}\otimes \eta_{\infty})-A_{\varepsilon}(\gamma_{0}\otimes \eta_{0})-A_{\varepsilon}(\gamma_{1}\otimes \eta_{1}),
  \end{equation}
  where $\gamma_{i},\eta_{i}$, $i=0,1,\infty$ are the asymptotics. We simplify our task as follows: it suffices to prove \eqref{eq:pop-estimate-need-to-show} when the perturbations \ref{pop-perturbation-1} and \ref{pop-perturbation-2} are turned off. To see why this is sufficient, recall that the chain homotopy class of the pair-of-pants is independent of the perturbation used. Thus we can take a sequence of solutions $(\pi_{n},u_{n})$ solving the equation with perturbations \ref{pop-perturbation-1} and \ref{pop-perturbation-2} depending on $n$ which tend to zero.

  By the usual compactness theory for solutions to Floer's equation (see \S\ref{sec:compactness}) and standard compactness results for ODEs, one concludes that $(\pi_{n},u_{n})$ has a subsequence which converges to a configuration consisting of a central limit $(\pi_{\infty},u_{\infty})$ solving the unperturbed pair-of-pants equation, together with some number of cylinders at the punctures solving the equation for $d_{\mathrm{FF}}$; see the illustration in Figure \ref{fig:pop-turn-off-perturb}. Since we have already shown that $d_{\mathrm{FF}}$ increases the $A_{\varepsilon}$-action in Lemma \ref{lemma:dff-filtration}, it is sufficient to prove \eqref{eq:pop-estimate-need-to-show} holds for solutions of the unperturbed equation; this completes the explanation of why we can work only with the unperturbed equation.

  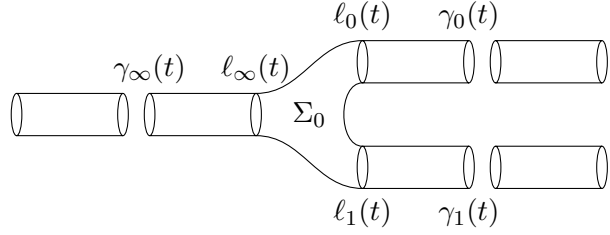
\begin{figure}[h]
    \centering
    \begin{tikzpicture}[scale=.7]
      \draw (0,0) circle (.1 and .4) (0,0.4)--+(2,0)coordinate(A1) (0,-0.4)--+(2,0)coordinate(B1) (2,0) circle (.1 and .4);
      \draw[shift={(4,1)}] (0,0) circle (.1 and .4) (0,0.4) coordinate(A2)--+(2,0) (0,-0.4)coordinate(B2)--+(2,0) (2,0) circle (.1 and .4);
      \draw[shift={(4,-1)}] (0,0) circle (.1 and .4) (0,0.4) coordinate(A3)--+(2,0) (0,-0.4)coordinate(B3)--+(2,0) (2,0) circle (.1 and .4);
      \node at (3,0) {$\Sigma_{0}$};

      \draw (A1)to[out=0,in=180](A2) (B2)to[out=180,in=180](A3) (B1)to[out=0,in=180](B3);

      \draw[shift={(6.5,1)}] (0,0) circle (.1 and .4) (0,0.4) coordinate(A4)--+(2,0) (0,-0.4)coordinate(B4)--+(2,0) (2,0) circle (.1 and .4);
      \draw[shift={(6.5,-1)}] (0,0) circle (.1 and .4) (0,0.4) coordinate(A5)--+(2,0) (0,-0.4)coordinate(B5)--+(2,0) (2,0) circle (.1 and .4);
      \draw[shift={(-2.5,0)}] (0,0) circle (.1 and .4) (0,0.4) coordinate(A6)--+(2,0) (0,-0.4)coordinate(B6)--+(2,0) (2,0) circle (.1 and .4);

      \path (A2) node [above] {$\ell_{0}(t)$}--+(2,0) node[above] {$\gamma_{0}(t)$};
      \path (B3) node [below] {$\ell_{1}(t)$}--+(2,0) node[below] {$\gamma_{1}(t)$};
      \path (A1) node [above] {$\ell_{\infty}(t)$}--+(-2,0) node[above] {$\gamma_{\infty}(t)$};
    \end{tikzpicture}
    \caption{Illustration of the pair-of-pants and limit (cylinders for $d_{\mathrm{FF}}$ break off) when perturbations are turned off}
    \label{fig:pop-turn-off-perturb}
  \end{figure}

  Referring to the notation in Figure \ref{fig:pop-turn-off-perturb}, we claim the following:
  \begin{equation}\label{eq:many-inequalities-pop-filtration}
    \begin{aligned}
      A_{0}( \gamma_{\infty} \otimes \eta_{\infty})&\ge A(H_{\infty,\eta',t};\ell_{\infty}(t))-\varepsilon\\
      A(H_{0,\eta',t};\ell_{0}(t))&\ge A_{0}( \gamma_{0}\otimes\eta_{0})-\varepsilon\\
      A(H_{1,\eta',t};\ell_{1}(t))&\ge A_{0}(\gamma_{1}\otimes \eta_{1})-\varepsilon\\
      A(H_{\infty,\eta',t};\ell_{\infty}(t))&\ge A(H_{0,\eta',t};\ell_{0}(t))+A(H_{1,\eta',t};\ell_{1}(t)),
    \end{aligned}
  \end{equation}
  where $A_{0}(\eta\otimes \gamma)$ is simply the action functional considered in \S\ref{sec:family-acti-filtr} with $\varepsilon$ set to zero, and: $$A(H_{i,\eta',t};\ell(t))=\int H_{i,\eta',t}(\ell(t))dt-\int \ell^{*}\lambda,$$ where $\eta'$ is the junction point of the limit flow tree.

  Combining everything, we conclude that:
  \begin{equation*}
    A_{0}(\gamma_{\infty}\otimes \eta_{\infty})+3\varepsilon\ge A_{0}(\gamma_{0}\otimes \eta_{0})+A_{0}(\gamma_{1}\otimes \eta_{1}).
  \end{equation*}
  Finally we observe that, along any unperturbed flow tree we have: $$\mathrm{Index}(\eta_{\infty})\ge \max\set{\mathrm{Index}(\eta_{1}),\mathrm{Index}(\eta_{0})},$$
  and hence $\varepsilon\mathrm{Index}(\eta_{\infty})+\varepsilon\dim N\ge \varepsilon \mathrm{Index}(\eta_{0})+\varepsilon\mathrm{Index}(\eta_{1});$ this implies \eqref{eq:pop-estimate-need-to-show}, as desired.

  It remains only to verify \eqref{eq:many-inequalities-pop-filtration}. The first three lines of \eqref{eq:many-inequalities-pop-filtration} follow from the same exact argument as Lemma \ref{lemma:dff-filtration}. The rest of the proof is showing the last line; it will ultimately follow from a computation of the curvature of $\mathfrak{a}$ on the central region $\Sigma_{0}$ of the pair-of-pants shown in Figure \ref{fig:pop-turn-off-perturb}.

  By Lemma \ref{lemma:energy-identity}, it is sufficient to bound the intergral of the connection two-form $\mathfrak{r}$ associated to $\mathfrak{a}$ over the central region $\Sigma_{0}$. Recall that:
  \begin{equation*}
    \mathfrak{a}=H_{x,y}\d x+K_{x,y}\d y\implies \mathfrak{r}=\left(\bd_{x}K_{x,y}-\bd_{y}H_{x,y}+\omega(X_{x,y},Y_{x,y})\right)\d x\wedge \d y,
  \end{equation*}
  where $X_{s,t},Y_{s,t}$ are the Hamiltonian vector fields for $H_{s,t},K_{s,t}$, respectively. In our case, above the central region, we have:
  \begin{equation*}
    \mathfrak{a}=H_{0,\eta',t_{0}(z)}\mathfrak{m}_{0}+H_{1,\eta',t_{1}(z)}\mathfrak{m}_{1}.
  \end{equation*}
  We claim that:
  \begin{equation}\label{eq:desired-formula-curvature-popff-filtration}
    \mathfrak{r}=\omega(X_{0,\eta',t_{0}(z)},X_{1,\eta',t_{1}(z)})\mathfrak{m}_{0}\wedge \mathfrak{m}_{1}.
  \end{equation}
  To compute this, we first observe that $\bd_{x}K_{x,y}-\bd_{y}H_{x,y}$ is linear, and hence it suffices to prove it vanishes for each $H_{i,\eta',t_{i}(z)}\mathfrak{m}_{i}$ separately. Write $t_{i}=f_{i}$, $H_{i,\eta',t_{i}(z)}=G_{i,f_{i}(z)}$ and $\mathfrak{m}_{i}=\d f_{i}$. Then, in conformal coordinates $x+iy$,
  \begin{equation*}
    G_{_{i},f_{i}(z)}\d f_{i}=G_{i,f_{i}(z)}\bd_{x}f_{i}\d x+G_{i,f_{i}(z)}\bd_{y}f_{i} \d y,
  \end{equation*}
  and a short computation shows the $\bd_{x}K_{x,y}-\bd_{y}H_{x,y}$ term vanishes. Thus the only term which contributes to $\mathfrak{r}$ is $\omega(X_{x,y},Y_{x,y})\d x\wedge \d y$, which expands to:
  \begin{equation*}
    \omega(V_{0,f_{0}(z)}\bd_{x}f_{0}+V_{1,f_{1}(z)}\bd_{x}f_{1},V_{0,f_{0}}\bd_{y}f_{0}+V_{1,f_{1}(z)}\bd_{y}f_{1})\d x\wedge \d y,
  \end{equation*}
  where $V_{i,f}$ is the Hamiltonian vector field of $G_{i,f}$. Simplifying, one obtains:
  \begin{equation*}
    \omega(V_{0,f_{0}(z)},V_{1,f_{1}(z)})(\bd_{x}f_{0}\bd_{y}f_{1}-\bd_{y}f_{0}\bd_{x}f_{1})\d x\wedge \d y,
  \end{equation*}
  which equals \eqref{eq:desired-formula-curvature-popff-filtration}, as desired. Finally, using the notation $v(z)=(z,u(z))$ from Lemma \ref{lemma:energy-identity}, we estimate:
  \begin{equation*}
    \int v^{*}\mathfrak{r}_{\sigma}\le \max_{\eta,t_{0},t_{1}}\abs{\set{H_{0,\eta,t_{0}},H_{1,\eta,t_{1}}}}\int_{\Sigma_{0}} \abs{\mathfrak{m}_{0}\wedge \mathfrak{m}_{1}},
  \end{equation*}
  which gives the desired result.
\end{proof}

\subsubsection{Proof of Theorem \ref{theorem:main-floer}}
\label{sec:proof-of-main-floer}

Suppose that $H_{c,\delta,\epsilon,\eta}$ is the family as constructed in \S\ref{sec:system-of-hamiltonians}, using the lift of $f$ to $\mathfrak{B}(a,\Omega)/U(n)$. The proof of Theorem \ref{theorem:main-floer} is an argument by contradiction: we assume $a>c$, and then derive a contradiction.

As in the hypotheses of Theorem \ref{theorem:main-floer}, suppose that there are classes $\zeta_{i}\in V_{c_{i}}$, $i=1,2$, where $c_{i}>0$, such that:
\begin{equation*}
  \mathrm{PSS}(\beta)=\Delta(\zeta_{1})\ast\zeta_{2}\text{ in }V_{c},
\end{equation*}
where $c=c_{1}+c_{2}$, and where $\beta$ has non-zero homological intersection with $f$. Without loss of generality, we can suppose that $c_{2}>0$. Since $a>c$, we can use the evaluation map $\mathfrak{e}$ and \S\ref{sec:non-triv-eval} to conclude:
\begin{equation*}
  1=\mathfrak{e}(\Delta(\zeta_{1})\ast\zeta_{2}).
\end{equation*}
Let $\theta_{i}=c_{i}/c$, so that $\theta_{1}+\theta_{2}=1$. Given a pseudogradient $P$ on $N$, construct the family $H_{\eta,t}'$ as a small perturbation of $H_{c,\delta,\epsilon,\eta}$ as in \S\ref{sec:spec-hamilt-assoc}. Let:
\begin{equation*}
  H_{i,\eta,t}=\theta_{i}H_{\eta,t}',\text{ for }i=1,2,
\end{equation*}
Fix a small constant $\varepsilon>0$, and pick the perturbation term used in the construction of $H_{\eta,t}'$ so that:
\begin{enumerate}
\item\label{setup-main-floer-1} $H_{1,\eta,t}$, $H_{2,\eta,t}$, and $H_{1,\eta,t}+H_{2,\eta,t}$ are all admissible for defining the family Floer complex,
\item\label{setup-main-floer-2} $\max_{\eta,t_{0},t_{1}}\abs{\set{H_{\eta,t_{0}}',H_{\eta,t_{1}}'}}<\varepsilon/C$, where $C$ is from Lemma \ref{lemma:popff-filtration}.
\item\label{setup-main-floer-3} the action of any non-central orbit appearing in $H_{1,\eta,t}$ is non-negative.
\end{enumerate}
Lemma \ref{lemma:construction-of-D} implies that we can assume that:
\begin{enumerate}[resume]
\item\label{setup-main-floer-4} the action of any orbit for $H_{2,\eta,t}$ is at least $\theta_{2}(c-a)-\varepsilon$.
\end{enumerate}
By picking the pseudogradient $P$ to have short enough flow lines, we can also suppose that:
\begin{enumerate}[resume]
\item\label{setup-main-floer-5} $\max_{\eta,t}\abs{\bd_{\eta}H_{\eta,t}'}_{g}\times \max\set{g\text{-length of flow lines for $P$}}<\varepsilon.$
\item\label{setup-main-floer-6} the conclusion of Proposition \ref{proposition:BV-family} holds for $H_{1,\eta,t}$; here we note that $H_{1,\eta,t}$ is a perturbation of $\theta_{1}H_{c,\delta,\epsilon,\eta}=H_{\theta_{1}c,\delta,\theta_{1}\epsilon,\eta}$, and Proposition \ref{proposition:BV-family} applies.
\end{enumerate}

With this established, we claim:
\begin{lemma}\label{lemma:proof-main-floer-action-estimate}
  For any classes $\zeta_{i}\in \mathrm{HFF}(P,H_{i,\eta,t},J)$, we have that $\varepsilon$ satisfies the hypotheses of Lemma \ref{lemma:dff-filtration} for each $H_{i,\eta,t}$, so $\mathscr{A}_{\varepsilon}$ is valid cohomological filtration, and:
  \begin{equation*}
    \mathscr{A}_{\varepsilon}(\Delta_{\mathrm{FF}}(\zeta_{1})\ast_{\mathrm{FF}}\zeta_{2})\ge \theta_{2}(c-a)-(5+\dim N)\varepsilon.
  \end{equation*}
\end{lemma}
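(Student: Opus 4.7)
The plan is to combine the three filtration estimates of Lemmas \ref{lemma:dff-filtration}, \ref{lemma:bvff-filtration}, and \ref{lemma:popff-filtration} with the vanishing-of-central-orbits statement of Proposition \ref{proposition:BV-family}. First, to see that $\mathscr{A}_\varepsilon$ is a cohomological filtration on every family Floer complex appearing in the iterated product, observe that every such Hamiltonian is of the form $\theta H'_{\eta,t}$ for some partial sum $\theta\in[0,1+\nu]$ of the coefficients $\theta_i,\nu$. Since the left-hand side of the hypothesis of Lemma \ref{lemma:dff-filtration} scales linearly in $\theta$, it is bounded by $(1+\nu)\varepsilon$ thanks to setup \ref{setup-main-floer-5}, and after slightly shortening $P$ the strict inequality is recovered. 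Admissibility of every partial-sum Hamiltonian that enters the iterated product is already ensured by setup \ref{setup-main-floer-1}.

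Next I will lower-bound the action of each of the $k+1$ factors. For $i=1,\dots,k$, any chain-level representative $c_i$ of $\zeta_i$ is taken by $\Delta_{\mathrm{FF}}$ to a cycle representing $\Delta_{\mathrm{FF}}(\zeta_i)$, and by Proposition \ref{proposition:BV-family} no generator of this cycle is a central orbit of $H_{i,\eta,t}$. Combined with setup \ref{setup-main-floer-3} (non-negative action on non-central orbits) and the inequality $\varepsilon\,\mathrm{Index}(\eta)\ge 0$, this specific representative witnesses
\[
\mathscr{A}_\varepsilon(\Delta_{\mathrm{FF}}(\zeta_i))\ge 0.
\]
For the final factor, setup \ref{setup-main-floer-4} directly implies that every generator of $\mathrm{CFF}(P,H_{k+1,\eta,t},J)$ has $\varepsilon$-action at least $-\varepsilon$, hence any representative of $\zeta_{k+1}$ has min-action $\ge -\varepsilon$, and so $\mathscr{A}_\varepsilon(\zeta_{k+1})\ge -\varepsilon$.

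Finally I will iterate Lemma \ref{lemma:popff-filtration} $k$ times along the nested product $(\cdots(\Delta_{\mathrm{FF}}(\zeta_1)\ast_{\mathrm{FF}}\Delta_{\mathrm{FF}}(\zeta_2))\ast_{\mathrm{FF}}\cdots)\ast_{\mathrm{FF}}\zeta_{k+1}$. At each of the $k$ applications the two input Hamiltonians are scalar multiples of $H'_{\eta,t}$ with coefficients bounded by $1+\nu$, so bilinearity of the Poisson bracket together with setup \ref{setup-main-floer-2} gives
\[
C\max_{\eta,t_0,t_1}|\{H_{a,\eta,t_0},H_{b,\eta,t_1}\}|\le (1+\nu)^2\,\varepsilon,
\]
and therefore each application contributes an error of at most $\bigl((3+\dim N)+(1+\nu)^2\bigr)\varepsilon$. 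Summing the $k$ error terms with the lower bounds on the $k+1$ factors yields
\[
\mathscr{A}_\varepsilon\bigl(\Delta_{\mathrm{FF}}(\zeta_1)\ast_{\mathrm{FF}}\cdots\ast_{\mathrm{FF}}\zeta_{k+1}\bigr)\ge -\varepsilon-k\bigl(4+\dim N+O(\nu)\bigr)\varepsilon,
\]
which is the claimed inequality with $\mathrm{const}(k,\dim N)=1+k(4+\dim N)+O(\nu)$.

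I do not expect any step to present a serious obstacle; the technical heavy lifting is already accomplished in the preceding lemmas and propositions. The subtlest bookkeeping point is that the max-over-representatives convention underlying $\mathscr{A}_\varepsilon$ on cohomology is being used here in the right direction: the particular cycle $\Delta_{\mathrm{FF}}(c_i)$ handed to us by Proposition \ref{proposition:BV-family} furnishes a \emph{lower} bound on $\mathscr{A}_\varepsilon(\Delta_{\mathrm{FF}}(\zeta_i))$, exactly what is needed to drive the iterated estimate.
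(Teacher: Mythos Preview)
Your proposal is correct and follows essentially the same approach as the paper's proof: both use Proposition~\ref{proposition:BV-family} together with setup~\ref{setup-main-floer-3} to obtain $\mathscr{A}_\varepsilon(\Delta_{\mathrm{FF}}(\zeta_i))\ge 0$, setup~\ref{setup-main-floer-4} for $\mathscr{A}_\varepsilon(\zeta_{k+1})\ge -\varepsilon$, and then iterate Lemma~\ref{lemma:popff-filtration} with the Poisson-bracket bound from setup~\ref{setup-main-floer-2}. You are in fact slightly more careful than the paper in tracking the $(1+\nu)$ factors coming from the partial-sum Hamiltonians, and note that Lemma~\ref{lemma:bvff-filtration}, despite appearing in your opening sentence, is not actually used (the paper does not use it either).
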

\begin{proof}
  The first statement follows from \ref{setup-main-floer-5}. The next step is to use Proposition \ref{proposition:BV-family} and \ref{setup-main-floer-6} to conclude that:
  \begin{equation*}
    \Delta_{\mathrm{FF}}(\zeta_{1})\text{ is represented by cycles not containing any central orbit.}
  \end{equation*}
  In particular, using \ref{setup-main-floer-3}, we conclude:
  \begin{equation*}
    \mathscr{A}_{\varepsilon}(\Delta_{\mathrm{FF}}(\zeta_{1}))\ge 0.
  \end{equation*}
  Use this, Lemma \ref{lemma:popff-filtration}, and \ref{setup-main-floer-2}, to conclude:
  \begin{equation*}
    \mathscr{A}_{\varepsilon}(\Delta_{\mathrm{FF}}(\zeta_{1})\ast_{\mathrm{FF}}\zeta_{2})\ge \mathscr{A}_{\varepsilon}(\Delta_{\mathrm{FF}}(\zeta_{1}))+\mathscr{A}_{\varepsilon}(\zeta_{2})-(4+\dim N)\varepsilon,
  \end{equation*}
  for $j=1,\dots,k-1$. Then use \ref{setup-main-floer-3} and \ref{setup-main-floer-4} to conclude:
  \begin{equation*}
    \mathscr{A}_{\varepsilon}(\Delta_{\mathrm{FF}}(\zeta_{1})\ast_{\mathrm{FF}}\zeta_{2})\ge \theta_{2}(c-a)-(5+\dim N)\varepsilon,
  \end{equation*}
  as desired.
\end{proof}

If the lengths of the flow lines are short enough, then Proposition \ref{proposition:eFF-vanish-on-non-central} says $\mathfrak{e}_{\mathrm{FF}}:\mathrm{HFF}(P,H_{\eta,t}',J)\to \Z/2\Z$ vanishes on classes which are represented by cycles which contain only non-central orbits. The discussion at the start of this subsection, together with the compatibility of $\mathfrak{e}_{\mathrm{FF}},\Delta_{\mathrm{FF}},\ast_{\mathrm{FF}}$ and their non-family analogues implies:
\begin{equation*}
  \mathfrak{e}_{\mathrm{FF}}(\Delta_{\mathrm{FF}}(\zeta_{1})\ast_{\mathrm{FF}}\zeta_{2})=1.
\end{equation*}
The contradiction leading to the proof of Theorem \ref{theorem:main-floer} will therefore be completed provided we can prove:
\begin{equation}\label{eq:main-cycle-proof-main-floer}
  \Delta_{\mathrm{FF}}(\zeta_{1})\ast_{\mathrm{FF}}\zeta_{2}\in\mathrm{HFF}(P,H_{\eta,t}',J)
\end{equation}
is represented by a cycle which contains only non-central orbits. This fact follows from the action estimate in Lemma \ref{lemma:proof-main-floer-action-estimate}, provided $\varepsilon$ is small enough. Indeed, Lemma \ref{lemma:construction-of-D} implies that the central orbit has action:
\begin{equation*}
  c+c\delta/2-\epsilon a.
\end{equation*}
Picking $\epsilon$ close enough to $1$, $\delta$ small enough, and picking $\varepsilon$ small enough, we can use the fixed negative number $c-a$ to ensure:
\begin{equation*}
  \mathscr{A}_{\varepsilon}(\text{any central orbit})\le c+c\delta/2-\epsilon a+\varepsilon \dim N< \theta_{2}(c-a)-(5+\dim N)\varepsilon,
\end{equation*}
where we use $\theta_{2}<1$ (since $\theta_{1}>0$ and $\theta_{1}+\theta_{2}=1$). It therefore follows that \eqref{eq:main-cycle-proof-main-floer} must be represented by a cycle which does not contain central orbits. Thus $\mathfrak{e}_{\mathrm{FF}}$ must vanish on it, providing the desired contradiction, proving Theorem \ref{theorem:main-floer}.\hfill$\square$

\subsubsection{Proof of Theorem \ref{theorem:hofer-zehnder}}
\label{sec:proof-theorem-hofer-zehnder}

The argument is essentially exactly the same. The first difference is that one defines:
\begin{equation*}
  H_{c,\delta,\epsilon,\eta}=G_{c,\delta}+K+\epsilon D_{\eta},
\end{equation*}
as a minor replacement of the functions defined in \S\ref{sec:system-of-hamiltonians}.

The second difference $f$ is now assumed to lift to $\mathfrak{B}(a,U)/U(n)$, where $U$ is the well of the Hofer-Zehnder admissible function $K$.

Everything else follows the same argument. This works because $K$ is independent of $c,\delta,\epsilon,\eta$ (so many terms involving derivatives of $H_{c,\delta,\epsilon,\eta}$ with respect to its parameters vanish), and $K$ only has constant orbits and $U$ is contained in the set where $K=-A=\min K$ (this is used in Lemma~\ref{lemma:construction-of-D} which estimates the actions). Because of the Hofer-Zehnder admissible function $K$, the action of the center of the ball is $c+c\delta/2-A-\epsilon a$. Thereby one concludes the evaluation map is defined when $c<A+a$.\hfill$\square$

\subsubsection{Proof of Theorem \ref{theorem:non-contractible}}
\label{sec:proof-theorem-non-contractible}

The argument is very similar to the proof of Theorem \ref{theorem:main-floer}, and in fact is a bit easier. Since $\zeta_{1}$ is represented by a cycle whose orbits are all non-contractible, $\zeta_{1}$ is represented by a cycle which does not contain the central orbit. The proof then follows the same lines as \S\ref{sec:proof-of-main-floer}. The details are left to the reader. \hfill$\square$


\section{From string topology to Floer cohomology}
\label{sec:string-topol-floer-cohom}

The goal of this section is to prove Theorem \ref{theorem:main-comparison} on the comparison between string topology and Floer cohomology. We specialize to the case when $W=T^{*}M$ and $\Omega$ is a fiberwise starshaped domain.

Much of the arguments in this section are straightforward modifications of the arguments of \cite{abbondandolo-schwarz-GT-2010,abouzaid-EMS-2015} from their Morse theory approach to our bordism approach. The exception is the proof that the product structures are identified, as we instead give a seemingly novel adiabatic gluing argument.

Recall from \S\ref{sec:comp-betw-string} the monoid $Z(\Lambda_{c})$ of smooth families of loops:
\begin{equation*}
  A:P\times \R/\Z\to M,
\end{equation*}
such that $P$ is a compact finite dimensional manifold, and $\ell_{\Omega}(A(x,-))<c$ holds for all $x\in P$, where:
\begin{equation*}
  \ell_{\Omega}(q)=\int_{0}^{1}\max\set{\ip{p,q'(t)}:p\in \Omega\cap T^{*}M_{q(t)}}dt
\end{equation*}
is the length of the loop $q$ as measured by $\Omega$. As in \S\ref{sec:comp-betw-string}, this leads to the bordism group $H(\Lambda_{c})$ as a quotient of $Z(\Lambda_{c})$. Then $c\mapsto H(\Lambda_{c})$ has the structure of a persistence module on the positive real line.

As explained in \S\ref{sec:three-struct-string}, there are three relevant structures on this persistence module:
\begin{enumerate}
\item the BV-operator $\Delta:H(\Lambda_{c})\to H(\Lambda_{c})$,
\item the inclusion of the constant loops $\mathfrak{i}:H^{*}(W)\to H(\Lambda_{c})$, and,
\item the Chas-Sullivan product $\ast:H(\Lambda_{c_{1}})\otimes H(\Lambda_{c_{2}})\to H(\Lambda_{c_{1}+c_{2}})$.
\end{enumerate}

In the first subsection \S\ref{sec:definition-morphism}, we will explain the definition of a morphism of persistence modules $\Theta:H(\Lambda_{c})\to V_{c}$; afterwards, we show $\Theta$ intertwines $\Delta,\mathfrak{i},\ast$ with the analogous structures on the Floer cohomology persistence module.

\subsection{Definition of the $\Theta$-morphism}
\label{sec:definition-morphism}

The definition of the $\Theta$-morphism follows the strategy of \cite{abbondandolo-schwarz-CPAM-2006,abbondandolo-portaluri-schwarz-JFPTA-2008,abbondandolo-schwarz-GT-2010,abouzaid-EMS-2015,cieliebak-hingston-oancea-JFPTA-2023,brocic-cant-shelukhin-math-ann-2025}, and uses moduli spaces with moving Lagrangian boundary conditions similar to the ones considered in \cite{cieliebak-JMPA-1994,brocic-cant-JFPTA-2024}.

\subsubsection{$\Theta$-data.}
\label{sec:data-theta-morphism}

Let $(H_{t},J)$ be a Hamiltonian system admissible for defining the Floer complex, and let $A:P\times \R/\Z\to M$ be a family of loops in $Z(\Lambda_{c})$.

For such inputs $(H_{t},J)$ and $A$, we define \emph{$\Theta$-data} to be:
\begin{enumerate}
\item\label{theta-data-1} a Hamiltonian connection $\mathfrak{a}$ on $P\times \Sigma\times W$ where $\Sigma$ is the half-infinite cylinder $(-\infty,0]\times\R/\Z$, so $\mathfrak{a}_{x,s,t}=K_{x,s,t}\d s+H_{x,s,t}$, where $K_{x,s,t}=0$ and $H_{x,s,t}=H_{t}$ for $s\le s_{0}$,
\item\label{theta-data-2} a smooth family of $\omega$-tame and Liouville equivariant almost complex structures $J_{p,s,t}$ on $P\times \Sigma \times W$ so $J_{p,s,t}=J$ when $s\le -s_{0}$,
\end{enumerate}
which satisfies the following properties:
\begin{enumerate}[resume]
\item\label{theta-data-3} $H_{x,s,t}=c_{x,s,t}r$ and $K_{x,s,t}=b_{x,s,t}r$ for $r\ge r_{0}$,
\item\label{theta-data-4} $\bd_{s}c_{x,s,t}\le \bd_{t}b_{x,s,t}$,
\item\label{theta-data-5} $c_{x,0,t}\ge \max\set{\ip{p,q'(t)}:p\in \Omega\cap T^{*}M_{q(t)}\text{ where }q(t)=A(x,t)}$, outside of $r\ge r_{0}$.
\end{enumerate}
Similarly to \S\ref{sec:cont-maps-floer}, condition \ref{theta-data-4} is used when showing that the curvature of $\mathfrak{a}$ is bounded from above, which is used in proving a priori energy estimates. Condition \ref{theta-data-5} is also used in the a priori energy estimate; see \S\ref{sec:a-priori-energy-estim-theta-map}.

Let us note that \ref{theta-data-5} implies:
\begin{equation*}
  \int c_{x,0,t}dt\ge \ell_{\Omega}(q)\text{ for each }q=A(x,-),
\end{equation*}
and so it is necessary that the slope of $H_{t}$ is at least the $\ell_{\Omega}$-length of all loops appearing in the family $A$.

\begin{lemma}
  If the slope of $H_{t}$ is at least the $c$, where $A\in Z(\Lambda_{c})$, then the space of $\Theta$-data for $(H_{t},J)$ and $A$ is weakly contractible.
\end{lemma}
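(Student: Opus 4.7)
The plan is to show the space of $\Theta$-data is non-empty and decomposes as a product of a convex set of connection one-forms and a contractible space of almost complex structures, hence is contractible, which implies weak contractibility.

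First, I would prove non-emptiness by explicit construction, closely imitating the formula from Lemma \ref{lemma:when-does-cdata-exist}. Since $P$ is compact and $A\in Z(\Lambda_{c})$, there is a uniform bound $c':=\sup_{x\in P}\ell_{\Omega}(A(x,-))<c$. Using a partition of unity on $P$, one constructs a smooth positive family $\phi_{x}(t)$ on $P\times\R/\Z$ with $\phi_{x}(t)>\max\set{\ip{p,q'(t)}:p\in\Omega\cap T^{*}M_{A(x,t)}}$ pointwise while $\int_{0}^{1}\phi_{x}(t)\,dt<c$. Writing $H_{t}=c_{t}r$ outside a compact set, so that $\int c_{t}\,dt\ge c$, I define
\[
H_{x,s,t}:=(1-\beta(s))H_{t}+\beta(s)\widetilde{\phi}_{x,t},\qquad K_{x,s,t}:=\int_{0}^{t}\partial_{s}H_{x,s,\tau}\,d\tau-t\int_{0}^{1}\partial_{s}H_{x,s,\tau}\,d\tau,
\]
where $\beta$ is a cut-off with $\beta(-s_{0})=0$ and $\beta(0)=1$, and $\widetilde{\phi}_{x,t}\in\mathscr{H}$ is any smooth extension with slope $\phi_{x}(t)$ in the end. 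The formula for $K_{x,s,t}$ ensures $1$-periodicity in $t$, and in the end yields $c_{x,s,t}=(1-\beta(s))c_{t}+\beta(s)\phi_{x}(t)$ with $b_{x,s,t}$ determined by the analogous integral. Any $\omega$-tame, Liouville-equivariant family $J_{x,s,t}$ agreeing with the fixed $J$ for $s\le-s_{0}$ (e.g., the constant choice) completes the data.

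Second, I would verify conditions \ref{theta-data-1}--\ref{theta-data-5}. Most hold by inspection; in particular at $s=0$ we have $c_{x,0,t}=\phi_{x}(t)$, which dominates the required supremum by construction, giving \ref{theta-data-5}. For the key inequality \ref{theta-data-4}, a direct computation yields
\[
\partial_{t}b_{x,s,t}-\partial_{s}c_{x,s,t}=-\int_{0}^{1}\partial_{s}c_{x,s,\tau}\,d\tau=-\beta'(s)\Bigl(\int\phi_{x}(t)\,dt-\int c_{t}\,dt\Bigr),
\]
which is non-negative precisely because of the strict inequality $\int\phi_{x}\,dt<c\le\int c_{t}\,dt$.

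Third, to conclude contractibility, I would observe that the conditions \ref{theta-data-3}--\ref{theta-data-5} are affine-linear inequalities in the pair $(c_{x,s,t},b_{x,s,t})$, and the asymptotic normalizations in \ref{theta-data-1} are preserved under convex combinations. Thus the set of valid connection one-forms is a non-empty convex subset of a vector space of $1$-forms, hence contractible. The space of families of $\omega$-tame, Liouville-equivariant almost complex structures agreeing with the fixed $J$ for $s\le-s_{0}$ is contractible by the standard fiberwise argument (the $\omega$-tame condition is open and convex, and the Liouville-equivariance is preserved by convex combinations in the end). As the two factors are independent, their product is contractible, hence weakly contractible.

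The main obstacle is condition \ref{theta-data-4}, which is the most rigid of the constraints: the construction must be arranged so that the time-averaged slope $\int c_{x,s,t}\,dt$ is non-increasing in $s$ for $\partial_{s}c\le\partial_{t}b$ to hold. This is precisely where the strict inequality $\ell_{\Omega}(A(x,-))<c$ in the definition of $Z(\Lambda_{c})$ is used; in the degenerate case of equality, one could only achieve \ref{theta-data-4} with equality $\partial_{s}c=\partial_{t}b$, severely constraining the construction.
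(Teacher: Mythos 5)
Your proposal is correct and follows essentially the same route as the paper's own proof: construct an explicit $\Theta$-datum by linearly interpolating $H_{t}$ to a boundary family $H_{x,0,t}$ dominating the pointwise support function with time-averaged slope at most $c$, with $K_{x,s,t}$ defined by the same integral formula as in Lemma \ref{lemma:when-does-cdata-exist}, and then use convexity of connection one-forms together with contractibility of the space of almost complex structures. Your explicit computation of $\partial_{t}b-\partial_{s}c$ and the remarks on the role of the strict inequality are a useful elaboration, but the argument is the paper's.
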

\begin{proof}
  The argument is exactly as in Lemma \ref{lemma:when-does-cdata-exist}; one can take convex combinations between any the connection one forms, and use the contractibility of the space of almost complex structures, to prove the space of $\Theta$-data is either empty or weakly contractible. To prove it is non-empty, one picks a family of smooth functions $H_{x,0,t}$ so that $H_{x,0,t}=c_{x,0,t}r$ for $r\ge r_{0}$, and: $$c_{x,0,t}\ge \max \set{\ip{p,q'(t)}:p\in \Omega\cap T^{*}M_{q(t)}\text{ where }q(t)=A(x,t)}.$$ Moreover, we can assume that:
  \begin{equation}\label{eq:estimate-theta-data-slope-decrease}
    \int c_{x,0,t}\d t\le c\le \text{slope of $H_{t}$}\text{ for each }x\in P.
  \end{equation}
  Similarly to the proof of Lemma \ref{lemma:when-does-cdata-exist}, define:
  \begin{equation*}
    \left\{
      \begin{aligned}
        H_{x,s,t}&=(1-\beta(s+1))H_{t}+\beta(s+1)H_{x,0,t},\\
        K_{x,s,t}&=\int_{0}^{t}\partial_{s}H_{x,s,\tau}d\tau-t\partial_{s}\int_{0}^{1}H_{x,s,\tau}d\tau.
      \end{aligned}
    \right.
  \end{equation*}
  Then \eqref{eq:estimate-theta-data-slope-decrease} implies condition \ref{theta-data-4}. Condition \ref{theta-data-5} holds by construction, and the other properties are obvious.
\end{proof}

\subsubsection{A priori energy estimate}
\label{sec:a-priori-energy-estim-theta-map}

The $\Theta$-map will be defined as follows: given $\Theta$-data $(\mathfrak{a},J)$, we will pick a generic perturbation one-form $\mathfrak{p}$ on $P\times \Sigma\times W$, which we assume vanishes above $s=0$ and above $s\le -s_{0}$. Then we will count the rigid solutions $(x,u)$ to \S\ref{sec:floers-equation-general} satisfying the boundary conditions:
\begin{equation}\label{eq:theta-boundary-equations}
  u(0,t)\in T^{*}M_{q(t)}\text{ where }q(t)=A(x,t).
\end{equation}
In this subsection, we show such solutions satisfy a priori energy bound. Similar energy estimates are proved in \cite{brocic-cant-JFPTA-2024,brocic-cant-shelukhin-math-ann-2025}.

We will use the general energy estimate Lemma \ref{lemma:energy-estimate}. First, \ref{theta-data-4} implies $\mathfrak{a}$ has curvature bounded from above. Therefore Lemma \ref{lemma:a-has-then-p-has} implies $\mathfrak{a},\mathfrak{p}$ also has curvature bounded from above. Thus, the a priori energy bound follows from:
\begin{lemma}\label{lemma:energy-estimate-theta}
  Let $\mathfrak{a},J$ be $\Theta$-data, and let $\mathfrak{p}$ be a perturbation one-form as above. There is a uniform bound:
  \begin{equation*}
    \sup \set{\omega(u)-\int_{\bd\Sigma}v^{*}\mathfrak{a}_{x}}<\infty
  \end{equation*}
  where the supremum is over $v(z)=(z,u(z))$ where $(x,u)$ is a finite energy solution of \S\ref{sec:floers-equation-general} for $(P\times \Sigma\times W,\mathfrak{a},\mathfrak{p},J)$ with boundary conditions \eqref{eq:theta-boundary-equations}.
\end{lemma}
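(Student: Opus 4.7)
The plan is to apply Stokes' theorem on the half-infinite cylinder and convert the desired bound into a boundary integral plus a finite asymptotic contribution, then exploit the compatibility condition \ref{theta-data-5} between the slope $c_{x,0,t}$ at the boundary and the support function of $\Omega$ along the loop $A(x,-)$.

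First, because $\omega=\d\lambda$ on $T^{*}M$ and $u$ is a finite-energy solution with left asymptotic orbit $\gamma$ as $s\to-\infty$, Stokes' theorem gives
\begin{equation*}
  \omega(u)=\int_{0}^{1}u(0,t)^{*}\lambda-\int_{0}^{1}\gamma^{*}\lambda.
\end{equation*}
Writing $u(0,t)=(q(t),p(t))\in T^{*}M_{q(t)}$ with $q(t)=A(x,t)$, one has $u(0,-)^{*}\lambda=\ip{p(t),q'(t)}\d t$. On the boundary $\set{s=0}$, the form $\d s$ vanishes, so $v^{*}\mathfrak{a}_{x}|_{\bd\Sigma}=H_{x,0,t}(u(0,t))\d t$. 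Therefore
\begin{equation*}
  \omega(u)-\int_{\bd\Sigma}v^{*}\mathfrak{a}_{x}=\int_{0}^{1}\bigl[\ip{p(t),q'(t)}-H_{x,0,t}(u(0,t))\bigr]\d t-\int_{0}^{1}\gamma^{*}\lambda.
\end{equation*}

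The second step is to bound the integrand on the right uniformly. Using the one-homogeneity of $r$ under fiberwise dilation, any $p\in T^{*}M_{q(t)}$ with $r(p)\ge r_{0}$ can be written $p=r(p)\tilde{p}$ with $\tilde{p}\in \bd\Omega\cap T^{*}M_{q(t)}$, whence
\begin{equation*}
  \ip{p,q'(t)}\le r(p)\cdot\max\set{\ip{\tilde{p},q'(t)}:\tilde{p}\in \Omega\cap T^{*}M_{q(t)}},
\end{equation*}
while $H_{x,0,t}(p)=c_{x,0,t}r(p)$ in the same region. Condition \ref{theta-data-5} then forces the integrand to be non-positive wherever $r(p)\ge r_{0}$. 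On the complement $\set{r(p)\le r_{0}}$, the fiberwise sublevel set is compact and $q(t)$ ranges over the compact image of $A$, so the integrand is bounded above by a constant depending only on $A$, $H_{x,0,t}$, and $\Omega$.

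Finally, the asymptotic term $\int_{0}^{1}\gamma^{*}\lambda$ takes values in a finite set: since $(H_{t},J)$ is admissible, the non-degenerate 1-periodic orbits of $X_{t}$ form a finite collection contained in a compact region of $W$ (using that $X_{t}$ is Liouville-equivariant outside a compact set, so no orbits can escape to infinity when the slope avoids the Reeb period spectrum). Combining the uniform pointwise bound on the boundary integrand with the finiteness of the asymptotic contribution yields the claimed uniform upper bound. The main point of the argument is the sign computation in the region $r\ge r_{0}$; this is exactly where the compatibility between the slope $c_{x,0,t}$ at the top of the cylinder and the support function $\ell_{\Omega}(A(x,-))$ of the loop is used, and it explains why \ref{theta-data-5} appears in the definition of $\Theta$-data.
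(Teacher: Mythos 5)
Your proof is correct and follows essentially the same route as the paper: apply Stokes to write $\omega(u)$ as a boundary term minus the asymptotic term $\int\gamma^{*}\lambda$, observe the boundary contribution of $\mathfrak{a}_{x}$ is $\int H_{x,0,t}\,\d t$, reduce to bounding $\int_{0}^{1}\langle p(t),q'(t)\rangle-H_{x,0,t}(u(0,t))\,\d t$, and control the integrand by rescaling in the fiber (homogeneity of $r$) combined with condition \ref{theta-data-5} for $r\ge r_{0}$ and compactness for $r\le r_{0}$. The paper phrases the final step slightly more compactly by introducing the auxiliary function $F(t,w)=\max\{\ldots\}r(w)-H_{x,0,t}(w)$ and taking its global maximum, but this is the same two-case argument you give.
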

\begin{proof}
  Unpacking the definitions, one sees that:
  \begin{equation*}
    \int_{\bd\Sigma}v^{*}\mathfrak{a}_{x}=\int_{0}^{1} H_{x,0,t}(u(0,t))d t,
  \end{equation*}
  and:
  \begin{equation*}
    \omega(u)=\int_{0}^{1}\lambda_{u(0,t)}(\bd_{t}u(0,t))dt-\int \gamma^{*}\lambda,
  \end{equation*}
  where $\gamma$ is the asymptotic orbit of $u$. Since there are only finitely many orbits, it is sufficient to bound:
  \begin{equation*}
    I=\int_{0}^{1}\lambda_{u(0,t)}(\bd_{t}u(0,t))-H_{x,0,t}(u(0,t))dt,
  \end{equation*}
  as can be verified by inspecting the terms in the general estimate Lemma \ref{lemma:energy-estimate}.

  As we are working with the cotangent bundle $W=T^{*}M$, we use the canonical Liouville form $\lambda=p\d q$; substituting this, and using the boundary conditions for $u$, the above integral becomes:
  \begin{equation*}
    I=\int_{0}^{1}\ip{p(t),q'(t)}-H_{x,0,t}(u(0,t))dt,
  \end{equation*}
  where $q(t)=A(x,t)$ and $u(0,t)=(p(t),q(t))$ considered as section of $T^{*}M$.

  Write $r(t)=r(u(0,t))$. Then:
  \begin{equation*}
    \ip{p(t),q'(t)}\le \max \set{\ip{p,q'(t)}:p\in \Omega\cap T^{*}M_{q(t)}}r(t),
  \end{equation*}
  as can be proved by rescaling in the fiber direction until $p\in \bd\Omega$, and then using the fact the above estimate is invariant under rescaling in the fiber direction. Thus:
  \begin{equation*}
    I\le \int_{0}^{1}\max \set{\ip{p,q'(t)}:p\in \Omega\cap T^{*}M_{q(t)}}r(t)-H_{x,0,t}(u(0,t))d t.
  \end{equation*}
  Because $\Theta$-data satisfies property \ref{theta-data-5}, the function:
  \begin{equation*}
    F(t,w)=\max \set{\ip{p,q'(t)}:p\in \Omega\cap T^{*}M_{q(t)}}r(w)-H_{x,0,t}(w)
  \end{equation*}
  is non-positive for $r(w)\ge r_{0}$. Thus $F$ attains a finite maximum $F_{\mathrm{max}}$ over the space of $t,w,x$, independently of $u$. Then $I\le F_{\mathrm{max}}$ holds independently of the solution $(x,u)$, completing the proof.
\end{proof}

\subsubsection{Definition of $\Theta$}
\label{sec:definition-theta}

Let $(H_{t},J)$ be admissible for defining $\mathrm{CF}(H_{t},J)$, let $A:P\times \R/\Z\to M$ be a family of loops in $Z(\Lambda_{c})$, and let $(\mathfrak{a},J)$ be $\Theta$-data. We assume throughout this subsection that the slope of $H_{t}$ is at least $c$.

For a generic perturbation term $\mathfrak{p}$ on $P\times \Sigma\times W$, consider the moduli space $\mathscr{M}$ of solutions to \S\ref{sec:floers-equation-general}, satisfying the boundary conditions \eqref{eq:theta-boundary-equations}. Let $\mathscr{M}_{0}$ be the rigid component of $\mathscr{M}$.

We define:
\begin{equation*}
  \Theta(A):=\sum_{\gamma}\#\set{(x,u)\in \mathscr{M}_{0}:u\text{ is asymptotic to }\gamma}\gamma.
\end{equation*}
By the above a priori energy estimate, this sum is finite. The key structural result is:
\begin{lemma}\label{lemma:theta-structural}
  For each $A\in Z(\Lambda_{c})$, $\Theta(A)$ is a cycle in the Floer complex. Moreover, the homology class of $\Theta(A)$ in $\mathrm{HF}(H_{t},J)$ is independent of the choice of $\Theta$-data, perturbation one-form $\mathfrak{p}$, and is independent of the bordism class of $A$. The resulting map:
  \begin{equation*}
    \Theta:H(\Lambda_{c})\to \mathrm{HF}(H_{t},J)
  \end{equation*}
  is a map of $\Z/2\Z$-vector spaces.
\end{lemma}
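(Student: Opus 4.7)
The plan is to follow the standard Floer-theoretic cobordism strategy, leveraging the compactness and transversality machinery already developed in \S3.3 together with the a priori energy bound of Lemma \ref{lemma:energy-estimate-theta}. First I would check that for generic perturbation $\mathfrak{p}$ the moduli space $\mathscr{M}$ is a smooth manifold cut transversally: the cotangent-fiber boundary condition \eqref{eq:theta-boundary-equations} is totally real and weakly exact, so the usual Fredholm theory with Lagrangian boundary yields a linearized operator of the right index, and the Sard-Smale argument applies (as in \S\ref{sec:transversality}). Together with Propositions \ref{proposition:gradient-bound} and \ref{proposition:maximum-principle} and the energy bound, the only source of non-compactness in a component of $\mathscr{M}$ is Floer breaking at the single puncture $s\to-\infty$ (there is no bubbling because $W=T^{*}M$ is symplectically aspherical and cotangent fibers are weakly exact). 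To show $d\Theta(A)=0$, I would study the one-dimensional component $\mathscr{M}_{1}$: each non-compact end corresponds to a rigid configuration $(x,u')\in \mathscr{M}_{0}$ with a non-stationary Floer differential cylinder glued on at the puncture, and the usual gluing theory identifies such ends with the coefficients of $d\Theta(A)$. Since the number of ends of a compact $1$-manifold is even, $d\Theta(A)=0$.

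For independence of the $\Theta$-data and of $\mathfrak{p}$: given two admissible choices, the (weak) contractibility of the space of $\Theta$-data together with the contractibility of the spaces of perturbation forms and almost complex structures produces a smooth interpolating family over $[0,1]$. The rigid elements of the resulting parametric moduli space over $[0,1]\times P$ assemble into a chain whose Floer coboundary equals the difference of the two cycles, exactly as in Lemma \ref{lemma:i-chain-map}. Bordism invariance proceeds by the same principle: given a cobordism $C:Q\times \R/\Z\to M$ within $\Lambda_{c}$ from $A_{0}$ to $A_{1}$, one extends the $\Theta$-data from $\partial Q\times \Sigma\times W$ to $Q\times \Sigma\times W$ in such a way that condition \ref{theta-data-5} holds pointwise with $q(t)=C(q,t)$ and the asymptotic system $(H_{t},J)$ is unchanged. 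The count of rigid solutions over $Q$ then produces a chain whose coboundary is $\Theta(A_{0})+\Theta(A_{1})$, so the classes agree in $\mathrm{HF}(H_{t},J)$.

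The main technical obstacle I expect is the global construction of $\Theta$-data over the cobordism $Q$: condition \ref{theta-data-5} imposes a pointwise lower bound on $c_{q,0,t}$ depending on $C(q,-)$, while condition \ref{theta-data-4} ties $c_{q,s,t}$ to $H_{t}$ at the negative end, and both must be matched to the previously chosen $\Theta$-data over $\partial Q$. Feasibility relies on the strict inequality $\ell_{\Omega}(C(q,-))<c\le\mathrm{slope}(H_{t})$ being uniform on the compact $Q$, which leaves enough room to interpolate the slope function smoothly while preserving \ref{theta-data-3}, \ref{theta-data-4}, \ref{theta-data-5}; this is analogous to the existence proof for $\Theta$-data given just before Lemma \ref{lemma:energy-estimate-theta}, but carried out in a family. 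Finally, linearity of $\Theta$ with respect to the monoidal coproduct is immediate: for $A=A_{1}\sqcup A_{2}$ on $P=P_{1}\sqcup P_{2}$, both the admissible $\Theta$-data and the moduli space split as disjoint unions, so $\Theta(A)=\Theta(A_{1})+\Theta(A_{2})$ at the chain level, descending to $H(\Lambda_{c})\to \mathrm{HF}(H_{t},J)$ as a homomorphism of $\Z/2\Z$-vector spaces.
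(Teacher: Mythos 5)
Your proposal follows exactly the paper's argument: prove $d\Theta(A)=0$ by counting ends of the one-dimensional component of $\mathscr{M}$, obtain independence of the data by a parametric moduli space over $[0,1]$, obtain bordism invariance by counting rigid solutions of the parametric moduli space fibered over the cobordism $Q$ (the chain $K$ in the paper's notation, whose coboundary is $\Theta(A)+\Theta(A')$), and observe linearity from the disjoint-union structure. You have also correctly pinned down the one genuine technical point — the existence of $\Theta$-data over all of $Q$, which uses the strict inequality $\ell_{\Omega}(C(q,-))<c\le\mathrm{slope}(H_{t})$ being uniform on compact $Q$ — exactly as the paper does.
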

\begin{proof}
  To see that $\Theta(A)$ is a cycle, one considers the ends of the 1-dimensional component $\mathscr{M}_{1}\subset \mathscr{M}$; one shows that the number of ends of the component of $\mathscr{M}_{1}$ asymptotic to $\gamma$ equals the coefficient in front of $\gamma$ appearing in the composition $d(\Theta(A))$. Thus $d(\Theta(A))=0$ as we count modulo two.

  To see that the chain homotopy class of $\Theta(A)$ is independent of the $\Theta$-data, or perturbation one-form $\mathfrak{p}$, one appeals to the usual parametric moduli space argument; see, e.g., the discussion at the end of \S\ref{sec:cont-maps-floer}.

  A bit care is needed to prove the chain homotopy class of $\Theta(A)$ is independent of the bordism class of $A$. Nonetheless, the argument is still based on a parametric moduli space. Given a cobordism $C:Q\times \R/\Z\to M$ between $A$ and $A'$, one can consider $\Theta$-data for $C$; this entails a connection one-form $\mathfrak{a}$ and almost complex structure on $Q\times \Sigma\times W$ satisfying properties \ref{theta-data-1} through \ref{theta-data-5}, the only difference being that $Q$ is a manifold with boundary rather than a closed manifold. Invoking another generic perturbation term $\mathfrak{p}$, this data leads to a parametric moduli space $\mathscr{M}^{\mathrm{param}}$ of solutions $(y,u)$ where $y\in Q$. We note that $\Theta$-data for $C$ exists since we assume the cobordism happens within $Z(\Lambda_{c})$, and the slope of $H_{t}$ is at least $c$, by assumption.

  For generic perturbation term, the $1$-dimensional component $\mathscr{M}_{1}^{\mathrm{param}}$ has two boundary components:
  \begin{equation*}
    \bd\mathscr{M}_{1}^{\mathrm{param}}=\set{(y,u):y\in P\text{ or }y\in P'},
  \end{equation*}
  where $P\sqcup P'=\bd Q$. These solutions on the boundary are rigid when $y$ is restricted to variations tangent to $\bd Q$. A moment's reflection therefore reveals that:
  \begin{equation*}
    \Theta(A)-\Theta(A')=\sum \#\set{(y,u)\in \bd\mathscr{M}^{\mathrm{param}}_{1}:u\text{ is asymptotic to $\gamma$}}\gamma.
  \end{equation*}
  If $\mathscr{M}_{1}^{\mathrm{param}}$ were compact, then we would be done, as the number of boundary points of a compact manifold with boundary is even. However, $\mathscr{M}_{1}^{\mathrm{param}}$ can have non-compact components. Nonetheless, the non-compact ends of $\mathscr{M}_{1}^{\mathrm{param}}$ can be understood via Floer breaking/gluing; the usual theory shows that:
  \begin{equation*}
    d(\sum \#\set{(y,u)\in \mathscr{M}_{0}^{\mathrm{param}}:u\text{ is asymptotic to $\gamma$}}\gamma)=dK=\sum N_{\gamma'}\gamma'
  \end{equation*}
  where $N_{\gamma'}$ is the number of non-compact ends of the component of $\mathscr{M}_{1}^{\mathrm{param}}$ consisting of those $(y,u)$ where $u$ is asymptotic to $\gamma'$. Thus one proves:
  \begin{equation*}
    \Theta(A)-\Theta(A')=dK,
  \end{equation*}
  as desired.

  Finally, to see that $\Theta$ is a vector space map, it suffices to prove that $\Theta$ respects addition. Since addition in $H(\Lambda_{c})$ is given by disjoint union, and we can pick the $\Theta$-data independently for each component of the parameter space $P$, it follows easily that $\Theta(A+A')=\Theta(A)+\Theta(A')$.
\end{proof}

\subsubsection{Compatibility with continuation maps}
\label{sec:comp-with-cont}

To prove that $\Theta$ induces a map of persistence modules $H(\Lambda_{c})\to V_{c}$, it is necessary to show that $\Theta$ is compatible with continuation maps between the Floer cohomologies.

\begin{lemma}\label{lemma:theta-continuation}
  If $\mathfrak{c}:\mathrm{HF}(H_{0,t},J_{0})\to \mathrm{HF}(H_{1,t},J_{1})$ is a continuation map, as in \S\ref{sec:cont-maps-floer}, and the slope of $(H_{0,t},J_{0})$ at at least $c$, then:
  \begin{equation*}
    \mathfrak{c}\circ \Theta_{0}=\Theta_{1}
  \end{equation*}
  as maps $H(\Lambda_{c})\to \mathrm{HF}(H_{1,t},J_{1})$, where $\Theta_{i}:H(\Lambda_{c})\to \mathrm{HF}(H_{i,t},J_{i})$ is the $\Theta$-map constructed in \S\ref{sec:definition-theta}.
\end{lemma}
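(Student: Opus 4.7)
The plan is to reduce the lemma to the independence of $\Theta$ from the choice of $\Theta$-data, by constructing a one-parameter family of $\Theta$-data that interpolates between the data defining $\Theta_{1}$ and the concatenation of $\Theta_{0}$-data with continuation data. First I would fix $\Theta$-data $(\mathfrak{a}_{0},J_{0}')$ for the input $(H_{0,t},J_{0})$ and $A$, which is valid since $A\in Z(\Lambda_{c})$ and the slope of $H_{0,t}$ is at least $c$. I would also fix continuation data $(\mathfrak{b},J'')$ from $(H_{0,t},J_{0})$ to $(H_{1,t},J_{1})$, which exists by Lemma \ref{lemma:when-does-cdata-exist} since the slope of $H_{1,t}$ is no smaller than the slope of $H_{0,t}$, hence also at least $c$.

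Next, for each $R\ge 0$ I would produce glued data $(\mathfrak{a}_{R},J_{R})$ on $P\times (-\infty,0]\times \R/\Z \times W$ which agrees with $\mathfrak{a}_{0}$ (translated so that $s=0$ is the boundary) on $[-R,0]$, and which is obtained by inserting a copy of the continuation data $\mathfrak{b}$ (appropriately translated) in the region $s\le -R$. The key check is that this glued data is itself $\Theta$-data for $(H_{1,t},J_{1})$ and $A$: conditions \ref{theta-data-1}, \ref{theta-data-2}, \ref{theta-data-3} are preserved under concatenation; condition \ref{theta-data-5} only involves $s=0$ and is inherited from $\mathfrak{a}_{0}$; and the curvature inequality \ref{theta-data-4} holds because it is satisfied piecewise, and the transition regions are standard (bearing in mind that the slope at the left of the $\Theta$-piece matches the slope at the right of the continuation piece). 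For $R=0$ we obtain valid $\Theta$-data for $(H_{1,t},J_{1})$ and $A$; by Lemma \ref{lemma:theta-structural} the associated cocycle represents $\Theta_{1}(A)$.

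The heart of the proof is then a neck-stretching argument as $R\to\infty$. I would consider the parametric moduli space $\mathscr{M}^{\mathrm{param}}=\bigcup_{R\ge 0}\mathscr{M}(R)$ associated to a generic perturbation one-form $\mathfrak{p}_{R}$ compatible with the breaking as $R\to\infty$. The a priori energy estimate of Lemma \ref{lemma:energy-estimate-theta} applies uniformly in $R$, since the curvature and boundary contributions are controlled independently of $R$, so the usual Floer compactness analysis applies. An inspection of the ends of the $1$-dimensional part of $\mathscr{M}^{\mathrm{param}}$ shows that ends with $R\to\infty$ correspond precisely to configurations consisting of a rigid $\Theta_{0}$-solution on the right and a rigid continuation cylinder on the left, while interior ends and ends at $R=0$ yield a chain homotopy between $\mathfrak{c}(\Theta_{0}(A))$ (the count from $R\to\infty$, by the gluing theorem for continuation maps concatenated to boundary problems) and $\Theta_{1}(A)$ (the count at $R=0$). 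This establishes $\mathfrak{c}_{*}\circ \Theta_{0}=\Theta_{1}$ on homology.

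The main technical obstacle is the parametric gluing result at the $R\to\infty$ end, namely showing that every broken configuration (a rigid $\Theta_{0}$-solution paired with a rigid continuation cylinder whose asymptotics match) glues uniquely, for large $R$, to a solution in $\mathscr{M}(R)$. This is a standard gluing theorem in Floer theory for half-cylinders with Lagrangian boundary conditions; the only non-routine point is that the Lagrangian boundary conditions for $\Theta$ are $\eta$-dependent, but since the gluing happens far from the boundary $s=0$ the boundary conditions do not interact with the gluing region and the argument follows the lines of \cite{salamon-notes-1997}. Once this gluing is in hand, the verification that every non-compact end at $R=\infty$ has this form (and conversely) proceeds by the same compactness-up-to-breaking discussion employed throughout the paper, completing the proof.
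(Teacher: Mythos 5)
Your proposal is correct, but it routes through a heavier argument than the paper's own. You set up a parametric moduli space $\mathscr{M}^{\mathrm{param}}=\bigcup_{R\ge 0}\mathscr{M}(R)$, establish uniform compactness, and extract a chain homotopy from the $1$-dimensional component by comparing the ends at $R=0$ against the ends at $R\to\infty$. The paper instead observes that for a \emph{single} sufficiently large value of $R$, the glued data $\mathfrak{a}^{1}_{x,s,t}$ (continuation data spliced onto $\Theta_{0}$-data as you describe) is itself admissible $\Theta_{1}$-data, so by Lemma~\ref{lemma:theta-structural} the count it produces \emph{is} a representative of $\Theta_{1}(A)$; and by the standard Floer gluing theorem the same count, for $R$ large, is in bijection with broken configurations contributing to $\mathfrak{c}(\Theta_{0}(A))$. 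Thus the cycles agree on the nose without any $R$-family or chain-homotopy bookkeeping. Both strategies rest on the same two pillars — the admissibility of glued data and the standard gluing theorem for continuation-type cylinders with Lagrangian boundary — but the paper's version replaces your neck-stretching cobordism with a one-step identification.

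Two small points worth flagging in your writeup. First, your assertion that ``for $R=0$ we obtain valid $\Theta$-data for $(H_{1,t},J_{1})$'' is imprecise as stated: at $R=0$ the interval $[-R,0]$ degenerates, and the glued data is only guaranteed to be smooth and well-formed once $R$ is large enough for the continuation data and the $\Theta_{0}$-data to have settled to their common asymptotic $H_{0,t}\,\d t$ on an overlap region; you should start the family at some $R_{0}>0$ (or note that for $R$ below a threshold the data is not of the prescribed form, which does not affect the argument). Second, you correctly note that the $\eta$-dependence of the Lagrangian boundary conditions is harmless because the gluing neck sits far from $s=0$; this is precisely the observation that makes the standard gluing theory applicable here, and it is worth stating explicitly that the family of $\Theta$-data and boundary conditions is held fixed on $s\ge -R$, so the only $R$-dependence is the translation of the continuation piece.
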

\begin{proof}
  This follows easily from the construction of $\mathfrak{c}$ and $\Theta_{i}$, and is left to the reader (see also the arguments in \cite{abbondandolo-schwarz-GT-2010,abouzaid-EMS-2015}).
\end{proof}

\subsection{BV-operators}
\label{sec:bv-operators}

This subsection is concerned with showing that $\Theta$ is compatible with the BV-operator on $H(\Lambda_{c})$ constructed in \S\ref{sec:bv-operator} and the BV-operator on $\mathrm{HF}(H_{t},J)$ constructed in \S\ref{sec:bv-operator-on-HF}. This is part of the content of Theorem \ref{theorem:main-comparison}.

\begin{lemma}
  If $(H_{t},J)$ is admissible for defining the Floer complex, and has slope at least $c$, then:
  \begin{equation*}
    \Delta\circ \Theta=\Theta\circ \Delta,
  \end{equation*}
  as maps $H(\Lambda_{c})\to \mathrm{HF}(H_{t},J)$.
\end{lemma}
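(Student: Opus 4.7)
The plan is to exhibit a one-parameter family of $\Theta$-data for $\Delta A$, indexed by a gluing length $R\in[0,+\infty)$, for which at $R=0$ the associated moduli space directly computes $\Theta(\Delta[A])$, while as $R\to+\infty$ the moduli space degenerates, via Floer-theoretic breaking and gluing, into configurations counting $\Delta(\Theta[A])$. Running $R$ from $0$ to $\infty$ then yields a cobordism between the cycles representing $\Theta(\Delta[A])$ and $\Delta(\Theta[A])$, proving the lemma on homology.

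First I would fix $\Theta$-data $(\mathfrak{a}^{0},J^{0})$ for the original family $A$, together with BV-data for $(H_{t},J)$ as in \S\ref{sec:bv-operator-on-HF}. For each $R\ge 0$ I would construct $\Theta$-data for $\Delta A$ on $(\R/\Z\times P)\times\Sigma\times W$ by a pullback-and-glue procedure: letting $\chi_{R}\colon(-\infty,0]\to[0,1]$ be a smooth non-decreasing function which equals $0$ on $(-\infty,-R-1]$ and $1$ on $[-R,0]$, the substitution $\tilde u(s,t):=u(s,t-\theta\,\chi_{R}(s))$ converts a Floer solution $u$ for $\mathfrak{a}^{0}$ into a solution of a modified equation whose boundary condition at $s=0$ becomes $\tilde u(0,t)\in T^{*}M_{A(x,t-\theta)}$ (matching $\Delta A$), whose asymptotic Hamiltonian at $s=-\infty$ remains the fixed $H_{t}$, and whose time-reparametrization is concentrated on the transition strip $[-R-1,-R]$. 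The resulting connection one-form $\mathfrak{a}^{R}$, together with the pulled-back almost complex structure $J^{R}$, gives $\Theta$-data for $\Delta A$; checking conditions \ref{theta-data-1}--\ref{theta-data-5} is routine once one notes that $\ell_{\Omega}$ is invariant under time reparametrization, so the slope bounds required for $\Delta A$ coincide with those for $A$.

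The two ends of the family are then analyzed separately. At $R=0$ the transition is pushed against the boundary and $\mathfrak{a}^{0}$ is itself a standard choice of $\Theta$-data for $\Delta A$; by Lemma \ref{lemma:theta-structural} the count of rigid solutions represents $\Theta(\Delta[A])$. As $R\to+\infty$ the transition region becomes separated from both the boundary and the asymptotic by long cylindrical regions on which the data is, up to the time-translation already performed, either the original $\mathfrak{a}^{0}$ or translation-invariant BV-data. A standard parametric Floer gluing argument, formally identical to the one used in Lemma \ref{lemma:theta-continuation}, then shows that the moduli space for large $R$ is cobordant to the fibered product of the moduli space of $\Theta$-half-cylinders computing $\Theta(A)$ with the moduli space of BV-cylinders defining $\Delta$ on the resulting orbits; this count is precisely $\Delta(\Theta[A])$. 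Interpolating in $R$ and collecting the chain-homotopy terms coming from Floer breakings at interior $R$ gives the chain-level identity $\Theta(\Delta A)+\Delta(\Theta A)=dK+Kd$, which is what we want.

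The main obstacle is the parametric gluing at $R\to\infty$: one must show that any sequence $(\theta_{n},x_{n},\tilde u_{n})$ with $R_{n}\to\infty$ decomposes, after passing to a subsequence and undoing the substitution, into a $\Theta$-half-cylinder on the right and a BV-cylinder in the transition region, and conversely that every such broken configuration arises from a unique nearby solution of the glued equation. This is slightly more delicate than the gluing in Lemma \ref{lemma:theta-continuation} because the gluing parameter is coupled with the $\theta\in\R/\Z$ parameter and with the moving boundary condition, but it follows the same recipe as in \cite{abbondandolo-schwarz-CPAM-2006,abouzaid-EMS-2015}. A secondary technical point is the compatible choice of generic perturbation one-forms $\mathfrak{p}^{R}$ as $R$ varies, handled by the usual recursive construction of perturbation data matching prescribed breakings.
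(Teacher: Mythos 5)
The approach you propose is genuinely different from the paper's, but it has a technical flaw that would need to be fixed before the argument works.

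The problem lies with the substitution $\tilde u(s,t):=u(s,t-\theta\,\chi_{R}(s))$. The map $\psi(s,t)=(s,t-\theta\chi_{R}(s))$ is \emph{not} a holomorphic reparametrization of the cylinder unless $\chi_{R}$ is constant: one has $d\psi(j\partial_{s})=d\psi(\partial_{t})=\partial_{t}$, whereas $j\,d\psi(\partial_{s})=j(\partial_{s}-\theta\chi_{R}'\partial_{t})=\partial_{t}+\theta\chi_{R}'\partial_{s}$, so the two differ by $\theta\chi_{R}'\partial_{s}$. Concretely, if $u$ solves the $\mathfrak{a}^{0}$-equation $\partial_{s}u+J(\partial_{t}u-X_{H}(u))=0$, then the reparametrized $\tilde u$ satisfies
\begin{equation*}
  \partial_{s}\tilde u+\theta\chi_{R}'(s)\,\partial_{t}\tilde u+\tilde J\bigl(\partial_{t}\tilde u-X_{\tilde H}(\tilde u)\bigr)=0,
\end{equation*}
and the extra term $\theta\chi_{R}'(s)\partial_{t}\tilde u$ is not of the form $-X_{K}(\tilde u)$ for any Hamiltonian $K$ (it is a derivative of the map, not a vector field on $W$). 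So the modified equation is not a Floer equation for a Hamiltonian connection, and the claim that $(\mathfrak{a}^{R},J^{R})$ is $\Theta$-data for $\Delta A$ in the sense of \S\ref{sec:data-theta-morphism} fails; "pulling back the almost complex structure" does not save this, because the issue is the complex structure $j$ on the domain, which \S\ref{sec:data-theta-morphism} takes to be the standard cylinder structure. Consequently Lemma \ref{lemma:theta-structural} cannot be invoked to conclude that the $R$-moduli space computes $\Theta(\Delta[A])$, and the rest of the argument does not get off the ground.

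The way the paper circumvents exactly this issue is worth noting. It glues BV-data $\mathfrak{a}^{\Delta}$ (which already has a $K_{\theta,s,t}\d s$-component, designed so that \ref{bv-data-4} holds) to $\Theta$-data for $A$, so the interpolation between the twisted and untwisted data is carried out by a genuine Hamiltonian connection, not by a domain coordinate change. For a single large $R$ this glued moduli space computes $\Delta(\Theta[A])$. One then applies the \emph{global} time shift $\bar u(s,t)=u(s,t-\theta)$, which \emph{is} holomorphic (being a rigid rotation of the circle factor), hence preserves the Floer framework: the shifted $\bar{\mathfrak{a}}_{\theta,x,s,t}=\mathfrak{a}_{\theta,x,s,t-\theta}$ is again a valid Hamiltonian connection, now with the $\Delta A$ boundary conditions and an untwisted asymptotic. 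That identifies the two moduli spaces on the nose; no parametric cobordism over $R$, no adiabatic gluing, and no deformation of $j$ is needed. If you want to repair your proof, replace the $\chi_{R}$-substitution by the paper's combination of gluing BV-data and performing the constant shift; the one-parameter family over $R$ then becomes unnecessary.
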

\begin{proof}
  The argument has no surprises, and follows similar lines to the arguments in \cite{abbondandolo-schwarz-GT-2010,abouzaid-EMS-2015}. We leave the details to the reader.
\end{proof}

\subsection{Inclusion of the constant loops and PSS}
\label{sec:incl-const-loops}

In this subsection, we argue that $\Theta$ intertwines the inclusion of constant loops map $\mathfrak{i}:H^{*}(W)\to H(\Lambda_{c})$ with $\mathrm{PSS}:H^{*}(W)\to \mathrm{HF}(H_{t},J)$. We assume that $H_{t},J$ has slope at least $c>0$.

\begin{lemma}
  Suppose that $\beta\in H^{*}(W)$. Then $\mathrm{PSS}(\beta)=\Theta(\mathfrak{i}(\beta))$ as homology classes in $\mathrm{HF}(H_{t},J)$, where $\mathfrak{i}(\beta)$ is considered as an element of $H(\Lambda_{c})$.
\end{lemma}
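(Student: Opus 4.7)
My strategy is to construct a parametric moduli space interpolating between the $\Theta$-data on a half-cylinder and the PSS-data on a punctured plane. By Lemma \ref{lemma:isomorphism_constant}, I may replace $\beta=[C]$ by the tautological ``fiber bundle'' cocycle $C':S'\to T^{*}M$, where $S'=\set{(x,p):x\in P,\,p\in T^{*}M_{f(x)}}$ and $C'(x,p)=p$. With this choice, the PSS incidence condition $u(0)=C'(\sigma)$ for $\sigma=(x,p)\in S'$ becomes ``$x\in P$ is free and $u(0)\in T^{*}M_{f(x)}$,'' which has exactly the same form as the boundary condition used in $\Theta(\mathfrak{i}(\beta))$: the constant family $A(x,t)=f(x)$ representing $\mathfrak{i}(\beta)$ imposes $u(0,t)\in T^{*}M_{f(x)}$ for each $t$, i.e., a Lagrangian boundary condition against the (constant in $t$) Lagrangian $T^{*}M_{f(x)}$.

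For $r\in[0,1]$, let $\Sigma_{r}=\set{z\in\C:\abs{z}\ge r}$, so that $\Sigma_{0}=\C$ (the PSS domain) and $\Sigma_{1}$ is conformally the half-cylinder $(-\infty,0]\times\R/\Z$ via $z=e^{-2\pi(s+it)}$ (the $\Theta$-domain). I equip $P\times\Sigma_{r}\times W$ with a smooth family $(\mathfrak{a}_{r},J_{r})$ interpolating between $\Theta$-data at $r=1$ and PSS-data at $r=0$; such an interpolation exists by the contractibility arguments used in \S\ref{sec:data-theta-morphism} and \S\ref{sec:incl-const-loops-in-HF}. I impose the Lagrangian boundary condition $u(\bd D(r))\subset T^{*}M_{f(x)}$ when $r>0$, which degenerates into the incidence $u(0)\in T^{*}M_{f(x)}$ as $r\to 0$. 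A generic perturbation one-form $\mathfrak{p}$ cuts out a parametric moduli space $\mathscr{M}$ of tuples $(r,x,u)$ whose rigid and one-dimensional strata have virtual dimensions consistent with both endpoints.

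The non-compact ends of the one-dimensional component $\mathscr{M}_{1}$ are of three types: Floer breakings at the Hamiltonian asymptotic $s\to-\infty$ (producing a chain homotopy term $dK$), the compact $r=1$ endpoint (producing the cycle $\Theta(\mathfrak{i}(\beta))$), and the $r=0$ endpoint (which I claim recovers the cycle $\mathrm{PSS}(C')$). The main obstacle is identifying the $r=0$ stratum with the PSS moduli space, a ``collapsing Lagrangian'' analysis: for $(r_{n},x_{n},u_{n})\in\mathscr{M}$ with $r_{n}\to 0$, Lemma \ref{lemma:energy-estimate-theta} (applied uniformly in $r$) provides uniform energy bounds, so a subsequence converges on compact subsets of $\C\setminus\set{0}$ to a finite-energy solution $u_{\infty}$ which extends across $0$ by the removable singularity theorem, with $u_{\infty}(0)\in T^{*}M_{f(x_{\infty})}$ by continuity of the boundary trace. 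Bubbling of holomorphic disks along the vanishing boundary circle is excluded because $T^{*}M_{f(x)}$ is weakly exact and contractible, so any such disk has vanishing symplectic area and is therefore constant. A complementary gluing construction produces, for each rigid PSS solution, a corresponding element of $\mathscr{M}$ for every sufficiently small $r>0$, establishing the bijection between the $r=0$ stratum and the PSS moduli space for $C'$. Summing the three boundary contributions modulo 2 gives the chain-level identity $\Theta(\mathfrak{i}(\beta))+\mathrm{PSS}(C')=dK$, and combining with $\mathrm{PSS}(\beta)=\mathrm{PSS}(C')$ (by Lemma \ref{lemma:isomorphism_constant} and the proper-cobordism invariance of PSS) completes the proof.
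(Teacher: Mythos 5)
Your proposal is correct and is essentially the paper's own argument under a conformal reparametrization: the paper fixes the $\Theta$-domain $(-\infty,0]\times\R/\Z$ and shifts the connection one-form by a translation parameter $R\in(s_{0},\infty)$ (via $\mathfrak{a}_{R,x,s,t}=\mathfrak{a}^{\mathrm{PSS}}_{s+R,t}$), whereas you fix the PSS-type data on $\C$ and shrink the boundary circle $|z|=r$; setting $r=e^{-2\pi R}$ shows the two parametric moduli spaces are the same after the change of variables $w(s,t)=u(s-R,t)$ that the paper performs when analyzing the $R\to\infty$ end. The compactness argument (removable singularity plus exclusion of disk bubbling by weak exactness of the fiber) and the complementary gluing statement you invoke at the degenerate end match the paper's treatment verbatim.
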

\begin{proof}
  The first step in the proof is to appeal to the proof of Lemma \ref{lemma:isomorphism_constant}; there it is shown that $\beta$ is represented by the map $C:S\to T^{*}M$ in the fiber product diagram:
  \begin{equation*}
    \begin{tikzcd}
      {S}\arrow[d,"{}"]\arrow[r,"{C}"] &{T^{*}M}\arrow[d,"{}"]\\
      {P}\arrow[r,"{f}"] &{M},
    \end{tikzcd}
  \end{equation*}
  where $f:P\to M$ is a smooth map and $P$ is a compact manifold. One can think of $C$ as the collection of cotangent fibers living over the map $f$.

  As in Lemma \ref{lemma:isomorphism_constant}, it holds that $\mathfrak{i}(\beta)$ is represented by:
  \begin{equation*}
    A:P\times \R/\Z\to M\text{ given by }A(x,t)=f(x),
  \end{equation*}
  i.e., $\mathfrak{i}(\beta)$ is represented by a cycle of constant loops.

  Let $(\mathfrak{a}^{\mathrm{PSS}},J)$ be PSS-data, so it is defined on $\R\times \R/\Z\times W$. Let us suppose that $\mathfrak{a}_{s,t}^{\mathrm{PSS}}=0$ for $s\ge s_{0}$. For each $R>s_{0}$, define:
  \begin{equation*}
    \mathfrak{a}_{R,x,s,t}=\mathfrak{a}_{s+R,t}^{\mathrm{PSS}};
  \end{equation*}
  note that $\mathfrak{a}_{R,x,s,t}$ is $\Theta$-data for $A(x,t)=f(x)$, since $\mathfrak{a}_{R,x,0,t}=0$, for each $R$.

  One can think of $\mathfrak{a}_{R,x,s,t}$ as a connection one-form on the family: $$(s_{0},\infty)\times P\times (-\infty,0]\times \R/\Z,$$
  and one can introduce a compatible perturbation one-form $\mathfrak{p}_{R,x,s,t}$ supported in the region where $s\le -R$, which is sufficient to ensure transversality.

  This leads to the parametric moduli space $\mathscr{M}$ of solutions $(R,x,u)$, satisfying the boundary conditions:
  \begin{equation*}
    u(0,t)\in T^{*}M_{f(x)},
  \end{equation*}
  which admits a smooth map $(R,x,u)\mapsto R\in (s_{0},\infty)$. Let us denote the fiber over $R_{0}\in (s_{0},\infty)$ by $\mathscr{M}(R_{0})$. Since $\mathfrak{a}_{R_0,x,s,t}$ is $\Theta$-data for each $R_{0}$, the count of elements rigid elements of $\mathscr{M}(R_{0})$ represents $\Theta(\mathfrak{i}(\beta))$.

  The rigid elements of $\mathscr{M}(R_{0})$ live in the one-dimensional component of $\mathscr{M}$. Let us note that $\mathscr{M}$ has three kinds of non-compact ends:
  \begin{enumerate}
  \item ends containing sequences $(R_{n},x_{n},u_{n})$ where $R_{n}\to s_{0}$; we will ignore these ends,
  \item ends containing sequences $(R_{n},x_{n},u_{n})$ where $R_{n}$ converges in $(s_{0},\infty)$; as is well-understood, these ends converge to configurations contributing to $dK$ where $K$ counts the rigid elements of $\mathscr{M}$; we will also ignore these ends,
  \item\label{i-theta-PSS-3} ends containing sequences $(R_{n},x_{n},u_{n})$ where $R_{n}$ converges to $\infty$.
  \end{enumerate}
  A bit of thought shows that, the count of ends of type \ref{i-theta-PSS-3}, where the left asymptotic of $u_{n}$ is $\gamma$, defines a coefficient $N_{\gamma}$, and $\sum N_{\gamma}\gamma$ represents $\Theta(\mathfrak{i}(\beta))$; the argument is the same as the one given in \cite[\S5.3]{cant-arXiv-2024} --- briefly, the count of rigid elements $\mathscr{M}(R_{0})$, for $R_{0}$ large enough, equals the count of ends of type \ref{i-theta-PSS-3}.

  Thus it remains to show the count of ends of type \ref{i-theta-PSS-3} represents $\mathrm{PSS}(\beta)$. To analyze this, consider the change of coordinates $w(s,t)=u(s-R,t)$, so that $w$ is defined on $(-\infty,R]\times \R/\Z$. Then, along any sequence $(R_{n},x_{n},u_{n})$, the shifted map $(x_{n},w_{n})$ has a subsequence which converges to a solution $(x,w)$ of the PSS-equation, where $\lim_{s\to\infty}w(s,t)=C(x)$. For this step to work properly, we should assume that $\mathfrak{p}_{R,x,s-R,t}$ converges as $R\to \infty$ to a limiting perturbation term $\mathfrak{p}_{x,s,t}$ compatible with the PSS equation.

  On the other hand, for any solution $(x,w)$ of the PSS equation, a standard gluing argument for solutions to Floer's equation with Lagrangian boundary conditions shows that each $(x,w)$ arises as the limit of an end of type \ref{i-theta-PSS-3} in the above sense. Thus we conclude that $\mathrm{PSS}(\beta)=\Theta(\mathfrak{i}(\beta))$, as desired.
\end{proof}

\subsection{Product structures}
\label{sec:product-structures}

In this final subsection, we explain why $\Theta$ is compatible with the product structures. Such a result was originally proved in \cite{abbondandolo-schwarz-GT-2010}, and is a cornerstone in the relationship between string topology and Floer cohomology.

\emph{Remark}. Let us comment that there is another way in the literature which relates string topology with Floer cohomology, where one defines a map from $\mathrm{HF}(H_{t},J)$ to a suitable Morse homology of the free loop space. In this context, \cite[pp.\,398]{abouzaid-EMS-2015} proves the product structures are identified, and the argument is simpler than the argument in \cite{abbondandolo-schwarz-GT-2010} (and the argument we will explain in this section). That this direction of morphism provides simpler argument for the identification of ring structures was observed also in \cite[pp.\,500]{abbondandolo-schwarz-proceedings-2012}. We should note that \cite[\S5.4]{cieliebak-hingston-oancea-JFPTA-2023} show that the map from Floer cohomology to Morse homology considered by \cite{abouzaid-EMS-2015} does respect filtrations (the argument relies on special choice of $(H_{t},J)$ and a novel action versus length estimate); moreover this argument can be applied to a suitable construction of the pair-of-pants product to show the product structures are preserved when the map is restricted to each filtration level.

Unfortunately, this direction of the morphism (going from Floer cohomology to string topology) does not seem to work well for our purposes. It does seem likely that, if we were to restrict to domains $\Omega$ which appear as the unit disk bundle associated to a Riemannian metric, then we could argue instead using Morse homology and using existing results in the literature, and reverse the direction of the $\Theta$ morphism, and obtain a proof (albeit a slightly convoluted one) that the product structures are identified.\footnote{Note that the length function $\ell_{\Omega}$ is the same as the length function associated to the fiberwise convex hull of $\Omega$. In particular, there is the potential of a Morse theoretical argument, using the energy functional for (irreversible) Finsler metrics.} We prefer to stick with the present direction of the $\Theta$ morphism, and give a direct proof.

\subsubsection{Set-up}
\label{sec:set-up}

Throughout, we fix classes $\alpha_{i}\in H(\Lambda_{c_{i}})$, $i=0,1$.

First, pick two Hamiltonian system $H_{i,t}$, $i=0,1$, defined on $[0,1]$, and an almost complex structure $J$ so that:
\begin{enumerate}
\item $H_{i,t}=c_{i}(t)r$ holds outside of $r\ge r_{0}$, 
\item $H_{i,t}=0$ unless $\abs{t-1/2}<1/4$, 
\item $(H_{i,t},J)$ are admissible for defining $\mathrm{CF}$, when $H_{i,t}$ is extended to $\R/\Z$ by $1$-periodicity,
\item the slope of $H_{i,t}$ is at least $c_{i}$.
\end{enumerate}
Define:
\begin{equation*}
  H_{\infty,t}:=\left\{
    \begin{aligned}
      &2H_{0,2t}\text{ for }t\in [0,1/2],\\
      &2H_{1,2t-1}\text{ for }t\in [1/2,1],
    \end{aligned}
  \right.
\end{equation*}
and extend $H_{\infty,t}$ to $\R/\Z$ by $1$-periodicity. Finally, suppose that:
\begin{enumerate}[resume]
\item $(H_{\infty,t},J)$ is admissible for defining $\mathrm{CF}$.
\end{enumerate}

Since the slope of $H_{\infty,t}$ is the sum of the slopes of $H_{0,t}$ and $H_{1,t}$, there is a pair-of-pants map:
\begin{equation*}
  \ast:\mathrm{HF}(H_{0,t},J)\otimes \mathrm{HF}(H_{1,t},J)\to \mathrm{HF}(H_{\infty,t},J).
\end{equation*}

Second, pick representatives $A_{i}:P\times \R/\Z\to M$ for $\alpha_{i}$ so that:
\begin{enumerate}
\item\label{product-theta-1} $x\in P_{i}\mapsto A_{i}(x,0)$, $i=0,1,$ are mutually transverse,
\item\label{product-theta-2} $\max\set{\ip{p,q'(t)}:p\in \Omega\cap T^{*}M_{q(t)}}\le c_{i}(t)$ for each $q(t)=A_{i}(x,t)$.
\end{enumerate}
the second condition can achieved by a $x$-dependent smooth family of time-reparametrizations, since the integral of $c_{i}(t)$ is strictly larger than $\ell_{\Omega}(q(t))$. As a consequence, it follows that $A_{i}(x,0)=A_{i}(x,t)$ unless $\abs{t-1/2}<1/4$.

Define $P_{\infty}$ to be the fiber product of the transverse maps in \ref{product-theta-1}, and define:
\begin{equation*}
  A_{\infty}:P_{\infty}\times \R/\Z \to M \text{ given by }A_{\infty}((x_{0},x_{1}),t)=\left\{
    \begin{aligned}
      &A_{0}(x_{0},2t)\text{ for }t\in [0,1/2],\\
      &A_{1}(x_{1},2t)\text{ for }t\in [1/2,1],
    \end{aligned}
  \right.
\end{equation*}
which we extend from $[0,1]$ to all of $\R$ by $1$-periodicity. Then $A_{\infty}\in Z(\Lambda_{c_{0}+c_{1}})$, and $[A_{\infty}]=\alpha_{0}\ast \alpha_{1}$.

The goal in this section is to prove:
\begin{equation}\label{eq:goal-product-compatibility}
  \Theta(A_{\infty})=\Theta(A_{0})\ast \Theta(A_{1}),
\end{equation}
as cycles in $\mathrm{HF}(H_{\infty,t},J)$.

\subsubsection{An auxiliary map}
\label{sec:an-auxiliary-map}

Before explaining why \eqref{eq:goal-product-compatibility} holds, we will define an auxiliary cycle in $\mathrm{CF}(H_{\infty,t},J)$ which we will show represents both $\Theta(A_{\infty})$ and $\Theta(A_{0})\ast \Theta(A_{1})$.

Define a connection one-form $\mathfrak{a}$ on $\C\setminus \set{0}$ by the formula:
\begin{equation*}
  \mathfrak{a}_{s,t}=H_{\infty,t}\d t
\end{equation*}
in cylindrical coordinates $z=e^{-2\pi (s+it)}$. Note that the connection one-form $\mathfrak{a}$ vanishes outside of the segments contained between rays $\R_{+} ie^{\pm \pi i/4}$ (where its values are determined by $H_{1,t}$) and $\R_{-}ie^{\pm \pi i/4}$ (where its values are determined by $H_{0,t}$), as shown in Figure \ref{fig:illustration-rays}.

\begin{figure}[h]
  \centering
  \begin{tikzpicture}[scale=.6]
    \path[fill=black!10!white] (0,0) -- (45:2) arc (45:135:2) --cycle;
    \path[fill=black!10!white] (0,0) -- (-45:2) arc (-45:-135:2) --cycle;
    \path (90:1.1) node {$H_{1,t}$}--(90:-1.1)node{$H_{0,t}$};
    \draw (45:-2)--(45:2) (-45:-2)--(-45:2);
  \end{tikzpicture}
  \caption{Illustration of the connection one-form $\mathfrak{a}$ on the domain $\C\setminus \set{0}$.}
  \label{fig:illustration-rays}
\end{figure}
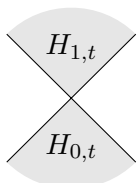

Let us say that two smoothly embedded closed disks $D_{0},D_{1}$ are \emph{shadowing} provided:
\begin{enumerate}
\item $D_{0}\subset \set{z=x+iy:y<0}$ and $D_{1}\subset \set{z=x+iy:y>0}$,
\item\label{shadowing-disk-2} in coordinates $z=e^{-2\pi (s+it)}$, each ray with fixed $t$ coordinate in $[1/8,3/8]$ intersects $\bd D_{0}$ in points $s^{0}_{-}(t)<s^{0}_{+}(t)$, where $s_{\pm}$ are smooth functions on $[1/8,3/8]$, and the radial vector $\bd_{s}$ is transverse to $\bd D_{0}$ when $t\in [1/8,3/8]$
\item\label{shadowing-disk-3} in coordinates $z=e^{-2\pi (s+it)}$, each ray with fixed $t$ coordinate in $[5/8,7/8]$ intersects $\bd D_{1}$ in points $s^{1}_{-}(t)<s^{1}_{+}(t)$, where $s_{\pm}$ are smooth functions on $[5/8,7/8]$, and the radial vector $\bd_{s}$ is transverse to $\bd D_{1}$ when $t\in [5/8,7/8]$
\end{enumerate}
A \emph{homotopy} of shadowing disks is an isotopy of embedded disks $D_{0,\tau},D_{1,\tau}$ which satisfy the above properties for all $\tau$.

The conditions admittedly look a bit strange, but it is perhaps best understood by comparing Figure \ref{fig:shadowing-disks} with Figure \ref{fig:illustration-rays}.

\begin{figure}[h]
  \centering
  \begin{tikzpicture}[scale=.7]
    \begin{scope}
      \path[fill=black!10!white] (0,0) -- (45:2) arc (45:135:2) --cycle;
      \path[fill=black!10!white] (0,0) -- (-45:2) arc (-45:-135:2) --cycle;
    \end{scope}
    \draw (45:-2)--(45:2) (-45:-2)--(-45:2);
    \path[fill=white] (1,0.5) arc (0:180:1) -- (-1,-0.5) arc (180:360:1)--cycle;
    \draw (1,0.5)to[out=-90,in=-90,looseness=0.3](-1,0.5) arc (180:0:1);
    \node at (0,.8) {$D_{1}$};
    \node at (0,-.8) {$D_{0}$};
    \draw[dashed] (0,0) --(45:1.3);
    \draw[scale=-1] (1,0.5)to[out=-90,in=-90,looseness=0.3](-1,0.5) arc (180:0:1);
    \node[fill, inner sep=1pt, circle] at (0,0) {};
  \end{tikzpicture}
  \caption{Shadowing disks block the rays from hitting the origin.}
  \label{fig:shadowing-disks}
\end{figure}
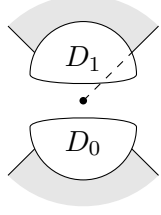

Given shadowing disks $D=(D_{0},D_{1})$, define a connection one-form $\mathfrak{a}^{D}$ on $$\Sigma^{D}:=\C\setminus \mathrm{Interior}(D_{0}\cup D_{1})$$ by the formula:
\begin{equation*}
  \mathfrak{a}_{z}^{D}=\left\{
    \begin{aligned}
      &0\text{ if the ray joining $0$ and $z$ is misses the interior of $D_{0}\cup D_{1}$},\\
      &\mathfrak{a}_{z}\text{ otherwise};
    \end{aligned}
  \right.
\end{equation*}
referring to Figure \ref{fig:shadowing-disks}, the connection one-form $\mathfrak{a}^{D}$ is supported in the shaded region of $\Sigma^{D}$. It is straightforward to show that $\mathfrak{a}^{D}$ is smooth on $\Sigma^{D}$.

Shadowing disks $D=(D_{0},D_{1})$ also determine moving Lagrangian boundary conditions for $\bd \Sigma^{D}$. Referring to the notation $s_{\pm}^{i}(t)$ in \ref{shadowing-disk-2} and \ref{shadowing-disk-3}, we define Lagrangians $L_{x_{0},x_{1},z}$ on $P_{0}\times P_{1}\times \bd \Sigma^{D}$ by the rule:
\begin{equation*}
  L^{D}_{x_{0},x_{1},z}:=\left\{
    \begin{aligned}
      &T^{*}M_{A_{0}(x_{0},2t)}\text{ if }z=e^{-2\pi (s_{-}^{0}(t)+it)}\text{ for }t\in [1/8,3/8],\\
      &T^{*}M_{A_{1}(x_{1},2t)}\text{ if }z=e^{-2\pi (s_{-}^{1}(t)+it)}\text{ for }t\in [5/8,7/8],\\
      &T^{*}M_{A_{0}(x_{0},0)}\text{ if }z\in \bd D_{0}\text{ and is not captured by previous rule},\\
      &T^{*}M_{A_{1}(x_{1},0)}\text{ if }z\in \bd D_{1}\text{ and is not captured by previous rule}.
    \end{aligned}
  \right.
\end{equation*}
In words, the Lagrangian boundary conditions are determined by the two loops $A_{i}(x_{i},-)$, $i=0,1$, which are traversed along the ``outer segment'' determined by $s=s^{i}_{-}(t)$, and are parametrized using the angular coordinate associated to $z=e^{-2\pi (s+it)}$. Outside of the outer segments, the Lagrangian boundary conditions remain fixed at $T^{*}M_{A_{i}(x_{i},0)}$, the fiber over the basepoint of the loop.

This set-up leads to data $(\mathfrak{a}^{D},L^{D})$ on the family $P_{0}\times P_{1}\times \Sigma^{D}\times W$. For an almost complex structure $J$ on the same family, so that $J_{x_{0},x_{1},z}=J$ is fixed outside of a sufficiently large disk, and a perturbation term $\mathfrak{p}$ on the same family, there is an associated moduli space $\mathscr{M}(\mathfrak{a}^{D},\mathfrak{p},J,L^{D})$ of finite energy solutions $(x_{0},x_{1},u)$ satisfying boundary conditions:
\begin{equation*}
  u(z)\in L_{x_{0},x_{1},z}\text{ for }z\in \bd \Sigma^{D},
\end{equation*}
and which solve the general form of Floer's equation described in \S\ref{sec:floers-equation-general}.

\begin{lemma}
  There is an a priori energy bound on solutions: $$(x_{0},x_{1},u)\in \mathscr{M}(\mathfrak{a}^{D},\mathfrak{p},J,L^{D})$$ which is independent of $D$ and $\mathfrak{p}$, provided $\mathfrak{p}$ is sufficiently small.
\end{lemma}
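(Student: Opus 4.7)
The plan is to apply the general energy estimate Lemma \ref{lemma:energy-estimate} after verifying two ingredients: that the curvature potential of $\mathfrak{a}^{D}$ is controlled uniformly in $D$, and that along the boundary $\bd D_{0}\cup \bd D_{1}$ the integrand $u^{*}\lambda - v^{*}\mathfrak{a}^{D}$ is pointwise bounded above by a quantity depending only on $A_{0},A_{1}$ and $H_{0,t},H_{1,t}$. Combined with a Stokes computation expressing $\omega(u)$ as a boundary term plus the asymptotic contribution of $\gamma$ at $z=\infty$, these inputs will reduce $E(u)$ to a constant independent of $D$.

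First I would verify the curvature claim. The unmodified $\mathfrak{a}=H_{\infty,t}\,\d t$ is $s$-independent with no $\d s$-component, so its curvature two-form vanishes identically. The modification $\mathfrak{a}^{D}$ amounts to multiplying by the characteristic function of the outer shadow of $D_{0}\cup D_{1}$; the critical observation is that this characteristic function is discontinuous only across rays at angles $t$ tangent to $\bd D_{0}\cup \bd D_{1}$, which by the transversality requirements \ref{shadowing-disk-2}--\ref{shadowing-disk-3} must lie outside $(1/8,3/8)\cup (5/8,7/8)$---precisely the $t$-range where $H_{\infty,t}=0$. Hence $\mathfrak{a}^{D}$ is smooth, its curvature still vanishes, and Lemma \ref{lemma:a-has-then-p-has} gives that $(\mathfrak{a}^{D},\mathfrak{p})$ has curvature bounded from above uniformly, provided $\mathfrak{p}$ is sufficiently small.

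Next I would exhaust $\Sigma^{D}$ by $\Sigma_{R}=\Sigma^{D}\cap \set{|z|\le R}$ and apply Stokes. Because $\mathfrak{a}^{D}$ vanishes near $z=0$, the finite-energy solution $u$ extends smoothly through the origin by removable singularity; as $R\to\infty$, only the asymptotic orbit $\gamma$ at $z=\infty$ and the boundary $\bd D_{0}\cup \bd D_{1}$ contribute, yielding $\omega(u)=\int_{\bd D_{0}\cup \bd D_{1}}u^{*}\lambda - \int\gamma^{*}\lambda$ up to orientation conventions. On each $\bd D_{i}$ the moving Lagrangian condition $T^{*}M_{q(z)}$ has constant base point outside the outer segment, so $u^{*}\lambda=0$ there (since $\d q=0$) and similarly $v^{*}\mathfrak{a}^{D}=0$. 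On the outer segment, parametrized by $\tau=2t\in [1/4,3/4]$, one has $u^{*}\lambda=\ip{p(\tau),q_{i}'(\tau)}\,\d\tau$ and $v^{*}\mathfrak{a}^{D}=H_{i,\tau}(u(\tau))\,\d\tau$. Property \ref{product-theta-2} of $A_{i}$, combined with the fiberwise starshaped structure of $\Omega$, gives $\ip{p,q_{i}'(\tau)}\le c_{i}(\tau)\,r(u)$ via the Liouville rescaling already used in the proof of Lemma \ref{lemma:energy-estimate-theta}; and $H_{i,\tau}(u)=c_{i}(\tau)\,r(u)$ outside a compact set. Consequently $u^{*}\lambda - v^{*}\mathfrak{a}^{D}\le C_{i}\,\d\tau$ with $C_{i}$ depending only on $A_{i}$ and $H_{i,t}$, and integration gives $\int_{\bd D_{i}}(u^{*}\lambda - v^{*}\mathfrak{a}^{D})\le C_{i}$.

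Assembling these inputs in Lemma \ref{lemma:energy-estimate} yields $E(u)\le C_{0}+C_{1}+\mathrm{action}(\gamma)+O(\norm{\mathfrak{p}}_{C^{1}})$, where $\mathrm{action}(\gamma)$ is bounded uniformly across the finite set of $1$-periodic orbits of $H_{\infty,t}$. The main obstacle is the $D$-independence of the constants $C_{i}$: one might worry that the boundary integrals depend on the shape and position of the shadowing disks, but the splitting of $\bd D_{i}$ into an outer segment (with moving fiber condition) and a remainder (with constant fiber condition, on which both integrands vanish) makes $\int u^{*}\lambda$ and $\int v^{*}\mathfrak{a}^{D}$ depend on $D$ only through the parametrization of the outer segment, which involves only the fixed data $A_{i}$, $c_{i}$, and the cylindrical coordinate. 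A secondary subtlety is the smoothness and vanishing curvature of $\mathfrak{a}^{D}$, which hinges entirely on the compatibility between the shadowing disks and the support of $H_{\infty,t}$ noted above.
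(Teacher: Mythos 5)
Your proposal is correct and follows essentially the same strategy as the paper's proof: apply the general energy identity/estimate, reduce matters to bounding $I=\int_{\bd\Sigma^{D}}u^{*}\lambda-v^{*}\mathfrak{a}^{D}$ (using that the curvature is controlled and the asymptotic orbits are finite in number), observe the integrand vanishes off the outer segments of $\bd D_{0}\cup\bd D_{1}$, parametrize the outer segments by the angular coordinate, and bound the resulting integrands via property \ref{product-theta-2} and the Liouville rescaling exactly as in Lemma \ref{lemma:energy-estimate-theta}. The extra discussion you give of the smoothness of $\mathfrak{a}^{D}$ and the vanishing of its curvature is correct but not repeated in the paper's short proof (it is recorded elsewhere, in the construction of $\mathfrak{a}^{D}$ and in the proof of Lemma \ref{lemma:Pi-is-POP}).
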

\begin{proof}
  The argument is similar to that used for Lemma \ref{lemma:energy-estimate-theta}, in that it suffices to bound the integrals:
  \begin{equation*}
    I=\int_{\bd \Sigma^{D}}u^{*}\lambda-v^{*}\mathfrak{a}^{D}
  \end{equation*}
  independently of the solution; here $v(z)=(z,u(z))$ is as in Lemma \ref{lemma:energy-estimate}. The integrand vanishes on points $z\in \bd D$ which do not lie in the outer segments $s=s_{-}^{i}(t)$ described above; thus it suffices to bound the integral over the outer segments. We parametrize the outer segments by the angular coordinate $t$ in $z_{0}(t)=e^{-2\pi (s_{-}^{0}(t)+it)}$ and $z_{1}(t)=e^{-2\pi (s_{+}^{1}(t)+it)}$. The integral decomposes as $I=I_{0}+I_{1}$ where:
  \begin{equation*}
    I_{0}=\int_{1/8}^{3/8}\ip{p(t),q'(t)}-2H_{0,2t}(u(z(t)))d t,\text{ where }q(t)=A_{0}(x_{0},2t).
  \end{equation*}
  and similarly for $I_{1}$, with $[1/8,3/8]$ replaced by $[5/8,7/8]$. Reparametrizing the integral by $\tau=2t$ for $i=0$ and $\tau=2t-1$ for $i=1$, so $\tau\in [1/4,3/4]$, and using the same estimate as in Lemma \ref{lemma:energy-estimate-theta} together with property \ref{product-theta-2} in the choice of $A_{i}(x_{i},t)$, one concludes that each $I_{i}$ is uniformly bounded independently of the solution. This completes the proof.
\end{proof}

Counting the rigid finite energy solutions $(x_{0},x_{1},u)$ in $\mathscr{M}(\mathfrak{a}^{D},\mathfrak{p},J,L^{D})$ for generic $\mathfrak{p}$, where the asymptotic orbit of $u$ equals $\gamma$ (at the negative end, as seen in the coordinates $z=e^{-2\pi(s+it)}$) gives a coefficient $N_{\gamma}$. Define:
\begin{equation*}
  \Pi(D):=\sum N_{\gamma}\gamma\in \mathrm{CF}(H_{\infty,t},J).
\end{equation*}
Then:
\begin{lemma}
  The chain $\Pi(D)$ is a cycle for each choice $\mathfrak{p},D,J$, provided $\mathfrak{p}$ is generic. The homology class of $\Pi(D)$ is independent of $\mathfrak{p},J$, and is also independent of the homotopy class of $D$ in the space of shadowing disks.
\end{lemma}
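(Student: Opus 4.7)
The plan is to run the standard three-step Floer-theoretic argument: verify the cycle property by analyzing the ends of the one-dimensional component of the moduli space, then build chain homotopies via parametric moduli spaces. The crucial preliminary input is the a priori energy estimate already established, which will supply compactness up to Floer breaking for all the moduli spaces considered below.

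\emph{Step 1 (cycle property).} Consider the one-dimensional component of $\mathscr{M}(\mathfrak{a}^{D},\mathfrak{p},J,L^{D})$. By the a priori energy bound, combined with the gradient bound (Proposition~\ref{proposition:gradient-bound}) and the maximum principle (Proposition~\ref{proposition:maximum-principle}), any sequence $(x_{0,n},x_{1,n},u_{n})$ in this component has a subsequence which converges in $C^{\infty}_{\mathrm{loc}}$ up to Floer breaking at the unique cylindrical puncture (at $z=0$). No breaking can occur along the Lagrangian boundary because the boundary is a disjoint union of circles with compactly supported moving Lagrangian conditions, and holomorphic strips on $W$ with boundary on cotangent fibers have non-negative symplectic area bounded by the a priori estimate. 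The boundary components of the compactified one-dimensional moduli space are therefore in bijection with pairs $(\sigma, v)$, where $\sigma$ is a rigid element of $\mathscr{M}(\mathfrak{a}^D,\mathfrak{p},J,L^D)$ and $v$ is a rigid Floer differential cylinder for $(H_{\infty,t},J)$ attached at the puncture. Counting these ends modulo two yields $d\circ\Pi(D)=0$.

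\emph{Step 2 (independence of $\mathfrak{p}$ and $J$).} Given two generic choices $(\mathfrak{p}_{0},J_{0})$ and $(\mathfrak{p}_{1},J_{1})$, pick a generic path $(\mathfrak{p}_{\tau},J_{\tau})$ in the (weakly contractible) space of such data. The resulting parametric moduli space over $\tau\in [0,1]$ is cut transversally away from its boundary fibers, and its rigid elements define a chain $K$. The standard breaking analysis from Step~1 applied over each endpoint of the parameter interval, plus breakings in the interior, shows
\begin{equation*}
\Pi(D;\mathfrak{p}_{1},J_{1})-\Pi(D;\mathfrak{p}_{0},J_{0})=d\circ K + K\circ 0,
\end{equation*}
so the two cycles are cohomologous.

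\emph{Step 3 (homotopy invariance in $D$).} Given a smooth path $\tau\in [0,1]\mapsto D_{\tau}=(D_{0,\tau},D_{1,\tau})$ of shadowing disks, the construction of $\mathfrak{a}^{D}$ and $L^{D}$ depends smoothly on $\tau$, producing a family of data $(\mathfrak{a}^{D_{\tau}},L^{D_{\tau}})$ on the varying domain $\Sigma^{D_{\tau}}$. Choose a smoothly varying almost complex structure and a generic perturbation one-form over $[0,1]$, and form the parametric moduli space. Its rigid elements define $K$, and the analysis of its ends splits into Floer breaking at the puncture (giving $dK$ and $K\circ 0$) and boundary contributions at $\tau=0,1$ (giving $\Pi(D_{1})-\Pi(D_{0})$), producing the desired chain homotopy.

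\emph{Main obstacle.} The subtle step is Step~3: one must ensure that the a priori energy bound and maximum principle hold \emph{uniformly} along the path $D_{\tau}$, and that no new failure of compactness is introduced by the varying domain. The uniform energy bound follows because the bound in the lemma above depends only on the $\ell_{\Omega}$-length of loops in $A_{0},A_{1}$ via condition~\ref{product-theta-2}, and on $c_{i}(t)$---none of which depend on $D$; thus the proof of that lemma carries over verbatim, provided we verify that the outer segments $s=s^{i}_{-}(t)$ exist for every $\tau$, which is exactly the content of the shadowing disk definition. Care is also needed because $\Sigma^{D_\tau}$ is a smoothly varying manifold with boundary rather than a fixed domain; we dispose of this by pulling back to a fixed model $\Sigma^{D_{0}}$ via a smooth family of diffeomorphisms $\Sigma^{D_{\tau}}\to \Sigma^{D_{0}}$ matching the parametrizations on the outer segments, reducing the problem to varying data on a fixed domain where the standard parametric Floer framework of \S\ref{sec:hamilt-conn-surf}--\S\ref{sec:general-form-floers} applies.
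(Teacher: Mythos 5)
Your proposal is correct and follows exactly the standard three-step argument the paper intends (the paper's proof simply defers to the template of Lemma~\ref{lemma:theta-structural}). One small slip: in the coordinates $z=e^{-2\pi(s+it)}$ used here, the unique cylindrical end of $\Sigma^{D}$ is at $z=\infty$ (where $s\to-\infty$), not at $z=0$, which is an ordinary interior point of $\Sigma^{D}$ near which $\mathfrak{a}^{D}$ vanishes; the Floer breaking in Step~1 occurs at $z=\infty$ against the Hamiltonian $H_{\infty,t}$, exactly as you otherwise describe.
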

\begin{proof}
  The argument is similar to many other arguments in this paper, in particular, Lemma \ref{lemma:theta-structural}. We omit the proof.
\end{proof}

To eliminate the apparent dependency on the homotopy class of shadowing disks, we select a distinguished homotopy class. Let us fix $D^{\mathrm{std}}=(D_{0}^{\mathrm{std}},D_{1}^{\mathrm{std}})$ to be \emph{standard circular shadowing disks}, defined to be:
\begin{equation*}
  D_{0}^{\mathrm{std}}=-i+D(r)\text{ and }D_{1}^{\mathrm{std}}=i+D(r)
\end{equation*}
where $D(r)$ is the disk of radius $r\in (2^{-1/2},1)$; see Figure \ref{fig:standard-shadow}. Elementary geometry proves these are indeed shadowing disks, and the homotopy class is independent of the choice of $r$.

\begin{figure}[h]
  \centering
  \begin{tikzpicture}[scale=.7]
    \begin{scope}
      \path[fill=black!10!white] (0,0) -- (45:2) arc (45:135:2) --cycle;
      \path[fill=black!10!white] (0,0) -- (-45:2) arc (-45:-135:2) --cycle;
    \end{scope}
    \draw (45:-2)--(45:2) (-45:-2)--(-45:2);
    \path[fill=white] ({1/sqrt(2)},1) arc (0:180:{1/sqrt(2)}) -- (-{1/sqrt(2)},-1) arc (180:360:{1/sqrt(2)})--cycle;
    \path[fill=white] (0,1) circle (0.8) (0,-1) circle(0.8);
    \draw (0,1) circle (0.8) (0,-1) circle (0.8);

    \node[fill, inner sep=1pt, circle] at (0,0) {};
  \end{tikzpicture}
  \caption{Standard circular shadowing disks}
  \label{fig:standard-shadow}
\end{figure}
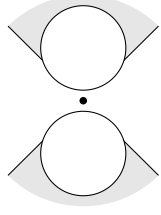

The two results whose proofs occupy the rest of the paper are:
\begin{lemma}\label{lemma:Pi-is-POP}
  If $D^{\mathrm{std}}$ comprises standard circular shadowing disks, then:
  \begin{equation*}
    \Pi(D^{\mathrm{std}})=\Theta(A_{0})\ast\Theta(A_{1}),
  \end{equation*}
  as elements of $\mathrm{HF}(H_{\infty,t},J)$.
\end{lemma}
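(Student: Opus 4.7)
The plan is to set up an adiabatic one-parameter deformation of the $\Pi(D^{\mathrm{std}})$ construction and identify the large-parameter limit with the pair-of-pants product of $\Theta(A_{0})$ and $\Theta(A_{1})$.

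First, recognize that $\Sigma^{D^{\mathrm{std}}}$ is topologically a pair-of-pants: it is $\C\cong S^{2}\setminus \set{\infty}$ with two open disks removed and a puncture at $\infty$. The outer boundary of each removed disk $\bd D_{i}^{\mathrm{std}}$ carries, via the moving Lagrangian boundary conditions set up above, precisely the data used to define $\Theta(A_{i})$: on the outer arc parametrized by $t\in[1/8,3/8]$ (resp.\ $[5/8,7/8]$) the Lagrangian traces out the nontrivial portion of the loop $A_{i}$, while on the rest of $\bd D_{i}^{\mathrm{std}}$ the Lagrangian is the constant fiber over the basepoint. Moreover $\mathfrak{a}^{D}=H_{\infty,t}\d t$ on the shadow strips, and in these angular regions the time-rescaling $\tau=2t$ or $\tau=2t-1$ converts $H_{\infty,t}\d t$ into $H_{i,\tau}\d\tau$, so the shadow strip over $D_{i}$ naturally supports the Floer equation for $H_{i,\tau}$ up to a conformal reparametrization.

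Second, construct a one-parameter family of $(\mathfrak{a},J)$-data on the fixed surface $\Sigma^{D^{\mathrm{std}}}$ parametrized by $R\ge 0$. Choose two disjoint simple closed curves encircling each $\bd D_{i}^{\mathrm{std}}$ inside its shadow region, and replace a neighborhood of each curve by a cylinder of conformal length $R$ (via a deformation of the complex structure supported in this neighborhood). Extend the connection one-form and almost complex structure so that on the inserted neck they coincide with the cylindrical Floer data for $(H_{i,\tau},J)$, and so that near $\bd D_{i}$ they are pulled back from a fixed choice of $\Theta(A_{i})$-data on the half-cylinder while near $\infty$ they are pulled back from a fixed choice of pair-of-pants data for $\ast$. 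For each $R$ let $\Pi_{R}$ denote the chain obtained by counting rigid finite-energy solutions with the $A_{i}$-moving Lagrangian boundary conditions. A priori energy bounds hold uniformly in $R$ by the same argument as Lemma \ref{lemma:energy-estimate-theta}, and the usual homotopy-of-data argument (as in the proof of Lemma \ref{lemma:theta-structural}) shows that the homology class of $\Pi_{R}$ is independent of $R$, with $\Pi_{0}=\Pi(D^{\mathrm{std}})$.

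Third, analyze the limit $R\to\infty$. Standard Floer compactness applied to a sequence of rigid solutions $(x_{0}^{k},x_{1}^{k},u_{k})$ with $R_{k}\to\infty$ extracts a subsequential limit which SFT-breaks along the two elongated necks into three pieces: a central pair-of-pants solution $u_{\mathrm{pop}}$ with asymptotics $\gamma_{0},\gamma_{1}$ at the two neck punctures and $\gamma_{\infty}$ at $\infty$, and two outer $\Theta$-type solutions $(x_{i},v_{i})$ on half-cylinders with moving Lagrangian $A_{i}$-boundary conditions and asymptotic orbit $\gamma_{i}$. Rigidity of the total configuration, together with index-additivity along the necks, forces each component to be rigid (in the appropriate sense, after quotienting each outer piece by $\R$-translation when matched against the pair-of-pants). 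The matching at the two necks is exactly the pairing used in the chain-level formula for $\Theta(A_{0})\ast\Theta(A_{1})$, so the rigid broken configurations are enumerated by precisely that product.

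The main obstacle is the complementary adiabatic gluing statement: every rigid broken configuration $(v_{0},v_{1},u_{\mathrm{pop}})$ extends, for all sufficiently large $R$, to a unique rigid solution of the $R$-equation, and every large-$R$ rigid solution arises this way. This is a multi-neck adiabatic gluing along two long cylinders simultaneously, in the spirit of \cite{fukaya-oh-AJM-1997,ekholm-GT-2007,oh-zhu-JSG-2011}; the delicate point is the simultaneous control of both neck gluings together with the parameter directions $x_{0}\in P_{0}$, $x_{1}\in P_{1}$, whose role is to match the boundary Lagrangian conditions consistently across each neck. Once this bijection is established, the rigid count defining $\Pi_{R}$ for $R$ large agrees with the chain-level expression for $\Theta(A_{0})\ast\Theta(A_{1})$, and combining with the $R$-invariance of the homology class yields $\Pi(D^{\mathrm{std}})=\Theta(A_{0})\ast\Theta(A_{1})$ in $\mathrm{HF}(H_{\infty,t},J)$, as required.
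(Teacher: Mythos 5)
Your construction is essentially the same as the paper's: a one-parameter family of data on domains interpolating between the original shadowing-disk surface and the pair-of-pants with the $\Theta$-cylinders attached, a uniform a priori energy bound as in Lemma \ref{lemma:energy-estimate-theta}, and identification of the two ends of the parametric moduli space with $\Pi(D^{\mathrm{std}})$ and $\Theta(A_{0})\ast\Theta(A_{1})$. The paper realises the deformation by removing progressively longer cylindrical ends from the disks $D_{i}(r)$ (so the boundary circles contract onto $\pm i$) and building the extended connection one-form $\mathfrak{a}_{i}'$ explicitly; your version inserts long cylinders carrying Floer data for $(H_{i,\tau},J)$ between $\bd D_{i}^{\mathrm{std}}$ and $\infty$. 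These are conformally the same picture, and the only thing worth flagging in your construction is that the explicit form of $\mathfrak{a}^{D}$ in the cylindrical coordinates around $\pm i$ (closed, zero curvature, slope equal to that of $H_{i,t}$) is what guarantees that such an insertion is possible as valid data with curvature bounded above; you assert this but do not verify it.

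The one real error in your write-up is the characterisation of the gluing step at $R\to\infty$ as an \emph{adiabatic} gluing in the style of \cite{fukaya-oh-AJM-1997,ekholm-GT-2007,oh-zhu-JSG-2011}. It is not. Because the stretched necks carry the Floer data $(H_{i,t},J)$ with non-degenerate $1$-periodic orbits, the breaking at each neck happens at isolated non-degenerate Hamiltonian orbits, and the complementary gluing is standard Floer-theoretic gluing of the same sort used to prove that continuation maps compose and that the pair-of-pants product is well defined. The parameters $x_{i}\in P_{i}$ play no role in the neck matching: they govern the moving Lagrangian boundary conditions on $\bd D_{i}$, which in the limit lie entirely on the outer $\Theta$-cylinder components, far from the neck; the matching across the necks is purely matching of asymptotic orbits. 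The genuinely adiabatic phenomenon you are anticipating --- breaking along a neck where $\mathfrak{a}$ vanishes, so that the asymptotic object is a degenerate family of constant strips on a Lagrangian fiber rather than an isolated non-degenerate orbit --- is precisely what arises in the companion Lemma~\ref{lemma:Pi-is-CS}, and is why the paper separates the proof of product compatibility into the two statements. Correctly locating where the adiabatic difficulty sits (and where it does not) is the structural point of the argument, so this is worth getting right.
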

\begin{lemma}\label{lemma:Pi-is-CS}
  If $D^{\mathrm{std}}$ comprises standard circular shadowing disks, then:
  \begin{equation*}
    \Pi(D^{\mathrm{std}})=\Theta(A_{\infty}),
  \end{equation*}
  as elements of $\mathrm{\mathrm{HF}}(H_{\infty,t},J)$.
\end{lemma}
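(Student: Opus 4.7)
The plan is to establish the equality $\Pi(D^{\mathrm{std}})=\Theta(A_\infty)$ via an adiabatic degeneration in which the two boundary circles of $\Sigma^{D^{\mathrm{std}}}$ merge at the origin into a single boundary circle of a half-cylinder, converting the $\Pi$-moduli problem into the $\Theta$-moduli problem in the limit. Because $[\Pi(D)]$ depends only on the homotopy class of the shadowing disks, I will first replace $D^{\mathrm{std}}$ by the one-parameter family $D_r=(-i+D(r),\,i+D(r))$ with $r\in(2^{-1/2},1)$ and focus on the adiabatic limit $r\to1^{-}$, where the two disks become internally tangent at $z=0$. I will then introduce the parametric moduli space $\mathscr{M}^{\mathrm{param}}=\bigsqcup_{r\in[r_0,1)}\{r\}\times\mathscr{M}(\mathfrak{a}^{D_r},\mathfrak{p}_r,J_r,L^{D_r})$ with $(\mathfrak{p}_r,J_r)$ varying smoothly in $r$ and chosen compatibly with a pregluing construction at the endpoint.

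The core step is an adiabatic compactness-plus-identification result: a sequence $(r_n,x_n,u_n)$ with $r_n\to1^{-}$ converges, after passing to a subsequence and possibly bubbling at the pinch, to a limit $(x_\infty,u_\infty)$ on the normalization of the nodal limit surface. Bubbling at the pinch will be excluded as follows: on the disk $\{|z|<1-r\}$ the connection form $\mathfrak{a}^{D_r}$ vanishes identically, since the rays from $0$ to such points are too short to reach either disk, while the nearby boundary Lagrangian is the single cotangent fibre $T^*M_{q_0}$ where $q_0=A_0(x_0,0)=A_1(x_1,0)$ is the common basepoint; any concentration bubble would therefore be a $J$-holomorphic disk with boundary in $T^*M_{q_0}$, which is constant by weak exactness of the fibre. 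The nodal limit surface is to be resolved in the direction that \emph{joins} $\bd D_0$ and $\bd D_1$ into a single smooth closed curve (as opposed to the direction that keeps them separate), producing a one-holed disk conformally equivalent to $(-\infty,0]\times\R/\Z$. Its boundary is parametrized so that the outer segment of $\bd D_0$ occupies $t\in[1/8,3/8]$ and the outer segment of $\bd D_1$ occupies $t\in[5/8,7/8]$, with constant Lagrangian $T^*M_{q_0}$ on the four arcs in between; this coincides with $T^*M_{A_\infty(x,t)}$. Pulled back under the conformal identification, $(\mathfrak{a}^{D_r},L^{D_r})$ in the limit becomes a valid choice of $\Theta$-data for $A_\infty$, so the rigid limit count is exactly the cycle $\Theta(A_\infty)$.

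The main obstacle is the gluing half of the adiabatic analysis: every rigid limit solution must arise as the limit of a unique smooth family of genuine solutions on $\Sigma^{D_r}$ for $r$ close to $1$, and the gluing must use the boundary-joining resolution rather than the separating one. This requires weighted Banach spaces adapted to the vanishing conformal modulus of the neck at $z=0$ and an implicit function theorem in the spirit of \cite{fukaya-oh-AJM-1997,ekholm-GT-2007,oh-zhu-JSG-2011}. The requisite surjectivity of the pregluing linearization should follow, via the standard neck decomposition, from transversality of the $\Theta(A_\infty)$ moduli problem, which is already built into the construction of $\Theta$-data in \S\ref{sec:data-theta-morphism}. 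Once the gluing is in place, the one-dimensional components of $\mathscr{M}^{\mathrm{param}}$ supply a cobordism $\Pi(D^{\mathrm{std}})-\Theta(A_\infty)=dK$ in $\mathrm{CF}(H_{\infty,t},J)$, completing the proof.
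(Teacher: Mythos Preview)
Your overall strategy---degenerate the shadowing disks so that the neck pinches, and identify the limiting moduli problem with the one defining $\Theta(A_\infty)$---is the same as the paper's. However, your sketch misidentifies the geometry of the neck and thereby misses the main difficulty of the argument.

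The error is in the sentence ``the nearby boundary Lagrangian is the single cotangent fibre $T^*M_{q_0}$ where $q_0=A_0(x_0,0)=A_1(x_1,0)$.'' For a point $(x_0,x_1,u)\in\mathscr{M}(\mathfrak{a}^{D_r},\dots)$ there is no reason for $A_0(x_0,0)$ and $A_1(x_1,0)$ to coincide: the parameter space is all of $P_0\times P_1$, not the fibre product $P_\infty$. The two inner boundary arcs of $\Sigma^{D_r}$ carry \emph{different} cotangent fibres $T^*M_{A_0(x_0,0)}$ and $T^*M_{A_1(x_1,0)}$, so the long strip in the neck is a holomorphic strip between two distinct Lagrangian fibres, not a disk on a single fibre. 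Your bubbling exclusion therefore does not apply. In fact the analysis shows that along any sequence with $r_n\to 1$ one is forced to have $(x_0^n,x_1^n)\to(x_0,x_1)\in P_\infty$, and it is precisely this emergent incidence condition that produces the parameter space $P_\infty$ needed for $\Theta(A_\infty)$.

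Relatedly, the limit domain is not the smooth half-cylinder $(-\infty,0]\times\R/\Z$: it is that half-cylinder with two boundary punctures (one for each side of the pinched neck), and the limit solution extends across these as removable singularities with values $p_\pm$ in the common fibre. The gluing is genuinely adiabatic in the sense of the references you cite: the preglued neck is the non-constant affine strip $N_{\mathfrak r}(z)=iq_0+2\mathfrak r^{-1}((\mathfrak r-z)p_-+(\mathfrak r+z)p_+)$ joining two \emph{nearby but distinct} fibres determined by a point $(y_0,y_1)$ in a slice $V\subset P_0\times P_1$ transverse to $P_\infty$, and the linearised operator must include the direction $TV\simeq\R^n$ normal to $TP_\infty$ to be surjective. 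Your sketch, by assuming the fibres coincide from the outset, preglues with a constant neck and linearises only over $TP_\infty$; this operator has the wrong Fredholm index and cannot be surjective, so the implicit function theorem step would fail. Once you correct the boundary conditions on the neck, incorporate the transverse slice $V$, and use the non-constant neck model, you recover exactly the paper's argument in \S\ref{sec:geometric-set-up}--\S\ref{sec:comp-theor-ends}.
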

Combining these, we conclude $\Theta(A_{\infty})=\Theta(A_{0})\ast \Theta(A_{1})$, completing the proof that $\Theta$ is compatible with the product structures (since $A_{\infty}$ represents the Chas-Sullivan product $\alpha_{0}\ast \alpha_{1}$). Together with the results in \S\ref{sec:bv-operators} and \S\ref{sec:incl-const-loops}, this completes the proof of Theorem \ref{theorem:main-comparison}.

Relatively speaking, Lemma \ref{lemma:Pi-is-POP} is easy and Lemma \ref{lemma:Pi-is-CS} is hard. We prove Lemma \ref{lemma:Pi-is-POP} in the next subsection \S\ref{sec:proof-lemma-pi-is-pop}, and the proof of Lemma \ref{lemma:Pi-is-CS} occupies the remaining parts of the paper.

\subsubsection{Proof of Lemma \ref{lemma:Pi-is-POP}}
\label{sec:proof-lemma-pi-is-pop}

The argument is fairly standard, and we only explain the main construction, omitting the Floer theory details.

For $r\in (2^{-1/2},1)$, let $D_{0}(r)=-i+D(r)$ and $D_{1}(r)=i+D(r)$, and let $D(r)=(D_{0}(r),D_{1}(r))$; these form standard circular shadowing disks.

Parametrize the punctured disk $D_{i}(r)^{\times}$ as a positive cylindrical end using cylindrical coordinates $s+it$. Then, $\mathfrak{a}^{D(r)}$ extends to a collar $s\in [0,\epsilon]$ for some small $\epsilon$, using the same formula (as slightly shrunken disks remain shadowing disks). Let us abbreviate:
\begin{equation*}
  \mathfrak{a}_{i}=\mathfrak{a}^{D_{i}(r)}|_{s\in [0,\epsilon]},
\end{equation*}
which takes the form:
\begin{equation*}
  \mathfrak{a}_{i}=K_{i,s,t}\d s+G_{i,s,t}\d t,
\end{equation*}
and a moment's thought reveals that $K_{i,s,t}=b_{i}(s,t)r$ and $G_{i,s,t}=c_{i}(s,t)r$ for $r\ge r_{0}$. Moreover, $b_{i}(s,t)\d s+c_{i}(s,t)\d t$ is a closed one-form, and:
\begin{equation*}
  \int_{0}^{1}c_{i}(s,t)\d t=\text{slope of $H_{i,t}$}.
\end{equation*}
Moreover, $\mathfrak{a}_{i}$ has zero curvature, since $\mathfrak{a}^{D_{i}(r)}$ has zero curvature. The goal is now to extend $\mathfrak{a}_{i}$ to all of $[0,\infty)\times \R/\Z$. Define:
\begin{equation*}
  \begin{aligned}
    G_{i,s,t}'&=\beta(1-s/\epsilon)G_{i,s,t}+\beta(s/\epsilon)H_{i,t}\\
    K_{i,s,t}'&=\int_{0}^{t}\bd_{s}G_{i,s,\tau}'d\tau-t\int_{0}^{1}\bd_{s}G_{i,s,\tau}'d\tau.
  \end{aligned}
\end{equation*}
Note that $G_{i,s,t}'=G_{i,s,t}$ in a neighborhood of $s=0$, by construction of the standard cut-off function $\beta$. Since $\mathfrak{a}_{i}$ has zero curvature, it holds that:
\begin{equation*}
  \bd_{s}G_{i,s,t}=\bd_{t}K_{i,s,t},
\end{equation*}
where we use the fact that $\mathfrak{a}_{i}(V_{1}),\mathfrak{a}_{i}(V_{2})$ are Poisson-commuting for any two tangent vectors $V_{1},V_{2}$ based at the same point (this holds since $\mathfrak{a}_{i}$ is obtained by restricting $\mathfrak{a}^{D(r)}$, which has this property). Thus it holds that $K_{i,s,t}'=K_{i,s,t}$ also holds in a neighborhood of $s=0$. Thus:
\begin{equation*}
  \mathfrak{a}_{i}'=K_{i,s,t}'\d s+G_{i,s,t}'\d t
\end{equation*}
is a valid extension of $\mathfrak{a}_{i}$ from $[0,\epsilon]\times \R/\Z$ to $[0,\infty)\times \R/\Z$, provided we shrink $\epsilon$. Moreover, as in \S\ref{sec:cont-maps-floer}, $\mathfrak{a}_{i}'$ has curvature bounded from above.

Denote by:
\begin{equation*}
  \mathfrak{a}'=\left\{
    \begin{aligned}
      &\mathfrak{a}_{0}'\text{ in }-i+D(r)^{\times},\\
      &\mathfrak{a}_{1}'\text{ in }i+D(r)^{\times},\\
      &\mathfrak{a}^{D(r)}\text{ elsewhere},
    \end{aligned}
  \right.
\end{equation*}
which is a smooth connection one-form on $\C\setminus \set{i,-i}$. By construction, $\mathfrak{a}'$ agrees with $H_{\infty,t}\d t$ on the cylindrical end around $z=\infty$, (since it agrees with $\mathfrak{a}^{D(r)}$), and agrees with $H_{0,t}\d t$, $H_{1,t}\d t$ in the cylindrical ends around the punctures $-i,i$. A bit of thought (and standard arguments) show that appropriately counting the rigid finite-energy solutions of the equation determined by $(\mathfrak{a}',J,\mathfrak{p})$, for some perturbation term $\mathfrak{p}$, defines a chain-level representation of the pair-of-pants operation:
\begin{equation*}
  \ast:\mathrm{CF}(H_{0,t},J)\otimes \mathrm{CF}(H_{1,t},J)\to\mathrm{CF}(H_{\infty,t},J).
\end{equation*}

Next we explain how to deform the equation defining $\Pi(D)$ to the equation defining $\ast\circ (\Theta(A_{0}),\Theta(A_{1}))$. The process is illustrated in Figure \ref{fig:Pi-is-POP}.

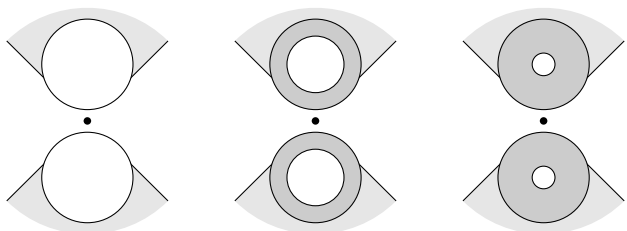
\begin{figure}[h]
  \centering
  \begin{tikzpicture}[scale=.75]
    \begin{scope}
      \path[fill=black!10!white] (0,0) -- (45:2) arc (45:135:2) --cycle;
      \path[fill=black!10!white] (0,0) -- (-45:2) arc (-45:-135:2) --cycle;
    \end{scope}
    \draw (45:-2)--(45:2) (-45:-2)--(-45:2);
    \path[fill=white] ({1/sqrt(2)},1) arc (0:180:{1/sqrt(2)}) -- (-{1/sqrt(2)},-1) arc (180:360:{1/sqrt(2)})--cycle;
    \path[fill=white] (0,1) circle (0.8) (0,-1) circle(0.8);
    \draw (0,1) circle (0.8) (0,-1) circle (0.8);

    \node[fill, inner sep=1pt, circle] at (0,0) {};
    \begin{scope}[shift={(4,0)}]
      \begin{scope}
        \path[fill=black!10!white] (0,0) -- (45:2) arc (45:135:2) --cycle;
        \path[fill=black!10!white] (0,0) -- (-45:2) arc (-45:-135:2) --cycle;
      \end{scope}
      \draw (45:-2)--(45:2) (-45:-2)--(-45:2);
      \path[fill=white] ({1/sqrt(2)},1) arc (0:180:{1/sqrt(2)}) -- (-{1/sqrt(2)},-1) arc (180:360:{1/sqrt(2)})--cycle;
      \path[fill=white] (0,1) circle (0.8) (0,-1) circle(0.8);
      \path[fill=black!20!white] (0,1) circle (0.8) (0,-1) circle(0.8);
      \path[fill=white] (0,1) circle (0.5) (0,-1) circle(0.5);
      \draw (0,1) circle (0.8) (0,-1) circle (0.8);
      \draw (0,1) circle (0.5) (0,-1) circle (0.5);

      \node[fill, inner sep=1pt, circle] at (0,0) {};
    \end{scope}
    \begin{scope}[shift={(8,0)}]
      \begin{scope}
        \path[fill=black!10!white] (0,0) -- (45:2) arc (45:135:2) --cycle;
        \path[fill=black!10!white] (0,0) -- (-45:2) arc (-45:-135:2) --cycle;
      \end{scope}
      \draw (45:-2)--(45:2) (-45:-2)--(-45:2);
      \path[fill=white] ({1/sqrt(2)},1) arc (0:180:{1/sqrt(2)}) -- (-{1/sqrt(2)},-1) arc (180:360:{1/sqrt(2)})--cycle;
      \path[fill=black!20!white] (0,1) circle (0.8) (0,-1) circle(0.8);
      \path[fill=white] (0,1) circle (0.2) (0,-1) circle(0.2);
      \draw (0,1) circle (0.8) (0,-1) circle (0.8);
      \draw (0,1) circle (0.2) (0,-1) circle (0.2);

      \node[fill, inner sep=1pt, circle] at (0,0) {};
    \end{scope}
  \end{tikzpicture}
  \caption{1-parameter family used to prove Lemma \ref{lemma:Pi-is-POP}; as the parameter $R$ increases, the boundary components of the surface contract onto the points $\pm i$. Solutions in the associated parametric moduli space converge to configurations representing $\Theta(A_{1})\ast \Theta(A_{2})$}
  \label{fig:Pi-is-POP}
\end{figure}

For each $R\in [0,\infty)$, define $\mathfrak{a}^{R}$ to be the restriction of $\mathfrak{a}'$ to the region obtained by removing from the disks $D_{0}(r)$ and $D_{1}(r)$ the cylindrical ends defined by $s>R$. Then $\mathfrak{a}^{0}$ agrees with $\mathfrak{a}^{D(r)}$, and $\mathfrak{a}^{R}$ ``converges'' to $\mathfrak{a}'$ as $R\to\infty$. Let us denote by $\Sigma(R)$ the surface with boundary obtained from this removal, so $\Sigma(0)=\Sigma^{D}$.

For each $R$ and $x_{i}\in P_{i}$, there are moving Lagrangian boundary conditions for $(\mathfrak{a}^{R},\Sigma(R))$ defined as follows: pick a reparametrization $\rho_{R,x_{i}}:\R/\Z\to \R/\Z$ so that:
\begin{equation*}
  \max\set{\ip{p,q'(t)}:p\in \Omega\cap T^{*}M_{q(t)}}<c_{i}(R,t)\text{ where }q(t)=A_{i}(x_{i},\rho_{R,x_{i}}(t)),
\end{equation*}
which induces boundary conditions by requiring that $u(R,t)\in T^{*}M_{q(t)}$ for each solution $(R,x_{0},x_{1},u)$. The functions $\rho_{R,x_{i}}$ should be chosen so that:
\begin{enumerate}
\item the boundary conditions for $R=0$ agree with the previously defined boundary conditions for $\mathfrak{a}^{D},\Sigma^{D}$,
\item $\rho_{R,x_{i}}(t)=t$ for $R$ sufficiently large.
\end{enumerate}
Because the set of reparametrizations achieving the above estimate is contractible (the condition is a convex condition $\bd_{t}\rho_{R,x_{i}}(t)$, since the slope of $H_{i,t}$ is large enough), it is possible to pick such $\rho_{R,x_{i}}$.

The data: $\mathfrak{a}^{R},\Sigma(R),J$, the above $(R,x_{0},x_{1})$-dependent moving boundary conditions, and a generic perturbation term $\mathfrak{p}$ on the family, leads to a moduli space $\mathscr{M}$ of solutions $(R,x_{0},x_{1},u)$. The 1-dimensional component of $\mathscr{M}$ maps to $[0,\infty)$ via $(R,x_{0},x_{1},u)\to R$, and generic fibers $\mathscr{M}(R_{0})$ defined by $R=R_{0}$ are zero-dimensional manifolds. By standard arguments, similar to those used in \S\ref{sec:inclusion-of-constants}, the count of:
\begin{enumerate}
\item\label{count-1-Pi-is-POP} elements in $\mathscr{M}(0)$ and,
\item\label{count-2-Pi-is-POP} non-compact ends of $\mathscr{M}$ containing sequences $(R_{n},x_{0,n},x_{1,n},u_{n})$ with $R_{n}\to\infty$,
\end{enumerate}
define homologous cycles in $\mathrm{CF}(H_{\infty,t},J)$. The count of \ref{count-1-Pi-is-POP} is exactly the count defining $\Pi(D)$, while the count of \ref{count-2-Pi-is-POP} represents $\Theta(A_{1})\ast \Theta(A_{2})$, as can be shown using standard Floer theory gluing arguments. We note that in this last part of the argument, one should pick the perturbation term $\mathfrak{p}$ so that it converges in an appropriate sense as $R\to\infty$. \hfill$\square$

\subsubsection{Geometric set-up for Lemma \ref{lemma:Pi-is-CS}}
\label{sec:geometric-set-up}

The main idea is to deform the shadowing disks to be rectangles with rounded corners, with a very thin neck between the disks, as shown on the left of Figure \ref{fig:Pi-is-CS-regions}. Roughly speaking, as the neck gets thinner, the solutions approximate those used to define $\Theta(A_{\infty})$ (in fact, the neck will degenerate to a flow line connecting two marked points on the boundary of the limiting curve). The presence of this flow line indicates that the gluing/compactness theory we will employ is similar to the adiabatic gluing/compactness theory of \cite{fukaya-oh-AJM-1997,ekholm-GT-2007,oh-zhu-JSG-2011,ekholm-etnyre-ng-sullivan-GT-2013,cant-chen-kyoto-2023}.

We now describe the surface and various regions inside the domain in more detail. The surfaces obtained by removing the approximately rectangular shadowing disks from $\C$ have a long neck region. As the space between the shadowing disks shrinks, the modulus of the neck grows. This neck region is shown in Figure \ref{fig:Pi-is-CS-regions}.

Define the domain $\Sigma(R)$, for $R\ge R_{0}$, by replacing the inner neck of modulus $2R_{0}$ with a neck of modulus $2R$. Note that $\Sigma(R)$ is defined abstractly and is not defined as a subset of $\C$. One can still speak about the various regions inside of $\Sigma(R)$; i.e., it still makes sense to speak about $\mathscr{R}_{1},\mathscr{R}_{2}$, and the necks.

In the limit $R\to\infty$, the surface $\Sigma(R)$ converges (in the moduli space of domains) to a surface $\Sigma(\infty)$ which is conformally equivalent to a disk with one interior puncture and two boundary punctures. The local model around the boundary punctures is illustrated in Figure \ref{fig:end-region}. The limit surface $\Sigma(\infty)$ has two ends (rather than one neck).

There is an obvious connection one-form $\mathfrak{a}$ on $\Sigma(R_{0})$, described by the shadowing disk construction. It has the property that $\mathfrak{a}$ vanishes on the neck region. When we define $\Sigma(R)$ by removing the inner neck and gluing in a longer neck, we obtain a connection one-form $\mathfrak{a}$ by requiring that it vanishes on the neck and agrees with the previously defined $\mathfrak{a}$ outside the neck.

The Lagrangian boundary conditions are defined for parameter $R_{0}$ by the shadowing disk construction. For $R\ge R_{0}$, one defines the Lagrangian boundary conditions to match those for $R_{0}$ away from the neck, and require solutions to map the boundary of the neck to cotangent fibers $T^{*}M_{A(x_{0},0)}$ and $T^{*}M_{A(x_{1},0)}$ (as in the boundary conditions for $R_{0}$).

\begin{figure}[h]
  \centering
  \begin{tikzpicture}[scale=0.5]
    \begin{scope}[shift={(-5,0.5)},scale=1.25]
      \begin{scope}
        \path[fill=black!10!white] (0,0) -- (45:3) arc (45:135:3) --cycle;
        \path[fill=black!10!white] (0,0) -- (-45:3) arc (-45:-135:3) --cycle;
      \end{scope}
      \draw (45:-3)--(45:3) (-45:-3)--(-45:3);

      \fill[white] (-1.5,-1) rectangle(1.5,1);
      \draw[rounded corners] (-1.5,.1)--(1.5,.1)--(1.5,1)--(-1.5,1)--cycle;
      \draw[rounded corners] (-1.5,-.1)--(1.5,-.1)--(1.5,-1)--(-1.5,-1)--cycle;
      \draw[dashed] (0,0) circle (2.5);
      \node at (0,1.5) {$\mathscr{R}_{1}$};
      \node at (0,-1.5) {$\mathscr{R}_{1}$};
      \node at (-3,0) {$\mathscr{R}_{2}$};

    \end{scope}
    \draw[rounded corners] (0,-1)--(0,0)--+(13,0)--+(13,-1) (0,2)--(0,1)--+(13,0)--+(13,1);
    \draw (0.5,0)--+(0,1) (12.5,0)--+(0,1) (11,0)--+(0,1) (2,0)--+(0,1);
    \draw[<->] (2,-0.5)--node[below]{$2R_{0}$}(10.5,-0.5);
    \draw[<->] (0.5,1.5)--node[above]{$\ell$}(2,1.5);
    \draw[<->] (12.5,1.5)--node[above]{$\ell$}(11,1.5);
  \end{tikzpicture}
  \caption{(Left) Shadowing disks defining the domain $\Sigma(R_{0})$; the region $\mathscr{R}_{1}$ is the region between the rays where the connection one-form $\mathfrak{a}_{R_{0}}$ is supported; the region $\mathscr{R}_{2}$ is the complement of a large disk centered on the origin. (Right) The neck region of $\Sigma(R_{0})$ is the neck with modulus $2R_{0}$; the \emph{total neck} is the neck with modulus $2\ell+2R_{0}$.}
  \label{fig:Pi-is-CS-regions}
\end{figure}
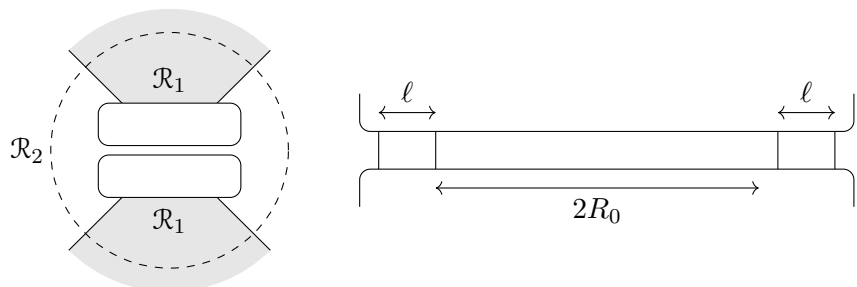

\begin{figure}[h]
  \centering
  \begin{tikzpicture}[scale=0.5]
    \draw[rounded corners] (0,-1)--(0,0)--+(10,0) (0,2)--(0,1)--+(10,0);
    \draw (0.5,0)--+(0,1) (2,0)--+(0,1);
    \draw[<->] (0.5,1.5)--node[above]{$\ell$}(2,1.5);
  \end{tikzpicture}
  \caption{The end region on the limiting disk around the left end; the picture around the right end is a reflected version.}
  \label{fig:end-region}
\end{figure}
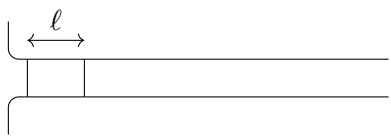

Let us pick perturbation one-forms $\mathfrak{p}_{R}$ depending on $R$ with the property that $\mathfrak{p}_{R}$ is supported in the region $\mathscr{R}_{1}\cap \mathscr{R}_{2}^{c}$; in particular, the perturbation term vanishes near the neck, and the equation appears as Floer's equation for $(H_{\infty,t},J)$ in the cylindrical end $\mathscr{R}_{2}$, using coordinates $z=e^{-2\pi (s+it)}$.

Once a complex structure is chosen (see \S\ref{sec:choice-almost-compl}), the construction then gives a parametric moduli space $\mathscr{M}$ of solutions $(R,x_{0},x_{1},u)$ where $(x_{0},x_{1},u)$ solves the equation for the domain $\Sigma(R)$. Let us denote by $\mathscr{M}$ the one-dimensional component, and by $\mathscr{M}_{0}$ the rigid component; we will not have occasion to consider higher dimensional components. We denote by $\mathscr{M}(R)$ the fiber of $\mathscr{M}$ of solutions $(x_{0},x_{1},u)$, so $\mathscr{M}(R)$ consists of rigid solutions for generic $R$.

As a special case, we define $\mathscr{M}(\infty)$ to be the moduli of rigid finite energy solutions on the limiting surface $\Sigma(\infty)$, where $(x_{0},x_{1})$ lies in the fiber product $P_{\infty}$; the finite energy condition implies any solution converges exponentially to a removable singularity lying on $T^{*}M_{A(x_{0},0)}=T^{*}M_{A(x_{1},0)}$ at the punctures.

We will require that $\mathfrak{p}_{R}$ is equal to $\mathfrak{p}_{\infty}$ for $R$ sufficiently large, where $\mathfrak{p}_{\infty}$ is a perturbation term used to achieve transversality for $\mathscr{M}(\infty)$. As part of the gluing analysis, we will show that such $\mathfrak{p}_{R}$ also achieve transversality for the parametric moduli space $\mathscr{M}$.

In \S\ref{sec:choice-ell} we will explain how to pick the parameter $\ell$; essentially it needs to be chosen large enough that the neck/ends are mapped into a small neighborhood of the cotangent fiber $T^{*}M_{A(x_{0},0)}$.

At this stage, we claim the following:
\begin{proposition}
  The count of elements $(x_{0},x_{1},u)\in \mathscr{M}(\infty)$, weighted by the asymptotic orbit at the cylindrical end, defines a cycle in $\mathrm{CF}(H_{\infty,t},J)$ which represents $\Theta(A_{\infty})$.
\end{proposition}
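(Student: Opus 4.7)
The plan is to identify $\mathscr{M}(\infty)$ with the moduli space used to define $\Theta(A_{\infty})$ via a conformal identification of $\Sigma(\infty)$ with the half-cylinder. After removing the two boundary punctures (where solutions converge exponentially to a single point by the removable singularity theorem) the surface $\Sigma(\infty)$ is biholomorphic to the half-infinite cylinder $(-\infty,0]\times \R/\Z$ with two marked boundary points $p_{0},p_{1}\in \set{0}\times \R/\Z$. I would pick this biholomorphism so that $p_{0}$ lies at $t=1/2$ and $p_{1}$ lies at $t=0\equiv 1$, corresponding to the two neck-pinch locations in the original shadowing-disk picture (these are the angular limits of the two necks as $R\to\infty$).

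Under this biholomorphism, pull back the connection one-form $\mathfrak{a}$ and the Lagrangian boundary conditions, together with the chosen $J$ and $\mathfrak{p}_{\infty}$. On the boundary arc $t\in [0,1/2]$, tracking the definition of the Lagrangian boundary conditions through the conformal map, one sees the condition reads $u(0,t)\in T^{*}M_{A_{0}(x_{0},2t)}$ (constant equal to $T^{*}M_{A_{0}(x_{0},0)}$ outside $[1/8,3/8]$), and symmetrically on the arc $t\in [1/2,1]$ the condition reads $u(0,t)\in T^{*}M_{A_{1}(x_{1},2t-1)}$. By definition of $A_{\infty}$, this is precisely the boundary condition $u(0,t)\in T^{*}M_{A_{\infty}((x_{0},x_{1}),t)}$. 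The fiber product condition $(x_{0},x_{1})\in P_{\infty}$ is automatic because $A_{\infty}((x_{0},x_{1}),0)=A_{0}(x_{0},0)=A_{1}(x_{1},0)$ is forced by the agreement of the Lagrangian boundary conditions at the two boundary punctures (which is precisely the condition the removable singularity requires).

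Next, I would verify that the resulting data is $\Theta$-data for $A_{\infty}$ in the sense of \S\ref{sec:data-theta-morphism}. Conditions \ref{theta-data-1}--\ref{theta-data-3} are immediate from the construction of $\mathfrak{a}$ in the shadowing-disk model (with $H_{\infty,t}$ as asymptotic system at the interior puncture). Condition \ref{theta-data-4} follows from the vanishing of the curvature of $\mathfrak{a}$ on the limiting model (where $\mathfrak{a}$ is supported only in the two segments $\mathscr{R}_{1}$, where it is of the form $H_{i,t}\d t$, with no curvature). Condition \ref{theta-data-5} amounts to checking that the ``angular slope'' $c_{(x_{0},x_{1}),0,t}$ of $\mathfrak{a}$ on the boundary dominates the quantity $\max\set{\ip{p,A'_{\infty}(t)}:p\in \Omega\cap T^{*}M_{A_{\infty}(t)}}$, which follows directly from property \ref{product-theta-2} in the choice of $A_{0},A_{1}$ combined with the factor-of-two reparametrization inherent in the concatenation. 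With these verifications, the count of rigid elements of $\mathscr{M}(\infty)$ is, by definition, the chain $\Theta(A_{\infty})$ obtained from this particular choice of $\Theta$-data, perturbation, and almost complex structure. The cycle property and independence of auxiliary choices are then given by Lemma~\ref{lemma:theta-structural}.

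The main obstacle will be the bookkeeping in the conformal identification: one must ensure that the specific choice of biholomorphism is compatible with the $t$-parametrization used in the definition of $A_{\infty}$, so that the boundary conditions match on the nose rather than up to an $(x_{0},x_{1})$-dependent reparametrization of the loop. If a reparametrization is unavoidable, then I would construct a cobordism within $Z(\Lambda_{c_{0}+c_{1}})$ from the reparametrized family to $A_{\infty}$ (by linearly interpolating the reparametrizations, which is possible because the set of reparametrizations satisfying property \ref{product-theta-2} is convex, as in the proof of Lemma~\ref{lemma:Pi-is-POP}), and then appeal to the bordism invariance of $\Theta$ proved in Lemma~\ref{lemma:theta-structural}. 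A secondary technical point is to ensure that $\mathfrak{p}_{\infty}$ respects the normalization conventions of \S\ref{sec:data-theta-morphism} (in particular, vanishing near both the interior puncture and the boundary punctures); this is arranged by fiat in the choice of $\mathfrak{p}_{R}$ and persists in the limit.
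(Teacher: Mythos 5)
Your approach mirrors the paper's: conformally identify $\Sigma(\infty)$ with the punctured half-cylinder, recognize the resulting data as $\Theta$-data for a representative of the Chas--Sullivan product class, and conclude by the well-definedness of $\Theta$. Your observation that the biholomorphism may only match the Lagrangian boundary conditions up to a loop reparametrization, and the repair by constructing a cobordism inside $Z(\Lambda_{c_0+c_1})$ and invoking Lemma~\ref{lemma:theta-structural}, is a correct and sensible point (the paper's proof simply asserts the boundary conditions coincide).

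However, there is a genuine gap in the step where you assert that conditions (1)--(3) of $\Theta$-data are ``immediate from the construction,'' and this gap is precisely the subtlety the paper's own proof singles out as its sole exception. The definition of $\Theta$-data requires that $\mathfrak{a}=H_{\infty,t}\,\d t$ hold \emph{exactly} for $s\le -s_0$, in the cylindrical coordinates $(s,t)$ of the half-cylinder. The original $\mathfrak{a}$ does satisfy this in the cylindrical coordinates of $\Sigma(\infty)$ inherited from the $\C$-picture, but the biholomorphism $\Sigma(\infty)\to (-\infty,0]\times\R/\Z$ does \emph{not} carry one set of cylindrical coordinates exactly to the other; it does so only asymptotically, with exponentially decaying corrections (and, depending on normalization, a possible rotation of the angular variable). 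Consequently the pulled-back connection one-form fails to satisfy the $\Theta$-data axioms near the puncture: it acquires a nonzero $\d s$-component and a small $t$-shift of $H_{\infty,t}$. You need to add the observation that the resulting count is stable under such exponentially localized modifications of the data near the end (via the usual parametric cobordism argument with fixed asymptotic system), so that one may interpolate the pulled-back data to genuine $\Theta$-data deep in the cylindrical end without changing the cycle. Only with this interpolation step does the identification of the count of $\mathscr{M}(\infty)$ with $\Theta(A_\infty)$ go through.
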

\begin{proof}
  The surface $\Sigma(\infty)$ is conformally equivalent to a closed disk $D(1)$ with two boundary punctures (say $-1,1$) and one interior puncture $0$. The equation and boundary conditions are what is used to define $\Theta$, with the sole exception that the cylindrical end at $0$ is not the standard cylindrical end around $0$ (the biholomorphism between $\Sigma(\infty)$ and $D(1)$ does not respect the cylindrical coordinates). However, the cycle one obtains is stable under small changes in the data, and so one can correct the cylindrical end very close to zero, without changing the resulting cycle. In this fashion, one proves the count obtained from $\mathscr{M}(\infty)$ is exactly the one used to define $\Theta(A_{\infty})$.
\end{proof}

Moreover, standard Floer theoretic arguments show:
\begin{proposition}
  The count of elements $(R_{0},x_{0},x_{1},u)\in \mathscr{M}(R_{0})$, weighted by the asymptotic orbit at the cylindrical end, and the count of non-compact ends of $\mathscr{M}$, also weighted by the asymptotic orbits, define homologous cycles in $\mathrm{CF}(H_{\infty,t},J)$. Both of these cycles represent $\Pi(D)$.
\end{proposition}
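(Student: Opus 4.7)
The plan is to interpret both counts as boundary-type contributions of the oriented $1$-manifold $\mathscr{M}_{[R_{0},\infty)}\subset \mathscr{M}$ obtained by fixing the parameter to lie in $[R_{0},\infty)$. For generic $R_{0}$ and generic perturbation terms $\mathfrak{p}_{R}$ varying smoothly in $R$, the usual Sard-Smale argument produces a parametric transversality statement, so $\mathscr{M}_{[R_{0},\infty)}$ is a smooth $1$-manifold whose boundary at $R=R_{0}$ is exactly the rigid moduli $\mathscr{M}(R_{0})$. The a priori energy estimate from Lemma \ref{lemma:energy-estimate-theta} (extended to the rectangular shadowing disks) together with the gradient bound and maximum principle in \S\ref{sec:general-form-floers} applies uniformly in $R$, so the only sources of non-compactness of $\mathscr{M}_{[R_{0},\infty)}$ are \emph{Floer breaking} at the cylindrical end of $\Sigma(R)$ and escape of the parameter $R\to\infty$.

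Next I would enumerate the ends: by the standard compactness-up-to-breaking of \S\ref{sec:compactness}, a non-compact end along which $R_{n}$ stays in a compact subset of $[R_{0},\infty)$ produces, in the limit, a configuration consisting of a rigid parametric solution together with a non-stationary Floer differential cylinder for $(H_{\infty,t},J)$ glued at the cylindrical end; the complementary gluing argument shows each such configuration is realized by exactly one end. Counting these ends, weighted by the asymptotic orbit, gives $d K$, where $K$ is the chain that counts rigid parametric solutions (equivalently, the pairs $(R,x_{0},x_{1},u)$ living in the $0$-dimensional slice of $\mathscr{M}_{[R_{0},\infty)}$). Summing the parity of ends of the $1$-manifold $\mathscr{M}_{[R_{0},\infty)}$ to zero yields
\begin{equation*}
  (\text{count of }\mathscr{M}(R_{0}))+(\text{count of }R\to\infty\text{ ends})+dK=0\bmod 2,
\end{equation*}
which is the desired homological identity.

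To identify both sides with $\Pi(D^{\mathrm{std}})$, I would use the invariance of $\Pi$ under homotopy of shadowing disks established earlier. The surface $\Sigma(R_{0})$ together with its connection $1$-form, Lagrangian boundary conditions, and perturbation term is precisely the data defining $\Pi(D)$ for the rectangular shadowing disks $D$ obtained from the $R_{0}$ slice of the family pictured in Figure \ref{fig:Pi-is-CS-regions}. These rectangular shadowing disks lie in the same path component of the space of shadowing disks as $D^{\mathrm{std}}$ (one can linearly interpolate the corners while maintaining the transversality conditions (2)--(3) defining a shadowing pair), so by the invariance statement the count of $\mathscr{M}(R_{0})$ represents $\Pi(D^{\mathrm{std}})$. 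The same conclusion for the $R\to\infty$ end count is now automatic from the homological identity above.

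The main obstacle will be the parametric transversality and gluing analysis at the Floer-breaking ends for a family whose conformal geometry degenerates as $R\to\infty$: one must choose $\mathfrak{p}_{R}$ carefully so that transversality holds on every slice $\mathscr{M}(R)$ \emph{and} in the parametric sense, and so that the compactness-up-to-breaking argument still produces limits with the standard asymptotic exponential convergence. Since the perturbation term is supported in $\mathscr{R}_{1}\cap \mathscr{R}_{2}^{c}$ (away from both the neck and the cylindrical end), the equation near the cylindrical end is genuinely Floer's equation for $(H_{\infty,t},J)$ for every $R$, so the standard gluing theory of \cite{salamon-notes-1997} applies verbatim and this obstacle is ultimately routine rather than substantive.
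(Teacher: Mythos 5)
Your proposal is correct and takes essentially the same approach as the paper: the paper's proof is a one-line remark noting that the chain homotopy between the two cycles is given by counting rigid elements of $\mathscr{M}_{0}$, and your argument is precisely the fleshed-out version of that, with the end/boundary analysis of the $1$-dimensional parametric moduli space (boundary at $R=R_{0}$, Floer-breaking ends contributing $dK$, and $R\to\infty$ ends contributing the second cycle), plus the observation that the $R_{0}$-slice is exactly the $\Pi$-construction for rectangular shadowing disks, which are homotopic to $D^{\mathrm{std}}$ in the space of shadowing disks.
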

\begin{proof}
  The argument is standard and similar to other arguments in this paper. We only comment that the chain homotopy between the two cycles is given by counting the rigid elements in the parametric moduli space $\mathscr{M}_{0}$.
\end{proof}

Thus the rest of the proof is dedicated to proving the cycle obtained by counting solutions in $\mathscr{M}(\infty)$ also represents the cycle $\Pi(D)$, i.e., represents the cycle obtained by counting the non-compact ends of $\mathscr{M}$. This step of the proof is what involves the delicate gluing argument. The argument is quite technical, and we have written it with an expert audience in mind; we assume the reader is already familiar with gluing theory in Floer theory, and we focus on the details needed to convince an expert in the validity of the approach.

\subsubsection{Choice of almost complex structure}
\label{sec:choice-almost-compl}

To simplify the argument as much as possible, we will make a particular choice of almost complex structure. The construction uses Riemannian geometry. For a given Riemannian metric $g$ on $TM$ (and hence on $T^{*}M$), and a smooth function $f:[0,\infty)\to (0,\infty)$, we let $J_{g,f}$ be the almost complex structure so that:
\begin{equation*}
  J_{g,f}=\left[
    \begin{matrix}
      0&-f(\rho_{g})g_{*}\\
      (f(\rho_{g})g_{*})^{-1}&0
    \end{matrix}
  \right]\text{ with respect to splitting }\mathrm{Ver}\oplus \mathrm{Hor}_{g},
\end{equation*}
where $\rho_{g}(p)=g(p,p)$ is the fiberwise radius-squared function, and $g_{*}$ is the duality isomorphism between $TM$ and $T^{*}M$. The horizontal distribution $\mathrm{Hor}_{g}$ is defined using the Levi-Civita connection.

The key result about such almost complex structures is that:
\begin{lemma}\label{lemma:maximum-principle-levi-civita}
  Let $(\Sigma,\bd\Sigma)$ be a (not necessarily compact) Riemann surface with boundary. Let $f_{z}$, $z\in \Sigma$, be a family of functions $[0,\infty)\to (0,\infty)$, yielding a domain dependent family of almost complex structures $J_{g,f_{z}}$. For any $J_{g,f_{z}}$-holomorphic map $u:(\Sigma,\bd\Sigma)\to T^{*}M$, whose boundary is mapped onto cotangent fibers, the function $\rho_{g}(u)$ does not attain any local maxima.
\end{lemma}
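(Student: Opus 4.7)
The overall strategy is to show that $\rho_g \circ u$ is subharmonic on $\Sigma$ and then to apply the strong maximum principle in the interior and Hopf's lemma on the boundary. The geometric input underlying both steps is the fact that $\rho_g$ is $J_{g,f}$-plurisubharmonic away from the zero section for any positive smooth $f$.

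First I would verify the plurisubharmonicity by a local calculation in bundle-adapted Riemannian normal coordinates $(q^i, p_i)$ at a point $(q_0, p_0) \in T^*M$: there $g_{ij}(q_0) = \delta_{ij}$, the Christoffel symbols vanish, the splitting $TT^*M = \mathrm{Ver} \oplus \mathrm{Hor}_g$ is the coordinate splitting, the almost complex structure acts as $J_{g,f}(a^i \partial_{q^i} + b_i \partial_{p_i}) = f^{-1} b_i \partial_{q^i} - f a^i \partial_{p_i}$ on the tangent space at $(q_0, p_0)$, and $\rho_g = |p|^2 + O(|q|^2 |p|^2)$. A direct computation shows that the $(1,1)$-form $\omega_\rho := -d(d\rho_g \circ J_{g,f})$ is non-negative, with positivity strict whenever $p \ne 0$. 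Pulling back by a $J_{g, f_z}$-holomorphic $u$ and writing in isothermal coordinates $z = s + it$ yields $\Delta(\rho_g \circ u) \ge 0$ in the weak sense. The strong maximum principle then says that at any interior point where $\rho_g \circ u$ has a local max, $\rho_g \circ u$ must be locally constant; combined with the strict positivity of $\omega_\rho$ away from the zero section and unique continuation for pseudo-holomorphic maps, this degenerate case reduces to $u$ being constant on the relevant component, which is compatible with the conclusion of the lemma interpreted as the non-existence of strict interior local maxima.

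For the boundary case I would use Hopf's lemma together with the cotangent-fiber boundary condition. At a would-be boundary local max $z_0 \in \bd\Sigma$, choose isothermal coordinates with $z_0 = 0$, $\bd\Sigma = \{t = 0\}$, and $\Sigma$-interior $\{t \ge 0\}$, so that the outward normal is $-\partial_t$. The boundary condition $u(s,0) \in T^*M_{q(s)}$ lets one decompose $\partial_s u(0) = (q'(0), V^s)$ in $(q,p)$-coordinates, and the Cauchy--Riemann equation gives $\partial_t u(0) = (f^{-1} V^s, -f q'(0))$ with $f = f_{z_0}(\rho_g(u(0)))$. A short calculation in Riemannian normal coordinates at $q(0)$ yields $\partial_t(\rho_g \circ u)(0) = -2 f \langle p_0, q'(0) \rangle$, while the boundary max condition $\partial_s(\rho_g \circ u)(0) = 0$ forces $\langle p_0, V^s \rangle = 0$. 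Hopf's lemma applied to the subharmonic function $\rho_g \circ u$ demands that the outward normal derivative $-\partial_t(\rho_g \circ u)(0) = 2 f \langle p_0, q'(0) \rangle$ be strictly positive unless $\rho_g \circ u$ is locally constant. I anticipate the main obstacle being the boundary sign analysis when the Lagrangian fibers vary with $z$: the pairing $\langle p_0, q'(0)\rangle$ is not a priori controlled, and one may need a doubling argument across the cotangent fibers (using that they are totally real for $J_{g,f}$) or an integration along $\bd\Sigma$ to reduce to the fixed-fiber case $q(s) \equiv q_0$, where $q'(0) = 0$ and Hopf's strict inequality directly forces $\rho_g \circ u$ to be locally constant near $z_0$.
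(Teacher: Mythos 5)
Your interior argument has a genuine gap: for a general smooth positive $f$, the function $\rho_g$ is \emph{not} $J_{g,f}$-plurisubharmonic. A direct computation in the flat model $T^*\R$ with $\rho_g=p^2$ gives $-d(d\rho_g\circ J_{g,f}) = \bigl(2f + 4\rho f'\bigr)\,dp\wedge dq$, and evaluating on $(V,J_{g,f}V)$ yields a positive multiple of $f+2\rho f'(\rho)$, which is negative whenever $f$ decays fast enough (for instance $f(\rho)=e^{-\rho}$ and $\rho>1/2$). Since the lemma is stated for an arbitrary family of positive functions $f_z$, the claimed subharmonicity of $\rho_g\circ u$ fails, and your invocation of the strong maximum principle for subharmonic functions is not available. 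The paper avoids this by never claiming plurisubharmonicity: writing the $J_{g,f_z}$-holomorphic curve equation in split form $\nabla_s p = f\,g_*\partial_t q$, $\nabla_t p = -f\,g_*\partial_s q$, and using $\nabla_s(g_*\partial_t q)=\nabla_t(g_*\partial_s q)$, one finds a differential \emph{inequality} $\Delta\rho_g \ge \langle b, \nabla\rho_g\rangle$ with $b$ built from $\nabla\log f_z(\rho_g(u))$ (this also correctly absorbs the $z$-dependence of $f_z$, which your pullback $u^*\omega_\rho$ does not account for). The strong maximum principle for the operator $\Delta - b\cdot\nabla$ applies to this weaker inequality, precisely because the first-order term vanishes at a critical point, without requiring $\Delta(\rho_g\circ u)\ge 0$.

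Your boundary analysis is closer to the mark. The computation that the outward normal derivative of $\rho_g\circ u$ equals $2f\langle p_0,q'(0)\rangle$ is correct, and you rightly flag the sign problem when the cotangent fiber varies with the boundary parameter; in the paper's intended setting each boundary component is mapped into a \emph{fixed} cotangent fiber, so $q'\equiv 0$ and the normal derivative vanishes. At that point the paper invokes a reflection argument: since $\partial_t(\rho_g\circ u)=0$ on the boundary, $\rho_g\circ u$ doubles across $\partial\Sigma$ to a $C^2$ function and the interior maximum principle applies to the doubling. Your Hopf-lemma route is a valid alternative once the normal derivative is known to vanish and once the interior inequality is corrected as above, but as written it inherits the plurisubharmonicity gap, since Hopf's lemma too requires $\rho_g\circ u$ to be a supersolution of the correct elliptic operator rather than to be subharmonic.
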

\begin{proof}
  Write $u(s,t)=(p(s,t),q(s,t))$ where $q(s,t)$ is the projection to the base, and $p(s,t)$ is considered as section of $u^{*}T^{*}M$. The computations in \cite[\S2]{cant-chen-kyoto-2023} imply that:
  \begin{equation*}
    \nabla_{s}p=f_{z}(\rho_{g})g_{*}\bd_{t}q\text{ and }\nabla_{t}p=-f_{z}(\rho_{g})g_{*}\bd_{s}q.
  \end{equation*}
  A standard computation gives:
  \begin{equation*}
    \Delta \rho_{g}=\Delta g(p,p)\ge 2g(\nabla_{s}\nabla_{s}p,p)+2g(\nabla_{t}\nabla_{t}p,p),
  \end{equation*}
  and using the above holomorphic curve equation, and the well-known fact $\nabla_{s}(g_{*}\bd_{t}q)=\nabla_{t}(g_{*}\bd_{s}q)$ (see \cite[Lemma 2.4]{cant-chen-kyoto-2023}), we conclude:
  \begin{equation*}
    \Delta \rho_{g}\ge (\bd_{s}\rho_{g})f_{z}(\rho_{g})^{-1}\bd_{s}(f_{z}(\rho_{g}))-(\bd_{t}\rho_{g})f_{z}(\rho_{g})^{-1}\bd_{t}(f_{z}(\rho_{g})).
  \end{equation*}
  If $\bd\Sigma=\emptyset$, the desired result then follows from the maximum principle \cite[Chapter 3]{gilbarg-trudinger-1998}. In general, observe that $\d\rho_{g}$ vanishes on directions orthogonal to the boundary, and hence doubles to a $C^{2}$ function; the maximum principle can then be applied to this doubling.
\end{proof}

Having established this, we construct a family $J_{x_{0},z}$ of almost complex structures on the family $P_{0}\times \C$, as follows. Pick a family of Riemannian metrics $g_{x_{0}}$ so that $g_{x_{0}}$ is flat in a neighborhood of $A_{0}(x_{0},0)$. Then pick the almost complex structures $J_{x_{0},z}$ so that:
\begin{enumerate}
\item in $\mathscr{R}_{2}$, $J_{x_{0},z}=J$ is fixed,
\item inside of a disk slightly smaller than the one bounding $\mathscr{R}_{2}$, it holds that $J_{x_{0},z}=J_{g_{x_{0}},f_{z}}$,
\item $f_{z}$ is locally constant inside the total neck (where it equals $f_{0}$) and on a neighborhood of $\mathscr{R}_{1}$ (where it equals $f_{z_1}$ for some $z_1 \in \mathscr{R}_{1}$).
\item $f_{0}(\rho)=1$ holds for $\rho\le r_{1}$,
\item $f_{z}(\rho)=\rho^{1/2}$ for $\rho \ge r_{1}+1$ holds at all points $z$.
\end{enumerate}
Note that because $f_{z}(\rho)=\rho^{1/2}$ holds outside of a compact set, $J_{x_{0},z}$ is Liouville equivariant outside of a compact set.

The construction is admittedly a bit ad hoc, but the benefits of the construction is that:
\begin{lemma}
  There is a constant $C$ independent of $R_{0}$ and $f_{z}$ for $z\not\in \mathscr{R}_{1}$ (but depending on $f_{z_1}$) so that any finite energy solution $(x_{0},x_{1},u)$ in the moduli space $\mathscr{M}(R)$, where $R\in [R_{0},\infty]$, satisfies $\rho_{x_{0}}(u)\le C$. In other words, fixing $f_{z_1}$, one can alter the choice of $f_{z}$ for $z\not\in \mathscr{R}_{1}$ so the above conclusion holds.
\end{lemma}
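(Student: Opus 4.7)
The strategy is to decompose $\Sigma(R)$ into three pieces -- the region $\mathscr{R}_{1}\cap\mathscr{R}_{2}^{c}$ where the equation is genuinely perturbed, the neck (together with its extension into the total neck and the interior of the limiting disk), and the cylindrical end $\mathscr{R}_{2}$ -- and bound $\rho_{x_{0}}(u)$ separately on each piece using three different mechanisms, then combine them to obtain the global bound.

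First, on $\mathscr{R}_{1}\cap \mathscr{R}_{2}^{c}$, the geometric data (the connection one-form $\mathfrak{a}$, the perturbation term $\mathfrak{p}_{R}$, and the almost complex structure $J_{g_{x_{0}},f_{z_{1}}}$) is independent of $R$ and lives on a compact region of fixed conformal type. By the general regularity and maximum principle discussion in \S\ref{sec:general-form-floers} (Propositions \ref{proposition:gradient-bound} and \ref{proposition:maximum-principle}), together with the a priori energy bound on $\mathscr{M}$ coming from the fact that $\mathfrak{a}$ has curvature bounded from above (analogous to Lemma \ref{lemma:energy-estimate-theta}), one obtains a uniform bound $\rho_{x_{0}}(u)\le C_{1}$ on $\mathscr{R}_{1}\cap \mathscr{R}_{2}^{c}$, with $C_{1}$ depending only on $f_{z_{1}}$ and the fixed data on this region.

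Second, on $\mathscr{R}_{2}$, the connection one-form equals $H_{\infty,t}\d t$, the perturbation term vanishes, and the almost complex structure is the fixed $\omega$-tame and Liouville equivariant $J$. Thus on $\mathscr{R}_{2}$ the map solves Floer's equation for $(H_{\infty,t},J)$ with bounded energy, so Proposition \ref{proposition:maximum-principle} gives $\rho_{x_{0}}(u)\le C_{2}$ on $\mathscr{R}_{2}$, with $C_{2}$ depending only on $H_{\infty,t}$ and $J$.

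Third, on the complement of the two previous regions -- that is, on the neck (of any length) together with the collar inside the disk bounding $\mathscr{R}_{2}$ where $f_{z}$ is not yet of Liouville-equivariant form -- the connection one-form and the perturbation both vanish, and $J_{x_{0},z}=J_{g_{x_{0}},f_{z}}$. Hence $u$ is genuinely $J_{g_{x_{0}},f_{z}}$-holomorphic on this region, with boundary either on the cotangent fibers $T^{*}M_{A_{i}(x_{i},0)}$ or on the interface with one of the other two regions. By Lemma \ref{lemma:maximum-principle-levi-civita}, $\rho_{x_{0}}(u)$ attains no interior maximum and no boundary maximum on the Lagrangian portion of the boundary, so it is controlled by its values on the interfaces, which are bounded by $\max(C_{1},C_{2})$ from the first two steps.

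Setting $C=\max(C_{1},C_{2})$ yields the global bound; note that $C$ depends on $f_{z_{1}}$ through $C_{1}$ but is independent of $R_{0}$ and of the choice of $f_{z}$ for $z\notin\mathscr{R}_{1}$ (provided one enlarges the transition radius $r_{1}$ so that $r_{1}\ge C$, which is the ``altering'' referred to in the statement; this is what ensures the Liouville-equivariant regime $f_{z}(\rho)=\rho^{1/2}$ is not entered inside the region governed by Lemma \ref{lemma:maximum-principle-levi-civita}). The main obstacle is the first step: one must verify that standard Floer compactness delivers the bound $C_{1}$ uniformly in $R_{0}$ and in the choice of $f_{z}$ outside $\mathscr{R}_{1}$. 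This is plausible because $\mathscr{R}_{1}\cap\mathscr{R}_{2}^{c}$ has fixed geometry and the data on it is fixed, but requires care to rule out bubbling or escape to infinity driven by the varying neck; this is handled by the absence of holomorphic spheres (since $W$ is exact) and by the Lagrangian boundary conditions being exact, reducing the compactness to the standard arguments recalled in \S\ref{sec:compactness}.
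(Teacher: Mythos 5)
Your decomposition and the roles you assign to Lemma \ref{lemma:maximum-principle-levi-civita} (on the neck) and Proposition \ref{proposition:maximum-principle} (on $\mathscr{R}_{2}$) match the paper. The gap is your first step: a uniform bound $\rho_{x_{0}}(u)\le C_{1}$ on $\mathscr{R}_{1}\cap\mathscr{R}_{2}^{c}$ does not follow from ``standard Floer compactness'' independently of the other two steps. Propositions \ref{proposition:gradient-bound} and \ref{proposition:maximum-principle} are stated for sequences $(\sigma_{n},u_{n})$ whose parameters converge, which fails as $R\to\infty$; and there is no local version of the Floer maximum principle that bounds $\rho_{x_{0}}(u)$ on a subregion from the restriction of the equation to that subregion --- the mechanism (cf.\ the remark in \S\ref{sec:connection-1-forms}) is to confine the cylindrical parts and then bound the distance of all other points to them. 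Exactness rules out bubbling, hence yields a gradient bound, but it says nothing about escape to infinity of $\rho_{x_{0}}(u)$; for that an anchor is required, and your step 1 supplies none.

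The paper supplies the anchor and makes the logic sequential rather than parallel. After reducing to $\mathscr{R}_{1}\cup\mathscr{R}_{2}$ via Lemma \ref{lemma:maximum-principle-levi-civita} and bounding $\rho_{x_{0}}(u)$ on $\mathscr{R}_{2}$ via Proposition \ref{proposition:maximum-principle}, it joins any point of $\mathscr{R}_{1}$ to $\mathscr{R}_{2}$ by a radial path lying entirely in $\mathscr{R}_{1}$, where the data depends only on $f_{z_{1}}$ and is fixed, so that bubbling analysis combined with the a priori energy bound gives a uniform derivative bound along the path. Since such paths have uniformly bounded length, $\rho_{x_{0}}(u)$ changes by a bounded amount along them, propagating the $\mathscr{R}_{2}$-bound to all of $\mathscr{R}_{1}$. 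The corrected version of your argument would therefore have $C_{1}$ depend on $C_{2}$ via this radial-path propagation, after which the neck is bounded by $\max(C_{1},C_{2})$ as you state.
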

\begin{proof}
  We consider the two regions $\mathscr{R}_{1},\mathscr{R}_{2}\subset \Sigma(R)$ illustrated in Figure \ref{fig:Pi-is-CS-regions}. By Lemma \ref{lemma:maximum-principle-levi-civita}, the supremum of $\rho_{x_{0}}(u)$ is equal to the supremum of $\rho_{x_{0}}(u)$ on the region $\mathscr{R}_{1}\cup \mathscr{R}_{2}$.

  By the earlier maximum principle Proposition \ref{proposition:maximum-principle}, the supremum of $\rho_{x_{0}}(u)$ over $\mathscr{R}_{2}$ depends only on $(H_{\infty,t},J)$, and the apriori energy bound which is independent of the choice of family of almost complex structures and $R$.

  Any point in $\mathscr{R}_{1}$ can be joined to $\mathscr{R}_{2}$ by a radial path. Standard bubbling analysis proves the derivatives along the path are bounded by a constant depending only on the Hamiltonian connection and $f_{z_1}$ (the radial path remains inside $\mathscr{R}_{1}$). Therefore the supremum of $\rho_{x_{0}}(u)$ over $\mathscr{R}_{1}$ is also bounded independently of $f_{z}$ for $z\not\in \mathscr{R}_{1}$, and $R_{0}$. This proves the statement.
\end{proof}

Therefore we can pick $r_{1}$ larger than $C$, because $C$ is independent of $r_{1}$.

The upshot of this construction is the following:

\begin{corollary}
  Any solution $(x_{0},x_{1},u)$ in $\mathscr{M}(R)$, $R\in [R_{0},\infty]$, is $J_{0}$-holomorphic on the intersection of the total neck and $u^{-1}(B(1))$ if $B(1)$ is a coordinate ball around $A(x_{0},0)$ where $g_{x_{0}}$ is flat; here $J_{0}$ is the standard almost complex structure in canonical coordinates $p,q$.
\end{corollary}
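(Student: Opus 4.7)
The plan is to chase the definitions and verify, piece by piece, that all of the ``extra'' structure in Floer's equation (the connection one-form, the perturbation, the non-standard almost complex structure) trivializes on the region in question, leaving exactly the standard Cauchy–Riemann equation. Concretely, I would check in turn:
\begin{enumerate}
\item On the total neck, the connection one-form $\mathfrak{a}$ vanishes. This is built into the construction of $\Sigma(R)$: $\mathfrak{a}$ was defined so as to vanish on the original inner neck of $\Sigma(R_{0})$ and to remain zero on the longer neck glued in to produce $\Sigma(R)$, and outside the inner neck it is supported in the shaded region $\mathscr{R}_{1}$ of Figure \ref{fig:Pi-is-CS-regions}, which misses the horizontal ``arms'' of the total neck.
\item On the total neck, the perturbation one-form $\mathfrak{p}_{R}$ vanishes, because it was chosen to be supported in $\mathscr{R}_{1}\cap \mathscr{R}_{2}^{c}$, again disjoint from the neck.
\item On the total neck, the family $f_{z}$ is locally constant equal to $f_{0}$ by the third bullet in the construction of $J_{x_{0},z}$ in \S\ref{sec:choice-almost-compl}.
\end{enumerate}

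Combining (1)--(3), Floer's equation on the portion of the total neck lying inside $u^{-1}(B(1))$ reduces to the unperturbed $J_{g_{x_{0}},f_{0}}$-holomorphic curve equation. Now I invoke the preceding lemma: there is a uniform constant $C$ (independent of $R$ and of the choices of $f_{z}$ away from $\mathscr{R}_{1}$) such that $\rho_{g_{x_{0}}}(u)\le C$ for every solution in $\mathscr{M}(R)$; since $r_{1}$ was chosen with $r_{1}>C$, we have $f_{0}(\rho_{g_{x_{0}}}(u))=1$ everywhere on the image of $u$. Hence the equation simplifies further to the $J_{g_{x_{0}},1}$-holomorphic curve equation.

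Finally, on $B(1)$ the metric $g_{x_{0}}$ is flat, so one may pick canonical coordinates $(p,q)$ in which the Levi-Civita horizontal distribution is spanned by the $\partial_{q_{i}}$ and the duality isomorphism $g_{*}$ identifies $\partial_{q_{i}}$ with $\partial_{p_{i}}$. Reading off the matrix defining $J_{g,f}$ in \S\ref{sec:choice-almost-compl} with $f\equiv 1$, one sees that $J_{g_{x_{0}},1}$ sends $\partial_{q_{i}}\mapsto \partial_{p_{i}}$ and $\partial_{p_{i}}\mapsto -\partial_{q_{i}}$, which is precisely $J_{0}$. Hence $u$ satisfies $\bar\partial_{J_{0}}u=0$ on the intersection of the total neck with $u^{-1}(B(1))$, as claimed.

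There is no substantive obstacle here: the statement is really a bookkeeping consequence of the careful design of the data. The one point that merits a sentence of justification is checking that the total neck lies outside both the support of $\mathfrak{a}$ and that of $\mathfrak{p}_{R}$, which is immediate from the pictures and definitions in \S\ref{sec:geometric-set-up}.
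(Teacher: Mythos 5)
Your proof is correct and takes essentially the same route as the paper's (implicit) argument. You correctly observe that the claim has two halves: (i) the holomorphic-curve equation on the neck has no Hamiltonian or perturbation terms, which you justify by noting that $\mathfrak{a}$ and $\mathfrak{p}_{R}$ are supported away from the total neck by construction; and (ii) the domain-dependent almost complex structure $J_{x_{0},z}$ restricted to the neck and the coordinate ball equals the standard $J_{0}$, which follows from $f_{z}=f_{0}$ on the neck, the bound $\rho_{g_{x_{0}}}(u)\le C<r_{1}$ from the preceding lemma so that $f_{0}=1$, and the flatness of $g_{x_{0}}$ on $B(1)$.

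One small slip: reading off the matrix for $J_{g,f}$ with $f\equiv 1$ and the vertical/horizontal identifications $\mathrm{Ver}=\mathrm{span}\{\partial_{p_{i}}\}$, $\mathrm{Hor}=\mathrm{span}\{\partial_{q_{i}}\}$, $g_{*}\partial_{q_{i}}=\partial_{p_{i}}$, one actually gets $J_{g,1}\partial_{p_{i}}=\partial_{q_{i}}$ and $J_{g,1}\partial_{q_{i}}=-\partial_{p_{i}}$ — the opposite signs from what you wrote. This matches the paper's convention $J_{0}\partial_{p_{i}}=\partial_{q_{i}}$, so the conclusion is still correct; you just computed $-J_{g,1}$ instead of $J_{g,1}$. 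Worth fixing but not a gap in the argument.
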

The standard almost complex structure is the one which satisfies $J_{0}\bd_{p_{i}}=\bd_{q_{i}}$, $i=1,\dots,n$. In other words, we can assume without loss of generality that $J_{x_{0},z}$ is standard on canonical coordinates on the coordinate ball $B(1)$, and if $z$ is in the total neck.

\subsubsection{On the choice of $\ell$}
\label{sec:choice-ell}

In this subsection, we prove that $\ell$ can be chosen large enough that certain properties hold for solutions in $\mathscr{M}(R)$ for $R\ge R_{0}$.
\begin{lemma}\label{lemma:choice-ell}
  For any $\epsilon>0$, the parameter $\ell$ can be chosen large enough that: for all solutions $(R,x_{0},x_{1},u)\in \mathscr{M}(R)$, $u$ maps the neck (or ends if $R=\infty$) into the preimage of $B(\epsilon)$, where $B(\epsilon)\subset B(1)$ is contained in the coordinate ball around $A(x_{0},0)$. Recall the neck/ends exclude the pieces with modulus $\ell$.
\end{lemma}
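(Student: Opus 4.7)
The plan is to argue by contradiction using Gromov compactness combined with the exactness of $\lambda$ on cotangent fibers. Suppose the conclusion fails for some $\epsilon > 0$: then we can pick a sequence $\ell_n \to \infty$, parameters $R_n \in [R_0, \infty]$, solutions $(x_{0,n},x_{1,n},u_n) \in \mathscr{M}(R_n)$, and points $z_n$ lying in the inner neck (or in the ends, if $R_n = \infty$) such that the base projection of $u_n(z_n)$ lies outside a ball of radius $\epsilon$ around $A_0(x_{0,n},0) = A_1(x_{1,n},0)$. By the definition of the neck/ends (which excludes the two $\ell$-length collars), the distance from $z_n$ to the vertical edges of the total neck, where the neck meets the rest of $\Sigma(R_n)$, is at least $\ell_n$.

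Recall that on the total neck, the connection one-form $\mathfrak{a}$ and the perturbation $\mathfrak{p}_{R_n}$ both vanish, and the almost complex structure $J_{x_{0,n},z}$ equals $J_{g_{x_{0,n}},f_0}$ with $f_0$ locally constant, so $u_n$ is honestly $J_{g_{x_{0,n}},f_0}$-holomorphic there with both horizontal boundary components on the single cotangent fiber $T^*M_{A_0(x_{0,n},0)}$. I would translate $z_n$ to the origin and apply Gromov compactness: the a priori energy bound (Lemma \ref{lemma:energy-estimate-theta}), the uniform $L^\infty$-bound obtained from the maximum principle established just before the lemma, and the gradient bound (Proposition \ref{proposition:gradient-bound}) all survive rescaling; sphere bubbles are excluded by the exactness of $\omega$ and disk bubbles by the exactness of cotangent fibers. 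After passing to a subsequence, one obtains a $J_\infty$-holomorphic limit $u_\infty \colon \mathbb{R} \times [0,1] \to T^*M$ (or a half-strip limit if $z_n$ hugs a vertical edge, but the argument below applies equally well), with both boundary components on the fiber $L_\infty = T^*M_{q_\infty}$, where $q_\infty = \lim A_0(x_{0,n},0)$.

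The final step is rigidity via exactness. Since $\lambda = p\,dq$ restricts to zero on every cotangent fiber, Stokes' theorem over $[-N,N] \times [0,1]$ gives
\begin{equation*}
  \int_{[-N,N]\times[0,1]} u_\infty^*\omega = \int_0^1 \lambda(\partial_t u_\infty(N,t))\,dt - \int_0^1 \lambda(\partial_t u_\infty(-N,t))\,dt.
\end{equation*}
Finite energy on the infinite strip forces $u_\infty(s,\cdot)$ to converge uniformly to constants in $L_\infty$ as $s \to \pm\infty$, and as $\lambda$ vanishes on $L_\infty$, the right-hand side tends to $0$ as $N \to \infty$. Hence $E(u_\infty) = 0$, so $u_\infty$ is constant, its value lies in $L_\infty$, and the base projection of $u_n(z_n) = \tilde u_n(0)$ converges to $q_\infty$. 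Combined with $A_0(x_{0,n},0) \to q_\infty$, this puts $\mathrm{base}(u_n(z_n))$ inside the $\epsilon$-ball around $A_0(x_{0,n},0)$ for $n$ large, contradicting the standing assumption.

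The main obstacle will be handling the compactness theory uniformly across $R_n \in [R_0,\infty]$, particularly the degeneration $R_n = \infty$ where the surface has two genuine ends rather than a single neck, and the boundary cases where $z_n$ approaches a vertical edge of the collar (so the rescaled limit is a half-strip instead of a full strip). In each edge case one obtains a finite energy $J$-holomorphic surface with boundary on a cotangent fiber, possibly with one removable puncture, and the same Stokes calculation forces it to be constant, so no new difficulties appear beyond careful bookkeeping.
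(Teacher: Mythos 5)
Your proof takes the same route the paper expects (the paper simply says ``This is a simple compactness argument, and is left to the reader''), and the essential steps — rescale the offending sequence to obtain a limiting holomorphic strip in $T^*M$ with boundary on cotangent fibers, then kill it by Stokes' theorem and exactness — are correct. The verifications you flag as the ``main obstacle'' (uniformity over $R \in [R_0,\infty]$, and half-strip degenerations) are indeed handled correctly: since $z_n$ is by hypothesis at distance at least $\ell_n \to \infty$ from the ends of the total neck, the rescaled domains always exhaust $\R\times[0,1]$, so the half-strip case never arises, and the uniform $C^0$ bound from the maximum principle lemma preceding this one (not Lemma \ref{lemma:energy-estimate-theta}, which is the $\Theta$-map version; you want the unnumbered energy bound for $\mathscr{M}(\mathfrak{a}^D,\mathfrak{p},J,L^D)$) makes the Gromov compactness uniform over $R$.

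One inaccuracy worth fixing: you assert $A_0(x_{0,n},0)=A_1(x_{1,n},0)$ in the contradiction setup, but for finite $R_n$ the pair $(x_{0,n},x_{1,n})$ ranges over all of $P_0\times P_1$, not over the fiber product $P_\infty$, so the two boundary components of the neck lie on potentially distinct cotangent fibers $T^*M_{A_0(x_{0,n},0)}$ and $T^*M_{A_1(x_{1,n},0)}$. Consequently the limit strip $u_\infty$ has boundary on $T^*M_{q_\infty}$ at $t=0$ and $T^*M_{q'_\infty}$ at $t=1$, and $q_\infty = q'_\infty$ must be derived rather than assumed. This is easy: if $q_\infty\ne q'_\infty$, each loop $t\mapsto u_\infty(s,t)$ joins points on two disjoint cotangent fibers, so $\int_0^1\abs{\bd_t u_\infty}^2\,\d t$ is bounded below by a positive constant for every $s$, making $E(u_\infty)=\infty$ and contradicting the a priori bound; if $q_\infty=q'_\infty$, your Stokes argument applies verbatim. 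The repaired argument also yields (for $\ell$ large) the implicit consequence that $A_1(x_1,0)$ must lie in $B(\epsilon)$ for any solution in $\mathscr{M}(R)$, which is needed for the statement of the lemma to be non-vacuous on the $t=1$ boundary of the neck.
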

\begin{proof}
  This is a simple compactness argument, and is left to the reader.
\end{proof}

The parameter $\epsilon$ will be chosen as follows. For each of the finitely many points $(x_{0},x_{1})$ in $P_{\infty}\subset P_{0}\times P_{1}$ so that there is some solution $(x_{0},x_{1},u)\in \mathscr{M}(\infty)$, consider the non-linear map:
\begin{equation}\label{eq:short-exact-sequence-P0P1}
  (y_{0},y_{1})\in U\subset P_{0}\times P_{1}\mapsto A_{1}(y_{1},0)-A_{0}(y_{0},0)
\end{equation}
defined on the open neighborhood of points $(y_{0},y_{1})$ where $A_{0}(y_{0},0),A_{1}(y_{1},0)$ lie in the coordinate ball $B(1)$ (this difference vector is computed using the coordinate chart).

By the transversality of $A_{0}(-,0),A_{1}(-,0)$, this map is a submersion at $(x_{0},x_{1})$, and the kernel of the differential is $TP_{\infty,(x_{0},x_{1})}$. Pick a smooth open disk $V$ passing through $(x_{0},x_{1})$ so that $TV_{(x_{0},x_{1})}$ and $TP_{\infty,(x_{0},x_{1})}$ are complementary spaces. Then the restriction of \eqref{eq:short-exact-sequence-P0P1} to $V$ is a diffeomorphism between small neighborhoods of $(x_{0},x_{1})\in V$ and $0\subset \R^{n}$. We assume that $\epsilon$ is small enough that $B(\epsilon)$ is contained in this small neighborhood; thus, for each vector $q$ in $B(\epsilon)$, there is $(y_{0},y_{1})\in V$ so that:
\begin{enumerate}
\item $A_{1}(y_{1},0)-A_{0}(y_{0},0)=q$.
\end{enumerate}
Moreover, for appropriate metric on $P_{0}\times P_{1}$ it follows that:
\begin{enumerate}[resume]
\item $\mathrm{dist}((y_{0},y_{1}),(x_{0},x_{1}))\le \abs{q}.$
\end{enumerate}
For the rest of argument, we fix $\epsilon,\ell,$ and $V$, without further modifications so that the conclusion of the lemma holds.

\subsubsection{Linearization framework for solutions of $\mathscr{M}(\infty)$}
\label{sec:line-fram-solut}

Let $(x_{0},x_{1},u)$ be a solution in $\mathscr{M}(\infty)$. The goal in this section is to describe the linearization framework. As usual, the space of maps nearby $u$ is modelled on an appropriate Banach space completion of sections of $u^{*}TW$. Since the domain of $u$ is a punctured domain, we will use an appropriately weighted Sobolev space. For use as a weight, fix $\delta\in (0,\pi)$.

First we define $W^{1,p,\delta}$, $p>2$, to be the Sobolev space of sections of $u^{*}TW$ which are tangent to the Lagrangian boundary conditions with an exponential weight in the ends. On the end regions, there is a canonical trivialization of $u^{*}TW\simeq \C^{n}$ induced by the canonical coordinates associated to the coordinate ball $B(1)$ around $A(x_{0},0)=A(x_{1},0)$, and the linear deformations $\xi$ are required to point in the real space $\R^{n}$.

There are two ends: the left end is identified with $[0,\infty)\times [0,1]$ and the right end with $(-\infty,0]\times [0,1]$. In these ends, we require that:
\begin{equation*}
  e^{\pm \delta s}\xi(s,t)\text{ is in }W^{1,p},
\end{equation*}
where $s$-coordinate on the end is such that the line $s=0$ corresponds to the innermost boundary of the strip of modulus $\ell$.

Of course, if one only uses variations in $W^{1,p,\delta}$, then one could not change the position of the removable singularity of $u$ at the two punctures. Thus it is necessary to stabilize to allow this point to vary. We therefore pick two linear maps $\Phi_{\pm}$ from $\R^{n}$ to the space of smooth variations of $u$ so that:
\begin{enumerate}
\item $\Phi_{-}(v)=\beta(s)v$, $\Phi_{+}(v)=0$ holds in the left end,
\item $\Phi_{-}(v)=0$, $\Phi_{+}(v)=\beta(-s)v$ holds in the right end,
\end{enumerate}
where $\beta$ cuts off in the cut-off region shown in Figure \ref{fig:Pi-is-CS-regions}.

This gives a family of variations: $$(v_{-},v_{+},\xi)\in \R^{n}\oplus \R^{n}\oplus W^{1,p,\delta} \mapsto \Phi_{-}(v_{-})+\Phi_{+}(v_{+})+\xi,$$ which is sufficient to capture all nearby maps with the same boundary conditions. For further information on the use of stablized exponentially weighted Sobolev spaces in the context of maps with Lagrangian boundary conditions with boundary punctures, we refer the reader to \cite{biran-cornea-arXiv-2007}.

Because $\mathscr{M}(\infty)$ is a parametric moduli space of triples $(x_{0},x_{1},u)$, where $(x_{0},x_{1})\in P_{\infty}$, there are also variations which move the point $(x_{0},x_{1})$. To account for this, we introduce a linear map $\Psi$ taking $w=(w_{0},w_{1})\in TP_{\infty}$ into the space of smooth variations of $u$ so that:
\begin{enumerate}[resume]
\item the projection of $\Psi(w)$ at $z\in \bd \Sigma(\infty)$ onto the zero section matches the variation of $A_{i}(x_{i},t(z))$ in the direction of $w_{i}\in TP_{i,x_{i}}$; here the $t$ coordinate is determined by the relation that $u(z)\in T^{*}M_{A_{i}(x_{i},t(z))}$ and is described in greater detail in the shadowing disks construction;
\item $\Psi(w)$ is constant and imaginary in the ends (i.e., projects to a constant imaginary vector).
\end{enumerate}
The total space of variations of $(x_{0},x_{1},u)$ is then identified with: $$\R^{n}\oplus \R^{n}\oplus TP_{\infty}\oplus W^{1,p,\delta}$$ and each tuple $(v_{-},v_{+},w,\xi)$ produces a variation of $u$ given by: $$\Phi_{-}(v_{-})+\Phi_{+}(v_{+})+\Psi(w)+\xi.$$

By the usual procedure (for concreteness, we use a Riemannian exponential map), one can differentiate the differential equation in the directions of these these variations. This produces a linear differential operator $D_{x_{0},x_{1},u}$.

By using a Riemannian exponential map for a metric which agrees with the standard metric in the canonical coordinate system above the coordinate ball $B(1)$, it is arranged that:
\begin{equation}\label{eq:linearized-operator}
  D_{x_{0},x_{1},u}(\Phi_{-}(v_{-})+\Phi_{+}(v_{+})+\Psi(w)+\xi)=\bar{\bd}\xi\text{ on the ends},
\end{equation}
where $\bar{\bd}=\bd_{s}+i\bd_{t}$, using the aforementioned trivialization. Then it is clear that the left hand side of \eqref{eq:linearized-operator} is valued in the exponentially weighted $L^{p}$ space.

To obtain this simple formula for the linearized operator on the ends, one argues that the nearby map associated to the variation $(v_{-},v_{+},w,\xi)$ equals:
\begin{equation*}
  u+\Phi_{-}(v_{-})+\Phi_{+}(v_{+})+\Psi(w)+\xi,
\end{equation*}
on the ends (in the canonical coordinate system) provided $v_{-},v_{+},w,\xi$ are not too big (this is possible since $u$ lies above $B(\epsilon)$, and so there is lots of room before one leaves $B(1)$, the domain of the coordinates).

Because $(x_{0},x_{1},u)$ is supposed to be rigid, the Fredholm index of \eqref{eq:linearized-operator} as a map $\R^{n}\oplus \R^{n}\oplus TP_{\infty}\oplus W^{1,p,\delta}\to L^{p,\delta}$ is zero. By the standard Sard-Smale argument, as in \cite{mcduff-salamon-book-2012}, the generic perturbation term $\mathfrak{p}_{\infty}$ can be chosen so the operator is an isomorphism for all solutions $(x_{0},x_{1},u)$. In particular, it has a bounded inverse. This will be used in \S\ref{sec:line-fram-pregl}.

\subsubsection{Pregluing}
\label{sec:pregluing}

The strategy is now to:
\begin{enumerate}
\item\label{gluing-step-1} For each rigid solution $(x_{0},x_{1},u)\in \mathscr{M}(\infty)$, construct preglued solutions which are supposed to approximate solutions in $\mathscr{M}(R)$ for $R$ sufficiently large. Prove they approximately solve the equation, as measured with an appropriately weighted Sobolev space norm.
\item\label{gluing-step-2} Prove that each preglued solution approximates exactly one of the non-compact ends of $\mathscr{M}$ containing sequences $(R_{n},x_{0}^{n},x_{1}^{n},u_{n})$ with $R_{n}\to\infty$. This involves analyzing a linearized operator for each preglued solution (gluing).
\item\label{gluing-step-3} Prove that genuine solutions in $\mathscr{M}(R)$ are close to the preglued solutions as $R\to\infty$. Together with \ref{gluing-step-2}, this establishes a bijection between the non-compact ends and solutions in $\mathscr{M}(\infty)$.
\end{enumerate}
This is a standard strategy for Floer gluing, and the precise details are quite close to the ``adiabatic'' gluing results cited in \S\ref{sec:geometric-set-up}. In this subsection, we are concerned with step \ref{gluing-step-1} of the strategy.

First of all, each solution $(x_{0},x_{1},u)\in \mathscr{M}(\infty)$ determines two points $p_{-},p_{+}$ in the cotangent fiber $T^{*}M_{A_{0}(x_{0},0)}=T^{*}M_{A_{1}(x_{1},0)}\simeq \R^{n}$ --- the identification with $\R^{n}$ uses the canonical coordinates above the ball $B(1)$. The point $p_{-}$ is the asymptotic at the left end, and $p_{+}$ is the asymptotic at the right end. These two points vary smoothly with the solution, and there are two constants $C_{\pm}$ so that:
\begin{enumerate}
\item\label{Cminus-gluing} on the left end, $\abs{u(s,t)-p_{-}}\le C_{-}e^{-\pi s}$, where $s\in [0,\infty)$,
\item\label{Cplus-gluing} on the right end, $\abs{u(s,t)-p_{+}}\le C_{+}e^{\pi s}$, where $s\in (-\infty,0]$.
\end{enumerate}
This follows from the removable singularity theorem for $J_{0}$-holomorphic maps.

The argument will require us splitting the neck of modulus $2R$ into specific regions; this is illustrated in Figure \ref{fig:pregluing-regions}.
\begin{figure}[h]
  \centering
  \begin{tikzpicture}[scale=.5]
    \draw[rounded corners] (0,-1)--(0,0)--+(13,0)--+(13,-1) (0,2)--(0,1)--+(13,0)--+(13,1);
    \fill[pattern=north west lines] (4,0) rectangle +(1,1) (8,0) rectangle +(1,1);
    \draw (2,0)--+(0,1) (11,0)--+(0,1);
    \draw[<->] (2,-0.5)--node[below]{$\mathfrak{s}$}(4,-0.5);
    \draw[<->] (4,1.5)--node[above]{$1$}(5,1.5);
    \draw[<->] (8,1.5)--node[above]{$1$}(9,1.5);
    \draw[<->] (9,-0.5)--node[below]{$\mathfrak{s}$}(11,-0.5);

    \draw (4,0)--+(0,1) (5,0)--+(0,1) (8,0)--+(0,1) (9,0)--+(0,1);
    \draw[<->] (5,-0.5)--node[below]{$2\mathfrak{r}$}(8,-0.5);

  \end{tikzpicture}
  \caption{Decomposing $R=\mathfrak{s}+1+\mathfrak{r}$. The shaded regions are the \emph{cut-off regions}.}
  \label{fig:pregluing-regions}
\end{figure}
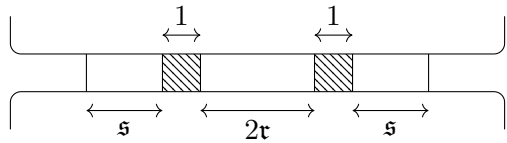

For each $(x_{0},x_{1},u)$, $q_{0}\in B(\epsilon/2)$, define:
\begin{equation*}
  \left\{
  \begin{aligned}
    &N_{\mathfrak{r}}:[-\mathfrak{r}-1,\mathfrak{r}+1]\times [0,1]\to \R^{n}\times \R^{n},\\
    &N_{\mathfrak{r}}(z):=iq_{0}+2\mathfrak{r}^{-1}((\mathfrak{r}-z)p_{-}+(\mathfrak{r}+z)p_{+}),
  \end{aligned}
  \right.
\end{equation*}
where $p_{\pm}$ are considered as real-vectors (in $\R^{n}\times \set{0}$). Let us observe that, the imaginary part of $N_{\mathfrak{r}}(z)$ equals $q_{0}+2\mathfrak{r}^{-1}(p_{+}-p_{-})t$, and so, provided $\mathfrak{r}$ is sufficiently large, the size of the imaginary part can be bounded by $\epsilon$, and hence $N_{\mathfrak{r}}$ can be considered as a holomorphic map in $\R^{n}\times B(1)\subset T^{*}M$.

Let $y_{0},y_{1}\in V$ be such that:
\begin{enumerate}[resume]
\item $A_{1}(y_{1},0)-A_{0}(y_{0},0)=2\mathfrak{r}^{-1}(p_{+}-p_{-})$, and
\item pick $q_{0}=A_{0}(y_{0},0)$.
\end{enumerate}
Then $N_{\mathfrak{r}}$ takes boundary conditions for $t=0,1$ on $T^{*}M_{A_{0}(y_{0},0)}$ and $T^{*}M_{A_{1}(y_{1},0)}$.

The idea is to glue this holomorphic neck to the existing solution $(x_{0},x_{1},u)$ to form a preglued solution.

Since $V$ is contained in a small neighborhood of $(x_{0},x_{1})$, we can use the Riemannian exponential map to define a variation $\Gamma(y_{0},y_{1})$ of $u$ which pushes the boundary conditions from those for $(x_{0},x_{1})$ to those for $(y_{0},y_{1})$, in a controlled way: we suppose that the $C^{1}$ size of $\Gamma(y_{0},y_{1})$ is controlled by the distance $\mathrm{dist}((y_{0},y_{1}),(x_{0},x_{1}))$. Moreover, we suppose that $\Gamma$ is supported away from the region $\mathscr{R}_{2}$. We denote this variation by $u+\Gamma(y_{0},y_{1})$, where the ``$+$'' symbol is to be interpreted using the Riemannian exponential map. Since the size of $\Gamma(y_{0},y_{1})$ is controlled by $\mathfrak{r}^{-1}\abs{p_{+}-p_{-}}$, we can assume that $u+\Gamma(y_{0},y_{1})$ maps the neck into $\R^{n}\times B(2\epsilon)$ provided $\mathfrak{r}$ is large enough.

Finally, we define the preglued solution $\mathrm{PG}_{\mathfrak{s},\mathfrak{r}}(x_{0},x_{1},u)$ as a piecewise function gluing together $u+\Gamma(y_{0},y_{1})$ and $N_{\mathfrak{r}}$, as illustrated in Figure \ref{fig:preglued-solution}.
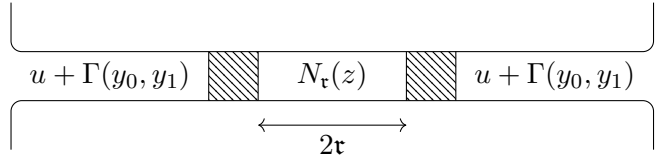
\begin{figure}[h]
  \centering
  \begin{tikzpicture}[scale=.65]
    \draw[rounded corners] (0,-1)--(0,0)--+(13,0)--+(13,-1) (0,2)--(0,1)--+(13,0)--+(13,1);
    \fill[pattern=north west lines] (4,0) rectangle +(1,1) (8,0) rectangle +(1,1);
    \draw (4,0)--+(0,1) (5,0)--+(0,1) (8,0)--+(0,1) (9,0)--+(0,1);
    \draw[<->] (5,-0.5)--node[below]{$2\mathfrak{r}$}(8,-0.5);
    \node at (6.5,0.5) {$N_{\mathfrak{r}}(z)$};
    \node at (11,0.5) {$u+\Gamma(y_{0},y_{1})$};
    \node at (2,0.5) {$u+\Gamma(y_{0},y_{1})$};
  \end{tikzpicture}
  \caption{Preglued solution; compare with Figure \ref{fig:Pi-is-CS-regions}. On the shaded cut-off regions (which are mapped into $B(2\epsilon)$), one should interpolate between the solutions using standard cut-off functions.}
  \label{fig:preglued-solution}
\end{figure}

In the left cut-off region, parametrized by $s,t\in [0,1]^{2}$, the interpolation is given by:
\begin{equation*}
  \beta(s)N_{\mathfrak{r}}(z)+(1-\beta(s))(u+\Gamma(y_{0},y_{1})),
\end{equation*}
and a similar (reflected) formula is used in the right cut-off region.

The decomposition of $R$ into $\mathfrak{s}+1+\mathfrak{r}$ affects how this preglued solution is constructed, so different choices of $\mathfrak{s},\mathfrak{r}$ yield different preglued solutions.

It will be important to measure sizes of variations of $\mathrm{PG}_{\mathfrak{s},\mathfrak{r}}(x_{0},x_{1},u)$ using a weighted Sobolev norm. We introduce the weight:
\begin{equation*}
  \mathfrak{w}=\min\set{e^{\delta(R+s)},e^{\delta (-s+R)}}
\end{equation*}
supported on the neck of length $2R$ parametrized so $s=0$ is the middle of the neck (see Figure \ref{fig:Pi-is-CS-regions} for illustration of the region). Then we define $L^{p,\mathfrak{w}}$ as the set of $L^{p}_{\mathrm{loc}}$ sections $\eta$ which are $L^{p}$ integrable on the asymptotic cylindrical end, and so that $\mathfrak{w}\eta$ is $L^{p}$ integrable on the neck; $W^{1,p,\mathfrak{w}}$ is defined analogously.

We then have:
\begin{lemma}\label{lemma:estimating-size-of-error}
  If $F(y_{0},y_{1},w)$ is the non-linear map encoding the PDE for solutions of $\mathscr{M}(R)$, i.e., in local coordinates:
  \begin{equation*}
    F(w)=\bd_{s}w+J_{x_{0},z}(w)\bd_{t}w-a(w),
  \end{equation*}
  then the $L^{p,\mathfrak{w}}$ size is bounded by:
  \begin{equation*}
    \norm{F(y_{0},y_{1},\mathrm{PG}_{\mathfrak{s},\mathfrak{r}}(x_{0},x_{1},u))}_{L^{p,\mathfrak{w}}}\le (C_{+}+C_{-})e^{-(\pi-\delta)\mathfrak{s}}+C_{\mathfrak{s}}\mathfrak{r}^{-1},
  \end{equation*}
  where $C_{\pm}$ are defined in \ref{Cminus-gluing} and \ref{Cplus-gluing}, and $C_{\mathfrak{s}}$ is independent of $x_{0},x_{1},u,\delta,\mathfrak{r}$, but depends on $\mathfrak{s}$.
\end{lemma}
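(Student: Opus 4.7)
The plan is to decompose $\Sigma(R)$ into three disjoint regions on which the preglued map has qualitatively different behavior, and estimate the $L^{p,\mathfrak{w}}$-norm of $F$ on each. Outside the total neck, $\mathrm{PG}_{\mathfrak{s},\mathfrak{r}}(x_0,x_1,u)$ agrees with the modified solution $u+\Gamma(y_0,y_1)$; on the middle sub-neck of modulus $2\mathfrak{r}$, it agrees with the affine strip $N_{\mathfrak{r}}$; and on the two cut-off regions of modulus $1$, it is a convex combination of the two. On the middle sub-neck, the choice of almost complex structure from \S\ref{sec:choice-almost-compl} ensures $J=J_0$ and the image lies in the coordinate ball where $J_0$ is standard. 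Since $N_{\mathfrak{r}}$ is explicitly $J_0$-holomorphic, $F$ vanishes identically on this region, so this region contributes zero to the norm.

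The main contribution, and the only one carrying the exponential factor in the statement, comes from the two cut-off regions. Consider the left cut-off region, parametrized by $(s,t)\in[0,1]\times[0,1]$. Since $J=J_0$ there and $\mathfrak{p}_R$ vanishes on the neck, one has
\begin{equation*}
  F(\mathrm{PG}) = \beta'(s)\bigl(N_{\mathfrak{r}}(z(s,t)) - (u+\Gamma)(s',t)\bigr) + \text{terms of the form }\beta F(N_{\mathfrak{r}})+(1-\beta)F(u+\Gamma),
\end{equation*}
the last two of which are pointwise $O(\mathfrak{r}^{-1})$ (the first vanishes exactly, the second is handled below). The key term is controlled using the triangle inequality by $|N_{\mathfrak{r}}(z)-p_-|+|u(s',t)-p_-|+|\Gamma|$. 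The first summand is $O(\mathfrak{r}^{-1}|p_+-p_-|+|q_0|)=O(\mathfrak{r}^{-1})$ by the affine structure of $N_{\mathfrak{r}}$ and our choice of $q_0$; the second summand is bounded by $C_-e^{-\pi\mathfrak{s}}$ by the removable singularity asymptotics; and $|\Gamma|=O(\mathfrak{r}^{-1})$ by the way $\Gamma$ was constructed from the distance $\mathrm{dist}((y_0,y_1),(x_0,x_1))$ in \S\ref{sec:choice-ell}. On the cut-off region the weight satisfies $\mathfrak{w}\le e^{\delta(R-\mathfrak{r})}=e^{\delta(\mathfrak{s}+1)}$, so multiplying and integrating over a region of bounded area yields a contribution of the form $C(C_-e^{-(\pi-\delta)\mathfrak{s}}+C_{\mathfrak{s}}\mathfrak{r}^{-1})$; symmetric analysis on the right cut-off region gives the $C_+$ term.

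It remains to bound $F(u+\Gamma)$ on the complement of the middle sub-neck and the cut-off regions, namely the outer part of the total neck together with the rest of $\Sigma(R)$. Since $u$ itself is a genuine solution, linearizing at $u$ gives $F(u+\Gamma)=D_u F\cdot \Gamma+O(\lvert\Gamma\rvert_{C^1}^2)=O(\mathfrak{r}^{-1})$ uniformly in the region. The weight on the portion lying in the outer neck is at most $e^{\delta(\mathfrak{s}+1)}$, while on the region outside the total neck it is simply bounded. Multiplying $\mathfrak{r}^{-1}$ by this weight gives a contribution absorbed in $C_{\mathfrak{s}}\mathfrak{r}^{-1}$, where the $\mathfrak{s}$-dependence arises from the factor $e^{\delta\mathfrak{s}}$. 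Assembling the three regions yields the claimed bound.

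The one genuine subtlety, and what I expect to be the main bookkeeping obstacle, is the careful accounting of the weight $\mathfrak{w}$ against the decay/growth of the various error terms, together with the verification that the linear interpolation of $N_{\mathfrak{r}}$ in the cut-off region lies in the domain of the canonical coordinate chart $B(1)$ so that $J_0$ is the genuine almost complex structure there. Both reduce to the choice $\delta\in(0,\pi)$ (to beat the exponential decay rate $\pi$ coming from the Cauchy--Riemann equation linearized at a constant asymptote) and to the fact, established in \S\ref{sec:choice-ell}, that $\ell$ has already been chosen large enough that the neck is mapped into $B(\epsilon)$.
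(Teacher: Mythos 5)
Your proposal is correct and follows essentially the same line of argument as the paper: you decompose $\Sigma(R)$ into the inner sub-neck where $N_{\mathfrak{r}}$ is exactly $J_0$-holomorphic (zero contribution), the two cut-off regions where the interpolation error $\beta'(s)(N_{\mathfrak{r}}-(u+\Gamma))$ is controlled by $|u-p_\pm|+|N_{\mathfrak{r}}-p_\pm|+|\Gamma|\le C_\pm e^{-\pi\mathfrak{s}}+O(\mathfrak{r}^{-1})$ and then amplified by the weight $e^{\delta(\mathfrak{s}+1)}$, and the remainder where $F(u+\Gamma)=O(\mathfrak{r}^{-1})$. The paper organizes the bookkeeping slightly differently (first bounding $F(u+\Gamma)$ globally, then adding the interpolation error), but the estimates and the source of each term are the same as yours.
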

\begin{proof}
  Since $u$ solves the equation, $F(y_{0},y_{1},u+\Gamma(y_{0},y_{1}))$ will approximately solve the equation up to an error whose $C^{0}$ size is controlled by $$\mathfrak{r}^{-1}\abs{p_{+}-p_{-}},$$ since that size governs the $C^{0}$ size of $\Gamma(y_{0},y_{1})$.

  The integral of this error over the region disjoint from the neck is then bounded by $C_{1}\mathfrak{r}^{-1}$. Moreover, the integral of this error over the necks of length $\mathfrak{s}+1$ produces errors of size:
  \begin{equation*}
    \mathfrak{r}^{-1}\abs{p_{+}-p_{-}}(\mathfrak{s}+1)^{1/p}e^{\delta (\mathfrak{s}+1)}\le C_{2}\mathfrak{r}^{-1}.
  \end{equation*}
  In total, these errors can be bounded by $C_{3}\mathfrak{r}^{-1}$.

  There is an additional error arising due to the interpolation (we focus only on the left end as the right end follows from a reflected estimate):
  \begin{equation*}
    \beta(s)N_{\mathfrak{r}}(z)+(1-\beta(s))(u(z)+\Gamma(y_{0},y_{1})),
  \end{equation*}
  Since $N_{\mathfrak{r}}(z)$ and $u(z)$ both solve the equation (which is the standard $\bar{\bd}$ equation in the cut-off region), this produces an error of $C^{0}$ size:
  \begin{equation*}
    \abs{u(z)+\Gamma(y_{0},y_{1})-N_{\mathfrak{r}}(z)}+C_{4}\mathfrak{r}^{-1},
  \end{equation*}
  where $C_{4}\mathfrak{r}^{-1}$ is due to the fact that $\Gamma(y_{0},y_{1})$ has $C^{1}$ size bounded by $\mathfrak{r}^{-1}$. We use that $\beta'(s)$ is approximately $1$ to avoid introducing another constant.

  Then we estimate $\abs{u(z)+\Gamma(y_{0},y_{1})-N_{\mathfrak{r}}(z)}$ in the region $s \in [\mathfrak{-r}-1, \mathfrak{-r}]$ by:
  \begin{equation*}
    \abs{u(z)-p_{-}}+\abs{p_{-}-N_{\mathfrak{r}}(z)}+\abs{\Gamma(y_{0},y_{1})}\le C_{-}e^{-\pi \mathfrak{s}}+C_{5}\mathfrak{r}^{-1}.
  \end{equation*}
  The bound on the first and the last term is clear from the cosntruction. Let us expand a bit why is $\abs{p_{-}-N_{\mathfrak{r}}(z)}$ bounded in terms of  $\mathfrak{r}^{-1}$. First note that $\abs{i q_0}$ is bounded in terms of $\mathfrak{r}^{-1}$ since the distance between $y_0$ and $x_0$ is controled by $\mathfrak{r}^{-1}$, and $A_0(\cdot, 0) $ is a smoot map, hence $q_0 - 0= A_0(y_0,0) - A_0(x_0,0)$ is controled by $\mathfrak{r}^{-1}$ as well. Secondly, since $s \in [\mathfrak{-r}-1, \mathfrak{-r}]$, we have that $\abs{\mathfrak{r}+z}$ is bounded by $\sqrt{2}$ and $\abs{ p_- -2\mathfrak{r}^{-1}((\mathfrak{r}-z)p_- + (\mathfrak{r}+z)p_+)} = \abs{2\mathfrak{r}^{-1}(\mathfrak{r} +z)(p_+ - p_-)}$.

  The contribution to the $L^{p,\mathfrak{w}}$ size can be estimated by:
  \begin{equation*}
    e^{+\delta\mathfrak{s}}C_{-}e^{-\pi \mathfrak{s}}+e^{+\delta\mathfrak{s}}C_{5}\mathfrak{r}^{-1}.
  \end{equation*}
  Combining with the earlier estimate, we conclude:
  \begin{equation*}
    \norm{F(y_{0},y_{1},\mathrm{PG}_{\mathfrak{s},\mathfrak{r}}(x_{0},x_{1},u))}_{L^{p,\mathfrak{w}}}\le (C_{-}+C_{+})e^{-(\pi-\delta)\mathfrak{s}}+C_{\mathfrak{s}}\mathfrak{r}^{-1},
  \end{equation*}
  where $C_{\mathfrak{s}}=(C_{3}+2e^{+\delta\mathfrak{s}}(C_{4}+C_{5}))$, where we take into account the reflected estimate at the right end. This completes the proof.
\end{proof}

In the rest of the argument, one should imagine that first $\mathfrak{s}$ is chosen large enough, and then, $\mathfrak{r}$ is chosen in terms of $\mathfrak{s}$.

\subsubsection{Linearization framework for the preglued solutions}
\label{sec:line-fram-pregl}

In this section we are concerned with step \ref{gluing-step-2} of the gluing argument outlined above. The ideas are similar to those in \S\ref{sec:line-fram-solut}; we will linearize the equation at the preglued solution producing a linearized operator. Then we will show this linearized operator is uniformly surjective as $R\to\infty$, provided the parameters $\mathfrak{s},\mathfrak{r}$ are chosen sufficiently large.

Let us abbreviate $(y_{0},y_{1},u_{\mathfrak{s},\mathfrak{r}})=\mathrm{PG}_{\mathfrak{s},\mathfrak{r}}(x_{0},x_{1},u)$. Using the Riemannian metric $g_{x_{0}}$ as in \S\ref{sec:line-fram-solut}, one can associate to each variation of $u_{\mathfrak{s},\mathfrak{r}}$ a nearby map, and by the usual procedure of differentiating the equation, obtain a linearized operator $D_{y_{0},y_{1},u_{\mathfrak{s},\mathfrak{r}}}$. As in \S\ref{sec:line-fram-solut}, $D_{y_{0},y_{1},u_{\mathfrak{s},\mathfrak{r}}}$ agrees with $\bar{\bd}$ on the neck region.

The space of variations of $(y_{0},y_{1},u_{\mathfrak{s},\mathfrak{r}})$ we will work with is:
\begin{equation*}
  \R^{n}\oplus TP_{0}\oplus TP_{1}\oplus W^{1,p,\mathfrak{w}},
\end{equation*}
where:
\begin{enumerate}
\item $TP_{0},TP_{1}$ account for variations $w_{0},w_{1}$ based at $y_{0},y_{1}$; these potentially move the boundary conditions,
\item $\R^{n}$ is a space of variations $v$ which are real-valued and constant on the neck, similarly to the space of variations $\R^{n}\oplus \R^{n}$ used in \S\ref{sec:line-fram-solut},
\item $W^{1,p,\mathfrak{w}}$ are variations of $u_{R}$ which are tangent to the boundary conditions and lie in the weighted Sobolev space.
\end{enumerate}
It is convenient to split the short-exact sequence:
\begin{equation*}
  0\to TP_{\infty}\to TP_{0}\oplus TP_{1}\to \R^{n}\to 0,
\end{equation*}
using the slice $V$, where the right map is the difference of the derivatives $y_{i}\mapsto A_{i}(y_{i},0)$; see \S\ref{sec:choice-ell} for the definition of $V$. With this splitting chosen, the space of variations is identified with:
\begin{equation*}
  \R^{n}\oplus TP_{\infty}\oplus \R^{n}\oplus W^{1,p,\mathfrak{w}}.
\end{equation*}

We will reuse the notation from \S\ref{sec:line-fram-solut} as much as possible. First, for each $v\in \R^{n}$, let $\Phi(v)$ be a function which is constant and equal to $v\in \R^{n}\times \set{0}$ in the neck region and ``agrees'' with $\Phi_{-}(v)$ and $\Phi_{+}(v)$ outside of the neck region. Here ``agrees'' should be interpreted up to the identification of variations of $u$ with variations of $u_{R}$ outside of the neck region (which appeals to a parallel transport map).

Second, for each $w\in TP_{\infty}$, let $\Psi(w)$ be a variation which is constant and imaginary in the neck region, and ``agrees'' with the old $\Psi(w)$ outside of the neck region.

Next, for $\mu_{0},\mu_{1}\in \R^{n}$, let $k(\mu_{0},\mu_{1}):[-\mathfrak{r}-1,\mathfrak{r}+1]\times [0,1]\to \C^{n}$ be given by:
\begin{equation*}
  k(\mu_{0},\mu_{1})=2\mathfrak{r}^{-1}i\mu_{0}+2\mathfrak{r}^{-1}((\mathfrak{r}-z)\mu_{0}+(\mathfrak{r}+z)\mu_{1}).
\end{equation*}
This has boundary conditions on $\R^{n}\times 2\mathfrak{r}^{-1}\mu_{0}$ and $\R^{n}\times 2\mathfrak{r}^{-1}\mu_{1}$.

For $\mathfrak{r}$ large enough, $(2\mathfrak{r}^{-1}\mu_{0},2\mathfrak{r}^{-1}\mu_{1})$ lies in the image of the derivatives of $(A_{0}(-,0),A_{1}(-,0))$ restricted to the slice $V$. Such pairs are uniquely determined by the difference vector $\mu=\mu_{1}-\mu_{0}\in \R^{n}$, and given such a pair we can find a variation $\gamma(\mu_{0},\mu_{1})$ by differentiating $\Gamma(y_{0},y_{1})$ in the direction of a vector tangent to $V$. Then $\gamma(\mu_{0},\mu_{1})$ has $C^{1}$ size controlled by $2\mathfrak{r}^{-1}\abs{\mu_{1}-\mu_{0}}$, and, by construction, $k(\mu_{0},\mu_{1})$ and $\gamma(\mu_{0},\mu_{1})$ have the same imaginary parts along the boundary. Then we define $\mathrm{K}(\mu)$ to be  $k(\mu_{0},\mu_{1})$ on the neck of modulus $2\mathfrak{r}$, the linear interpolation between $k(\mu_{0},\mu_{1})$ and $\gamma(\mu_{0},\mu_{1})$ on the the cut-off region (using the cut-off functions $\beta(s),\beta(-s)$), and $\gamma(\mu_{0},\mu_{1})$ everywhere else.

It suffices to say that the linearized operator takes the form:
\begin{equation}\label{eq:linearized-op-preglued}
  (v,w,\mu,\xi)\mapsto D_{y_{0},y_{1},u_{\mathfrak{s},\mathfrak{r}}}(\Phi(v)+\Psi(w)+\mathrm{K}(\mu)+\xi)\in L^{p,\mathfrak{w}}.
\end{equation}
We will now argue that this linearized operator is uniformly surjective.

\begin{lemma}
  For any $\eta\in L^{p,\mathfrak{w}}$, there exists $(v,w,\mu,\xi)$ so that:
  \begin{equation*}
    D_{y_{0},y_{1},u_{\mathfrak{s},\mathfrak{r}}}(\Phi(v)+\Psi(w)+\mathrm{K}(\mu)+\xi)=\eta,
  \end{equation*}
  and:
  \begin{equation*}
    \abs{v}+\abs{w}+\abs{\mu}+\norm{\xi}_{W^{1,p,\mathfrak{w}}}\le C_{\mathrm{lin}}\norm{\eta},
  \end{equation*}
  where $C_{\mathrm{lin}}$ is independent of $\eta,\rho,\mathfrak{s},\mathfrak{r}$ or the original solution $(x_{0},x_{1},u)$, provided $\mathfrak{r}$ is large enough.
\end{lemma}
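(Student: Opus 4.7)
The plan is to build an approximate right inverse $Q_0$ out of two local pieces adapted to the two scales of the preglued surface, and then correct it to a genuine right inverse by a Neumann series argument, analogous to the gluing schemes in \cite{fukaya-oh-AJM-1997,ekholm-GT-2007,oh-zhu-JSG-2011}. First I would cover $\Sigma(R)$ by an ``exterior'' region $\Sigma^{\mathrm{ext}}$ (the complement of the inner half of the total neck) and a ``neck'' region $\Sigma^{\mathrm{neck}}$, each containing the cut-off regions from Figure \ref{fig:pregluing-regions}, and fix cut-off functions $\chi_{\mathrm{ext}},\chi_{\mathrm{neck}}$ subordinate to them. Given $\eta\in L^{p,\mathfrak{w}}$, the idea is to solve the linearized equation separately on each region for the input $\chi_{\mathrm{ext}}\eta$ and $\chi_{\mathrm{neck}}\eta$, and then glue.

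On $\Sigma^{\mathrm{ext}}$, the preglued map equals $u+\Gamma(y_0,y_1)$, so the linearized operator there differs from $D_{x_0,x_1,u}$ (pulled back along the obvious inclusion $\Sigma^{\mathrm{ext}}\hookrightarrow\Sigma(\infty)$) by an operator whose coefficients are $O(\mathfrak{r}^{-1})$ in $C^0$. Using the inverse of $D_{x_0,x_1,u}$ whose existence was arranged in \S\ref{sec:line-fram-solut}, together with the fact that the set of rigid solutions in $\mathscr{M}(\infty)$ is finite, yields a uniformly bounded local right inverse $Q_{\mathrm{ext}}$ producing a deformation of the form $\Phi(v)+\Psi(w)+\xi^{\mathrm{ext}}$, where $\xi^{\mathrm{ext}}$ is supported away from the middle of the neck. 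On $\Sigma^{\mathrm{neck}}$ the equation is the standard $\bar\partial$ with Lagrangian boundary $\R^n\times\{q_0\}$ in the canonical coordinates of \S\ref{sec:choice-almost-compl}; here the asymptotic operator $i\partial_t$ on $\R^n$-valued functions with $\R^n$ boundary conditions has spectrum $\pi\Z$, so the weight $\mathfrak{w}$ with $\delta<\pi$ lies in the resolvent set at both ends. Standard Fredholm theory for $\bar\partial$ on long strips with such exponential weights (cf.\ \cite{abbondandolo-schwarz-GT-2010,ekholm-GT-2007}) gives a right inverse $Q_{\mathrm{neck}}$ whose operator norm is bounded uniformly in $\mathfrak{r}$; the stabilization factors $v\in\R^n$ and $\mu\in\R^n$ via $\Phi,\mathrm{K}$ precisely account for the constant and linear (in $s$) modes that constitute the cokernel when one passes to the weighted space, and the resulting deformation is of the form $\Phi(v)+\mathrm{K}(\mu)+\xi^{\mathrm{neck}}$.

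I would then set $Q_0(\eta)=Q_{\mathrm{ext}}(\chi_{\mathrm{ext}}\eta)+Q_{\mathrm{neck}}(\chi_{\mathrm{neck}}\eta)$, appropriately summing the stabilization contributions; by construction $D_{y_0,y_1,u_{\mathfrak{s},\mathfrak{r}}}\circ Q_0=\mathrm{id}+E$, where $E$ collects the commutators of $D$ with the cut-offs together with the $O(\mathfrak{r}^{-1})$ discrepancy between the pulled-back operator on $\Sigma^{\mathrm{ext}}$ and $D_{x_0,x_1,u}$. The commutator pieces are supported in the overlap near $s=\pm\mathfrak{r}\mp\mathfrak{s}/2$, where the weight $\mathfrak{w}$ is suppressed by a factor $e^{-\delta\mathfrak{s}/2}$ relative to its maximum, so $\|E\|_{L^{p,\mathfrak{w}}\to L^{p,\mathfrak{w}}}\le C(e^{-\delta\mathfrak{s}/2}+\mathfrak{r}^{-1})$. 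Choosing $\mathfrak{s}$ and then $\mathfrak{r}$ large enough forces $\|E\|<1/2$, so $\mathrm{id}+E$ is invertible by Neumann series and $Q=Q_0(\mathrm{id}+E)^{-1}$ is the desired right inverse, with norm $C_{\mathrm{lin}}\le 2(\|Q_{\mathrm{ext}}\|+\|Q_{\mathrm{neck}}\|)$ independent of the parameters.

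The hard step is the uniform estimate for $Q_{\mathrm{neck}}$ as $\mathfrak{r}\to\infty$: one must verify, with constants independent of $\mathfrak{r}$, that the $\bar\partial$ operator on $[-\mathfrak{r}-1,\mathfrak{r}+1]\times[0,1]$ with $\R^n$-type boundary, acting on $W^{1,p,\mathfrak{w}}$ stabilized by the $n+n$-dimensional space spanned by $\Phi$ and $\mathrm{K}$, is a surjection with uniformly bounded right inverse. The cleanest route is to conjugate $\bar\partial$ by the weight $\mathfrak{w}$, reducing to the unweighted operator $\bar\partial-(\partial\log\mathfrak{w})$ on a long strip; the perturbation $\partial\log\mathfrak{w}$ has $L^\infty$ norm at most $\delta<\pi$, staying strictly inside the spectral gap of the asymptotic operator, so uniform right-invertibility follows from the standard long-strip linear gluing estimate. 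Once this technical step is in place, the remaining combinatorics of cut-offs and Neumann series is routine.
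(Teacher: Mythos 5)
Your plan is a recognizable two-chart linear gluing scheme, and it could be carried through, but the paper's route is genuinely different and leaner. Rather than splitting $\eta$ by cut-offs $\chi_{\mathrm{ext}}\eta + \chi_{\mathrm{neck}}\eta$ and building two local right inverses, the paper introduces a pair of transfer maps: $\mathfrak{R}\colon L^{p,\mathfrak{w}}\to L^{p,\delta}$, which slices the neck at its midpoint and pushes the two halves (extended by zero) to the two ends of $\Sigma(\infty)$, and $\mathfrak{L}\colon L^{p,\delta}\to L^{p,\mathfrak{w}}$, a partition-of-unity pullback built from cut-offs $f_{\pm}$ with $O(\mathfrak{r}^{-1})$ derivatives, with $\mathfrak{L}\circ\mathfrak{R}=\mathrm{id}$. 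The weights $\mathfrak{w}$ and $e^{\pm\delta s}$ are tuned so that $\mathfrak{R}$ is norm-preserving, so all of $\eta$ is transferred to $\Sigma(\infty)$, solved there using the already-assumed surjectivity of $D_{x_0,x_1,u}$, and transferred back. The neck $\bar\partial$ problem appears only implicitly (as the deep-end part of the invertible limit operator), so no separate long-strip estimate for the finite neck is needed. By contrast, your scheme requires you to establish, with uniformity in $\mathfrak{r}$, that the stabilized $\bar\partial$ operator on $[-\mathfrak{r},\mathfrak{r}]\times[0,1]$ with $\R^n$-type boundary is surjective — this is exactly the content you wave at via "the standard long-strip linear gluing estimate," and it is the nontrivial part of your route.

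One concrete point that deserves care in your version: you claim the commutator error is controlled by $e^{-\delta\mathfrak{s}/2}+\mathfrak{r}^{-1}$, attributing the exponential factor to weight suppression near the transition region. But the operator norm of multiplication by the cut-off derivative on $L^{p,\mathfrak{w}}$ is just the $L^{\infty}$ size of that derivative — the weight appears on both sides and cancels. So unless you build extra decay in (e.g.\ by arguing the output of $Q_{\mathrm{neck}}$ has exponential decay out of the middle, and the output of $Q_{\mathrm{ext}}$ has exponential decay into the neck, and then matching these against where the cut-offs sit), the honest estimate is $O(\mathfrak{r}^{-1})$ from spreading the cut-off over length $\mathfrak{r}$, which is what the paper uses. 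Your version as written would also require both $\mathfrak{s}$ and $\mathfrak{r}$ large, whereas the paper's lemma gets away with only $\mathfrak{r}$ large; this is harmless for the downstream argument, but it is worth noticing that the hypothesis of the lemma as stated only mentions $\mathfrak{r}$.
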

\begin{proof}
  Divide the neck $[-\mathfrak{r},\mathfrak{r}]$ into two regions $[-\mathfrak{r},0]\cup [0,\mathfrak{r}]$ and introduce two cut-off functions:
  \begin{enumerate}
  \item $f_{-}(s)=\beta(1-s/\mathfrak{r})$,
  \item $f_{+}(s)=\beta(1+s/\mathfrak{r})$,
  \end{enumerate}
  The key is that $f_{-}$ is $1$ on the left region, and cuts off in the right region, while $f_{+}$ cuts off in the left region, and is $1$ on the right region. Another key is that the derivative of $f_{\pm}$ is $O(\mathfrak{r}^{-1})$.

  We define two maps. First: $\mathfrak{R}:L^{p,\mathfrak{w}}\to L^{p,\delta}$ as an extension by zero map; this is illustrated in Figure \ref{fig:extension-by-zero}. Modulo the small change in sizes due to parallel transport (taking variations of $u_{\mathfrak{s},\mathfrak{r}}$ to $u$), this function is norm preserving by virtue of how the weights are defined. Second we define $\mathfrak{L}:L^{p,\delta}\to L^{p,\mathfrak{w}}$ using the cut-off functions $f_{-},f_{+}$ as shown in Figure \ref{fig:partition-of-unity-map}. The $\mathfrak{L}$ map is almost norm preserving (due to how the weights are defined), and only increases norms slightly.

  Let us observe that $\mathfrak{L}\circ \mathfrak{R}=\id$, and moreover $\mathfrak{L}$ maps $W^{1,p,\delta}$ into $W^{1,p,\mathfrak{w}}$ in an approximately norm preserving way.

  We argue as follows: given $\eta\in L^{p,\mathfrak{w}}$, we can find $v_{-},v_{+},w,\xi$ so that:
  \begin{equation*}
    D_{x_{0},x_{1},u}(\Phi_{-}(v_{-})+\Phi_{+}(v_{+})+\Psi(w)+\xi)=\mathfrak{R}(\eta),
  \end{equation*}
  where $\abs{v_{-}}+\abs{v_{+}}+\abs{w}+\norm{\xi}_{W^{1,p,\delta}}\le C_{\infty}\norm{\eta}_{L^{p,\mathfrak{w}}}$, using the assumed surjectivity of the linearized operator for the genuine solution $(x_{0},x_{1},u)$.

  Pick $\mu$ and $v$ so that:
  \begin{equation}\label{eq:linear-equation}
    v+\mu_{0}=v_{-}\text{ and }v+\mu_{1}=v_{+},
  \end{equation}
  so that $\mu=v_{+}-v_{-}$. We then claim that:
  \begin{equation*}
    D_{y_{0},y_{1},u_{\mathfrak{s},\mathfrak{r}}}(\Phi(v)+\Psi(w)+\mathrm{K}(\mu)+\mathfrak{L}(\xi))=\eta+O(\mathfrak{r}^{-1})\norm{\eta}.
  \end{equation*}
  Indeed, $\mathrm{K}(\mu)+\Phi(v)$ is close to $\Phi_{-}(v_{-})$ and $\Phi_{+}(v_{+})$ on the cut-off regions, and the difference between $\bar{\bd}(\mathrm{K}(\mu)+\Phi(v))$ and $\bar{\bd}(\Phi_{\pm}(v_{\pm}))$ is a small error of order $\mathfrak{r}^{-1}\abs{\mu}$, using equation \eqref{eq:linear-equation} and the fact $\gamma(\mu_{0},\mu_{1})$ has $C^{1}$ size controlled by $\mathfrak{r}^{-1}\abs{\mu}$.

  On the inner neck, the only contribution is $\bar{\bd}\mathfrak{L}(\xi)$ which is approximately $\eta=\mathfrak{L}(\mathfrak{R}(\eta))$ up to an error $O(\mathfrak{r}^{-1})\norm{\eta}$, due to derivatives of $f_{+},f_{-}$.

  The only other errors are due to the parallel transport, but this is of order $O(\mathfrak{r}^{-1})\norm{\eta}$, since the distance of curves we need to parallel transport along is $O(\mathfrak{r}^{-1})$ (noting that parallel transport acts identically in the neck).

  Thus we can solve the equation up to an error of $O(\mathfrak{r}^{-1})\norm{\eta}$. Provided $O(\mathfrak{r}^{-1})<1/2$, we can then solve the equation by an iterative process, as is common in solving linear equations in Banach spaces. The result gives an inverse image whose norm is uniformly bounded in terms of the input norm $\norm{\eta}$ by some constant $C_{\mathrm{lin}}$.
\end{proof}

\begin{figure}[h]
  \centering
  \begin{tikzpicture}[scale=.4]
    \draw[rounded corners] (0,-1)--(0,0)--+(13,0)--+(13,-1) (0,2)--(0,1)--+(13,0)--+(13,1);
    \draw (3,0)--+(0,1) (10,0)--+(0,1) (6.5,0)--+(0,1);
    \draw[<->] (3,-0.5)--node[below]{$2R$}(10,-0.5);
    \node at (4.75,0.5) {$\eta$};
    \node at (8.25,0.5) {$\eta$};
    \node at (11.5,0.5) {$\eta$};
    \node at (1.5,0.5) {$\eta$};
    \begin{scope}[shift={(15,2)}]
      \draw[rounded corners] (0,-1)--(0,0)--+(13,0) (0,2)--(0,1)--+(13,0);
      \draw (3,0)--+(0,1) (10,0)--+(0,1) (6.5,0)--+(0,1);
      \node at (4.75,0.5) {$\eta$};
      \node at (8.25,0.5) {$0$};
      \node at (11.5,0.5) {$0$};
      \node at (1.5,0.5) {$\eta$};
    \end{scope}

    \begin{scope}[shift={(15,-2)}]
      \draw[rounded corners] (0,0)--+(13,0)--+(13,-1) (0,1)--+(13,0)--+(13,1);
      \draw (3,0)--+(0,1) (10,0)--+(0,1) (6.5,0)--+(0,1);
      \node at (4.75,0.5) {$0$};
      \node at (8.25,0.5) {$\eta$};
      \node at (11.5,0.5) {$\eta$};
      \node at (1.5,0.5) {$0$};
    \end{scope}
  \end{tikzpicture}
  \caption{Extension by zero map $\mathfrak{R}$; this goes from deformations of the preglued map $u_{R}$ to deformations of the original map $u$.}
  \label{fig:extension-by-zero}
\end{figure}
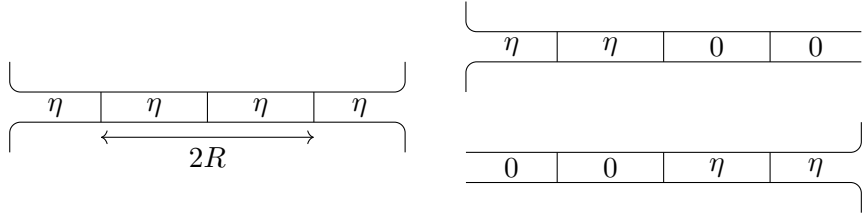

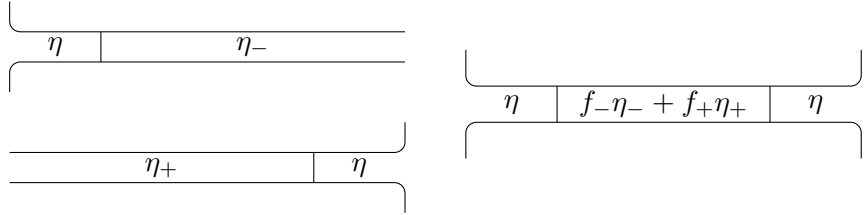
\begin{figure}[h]
  \centering
  \begin{tikzpicture}[scale=.4]
    \begin{scope}[yscale=1.2]
      \draw[rounded corners] (0,-1)--(0,0)--+(13,0)--+(13,-1) (0,2)--(0,1)--+(13,0)--+(13,1);
      \draw (3,0)--+(0,1) (10,0)--+(0,1);
      \node at (6.5,0.5) {$f_{-}\eta_{-}+f_{+}\eta_{+}$};
      \node at (11.5,0.5) {$\eta$};
      \node at (1.5,0.5) {$\eta$};
    \end{scope}
    \begin{scope}[shift={(-15,2)}]
      \draw[rounded corners] (0,-1)--(0,0)--+(13,0) (0,2)--(0,1)--+(13,0);
      \draw (3,0)--+(0,1);
      \node at (8,0.5) {$\eta_{-}$};
      \node at (1.5,0.5) {$\eta$};
    \end{scope}

    \begin{scope}[shift={(-15,-2)}]
      \draw[rounded corners] (0,0)--+(13,0)--+(13,-1) (0,1)--+(13,0)--+(13,1);
      \draw (10,0)--+(0,1);
      \node at (5,0.5) {$\eta_{+}$};
      \node at (11.5,0.5) {$\eta$};
    \end{scope}
  \end{tikzpicture}
  \caption{Partition of unity map $\mathfrak{L}$; this goes from deformations of $u$ to deformations of $u_{R}$.}
  \label{fig:partition-of-unity-map}
\end{figure}

As a consequence of this result, and the previous result Lemma \ref{lemma:estimating-size-of-error}, one concludes by the usual application of the inverse function theorem in Banach spaces that there are indeed genuine solutions of $\mathscr{M}(R)$ close to the preglued solutions, provided the parameter $\mathfrak{s}$ is large, and $\mathfrak{r}$ is chosen much larger than $\mathfrak{s}$, so that the failure of the preglued solution to solve the equation is small enough (see Lemma \ref{lemma:estimating-size-of-error}).

Moreover, these genuine solutions are cut transversally, because the linearization at the preglued solutions is uniformly surjective. The solutions are rigid in $\mathscr{M}(R)$ as can be deduced by the index formula together with the above uniform surjectivity. Let us denote this genuine solution by $\mathrm{G}_{\mathfrak{s},\mathfrak{r}}(x_{0},x_{1},u)$. This rigidity leads to the following conclusion:

\begin{lemma}
  Let $\mathfrak{s}_{0}\ge 0$ be a constant, and let $P(\mathfrak{s})$ be such that $\mathfrak{r}\ge P(\mathfrak{s})$ and $\mathfrak{s}\ge \mathfrak{s}_{0}$ implies that the gluing argument applied to $\mathrm{PG}_{\mathfrak{s},\mathfrak{r}}(x_{0},x_{1},u)$ converges to a genuine solution $\mathrm{G}_{\mathfrak{s},\mathfrak{r}}(x_{0},x_{1},u)$ (this requires $\mathfrak{r}$ to be large enough for the linearized operator to be uniformly surjective, and then the error in Lemma \ref{lemma:estimating-size-of-error} to be small enough). Now suppose that $\mathfrak{s}(\tau),\mathfrak{r}(\tau)$ varies continuously and so that:
  \begin{enumerate}
  \item $\mathfrak{s}(\tau)\ge \mathfrak{s}_{0}$ and $\mathfrak{r}(\tau)\ge P(\mathfrak{s}(\tau))$ for all $\tau$,
  \item $\mathfrak{s}(\tau)+\mathfrak{r}(\tau)=\mathfrak{s}(0)+\mathfrak{r}(0)$,
  \end{enumerate}
  then $G_{\mathfrak{s}(\tau),\mathfrak{r}(\tau)}(x_{0},x_{1},u)=\mathrm{G}_{\mathfrak{s}(0),\mathfrak{r}(0)}(x_{0},x_{1},u)$. In particular, the glued solution in $\mathscr{M}(R)$ is $G_{\mathfrak{s},R-\mathfrak{s}}$, provided $\mathfrak{s}\ge \mathfrak{s}_{0}$ and $R\ge \mathfrak{s}+P(\mathfrak{s})$.
\end{lemma}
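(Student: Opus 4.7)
The strategy is to exploit that $R=\mathfrak{s}(\tau)+1+\mathfrak{r}(\tau)$ is \emph{independent of} $\tau$, so every $\mathrm{G}_{\mathfrak{s}(\tau),\mathfrak{r}(\tau)}(x_{0},x_{1},u)$ lives in one and the same moduli space $\mathscr{M}(R)$; I will then upgrade the gluing construction to a continuous map $[0,1]\to \mathscr{M}(R)$ and conclude by the discreteness of its rigid component.

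First, I would view the preglued maps $\mathrm{PG}_{\mathfrak{s}(\tau),\mathfrak{r}(\tau)}(x_{0},x_{1},u)$ as a path of smooth maps defined on the fixed domain $\Sigma(R)$ with fixed Lagrangian boundary conditions. Inspecting the construction in \S\ref{sec:pregluing}, the data $(y_{0}(\tau),y_{1}(\tau))\in V$, the holomorphic neck $N_{\mathfrak{r}(\tau)}$, the boundary deformation $\Gamma(y_{0}(\tau),y_{1}(\tau))$, and the cut-off interpolations all depend smoothly on $\tau$; hence the preglued path is continuous in $C^{\infty}_{\mathrm{loc}}$, and in fact in every reasonable uniform norm on $\Sigma(R)$.

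Next, I would revisit the Newton iteration used in \S\ref{sec:line-fram-pregl} to produce $\mathrm{G}_{\mathfrak{s},\mathfrak{r}}$. The weight $\mathfrak{w}$ depends only on $R$, which is fixed along the path, so the target Banach spaces are constant. The a priori bound from Lemma~\ref{lemma:estimating-size-of-error} and the uniform constant $C_{\mathrm{lin}}$ for the inverse of the linearized operator are all uniform over $\tau\in[0,1]$, so the contraction-mapping argument applies continuously in $\tau$ and produces a continuous correction term in $W^{1,p,\mathfrak{w}}$. Thus $\tau\mapsto \mathrm{G}_{\mathfrak{s}(\tau),\mathfrak{r}(\tau)}(x_{0},x_{1},u)$ is a continuous map $[0,1]\to \mathscr{M}(R)$ with respect to, say, the $C^{0}$-topology on maps.

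Finally, each $\mathrm{G}_{\mathfrak{s}(\tau),\mathfrak{r}(\tau)}(x_{0},x_{1},u)$ is rigid in $\mathscr{M}(R)$, since the uniform surjectivity of the linearized operator at the preglued solution together with the Fredholm index computation forces it to lie in the $0$-dimensional component. For a generic perturbation $\mathfrak{p}_{R}$, this component is a discrete set of isolated points, and a continuous map from the connected interval $[0,1]$ into a discrete set is constant. This gives $\mathrm{G}_{\mathfrak{s}(\tau),\mathfrak{r}(\tau)}(x_{0},x_{1},u)=\mathrm{G}_{\mathfrak{s}(0),\mathfrak{r}(0)}(x_{0},x_{1},u)$, and specializing to the path $(\mathfrak{s}(\tau),\mathfrak{r}(\tau))=((1-\tau)\mathfrak{s}+\tau\mathfrak{s}_{1},\ldots)$ with endpoints $(\mathfrak{s},R-1-\mathfrak{s})$ and $(\mathfrak{s}_{0},R-1-\mathfrak{s}_{0})$ yields the final sentence. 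The main obstacle is the second step: one must check carefully that the iterative construction of $\mathrm{G}_{\mathfrak{s},\mathfrak{r}}$ is not merely convergent for each fixed $(\mathfrak{s},\mathfrak{r})$ but depends continuously on these parameters as elements of the fixed space of maps $\Sigma(R)\to W$; this requires tracking continuity through the parallel-transport identifications and partition-of-unity operators $\mathfrak{R},\mathfrak{L}$ used in the uniform surjectivity lemma.
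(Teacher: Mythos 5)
Your proposal is correct and follows the same approach as the paper: the paper's proof is just the one-line remark ``This follows from the rigidity of the solutions, and the fact the pregluing construction is continuous with respect to variations of $\mathfrak{s}$ and $\mathfrak{r}$,'' which is exactly the combination you spell out (fixed $R$ means fixed domain and fixed weight $\mathfrak{w}$, so the glued-solution path lies in a single discrete rigid component). You have usefully fleshed out the continuity-in-$\tau$ of the Newton iteration, which the paper leaves implicit.
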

\begin{proof}
  This follows from the rigidity of the solutions, and the fact the pregluing construction is continuous with respect to variations of $\mathfrak{s}$ and $\mathfrak{r}$.
\end{proof}

This completes part \ref{gluing-step-2} of the gluing argument. In the next and final subsection, we will complete step \ref{gluing-step-3} of the gluing argument.

\subsubsection{Compactness for ends of $\mathscr{M}$.}
\label{sec:comp-theor-ends}

Suppose that $(R_{n},x_{0}^{n},x_{1}^{n},u_{n})$ is a sequence of genuine solutions in $\mathscr{M}$, with $R_{n}\to\infty$. To complete the gluing argument, we need to show that (after passing to a subsequence) $(R_{n},x_{0}^{n},x_{1}^{n},u_{n})$ eventually equals the glued solution $\mathrm{G}_{\mathfrak{s},\mathfrak{r}_{n}}(x_{0},x_{1},u)$, where $\mathfrak{r}_{n}=R_{n}-\mathfrak{s}$, and $(x_{0},x_{1},u)$ is an appropriate Gromov limit of $(x_{0}^{n},x_{1}^{n},u_{n})$.

First, pick the subsequence so that $(x_{0}^{n},x_{1}^{n})$ converge to a limit $(x_{0},x_{1})$. For later use, let us suppose the subsequence is such that $x_{0}^{n}$ is in the ball $B(\epsilon)$ around $x_{0}$. After passing to a further subsequence, the restriction of $u_{n}$ to the neck $[-R_{n},R_{n}]$ converges on compact subsets to a limiting holomorphic strip with boundary on cotangent fibers $T^{*}M_{A_{0}(x_{0},0)}$ and $T^{*}M_{A_{1}(x_{1},0)}$. Since this limiting holomorphic strip has bounded energy (by Fatou's lemma), it must be that $T^{*}M_{A_{0}(x_{0},0)}=T^{*}M_{A_{1}(x_{1},0)}$, otherwise no such finite energy holomorphic strip exists. Thus we conclude $A_{0}(x_{0},0)=A_{1}(x_{1},0)$, so $(x_{0},x_{1})\in P_{\infty}$.

By passing to yet a further sequence, standard elliptic regularity and Floer theory compactness results imply that $u_{n}$ converges on compact subsets of the limiting domain $\Sigma(\infty)$ to a smooth map $u$ --- here we use the identification of the surfaces with sufficiently deep ends/necks removed, so that any compact subset of $\Sigma(\infty)$ eventually is identified with a compact subset of $\Sigma(R_{n})$.

Let $(\bar{x}_{0}^{n},\bar{x}_{1}^{n},\bar{u}_{n})=\mathrm{G}_{\mathfrak{s},\mathfrak{r}_{n}}(x_{0},x_{1},u)$. It is sufficient to prove there is a variation of $(\bar{x}_{0}^{n},\bar{x}_{1}^{n},\bar{u}_{n})$ of small norm which equals $(x_{0}^{n},x_{1}^{n},u_{n})$ after applying the Riemannian exponential map, as then we can appeal to the rigidity of the glued solution to conclude $(\bar{x}_{0}^{n},\bar{x}_{1}^{n},\bar{u}_{n})=(x_{0}^{n},x_{1}^{n},u_{n})$.

It is clear from the construction that $(\bar{x}_{0}^{n},\bar{x}_{1}^{n},\bar{u}_{n})$ also converges to $(x_{0},x_{1},u)$ as $n\to\infty$, in the same manner that $(x_{0}^{n},x_{1}^{n},u_{n})$ does.

In particular, the restrictions of $\bar{u}_{n},u_{n}$ to $[-\mathfrak{r}_{n},\mathfrak{r}_{n}]\times [0,1]$ are both holomorphic strips whose endpoints are eventually within the $C_{\pm}e^{-\pi \mathfrak{s}}$ neighborhoods of $p_{\pm}$, since the limit $(x_{0},x_{1},u)$ has its endpoints within this neighborhood, as explained in \S\ref{sec:pregluing}.

Let us abbreviate by $q_{0}^{n},q_{1}^{n}$ and $\bar{q}_{0}^{n},\bar{q}_{1}^{n}$ the imaginary parts of the boundary components of $u_{n}$ and $\bar{u}_{n}$. Then we estimate:
\begin{lemma}\label{lemma:basepoint-control}
  It holds that:
  \begin{equation*}
    \mathfrak{r}_{n}\abs{(q_{1}^{n}-q_{0}^{n})-(\bar{q}_{1}^{n}-\bar{q}_{0}^{n})}\le (C_{-}+C_{+})e^{-\pi\mathfrak{s}}.
  \end{equation*}
\end{lemma}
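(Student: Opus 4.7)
The plan is to express $q_1 - q_0$ as a boundary integral for a holomorphic strip via Cauchy--Riemann, then to control the difference of two such integrals using the fact that both $u_n$ and $\bar u_n$ are close to the asymptotic values $p_\pm$ at the edges of the middle strip.

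First I would exploit the choice of almost complex structure from \S\ref{sec:choice-almost-compl}: on the middle strip $[-\mathfrak{r}_n, \mathfrak{r}_n] \times [0,1]$ (whose image lies in the coordinate ball $B(1)$ around $A(x_0,0)$), both $u_n$ and $\bar u_n$ are $J_0$-holomorphic in canonical coordinates $p + iq$, the fiber being the real part and the base the imaginary part. The cotangent-fiber boundary conditions then read $\mathrm{Im}\, u_n(s,0) = q_0^n$ and $\mathrm{Im}\, u_n(s,1) = q_1^n$, both constant in $s$. Writing $u_n = P_n + iQ_n$, the Cauchy--Riemann equation $\partial_t Q_n = \partial_s P_n$ gives, for any $s$,
\begin{equation*}
q_1^n - q_0^n \;=\; \int_0^1 \partial_t Q_n(s,t)\, dt \;=\; \int_0^1 \partial_s P_n(s,t)\, dt.
\end{equation*}
Integrating in $s$ over $[-\mathfrak{r}_n, \mathfrak{r}_n]$ and subtracting the analogous identity for $\bar u_n = \bar P_n + i\bar Q_n$, I obtain
\begin{equation*}
2\mathfrak{r}_n\bigl[(q_1^n - q_0^n) - (\bar q_1^n - \bar q_0^n)\bigr] \;=\; \int_0^1 \Bigl[(P_n - \bar P_n)(\mathfrak{r}_n, t) \;-\; (P_n - \bar P_n)(-\mathfrak{r}_n, t)\Bigr] dt.
\end{equation*}

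Next I would invoke the bound recorded in the paragraph immediately preceding the lemma: for $n$ large, both $u_n(\pm\mathfrak{r}_n, t)$ and $\bar u_n(\pm\mathfrak{r}_n, t)$ lie within the $C_\pm e^{-\pi\mathfrak{s}}$-neighborhood of $p_\pm$. This follows from the exponential-convergence estimates \ref{Cminus-gluing}--\ref{Cplus-gluing} applied to the limit $u$, together with the $C^0_{\mathrm{loc}}$-convergence of $u_n, \bar u_n$ to $u$. The triangle inequality then yields
\begin{equation*}
|(P_n - \bar P_n)(\pm\mathfrak{r}_n, t)| \;\le\; |u_n(\pm\mathfrak{r}_n,t) - p_\pm| + |\bar u_n(\pm\mathfrak{r}_n, t) - p_\pm| \;\le\; 2C_\pm e^{-\pi\mathfrak{s}},
\end{equation*}
and substituting gives
\begin{equation*}
2\mathfrak{r}_n\bigl|(q_1^n - q_0^n) - (\bar q_1^n - \bar q_0^n)\bigr| \;\le\; 2(C_+ + C_-)\, e^{-\pi\mathfrak{s}},
\end{equation*}
which is the claim after dividing by $2$ (any leftover factor of $2$ is absorbed into $C_\pm$).

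The only ingredient that has substance is the Cauchy--Riemann identity turning the difference $q_1 - q_0$ into a boundary integral across $\{s = \pm \mathfrak{r}_n\}$; beyond that, the proof is mechanical and all of the hard work — arranging that $u_n, \bar u_n$ are genuinely $J_0$-holomorphic in the neck, and establishing exponential proximity of their endpoint values to $p_\pm$ — was already carried out in \S\ref{sec:choice-almost-compl} and \S\ref{sec:pregluing}. The only minor obstacle is bookkeeping the numerical constants so as to get $\mathfrak{r}_n$ rather than $2\mathfrak{r}_n$ on the left-hand side, but this discrepancy is at most a factor of $2$ and is absorbed into $C_\pm$.
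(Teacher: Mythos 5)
Your proof is correct and follows essentially the same route as the paper's: use the Cauchy--Riemann identity $\partial_s P = \partial_t Q$ in the flat neck to rewrite the base-point difference as a boundary integral of the fiber coordinate across $\{s=\pm\mathfrak{r}_n\}$, then bound each endpoint contribution by $2C_\pm e^{-\pi\mathfrak{s}}$ using the exponential asymptotics of the limit and the $C^0_{\mathrm{loc}}$-convergence. In fact the constants come out exactly right and no factor of $2$ needs to be absorbed, so your parenthetical worry at the end is unnecessary.
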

\begin{proof}
  Let $U=u_{n}-\bar{u}_{n}$, restricted to $[-\mathfrak{r}_{n},\mathfrak{r}_{n}]\times [0,1]$. Let $q(s,t),p(s,t)$ be the coordinate projections of $U$ to $\R^{n}\times B(1)$. Then:
  \begin{equation*}
    \bd_{s}p=\bd_{t}q\implies 2\mathfrak{r}_{n}((q_{1}^{n}-q_{0}^{n})-(\bar{q}_{1}^{n}-\bar{q}_{0}^{n}))=\int_{0}^{1}p(\mathfrak{r}_{n},t)-p(-\mathfrak{r}_{n},t)dt
  \end{equation*}
  Using that $U$ has its $s=\pm\mathfrak{r}_{n}$ boundary component in the $2C_{\pm}e^{-\pi \mathfrak{s}}$ neighborhood of $0$, we conclude the desired result.
\end{proof}

Next, we recall from \S\ref{sec:line-fram-pregl} that there is a variation $\mathrm{K}(\mu_{n})$ associated to $\mu_{n}\in \R^{n}$ whose restriction to $[-\mathfrak{r}_{n},\mathfrak{r}_{n}]\times [0,1]$ equals:
\begin{equation*}
  2\mathfrak{r}_{n}^{-1}i\mu_{n,0}+2\mathfrak{r}_{n}^{-1}(\mu_{n,0}(z+\mathfrak{r}_{n})+\mu_{n,1}(\mathfrak{r}_{n}-z)),
\end{equation*}
and which has a $C^{1}$ distance controlled by $\mathfrak{r}^{-1}_{n}(\mu_{n,1}-\mu_{n,0})$ outside of the neck $[-\mathfrak{r}_{n},\mathfrak{r}_{n}]\times [0,1]$. Let us ``add'' this variation to $(\bar{x}_{0}^{n},\bar{x}_{1}^{n},\bar{u}_{n})$ (using the Riemannian exponential map) so that: $$2\mathfrak{r}_{n}^{-1}(\mu_{n,1}-\mu_{n,0})=q_{1}^{n}-q_{0}^{n}-(\bar{q}_{1}^{n}-\bar{q}_{0}^{n}).$$ Then it holds that:
\begin{equation*}
  u_{n}-(\bar{u}_{n}+\mathrm{K}(\mu_{n}))
\end{equation*}
has boundary conditions on a single cotangent fiber. Moreover, Lemma \ref{lemma:basepoint-control} $\abs{\mu_{n}}$ is controlled by $2^{-1}(C_{-}+C_{+})e^{-\pi\mathfrak{s}}$. It then follows that:
\begin{equation*}
  \norm{u_{n}-(\bar{u}_{n}+\mathrm{K}(\mu_{n}))}_{W^{1,p,\mathfrak{w}}}\le O(\mathfrak{r}^{-1}_{n}e^{\delta\mathfrak{s}})+O(\mathfrak{e}^{\delta\mathfrak{s}}\mathrm{dist}_{\mathfrak{s}}(u_{n},\bar{u}_{n}))
\end{equation*}
where $\mathrm{dist}_{\mathfrak{s}}$ is the $C^{1}$ distance computed on the complement of $[-\mathfrak{r}_{n},\mathfrak{r}_{n}]\times [0,1]$. To see this, one uses:
\begin{enumerate}
\item the aforementioned bound of the $C^{1}$ size $K(\mu_{n})$ on the complement of $[-\mathfrak{r}_{n},\mathfrak{r}_{n}]\times [0,1]$,
\item the fact the $W^{1,p,\mathfrak{w}}$ distance on the complement of $[-\mathfrak{r}_{n},\mathfrak{r}_{n}]\times [0,1]$ is bounded by $e^{\delta\mathfrak{s}}$ times the usual $C^{1}$ distance, and
\item $u_{n}-(\bar{u}_{n}+\mathrm{K}(\mu))$ is holomorphic on $[-\mathfrak{r}_{n},\mathfrak{r}_{n}]\times [0,1]$, with boundary on a single cotangent fiber, and therefore the $W^{1,p,\mathfrak{w}}$ size on this neck is bounded by the $C^{1}$ size at the endpoints.
\end{enumerate}
Thus we conclude that:
\begin{equation*}
  u_{n}=\bar{u}_{n}+\mathrm{K}(\mu_{n})+\xi_{n}
\end{equation*}
where $\norm{\xi_{n}}_{W^{1,p,\mathfrak{w}}}=O(\mathfrak{r}^{-1}_{n}e^{\delta\mathfrak{s}})+O(\mathfrak{e}^{\delta\mathfrak{s}}\mathrm{dist}_{\mathfrak{s}}(u_{n},\bar{u}_{n}))$. In particular, by taking $n$ large enough, we can make $\norm{\xi_{n}}_{W^{1,p,\mathfrak{w}}}+\abs{\mu_{n}}$ as small as desired. In particular, $(x_{0},x_{1},u_{n})$ enters arbitrarily small neighborhoods of $(\bar{x}_{0}^{n},\bar{x}_{1}^{n},\bar{u}_{n})$ in the correct topology for the uniform surjectivity of the linearized operator to be applied. By the rigidity of the glued solution, it follows that $(x_{0},x_{1},u_{n})=(\bar{x}_{0}^{n},\bar{x}_{1}^{n},\bar{u}_{n})$, as desired.

This completes the proof that the count of non-compact ends of $\mathscr{M}$ equals the count of rigid solutions of $\mathscr{M}(\infty)$.\hfill$\square$

\bibliographystyle{./amsalpha-doi}
\bibliography{citations}

\providecommand{\bysame}{\leavevmode\hbox to3em{\hrulefill}\thinspace}
\providecommand{\MR}{\relax\ifhmode\unskip\space\fi MR }
\providecommand{\MRhref}[2]{%
  \href{http://www.ams.org/mathscinet-getitem?mr=#1}{#2}
}
\providecommand{\href}[2]{#2}
\begin{thebibliography}{{Mac}71}

\bibitem[AAC23]{alizadeh-atallah-cant-arXiv-2023}
H.~Alizadeh, M.~S. Atallah, and D.~Cant, \emph{Lagrangian intersections and the
  spectral norm in convex-at-infinity symplectic manifolds}, arXiv:2312.14752,
  12 2023, Accepted to Math. Z.,
  \href{https://doi.org/10.48550/arXiv.2312.14752}{doi:10.48550/arXiv.2312.14752}.

\bibitem[Abo15]{abouzaid-EMS-2015}
M.~Abouzaid, \emph{Symplectic cohomology and {V}iterbo's theorem}, Free Loop
  Spaces in Geometry and Topology, European Mathematical Society, 2015,
  pp.~271--486, \href{https://doi.org/10.4171/153}{doi:10.4171/153}.

\bibitem[Alu09]{aluffi-GSM-2009}
P.~Aluffi, \emph{Algebra: Chapter 0}, Graduate Studies in Mathematics, vol.
  104, AMS, 2009, \href{https://doi.org/10.1090/gsm/104}{doi:10.1090/gsm/104}.

\bibitem[APS08]{abbondandolo-portaluri-schwarz-JFPTA-2008}
A.~Abbondandolo, A.~Portaluri, and M.~Schwarz, \emph{The homology of path
  spaces and {F}loer homology with conormal boundary conditions}, J. Fixed
  Point Theory Appl. \textbf{4} (2008), 263--293,
  \href{https://doi.org/10.1007/S11784-008-0097-Y}{doi:10.1007/S11784-008-0097-Y}.

\bibitem[AS06]{abbondandolo-schwarz-CPAM-2006}
A.~Abbondandolo and M.~Schwarz, \emph{On the {F}loer homology of cotangent
  bundles}, Comm. Pure Appl. Math. \textbf{59} (2006), 254--316,
  \href{https://doi.org/10.1002/cpa.20090}{doi:10.1002/cpa.20090}.

\bibitem[AS10]{abbondandolo-schwarz-GT-2010}
\bysame, \emph{Floer homology of cotangent bundles and the loop product}, Geom.
  Topol. \textbf{14} (2010), 1569--1722,
  \href{https://doi.org/10.2140/GT.2010.14.1569}{doi:10.2140/GT.2010.14.1569}.

\bibitem[AS12]{abbondandolo-schwarz-proceedings-2012}
\bysame, \emph{On product structures in {F}loer homology of cotangent bundles},
  Global Differential Geometry, Springer Berlin Heidelberg, 2012, pp.~491--521.

\bibitem[BC07]{biran-cornea-arXiv-2007}
P.~Biran and O.~Cornea, \emph{Quantum structures for lagrangian submanifolds},
  arXiv:0708.4221, 2007,
  \href{https://doi.org/10.48550/arXiv.0708.4221}{doi:10.48550/arXiv.0708.4221}.

\bibitem[BC24]{brocic-cant-JFPTA-2024}
F.~Bro\'ci\'c and D.~Cant, \emph{Bordism classes of loops and {F}loer's
  equation in cotangent bundles}, J. Fixed Point Theory Appl. \textbf{26}
  (2024), 1--29,
  \href{https://doi.org/10.1007/s11784-024-01114-x}{doi:10.1007/s11784-024-01114-x}.

\bibitem[BCS25]{brocic-cant-shelukhin-math-ann-2025}
F.~Bro{\'c}i{\'c}, D.~Cant, and E.~Shelukhin, \emph{The chord conjecture for
  conormal bundles}, Math. Ann. (2025), 63,
  \href{https://doi.org/10.1007/s00208-025-03171-0}{doi:10.1007/s00208-025-03171-0}.

\bibitem[Bim24]{bimmermann-arch-math-2024}
J.~Bimmermann, \emph{{H}ofer–{Z}ehnder capacity of magnetic disc tangent
  bundles over constant curvature surfaces}, Arch. Math. \textbf{123} (2024),
  103--111,
  \href{https://doi.org/10.1007/s00013-024-02003-y}{doi:10.1007/s00013-024-02003-y}.

\bibitem[BK22]{benedetti-kang-JFPTA-2022}
G.~Benedetti and J.~Kang, \emph{Relative {H}ofer-{Z}ehnder capacity and
  positive symplectic homology}, J. Fixed Point Theory Appl. \textbf{24}
  (2022), no.~44, 1--32,
  \href{https://doi.org/10.1007/s11784-022-00963-8}{doi:10.1007/s11784-022-00963-8}.

\bibitem[Bro25]{brocic-CCM-2025}
F.~Bro\'{c}i\'{c}, \emph{Riemannian distance and symplectic embeddings in
  cotangent bundle}, Comm. Cont. Math. \textbf{27} (2025), no.~03, 1--28,
  \href{https://doi.org/10.1142/S021919972450024X}{doi:10.1142/S021919972450024X}.

\bibitem[Can22]{cant-thesis-2022}
D.~Cant, \emph{A dimension formula for relative symplectic field theory},
  Stanford University PhD Thesis. Available at:
  {\url{https://dylancant.ca/thesis.pdf}}, 7 2022.

\bibitem[Can24]{cant-arXiv-2024}
\bysame, \emph{Remarks on eternal classes in symplectic cohomology},
  arXiv:2410.03914, 10 2024, Submitted,
  \href{https://doi.org/10.48550/arXiv.2410.03914}{doi:10.48550/arXiv.2410.03914}.

\bibitem[CC23]{cant-chen-kyoto-2023}
D.~Cant and D.~Chen, \emph{Adiabatic compactness for holomorphic curves with
  boundary on nearby {L}agrangians}, arXiv:2302.13391, 02 2023, 77 pages.
  Accepted to Kyoto J. Math.,
  \href{https://doi.org/10.48550/arXiv.2302.13391}{doi:10.48550/arXiv.2302.13391}.

\bibitem[CHO23]{cieliebak-hingston-oancea-JFPTA-2023}
K.~Cieliebak, N.~Hingston, and A.~Oancea, \emph{Loop coproduct in {M}orse and
  {F}loer homology}, J. Fixed Point Theory Appl. \textbf{25} (2023), no.~59,
  1--84,
  \href{https://doi.org/10.1007/s11784-023-01061-z}{doi:10.1007/s11784-023-01061-z}.

\bibitem[Cie94]{cieliebak-JMPA-1994}
K.~Cieliebak, \emph{Pseudo-holomorphic curves and periodic orbits on cotangent
  bundles}, J. Math. Pure Appl. \textbf{9} (1994), no.~73, 251--278.

\bibitem[Cie02]{cieliebak-JEMS-2002}
\bysame, \emph{Handle attaching in symplectic homology and the chord
  conjecture}, J. European Math. Soc. \textbf{4} (2002), no.~2, 115--142,
  \href{https://doi.org/10.1007/s100970100036}{doi:10.1007/s100970100036}.

\bibitem[CS99]{chas-sullivan-arXiv-1999}
M.~Chas and D.~Sullivan, \emph{String topology}, arXiv:math/9911159v1, 11 1999,
  \href{https://doi.org/10.48550/arXiv.math/9911159}{doi:10.48550/arXiv.math/9911159}.

\bibitem[EENS13]{ekholm-etnyre-ng-sullivan-GT-2013}
T.~Ekholm, J.~Etnyre, L.~Ng, and M.~G. Sullivan, \emph{Knot contact homology},
  Geom. Topol. \textbf{17} (2013), 975--1112,
  \href{https://doi.org/10.2140/gt.2013.17.975}{doi:10.2140/gt.2013.17.975}.

\bibitem[EG91]{eliashberg-gromov-AMS-1991}
Y.~Eliashberg and M.~Gromov, \emph{Convex symplectic manifolds}, Several
  complex variables and complex geometry, part 2, Proc. Sympos. Pure Math.,
  vol.~52, AMS, 1991, pp.~135--162,
  \href{https://doi.org/10.1090/pspum/052.2}{doi:10.1090/pspum/052.2}.

\bibitem[Ekh07]{ekholm-GT-2007}
T.~Ekholm, \emph{Morse flow trees and {L}egendrian contact homology in 1-jet
  spaces}, Geom. Topol. \textbf{11} (2007), 1083--1224,
  \href{https://doi.org/10.2140/gt.2007.11.1083}{doi:10.2140/gt.2007.11.1083}.

\bibitem[EP03]{entov-poltero-IMRN-2003}
M.~Entov and L.~Polterovich, \emph{Calabi quasimorphism and quantum homology},
  Int. Math. Res. Not. \textbf{2003} (2003), 1635--1676,
  \href{https://doi.org/10.1155/S1073792803210011}{doi:10.1155/S1073792803210011}.

\bibitem[EP22]{eliashberg-pancholi}
Y.~Eliashberg and D.~Pancholi, \emph{{H}onda-{H}uang’s work on contact
  convexity revisited}, arXiv:2207.07185, 2022,
  \href{https://doi.org/10.48550/arXiv.2207.07185}{doi:10.48550/arXiv.2207.07185}.

\bibitem[Fau20]{fauck-IJM-2020}
A.~Fauck, \emph{On manifolds with infinitely many fillable contact structures},
  Int. J. Math. \textbf{31} (2020), no.~13, 1--71,
  \href{https://doi.org/10.1142/S0129167X20501086}{doi:10.1142/S0129167X20501086}.

\bibitem[FHS95]{floer-hofer-salamon-duke-1995}
A.~Floer, H.~Hofer, and D.~Salamon, \emph{Transversality in elliptic {M}orse
  theory for the symplectic action}, Duke Math. J. \textbf{80} (1995), no.~1,
  251--292,
  \href{https://doi.org/10.1215/S0012-7094-95-08010-7}{doi:10.1215/S0012-7094-95-08010-7}.

\bibitem[Flo89]{floer-CMP-1989}
A.~Floer, \emph{Symplectic fixed points and holomorphic spheres}, Comm. Math.
  Phys. \textbf{120} (1989), 575--611,
  \href{https://doi.org/10.1007/BF01260388}{doi:10.1007/BF01260388}.

\bibitem[FO97]{fukaya-oh-AJM-1997}
K.~Fukaya and Y-G. Oh, \emph{Zero-loop open strings in the cotangent bundle and
  {M}orse homotopy}, Asian J. Math. \textbf{1} (1997), no.~1, 96--180,
  \href{https://doi.org/10.4310/AJM.1997.V1.N1.A5}{doi:10.4310/AJM.1997.V1.N1.A5}.

\bibitem[FRV23]{ferreira-ramos-vicente-arXiv-2023}
B.~Ferreira, V.~G.~B. Ramos, and A.~Vicente, \emph{Gromov width of the disk
  cotangent bundle of spheres of revolution}, arXiv:2301.08528, 2023,
  \href{https://doi.org/10.48550/arXiv.2301.08528}{doi:10.48550/arXiv.2301.08528}.

\bibitem[FS07]{frauenfelder-schlenk-IJM-2007}
U.~Frauenfelder and F.~Schlenk, \emph{Hamiltonian dynamics on convex symplectic
  manifolds}, Isr. J. Math. \textbf{159} (2007), 1--56,
  \href{https://doi.org/10.1007/S11856-007-0037-3}{doi:10.1007/S11856-007-0037-3}.

\bibitem[Fuk97]{fukaya-AMSIP-1997}
K.~Fukaya, \emph{Morse homotopy and its quantization}, Geometric topology: 1993
  Georgia international topology conference (William~H Kazez, ed.), Studies in
  Advanced Mathematics, vol.~2, American Mathematical Society and International
  Press of Boston, 1997, pp.~409--440,
  \href{https://doi.org/10.1090/amsip/002.1}{doi:10.1090/amsip/002.1}.

\bibitem[GT98]{gilbarg-trudinger-1998}
D.~Gilbarg and N.~S. Trudinger, \emph{Elliptic partial differential equations
  of second order}, revised 3rd printing ed., Grundlehren der mathematischen
  Wissenschaften, vol. 224, Springer, 1998,
  \href{https://doi.org/10.1007/978-3-642-61798-0}{doi:10.1007/978-3-642-61798-0}.

\bibitem[HS95]{hofer-salamon-floermem-1995}
H.~Hofer and D.~Salamon, \emph{Floer homology and {N}ovikov rings}, The Floer
  memorial volume, Progr. Math., vol. 133, Birkh{\"a}user, 1995, pp.~483--524,
  \href{https://doi.org/10.1007/978-3-0348-9217-9}{doi:10.1007/978-3-0348-9217-9}.

\bibitem[Hut08]{hutchings-agt-2008}
M.~Hutchings, \emph{Floer homology of families {I}}, Alg. Geom. Topol.
  \textbf{8} (2008), 435--492,
  \href{https://doi.org/10.2140/agt.2008.8.435}{doi:10.2140/agt.2008.8.435}.

\bibitem[Iri14]{irie-JEMS-2014}
K.~Irie, \emph{{H}ofer-{Z}ehnder capacity of unit disk cotangent bundles and
  the loop product}, J. Eur. Math. Soc. \textbf{16} (2014), 2477--2497,
  \href{https://doi.org/10.4171/JEMS/491}{doi:10.4171/JEMS/491}.

\bibitem[KS21]{kislev-shelukhin-GT-2021}
A.~Kislev and E.~Shelukhin, \emph{Bounds on spectral norms and barcodes},
  Geometry {\&} Topology \textbf{25} (2021), 3257–3350,
  \href{https://doi.org/10.2140/gt.2021.25.3257}{doi:10.2140/gt.2021.25.3257}.

\bibitem[{Mac}71]{maclean-springer-1971}
S.~{Mac Lane}, \emph{Categories for the working mathematician}, 2nd edition
  ed., Graduate texts in mathematics, vol.~5, Springer, 1971,
  \href{https://doi.org/10.1007/978-1-4757-4721-8}{doi:10.1007/978-1-4757-4721-8}.

\bibitem[Mil65a]{milnor-book-PUP-1965}
J.~Milnor, \emph{Lectures on the h-cobordism theorem}, Princeton University
  Press, 1965,
  \href{https://doi.org/10.1515/9781400878055}{doi:10.1515/9781400878055}.

\bibitem[Mil65b]{milnor-book-1965}
\bysame, \emph{Topology from the differentiable viewpoint}, The University
  Press of Virginia, 1965,
  \href{https://doi.org/10.2307/2314613}{doi:10.2307/2314613}.

\bibitem[MS12]{mcduff-salamon-book-2012}
D.~McDuff and D.~Salamon, \emph{$j$-holomorphic curves and symplectic
  topology}, 2nd edition ed., American Mathematical Society, Colloquium
  Publications, 2012,
  \href{https://doi.org/10.1090/coll/052}{doi:10.1090/coll/052}.

\bibitem[MS17]{mcduffsalamon-alt}
\bysame, \emph{Introduction to symplectic topology}, 3rd ed., Oxford University
  Press, 2017,
  \href{https://doi.org/10.1093/oso/9780198794899.001.0001}{doi:10.1093/oso/9780198794899.001.0001}.

\bibitem[MT94]{mcduff-traynor-LMS-1994}
D.~McDuff and L.~Traynor, \emph{The 4-dimensional symplectic camel and related
  results}, Symplectic Geometry (D.~Salamon, ed.), London Mathematical Society
  Lecture Note Series, Cambridge University Press, 1994, pp.~169--182,
  \href{https://doi.org/10.1017/CBO9780511526343.010}{doi:10.1017/CBO9780511526343.010}.

\bibitem[OZ11]{oh-zhu-JSG-2011}
Y-G. Oh and K.~Zhu, \emph{Floer trajectories with immersed nodes and
  scale-dependent gluing}, J. Symplectic Geom. \textbf{9} (2011), no.~4,
  483--636,
  \href{https://doi.org/10.4310/JSG.2011.v9.n4.a4}{doi:10.4310/JSG.2011.v9.n4.a4}.

\bibitem[PSS96]{piunikhin-salamon-schwarz-1996}
S.~Piunikhin, D.~Salamon, and M.~Schwarz, \emph{Symplectic {F}loer-{D}onaldson
  theory and quantum cohomology}, Contact and symplectic geometry, Publications
  of the Newton Institute, vol.~8, Cambridge University Press, 1996,
  pp.~171--200.

\bibitem[Rit13]{ritter-jtopol-2013}
A.~Ritter, \emph{Topological quantum field theory structure on symplectic
  cohomology}, J. Topol. \textbf{6} (2013), 391--489,
  \href{https://doi.org/10.1112/jtopol/jts038}{doi:10.1112/jtopol/jts038}.

\bibitem[Sal97]{salamon-notes-1997}
D.~Salamon, \emph{Lectures on {F}loer homology}, IAS/Park City Graduate Summer
  School on Symplectic Geometry and Topology, 1997,
  \href{https://doi.org/10.1090/pcms/007/05}{doi:10.1090/pcms/007/05}.

\bibitem[Sch95]{schwarz-thesis-1995}
M.~Schwarz, \emph{Cohomology operations from {$S^{1}$}-cobordisms in {F}loer
  homology}, \url{https://www.math.uni-leipzig.de/~schwarz/diss.pdf}, 1995, PhD
  Dissertation.

\bibitem[Sch00]{schwarz-PJM-2000}
\bysame, \emph{On the action spectrum for closed symplectically aspherical
  manifolds}, Pacific J. Math. \textbf{193} (2000), 419--461,
  \href{https://doi.org/10.2140/PJM.2000.193.419}{doi:10.2140/PJM.2000.193.419}.

\bibitem[Sei08]{seidel-book-2008}
P.~Seidel, \emph{Fukaya categories and {P}icard-{L}efshetz theory}, Zurich
  Lectures in Advanced Mathematics, European Mathematical Society, 2008,
  \href{https://doi.org/10.4171/063}{doi:10.4171/063}.

\bibitem[Sei14]{seidel-inventiones-2014}
\bysame, \emph{Disjoinable {L}agrangian spheres and dilations}, Invent. Math.
  \textbf{197} (2014), 299--359,
  \href{https://doi.org/10.1007/s00222-013-0484-x}{doi:10.1007/s00222-013-0484-x}.

\bibitem[Sei15]{seidel-GAFA-2015}
\bysame, \emph{The equivariant pair-of-pants product in fixed point {F}loer
  cohomology}, Geom. Funct. Anal. \textbf{25} (2015), 942--1007,
  \href{https://doi.org/10.1007/s00039-015-0331-x}{doi:10.1007/s00039-015-0331-x}.

\bibitem[She22a]{shelukhin-GAFA-2022}
E.~Shelukhin, \emph{Symplectic cohomology and a conjecture of {V}iterbo}, Geom.
  Funct. Anal. \textbf{32} (2022), 1514--1543,
  \href{https://doi.org/10.1007/s00039-022-00619-2}{doi:10.1007/s00039-022-00619-2}.

\bibitem[She22b]{shelukhin-invetiones-2022}
\bysame, \emph{Viterbo conjecture for {Z}oll symmetric spaces}, Inventiones
  mathematicae \textbf{230} (2022), no.~1, 321--373,
  \href{https://doi.org/10.1007/s00222-022-01124-x}{doi:10.1007/s00222-022-01124-x}.

\bibitem[SS12]{seidel-solomon-GAFA-2012}
P.~Seidel and J.~P. Solomon, \emph{Symplectic cohomology and {$q$}-intersection
  numbers}, Geom. Funct. Anal. \textbf{22} (2012), 443--477,
  \href{https://doi.org/10.1007/s00039-012-0159-6}{doi:10.1007/s00039-012-0159-6}.

\bibitem[SW06]{salamon-weber-GAFA-2006}
D.~Salamon and J.~Weber, \emph{Floer homology and the heat flow}, Geom. Funct.
  Anal. \textbf{16} (2006), 1050--1138,
  \href{https://doi.org/10.1007/s00039-006-0577-4}{doi:10.1007/s00039-006-0577-4}.

\bibitem[Vit92]{viterbo-mathann-1992}
C.~Viterbo, \emph{Symplectic topology as the geometry of generating functions},
  Math. Ann. \textbf{292} (1992), 685--710,
  \href{https://doi.org/10.1007/BF01444643}{doi:10.1007/BF01444643}.

\bibitem[Vit99]{viterbo-GAFA-1999}
\bysame, \emph{Functors and computations in {F}loer homology with applications
  {I}}, Geom. Funct. Anal. \textbf{9} (1999), 985--1033,
  \href{https://doi.org/10.1007/s000390050106}{doi:10.1007/s000390050106}.

\bibitem[Wei91]{weinstein-hokkaido-91}
A.~Weinstein, \emph{{Contact surgery and symplectic handlebodies}}, Hokkaido
  Mathematical Journal \textbf{20} (1991), no.~2, 241 -- 251,
  \href{https://doi.org/10.14492/hokmj/1381413841}{doi:10.14492/hokmj/1381413841}.

\bibitem[Zho21]{zhou-topology-2021}
Z.~Zhou, \emph{Symplectic fillings of asymptotically dynamically convex
  manifolds i}, Journal of Topology \textbf{14} (2021), no.~1, 112--182,
  \href{https://doi.org/10.1112/topo.12177}{doi:10.1112/topo.12177}.

\end{thebibliography}
\end{document}